\documentclass[a4paper,10pt]{article}
\usepackage[utf8]{inputenc}
\usepackage[T1]{fontenc}
\usepackage{geometry}
\usepackage{tabularx}
\usepackage{booktabs}
\usepackage{array}
\usepackage{microtype}
\usepackage{setspace}
\usepackage{xcolor}
\definecolor{lightgreen}{rgb}{.20,.60,.22}
\usepackage{pdfpages}
\usepackage[normalem]{ulem}

\usepackage{amssymb,amsmath,amsopn,amsxtra,amsthm,amsfonts}
\usepackage{mathtools}
\usepackage[mathcal]{euscript}
\let\mathscr\mathcal
\usepackage[bb=px]{mathalfa}

\usepackage{tikz}
\usetikzlibrary{matrix}
\usepackage{tikz-cd}
\usepackage{enumitem}
\usepackage{xspace}
\usepackage{xifthen}
\usepackage{xparse}

\usepackage[
backend=biber,
style=alphabetic,
maxnames=99,        
maxalphanames=99,   
minalphanames=99    
]{biblatex}
\setlist[enumerate,1]{label={(\arabic*)},itemsep=\parskip} 
\setlist[itemize,1]{itemsep=\parskip} 

\newlist{thmlist}{enumerate}{2}
\setlist[thmlist,1]{label={\em(\roman*)},ref={(\roman*)},%
	itemsep=\parskip,leftmargin=*,align=left}
\setlist[thmlist,2]{label={\em(\alph*)},ref={(\alph*)},%
	itemsep=\parskip,leftmargin=*,align=left,topsep=0.1cm}

\newlist{defnlist}{enumerate}{2}
\setlist[defnlist,1]{label={(\roman*)},ref={(\roman*)},itemsep=\parskip,%
	leftmargin=*,align=left}
\setlist[defnlist,2]{label={(\alph*)},ref={(\alph*)},itemsep=\parskip,%
	leftmargin=*,align=left,topsep=0.1cm}

\DeclareFontFamily{U}{min}{}
\DeclareFontShape{U}{min}{m}{n}{<-> udmj30}{}

\newcommand{\nc}{\newcommand}
\nc{\renc}{\renewcommand}
\nc{\ssec}{\subsection}
\nc{\sssec}{\subsubsection}
\nc{\on}{\operatorname}
\nc{\term}[1]{#1\xspace}

\nc{\sA}{\ensuremath{\mathcal{A}}\xspace}
\nc{\sB}{\ensuremath{\mathcal{B}}\xspace}
\nc{\sC}{\ensuremath{\mathcal{C}}\xspace}
\nc{\sD}{\ensuremath{\mathcal{D}}\xspace}
\nc{\sE}{\ensuremath{\mathcal{E}}\xspace}
\nc{\sF}{\ensuremath{\mathcal{F}}\xspace}
\nc{\sG}{\ensuremath{\mathcal{G}}\xspace}
\nc{\sH}{\ensuremath{\mathcal{H}}\xspace}
\nc{\sI}{\ensuremath{\mathcal{I}}\xspace}
\nc{\sJ}{\ensuremath{\mathcal{J}}\xspace}
\nc{\sK}{\ensuremath{\mathcal{K}}\xspace}
\nc{\sL}{\ensuremath{\mathcal{L}}\xspace}
\nc{\sM}{\ensuremath{\mathcal{M}}\xspace}
\nc{\sN}{\ensuremath{\mathcal{N}}\xspace}
\nc{\sO}{\ensuremath{\mathcal{O}}\xspace}
\nc{\sP}{\ensuremath{\mathcal{P}}\xspace}
\nc{\sQ}{\ensuremath{\mathcal{Q}}\xspace}
\nc{\sR}{\ensuremath{\mathcal{R}}\xspace}
\nc{\sS}{\ensuremath{\mathcal{S}}\xspace}
\nc{\sT}{\ensuremath{\mathcal{T}}\xspace}
\nc{\sU}{\ensuremath{\mathcal{U}}\xspace}
\nc{\sV}{\ensuremath{\mathcal{V}}\xspace}
\nc{\sW}{\ensuremath{\mathcal{W}}\xspace}
\nc{\sX}{\ensuremath{\mathcal{X}}\xspace}
\nc{\sY}{\ensuremath{\mathcal{Y}}\xspace}
\nc{\sZ}{\ensuremath{\mathcal{Z}}\xspace}

\nc{\bA}{\ensuremath{\mathbf{A}}\xspace}
\nc{\bB}{\ensuremath{\mathbf{B}}\xspace}
\nc{\bC}{\ensuremath{\mathbf{C}}\xspace}
\nc{\bD}{\ensuremath{\mathbf{D}}\xspace}
\nc{\bE}{\ensuremath{\mathbf{E}}\xspace}
\nc{\bF}{\ensuremath{\mathbf{F}}\xspace}
\nc{\bG}{\ensuremath{\mathbf{G}}\xspace}
\nc{\bH}{\ensuremath{\mathbf{H}}\xspace}
\nc{\bI}{\ensuremath{\mathbf{I}}\xspace}
\nc{\bJ}{\ensuremath{\mathbf{J}}\xspace}
\nc{\bK}{\ensuremath{\mathbf{K}}\xspace}
\nc{\bL}{\ensuremath{\mathbf{L}}\xspace}
\nc{\bM}{\ensuremath{\mathbf{M}}\xspace}
\nc{\bN}{\ensuremath{\mathbf{N}}\xspace}
\nc{\bO}{\ensuremath{\mathbf{O}}\xspace}
\nc{\bP}{\ensuremath{\mathbf{P}}\xspace}
\nc{\bQ}{\ensuremath{\mathbf{Q}}\xspace}
\nc{\bR}{\ensuremath{\mathbf{R}}\xspace}
\nc{\bS}{\ensuremath{\mathbf{S}}\xspace}
\nc{\bT}{\ensuremath{\mathbf{T}}\xspace}
\nc{\bU}{\ensuremath{\mathbf{U}}\xspace}
\nc{\bV}{\ensuremath{\mathbf{V}}\xspace}
\nc{\bW}{\ensuremath{\mathbf{W}}\xspace}
\nc{\bX}{\ensuremath{\mathbf{X}}\xspace}
\nc{\bY}{\ensuremath{\mathbf{Y}}\xspace}
\nc{\bZ}{\ensuremath{\mathbf{Z}}\xspace}

\nc{\dA}{\ensuremath{\mathds{A}}\xspace}
\nc{\dB}{\ensuremath{\mathds{B}}\xspace}
\nc{\dC}{\ensuremath{\mathds{C}}\xspace}
\nc{\dD}{\ensuremath{\mathds{D}}\xspace}
\nc{\dE}{\ensuremath{\mathds{E}}\xspace}
\nc{\dF}{\ensuremath{\mathds{F}}\xspace}
\nc{\dG}{\ensuremath{\mathds{G}}\xspace}
\nc{\dH}{\ensuremath{\mathds{H}}\xspace}
\nc{\dI}{\ensuremath{\mathds{I}}\xspace}
\nc{\dJ}{\ensuremath{\mathds{J}}\xspace}
\nc{\dK}{\ensuremath{\mathds{K}}\xspace}
\nc{\dL}{\ensuremath{\mathds{L}}\xspace}
\nc{\dM}{\ensuremath{\mathds{M}}\xspace}
\nc{\dN}{\ensuremath{\mathds{N}}\xspace}
\nc{\dO}{\ensuremath{\mathds{O}}\xspace}
\nc{\dP}{\ensuremath{\mathds{P}}\xspace}
\nc{\dQ}{\ensuremath{\mathds{Q}}\xspace}
\nc{\dR}{\ensuremath{\mathds{R}}\xspace}
\nc{\dS}{\ensuremath{\mathds{S}}\xspace}
\nc{\dT}{\ensuremath{\mathds{T}}\xspace}
\nc{\dU}{\ensuremath{\mathds{U}}\xspace}
\nc{\dV}{\ensuremath{\mathds{V}}\xspace}
\nc{\dW}{\ensuremath{\mathds{W}}\xspace}
\nc{\dX}{\ensuremath{\mathds{X}}\xspace}
\nc{\dY}{\ensuremath{\mathds{Y}}\xspace}
\nc{\dZ}{\ensuremath{\mathds{Z}}\xspace}

\nc{\bbA}{\ensuremath{\mathbb{A}}\xspace}
\nc{\bbB}{\ensuremath{\mathbb{B}}\xspace}
\nc{\bbC}{\ensuremath{\mathbb{C}}\xspace}
\nc{\bbD}{\ensuremath{\mathbb{D}}\xspace}
\nc{\bbE}{\ensuremath{\mathbb{E}}\xspace}
\nc{\bbF}{\ensuremath{\mathbb{F}}\xspace}
\nc{\bbG}{\ensuremath{\mathbb{G}}\xspace}
\nc{\bbH}{\ensuremath{\mathbb{H}}\xspace}
\nc{\bbI}{\ensuremath{\mathbb{I}}\xspace}
\nc{\bbJ}{\ensuremath{\mathbb{J}}\xspace}
\nc{\bbK}{\ensuremath{\mathbb{K}}\xspace}
\nc{\bbL}{\ensuremath{\mathbb{L}}\xspace}
\nc{\bbM}{\ensuremath{\mathbb{M}}\xspace}
\nc{\bbN}{\ensuremath{\mathbb{N}}\xspace}
\nc{\bbO}{\ensuremath{\mathbb{O}}\xspace}
\nc{\bbP}{\ensuremath{\mathbb{P}}\xspace}
\nc{\bbQ}{\ensuremath{\mathbb{Q}}\xspace}
\nc{\bbR}{\ensuremath{\mathbb{R}}\xspace}
\nc{\bbS}{\ensuremath{\mathbb{S}}\xspace}
\nc{\bbT}{\ensuremath{\mathbb{T}}\xspace}
\nc{\bbU}{\ensuremath{\mathbb{U}}\xspace}
\nc{\bbV}{\ensuremath{\mathbb{V}}\xspace}
\nc{\bbW}{\ensuremath{\mathbb{W}}\xspace}
\nc{\bbX}{\ensuremath{\mathbb{X}}\xspace}
\nc{\bbY}{\ensuremath{\mathbb{Y}}\xspace}
\nc{\bbZ}{\ensuremath{\mathbb{Z}}\xspace}

\nc{\mrm}[1]{\ensuremath{\mathrm{#1}}\xspace}
\nc{\mbf}[1]{\ensuremath{\mathbf{#1}}\xspace}
\nc{\mcal}[1]{\ensuremath{\mathcal{#1}}\xspace}
\nc{\msc}[1]{\ensuremath{\mathscr{#1}}\xspace}
\nc{\mfr}[1]{\ensuremath{\mathfrak{#1}}\xspace}

\renc{\bar}[1]{\overline{#1}}

\let\S\relax

\nc{\sub}{\subset}
\nc{\too}{\longrightarrow}
\nc{\hook}{\hookrightarrow}
\nc*{\hooklongrightarrow}{\ensuremath{\lhook\joinrel\relbar\joinrel\rightarrow}}
\nc{\hooklong}{\hooklongrightarrow}
\nc{\twoheadlongrightarrow}{\relbar\joinrel\twoheadrightarrow}
\nc{\shiso}{\approx}
\nc{\isoto}{\xrightarrow{\sim}}
\nc{\isofrom}{\xleftarrow{\sim}}
\renc{\ge}{\geqslant}
\renc{\le}{\leqslant}

\nc{\id}{\mathrm{id}}

\DeclareMathOperator{\Hom}{\on{Hom}}
\nc{\uHom}{\underline{\smash{\Hom}}}

\DeclareMathOperator{\End}{\on{End}}

\nc{\uEnd}{\underline{\smash{\End}}}

\renc{\lim}{\varprojlim}

\makeatletter
\newcommand{\colim@}[2]{%
	\vtop{\m@th\ialign{##\cr
			\hfil$#1\operator@font colim$\hfil\cr
			\noalign{\nointerlineskip\kern1.5\ex@}#2\cr
			\noalign{\nointerlineskip\kern-\ex@}\cr}}%
}
\newcommand{\colim}{%
	\mathop{\mathpalette\colim@{\rightarrowfill@\textstyle}}\nmlimits@
}
\makeatother

\nc{\Cofib}{\on{Cofib}}
\nc{\Fib}{\on{Fib}}
\nc{\initial}{\varnothing}
\newcommand{\opp}{\mathrm{op}}

\usepackage{url}
\usepackage{hyperref}
\hypersetup{
	colorlinks=true,
	linkcolor=purple,
	citecolor=lightgreen,
	urlcolor=cyan,
}

\usepackage[capitalise,nameinlink]{cleveref}

\newtheorem{ThmAlpha}{Theorem}
\newtheorem{thm}{Theorem}[section]
\newtheorem{cor}[thm]{Corollary}
\newtheorem{lem}[thm]{Lemma}
\newtheorem{prop}[thm]{Proposition}

\newtheorem{qu}[thm]{Question}

\theoremstyle{definition}

\newtheorem{de}[thm]{Definition}

\newtheorem{rem}[thm]{Remark}
\newtheorem{warning}[thm]{Warning}

\newtheorem{ex}[thm]{Example}
\newtheorem{exam}[thm]{Example}

\newtheorem{nota}[thm]{Notation}

\newtheorem{warn}[thm]{Warning}
\newtheorem{cov}[thm]{Convention}

\renewcommand{\eqref}[1]{(\ref{#1})}

\numberwithin{equation}{subsection}

\nc{\Spc}{\mrm{Spc}}
\nc{\Spt}{\mrm{Spt}}
\nc{\Spec}{\on{Spec}}
\nc{\Stk}{\mrm{Stk}}
\nc{\Sch}{\mrm{Sch}}
\nc{\aff}{\mrm{aff}}
\nc{\A}{\mbf{A}}
\renc{\P}{\mbf{P}}
\nc{\cl}{{\mrm{cl}}}
\nc{\bDelta}{\mathbf{\Delta}}
\nc{\un}{\mathbf{1}}
\nc{\Tot}{\on{Tot}}
\nc{\Cech}{\textnormal{\v{C}}}
\nc{\Mod}{\mrm{Mod}}
\nc{\Qcoh}{\on{Qcoh}}
\nc{\free}{\mrm{free}}

\nc{\perf}{\mrm{perf}}
\nc{\aperf}{\mrm{aperf}}
\nc{\coh}{\mrm{coh}}
\newcommand{\Cat}{\mrm{Cat}}
\nc{\unitm}{\mbf{1}}
\nc{\sphere}{\mbf{S}}
\nc{\Z}{\mbf{Z}}
\nc{\Map}{\mrm{Map}}
\nc{\map}{\mrm{map}}
\nc{\PrL}{\mathcal{P}r^\mrm{L}}
\nc{\PrLst}{\mathcal{P}r^\mrm{L}_\mrm{St}}
\nc{\Motnc}{\mathcal{M}_{\mrm{loc}}}
\nc{\Motadd}{\mathcal{M}_{\mrm{add}}}
\nc{\Einfty}{{\sE_\infty}}
\nc{\E}[1]{{\sE_{#1}}}
\nc{\modmod}{/\!\!/}
\nc{\heart}{\heartsuit}
\nc{\proj}{\mrm{proj}}
\nc{\LL}{\on{L}}
\nc{\K}{\on{K}}
\nc{\G}{\on{G}}
\nc{\GL}{\on{GL}}
\nc{\BGL}{\on{BGL}}
\nc{\M}{\on{M}}
\nc{\KH}{\on{KH}}
\nc{\Alg}{\on{Alg}}
\nc{\CAlg}{\on{CAlg}}
\nc{\cn}{\mrm{cn}}
\nc{\hw}{\mrm{Hw}}
\nc{\htt}{\mrm{Ht}}
\nc{\Fun}{\on{Fun}}
\nc{\Funadd}{\on{Fun}_{\mrm{add}}}
\nc{\Funex}{\on{Fun}_{\mrm{ex}}}
\nc{\Ind}{\on{Ind}}
\nc{\Pro}{\on{Pro}}
\nc{\Kar}{\on{Kar}}
\nc{\Obj}{\on{Obj}}

\nc{\scr}{\term{simplicial commutative ring}}
\nc{\scrs}{\term{simplicial commutative rings}}

\nc{\Einfring}{\term{$\Einfty$-ring}}
\nc{\Einfrings}{\term{$\Einfty$-rings}}

\nc{\Ering}{\term{$\sE_1$-ring}}
\nc{\Erings}{\term{$\sE_1$-rings}}

\nc{\inftyCat}{\term{$\infty$-category}}
\nc{\inftyCats}{\term{$\infty$-categories}}

\nc{\inftyTop}{\term{$\infty$-topos}}
\nc{\inftyTops}{\term{$\infty$-toposes}}

\nc{\inftyGrpd}{\term{$\infty$-groupoid}}
\nc{\inftyGrpds}{\term{$\infty$-groupoids}}

\def\mc{\mathcal}
\def\mb{\mathbf}

\def\op{\mathrm}
\def\ein{\term{$\mathbb{E}_{\infty}$}}
\long\def\enu#1{%
	\begin{enumerate}[label=(\arabic*),font=\normalfont]
		#1
	\end{enumerate}
}

\newcommand{\alg}{\operatorname{Alg}}
\newcommand{\calg}{\operatorname{CAlg}}
\newcommand{\ccalg}{\operatorname{cCAlg}}

\newcommand{\cmon}{\operatorname{CMon}}
\newcommand{\modu}{\operatorname{Mod}}
\newcommand{\lmodu}{\operatorname{LMod}}
\newcommand{\rmodu}{\operatorname{RMod}}
\newcommand{\amodu}{\operatorname{aMod}}
\newcommand{\moducn}{\ensuremath{\operatorname{Mod}^{\operatorname{cn}}}\xspace}
\newcommand{\modurcn}{\ensuremath{\operatorname{Mod}_{R}^{\operatorname{cn}}}\xspace}
\newcommand{\mapp}{\operatorname{Map}}
\newcommand{\unmap}{\underline{\operatorname{Map}}}
\newcommand{\udmap}{\underline{\operatorname{Map}}}

\newcommand{\colimit}{\mathrm{colim}}
\newcommand{\funct}{\operatorname{Fun}}

\newcommand{\catadidem}{\ensuremath{\mathrm{Cat}_{\mathrm{ad}}^{\mathrm{idem}}}\xspace}
\newcommand{\prl}{\ensuremath{\mc{P}\mathrm{r}^L}\xspace}
\newcommand{\prlv}{\ensuremath{\mc{P}\mathrm{r}^L_{\mathcal{V}}}\xspace}
\newcommand{\prlw}{\ensuremath{\mc{P}\mathrm{r}^L_{\mathcal{W}}}\xspace}

\newcommand{\prv}{\ensuremath{\mc{P}\mathrm{r}_{\mathcal{V}}}\xspace}

\newcommand{\pr}{\ensuremath{\mc{P}\mathrm{r}}\xspace}
\newcommand{\prad}{\ensuremath{\mc{P}\mathrm{r}_{\op{ad}}}\xspace}
\newcommand{\prlad}{\ensuremath{\mc{P}\mathrm{r}_{\op{ad}}^L}\xspace}
\newcommand{\praddbl}{\ensuremath{\mc{P}\mathrm{r}_{\op{ad}}^{\op{dbl}}}\xspace}

\newcommand{\pradone}{\ensuremath{\mc{P}\mathrm{r}_{\op{ad},1}}\xspace}
\newcommand{\prladone}{\ensuremath{\mc{P}\mathrm{r}^L_{\op{ad},1}}\xspace}
\newcommand{\prlst}{\ensuremath{\mc{P}\mathrm{r}_{\op{st}}^L}\xspace}

\newcommand{\prvdbl}{\ensuremath{\mc{P}\mathrm{r}_{\mc{V}}^{\op{dbl}}}\xspace}
\newcommand{\prvat}{\ensuremath{\mc{P}\mathrm{r}_{\mc{V}}^{\op{at}}}\xspace}
\newcommand{\prwdbl}{\ensuremath{\mc{P}\mathrm{r}_{\mc{W}}^{\op{dbl}}}\xspace}

\newcommand{\prcdbl}{\ensuremath{\mc{P}\mathrm{r}_{\mc{C}}^{\op{dbl}}}\xspace}

\newcommand{\prat}{\ensuremath{\mc{P}\mathrm{r}^{\op{at}}}\xspace}

\newcommand{\prvil}{\ensuremath{\mc{P}\mathrm{r}^{iL}_{\mcv}}\xspace}
\newcommand{\cofib}{\operatorname{cofib}}
\newcommand{\fib}{\operatorname{fib}}
\newcommand{\spgeq}{\ensuremath{\operatorname{Sp}_{\geq0}}\xspace}

\newcommand{\eone}{\term{\mathbb{E}_1}}

\newcommand{\einfring}{\term{$\mathbb E_\infty$-ring}}
\newcommand{\einfrings}{\term{$\mathbb E_\infty$-rings}}

\newcommand{\infcat}{\term{$\infty$-category}}
\newcommand{\infcats}{\term{$\infty$-categories}}
\newcommand{\syminfcat}{\term{symmetric monoidal $\infty$-category}}
\newcommand{\syminfcats}{\term{symmetric monoidal $\infty$-categories}}
\newcommand{\dualaddinfcat}{\term{dualizable additive $\infty$-category}}
\newcommand{\dualaddinfcats}{\term{dualizable additive $\infty$-categories}}

\newcommand{\shv}{\operatorname{Shv}}

\newcommand{\shvhyp}{\operatorname{Shv}^{\wedge}}

\newcommand{\idl}{\op{Idl}}

\newcommand{\idlc}{\op{Idl}(\mc{C})}

\newcommand{\im}{\op{Im}}

\newcommand{\mca}{\ensuremath{\mathcal{A}}\xspace}

\newcommand{\mcc}{\ensuremath{\mathcal{C}}\xspace}
\newcommand{\mcd}{\ensuremath{\mathcal{D}}\xspace}

\newcommand{\mcf}{\ensuremath{\mathcal{F}}\xspace}

\newcommand{\mci}{\ensuremath{\mathcal{I}}\xspace}

\newcommand{\mck}{\ensuremath{\mathcal{K}}\xspace}
\newcommand{\mcl}{\ensuremath{\mathcal{L}}\xspace}
\newcommand{\mcm}{\ensuremath{\mathcal{M}}\xspace}
\newcommand{\mcn}{\ensuremath{\mathcal{N}}\xspace}
\newcommand{\mco}{\ensuremath{\mathcal{O}}\xspace}
\newcommand{\mcp}{\ensuremath{\mathcal{P}}\xspace}

\newcommand{\mcs}{\ensuremath{\mathcal{S}}\xspace}

\newcommand{\mcu}{\ensuremath{\mathcal{U}}\xspace}
\newcommand{\mcv}{\ensuremath{\mathcal{V}}\xspace}
\newcommand{\mcw}{\ensuremath{\mathcal{W}}\xspace}
\newcommand{\mcx}{\ensuremath{\mathcal{X}}\xspace}

\newcommand{\mcz}{\ensuremath{\mathcal{Z}}\xspace}

\newcommand{\spec}{\op{Spec}}

\newcommand{\frmcoh}{\mathsf{Frm}_{\mathsf{coh}}}
\newcommand{\frm}{\mathsf{Frm}}
\newcommand{\pfrm}{\mathsf{PFrm}}

\newcommand{\coker}{\op{coker}}
\newcommand{\catper}{\op{Cat}^{\op{perf}}}

\newcommand{\opsp}{\ensuremath{\op{Sp}}\xspace}
\newcommand{\sppcpl}{\ensuremath{\op{Sp}_{p\text{-}\op{cpl}}}\xspace}
\newcommand{\idem}{\ensuremath{\op{idem}}\xspace}
\newcommand{\Idem}{\ensuremath{\op{Idem}}\xspace}
\newcommand{\cidem}{\ensuremath{\op{cIdem}}\xspace}
\newcommand{\cidemc}{\ensuremath{\op{cIdem}(\mathcal{C})}\xspace}

\newcommand{\nuc}{\ensuremath{\op{Nuc}}\xspace}

\begin{document}

\title{Generalized Telescope Conjecture}
\author{Jiacheng Liang}

\date{}
\let\mathbb=\mathbf

\maketitle

\setstretch{1.1}
\setcounter{tocdepth}{2}

{\renewcommand{\thefootnote}{}\footnotetext{Date: September 3, 2026}}

\vspace{-1em}
\begin{abstract}
	We introduce the atomic smashing frame, extending the Balmer spectrum from tensor-triangular geometry to an arbitrary presentably symmetric monoidal $\infty$-category $\mathcal{V}$. This yields a formulation of the telescope conjecture for $\mathcal{V}$ and recovers the classical Balmer spectrum in the stable compactly-rigidly generated case.
	
	Exploiting dualizable and rigid $\infty$-categories, we establish a correspondence between smashing ideals and locally rigid localizations. This leads to a recollement theorem for smashing frames in the (pre)stable
	setting, together with an atomic refinement in the stable compactly-rigidly generated case. As a major application to chromatic homotopy theory, we show that the natural projections induce an embedding of the smashing frame of $\mathrm{Sp}$ into the product of the smashing frames of the monochromatic layers $\mathrm{Sp}_{T(n)}$, over all primes and heights. In particular, the spatiality of the smashing frame of $\mathrm{Sp}$ reduces entirely to that of $\mathrm{Sp}_{T(n)}$. 
	
In the unstable setting, we characterize the telescope conjecture for
$\infty$-topoi in terms of smashing fields, and for connective module categories and hypercomplete connective sheaves in terms of Pierce-type
conditions. Finally, we introduce the Serre smashing frame. Over a connective $\mathbb{E}_\infty$-ring $R$, this frame sits between the atomic and usual smashing frames, providing an intermediate structural layer in the study of the telescope
conjecture for the connective $R$-module category.
\end{abstract}
\tableofcontents

\section{Introduction}
\subsection{Background}
The development of stable homotopy theory over the past few decades has been largely guided by the theory of chromatic filtration. At the core of this theoretical framework is the telescope conjecture, proposed by Ravenel in the late 1970s and formally published in 1984. The conjecture asserts that for any height $n$, localization with respect to Morava $K$-theory $K(n)$ is equivalent to localization with respect to the telescope of a $v_n$-self map on a finite type $n$ complex. 

Although Mahowald and Miller proved the correctness of the conjecture at heights $n=0$ and $n=1$, respectively in \cite{mahowald1981bo} and \cite{miller1981relations}, its validity at higher heights remained an open question for decades. Recently, Burklund, Hahn, Levy, and Schlank \cite{burklund2023k} resolved this question by conducting a deep analysis of the algebraic $K$-theory of chromatic filtration, explicitly falsifying the telescope conjecture at heights $n \geq 2$. 

The failure of the classical telescope conjecture for the $\infty$-category $\opsp$ of spectra motivates the development of a broader categorical framework to precisely quantify this telescopic gap. Beyond its chromatic origins, the telescope conjecture has evolved into a fundamental problem in the study of tensor-triangular geometry ($tt$-geometry). Abstracted from the foundational works of Bousfield and Ravenel \cite{Bou79,Rav84}, generalizations of the telescope conjecture have been naturally explored within compactly-rigidly generated $tt$-categories, asking whether every smashing ideal is compactly generated.
While it fails for $\opsp$, the telescope conjecture has been shown to hold in several algebraic contexts. Most notably, Neeman \cite{neeman1992chromatic} established it for derived categories of commutative Noetherian rings, a result later generalized by Antieau and Stevenson \cite{AS16} to representations of quivers over commutative Noetherian rings, fully proving the conjecture for Dynkin quivers.

A key property of a compactly-rigidly generated $tt$-category $\mcc$ is that every compactly generated localizing ideal is a smashing ideal. This allows the classical Balmer frame \cite{Balmer05a}, defined as the poset of radical thick ideals of $\mcc^\omega$, to embed naturally into the smashing frame introduced in \cite{BalmerKrauseStevenson20}:
\[ \Idem_f(\mcc) \hookrightarrow \Idem(\mcc). \]
In this setting, the telescope conjecture is equivalent to asking whether this canonical inclusion is an equivalence. This elegant formulation also appears in the context of higher Zariski geometry (see \cite{aoki2025higherzariskigeometry,Zariski_Bal}).

However, this characterization relies heavily on the assumption of compact-rigidity. This prompts a natural structural question: 
\begin{qu}
	Is it possible to formulate the telescope conjecture for an arbitrary big $tt$-category, or more generally, for any presentably symmetric monoidal $\infty$-category $\mathcal{V} \in \mathrm{CAlg}(\prl)$?
\end{qu}
Extending the conjecture in this way fundamentally reduces to identifying a ``generalized Balmer frame'' for an arbitrary $\mathcal{V} \in \mathrm{CAlg}(\prl)$. Specifically, we seek a conceptual construction that satisfies the following natural conditions:
\begin{qu}\label{qu2}
	Does there exist a natural extension of the Balmer frame to an arbitrary $\mcv \in \calg(\prl)$ such that:
	\enu{
		\item it depends functorially on $\mcv$, inducing a functor $\calg(\prl) \to \frm$;
		\item it admits a natural embedding into the smashing frame of $\mcv$;
		\item it recovers the classical Balmer frame when $\mcv$ is stable and compactly-rigidly generated?
	}
\end{qu}
The classical Balmer frame is constructed specifically from compact objects. In the classical compactly-rigidly generated setting, the notions of ``absolute compactness'' and ``relative compactness'' coincide. 
However, once the assumption of compact-rigidity is dropped, these two distinct categorical notions generally diverge. 

To establish a universal framework, it is necessary to base our constructions on ``relative compactness''. Categorically, this corresponds to $\mathcal{V}$-atomic objects, which in turn yield $\mathcal{V}$-atomically generated smashing ideals. This conceptual shift will introduce the main object of study in this paper: the atomic smashing frame $\cidem_f(\mathcal{V})$, which provides a positive answer to  \cref{qu2}.

\subsection{Main results}
\subsubsection*{Atomic Smashing Frames}
To formalize the generalized Balmer frame beyond the compactly-rigidly generated stable setting, particularly in potentially unstable contexts, we must identify an appropriate categorical substitute for the smashing frame. 

A key insight in this direction, due to Aoki, is that the correct formulation of the smashing frame relies on the poset of coidempotent objects rather than idempotent objects, though these two notions coincide in the stable setting.

\begin{de}
	Let $\mcv\in \calg(\prl)$. The \textbf{smashing frame} of $\mcv$ is defined to be the poset $\cidem(\mcv)$ of coidempotent objects. 
\end{de} 

The primary reason for this distinction is that the poset of idempotent objects $\Idem(\mcv)$ fails to form a frame in general. Furthermore, Aoki establishes that this construction yields the following sheaves-spectrum adjunction \cite{aoki2023sheaves}:
\[
\begin{tikzcd}
	\frm  \arrow[r, "\op{Shv}(-)", hook, shift right=-1ex]  & \calg(\prl) \arrow[l, "\perp"', "\cidem(-)", shift right=-1ex]
\end{tikzcd}.
\]

In joint work of the author \cite{Zariski_Bal}, we relate smashing ideals to the theory of dualizable $\mcv$-modules. Specifically, we establish an equivalence between the smashing frame of $\mcv$ and that of the $\infty$-category of dualizable $\mcv$-modules:
\[
\cidem(\mcv) \xrightarrow[\sim]{x \mapsto \langle x \rangle} \cidem(\prvdbl).
\]

This perspective indicates that smashing ideals can be encoded by coidempotent objects within the $\infty$-category of dualizable modules.
Guided by this correspondence, the atomic smashing frame naturally emerges, as follows: 

\begin{prop}[{\cref{atsmashingideals}}]
	Let $\mcv\in\calg(\prl)$. The image of the canonical embedding 
	$$\cidem(\prvat)\hookrightarrow\cidem(\prvdbl)$$ 
	can be identified with the $\mcv$-atomically generated smashing ideals of \mcv. In particular, the subposet of atomically generated smashing ideals forms a subframe.
\end{prop}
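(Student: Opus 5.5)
The plan is to work throughout with the equivalence $\cidem(\mcv)\simeq\cidem(\prvdbl)$, $x\mapsto\langle x\rangle$, recalled above from \cite{Zariski_Bal}, and to determine which coidempotents of $\prvdbl$ lie in the subcategory $\prvat$. Recall that $\prvat\subset\prvdbl$ is the full subcategory of $\mcv$-atomically generated modules, that is, modules generated under colimits by $\mcv$-atomic objects. It contains the unit $\mcv$, is closed under $\otimes_\mcv$, and its morphisms are the $\mcv$-linear left adjoints whose right adjoints are $\mcv$-linear and colimit-preserving. First I check that $\prvat\hookrightarrow\prvdbl$ is a symmetric monoidal full inclusion containing the unit. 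Then a coidempotent of $\prvat$, i.e.\ a map $\mcm\to\mcv$ inducing $\mcm\otimes\mcm\simeq\mcm$, is exactly a coidempotent of $\prvdbl$ whose source is atomically generated. Note that full faithfulness forces the map to be the same in both categories. This gives the embedding $\cidem(\prvat)\hookrightarrow\cidem(\prvdbl)$.

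Next I identify the image. By the equivalence of \cite{Zariski_Bal}, a coidempotent $x\in\cidem(\mcv)$ corresponds to the smashing ideal $\langle x\rangle = x\otimes\mcv\subset\mcv$. Its inclusion $\langle x\rangle\hookrightarrow\mcv$ is a strongly continuous $\mcv$-linear functor with right adjoint $x\otimes-$ (or $\uHom(x,-)$ in the unstable coidempotent formulation). So I must show that $\langle x\rangle$ is atomically generated as a $\mcv$-module if and only if the smashing ideal is generated, as a localizing ideal, by $\mcv$-atomic objects of $\mcv$.

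The key lemma is that, for the strongly continuous $\mcv$-linear inclusion $i\colon\langle x\rangle\hookrightarrow\mcv$, an object $a\in\langle x\rangle$ is $\mcv$-atomic in $\langle x\rangle$ if and only if $i(a)$ is $\mcv$-atomic in $\mcv$. The forward direction holds because the right adjoint of $i$ is $\mcv$-linear and colimit-preserving, so $i$ preserves atomic objects. For the converse, I would compute $\uHom_{\langle x\rangle}(a,-)=\uHom_\mcv(ia,i-)$ using full faithfulness; this is colimit-preserving and $\mcv$-linear because $i$ is. With the lemma in hand, $\langle x\rangle$ is generated under colimits and the $\mcv$-action by atomics exactly when it is generated by atomic objects of $\mcv$ lying in it. Those atomics then generate $\langle x\rangle$ as a localizing ideal. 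Conversely, if a smashing ideal is generated as a localizing ideal by atomics $a_i$, then the objects $v\otimes a_i$ generate it under colimits. Since the module structure allows tensoring by $\mcv$, these are $\mcv$-generated by the $a_i$, and so the ideal lies in $\prvat$.

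Finally, the subframe claim: $\cidem(\prvat)$ is the coidempotent poset of a presentably symmetric monoidal category, and it carries finite meets given by tensor product and joins given by colimits in $\prvat$. I would show that $\prvat\hookrightarrow\prvdbl$ preserves colimits and tensor products, so the embedding preserves finite meets and arbitrary joins and is therefore a subframe inclusion. I expect the main obstacle to be verifying that colimits in $\prvat$ agree with those in $\prvdbl$, together with the compatibility of joins of coidempotents. Concretely, the join of smashing ideals generated by atomics should be generated by the union of those atomics. If proving closure of $\prvat$ under colimits in $\prvdbl$ directly is hard, I would instead argue at the level of ideals. The join of the ideals $\langle x_j\rangle$ is the localizing ideal generated by their union, hence generated by the union of the atomic generators. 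The meet $\langle x\otimes y\rangle$ is generated by the objects $a\otimes b$, and these are atomic because tensor products of atomics are atomic in $\mcv$.
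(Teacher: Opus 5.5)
Your proposal is correct and follows the paper's proof. As you say, $\prvat\subset\prvdbl$ is a full subcategory that contains the unit and is closed under $\otimes_{\mcv}$, so combined with \cref{smideal} this identifies $\cidem(\prvat)$ with the $\mcv$-atomically generated smashing ideals, and closure of $\prvat$ under colimits and tensor products (\cref{tensorat}) gives the subframe claim.

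Your key lemma comparing atomics in $\langle x\rangle$ with atomics in $\mcv$ is not needed for the proposition itself. It is the content of the remark after the definition of atomic smashing ideals.

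There are two small slips. First, the right adjoint of $\langle x\rangle\hookrightarrow\mcv$ is $x\otimes-$ in the unstable case as well; $\uHom(x,-)$ is instead the right adjoint of $x\otimes-$. Second, your recalled definition of atomic generation should read ``under colimits and the $\mcv$-action''.
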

\begin{de}
	Let \(\mcv \in \calg(\prl)\). A smashing ideal
	\(\mci \hookrightarrow \mcv\) is called an \textbf{atomic smashing ideal}
	if it is generated, as a \(\mcv\)-submodule, by \(\mcv\)-atomic objects
	(equivalently, by dualizable objects).
	We define the \textbf{atomic smashing frame} of \(\mcv\) to be the
	subposet
	\[
	\cidem_f(\mcv) \subset \cidem(\mcv)
	\]
	consisting of those coidempotent objects \(x \to \mathbf{1}\) whose
	associated smashing ideal \(\langle x\rangle \hookrightarrow \mcv\) is
	atomic. 
	
	This definition can be illustrated by the following diagram:
	\[
	\begin{tikzcd}
		\cidem_f(\mcv) \arrow[r, hook] \arrow[d, "\sim", "\langle - \rangle"']
		& \cidem(\mcv) \arrow[d, "\sim", "\langle - \rangle"'] \\
		\cidem(\prvat) \arrow[r, hook]
		& \cidem(\prvdbl).
	\end{tikzcd}
	\] 
\end{de}

 We will first show that if $\mcc\in\calg(\prlst)$ is compactly-rigidly generated and stable, then the atomic smashing frame $\cidem_f(\mcc)$ recovers the usual Balmer frame $\op{Thick}(\mcc^\omega)$. This follows from the following more general statement.

\begin{prop}[{\cref{localrigatgen}}]
	Let $\mcv\in\calg(\prl)$, and let $\mcw\in \calg(\prlv)$ be a locally rigid algebra over \mcv such that \mcw is \mcv-atomically generated. A smashing ideal $\mci\hookrightarrow\mcw$ is an atomic smashing ideal of \mcw if and only if it is \mcv-atomically generated when regarded as a \mcv-module.
\end{prop}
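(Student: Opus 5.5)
We will compare the two notions of generation by controlling, in both directions, the passage between $\mcw$-atomic and $\mcv$-atomic objects of $\mcw$. Throughout, we work with a smashing ideal $\mci=\langle x\rangle\hookrightarrow\mcw$ coming from a coidempotent $x\to\mathbf{1}_\mcw$. We regard it as a $\mcw$-module that is a retract of $\mcw$, since $\mci\simeq x\otimes\mcw$ and the inclusion admits the $\mcw$-linear right adjoint $\uHom(x,-)$. We recall two standard facts about locally rigid $\mcw$ over $\mcv$.
\begin{itemize}
\item Since the multiplication $\mcw\otimes_\mcv\mcw\to\mcw$ has a $\mcw$-bimodule colimit-preserving right adjoint, the forgetful functor from dualizable $\mcw$-modules to dualizable $\mcv$-modules preserves and reflects the relevant structure.
\item Being $\mcw$-atomic is detected $\mcv$-linearly. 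Concretely, for $c\in\mcw$, the functor $\uHom_\mcw(c,-)$ is $\mcw$-linear and colimit preserving iff $\uHom_\mcv(c,-)$ is $\mcv$-linear and colimit preserving, because $\uHom_\mcv(c,-)=\Gamma\circ\uHom_\mcw(c,-)$, with $\Gamma$ the $\mcv$-linear colimit-preserving right adjoint of the unit $\mcv\to\mcw$.
\end{itemize}

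\textbf{Only if.} Assume $\mci$ is generated under colimits and the $\mcw$-action by $\mcw$-atomic objects $c_i$. By the second fact, each $c_i$ is also $\mcv$-atomic. It remains to see that the objects $w\otimes c_i$ with $w$ ranging over $\mcv$-atomic objects of $\mcw$ are $\mcv$-atomic and generate $\mci$ under colimits and the $\mcv$-action. Generation follows from the hypothesis that $\mcw$ is $\mcv$-atomically generated, by writing each $w$ as a colimit of $\mcv$-tensors of $\mcv$-atomics. For atomicity, $\uHom_\mcv(w\otimes c_i,-)\simeq\uHom_\mcv(w,\uHom_\mcw(c_i,-))$ is a composite of $\mcv$-linear colimit-preserving functors.

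\textbf{If.} Assume $\mci$ is generated as a $\mcv$-module by $\mcv$-atomic objects $a_j\in\mci\subset\mcw$. We must show $\mci$ is generated as a $\mcw$-module by $\mcw$-atomic objects. The plan is to use local rigidity to show that every $\mcv$-atomic object of $\mcw$ is a colimit of $\mcw$-atomic objects that are compactly exhausting it. In the locally rigid setting, the $\mcv$-atomic objects are exactly those $a$ for which the canonical map $\widehat{y}(a)\to a$, from the locally rigid ``compactly exhaustible'' structure, behaves like the compact approximation. So each $a$ is a sequential colimit of $\mcw$-atomic (i.e.\ compactly exhaustible basic) maps $a_0\to a_1\to\cdots$. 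We then need this exhaustion to stay inside $\mci$. Smashing gives this: applying the colimit-preserving $\mcw$-linear idempotent $x\otimes-$ preserves $\mcw$-atomicity up to retract. This holds because $x\otimes-$ has a $\mcw$-linear colimit-preserving right adjoint, so it sends $\mcw$-atomics of $\mcw$ to $\mcw$-atomics of $\mci$, and these are atomic in $\mcw$ via the retraction. Hence $a\simeq x\otimes a\simeq\colim x\otimes a_n$, a colimit of $\mcw$-atomics lying in $\mci$.

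The main obstacle is the ``if'' step, specifically producing the $\mcw$-atomic approximations of a $\mcv$-atomic object from local rigidity. I would first try to use the characterization of locally rigid $\mcw$ via the equivalence $\cidem(\mcw)\simeq\cidem(\mcw\text{-dualizable modules})$. This lets me phrase the statement as: the dualizable $\mcw$-module $\mci$ is $\mcw$-atomically generated iff its underlying dualizable $\mcv$-module is $\mcv$-atomically generated. I would then test this against the image of $\cidem(\prvat)\hookrightarrow\cidem(\prvdbl)$ identified in the previous proposition (\cref{atsmashingideals}), applied over both $\mcv$ and $\mcw$.
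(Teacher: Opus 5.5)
Your ``only if'' direction works and is exactly the paper's argument. The objects $w\otimes c_i$, with $w$ a $\mcv$-atomic object of $\mcw$, are $\mcv$-atomic because $\uHom_{\mcv}(w\otimes c_i,-)\simeq\uHom_{\mcv}(w,\uHom_{\mcw}(c_i,-))$, and they generate $\mci$ over $\mcv$. You should, however, delete the assertion that each $c_i$ is itself $\mcv$-atomic, because your ``second fact'' is false as an equivalence. The functor $\Gamma=\uHom_{\mcv}(\mathbf{1}_{\mcw},-)$ is colimit-preserving and $\mcv$-linear precisely when $\mathbf{1}_{\mcw}$ is $\mcv$-atomic, i.e.\ when $\mcw$ is rigid. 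For instance, the unit of $\sppcpl$ is dualizable, hence $\sppcpl$-atomic, but it is not compact. Under local rigidity only the reverse implication holds ($\mcv$-atomic $\Rightarrow$ $\mcw$-atomic), and that is exactly what the other direction needs.

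Your ``if'' direction has a genuine gap. Compact exhaustion in a dualizable category presents an object as a sequential colimit along compact \emph{morphisms}. The terms of such a sequence need not be $\mcw$-atomic objects, so no $\mcw$-atomic approximants are produced. The next step also fails. The functor $x\otimes-\colon\mcw\to\mci$ is $i^R$, and it preserves atomic objects (already the unit) only if it is a $\mcw$-internal left adjoint. That means $i^{RR}$ must be colimit-preserving and $\mcw$-linear, which by \cref{splitsmidl} happens exactly when $\mci$ is split. Already for $\mcv=\mcw=\opsp$ and $\mci=\opsp_{p\text{-}\mathrm{nil}}$, one has $i^R(\mathbb{S})=\Gamma_p\mathbb{S}$, which is not compact.

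No approximation is needed. The missing input is that local rigidity makes the action map $\mcw\otimes_{\mcv}\mci\to\mci$ a $\mcw$-internal left adjoint (\cite[Proposition 4.11]{ramzi2024locally}). Take a $\mcv$-atomic $a\in\mci$. The functor $w\mapsto w\otimes a$ factors as $\mcw\simeq\mcw\otimes_{\mcv}\mcv\to\mcw\otimes_{\mcv}\mci\to\mci$, where the first map is the base change of $a\colon\mcv\to\mci$. This is a composite of $\mcw$-internal left adjoints, so your given $\mcv$-atomic generators are already $\mcw$-atomic and generate $\mci$ as a $\mcw$-module. The paper uses the same input in packaged form. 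It notes that $\mcw\otimes_{\mcv}\mci$ is $\mcw$-atomically generated by base change and that the action map is a generating $\mcw$-internal left adjoint, so \cref{conserdual}(2) applies.
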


\begin{ex}\,
	\enu{
		\item Suppose that $\mcc\in\calg(\prlst)$ is compactly generated and locally rigid over \opsp (e.g., $\opsp$, $L_n^f\opsp$, $L_n\opsp$, $\opsp_{T(n)}$, $\opsp_{K(n)}$, and $\opsp_{p\text{-}\op{cpl}}$). Then a smashing ideal $\mci\hookrightarrow \mcc$ is atomic if and only if it is compactly generated.
		\item Suppose that $\mcc\in\calg(\prlad)$ is compact projectively generated and locally rigid over \spgeq (e.g., $\opsp_{\geq0}$, $\opsp_{G,\geq0}$, $\op{Fil}(\opsp)_{h\geq0}$, $\mathrm{Syn}_{R,\geq 0}$, and $\op{DM}(k,\mathbb{Z}[1/p])_{c\geq0}$; see \cref{exalgttt} for definitions). Then a smashing ideal $\mci\hookrightarrow \mcc$ is atomic if and only if it is compact projectively generated.
	}
\end{ex}

We then establish a general coherence theorem for atomic smashing frames.
This is analogous to the coherence of the classical Balmer frame.

\begin{ThmAlpha}[{\cref{atsmfrmcoh}, \ref{atsmfrmloccoh}}]
Let \(\mcv\in\calg(\prl)\).
\enu{
	\item If the unit of \(\mcv\) is compact, then
	\(\cidem_f(\mcv)\) is a coherent frame.
	
	\item If the compact dualizable objects of \(\mcv\) generate
	\(\mcv\) as a localizing ideal, then \(\cidem_f(\mcv)\) is a locally
	coherent frame.
}
In particular, in both cases, the atomic smashing frame \(\cidem_f(\mcv)\) is spatial.
\end{ThmAlpha}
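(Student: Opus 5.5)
Our plan is to exhibit $\cidem_f(\mcv)$ as the frame of ideals of a suitable distributive lattice (or, in case (2), as a frame generated by a join-dense family of compact elements closed under finite meets). Spatiality then follows from the classical fact that coherent, and more generally locally coherent, frames are spatial (Stone duality / Johnstone).

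The first step is to identify the compact elements of $\cidem_f(\mcv)$. Via the identification $\cidem_f(\mcv)\simeq\cidem(\prvat)$ from \cref{atsmashingideals}, every atomic smashing ideal $\langle x\rangle$ is generated, as a localizing $\mcv$-submodule, by $\mcv$-atomic (equivalently dualizable) objects. For a single dualizable object $d$ we write $\langle d\rangle_\otimes$ for the smallest smashing ideal containing $d$. We will first check that $\langle d\rangle_\otimes$ is again atomically generated, so that it lies in $\cidem_f(\mcv)$. The candidate description is that it is generated by the objects $d\otimes e$ with $e$ dualizable, together with their closure under the operations producing coidempotents. Consequently every element of $\cidem_f(\mcv)$ is a join of such $\langle d\rangle_\otimes$.

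Next we show these elements are compact, and this is where the hypothesis enters. If $\langle d\rangle_\otimes\le\bigvee_i x_i$ in $\cidem_f(\mcv)$, then $d$ lies in the localizing ideal generated by the $x_i$, which is a filtered colimit of the finite joins. In case (1) the unit is compact, so dualizable objects are compact. Compactness of $d$ then lets us factor $d\to d$ through a finite stage, so $d$ is a retract of an object of a finite join, giving compactness of $\langle d\rangle_\otimes$. For meets we use $\langle d\rangle_\otimes\wedge\langle d'\rangle_\otimes=\langle d\otimes d'\rangle_\otimes$, which is again compact since meets of smashing ideals correspond to tensor products of coidempotents. The top element $\mathbf 1=\langle\mathbf 1\rangle_\otimes$ is compact because the unit is compact. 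Hence $\cidem_f(\mcv)$ is a frame, by \cref{atsmashingideals}, in which compact elements are join-dense and closed under finite meets including the top: that is, a coherent frame.

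In case (2) we run the same argument but restrict the generators to compact dualizable objects. Closure under tensor products gives closure of compact elements under binary meets. The generation hypothesis ensures the top element is a join of compacts, although it need not itself be compact; this yields local coherence. Spatiality in both cases then follows from the spatiality of (locally) coherent frames.

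The main obstacle is the first step. In the unstable setting the smallest smashing ideal containing a dualizable $d$ is not obviously the localizing ideal generated by $d$, and its coidempotent is not obviously atomically generated. We would attack this using \cref{atsmashingideals}: working inside $\cidem(\prvat)$, we form the coidempotent generated by the atomically generated module $\langle d\rangle$, which exists because $\cidem(\prvat)$ is a frame. We then argue that compactness of $d$ is preserved when passing to the associated coidempotent, which is itself a filtered colimit construction.
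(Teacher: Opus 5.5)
Your outline has the same shape as the paper's proof. You take the principal ideals $\langle x\rangle$ on dualizable objects (compact dualizable in case (2)) and show that they are compact, join-dense, and closed under finite meets via $\langle x\rangle\wedge\langle y\rangle=\langle x\otimes y\rangle$. However, the step you call the main obstacle is left open, and the attack you sketch would not close it. Forming ``the coidempotent generated by $\langle d\rangle$'' presupposes that $\langle d\rangle\hookrightarrow\mcv$ is already an element of $\cidem(\prvat)$. Being a frame gives no way to reflect an arbitrary submodule into it.

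The missing observation is that no such construction is needed. For $d\in\mcv^d$, the functor $d\otimes-\colon\mcv\to\mcv$ is an internal left adjoint. So by \cref{generatedual}, or by \cref{conserdual}(1) applied to the generating internal left adjoint $\mcv\to\langle d\rangle$, the localizing ideal $\langle d\rangle$ is itself a smashing ideal, and it is atomically generated by $d$. It is therefore the smallest smashing ideal containing $d$, and every atomic $\langle S\rangle$ with $S\subset\mcv^d$ equals $\bigvee_{s\in S}\langle s\rangle$.

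Your worry that compactness must survive passage to the associated coidempotent is a red herring. That coidempotent is generally not compact; for example, $\Gamma_p\mathbb S$ is the coidempotent of $\langle\mathbb S/p\rangle$. Only compactness of $d$ is used. Make the finite-stage step precise as follows. Let $c_F$ be the coidempotents of the finite joins, so that $c\simeq\colim_F c_F$. Then $d\simeq c\otimes d\simeq\colim_F(c_F\otimes d)$, so $d$ is a retract of some $c_F\otimes d\in\mathcal I_F$.

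In case (2), ``run the same argument with compact dualizable generators'' does not by itself give join-density. An atomic smashing ideal may be generated by non-compact dualizable objects $s$. You need $\langle s\rangle=\langle s\otimes a\mid a\in\mcv^\omega\cap\mcv^d\rangle$. This follows by applying $s\otimes-$ to $\mathbf 1\in\langle\mcv^\omega\cap\mcv^d\rangle$. Equivalently, it follows from your ``top is a join of compacts'' together with distributivity and the meet formula. You also need $s\otimes a$ to be compact, which holds because $\Map(s\otimes a,-)\simeq\Map(a,s^\vee\otimes-)$. The same formula is what justifies closure of compact dualizable objects under $\otimes$, which you use for binary meets. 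With these additions your argument coincides with the paper's.
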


As a first application of our framework, we give a precise meaning to, and
compute, the Balmer spectrum for several big \(tt\)-categories which are not
compactly-rigidly generated. The point is that the usual definition of the
Balmer spectrum does not directly apply in this generality.

\begin{center}
	\renewcommand{\arraystretch}{1.35}
	\small
	\begin{tabularx}{\textwidth}{>{\raggedright\arraybackslash}p{0.22\textwidth}
			>{\raggedright\arraybackslash}p{0.34\textwidth}
			>{\raggedright\arraybackslash}X}
		\hline
		\textbf{Category} 
		& \textbf{Atomic smashing frame} 
		& \textbf{Smashing frame} \\
		\hline
		
		$\opsp^{X}$, 
		particularly when $X=BG$ (\cref{ex:borel-spectra})
		&
		\(\displaystyle
		\cidem_f(\opsp^X)\simeq
		\prod_{\pi_0X}\cidem_f(\opsp)
		\)
		&
		\(\displaystyle
		\cidem(\opsp^X)\simeq
		\prod_{\pi_0X}\cidem(\opsp)
		\).
		\\
		\hline
		
		$\opsp^{\mathbb{N}}$ graded spectra
		(\cref{ex:graded-spectra})
		&
		\(\displaystyle
		\cidem_f(\opsp^{\mathbb{N}})
		\simeq
		\cidem_f(\opsp)
		\)
		&
		\(\displaystyle
		\cidem(\opsp^{\mathbb{N}})
		\simeq
		\cidem(\opsp)
		\).
		\\
	\hline
	$\shv(\mcx;\mcd(k))$
	(\cref{atsmfrmsheaf})
	&
	$\op{Z}(\mcx_{\leq-1})$
	&
$\mcx_{\leq-1}$
		\\
	\hline
	$\sppcpl$
	&
	\(
	\cidem_f(\opsp_{(p)})_{\leq \mathbb{Q}}
	\)\footnote{This frame can be identified with $\{\cdots< L^f_n\mathbb{S}< L^f_{n-1}\mathbb{S} <\dots<L^f_{0}\mathbb{S}=\mathbb{Q}\}$.} (by \cref{thmC})
	&
	$
	\cidem(\opsp_{(p)})_{\leq \mathbb{Q}}$ (by \cref{thmC})
		\\
		\hline
			$\opsp_{K(n)}$
		(\cref{smfieldex})
		&
		\(\displaystyle
		\{0,1\}
		\)
		&
		\(\displaystyle
\{0,1\}
	\)
		\\
		\hline
		$\opsp_{T(n)}$
		(\cref{ex:atomic-Tn})
		&
		\(
		\{0,1\}
		\)
		&
		Not completely known yet; by \cite{burklund2023k}, it has at least 3
		elements for \(n\geq 2\).
		\\
		\hline	
	\end{tabularx}
\end{center}

These observations motivate the following formulation of the telescope conjecture for an arbitrary $\mathcal{V}\in \calg(\prl)$.
\begin{de}
	Let $\mcv\in\calg(\prl)$. We say that the \textbf{telescope conjecture} for $\mcv$ holds if the canonical inclusion map 
	\[
	\cidem_f(\mcv) \xhookrightarrow{} \cidem(\mcv) 
	\]
	from the atomic smashing frame to the smashing frame is an equivalence, or equivalently, if every smashing ideal of $\mcv$ is an atomic smashing ideal.
\end{de}
To investigate whether the telescope conjecture satisfies a local-to-global principle, we study several natural classes of coverings. We establish descent results for three natural classes of coverings. For example, an \textbf{atomic Hopf covering} is a jointly conservative family of morphisms 
$$\{f_i \colon \mcv \to \mcv_i\}$$ 
in $\calg(\prl)$ such that each $\mcv_i$ is $\mcv$-atomically generated and each $f_i$ admits a $\mcv$-linear left adjoint. This framework allows us to deduce the global validity of the telescope conjecture from its local validity, yielding the following descent theorem:

\begin{ThmAlpha}[{\cref{tcdescent}, \ref{atopensmcovtcdescent}, \ref{finiteatsmcloseddescent}}]
Let \(\mcv \in \calg(\prl)\), and let
\(\{f_i \colon \mcv \to \mcv_i\}\) be one of the following:
\enu{
	\item an atomic Hopf covering of \(\mcv\);
	\item an atomic smashing-open covering of \(\mcv\);
	\item a finite atomic smashing-closed covering of \(\mcv\), provided that
	\(\mcv\) is compactly-rigidly generated and stable.
}
Then \(\mcv\) satisfies the telescope conjecture whenever each \(\mcv_i\) does.
Moreover, in cases \((2)\) and \((3)\), this condition is also necessary.
\end{ThmAlpha}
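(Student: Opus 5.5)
We argue case by case. The common strategy is to fix a smashing ideal $\mci=\langle x\rangle\hookrightarrow\mcv$, with $x\to\mathbf{1}$ coidempotent, and show it is atomic by transporting it along the $f_i$. Since each $f_i$ is symmetric monoidal and colimit preserving, it carries coidempotents to coidempotents, so $f_i(x)\to\mathbf{1}_{\mcv_i}$ defines a smashing ideal $\mci_i=\langle f_i x\rangle$ of $\mcv_i$. By hypothesis $\mci_i$ is atomic, i.e. generated as a $\mcv_i$-module by $\mcv_i$-atomic objects. The task is then to produce enough $\mcv$-atomic objects inside $\mci$, using the dictionary $\cidem(\mcv)\simeq\cidem(\prvdbl)$ and the identification of atomic smashing ideals with $\cidem(\prvat)$ (\cref{atsmashingideals}).

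\textbf{Case (1): atomic Hopf coverings.} The left adjoint $f_i^L$ is $\mcv$-linear, so the projection formula gives $f_i^L(f_i(x)\otimes y)\simeq x\otimes f_i^L(y)$, and $f_i^L$ carries $\mci_i$ into $\mci$. Suppose $a\in\mcv_i$ is $\mcv_i$-atomic. Because $\mcv_i$ is $\mcv$-atomically generated, relative atomicity should transfer: $\mcv$-atomic generators of $\mcv_i$ tensored with $\mcv_i$-atomic objects remain $\mcv$-atomic as $\mcv$-modules, and $f_i^L$ has a $\mcv$-linear colimit-preserving right adjoint $f_i$, so it preserves $\mcv$-atomic objects. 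This gives a family of $\mcv$-atomic objects $f_i^L(a\otimes g)$ lying in $\mci$. Let $\mci'\subset\mci$ be the localizing ideal they generate; $\mci'$ is atomic by the preceding argument together with \cref{atsmashingideals}. We then show $\mci'=\mci$ by checking $f_i(\mci')\supseteq\mci_i$ for all $i$ and invoking joint conservativity applied to the cofiber (or coidempotent comparison map) of $x'\to x$. We expect this last step to be the main obstacle. In the unstable setting, conservativity must be applied to a map of coidempotents, not a cofiber: $x'\to x$ becomes an equivalence after each $f_i$, because the smashing ideals $\langle f_ix'\rangle\supseteq\mci_i$ and both are contained in $\langle f_ix\rangle$. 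The first thing to try is showing that $f_i(f_i^L(a\otimes g))$ generates $\mci_i$, using that $\mcv_i$ is a $\mcv$-module retract situation via the counit.

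\textbf{Case (2): atomic smashing-open coverings.} Here $\mcv_i=\mcv\otimes\langle e_i\rangle$ for atomic coidempotents $e_i$, so this case reduces to frame arithmetic. Via \cref{atsmashingideals}, $\cidem_f$ is a subframe, and $\cidem(\mcv_i)\simeq\cidem(\mcv)_{\le e_i}$. For sufficiency, write $x=\bigvee_i(x\wedge e_i)$ using the covering and distributivity; each $x\wedge e_i$ is atomic in $\mcv_i$, hence atomic in $\mcv$ by the transitivity result \cref{localrigatgen}, and atomic ideals are closed under joins. For necessity, $y\le e_i$ atomic in $\mcv$ means $y$ is generated by $\mcv$-atomics, and $e_i\otimes(-)$ carries these to $\mcv_i$-atomics.

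\textbf{Case (3): finite atomic smashing-closed coverings.} In the stable compactly-rigidly generated setting, closed pieces are $\mcv_i=\mcv/\langle e_i\rangle$ with $\bigwedge e_i=0$ in a finite meet. We would use the recollement for smashing frames announced in the introduction, which identifies $\cidem(\mcv/\langle e\rangle)$ with the interval $\cidem(\mcv)_{\ge e}$ compatibly with the atomic (Balmer) frames. Given $x$, each $x\vee e_i$ is compactly generated by hypothesis. Finite distributivity then gives $x=x\vee\bigwedge_i e_i=\bigwedge_i(x\vee e_i)$, which is a finite meet of compactly generated smashing ideals, and compactly generated ideals are closed under finite meets because tensor products of compacts are compact. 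Necessity follows because the quotient map sends compact generators to compact generators, and the interval correspondence surjects.
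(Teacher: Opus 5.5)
Your cases (2) and (3) are the paper's arguments: in the open case, $x=\bigvee_i(x\wedge e_i)$ with $\cidem_f(\mcv_i)\simeq\cidem_f(\mcv)_{\leq e_i}$ via \cref{localrigatgen}; in the closed case, $x=\bigwedge_i(x\vee e_i)$ via the atomic recollement \cref{recollementatsmfrms}, whose closed half is where Thomason--Neeman enters. One omission there: $\bigvee_i e_i=\mathbf{1}$ (resp.\ $\bigwedge_i e_i=0$) is not part of the definition of a covering. You must deduce it from joint conservativity on smashing frames by comparing images in each $\cidem(\mcv_j)$.

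The genuine gap is in case (1). The decisive step $\mci'=\mci$ is left open, and the mechanism you propose for it has no support in the hypotheses. That mechanism is to show $\mci_i\subseteq\mcv_i\otimes_\mcv\mci'$ because $f_if_i^L(a\otimes g)$ generates $\mci_i$. Whatever the counit $f_i^Lf_i\to\id$ provides concerns objects of $\mcv$, whereas you need to recover objects of $\mcv_i$ from $f_if_i^L$. The unit $z\to f_if_i^Lz$ need not put $z$ into $\langle f_if_i^Lz\rangle$. For example, take the atomic Hopf covering $\opsp\to\opsp^{BC_2}$ (constant functor, $\opsp$-linear left adjoint $(-)_{hC_2}$, target compactly generated by $\mathbb{S}[C_2]$). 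The object $z=H\mathbb{Q}$ with the sign action has $z_{hC_2}\simeq 0$, so $ff^Lz=0$ although $z\neq 0$. Any argument along your lines would have to exploit the special form of the objects $a\otimes g$, and none is given.

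The missing idea is to use joint conservativity in its adjoint form, inside $\mcv$, rather than on coidempotents. The right adjoint of $\sum_i f_i^L\colon\bigoplus_i\mcv_i\to\mcv$ is $(f_i)_i$. So joint conservativity says precisely that the objects $f_i^L(w)$ generate $\mcv$ under colimits (\cref{generatingequiconds}). Now $\mci$ is the essential image of the colimit-preserving functor $x\otimes-$. By $\mcv$-linearity of $f_i^L$, $x\otimes f_i^L(w)\simeq f_i^L(f_i(x)\otimes w)$, so the objects of $f_i^L(\mci_i)$, $i\in I$, generate $\mci$ under colimits. Next, $\mci_i$ is generated under colimits by the objects $f_i(v)\otimes a\otimes g$ with $v\in\mcv$, and $f_i^L(f_i(v)\otimes a\otimes g)\simeq v\otimes f_i^L(a\otimes g)\in\mci'$. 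Hence $f_i^L(\mci_i)\subseteq\mci'$, so $\mci\subseteq\mci'$, and you never need to compute $f_i(x')$. This is exactly the paper's proof: \cref{tcdescent} is deduced from \cref{hopfcovproperties}(2) applied to $\mcm=\mci$, using $\mcv_i\otimes_\mcv\mci\simeq\mci_i$. There the same observation appears as the generating internal left adjoint $\bigoplus_i\mci_i\to\mci$ out of a $\mcv$-atomically generated module, to which \cref{conserdual} applies.
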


\subsubsection*{Recollement for (Atomic) Smashing Frames}
In the (pre)stable setting, we develop a recollement theorem for smashing frames. The underlying principle is that a smashing ideal $\mci\xrightarrow{i}\mcv$ naturally inherits a presentably symmetric monoidal structure because its right adjoint $i^R$ is a localization compatible with the monoidal structure. Furthermore, this promotes $\mci$ to a locally rigid algebra over \mcv. This structural phenomenon leads to the following identification of smashing ideals with locally rigid localizations (see \cref{smidlvslocrig}):
	$$\cidem(\mcv)\xrightarrow{\sim}\big\{\text{locally rigid localizations of \mcv}\big\}.$$

Consequently, when \mcv is prestable, both $\mci$ and the quotient $\mcv/\mci$ are naturally symmetric monoidal, allowing us to study their respective (atomic) smashing frames. In fact, these two frames govern the (atomic) smashing frame of the  whole category via an Artin gluing process:

\begin{ThmAlpha}[{\cref{recollementfrms}, \ref{recollementatsmfrms}}]\label{thmC}
	Let $\mcc\in\calg(\prl)$ be prestable. Let
	\[
	\mci\xhookrightarrow{i} \mcc \xrightarrow{L} \mcc/\mci
	\]
	be a smashing localization sequence. Then: 
	\enu{
	\item There are natural equivalences
	\[
	\cidem(\mci)
	\simeq
	\cidem(\mcc)_{\leq\mci},
	\qquad
	\cidem(\mcc/\mci)
	\simeq
	\cidem(\mcc)_{\geq\mci}.
	\]
	Consequently, the canonical morphism
	\[
	\cidem(\mcc)
	\xrightarrow{(i^R,L)}
	\cidem(\mci)\times\cidem(\mcc/\mci)
	\]
	is an embedding of frames.
	Moreover, under the preceding identifications,
	\(\cidem(\mcc)\) is obtained by Artin gluing:
	\[
	\cidem(\mcc)
	\simeq
	\cidem(\mci)
	\overleftarrow{\times}_{\!\phi_\mci}
	\cidem(\mcc/\mci),
	\]
	where the gluing functor is explicitly given by
	\[
	\phi_\mci(a)
	=
	\mci\vee(\mci\backslash a),
	\qquad
	a\in\cidem(\mcc)_{\leq\mci}.
	\]
	Here \(\mci\backslash a\) denotes the Heyting implication in the frame
	\(\cidem(\mcc)\).
	\item 
	Moreover, if $\mcc$ is stable and compactly-rigidly generated, and
	$\mci$ is an atomic smashing ideal, then the same assertions of (1) hold with
	smashing frames replaced by atomic smashing frames.
}
\end{ThmAlpha}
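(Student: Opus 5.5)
Let $e\colon u\to\mathbf 1$ be the coidempotent with $\langle u\rangle=\mci$, so $i i^R\simeq u\otimes-$. The approach is to compute both halves of the frame directly in terms of coidempotents, and then deduce the gluing description from general frame theory.

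\textbf{Step 1: the ideal $\mci$.} The subposet $\cidem(\mcc)_{\le\mci}$ consists of the coidempotents $a\to\mathbf 1$ that factor through $u$. For such an $a$, the induced map $a\to u$ exhibits $a$ as a coidempotent of $\mci$, whose unit is $u$. Conversely, a coidempotent $b\to u$ in $\mci$ composes with $e$ to give a coidempotent of $\mcc$. To check this, I will use that $i$ is symmetric monoidal on $\mci$, with $\otimes$ computed in $\mcc$, together with $u\otimes b\simeq b$.

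These two constructions are inverse, which gives $\cidem(\mci)\simeq\cidem(\mcc)_{\le\mci}$, with the map realized by $i^R(a)=u\otimes a$.

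\textbf{Step 2: the quotient $\mcc/\mci$.} I want a bijection between coidempotents $b\to\mathbf 1$ of $\mcc/\mci$ and coidempotents $a\ge u$ of $\mcc$.

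Prestability enters here. The quotient $\mcc/\mci$ is a localization $L$ with a fully faithful right adjoint, and each object has a cofiber sequence $u\otimes x\to x\to L x$. Given $a\ge u$, the map $L a\to L\mathbf 1$ is again coidempotent because $L$ is monoidal. Conversely, given $b\to \mathbf 1_{\mcc/\mci}$, I take $a$ to be the pullback of $\mathbf 1\to L\mathbf1\leftarrow b$ in $\mcc$. Equivalently, $a$ is determined by the ideal $L^{-1}\langle b\rangle\supseteq\mci$.

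The cleanest route is via \cref{smidlvslocrig}: smashing ideals of $\mcc$ containing $\mci$ correspond to smashing ideals of $\mcc/\mci$, by taking the image under $L$ and the preimage. I then check that a preimage of a smashing ideal is again smashing. Its colocalization is obtained by gluing the colocalizations of $\mci$ and of $\langle b\rangle$, and this gluing uses the recollement, since prestability makes the fiber sequences behave well.

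With both identifications in hand, the map $(i^R,L)$ becomes $a\mapsto(a\wedge\mci,\ a\vee\mci)$. This map is injective on any frame, because distributivity yields the identity
\[
a = (a\vee \mci)\wedge(\mci\backslash(a\wedge \mci)).
\]
Since it also preserves finite meets and all joins, it is an embedding of frames.

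\textbf{Step 3: Artin gluing.} This part is pure frame theory. For a frame $F$ and an element $c\in F$, the frame $F$ is the Artin gluing of ${\downarrow}c$ and ${\uparrow}c$ along the finite-meet-preserving map $\phi(a)=c\vee(c\backslash a)$. Concretely, a pair $(a,b)$ satisfying $b\le\phi(a)$ corresponds to the element $b\wedge(c\backslash a)$, and I will verify the inverse bijection directly.

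\textbf{Part (2).} Assume $\mcc$ is stable and compactly-rigidly generated, and that $\mci$ is atomic. I need to show that the equivalences of Step 1 and Step 2 restrict to atomic ideals.

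By \cref{localrigatgen}, $\mci$ and $\mcc/\mci$ are locally rigid and compactly generated over $\mcc$. Hence an ideal of either one is atomic exactly when it is generated by compact objects of $\mcc$, or respectively by images of compact objects of $\mcc$. The steps are then:
\begin{itemize}
\item For ideals below $\mci$ this is immediate.
\item For ideals $\mcj\supseteq\mci$, I show that $\mcj$ is compactly generated if and only if $L\mcj$ is. This uses Neeman--Thomason localization, which says that compacts of $\mcc/\mci$ are retracts of images of compacts of $\mcc$.
\item Finally, the gluing map $\phi_\mci$ preserves atomic elements. This holds because Heyting implication between compactly generated ideals in a coherent frame stays in the subframe, by the coherence of $\op{Thick}(\mcc^\omega)$.
\end{itemize}

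\textbf{Main obstacle.} I expect the hardest step to be Step 2, specifically that preimages under $L$ of smashing ideals are smashing in the merely prestable setting. This is exactly where the prestability hypothesis must be used, and it is where I would concentrate the work.
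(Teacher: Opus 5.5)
Your outline is the same as the paper's. You identify the open part $\cidem(\mcc)_{\le\mci}$ and the closed part $\cidem(\mcc)_{\ge\mci}$, get injectivity of $a\mapsto(a\wedge\mci,a\vee\mci)$ from the distributivity identity, and read off the gluing from \cref{frmopencloseddecomp}. In (2), the open part comes from local rigidity (\cref{cidematideal}) and the closed part from the Thomason--Neeman lifting of $y\oplus\Sigma y$. There are two local differences:
\begin{itemize}
\item Your Step 1 is a direct coidempotent computation rather than the paper's passage through locally rigid localizations. It is fine, and like the paper's argument it needs no prestability.
\item For Step 2 the paper works on the idempotent side. By \cref{connlocandclosub}, smashing ideals of $\mcc/\mci$ are connective smashing localizations of $\mcc/\mci$. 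Composing with $L$ gives exactly the connective smashing localizations of $\mcc$ whose kernel contains $\mci$; the $\pi_0$-surjectivity conditions match because $\pi_0\mathbf{1}\to\pi_0 c$ is surjective, where $c=\cofib(u\to\mathbf{1})$. So one never has to show by hand that a preimage is smashing. Note that \cref{smidlvslocrig} is the input for the open part, not the closed one.
\end{itemize}
Your pullback construction in Step 2 also works, and the point where prestability enters is concrete. Form $a=\mathbf{1}\times_c b$ in $\opsp(\mcc)$, where $\otimes$ is exact. It is connective precisely because $\pi_0\mathbf{1}\to\pi_0 c$ is an epimorphism. You then check $a\otimes a\simeq a$, $u\le a$, $c\otimes a\simeq b$ and $\langle a\rangle=L^{-1}\langle b\rangle$ after applying $u\otimes-$ and $c\otimes-$. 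These two functors are jointly conservative by the cofiber sequence $u\otimes x\to x\to c\otimes x$.

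The one genuine problem is the last bullet of (2), which is both unnecessary and wrongly justified. It is unnecessary because in (2) the gluing functor is $\phi^f_\mci(a)=\mci\vee(\mci\backslash_f a)$, where $\backslash_f$ is the Heyting implication of the frame $\cidem_f(\mcc)$ itself. Once the two interval identifications restrict to atomic ideals, your Step 3 applied to the frame $\cidem_f(\mcc)$ and the element $\mci$ gives the gluing, and the embedding is just the restriction of the one from (1). It is wrongly justified because it asserts that $\mci\backslash a$, computed in $\cidem(\mcc)$, is atomic. That amounts to saying the inclusion $\cidem_f(\mcc)\hookrightarrow\cidem(\mcc)$ preserves these implications, and coherence of $\cidem_f(\mcc)\simeq\op{Thick}(\mcc^\omega)$ says nothing about this. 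A coherent subframe need not be closed under the ambient implication. Take $X=[-1,0]\cup\{1/n\mid n\geq 1\}\subset\mathbb{R}$; by \cref{pierceSpgeq0shv}, $\cidem_f\subset\cidem$ for $\shvhyp(X;\spgeq)$ is $\mco_p(X)\subset\mco(X)$. The Pierce open $U=\{1/n\}$ has pseudocomplement $[-1,0)$ in $\mco(X)$, which is not Pierce, while its pseudocomplement in $\mco_p(X)$ is $\varnothing$. So drop that bullet and state the atomic gluing with $\backslash_f$.
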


An immediate consequence of this recollement is the reduction of spatiality:
\begin{cor}[{\cref{recollementspatial}}]
	Let $\mcc\in\calg(\prl)$ be such that $\mcc$ is prestable. Let $\mci\xhookrightarrow{i} \mcc \xrightarrow{L} \mcc/\mci$ be a smashing localization. Then $\cidem(\mcc)$ is spatial if and only if both $\cidem(\mci)$ and $\cidem(\mcc/\mci)$ are spatial.
\end{cor}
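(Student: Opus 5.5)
We deduce the corollary from the Artin gluing description in \cref{thmC}(1), together with a general fact about frames: if a frame $F$ is the Artin gluing $A\overleftarrow{\times}_{\phi}B$ of frames $A$ and $B$ along a finite-meet-preserving map $\phi\colon A\to B$, then $F$ is spatial if and only if $A$ and $B$ are. By \cref{thmC}(1) we have
\[
\cidem(\mcc)\simeq\cidem(\mci)\overleftarrow{\times}_{\!\phi_\mci}\cidem(\mcc/\mci),
\]
and the corollary follows once the frame-theoretic fact is established.

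For the direction ``$F$ spatial implies $A$ and $B$ spatial'', we use the identifications $\cidem(\mci)\simeq\cidem(\mcc)_{\leq\mci}$ and $\cidem(\mcc/\mci)\simeq\cidem(\mcc)_{\geq\mci}$ from \cref{thmC}(1).
\begin{itemize}
\item The down-set $F_{\leq u}$ is the open sublocale of $F$ corresponding to $u$.
\item The up-set $F_{\geq u}$ is the closed sublocale of $F$ complementary to $u$.
\item Open and closed sublocales of a spatial locale are spatial, since they correspond to open and closed subspaces of the associated sober space.
\end{itemize}
Concretely, for the closed part: if $a\not\leq b$ in $F_{\geq u}$, spatiality of $F$ gives a point $p\colon F\to\{0,1\}$ with $p(a)=1$ and $p(b)=0$. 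Since $u\leq b$, we get $p(u)=0$, so $p$ factors through the quotient $x\mapsto x\vee u$, i.e.\ restricts to a point of $F_{\geq u}$. For the open part, the point $p$ restricted along $x\mapsto x\wedge u$ is a point of $F_{\leq u}$ separating $a$ and $b$ in $F_{\leq u}$.

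For the converse, suppose $A$ and $B$ are spatial with sober spaces $X_A$ and $X_B$. The glued frame is the frame of opens of the glued space $X=X_A\sqcup X_B$. In this space, a subset $U_A\sqcup U_B$ is open precisely when $U_A$ is open in $X_A$, $U_B$ is open in $X_B$, and $U_B\subseteq\phi(U_A)$, with the sign conventions matching the description $\phi_\mci(a)=\mci\vee(\mci\backslash a)$.

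Rather than construct the space, it is cleaner to separate points directly. Let $(a,b)\not\leq(a',b')$ in $F$. Then either $a\not\leq a'$ or $b\not\leq b'$.
\begin{itemize}
\item If $b\not\leq b'$, choose a point $q$ of $B$ with $q(b)=1$ and $q(b')=0$. Compose $q$ with the projection $F\to B$, $(a,b)\mapsto b$. This projection is a frame map, namely the closed quotient map.
\item If $a\not\leq a'$, choose a point $p$ of $A$ separating $a$ from $a'$, and compose it with the projection $F\to A$. This projection is also a frame map, the open restriction $x\mapsto x\wedge\mci$.
\end{itemize}
In both cases the composite point of $F$ separates $(a,b)$ from $(a',b')$, so $F$ is spatial.

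The main obstacle is checking that these two projections out of the glued frame are frame homomorphisms under the concrete identifications of \cref{thmC}. In the identification of $F$ with pairs, the relevant maps are $x\mapsto x\wedge\mci$ and $x\mapsto x\vee\mci$. In any frame, $x\mapsto x\wedge u$ and $x\mapsto x\vee u$ preserve arbitrary joins and finite meets, the latter by distributivity. Once this is in place, the separation argument above is formal and the corollary follows.
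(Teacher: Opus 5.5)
Your proposal is correct and is essentially the paper's argument. The forward direction is the fact that open and closed sublocales of a spatial locale are spatial; in the open case, $p(u)=1$ because $a\le u$ and $p(a)=1$, and this is exactly why $p$ restricts to a point of $F_{\le u}$. Your point-separation argument for the converse unpacks the paper's observation that $\cidem(\mcc)\hookrightarrow\cidem(\mci)\times\cidem(\mcc/\mci)$ is a frame embedding into a spatial frame, combined with \cref{subfrmspatial}. The ``main obstacle'' you flag is automatic: the two projections are $\cidem(i^R)$ and $\cidem(L)$, which are frame maps by functoriality of $\cidem(-)$.
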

Another consequence is that the atomic smashing frame contains the
zero-dimensional part of the smashing frame.
\begin{ThmAlpha}[{\cref{splitclopen}}]
	Let \(\mcc\in\calg(\prlad)\) be prestable. We have the following inclusion
	\[
	\op{Z}\bigl(\cidem(\mcc)\bigr)
	\subset
	\cidem_f(\mcc).
	\]
\end{ThmAlpha}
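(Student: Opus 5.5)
We need to show that every complemented element of the frame $\cidem(\mcc)$ gives an atomic smashing ideal. The plan is to realize a complemented coidempotent as a direct-factor splitting of $\mcc$ and then observe that a direct factor is generated by the image of atomic generators.

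\textbf{Step 1: complements split the unit.} Let $x\to\mathbf 1$ be a coidempotent that has a complement $y\to\mathbf 1$ in $\cidem(\mcc)$, so that $x\wedge y=0$ and $x\vee y=\mathbf 1$. Meets in $\cidem(\mcc)$ are computed by tensor products, so $x\otimes y\simeq 0$. The join corresponds to the smashing ideal generated by $\langle x\rangle$ and $\langle y\rangle$. Since $\mcc$ is additive, I expect the join to be represented by $x\oplus y$ with the canonical map, provided $x\otimes y=0$. To check this, note that $(x\oplus y)^{\otimes 2}\simeq x\oplus y$ because the cross terms vanish, and that $x\oplus y$ is coidempotent, with the relevant map to $\mathbf 1$ an equivalence after tensoring with it. Hence $x\vee y=\mathbf 1$ means the map $x\oplus y\to\mathbf 1$ is an equivalence, so $\mathbf 1\simeq x\oplus y$.

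\textbf{Step 2: splitting the category.} From $\mathbf 1\simeq x\oplus y$ and $x\otimes y=0$, every object decomposes as $c\simeq (x\otimes c)\oplus(y\otimes c)$. Therefore $\mcc\simeq \langle x\rangle\times\langle y\rangle$ as $\mcc$-modules, with $\langle x\rangle = x\otimes\mcc$. Here prestability, or more precisely additivity, is used to identify $\langle x\rangle$ with the essential image of $x\otimes-$, which is a retract of the identity functor.

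\textbf{Step 3: atomic generation.} The main obstacle is that $\mcc$ itself need not be atomically generated, so generators of $\langle x\rangle$ cannot simply be taken to be images of atomic generators of $\mcc$. Instead, I will use the unit. Since $\mathbf 1$ is dualizable and $x$ is a retract of $\mathbf 1$, the object $x$ is dualizable: its dual is $x$ itself, with evaluation and coevaluation obtained by restricting those of $\mathbf 1$ along the splitting. Dualizable objects are $\mcc$-atomic, and this equivalence is stated in the paper's definition of atomic smashing ideals. Moreover, $\langle x\rangle$ is generated as a $\mcc$-submodule by $x$, because $x\otimes c$ lies in the submodule generated by $x$. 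Hence $\langle x\rangle$ is an atomic smashing ideal, that is, $x\in\cidem_f(\mcc)$.

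The point requiring the most care is Step 1, namely verifying that the frame join of two disjoint coidempotents in an additive setting is their direct sum. I would check this directly from the universal property of $\langle x\rangle\vee\langle y\rangle$ in $\cidem(\prl_{\mcc}^{\op{dbl}})$ via the equivalence $\cidem(\mcc)\simeq\cidem(\prcdbl)$. There, $\langle x\rangle\times\langle y\rangle$ is visibly the smallest smashing ideal containing both factors.
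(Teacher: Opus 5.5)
Your argument is correct, but it is organized differently from the paper's. The paper proves the sharper statement that a smashing ideal is split if and only if it is complemented, in three steps:
\begin{itemize}
\item by \cref{splitstablesmid}, splitness of $\mci$ is equivalent to $\cidem(\mcc)\to\cidem(\mci)\times\cidem(\mcc/\mci)$ being an equivalence;
\item the recollement \cref{recollementfrms} identifies this map with $F\to F_{\leq u}\times F_{\geq u}$, which is an equivalence exactly when $u$ is complemented;
\item split ideals are atomic by \cref{splitsmidl}.
\end{itemize}
That route uses prestability essentially, through cofibers of $x\to\mathbf{1}$ and the closed half of the recollement.

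You prove only the implication that is actually needed, and you do it directly in the frame. Meets are tensor products, and the join of disjoint coidempotents is their biproduct. So a complement forces $\mathbf{1}\simeq x\oplus y$, hence $x$ is a retract of $\mathbf{1}$, hence dualizable, and $\langle x\rangle$ is atomic.

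Your route needs only additivity of $\mcc$; in fact it uses only the zero object, to produce the retraction $(\mathrm{id}_x,0)\colon x\oplus y\to x$. So it gives the inclusion for every $\mcc\in\calg(\prlad)$, prestable or not. It also pinpoints what fails in the unpointed example $\mcs\times\mcs$ in the remark after \cref{splitclopen}. There $\mathbf{1}$ is a coproduct $x\sqcup y$ of disjoint coidempotents, but $x$ is not a retract of $\mathbf{1}$. What the paper's route buys in exchange is the converse: complemented elements are exactly the split smashing ideals.

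Three small points on presentation:
\begin{itemize}
\item The step you flag as delicate is immediate. Since $\cidem(\mcc)$ is a full subcategory of $\mcc_{/\mathbf{1}}$, it suffices to check one thing: the specific map $(x\oplus y)\otimes(x\oplus y)\to x\oplus y$ induced by $(f,g)$ is an equivalence. On summands it is $x\otimes f$ and $y\otimes g$, and the cross terms are $0$. Once this holds, the coproduct $x\oplus y$ is automatically the join, and the detour through $\cidem(\prcdbl)$ is unnecessary.
\item Step 2 is never used.
\item You should say explicitly that $\cidem_f(\mcc)$ is closed under arbitrary joins (\cref{atsmashingideals}). This is what lets you pass from complemented elements to all of $\op{Z}(\cidem(\mcc))$.
\end{itemize}
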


This recollement mechanism also allows us to stratify smashing localizations. As an application, we deduce the following decomposition for the smashing frame of $\opsp$, showing that it embeds into the product of its monochromatic layers:

\begin{ThmAlpha}[{\cref{primedecomp}, \ref{smspstructure}}]\label{thmF}
	The following hold:
	\begin{enumerate}
		\item Both the smashing frame and the atomic smashing frame of $\opsp$ decompose into those of $\opsp_{(p)}$. Specifically, the horizontal arrows in the following diagram of frames are equivalences:
		$$\begin{tikzcd}
			\Idem_f(\opsp) \arrow[r,"\sim"] \arrow[d, hook] & \prod_{p}^{/\mathbb{Q}} \Idem_f(\opsp_{(p)}) \arrow[d, hook] \\
			\Idem(\opsp) \arrow[r,"\sim"]             & \prod_{p}^{/\mathbb{Q}} \Idem(\opsp_{(p)})                 
		\end{tikzcd}$$
		\item For each prime $p$, the projection maps induce a frame embedding:
		$$\Idem(\opsp_{(p)})\hookrightarrow\prod_{n=0}^\infty\Idem(\opsp_{T(n)}).$$ 
	\end{enumerate}
	Consequently, we obtain the following sequence of frame embeddings:
	$$\Idem(\opsp)\hookrightarrow\prod_{p}^{/\mathbb{Q}}\prod_{n=0}^\infty\Idem(\opsp_{T(n)})\hookrightarrow\prod_{p}\prod_{n=0}^\infty\Idem(\opsp_{T(n)}).$$
\end{ThmAlpha}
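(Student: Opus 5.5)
We prove the two parts separately, using Theorem~\ref{thmC} (recollement and Artin gluing) as the main tool.

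\textbf{Part (1): the decomposition over primes.} Consider the smashing localizations $\opsp \to \opsp_{(p)}$ and $\opsp\to\opsp_{\mathbb{Q}}$. Each $\opsp_{(p)}$ is the localization at the coidempotent/idempotent Moore spectrum $\mathbb{S}_{(p)}$. These are jointly conservative, since a spectrum is zero iff all its $p$-localizations vanish. Moreover, for $p\neq q$ we have $\mathbb{S}_{(p)}\otimes\mathbb{S}_{(q)}\simeq\mathbb{Q}$.

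\begin{enumerate}
\item Show the map $\Idem(\opsp)\to\prod_p\Idem(\opsp_{(p)})$ is injective. Given an idempotent $e$, the natural map $e\to\prod_p e_{(p)}$ is an arithmetic-fracture-type statement. Concretely, $e$ is recovered as a glued object because $e\otimes\mathbb{Q}$ is determined by any single $e_{(p)}$.
\item Identify the image as the fiber product over $\Idem(\opsp_{\mathbb{Q}})=\{0,1\}$: the tuples whose rationalizations all agree. This is what $\prod^{/\mathbb{Q}}_p$ denotes.
\item For surjectivity, given a compatible family $(e_p)$, construct $e$ by gluing.
\begin{itemize}
\item If the common rationalization is $0$, then each $e_p$ is torsion, and we take $e=\bigoplus_p e_p$ via $p$-torsion idempotents. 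Idempotence follows because the cross terms $e_p\otimes e_q$ with $p\neq q$ are rational torsion, hence $0$.
\item If the common rationalization is $\mathbb{Q}$, apply the same construction to the complementary coidempotents. The recollement of Theorem~\ref{thmC} for the ideal of torsion spectra, with $\opsp/\mci\simeq\opsp_{\mathbb{Q}}$, packages this uniformly.
\end{itemize}
\item For the atomic row, note that compact generation is preserved. A compactly generated smashing ideal localizes to compactly generated ones, since $\opsp\to\opsp_{(p)}$ preserves compacts. Conversely, the glued ideal is generated by the $p$-local finite generators, which are compact in $\opsp$ up to finite localization at $p$. This uses Theorem~\ref{thmC}(2) with the atomic torsion ideal.
\end{enumerate}

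\textbf{Part (2): the embedding into monochromatic layers.} Fix $p$ and work in $\mcc=\opsp_{(p)}$, which is stable and compactly-rigidly generated.
\begin{enumerate}
\item Use the finite chromatic filtration by the atomic smashing ideals $\mci_n=\ker(L_{n}^f)$, with $\mci_{n-1}/\mci_{n}$-type layers $\opsp_{T(n)}\simeq$ the monochromatic part $M^f_n\opsp$.
\item Apply Theorem~\ref{thmC}(1) to the sequence $\mci\hookrightarrow L^f_n\opsp\to L^f_{n-1}\opsp$. Here $\mci$ is the ideal of $L^f_n$-local, $L_{n-1}^f$-acyclic spectra, which is equivalent to $\opsp_{T(n)}$ as symmetric monoidal categories via the monochromatic/$T(n)$-local equivalence. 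This gives an embedding
\[
\Idem(L_n^f\opsp)\hookrightarrow\Idem(\opsp_{T(n)})\times\Idem(L^f_{n-1}\opsp).
\]
Induction then gives embeddings $\Idem(L_n^f\opsp)\hookrightarrow\prod_{k\le n}\Idem(\opsp_{T(k)})$, using $\opsp_{T(0)}=\opsp_{\mathbb{Q}}$.
\item Pass to the limit: show $\Idem(\opsp_{(p)})\to\prod_n\Idem(L_n^f\opsp)$ is injective. We expect this to be the main obstacle, since $\opsp_{(p)}$ is not the limit of the $L_n^f\opsp$ (chromatic convergence fails for general spectra).
\item To handle this, compare two idempotents $e\le e'$ with the same $L_n^f$-localizations for all $n$, i.e. $L_n^f$ kills $\fib(e\to e')$ for all $n$. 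Then the fiber lies in the intersection of the $L^f_n$-acyclic ideals, which is generated by $\mathbb{S}/p^\infty$-type spectra of infinite height. The key point is that smashing ideals are determined by their restriction via the Heyting structure. Write $a=e\wedge\neg$-type differences as coidempotents $c$ with $L_n^f c=0$ for all $n$. Then $c\simeq c\otimes c$ is a coidempotent that is $L_n^f$-acyclic for every $n$, so $c\otimes F(n)$-type arguments show $c$ is dissonant. A dissonant coidempotent $c\to\mathbb{S}$ should vanish: the map $c\to\mathbb{S}$ factors through $c\otimes c\simeq c$, and the cofiber is a ring receiving a unit map that is an $L^f_n$-equivalence for all $n$. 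By Hopkins--Ravenel-style nilpotence, a ring spectrum whose unit is invertible after every $L_n^f$ and whose fiber is dissonant must be $\mathbb{S}_{(p)}$-equivalent on homotopy of finite complexes, forcing $c=0$.
\item If this route fails, fall back on the frame-theoretic argument. Use that $\bigvee_n\mci_n'$ of the complementary ideals equals $\mathbb{S}_{(p)}$ in $\cidem$, because the localizing ideal generated by all finite $p$-local complexes of type $\geq 0$ is everything. Infinite distributivity of the frame then yields $e=\bigvee_n(e\wedge \mci_n')$, which is determined by the finite-stage data.
\end{enumerate}

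Finally, compose the part (1) embedding with part (2) applied to each prime; the last inclusion of the fiber product into the full product is tautological.
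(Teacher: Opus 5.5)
\textbf{Part (2), the crux.} Your recollement step in Part (2) matches the paper. You also correctly locate the crux: injectivity of $\Idem(\opsp_{(p)})\to\lim_n\Idem(L_n^f\opsp)$. But neither of your arguments for that step works.

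In the first argument, take $e\le e'$ agreeing after every $L_n^f$. The object $\fib(e\to e')\simeq e\otimes c'$, where $c'=\fib(\mathbb{S}_{(p)}\to e')$, is a coidempotent of the $e$-local category, with unit $e$. It is not a coidempotent of $\opsp_{(p)}$. Your chromatic-convergence-style vanishing argument only concerns coidempotents over $\mathbb{S}_{(p)}$. So at best it shows that the maps $\Idem(\opsp_{(p)})\to\Idem(L_n^f\opsp)$ jointly detect the bottom element. Running it over $e$ instead would need chromatic convergence for the non-finite spectrum $e$, which you have no argument for.

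Detecting the bottom element does not give injectivity for maps to closed pieces $F\to F_{\geq x_n}$, $a\mapsto a\vee x_n$, even when $\bigwedge_n x_n=0$. For example, in $\mathcal{O}(\mathbb{R})$ with $x_n=(-1/n,1/n)$, the opens $\mathbb{R}\setminus\{0\}$ and $\mathbb{R}$ have the same image for every $n$.

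Your fallback treats these pieces as open, but $\Idem(L_n^f\opsp)\simeq\Idem(\opsp_{(p)})_{\geq\ker(L_n^f)}$ is the \emph{closed} part of the recollement. There is no open cover to which infinite distributivity applies. The dual identity $e=\bigwedge_n(e\vee\ker L_n^f)$ is not a frame law, and the same example shows it fails.

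\textbf{The missing input.} What you need is \cref{smidlstructure}. It says every smashing localization $L$ of $\opsp_{(p)}$ is $0$, the identity, or satisfies $\ker(L_N^f)\subset\ker(L)\subset\ker(L_N)$ for a unique finite $N$. This is where the nilpotence theorem and chromatic convergence for $\mathbb{S}_{(p)}$ genuinely enter. With it, injectivity is immediate. If $A,B$ agree after every $L_n^f$, there are two cases.
\begin{itemize}
\item Both are nonzero on every $K(n)$, hence both are the identity.
\item Both are $L_N^f$-local for some $N$, and then $A\mathbb{S}\simeq L_N^f\mathbb{S}\otimes A\mathbb{S}\simeq L_N^f\mathbb{S}\otimes B\mathbb{S}\simeq B\mathbb{S}$.
\end{itemize}

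\textbf{Part (1) needs the same lemma.} You use it there without stating it.

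Your direct-sum construction is correct for rationally trivial \emph{coidempotents} $c_p$, and it is cleaner than the paper's $L_E$ with $E=\bigoplus_n A_n\mathbb{S}$. Such $c_p$ are $p$-torsion, so $c_p\to\mathbb{S}_{(p)}$ lifts uniquely to $c_p\to\mathbb{S}$. Then $c=\bigoplus_p c_p$ is coidempotent because $c_p\otimes c_q=0$ for $p\neq q$, and $c\otimes\mathbb{S}_{(p)}\simeq c_p$.

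For rationally trivial idempotent \emph{algebras} $e_p$, however, the same construction fails. Since $\pi_0(\bigoplus_p e_p)=\bigoplus_p\pi_0(e_p)$, there is no unit map whose $p$-localization is the unit of $e_p$ for infinitely many nonzero $e_p$. Appealing to the recollement along the torsion ideal does not repair this. That case is harmless only because \cref{smidlstructure} forces all such $e_p=0$.

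\textbf{The atomic row.} Here you need two inputs you did not supply:
\begin{itemize}
\item the thick subcategory theorem, which implies that a rationally nonzero compactly generated smashing ideal of $\opsp_{(p)}$ is everything;
\item the fact that $p$-local finite complexes of type $\geq 1$ are already compact in $\opsp$.
\end{itemize}
Your phrase ``compact up to finite localization'' is not the right statement.
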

One application is a retraction property for the telescope inclusion:

\begin{cor}[{\cref{retracttcinclu}}]
	Let $p$ be a prime. The following diagram of frames commutes:
	$$\begin{tikzcd}
		\Idem_f(\opsp_{(p)}) \arrow[d, hook] \arrow[r, "\sim"] & \lim_n\Idem_f(L_n\opsp) \arrow[d, "\sim"] \\
		\Idem(\opsp_{(p)}) \arrow[r]                           & \lim_n\Idem(L_n\opsp)                     
	\end{tikzcd}$$ 
	In particular, the telescope inclusion $\Idem_f(\opsp_{(p)})\hookrightarrow \Idem(\opsp_{(p)})$ admits a natural frame retraction over $\Idem(\opsp_{\mathbb{Q}})$, which in turn implies that the global telescope inclusion
	$$\Idem_f(\opsp)\hookrightarrow \Idem(\opsp)$$
	admits a natural frame retraction.
\end{cor}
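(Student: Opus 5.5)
We prove the commutative square first and then extract the retraction from it. Throughout we use the chromatic tower $\opsp_{(p)}\to\cdots\to L_n\opsp\to L_{n-1}\opsp\to\cdots$. Each $L_n\opsp$ is a smashing localization of $\opsp_{(p)}$, so base change along $L_n$ gives frame maps $\Idem(\opsp_{(p)})\to\Idem(L_n\opsp)$. These maps are compatible with the transitions $L_n\opsp\to L_{n-1}\opsp$, so they assemble into the bottom arrow. The maps preserve atomic (here: compactly generated) smashing ideals, because $L_n$ sends compact generators to compact objects of the compactly generated, locally rigid category $L_n\opsp$ (Example above, (1)). Hence the same construction gives the top arrow, and the square commutes by naturality of base change.

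\emph{Step 1: the top arrow is an equivalence.} By \cref{thmC}(2) applied to the smashing localization $\opsp_{(p)}\to L_n\opsp$, $\Idem_f(L_n\opsp)$ is identified with $\Idem_f(\opsp_{(p)})_{\geq \mci_n}$, where $\mci_n$ is the kernel of $L_n$. This kernel equals $\ker L_n^f$ by smash-product/Hopkins–Ravenel, since the two localizations have the same acyclics on finite spectra. Via the thick subcategory theorem, $\Idem_f(\opsp_{(p)})$ is the chain $\{C_{\ge k}\}_{k}\cup\{0\}$ of thick ideals. Restricting to $\geq\mci_n$ truncates this chain to the ideals $C_{\ge k}$ with $k\le n+1$. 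The limit over $n$ of these truncations is the whole chain: the intersection $\bigcap_n \mci_n$ is $0$, and the chain is recovered as a union of its truncations, with the bottom element $0$ corresponding to the compatible family of bottom elements. Concretely, an element of $\lim_n$ is a compatible sequence of cut points, which stabilizes at a height or is $0$ at every level.

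\emph{Step 2: the right vertical arrow is an equivalence.} Each $L_n\opsp$ satisfies the telescope conjecture, since $L_n=L_n^f$ at the level of smashing ideals of $L_n\opsp$. We would argue this by finite recollement using \cref{thmC}. The successive layers are $L_{K(k)}$-type monochromatic pieces $M_k\opsp$, whose smashing frames are $\{0,1\}$ (as for $\opsp_{K(n)}$, \cref{smfieldex}). More directly, we would quote the Hopkins–Ravenel result that $L_n\opsp$ satisfies the telescope conjecture, i.e.\ $\Idem_f(L_n\opsp)=\Idem(L_n\opsp)$, with the identification coming from the Example above. Taking limits then gives the equivalence.

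\emph{Step 3: the retraction.} The retraction is $r=(\text{top})^{-1}\circ(\text{right})^{-1}\circ(\text{bottom})$, a composite of frame maps, and commutativity gives $r\circ\text{incl}=\id$. Being "over $\Idem(\opsp_{\mathbb{Q}})$" means compatibility with the projections to $L_0\opsp=\opsp_{\mathbb Q}$. This is clear, since every map in the square factors compatibly through the $n=0$ component. For the global statement we use \cref{thmF}(1), which writes both frames as $\prod_p^{/\mathbb Q}$ fiber products over $\Idem(\opsp_{\mathbb Q})$. The $p$-local retractions are all over $\Idem(\opsp_{\mathbb Q})$, so they glue to a frame retraction of the fiber product.

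The main obstacle is Step 1: checking that the limit of the truncated chains really is the chain, in particular that the bottom element $0$ is correctly recovered. Concretely, this requires that $\bigcap_n\ker(L_n)$ on finite spectra is $0$, which holds for finite $p$-local spectra. It also requires that limits of frames are computed as limits of posets, so that no extra elements appear.
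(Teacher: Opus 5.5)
The overall plan is the right one, and Step 3 is fine: compute both limits as chains, use the telescope conjecture for each $L_n\opsp$, and set $r=(\mathrm{top})^{-1}\circ(\mathrm{right})^{-1}\circ(\mathrm{bottom})$. The gap is in Step 1.

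\textbf{The false step.} You assert $\ker(L_n)=\ker(L_n^f)$ in $\opsp_{(p)}$, on the grounds that the two localizations have the same acyclics among finite spectra.
\begin{itemize}
\item Agreement on finite spectra only gives $\ker(L_n)\cap\opsp_{(p)}^\omega=\mcc^{(p)}_{\geq n+1}$. Hence $\ker(L_n^f)\subseteq\ker(L_n)$, with equality if and only if $\ker(L_n)$ is compactly generated.
\item That equality is the statement $L_n^f=L_n$, i.e.\ the telescope conjecture through height $n$. It is false for $n\geq 2$ by Burklund--Hahn--Levy--Schlank.
\item If it held for all $n$, \cref{smidlstructure} would make every smashing ideal of $\opsp_{(p)}$ finite. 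The left vertical arrow would then already be an equivalence, and the corollary would have no content.
\end{itemize}
So for $n\geq 2$, $\ker(L_n)$ is not an atomic smashing ideal and not an element of $\Idem_f(\opsp_{(p)})$. \Cref{recollementatsmfrms} therefore cannot be applied to $\opsp_{(p)}\to L_n\opsp$. Your identification $\Idem_f(L_n\opsp)\simeq\Idem_f(\opsp_{(p)})_{\geq\mci_n}$ is unproved, and it is the one non-formal input to the top equivalence. The bottom-element issue you flag as the main obstacle is harmless by comparison.

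\textbf{The missing input.} What you need is the comparison of the two towers:
\begin{itemize}
\item $L_k^f\mathbb S\otimes L_n\mathbb S\simeq L_{\min(k,n)}\mathbb S$. For $k<n$, iterate the identity $L_{n-1}^f\mathbb S\otimes L_n\mathbb S\simeq L_{n-1}\mathbb S$ that the paper quotes in its comparison of the telescope and chromatic towers. For $k\geq n$ it holds because $L_n$-local spectra are $L_k^f$-local.
\item This identity says base change $\Idem_f(\opsp_{(p)})\to\Idem_f(L_n\opsp)$ sends $L_k^f\mathbb S$ to $L_{\min(k,n)}\mathbb S$.
\item Combine it with the chain $\Idem(L_n\opsp)=\{L_n\mathbb S<\cdots<L_{-1}\mathbb S\}$. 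This comes from the recollement $M_n\opsp\hookrightarrow L_n\opsp\to L_{n-1}\opsp$, the fact that $\opsp_{K(n)}$ is a smashing field, and the explicit gluing map. Every element of $\Idem(L_n\opsp)$ is then a base change of a finite localization, hence atomic.
\item So $\Idem_f(L_n\opsp)=\Idem(L_n\opsp)$ is the $(n+2)$-element chain, and base change restricts to a bijection from $\{L_k^f\mathbb S\}_{-1\leq k\leq n}$. Your limit computation then goes through unchanged.
\end{itemize}
Alternatively, as the paper does, apply \cref{recollementatsmfrms} to the atomic ideal $\ker(L_n^f)$ to get $\Idem_f(L_n^f\opsp)$. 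Then check that $\Idem_f(L_n^f\opsp)\to\Idem_f(L_n\opsp)$ is an equivalence.

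\textbf{Step 2.} The same identity is what actually closes Step 2, so replace the appeal to a ``Hopkins--Ravenel result'' with it. The recollement route alone does not suffice. The telescope conjecture for the open piece $M_n\opsp$ and the closed piece $L_{n-1}\opsp$ does not formally imply it for $L_n\opsp$, because the gluing maps for $\Idem_f$ and $\Idem$ could differ. You need the explicit chain together with the atomicity of each $L_k\mathbb S=L_k^f\mathbb S\otimes L_n\mathbb S$.
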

\subsubsection*{Spatiality of Smashing Frames}
A fundamental open problem concerning the structure of the smashing frame $\Idem(\opsp)$ is whether it is spatial; see \cite{balchin2021big, aoki2024smashing}. Recall that a frame is \textbf{spatial} if it can be realized as the lattice of open subsets of some topological space.

Had the telescope conjecture for $\opsp$ held, the spatiality of $\Idem(\opsp)$ would have been immediate, as the Balmer frame $\Idem_f(\opsp)$ is spatial. In light of the recent disproof of the telescope conjecture  \cite{burklund2023k} for $\opsp$, non-finite smashing localizations are now known to exist. However, our understanding of $\Idem(\opsp)$ remains quite limited, rendering its spatiality a central property of independent interest.

The spatiality of $\Idem(\opsp)$ would guarantee the existence of enough prime elements (``points'') to completely recover the frame. This would allow one to classify all smashing localizations via point-set topological methods (such as Stone duality), effectively salvaging the foundational geometric approach to stable homotopy theory in the post-telescope-conjecture world; see \cite{balchin2021big,verasdanis2023stratification} for a comprehensive study of smashing frames in the spatial case.

While the global spatiality of $\Idem(\opsp)$ remains an actively open question, our decomposition in \cref{thmF} allows us to reduce this property entirely to the local monochromatic layers:

\begin{ThmAlpha}[{\cref{sptnspatial}}]
	The smashing frame $\Idem(\opsp)$ of the $\infty$-category of spectra is spatial if and only if for each prime $p$ and each height $n \geq 0$, the smashing frame $$\Idem(\opsp_{T(n)})$$ is spatial.
\end{ThmAlpha}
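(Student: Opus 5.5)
The plan is to obtain spatiality from the frame embeddings of \cref{thmF} together with the corollary that spatiality passes through recollements (\cref{recollementspatial}). Two standard facts about frames will be used throughout. First, a product of frames is spatial if and only if each factor is, with points of the product being points of one factor. Second, spatiality is inherited by frames obtained by Artin gluing, and conversely by the two pieces of a gluing; this is the content of \cref{recollementspatial}.

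\emph{Step 1: reduce from $\opsp$ to the $\opsp_{(p)}$.} By \cref{thmF}(1), $\Idem(\opsp)\simeq\prod_p^{/\mathbb{Q}}\Idem(\opsp_{(p)})$. I expect this restricted product (the product fibred over $\Idem(\opsp_{\mathbb{Q}})=\{0,1\}$) to be realized via the recollement for the smashing ideal cut out by rationalization. Concretely, this is the recollement with open part $\cidem(\opsp)_{\geq \mathbb{Q}}$ and closed part the torsion part, which splits as $\prod_p$ of the $p$-torsion pieces. Applying \cref{recollementspatial} then shows: $\Idem(\opsp)$ is spatial iff $\Idem(\opsp_\mathbb{Q})$ (trivially spatial) and each torsion piece is spatial. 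The same recollement applied inside $\opsp_{(p)}$ shows this is equivalent to each $\Idem(\opsp_{(p)})$ being spatial.

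\emph{Step 2: reduce from $\opsp_{(p)}$ to the $T(n)$-layers.} I would filter by the finite localizations $L_n^f$, which are finite smashing localizations. The recollement for the ideal of $L_{n-1}^f$-acyclics inside $L_n^f\opsp_{(p)}$ has monochromatic layer $\opsp_{T(n)}$ as the relevant piece: it is the smashing localization of the $L_n^f$-local, $L_{n-1}^f$-acyclic category, which is identified with $\opsp_{T(n)}$ up to symmetric monoidal equivalence. Inducting with \cref{recollementspatial} then gives: $\Idem(L_n^f\opsp_{(p)})$ is spatial iff $\Idem(\opsp_{T(k)})$ is spatial for all $k\le n$.

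\emph{Step 3: pass to the limit over $n$.} This is the main obstacle, since $\opsp_{(p)}$ is not a finite gluing. For the direction ``all $T(n)$ spatial $\Rightarrow$ $\Idem(\opsp_{(p)})$ spatial'', I would use the embedding $\Idem(\opsp_{(p)})\hookrightarrow\prod_n\Idem(\opsp_{T(n)})$ from \cref{thmF}(2). A subframe of a spatial frame need not be spatial, so it must be shown that points separate elements. Given $a\not\le b$ in $\Idem(\opsp_{(p)})$, injectivity of the embedding provides some $n$ with the $T(n)$-projections satisfying $a_n\not\le b_n$; here one must check that the embedding reflects order, which is the meaning of ``embedding''. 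Spatiality of $\Idem(\opsp_{T(n)})$ then gives a point $q$ of that frame separating $a_n$ from $b_n$. Composing $q$ with the frame map $\Idem(\opsp_{(p)})\to\Idem(\opsp_{T(n)})$ yields a point of $\Idem(\opsp_{(p)})$ separating $a$ and $b$.

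For the converse, each $\Idem(\opsp_{T(n)})$ appears as an interval/recollement piece of $\Idem(L_n^f\opsp_{(p)})$, which is itself an interval $\cidem(\opsp_{(p)})_{\geq L_n^f}$ by \cref{thmC}(1). Intervals (open or closed sublocales) of spatial frames are spatial, so spatiality descends from $\Idem(\opsp_{(p)})$ to each $\Idem(\opsp_{T(n)})$. The point-composition argument of Step 3 equally handles Step 1's restricted product, so in the write-up I would use it uniformly for the ``if'' direction and the sublocale argument for the ``only if'' direction.
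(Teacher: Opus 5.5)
Your proposal is correct and, in the form you say you would write up, it is the paper's argument: the ``if'' direction is the frame embedding $\Idem(\opsp)\hookrightarrow\prod_p\prod_n\Idem(\opsp_{T(n)})$ of \cref{smspstructure} combined with your point-composition argument, which is exactly \cref{subfrmspatial}, and the ``only if'' direction is that $\Idem(\opsp_{T(n)})\simeq(\Idem(\opsp)_{\geq L_n^f\mathbb{S}})_{\leq L_{n-1}^f\mathbb{S}}$ is a locally closed sublocale of $\Idem(\opsp)$, so your recollement inductions in Steps 1--2 are correct but not needed. One correction: your caveat is backwards, since a subframe of a spatial frame is always spatial (your separation argument proves this), and it is general sublocales that can fail to be spatial; also, in the paper's convention the smashing ideal gives the open part $\cidem(\mcc)_{\leq\mci}$ and the quotient gives the closed part $\cidem(\mcc)_{\geq\mci}$, so your open/closed labels in Step 1 are swapped, which is harmless for spatiality.
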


\subsubsection*{Smashing Fields}

In the unstable setting, the telescope conjecture admits a particularly simple
form in many natural examples. We introduce \emph{smashing fields}, which
can be regarded as a (weaker) analogue of tensor-triangular fields
introduced in \cite{MR3911737}, formulated from the perspective of smashing ideals.

\begin{de}
	Let $\mathcal{C}$ be a symmetric monoidal $\infty$-category whose underlying
	$\infty$-category has an initial object, and assume that the tensor product
	preserves the initial object in each variable. We say that $\mathcal{C}$ is
	a \textbf{smashing field} if its frame of coidempotent objects is the
	two-element frame:
	\[
	\cidem(\mathcal{C})=\{\emptyset,\mathbf{1}\}.
	\]
\end{de}

Equivalently, a smashing field has no nontrivial smashing ideals. Since the zero smashing ideal is atomically generated vacuously, while the unit ideal is generated by the dualizable tensor unit, \emph{every smashing field satisfies the telescope conjecture}. The following examples, discussed in more detail in
\cref{smfieldex}, illustrate the scope of this notion:

\begin{center}
	\renewcommand{\arraystretch}{1.25}
	\small
	\begin{tabularx}{\textwidth}{
			>{\raggedright\arraybackslash}p{0.36\textwidth}
			>{\raggedright\arraybackslash}X}
		\hline
		\textbf{Smashing field} & \textbf{Input} \\
		\hline
		
		\(\mathcal{D}(k)\), where \(k\) is a field
		&
		Neeman's telescope theorem \cite{neeman1992chromatic}
		\\
		\hline
		
		\(\mathcal{D}(\mathbb{Z})_{p\text{-}\op{cpl}}\)
		&
		detected by the conservative functor
		\(-\otimes^{L}\mathbb{F}_p\)
		\\
		\hline
		
		\(\mcs\) and \(\mcs_{\leq n}\) for \(n\geq -1\)
		&
	\cref{tctopos}
		\\
		\hline
		
		\(\mcs_*\)
		&
		\cref{sptsmfield}
		\\
		\hline
		
		\(\cmon(\mcs)\)
		&
		\cref{cmonssmfield}
		\\
		\hline
		
		\(\opsp_{\geq 0}\)
		&
		\cref{noethertelescope}
		\\
		\hline
		
		\(\opsp_{K(n)}\)
		&
	\cite[Theorem 7.5]{hovey1999morava}
		\\
		\hline
		
		\(\op{Sp}_{\mathbb{F}_p}\)
		&
		\cite[Proposition 6.2]{MR3374070}
		\\
		\hline
		
		\(\op{Sp}_{I_p}\)
		&
		\cite[Proposition 6.6]{MR3374070}
		\\
		\hline

	\end{tabularx}
\end{center}

For Cartesian symmetric monoidal categories, the smashing frame is especially
simple: it can be identified with the frame of subterminal objects. This gives
the following characterization of the telescope conjecture for $\infty$-topoi.

\begin{ThmAlpha}[{\cref{tctopos}}]
Let \(\mcx\in\calg(\prl)\) be presentably Cartesian symmetric monoidal
and nontrivial. Then
\[
\cidem_f(\mcx)
=
\{\emptyset_\mcx,*\}.
\]
Consequently, the following conditions are equivalent:
\enu{\item $\mcx$ satisfies the telescope conjecture.
	\item $\mcx$ is a smashing field.
	\item $\mcx_{\leq-1}$ consists of only two elements.
}
\end{ThmAlpha}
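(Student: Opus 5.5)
The proof has two parts. First we identify the atomic objects of a presentably Cartesian closed $\mcx$. Then we use the paper's identification of the smashing frame of a Cartesian $\mcx$ with its frame of subterminal objects $\mcx_{\leq-1}$.

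\textbf{Atomic smashing ideals.} A coidempotent $x\to *$ in a Cartesian monoidal category is the same thing as a subterminal object $U\hookrightarrow *$. The associated smashing ideal $\langle U\rangle$ is the slice-type ideal $\mcx_{/U}\simeq\{y : y\times U\simeq y\}$. An ideal is atomic when it is generated under colimits and $\mcx$-tensoring by $\mcx$-atomic objects. For Cartesian $\mcv=\mcx$, these are exactly the dualizable objects, and I would classify those directly. Suppose $y$ is dualizable with dual $y^\vee$, evaluation $y\times y^\vee\to *$ and coevaluation $*\to y^\vee\times y$. Every object of a Cartesian category is a comonoid via its diagonal, so the triangle identities force $y\times y^\vee\to *$ to split: composing the diagonal with the evaluation exhibits the coevaluation as producing a point.

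I would argue as follows. The functor $y\times-$ has right adjoint $y^\vee\times-$, since dualizability gives $\Map(y\times a,b)\simeq\Map(a,y^\vee\times b)$. But $y\times-$ also has right adjoint $(-)^y$ by Cartesian closedness. Hence $(-)^y\simeq y^\vee\times-$, so $(-)^y$ preserves colimits; in particular it preserves the initial object, giving $\emptyset^y\simeq y^\vee\times\emptyset\simeq\emptyset$. Now either $y\simeq\emptyset$, or we use the fact that $\Map(y,-)$ commutes with products and with $\times y^\vee$. Evaluating at $*$ gives $y^\vee\simeq *^y\simeq *$. Substituting this back, $(-)^y\simeq\id$, so $y\times-\dashv\id$. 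Then $\Map(y\times a,b)\simeq\Map(a,b)$, and the Yoneda lemma gives $y\times a\simeq a$ naturally, via the projection. Taking $a=*$ yields $y\simeq *$.

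\textbf{Conclusion.} The only dualizable objects are therefore $\emptyset$ and $*$, up to equivalence. The atomic smashing ideals are generated by these, so they are $0=\langle\emptyset\rangle$ and $\mcx=\langle *\rangle$. This gives $\cidem_f(\mcx)=\{\emptyset_\mcx,*\}$. Nontriviality of $\mcx$ ensures that these two elements are distinct. For the equivalences among (1)–(3): the telescope conjecture means $\cidem_f(\mcx)=\cidem(\mcx)$. By the formula just proved, this holds exactly when $\cidem(\mcx)$ has two elements, which is the definition of a smashing field. Finally, $\cidem(\mcx)\simeq\mcx_{\leq-1}$ via $U\mapsto(U\to *)$, which gives (3).

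\textbf{Main obstacle.} The delicate step is showing that dualizable implies $\emptyset$ or $*$. In particular, the step ``$*^y\simeq *$'' uses that $(-)^y$ preserves the terminal object, which is automatic for a right adjoint; that is the cleanest route. I would double-check that the dichotomy is not actually needed, since the argument above seems to force $y\simeq *$ even in the case $y\simeq\emptyset$. This would be a contradiction, because $\emptyset$ is dualizable only when $\emptyset$ is self-dual and $\emptyset\times-$ is constant with value $\emptyset$. The resolution is that $\emptyset^\vee$ need not be $*$: the identity $*^\emptyset\simeq *$ holds, but $y^\vee\times *\simeq y^\vee$ only gives $\emptyset^\vee\simeq *$ when $\emptyset$ is actually dualizable. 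So I would carefully redo this step, checking whether $\emptyset$ counts as atomic. If it does not, the zero ideal is still atomic, being vacuously generated by the empty set, and the conclusion is unchanged.
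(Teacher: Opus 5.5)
Your proposal is correct and has the same skeleton as the paper's proof: $\cidem(\mcx)\simeq\mcx_{\leq-1}$, then every dualizable object is terminal, hence the atomic smashing ideals are $0$ and $\mcx$. Where you differ is in how you prove the key lemma.

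The paper computes the zig-zag directly. Since $*$ is terminal, the evaluation is the unique map, and the coevaluation $*\to A\times A^\vee$ is a pair of global points $(a,a')$. So the triangle-identity composite $A\to A\times A^\vee\times A\to A$ is the constant map at $a$. Thus $\id_A$ factors through $*$, and $A$ is a retract of $*$.

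You instead use uniqueness of adjoints: $y\times-\dashv y^\vee\times-$, and a right adjoint preserves the terminal object. Hence $y^\vee\simeq y^\vee\times *\simeq *$, and then $y\simeq *$. This is more formal and avoids pointwise bookkeeping. The detour through $(-)^y$ is unnecessary, since $y^\vee\times-$ is already a right adjoint. So, like the paper's argument, yours needs neither closedness nor presentability. The paper's version has the small advantage of exhibiting the retraction $*\xrightarrow{a}A$ explicitly. Your heuristic sentence about the coevaluation ``producing a point'' is exactly that argument.

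You should, however, clean up the treatment of $\emptyset$. Evaluating at $*$ needs no case split: it applies to every dualizable $y$ and gives $y\simeq *$. Applied to $y=\emptyset$, it says $\emptyset$ is dualizable only if $\emptyset\simeq *$. You can also see this directly: a coevaluation would be a map $*\to\emptyset\times\emptyset^\vee\simeq\emptyset$, which already forces $*\simeq\emptyset$.

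So for nontrivial $\mcx$ the only dualizable object is $*$, not ``$\emptyset$ and $*$''. There is no contradiction to resolve. Your proposed resolution about $\emptyset^\vee$ presupposes the very dualizability that the argument refutes.

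None of this affects the result. As you note, the zero ideal is atomic vacuously, so $\cidem_f(\mcx)=\{\emptyset_\mcx,*\}$, and (1)--(3) follow as you describe.
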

\subsubsection*{Telescope Conjecture in the Additive setting}
Transitioning to the additive and prestable settings, our analysis builds upon the framework of generalized higher almost algebra developed in \cite[Theorem~D]{hattt}. A fundamental result of this framework, generalizing the main theorem of \cite{hebestreit2024note}, establishes that for a presentably symmetric monoidal, left-complete prestable $\infty$-category $\mcc$, its smashing frame can be canonically identified with the frame of idempotent ideals of $\pi_0\mb{1}_{\mcc}$:
\[
\cidem(\mcc)\simeq\cidem(\idl(\mcc^\heartsuit)).
\]

Leveraging this identification, we demonstrate that for a \textbf{projectively rigid} additive $\infty$-category, namely, one that is compact projectively generated and where dualizable objects coincide with compact projective objects, the telescope conjecture reduces entirely to its heart. This indicates that the telescope conjecture in the projectively rigid setting is governed by purely algebraic conditions, devoid of any chromatic or periodic phenomena.

\begin{ThmAlpha}[{\cref{comparisontcinc}}]
Let $\mcc\in \calg(\prlad)$ be a projectively rigid \spgeq-algebra. Then we have the following commutative diagram, where the vertical maps are equivalences:
\[
\begin{tikzcd}
	\cidem_f(\mcc) \arrow[d, "\sim"] \arrow[r, hook]           & \cidem(\mcc) \arrow[d, "\pi_0"', "\sim"]           \\
	\cidem_f(\mcc^\heartsuit) \arrow[r, hook] \arrow[d, "\sim"] & \cidem(\mcc^\heartsuit) \arrow[d, "\im"', "\sim"] \\
	\idl_p(\mcc^\heartsuit) \arrow[r, hook]                     & \cidem(\idl(\mcc^\heartsuit)),
\end{tikzcd}
\]
where $\idl_p(\mcc^\heartsuit)$  denotes the poset of Pierce ideals of $\pi_0\mb{1}$, respectively.
	In particular, the telescope conjecture holds for $\mcc$ if and only if it holds for its heart $\mcc^\heartsuit$.
\end{ThmAlpha}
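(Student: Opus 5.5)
We argue square by square. The right column is essentially given: the right vertical composite $\cidem(\mcc)\to\cidem(\mcc^\heartsuit)\to\cidem(\idl(\mcc^\heartsuit))$ should be the identification $\cidem(\mcc)\simeq\cidem(\idl(\mcc^\heartsuit))$ of \cite[Theorem~D]{hattt} recalled above. We apply it both to $\mcc$ and to $\mcc^\heartsuit$; the heart is a $1$-category and hence trivially left-complete as a $0$-truncated prestable category. This gives that $\pi_0\colon \cidem(\mcc)\to\cidem(\mcc^\heartsuit)$ and $\im\colon \cidem(\mcc^\heartsuit)\to \cidem(\idl(\mcc^\heartsuit))$ are equivalences. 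Concretely, a coidempotent $x\to\mb{1}$ is sent to $\pi_0x\to\pi_0\mb{1}$, and then to the image ideal $I\subset \pi_0\mb{1}$, which is idempotent. Left-completeness of $\mcc$ itself holds: $\mcc$ is compact projectively generated, so $\mcc\simeq\mathcal{P}_\Sigma(\mcc^{\mathrm{cp}})$, and this category is left-complete.

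For the left column, we must show that $\pi_0$ and $\im$ carry atomic smashing ideals exactly onto atomic ones, and that these land precisely in the Pierce ideals $\idl_p(\mcc^\heartsuit)$. We expect $\idl_p$ to be the ideals generated by idempotents of $\pi_0\mb{1}$, equivalently the idempotent ideals generated by compact data.

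The first step is to apply \cref{localrigatgen} with $\mcv=\spgeq$. Since $\mcc$ is projectively rigid, it is locally rigid over $\spgeq$ and $\spgeq$-atomically generated, with atomics being the compact projectives, which coincide with the dualizables. Hence a smashing ideal $\mci$ is atomic if and only if it is generated by compact projective objects of $\mcc$ lying in $\mci$. We get the analogous statement for $\mcc^\heartsuit$ over $\mathrm{Ab}=\spgeq^\heartsuit$: its dualizables are the $\pi_0$ of compact projectives, namely the f.g. projective $\pi_0\mb{1}$-type objects.

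The second step is to translate generation by a dualizable $P$ into ideal data. For dualizable $P$ in $\mci=\langle x\rangle$, the evaluation and coevaluation exhibit $\mb{1}\to P\otimes P^\vee\to \mb{1}$. The resulting trace ideal in $\pi_0\mb{1}$ is generated by finitely many elements and is idempotent, so it is the ideal $e\pi_0\mb{1}$ of an idempotent element $e$; finitely generated idempotent ideals of a commutative ring are principal on an idempotent. In the opposite direction, an idempotent $e$ splits $\mb{1}=e\mb{1}\oplus(1-e)\mb{1}$ in the additive idempotent-complete $\mcc$, and $e\mb{1}$ is a dualizable generator of a clopen smashing ideal. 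Joins of such ideals give exactly the Pierce ideals, i.e. unions of ideals generated by idempotents. The same computation works verbatim in $\mcc^\heartsuit$, since idempotents of $\pi_0\mb{1}_\mcc$ and of $\mb{1}_{\mcc^\heartsuit}$ coincide.

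The third step is commutativity of the squares. This is immediate, because the horizontal maps are inclusions and the vertical maps on the left are restrictions of those on the right.

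The main obstacle is the second step: showing that every atomic smashing ideal is generated by summands $e\mb{1}$. This means that a compact projective $P\in\mci$ generates the same ideal as $e_P\mb{1}$, where $e_P$ is the idempotent cutting out the support of $P$. We would first try to show that $P\simeq P\otimes e_P\mb{1}$ and that $e_P\mb{1}$ is a retract of $P\otimes P^\vee$, using the trace map. If that fails, we would instead work through the equivalence $\cidem(\mcc)\simeq\cidem(\idl)$ and compare the idempotent ideal of $\langle P\rangle$ with the trace ideal of $\pi_0P$ over $\pi_0\mb{1}$, which is generated by an idempotent because $\pi_0P$ is f.g. projective.

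The final claim follows by reading off the diagram: the top inclusion is an equivalence if and only if the middle one is.
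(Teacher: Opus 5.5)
Your Step~2 is not merely the hard step. In the generality of the statement it is false.

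You treat $\pi_0\mb{1}$ as a commutative ring. For a general projectively rigid $\spgeq$-algebra, however, it is the unit of the abelian category $\mcc^\heartsuit$, and its ``ideals'' are subobjects of that unit. The ring-theoretic facts you invoke are then unavailable: that finitely generated idempotent ideals are cut out by idempotents, and that trace ideals of finite projectives are too.

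Take $\mcc=\opsp_{C_p,\geq0}$ (\cref{counterex3}):
\begin{itemize}
\item the smashing ideal $\langle\Sigma^\infty_+C_p\rangle$ is atomic, since it is generated by a compact projective self-dual object;
\item its idempotent ideal is the sub-Mackey functor $\underline I\subset\underline A=\pi_0\mb{1}$ generated by $1_e$, which is also the trace ideal of $\pi_0\Sigma^\infty_+C_p$;
\item $\underline I$ is finitely generated and idempotent, yet $\End(\underline A)=A(C_p)=\mathbb{Z}[x]/(x^2-px)$ has no idempotents besides $0$ and $1$.
\end{itemize}
So atomic smashing ideals need not be generated by summands $e\mb{1}$. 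Equivalently, $\op{Z}(\cidem(\mcc))\subsetneq\cidem_f(\mcc)$ can occur; see the remark after \cref{main2}. Moreover, \cref{counterex2} gives a finitely generated idempotent ideal that is not Pierce. This rules out your alternative reading as well (``idempotent ideals generated by compact data'').

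Hence your identification of both $\cidem_f(\mcc)$ and $\cidem_f(\mcc^\heartsuit)$ with idempotent-generated ideals fails. With it goes your proof of the top-left equivalence, which is the real content of the theorem. Relatedly, you have misread $\idl_p$. By \cref{defpierce}, a Pierce ideal is \emph{defined} as one whose smashing ideal of $\mcc^\heartsuit$ is atomic, so the bottom-left equivalence is tautological. The description by idempotents is special to $\modu_R(\spgeq)$ and is proved separately in \cref{main2}.

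What you need is a direct comparison of atomic generation in $\mcc$ and in $\mcc^\heartsuit$ that never mentions idempotents. The paper lifts generators, as follows.
\begin{itemize}
\item Suppose the heart of $\amodu_{(\mb{1},I)}(\mcc)$ is generated by compact $1$-projectives $P^I_\alpha$. Since $\pi_0\colon\mcc^{\op{cproj}}\to(\mcc^\heartsuit)^{1\text{-}\op{cproj}}$ is an equivalence on homotopy categories, one obtains compact projectives $P_\alpha\in\mcc$ and a $\pi_0$-surjection $\bigoplus_\alpha P_\alpha\to X$.
\item The $P_\alpha$ lie in $\amodu_{(\mb{1},I)}(\mcc)=\ker\bigl(\mcc\to\modu_{\mb{1}/I^\infty}(\mcc)\bigr)$. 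Indeed, their images there are projective with vanishing $\pi_0$, hence zero.
\item In a separated Grothendieck prestable category, such $\pi_0$-surjections give compact projective generation \cite[Theorem C.2.1.6]{sag}.
\end{itemize}
The converse direction is base change along $\mcc\to\mcc^\heartsuit$.

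You actually had the key input already in Step~1. The dualizables of $\mcc^\heartsuit$ are exactly the $\pi_0P$ with $P\in\mcc^{\op{cproj}}=\mcc^d$, so \cref{atsmfrminjsurj}(2) shows that $\cidem_f(\mcc)\to\cidem_f(\mcc^\heartsuit)$ is surjective. It is injective because it is the restriction of the equivalence $\pi_0\colon\cidem(\mcc)\to\cidem(\mcc^\heartsuit)$.
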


Additionally, for a presentably symmetric monoidal separated Grothendieck
prestable $\infty$-category $\mcc$, we introduce the \textbf{Serre smashing
	frame} $\cidem_s(\mcc)\subset \cidem(\mcc)$, consisting of the Serre smashing
ideals. We show in \cref{pureidlfrm} that this is a subframe.

We prove the following spatiality result for the Serre smashing frame, in
analogy with the classical case of pure frames (\cref{idlusptial}).
\begin{ThmAlpha}[{\cref{serresmfrmspatial}}]
	Let $\mcc\in\calg(\prlad)$ be a projectively rigid $\spgeq$-algebra. Then
	the Serre smashing frame $\cidem_s(\mcc)$ is spatial, and
	$\spec\big(\cidem_s(\mcc)\big)$ is quasi-compact.
\end{ThmAlpha}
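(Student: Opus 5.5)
Throughout, write $R=\pi_0\mathbf{1}_\mcc$ and $A=\mcc^\heartsuit$. The plan is to reduce spatiality of $\cidem_s(\mcc)$ to a statement about a frame of ideals in the commutative ring $R$, and then apply the classical spatiality of pure-type frames (\cref{idlusptial}).

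\textbf{Step 1 (transfer to the heart).} Since $\mcc$ is a projectively rigid $\spgeq$-algebra, it is compact projectively generated, hence separated and left-complete. So the identification $\cidem(\mcc)\simeq\cidem(\idl(\mcc^\heartsuit))$ from \cite[Theorem~D]{hattt} applies, as does the diagram of \cref{comparisontcinc}. I would first check that a smashing ideal $\mci\hookrightarrow\mcc$ is Serre exactly when its heart $\mci^\heartsuit\subset A$ is a Serre (closed under subobjects, quotients and extensions) smashing ideal of $A$. This should follow because $\mci$ is prestable with $\mci^\heartsuit=\mci\cap A$, and $\mci$ is determined by its heart via left-completeness.

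\textbf{Step 2 (identify with ideals of $R$).} Next I would identify the image of $\cidem_s(\mcc)$ under $\im$ with the poset of idempotent ideals $I\subseteq R$ such that $R/I$ is flat over $R$. Equivalently, these are the \emph{pure} ideals $I$, those with $I\cdot x=x$ for every $x\in I$ in the elementwise sense, i.e. $R/I$ a flat epimorphic quotient. The key check is that the smashing ideal generated by $I$ is Serre iff tensoring with $R/I$ is exact, i.e. iff $R/I$ is flat. By projective rigidity, dualizable objects of $\mcc$ are compact projective, so the tensor product on $A$ is computed by the right-exact tensor product over $R$ on compact projectives. Exactness of $-\otimes(\text{coidempotent})$ on $A$ then translates to flatness of $I$, equivalently of $R/I$. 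I would verify this on compact projective generators and extend by filtered colimits. The subframe structure is already supplied by \cref{pureidlfrm}.

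\textbf{Step 3 (classical input).} Once $\cidem_s(\mcc)$ is identified with the frame of pure ideals of the commutative ring $R$, spatiality is the classical theorem (\cref{idlusptial}). The points are the maximal pure ideals, and the associated space is the space $\op{Max}$ of the pure spectrum. Quasi-compactness of $\spec(\cidem_s(\mcc))$ follows because the top element $R$ is compact in the frame of pure ideals. Indeed, if $R=\bigvee I_\alpha$, then $1$ lies in a finite sum, and pure ideals are closed under finite sums.

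\textbf{Main obstacle.} I expect the hard part to be Step 2, namely showing that the Serre condition on the categorical side matches exactly the purity/flatness condition on the ring. The direction from Serre to flat needs that exactness of localization on the heart detects flatness of $R/I$ as an $R$-module. This in turn uses that $A$ is generated by compact projectives whose $\Hom$ out of $\mathbf{1}$ reduces to $R$-module data. I would first try to prove it by testing on the unit and on dualizable compact projectives, using rigidity to move tensoring with a dualizable object to the other side.
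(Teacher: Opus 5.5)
Your outline has the same shape as the paper's proof, which combines \cref{pureserreequi} with \cref{purefrmspatial}. The gap is in Steps 2--3.

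\textbf{The commutative-ring reduction.} You treat $R=\pi_0\mathbf 1_\mcc$ as a commutative ring and $\mcc^\heartsuit$ as $R$-modules, and then invoke the classical result \cref{idlusptial}. That is only legitimate when $\mcc=\modu_R(\spgeq)$ for a connective $\mathbb{E}_\infty$-ring. For a general projectively rigid $\spgeq$-algebra, $\pi_0\mathbf 1$ is merely a commutative algebra in the abelian symmetric monoidal category $\mca=\mcc^\heartsuit$. Its ideals are subobjects of the unit in $\mca$, and products and idempotence are computed with the tensor product of $\mca$. For $\opsp_{G,\geq0}$ the heart is Mackey functors, and idempotence of a Green ideal can be produced by transfers even when it fails pointwise (see \cref{counterex2}, \cref{counterex3}). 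For $\mathrm{Syn}_{R,\geq0}$ the heart is graded $R_*$-modules. So the elementwise description of purity ($x=xy$ with $y\in I$) is unavailable, and that is exactly what the proof of \cref{idlusptial} via \cref{purenilrad} rests on.

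What you need instead is the categorical statement \cref{purefrmspatial} for the $1$-projectively rigid heart:
\begin{enumerate}
\item Compact $1$-projectives are dualizable, hence flat, which gives the intersection criterion \cref{charpureidl} tested on compact $1$-projectives.
\item From this, pure ideals are determined by their radicals (\cref{radpureidls}), using compactness of finitely generated ideals and the description of $\sqrt{-}$ in \cref{idlprefrm}.
\item Hence $I\mapsto\sqrt I$ embeds $\idl_u(\mca)$ into the coherent, hence spatial, frame $\idl(\mca)_{\op{rad}}$, and \cref{subfrmspatial} gives spatiality.
\item Quasi-compactness comes from compactness of the unit ideal in $\idl(\mca)$.
\end{enumerate}

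\textbf{Step 1 and the location of the difficulty.} Step 1 is asserted rather than proved, and you have the hard direction backwards. The inclusion of a smashing ideal need not be left exact. So ``$\mci^\heartsuit=\mci\cap\mca$, and $\mci$ is determined by its heart'' does not by itself relate the Serre condition on $\mcc$ to flatness in the heart. The relevant statement is \cref{equidefpureidls}: $\mathbf 1/I^\infty$ is flat over $\mathbf 1$ if and only if $I$ is pure.

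The direction you single out as the main obstacle, Serre $\Rightarrow$ pure, is immediate from \cref{truncatedflat}, since $\tau_{\le 0}(\mathbf 1/I^\infty)\simeq\pi_0\mathbf 1/I$. The converse is the genuinely hard one. The paper proves it using \cref{discflat} together with L-\'etale lifting and a cotangent-complex argument from \cite{hattt}. Nothing in your plan controls the higher homotopy of $\mathbf 1/I^\infty\otimes M$ for discrete $M$, and heart-level checks on compact projectives cannot do so.

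For this theorem you do not actually need the converse. The easy direction, together with \cref{pureidlfrm}, \cref{smashidlidemidliden} and \cref{thmpuresubfrm}, already exhibits $\cidem_s(\mcc)$ as a subframe of $\idl_u(\mca)$ with the same top element. That suffices once \cref{purefrmspatial} is available; the paper instead uses the full identification \cref{pureserreequi}.
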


This structure leads to the \textbf{flat telescope conjecture} for $\mcc$,
which asks whether the canonical inclusion
\[
\cidem_s(\mcc)\hookrightarrow\cidem(\mcc)
\]
is an isomorphism.

When \mcc is the \infcat of connective modules over a connective \einfring, we show that this provides an intermediate structural layer for the telescope conjecture, and that the (flat) telescope conjecture for \mcc reduces to algebraic conditions on $\pi_0$.

\begin{ThmAlpha}[{\cref{main2}, \ref{atsmidlisserre}}]
	Let $R$ be a connective \ein-ring. Then any atomic smashing ideal of $\modu_R^{\op{cn}}=\modu_R(\spgeq)$ is a Serre smashing ideal. Consequently, the Serre smashing frame resides as an intermediate structural layer of the telescope inclusion:
	$$\begin{tikzcd}[column sep=0em]
		& \cidem_s(\modu_R^{\op{cn}}) \arrow[rd, dashed, hook] &                           \\
		\cidem_f(\modu_R^{\op{cn}}) \arrow[rr, hook] \arrow[ru, dashed, hook] &                                                      & \cidem(\modu_R^{\op{cn}})
	\end{tikzcd}$$
	Moreover, this hierarchy of frames corresponds precisely to the following inclusions among idempotent ideals of $\pi_0 R$:
	$$\{\text{Pierce ideals}\} \subset \{\text{pure ideals}\} \subset \{\text{idempotent ideals}\}.$$
\end{ThmAlpha}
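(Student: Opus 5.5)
This theorem has two parts. The first is the inclusion $\cidem_f(\modu_R^{\op{cn}})\subset\cidem_s(\modu_R^{\op{cn}})$. The second identifies the three frames with Pierce, pure and idempotent ideals of $\pi_0R$. I would prove the second identification first and deduce the inclusion from it.

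\textbf{Step 1: the outer frame.} The paper identifies the smashing frame of a presentably symmetric monoidal, left-complete prestable $\infty$-category with idempotent ideals of $\pi_0\mathbf 1$, via $\cidem(\mcc)\simeq\cidem(\idl(\mcc^\heartsuit))$. The category $\modu_R^{\op{cn}}$ is left complete with heart $\modu_{\pi_0R}^\heartsuit$, so $\cidem(\modu_R^{\op{cn}})$ is identified with the idempotent ideals $I\subset\pi_0R$ by $(x\to R)\mapsto\im(\pi_0x\to\pi_0R)$.

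\textbf{Step 2: the atomic frame.} The category $\modu_R^{\op{cn}}$ is projectively rigid over $\spgeq$: it is generated by $R$, its compact projectives are retracts of finite free modules, and these are exactly the dualizable objects. So the comparison theorem stated above (\cref{comparisontcinc}) identifies $\cidem_f$ with the Pierce ideals of $\pi_0R$, namely ideals generated by idempotents.

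\textbf{Step 3: the Serre frame.} I would show that under Step 1 a smashing ideal $\langle x\rangle$ is Serre exactly when $I=\im(\pi_0x\to\pi_0R)$ is pure, i.e.\ $R/I$ is flat, or equivalently every $a\in I$ satisfies $a=ab$ for some $b\in I$.

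For the direction ``Serre $\Rightarrow$ pure'', use that a Serre smashing ideal is closed under subobjects in the heart. Then the essential image of the idempotent comonad $x\otimes-$ on the heart is the class $\{M: IM=M\}$, and closure under submodules forces $I$ to be pure. Concretely, take $M$ to be a cyclic submodule $(a)\subset I$.

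For the converse, if $I$ is pure then $x\otimes M\to M$ in the heart is the inclusion $IM=I\otimes M\hookrightarrow M$, using flatness of $R/I$, and $IM$ is exact in $M$. This gives closure under subobjects and quotients.

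I expect Step 3 to be the main obstacle. One has to translate the precise definition of ``Serre smashing ideal'' from the separated Grothendieck prestable setting into the heart, and verify that $\pi_0$ of the coidempotent behaves like $I\otimes_{\pi_0R}-$ on discrete modules. The relevant Tor vanishing is $\op{Tor}_1(R/I,-)=0$, which is exactly flatness of $R/I$. I would first try to reduce everything to the heart using the equivalence $\cidem(\mcc)\simeq\cidem(\mcc^\heartsuit)$ of \cref{comparisontcinc}. After that reduction the statement becomes the classical fact that pure ideals correspond to hereditary torsion-free-compatible TTF classes closed under submodules.

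\textbf{Step 4: the inclusion.} A Pierce ideal $(e)$ with $e^2=e$ is pure, since $R/(e)\cong(1-e)R$ is projective. So Pierce ideals are pure, and pure ideals are idempotent because $a=ab$ with $b\in I$ gives $I=I^2$. Transporting these inclusions through Steps 1–3 gives the dashed inclusions $\cidem_f\subset\cidem_s\subset\cidem$. In particular every atomic smashing ideal is Serre.
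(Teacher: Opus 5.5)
Your overall plan is the paper's own: identify the three frames with idempotent, Pierce and pure ideals of $\pi_0R$, then invoke the classical inclusions (\cref{main2}, \cref{pureserreequi}, \cref{pierceincpure}). However, both identifications you need are asserted rather than proved. In Step 2 you misread \cref{comparisontcinc}. It identifies $\cidem_f(\mcc)$ with $\idl_p(\mcc^\heartsuit)$, where ``Pierce'' is \emph{defined} in \cref{defpierce} to mean that the associated smashing ideal of the heart is atomic. That these are exactly the ideals generated by idempotents is the separate result \cref{main2}. Its nontrivial direction, atomic $\Rightarrow$ generated by idempotents, is precisely the one your chain atomic $\Rightarrow$ Pierce $\Rightarrow$ pure $\Rightarrow$ Serre requires. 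The paper proves it by a finiteness argument:
\begin{itemize}
\item $\cidem_f(\mcc)$ is coherent (\cref{atsmfrmcoh}), so an atomic smashing ideal is a filtered join of compact ones;
\item a compact one is generated by a single compact projective, so its idempotent ideal is finitely generated (\cref{compatsmidl});
\item a finitely generated idempotent ideal is generated by one idempotent (\cref{fgidemidl}).
\end{itemize}
(On the heart you could instead argue as follows. A finitely generated projective $P$ with $IP=P$ has idempotent trace ideal $(e)$ and generates the same localizing ideal as $e\,\pi_0R$. Nakayama gives $a\in I$ with $(1-a)P=0$, so $e=ae\in I$.) In Step 4 you also need that a Pierce ideal generated by infinitely many idempotents is pure. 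This holds, for example, because finitely many idempotents generate the same ideal as a single one.

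The more serious gap is the direction pure $\Rightarrow$ Serre in Step 3, which you only treat in the heart. Being Serre means that $\modu_R(\spgeq)\to\modu_{R/I^\infty}(\spgeq)$ is left exact, i.e.\ that $R\to R/I^\infty$ is flat. This is a statement about $\pi_n(R/I^\infty\otimes_R M)$ for all $n$, not about closure properties of $\{M : IM=M\}$ inside $\modu_{\pi_0R}(\op{Ab})$. The poset equivalence $\cidem(\mcc)\simeq\cidem(\mcc^\heartsuit)$ does not transport the Serre condition, so your proposed reduction to the heart does not go through. The paper handles this step in \cref{equidefpureidls}: it lifts the flat L-\'etale map $\pi_0R\to\pi_0R/I$ to $R$ and compares the lift with $R/I^\infty$ via cotangent complexes. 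For connective $\mathbb{E}_\infty$-rings there is a quicker fix. If $I$ is pure then $\pi_0R/I\cong(1+I)^{-1}\pi_0R$, so the localization $R[(1+I)^{-1}]$ is a flat idempotent $R$-algebra, surjective on $\pi_0$ with $\pi_0$-kernel $I$. By the uniqueness in \cref{almostalg} it is $R/I^\infty$. For the first assertion alone it even suffices to know that $I$ is generated by idempotents: then $R/I^\infty$ is a filtered colimit of the flat localizations $R[(1-e)^{-1}]$, hence flat by \cref{flatmapsclosure}. Your converse, Serre $\Rightarrow$ pure, is fine, and follows more directly by truncation (\cref{truncatedflat}).
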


More explicitly, for an idempotent ideal $I\subset\pi_{0}R$, the \infcat $\amodu_{(R,I)}(\spgeq)$ of connective almost modules is compact projectively generated if and only if $I$ is a Pierce ideal; similarly, $\amodu_{(R,I)}(\spgeq)$ is a Serre smashing ideal if and only if $I$ is a pure ideal. In particular, $\modu_R(\spgeq)$ satisfies the telescope conjecture if and only if every idempotent ideal of $\pi_0R$ is a Pierce ideal; it satisfies the flat telescope conjecture if and only if every idempotent ideal of $\pi_0R$ is a pure ideal. 

Geometrically, the assignment $I\mapsto V(I)^c$ relates Pierce and pure ideals to the following classes of open subsets of the Zariski spectrum $\spec(\pi_0R)$:
\[
\begin{tikzcd}[column sep=-0.5em] 
	\{\text{Pierce ideals}\} \arrow[d, "V(-)^c"', "\sim"] & \subset & \{\text{pure ideals}\} \arrow[d, "V(-)^c"', "\sim"] \\
	\{\text{Pierce open subsets}\}                        & \subset & \left\{\begin{tabular}{c} open subsets closed \\ under specializations \end{tabular}\right\}
\end{tikzcd}
\] 
where a \textbf{Pierce open subset} means an open subset that can be written as a union of clopen subsets.

We end by establishing a criterion for when the \infcat $\shvhyp(X;\spgeq)$ of hypercomplete \spgeq-sheaves  satisfies the telescope conjecture: 

\begin{ThmAlpha}[{\cref{pierceAbshv}, \ref{pierceSpgeq0shv}}]
	Let $X$ be a space. Let $\underline{\mathbb{S}}^{\wedge}\in \shvhyp(X;\spgeq)$ denote its unit. Then 
	\enu{
		\item There are natural equivalences from the poset $	\mco(X)$ of open subsets
		\[
		\mco(X)\xrightarrow{\sim}\cidem(\shvhyp(X;\spgeq))\xrightarrow{\sim}\cidem(\shv(X;\op{Ab}))
		\]
		given by $U\mapsto j^U_!(\underline{\mathbb{S}}_U^{\wedge})\mapsto j^U_!(\underline{\mathbb{Z}}_U)$.
		\item  Every smashing ideal  of $\shvhyp(X;\spgeq)$ is a Serre smashing ideal.
		\item Furthermore, the smashing ideal $\shvhyp(U;\spgeq)\xhookrightarrow{j^U_!}\shvhyp(X;\spgeq)$ is an atomic smashing ideal if and only if $U\subset X$ is a Pierce open subset.  In particular, the equivalences in (1) restrict to  equivalences $$\mco_p(X)\xrightarrow{\sim}\cidem_f(\shvhyp(X;\spgeq))\xrightarrow{\sim} \cidem_f(\shv(X;\op{Ab}))$$ from the poset $\mco_p(X)$ of Pierce open subsets. 
	} 
\end{ThmAlpha}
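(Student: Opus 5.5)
The plan is to prove the three parts in order, passing through the heart via the identification $\cidem(\mcc)\simeq\cidem(\idl(\mcc^\heartsuit))$ for left-complete prestable $\mcc$ recalled in the introduction.

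\emph{Step (1).} For an open $j^U\colon U\hookrightarrow X$, the object $j^U_!(\underline{\mathbb{S}}^\wedge_U)\to\underline{\mathbb{S}}^\wedge$ is coidempotent. This follows from the projection formula $j_!j^*(-)\simeq j_!(\underline{\mathbb{S}}_U^\wedge)\otimes(-)$ and from $j^*j_!\simeq\id$. Its smashing ideal is the essential image of $j^U_!$, namely $\shvhyp(U;\spgeq)$. The assignment is monotone, and it reflects order because stalks recover $U$ as the locus where $\pi_0$ of the coidempotent is nonzero.

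For surjectivity, note that $\shvhyp(X;\spgeq)$ is left complete. So the cited identification gives $\cidem(\shvhyp(X;\spgeq))\simeq\cidem(\idl(\shv(X;\op{Ab})))$, and it suffices to classify idempotent subsheaves $I\subset\underline{\mathbb{Z}}$. Stalkwise, $I_x\subset\mathbb Z$ is an idempotent ideal, hence $0$ or $\mathbb Z$. Put $U=\{x : I_x=\mathbb Z\}$. This set is open: the section $1$ of $I_x$ lifts to a neighbourhood $V$, forcing $I|_V=\underline{\mathbb Z}_V$. Subsheaves of a sheaf on a topological space are determined by their stalks, so $I=j^U_!\underline{\mathbb Z}_U$. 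This gives both equivalences in (1) and shows they are compatible via $\pi_0$.

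\emph{Step (2).} By (1), every smashing ideal has the form $\shvhyp(U;\spgeq)$, embedded by $j^U_!$. Its heart is the full subcategory of abelian sheaves supported on $U$, i.e.\ those whose stalks vanish off $U$. This class is closed under subobjects, quotients and extensions, since these are checked stalkwise. Hence it is a Serre subcategory, which is the defining condition for a Serre smashing ideal.

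\emph{Step (3).} I expect this to be the main obstacle, specifically the "only if" direction.

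For "if": suppose $U=\bigcup V_i$ with each $V_i$ clopen. Then each $j^{V_i}_!\underline{\mathbb{S}}^\wedge_{V_i}$ is a direct summand of the unit, hence dualizable. Together these generate $\shvhyp(U;\spgeq)$ as a $\mcv$-submodule, because their coidempotents have join $U$ in the frame $\mco(X)$.

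For "only if": let $D$ be a dualizable object in the ideal. I would show that its support $W=\{x : D_x\neq0\}$ is clopen and that $\langle D\rangle=\langle j^W_!\underline{\mathbb{S}}^\wedge_W\rangle$.
\begin{itemize}
\item Closedness of $X\setminus W$: if $D_x=0$, the coevaluation $\underline{\mathbb S}^\wedge\to D\otimes D^\vee$, which encodes $\id_D$, has zero germ at $x$ on $\pi_0$. Hence $\id_D=0$ on a neighbourhood of $x$.
\item Openness of $W$, and the generation claim: if $D_x\neq0$, the stalk is a nonzero dualizable connective spectrum. Then $\pi_0(\mathrm{ev})\colon\pi_0(D\otimes D^\vee)\to\underline{\mathbb Z}$ is surjective at $x$. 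Lifting $1$ to a section on a neighbourhood exhibits the unit locally as a retract of $D\otimes D^\vee$.
\end{itemize}
Gluing these local retractions inside the ideal, which is possible because smashing ideals are local by (1), shows that $\langle D\rangle$ contains $j^W_!\underline{\mathbb S}^\wedge_W$, and conversely $D$ lies in $\shvhyp(W;\spgeq)$.

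Thus an atomic ideal corresponds to a union of clopen sets, i.e.\ a Pierce open subset. The statement for $\shv(X;\op{Ab})$ then follows by applying $\pi_0$, as in \cref{comparisontcinc}. The delicate points in this step are the local-to-global gluing of the retractions and checking that stalks of dualizable objects are dualizable in $\spgeq$.
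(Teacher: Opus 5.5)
Your overall plan matches the paper's: classify idempotent subsheaves of $\underline{\mathbb Z}$ stalkwise, and show that dualizable sheaves have clopen support. Your Step 3 is a reasonable alternative to the paper's route. The paper invokes \cref{dualshvlocconst} and then the stalk-vanishing ideal $\mathcal K_x=\{F\mid F_x=0\}$; you instead get clopenness of $\operatorname{Supp}(D)$ directly from germs of $\mathrm{coev}$ and $\mathrm{ev}$, and even identify $\langle D\rangle$ with $\langle j^W_!\underline{\mathbb S}^\wedge_W\rangle$. The two points you call delicate are not. The gluing is just the fact, from (1), that joins of smashing ideals correspond to unions of opens. Stalks of dualizable objects are dualizable because $x^*$ is symmetric monoidal. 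Also, your first bullet proves that $X\setminus W$ is open, not closed. There are, however, gaps elsewhere.

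\textbf{Step 1 (main gap).} You assert that $\shvhyp(X;\spgeq)$ is left complete so that you can apply \cref{smashidlidemidliden}. For an arbitrary space this is not justified. Hypercompleteness does not in general imply convergence of Postnikov towers; the standard sufficient conditions are finite-dimensionality hypotheses. Without it, nothing in your argument shows that an arbitrary coidempotent $C\to\underline{\mathbb S}^\wedge$ is determined by the ideal $\operatorname{im}(\pi_0C\to\underline{\mathbb Z})$. So surjectivity of $\mco(X)\to\cidem(\shvhyp(X;\spgeq))$ is unproved.

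The paper avoids left completeness by using that the stalk functors are jointly conservative and symmetric monoidal on the hypercomplete category. The argument then runs as follows:
\begin{itemize}
\item By \cref{conservinjfrms} and \cref{noethertelescope}, $\cidem(\shvhyp(X;\spgeq))$ embeds into $\prod_x\{0,\mathbb S\}$.
\item The locus $U=\{x\mid C_x\simeq\mathbb S\}$ is open. Lift $1\in\pi_0C_x=\colim_{V\ni x}\pi_0C(V)$ to a section over some $V$. After shrinking $V$, this section splits the counit, so $C|_V\simeq\underline{\mathbb S}^\wedge_V$ by \cref{retractcidem}.
\item The maps $C\leftarrow C\otimes j^U_!\underline{\mathbb S}^\wedge_U\to j^U_!\underline{\mathbb S}^\wedge_U$ are stalkwise equivalences, hence equivalences.
\end{itemize}
The abelian equivalence then follows from \cref{idemidlabel} together with your stalk argument.

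\textbf{Step 2.} This step misstates the definition. A Serre smashing ideal is one for which $\mcc\to\mcc/\mci$ is left exact (\cref{defserre}). That condition also involves non-discrete objects; the heart being a Serre subcategory is only a consequence. The fix is easy here. The quotient is base change along $R=\cofib(j^U_!\underline{\mathbb S}^\wedge_U\to\underline{\mathbb S}^\wedge)$, whose stalks are $0$ or $\mathbb S$, so left exactness of $R\otimes-$ can be checked stalkwise. This is exactly the paper's argument.

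\textbf{Transfer to $\shv(X;\op{Ab})$.} You cannot obtain the abelian statement from \cref{comparisontcinc}, because that theorem requires projective rigidity, which fails for general $X$. Applying $\pi_0$ only gives an injection $\cidem_f(\shvhyp(X;\spgeq))\hookrightarrow\cidem_f(\shv(X;\op{Ab}))$. For the reverse inclusion, rerun your support argument in $\shv(X;\op{Ab})$. It works verbatim, since $\mathbb Z$ is compact and nonzero dualizable abelian groups are finite free, so the trace is surjective. The paper in fact proves the abelian case first and deduces the spectral one by the same argument.
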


\begin{cor}[{\cref{shvabtc}}]
	Let $X$ be a space. Then the separated Grothendieck prestable \infcat $\shvhyp(X;\spgeq)$ always satisfies the flat telescope conjecture. Furthermore, the following are equivalent:
\enu{
	\item $\shvhyp(X;\spgeq)$ satisfies the telescope conjecture;
	\item $\shv(X;\op{Ab})$ satisfies the telescope conjecture;
	\item every open subset of $X$ is a Pierce open subset.
}
In particular, these hold when $X$ is a profinite space.
\end{cor}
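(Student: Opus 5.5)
The corollary follows by combining the preceding theorem (\cref{pierceAbshv}, \ref{pierceSpgeq0shv}) with the definitions of the two telescope conjectures. No new input is needed beyond a topological fact about profinite spaces.

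\textbf{Flat telescope conjecture.} By definition, the flat telescope conjecture asks that $\cidem_s(\shvhyp(X;\spgeq))\hookrightarrow\cidem(\shvhyp(X;\spgeq))$ be an isomorphism. Equivalently, every smashing ideal must be Serre. This is exactly part (2) of the theorem, so it holds for every space $X$.

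\textbf{(1)$\Leftrightarrow$(3).} The telescope conjecture for $\shvhyp(X;\spgeq)$ asks that $\cidem_f\hookrightarrow\cidem$ be an equivalence. By part (1), the smashing ideals are exactly the $j^U_!\shvhyp(U;\spgeq)$ with $U\in\mco(X)$. By part (3), such an ideal is atomic precisely when $U$ is Pierce. The inclusion $\cidem_f\hookrightarrow\cidem$ is therefore identified with $\mco_p(X)\hookrightarrow\mco(X)$, and it is surjective iff every open subset is Pierce.

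\textbf{(2)$\Leftrightarrow$(3).} Part (3) also identifies $\mco_p(X)\xrightarrow{\sim}\cidem_f(\shv(X;\op{Ab}))$. This is compatible with $\mco(X)\xrightarrow{\sim}\cidem(\shv(X;\op{Ab}))$ via $U\mapsto j^U_!\underline{\mathbb{Z}}_U$. Hence the telescope inclusion for $\shv(X;\op{Ab})$ is again $\mco_p(X)\subset\mco(X)$, and the same argument applies.

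The only point needing care is that the two equivalences of part (1) are compatible with the inclusions of part (3), so that the square
\[
\begin{tikzcd}
\mco_p(X) \arrow[r,hook]\arrow[d,"\sim"] & \mco(X)\arrow[d,"\sim"]\\
\cidem_f \arrow[r,hook] & \cidem
\end{tikzcd}
\]
commutes. Part (3) states this as a restriction of the equivalences in (1), so commutativity holds.

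\textbf{Profinite case.} Let $X$ be a profinite space, i.e.\ compact, Hausdorff and totally disconnected. Such a space is zero-dimensional: its clopen subsets form a basis of the topology. Every open $U$ is then the union of the clopen sets it contains, so it is Pierce, and condition (3) holds.

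The main obstacle, if any, is whether ``profinite space'' might mean something other than a Stone space, such as a pro-object in spaces. I would read it as a Stone space and quote the standard fact that clopens form a basis.
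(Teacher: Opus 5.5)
Your proposal is correct and follows the paper's route: the corollary is read off directly from the identifications $\mco(X)\simeq\cidem$ and $\mco_p(X)\simeq\cidem_f$ in \cref{pierceAbshv} and \cref{pierceSpgeq0shv}, together with part (2) for the flat version. The profinite case likewise comes from the existence of a clopen basis. The paper's proof of the profinite statement also invokes \cref{tcaddandheart}, but your direct appeal to condition (3) already suffices.
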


\subsection{Outline}

The paper is organized into three parts.

\emph{Part I develops the general theory of smashing and atomic smashing
	frames.}
In \cref{sec2}, we review the categorical background on internal adjoints,
dualizable modules, and atomic generation in
\(\mathrm{Pr}^L_{\mathcal V}\).
In \cref{sec3}, we study smashing ideals through the theory of dualizable
\(\mathcal V\)-modules, identify them with locally rigid localizations, and
analyze their behavior under base change.
In \cref{sec4}, we introduce the atomic smashing frame and formulate the
generalized telescope conjecture. We recover the classical Balmer frame in
the compactly-rigidly generated stable setting, establish local and global
coherence results, and prove descent for
the telescope conjecture along atomic Hopf and smashing-open coverings. We
also introduce smashing fields and characterize the telescope conjecture in
the Cartesian setting.
Finally, in \cref{secrecollement}, we develop a unified recollement formalism
for stable and prestable symmetric monoidal \(\infty\)-categories. We show
that smashing frames are reconstructed from their open and closed pieces by
Artin gluing, characterize split smashing ideals as clopen decompositions,
and establish a corresponding recollement theorem for atomic smashing frames
in the compactly-rigidly generated stable setting.

\emph{Part II is devoted to stable and chromatic applications.}
In \cref{sec5}, we apply the preceding recollement and gluing formalism to
stable homotopy theory. We establish primewise and chromatic decomposition
results for the smashing frame of \(\mathrm{Sp}\), and describe its relation
to the smashing frames of the monochromatic categories
\(\mathrm{Sp}_{T(n)}\). In particular, we show that the spatiality problem
for \(\cidem(\mathrm{Sp})\) reduces entirely to the spatiality of the
monochromatic smashing frames
\(\cidem(\mathrm{Sp}_{T(n)})\) at heights \(n\geq 2\).

\emph{Part III develops the prestable and additive theory.}
In \cref{tcadd}, we use higher almost algebra to identify the smashing frame
of a left-complete prestable symmetric monoidal \(\infty\)-category with the
frame of idempotent ideals in its heart. Under projective rigidity, this
identification restricts to the atomic smashing frame, where atomic smashing
ideals correspond to Pierce ideals. Consequently, the telescope conjecture
reduces to a purely algebraic condition on the heart.
In \cref{sec7}, we introduce dualizable flatness and the Serre smashing
frame. For connective \(\mathbb E_\infty\)-rings, the Serre smashing frame provides an intermediate layer
$
\cidem_f
\subset
\cidem_s
\subset
\cidem.
$
For rigid \spgeq-algebras, we identify the Serre smashing frame with the
pure frame of the heart. In particular, for connective
\(\mathbb E_\infty\)-rings, the three smashing frames correspond exactly to
the hierarchy of Pierce, pure, and idempotent ideals. We conclude with
sheaf-theoretic applications, where smashing ideals are classified by open
subsets and atomic smashing ideals by Pierce open subsets.

\subsection{Conventions and notations}
For the reader's convenience, we record the recurring notions and notation used throughout.

\begin{itemize}
	\item We denote by $\mathcal{S}$ the \infcat of animas (i.e., $\infty$-groupoids), and by $\mathrm{Cat}_{\infty}$ the \infcat of small \infcats. Similarly, $\Cat_{\infty}^*$ denotes the \infcat of small pointed \infcats and point-preserving functors, while $\mathrm{Sp}$ denotes the \infcat of spectra.
	
	\item For \(n\in\mathbb{N}\), \(\mathcal{S}_{\leq n}\) is the full subcategory of \(\mathcal{S}\) spanned by \(n\)-truncated animas. For example, \(\mathcal{S}_{\leq -1}\) is the full subcategory spanned by \(\varnothing\) and \(*\), and \(\mathcal{S}_{\leq -2}\) is the full subcategory spanned by \(*\).
	
	\item We write \(\mcs_*\) for the \(\infty\)-category of pointed animas.
	We write \(\cmon(\mcs)\) for the \(\infty\)-category of
	\(\mathbb{E}_\infty\)-animas, equivalently commutative monoid objects in
	\(\mcs\). More generally, \(\cmon_m(\mcs)\) denotes the \(m\)-semiadditive
	mode of \cite{Harpaz_2020}.
	\item For functor categories \(\mathrm{Fun}(\mathcal{C},\mathcal{D})\) we use superscripts \text{lex}, \text{rex}, \text{ex}, $R$ and $L$ to indicate the full subcategories of functors preserving finite limits, finite colimits, finite limits and finite colimits (exact functors), small limits, and small colimits, respectively.
	
	\item Given a symmetric monoidal $\infty$-category $\mcc$. We denote the full subcategory of dualizable objects by $\mcc^d$. The $\infty$-category of commutative monoids in $\mcc$ will be denoted by $\mathrm{CAlg}(\mathcal{C})$ and we refer to its objects as commutative algebras or \ein-algebras in $\mathcal{C}$. For $\mathcal{C}=\mathrm{Sp}$ equipped with its natural symmetric monoidal structure, we usually say $\mathbb{E}_{\infty}$-ring instead of commutative algebra.
	\item We write \(\spgeq\) for the \(\infty\)-category of connective
	spectra. For a connective \(\mathbb{E}_\infty\)-ring \(R\), we write
	\(\modu_R^{\op{cn}}\) or \(\modu_R(\spgeq)\) for the \(\infty\)-category
	of connective \(R\)-modules.
	\item We use standard chromatic notation at a fixed prime \(p\), which is
	implicit unless otherwise specified. Thus \(K(n)\) denotes Morava
	\(K\)-theory at \(p\), \(T(n)\) denotes the telescope of a \(v_n\)-self map
	at \(p\), \(\opsp_{K(n)}\) and \(\opsp_{T(n)}\) denote the corresponding
	local categories of spectra, and \(L_n\) and \(L_n^f\) denote the usual and
	finite chromatic localizations at \(p\). 
	
	\item For a prestable or \(t\)-structured stable \(\infty\)-category
	\(\mathcal{C}\), we write \(\mathcal{C}^{\heartsuit}\) for its heart.
	\item We use $\mapp(-,-)$ to denote mapping animas, $\unmap(-,-)$ for enriched mapping objects (when the enriched context is clear), and $\underline{\Hom}(-,-)$ for internal homs.
	\item For an \infcat \mcc, we denote by $\mcc^\kappa$ the full subcategory of $\kappa$-compact objects; we denote by $\mcc^{\op{cproj}}$ the full subcategory of compact projective objects.  
	\item \(\prl\) denotes the \(\infty\)-category of presentable \(\infty\)-categories with left adjoint functors. Its default symmetric monoidal structure is the Lurie tensor product, which corepresents functors out of the Cartesian product that are colimit-preserving in each variable. 
	
\item 	For a regular cardinal \(\kappa\), \(\prl_{\kappa}\subset\prl\) denotes the (non-full) subcategory of \(\kappa\)-accessible presentable \(\infty\)-categories with functors that preserve \(\kappa\)-compact objects.
	
\item 	For a natural number $n\geq 0$, \(\prl_{n}\subset\prl\) denotes the full subcategory of presentable $n$-categories, and $\prl_{\kappa,n} := \prl_{n}\cap \prl_{\kappa}$ for a regular cardinal \(\kappa\). 
	\item  A (symmetric) monoidal presentable $\infty$-category $\mathcal{C}$ is called presentably (symmetric) monoidal if the tensor product $-\otimes-$ preserves colimits separately in each variable.
\item We write \(\prlst\subset\prl\) for the full subcategory of stable
presentable \(\infty\)-categories and colimit-preserving functors, and
\(\prlad\subset\prl\) for the full subcategory of additive presentable
\(\infty\)-categories. If \(\mathcal{V}\in\calg(\prl)\), we write
\(\prlv\) for the \(\infty\)-category of presentable \(\mathcal{V}\)-modules
and \(\mathcal{V}\)-linear left adjoint functors.  
\item We refer an object in $\calg(\prlst)$ to a big $tt$-category.
\item We denote by $\catper$ the \infcat of (small) idempotent-complete stable \infcats. We denote by \catadidem the \infcat of (small) idempotent-complete additive \infcats.

\item By a space we mean a topological space, not an anima. For a
space \(X\) and an \(\infty\)-category \(\mathcal{C}\), we write
\(\shv(X;\mathcal{C})\) for the \(\infty\)-category of
\(\mathcal{C}\)-valued sheaves on \(X\), and \(\shvhyp(X;\mathcal{C})\) for
its full subcategory of hypercomplete sheaves.

\item For a space \(X\), we write \(\mco(X)\) for the poset of open
subsets of \(X\), and \(\mco_p(X)\subset\mco(X)\) for the subposet of
Pierce open subsets, namely open subsets which are unions of clopen
subsets.
\end{itemize}

\subsection{Acknowledgments}
The author is especially grateful to Yifan Jin and Maxime Ramzi for many
helpful conversations that inspired several of the results in this paper.

The author would also like to thank Anish Chedalavada, Daniel Dugger,
Sasha Efimov, Thorger Geiß, David Gepner, Mike Hill, Ishan Levy,
Zhenpeng Li, Tomer Schlank, Vova Sosnilo, Leonard Tokic, Yuchen Wu,
Albert Jinghui Yang, and Changhan Zou for useful discussions and comments
at various stages of this project.

AI tools were used only to search for examples and to
assist with editing. The author takes full responsibility for the content of
this paper.
\part{General Theory}
\section{Preliminaries}\label{sec2}
This section establishes the necessary categorical background. By reviewing the mechanics of categorical ideals and dualizable categories, we lay the groundwork for the subsequent analysis of non-stable smashing ideals.
\subsection{Categorical ideals}\label{sec2.1}
In this subsection, we recall the notion of categorical ideals (often called sub-unit objects) within symmetric monoidal $\infty$-categories.
 This framework allows for well-behaved higher-categorical operations, including the intersection, union, and radicalization of ideals. For a more comprehensive account of categorical ideals, we refer the reader to \cite[Section~2]{Zariski_Bal}.
	\begin{de}\label{imagedef}
	Let \mcc be a \syminfcat. We say that a map $i: I\to\mb{1}$ is an \textbf{ideal} of $\mc{C}$ if $i$ is a monomorphism, i.e. a $(-1)$-truncated morphism in $\mc{C}$. This  is often called a sub-unit object. For notational convenience, we often denote an ideal $i: I\to\mb{1}$ simply by $I$.
	
	We define the \infcat of ideals $\op{Idl}(\mc{C})$ to be $(\mc{C}_{/\mb{1}})_{\leq-1}$, which inherits a symmetric monoidal structure, because by \cite[\textsection2.2.2 and 4.8.2.15]{ha} the inclusion $\idlc\hookrightarrow \mc{C}_{/\mb{1}}$ admits a symmetric monoidal left adjoint as follows
		$$\begin{tikzcd}
		\mc{C}_{/\mb{1}} \arrow[r, "\im(-)", shift left=1ex]   & \arrow[l,"\perp"', hook', shift left=1ex] \op{Idl}(\mc{C})
	\end{tikzcd} .$$
	 Therefore the symmetric monoidal structure  $\op{Idl}(\mc{C})$ is given by $I\cdot J:= \im(I\otimes J)$.
\end{de}
 The next result records the basic finiteness and radicality properties of this poset of ideals.
\begin{thm}[{\cite[Proposition 2.2.5,  2.2.15 and 2.2.16]{Zariski_Bal}}]\label{idlprefrm}
	Let $\mcc\in\calg(\prl_\kappa)$, where $\kappa\geq\omega$ is a regular cardinal. Suppose that $S\subset \mcc^\kappa$ be a generating set. Then:
	\enu{\item $\op{Idl}(\mc{C})$ is a $\kappa$-coherent preframe in the sense of \cref{defkcoh}.
	Moreover, an ideal $I$ is $\kappa$-compact in $\idlc$ if and only if there exists a $\kappa$-small collection of objects $\{X_\alpha\}\subset S$ and  maps $X_\alpha\to \mb{1}$ such that $\bigvee_\alpha\op{Im}(X_\alpha)=I$.
	\item An ideal $I\subset\mb{1}$ is radical (see \cref{defradprime}) if and only if, for any 
	ideal $J$ such that $J=\im(X)$ for some map $X\to\mb{1}$ with $X\in S$ and that $J^n\subset I$ for some $n\geq 1$, we have $J\subset I$.
	\item Let $\idlc_{\op{rad}}\subset\idlc$ denote the full subcategory spanned by radical ideals. It is a $\kappa$-coherent frame. Moreover, a radical ideal $I$ is $\kappa$-compact in $\idlc_{\op{rad}}$ if and only if there exists a $\kappa$-compact ideal $J\in \idlc^\kappa$ such that $\sqrt{J}=I$. 
	
	}
	Moreover, suppose that $\kappa=\omega$. Let $I\subset\mb{1}$ be an ideal. By \cref{operationsqrt}, we have $\sqrt{I}\simeq\sqrt[1]{I}= \bigvee_{\alpha\in K}I_\alpha$ where $K\subset \idlc$ consists of those compact ideals $I_\alpha$ such that $I^n_\alpha\subset I$ for some $n\geq 1$.
\end{thm}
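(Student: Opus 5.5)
We prove the three assertions in order, working throughout in the preorder $\idlc=(\mcc_{/\mb{1}})_{\leq -1}$, which is a poset because its objects are monomorphisms into $\mb{1}$.

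\textbf{Step 1: the preframe structure.}
Since $\mcc$ is presentable, $\mcc_{/\mb{1}}$ is presentable. Its $(-1)$-truncated objects form a presentable poset, which is therefore a complete lattice.
\begin{itemize}
	\item Finite meets are computed as fiber products over $\mb{1}$, i.e.\ as intersections of subobjects.
	\item Joins are computed as $\im(\coprod I_\alpha\to\mb{1})$.
	\item The multiplication $I\cdot J=\im(I\otimes J)$ preserves joins in each variable. This holds because $\otimes$ preserves colimits in each variable and $\im$ is a left adjoint.
\end{itemize}

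\textbf{Step 2: $\kappa$-compactness of ideals.}
Let $I$ be an ideal. Write $I$ as a colimit of objects of $S$ mapping to it; composing with $I\to\mb{1}$ gives maps $X\to\mb{1}$ with $X\in S$. Since $\im$ is a left adjoint, it preserves this colimit, so
\[
I=\bigvee\im(X\to\mb{1}).
\]
The elements $\im(X)$ with $X\in S\subset\mcc^\kappa$ are $\kappa$-compact in $\idlc$. The reason is that $\Hom(\im X, J)=\Hom_{/\mb{1}}(X,J)$. For a $\kappa$-filtered join $J=\bigvee J_\beta$, we have
\[
J=\colim J_\beta
\]
in $\mcc_{/\mb{1}}$, because $\kappa$-filtered colimits of monomorphisms are monomorphisms in a presentable $\infty$-category for $\kappa$ large. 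Hence a map from the $\kappa$-compact $X$ into $J$ factors through some $J_\beta$.

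Consequently:
\begin{itemize}
	\item every ideal is a $\kappa$-filtered join of $\kappa$-small joins of the $\im(X)$;
	\item the $\kappa$-compact ideals are exactly the retracts of such joins, and in a poset a retract is just the element itself. This gives the stated characterization.
\end{itemize}

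To get a $\kappa$-coherent preframe we also need compact elements closed under products and the unit to be compact. For products, the product of two compact ideals is
\[
\im(X)\cdot\im(Y)=\im(X\otimes Y),
\]
and $X\otimes Y$ is $\kappa$-compact because the tensor lies in $\prl_\kappa$. Choosing $S$ closed under $\otimes$ up to $\kappa$-small colimits handles this. The unit $\mb{1}$ is $\kappa$-compact because the structure lies in $\calg(\prl_\kappa)$. The main technical point in this step is the filtered-colimit stability of monomorphisms, which is where I would cite HTT 5.5.6.

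\textbf{Step 3: the radical criterion.}
Suppose $I$ satisfies the stated test on compact generators, and let $K$ be any ideal with $K^n\subset I$.

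First reduce to compact ideals. A compact $J$ is a finite join $\bigvee_{k=1}^m J_k$ of generators $J_k=\im(X_k)$. We have $J_k^n\subset J^n\subset I$, so the test gives $J_k\subset I$ for each $k$, and hence $J\subset I$.

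Now pass to $K$ itself. Write $K=\bigvee J_\beta$ with the $J_\beta$ compact generators below $K$. Then $J_\beta^n\subset K^n\subset I$, so each $J_\beta\subset I$, and therefore $K\subset I$. This shows $I$ is radical.

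\textbf{Step 4: the frame of radical ideals.}
Radical ideals are closed under arbitrary meets, so $\sqrt{-}$ is a reflector. The radical of $J$ is
\[
\sqrt{J}=\bigvee\{\,L \text{ compact} : L^n\subset J \text{ for some } n\,\}.
\]
To see that this join is again radical, apply Step 3: if a generator $L$ has $L^n\subset\sqrt{J}$, then by compactness $L^n$ lies under a finite join of elements whose powers lie in $J$, so some power of $L$ lies in $J$.

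In the radical quotient, multiplication agrees with meet:
\[
\sqrt{I\cdot J}=I\wedge J \quad\text{for radical } I,J.
\]
Indeed, $(I\wedge J)^2\subset I\cdot J$. Distributivity of products over joins then gives frame distributivity. The $\kappa$-compact radicals are the $\sqrt{J}$ with $J$ $\kappa$-compact, since $\sqrt{-}$ preserves $\kappa$-filtered joins.

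\textbf{Step 5: the case $\kappa=\omega$.}
The final formula $\sqrt{I}=\bigvee_{\alpha\in K} I_\alpha$ is the $\kappa=\omega$ instance of the description of $\sqrt{J}$ in Step 4.
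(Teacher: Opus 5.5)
Your overall strategy for (1), (2) and the final $\kappa=\omega$ formula works, modulo the Step 2 corrections in the second paragraph. There is, however, a genuine gap in the general-$\kappa$ part of (3).

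\textbf{The gap in Step 4.} You assert $\sqrt{J}=\bigvee\{L \text{ compact} : L^n\le J \text{ for some } n\}$ for arbitrary $\kappa$. You prove this join is radical by saying that $L^n$ ``lies under a finite join''. That step needs $L^n$ to be $\omega$-compact.
\begin{itemize}
\item The family $\{L : L^m\le J \text{ for some } m\}$ is closed under finite joins, since $(L_1\vee L_2)^{m_1+m_2-1}\le J$. This is the estimate you are implicitly using.
\item It is not closed under countable joins, because there is no uniform exponent.
\item So for uncountable $\kappa$ the join is not $\kappa$-directed, and $\kappa$-compactness of $L^n$ gives nothing. This is why the statement only claims the one-step formula for $\kappa=\omega$.
\end{itemize}
Consequently, your claim that $\sqrt{-}$ preserves $\kappa$-filtered joins is unjustified when $\kappa>\omega$. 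Your description of the $\kappa$-compact radical ideals rests on that claim.

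\textbf{How to fix it.} Use criterion (2) directly. Suppose $I=\bigvee_\beta I_\beta$ is a $\kappa$-directed join of radical ideals, and $L=\im(X)$ with $X\in S$ satisfies $L^n\le I$.
\begin{itemize}
\item $L^n=\im(X^{\otimes n})$ is $\kappa$-compact, because $X^{\otimes n}\in\mathcal{C}^\kappa$.
\item Hence $L^n\le I_\beta$ for some $\beta$, so $L\le I_\beta\le I$.
\end{itemize}
Thus radical ideals are closed under $\kappa$-directed joins. The inclusion $\idlc_{\mathrm{rad}}\subset\idlc$ therefore preserves $\kappa$-directed joins, so its left adjoint $\sqrt{-}$ preserves $\kappa$-compact objects. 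Every radical $I$ is the join of the $\kappa$-directed family $\{\sqrt{J} : J\le I,\ J \text{ $\kappa$-compact}\}$. This gives the characterization of $\kappa$-compact radicals.

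\textbf{Missing coherence axioms.} You also skip two of the coherence axioms for the frame, and you should verify them:
\begin{itemize}
\item the unit $\sqrt{\mb{1}}=\mb{1}$ is $\kappa$-compact;
\item $\sqrt{J_1}\wedge\sqrt{J_2}=\sqrt{\sqrt{J_1}\cdot\sqrt{J_2}}=\sqrt{J_1J_2}$, using $\sqrt{a\cdot b}=\sqrt{a\cdot\sqrt{b}}$, with $J_1J_2$ $\kappa$-compact.
\end{itemize}

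\textbf{Two corrections to Step 2.}
\begin{itemize}
\item You need $\kappa$-filtered colimits of monomorphisms to be monomorphisms for the \emph{given} $\kappa$, not ``for $\kappa$ large''. As written, your argument does not cover $\kappa=\omega$, which you need at the end. The right reason is that $\mathcal{C}\simeq\operatorname{Ind}_\kappa(\mathcal{C}^\kappa)$, so $\kappa$-filtered colimits commute with finite limits in $\mathcal{C}$ (both are computed pointwise in presheaves on $\mathcal{C}^\kappa$), hence also in $\mathcal{C}_{/\mb{1}}$.
\item $S$ is given, so you cannot choose it closed under $\otimes$. You also do not need to. Your compactness argument shows that $\im(Z)$ is $\kappa$-compact for every $Z\in\mathcal{C}^\kappa$, not just $Z\in S$. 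Since $X\otimes Y\in\mathcal{C}^\kappa$, a product of two $\kappa$-small joins of generators is a $\kappa$-small join of $\kappa$-compact ideals, hence $\kappa$-compact.
\end{itemize}
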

Thus, under compact generation hypotheses, categorical ideals behave much like ordinary ideals in commutative algebra: they form a coherent frame upon passing to radical ideals, and radicalization can be detected on compact generators. This provides the order-theoretic input required for our later study of smashing ideals. For a more detailed treatment in this direction, we refer the reader to \cite{Zariski_Bal}.

\subsection{Dualizability and atomic generation}
To extend our framework beyond absolute compact generation, this subsection discusses the notions of dualizable $\mathcal{V}$-modules and atomic objects. The use of $\mathcal{V}$-atomic objects provides the appropriate relative version of compactness needed to generalize the classical Balmer spectrum framework. We briefly recall these definitions below; for a comprehensive exposition, we refer the reader to \cite{efimov2024k,ramzi2024dualizable}.

We begin with the relevant notion of morphism between $\mathcal{V}$-modules. 
\begin{de}
	Let $\mc{V}\in \calg(\prl)$ and $\mc{M}\in\prlv:=\modu_{\mc{V}}(\prl)$. 
 We say a \mcv-module map
		$f:\mcm\to\mc{N} \in \prl_\mcv$ is an internal left adjoint if it is a left adjoint in the $(\infty, 2)$-category $\prl_\mcv$. 

\end{de}

\begin{de}\label{dualdef}
	Let $\mc{V}\in \calg(\prl)$. We denote by $\pr^{i L}_\mcv$ the (non-full) subcategory of $\operatorname{Mod}_{\mathcal{V}}(\prl)$ whose morphisms are the internal left adjoints. We denote $\infty$-category $\pr_{\mathcal{V}}^{\mathrm{dbl}}$ to be the full subcategory of $\pr^{i L}_\mcv$ spanned by dualizable $\mathcal{V}$-modules, in other words the (non-full) subcategory of $\operatorname{Mod}_{\mathcal{V}}(\prl)$ spanned by dualizable $\mathcal{V}$-modules, and with morphisms the internal left adjoints. 
\end{de}

\begin{rem}\label{monoidalprvil}
	Note that $\pr^{i L}_\mcv$ and $\pr_{\mathcal{V}}^{\mathrm{dbl}}$ naturally inherit a symmetric monoidal structure from $\pr_{\mathcal{V}}^{L}$ because  for any $\mcc\in\prl_\mcv$, tensor product functor $\mcc\otimes_\mcv-: \prl_\mcv\to\prl_\mcv$ is a 2-functor and thus preserves internal left adjoints.
\end{rem}
We next recall their basic cocompleteness properties. These facts will allow us to create colimits of dualizable categories on underlying categories:
\begin{prop}[{\cite[Corollary 1.32 and Proposition 1.62]{ramzi2024dualizable}}]\label{colimforget}
	The following functors create small colimits
	$$
	\pr^{i L}_\mcv \rightarrow \pr^{L}_\mcv \text{ \,and \,} \prvdbl\to \pr^{iL}_\mcv .
	$$
		Therefore the sources of the functors
	have all small colimits.
\end{prop}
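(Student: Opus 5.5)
This is a citation to Ramzi's dualizable paper, so rather than reconstruct it in full I would reduce it to two checks. The first is that the forgetful functor \(\pr^{iL}_\mcv \to \prlv\) preserves and reflects small colimits. The second is that the full subcategory \(\prvdbl\subset\pr^{iL}_\mcv\) is closed under those colimits.

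\textbf{Step 1: colimits in \(\pr^{iL}_\mcv\).} Take a diagram \(F\colon K\to\pr^{iL}_\mcv\) and form its colimit \(\mcc=\colim_K F\) in \(\prlv\). I would compute \(\mcc\) in the standard way as the limit of right adjoints, i.e. the limit in \(\op{Pr}^R\). Since every transition map \(f\) is an internal left adjoint, its right adjoint \(f^R\) preserves colimits and is \(\mcv\)-linear. Hence the diagram of right adjoints lives in \(\prlv\) itself.

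Next I would show that each structure map \(\iota_k\colon F(k)\to\mcc\) has a colimit-preserving, \(\mcv\)-linear right adjoint. That right adjoint is the projection \(\mcc\simeq\lim F^R\to F(k)\). The point is that limits in \(\prlv\) along colimit-preserving \(\mcv\)-linear functors are computed underlying, and colimits and the \(\mcv\)-action are computed pointwise. So the projections preserve colimits and commute with the \(\mcv\)-action, hence are internal right adjoints.

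For the universal property, let \(g\colon\mcc\to\mcn\) be a map in \(\prlv\). I need that \(g\) is an internal left adjoint iff each \(g\iota_k\) is. The right adjoint of \(g\) is \(g^R(n)=\lim_k \iota_k(\iota_k g)^R\)-style data, or better, it is the unique object whose \(k\)-components are \((g\iota_k)^R(n)\). If each \((g\iota_k)^R\) preserves colimits and is \(\mcv\)-linear, then so does \(g^R\), because colimits and the \(\mcv\)-action in \(\lim F^R\) are computed componentwise. Mapping spaces in \(\pr^{iL}_\mcv\) are the corresponding subspaces, so this shows that \(\pr^{iL}_\mcv\to\prlv\) creates colimits.

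\textbf{Step 2: closure of \(\prvdbl\).} This is the harder step. Dualizability of \(\mcc\) is not obviously inherited, and I would handle it in three moves.

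1. Use the characterization that \(\mcm\in\prlv\) is dualizable iff the colimit map \(\mcv\otimes\mcm\simeq\mcm\) factors appropriately; equivalently, iff \(\mcm\) is a retract in \(\pr^{iL}_\mcv\) of a compactly/atomically generated module \(\mcp_\mcv(A)=\Fun(A^\opp,\mcv)\)-type object. This is the Lurie–Efimov retract criterion in the \(\mcv\)-linear setting.
2. Show closure under coproducts. Coproducts in \(\prlv\) are also products, and duals add: \(\bigl(\coprod\mcm_i\bigr)^\vee\simeq\prod\mcm_i^\vee\).
3. Show closure under geometric realizations, or pushouts. For these I would use the retract criterion: lift each term to an atomically generated \(\mcp(A_k)\), take the colimit of the small \(\mcv\)-enriched categories \(A_k\), and show \(\mcc\) is a retract of \(\mcp(\colim A_k)\) via internal left adjoints, using Step 1 to build the section and retraction.

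The main obstacle is this last point, namely functorially choosing the retract presentations compatibly with the diagram. I would first try Efimov's trick of using \(\hat{\mathcal{Y}}\colon\mcm\to\Ind(\mcm^{\omega_1})\). Its functoriality in internal left adjoints is exactly what gives the needed compatibility, and I would adapt it \(\mcv\)-linearly.
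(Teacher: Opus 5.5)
Your Step 1 is sound. The colimit along internal left adjoints is the limit of the right adjoints, and the projections are the right adjoints of the $\iota_k$. Internal left adjointness of $g$ is then detected on the $g\iota_k$, because the projections are jointly conservative and compute colimits and the $\mcv$-action componentwise. One point you assert without argument is that $\colim F\simeq\lim F^R$ is an identification of $\mcv$-modules. This needs a mate (Beck--Chevalley) argument, and that is exactly where the strict $\mcv$-linearity of the $f^R$ enters.

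The genuine gap is in Step 2: the criterion you invoke is false. You claim that $\mcm$ is dualizable iff it is a retract \emph{in $\pr^{iL}_\mcv$} of an atomically generated module. Suppose $s\colon\mcm\to\mcp$ and $r\colon\mcp\to\mcm$ are internal left adjoints with $rs\simeq\id$ and $\mcp$ atomically generated. Then $r$ is essentially surjective, hence generating, so $\mcm$ is atomically generated by \cref{conserdual}(2). Your criterion would therefore make every dualizable $\mcv$-module atomically generated. This already fails for $\mcv=\opsp$: $\shv(\mathbb{R};\opsp)$ is dualizable but has no nonzero compact objects. For the same reason, the goal of your third move (exhibiting $\colim F$ as a retract of $\mcp(\colim A_k)$ via internal left adjoints) cannot be reached even for a one-object diagram.

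In the Lurie--Efimov criterion the retraction lives in $\prlv$. Only the section $\hat y$ is an internal left adjoint. The colimit functor $\mcp(\mcm^\kappa)\to\mcm$ is generating, so by the same observation it cannot be an internal left adjoint unless $\mcm$ is atomically generated. With this correction your plan can be completed, but its real content is then a Beck--Chevalley statement $\hat y\circ f\simeq\mcp(f)\circ\hat y$ for internal left adjoints $f$ (including that such $f$ preserve $\kappa$-compact objects). You have not supplied this.

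There is a much shorter route that uses only Step 1 and needs no reduction to coproducts and pushouts. For $\mcn\in\prlv$ one has $\Fun^L_\mcv(\colim F,\mcn)\simeq\lim_k\Fun^L_\mcv(F(k),\mcn)$, along precomposition with the transition maps $f$. These precomposition functors are right adjoints in $\prlv$ to precomposition with the $f^R$. So by Step 1 this limit is also a colimit, and it therefore commutes with $-\otimes_\mcv\mcn$. Hence the canonical map $\Fun^L_\mcv(\colim F,\mcv)\otimes_\mcv\mcn\to\Fun^L_\mcv(\colim F,\mcn)$ is the limit of the corresponding maps for the $F(k)$. Each of those is an equivalence, and that is precisely dualizability of $\colim F$.
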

We will also need a way to formalize the idea that a morphism has sufficiently large image. This is captured by the following notion.
\begin{de}
	We say that a functor $F:\mcc\to\mcd$ between \infcats is \textbf{generating} if $F$ admits a conservative right adjoint. 
\end{de}
For module categories, this condition admits several equivalent formulations in terms of generation under colimits and under the $\mathcal{V}$-action. We spell this out since these equivalent criteria will be used repeatedly.
\begin{prop}\label{generatingequiconds}
	Let $\mcv\in\calg(\prl)$ and $F:\mcm\to\mcn\in\prlv$. Then the following are equivalent:
	\enu{\item $F$ is generating.
		\item  $F$ generates $\mcn$ under small colimits.
		\item  $F$ generates $\mcn$ under small colimits and \mcv-actions.
	} 
\end{prop}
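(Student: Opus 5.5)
The plan is to prove the cycle of implications (1)$\Rightarrow$(2)$\Rightarrow$(3)$\Rightarrow$(1). Write $G:\mcn\to\mcm$ for the right adjoint of $F$. It exists by the adjoint functor theorem, since $F$ is a colimit-preserving functor between presentable $\infty$-categories. The essential image of $F$, and the full subcategory it generates, will be taken in the usual sense: the smallest full subcategory of $\mcn$ containing $F(\mcm)$ that is closed under small colimits, and in (3) also under the action $v\otimes-$ for $v\in\mcv$.

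For (1)$\Rightarrow$(2), let $\mcn'\subset\mcn$ be the subcategory generated under small colimits by $F(\mcm)$. It is presentable, because it is generated by a small set of objects: $\mcm$ is presentable, so it is generated by a small set, and we take the images of that set. Hence the inclusion $\mcn'\hookrightarrow\mcn$ has a right adjoint $R$. For $y\in\mcn$, consider the counit $\epsilon: RY\to y$, viewing $RY$ in $\mcn$. Since $F$ factors through $\mcn'$, for every $m\in\mcm$ the map $\mapp(Fm,RY)\to\mapp(Fm,y)$ is an equivalence. Equivalently, $G(\epsilon)$ is an equivalence. Conservativity of $G$ then forces $\epsilon$ to be an equivalence, so $y\in\mcn'$.

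The implication (2)$\Rightarrow$(3) is trivial, since generating under a larger set of operations only enlarges the generated subcategory.

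For (3)$\Rightarrow$(1), the key observation is that $F(\mcm)$ is already closed under the $\mcv$-action up to equivalence, because $F$ is $\mcv$-linear: $v\otimes F(m)\simeq F(v\otimes m)$. Moreover, the class of colimits of objects of the form $F(m)$ is stable under $v\otimes-$, since $v\otimes-$ preserves colimits. Consequently, the subcategory generated under colimits and $\mcv$-actions coincides with the subcategory generated under colimits alone. This reduces (3) to (2).

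It then remains to show (2)$\Rightarrow$(1). Let $f:y\to y'$ be a map with $G(f)$ an equivalence. Then $\mapp(Fm,y)\to\mapp(Fm,y')$ is an equivalence for all $m$. Let $\mcn''$ be the full subcategory of objects $x$ for which $\mapp(x,y)\to\mapp(x,y')$ is an equivalence. It is closed under small colimits, since $\mapp(-,y)$ turns colimits into limits, and it contains $F(\mcm)$. By (2), $\mcn''=\mcn$. Taking $x=y'$ shows that $f$ has an inverse up to homotopy, so $f$ is an equivalence by Yoneda.

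The only mildly delicate point is the closure argument in (3)$\Rightarrow$(2). It requires noting that the iterated colimit closure (a transfinite process) commutes with $v\otimes-$. This follows by transfinite induction on the stages of the closure, using that each $v\otimes-$ preserves colimits.
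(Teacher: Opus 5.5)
Your proof is correct and is essentially the paper's. The step (1)$\Rightarrow$(2) is the same right-adjoint-of-the-inclusion argument, and your route (3)$\Rightarrow$(2)$\Rightarrow$(1) is the paper's direct mapping-space proof of (3)$\Rightarrow$(1) with its implicit step spelled out, namely that $v\otimes F(m)\simeq F(v\otimes m)$ makes the colimit closure $\mathcal{N}'$ of $F(\mathcal{M})$ stable under $v\otimes-$; that step needs no transfinite induction, since for fixed $v$ the full subcategory $\{y\mid v\otimes y\in\mathcal{N}'\}$ is closed under colimits and contains $F(\mathcal{M})$, hence contains $\mathcal{N}'$.
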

\begin{proof}
	The direction $(2)\implies (3)$ is obvious. 
	
	For $(1)\implies (2)$: Let $\mcn' \subset \mcn$ be the full subcategory generated by the essential image of $F$ under small colimits. Since $\mcn'$ is presentable, the inclusion $i: \mcn' \hookrightarrow \mcn$ admits a right adjoint $i^R$.  By the factorization of $F$ $$\mcm\xrightarrow{F'}\mcn'\xrightarrow{i}\mcn,$$
	we see that $i^R$ is conservative too.
	Thus $\mcn' = \mcn$, meaning $F$ generates $\mcn$ under small colimits.
	
	For $(3)\implies (1)$: Assume $F$ generates $\mcn$ under small colimits and $\mcv$-actions. Let $R$ be the right adjoint of $F$. Suppose $Y\to Z \in \mcn$ is a morphism such that $R(Y)\to R(Z)$ is an equivalence. Because $F$ is a $\mcv$-linear functor, we have $F(V\otimes X) \simeq V\otimes F(X) $ for any $V \in \mcv$ and $X \in \mcm$. Therefore, by adjunction, we have:$$\mathrm{Map}_\mcn(V\otimes F(X) , Y) \simeq \mathrm{Map}_\mcn(F(V\otimes X ), Y) \simeq \mathrm{Map}_\mcm(V\otimes X , R(Y))$$ Since the collection of such objects generates $\mcn$ under small colimits by assumption, we must have $Y\to Z$ is an equivalence. Therefore, $R$ is conservative, and $F$ is generating.
\end{proof}

\begin{prop}\label{tensorgenerating}
	Let $\mcv \in \calg(\prl)$. 
	If $F_1:\mcm_1\to \mcn_1$ and $F_2:\mcm_2 \to \mcn_2$ are two generating $\mcv$-module maps, then
	\[
	F_1\otimes_\mcv F_2:\mcm_1\otimes_\mcv \mcm_2 \to \mcn_1\otimes_\mcv \mcn_2
	\]
	is generating too.
\end{prop}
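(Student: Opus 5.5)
We factor $F_1\otimes_\mcv F_2$ as the composite
\[
\mcm_1\otimes_\mcv\mcm_2\xrightarrow{F_1\otimes_\mcv \id}\mcn_1\otimes_\mcv\mcm_2\xrightarrow{\id\otimes_\mcv F_2}\mcn_1\otimes_\mcv\mcn_2 .
\]
Generating functors are closed under composition: the right adjoint of the composite is the composite of the right adjoints, and a composite of conservative functors is conservative. By symmetry, it therefore suffices to prove that $F\otimes_\mcv\id_\mcp$ is generating whenever $F:\mcm\to\mcn$ is a generating $\mcv$-module map and $\mcp\in\prlv$.

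For this we use the criterion $(3)\Rightarrow(1)$ of \cref{generatingequiconds}. We must show that the essential image of $F\otimes_\mcv\id_\mcp$ generates $\mcn\otimes_\mcv\mcp$ under small colimits and $\mcv$-actions; we will in fact show it generates under small colimits alone. The key input is that the relative tensor product is generated by pure tensors: the canonical map $\mcn\times\mcp\to\mcn\otimes_\mcv\mcp$ has image $\{n\otimes p\}$ generating the target under colimits. This holds because $\mcn\otimes_\mcv\mcp$ is the geometric realization (colimit in $\prl$) of the bar construction $\mcn\otimes\mcv^{\otimes k}\otimes\mcp$. The absolute Lurie tensor product is generated under colimits by pure tensors, and the final face map $\mcn\otimes\mcp\to\mcn\otimes_\mcv\mcp$ is generating, since a colimit in $\prl$ is computed as a limit of right adjoints in $\widehat{\Cat}_\infty$ and the projection from the zeroth term is jointly conservative. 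This is standard, and is the step I would cite rather than reprove.

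Given that, let $\mcn'\subset\mcn\otimes_\mcv\mcp$ be the full subcategory generated under colimits by the image of $F\otimes\id$. Fix $p\in\mcp$. The functor $-\otimes p:\mcn\to\mcn\otimes_\mcv\mcp$ preserves colimits, and $F(m)\otimes p=(F\otimes\id)(m\otimes p)$ lies in $\mcn'$. Since $F$ generates $\mcn$ under colimits by \cref{generatingequiconds}(2), every object $n\otimes p$ lies in $\mcn'$. Because pure tensors generate the target, we get $\mcn'=\mcn\otimes_\mcv\mcp$, and so $F\otimes\id$ is generating.

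The main obstacle is justifying that pure tensors generate $\mcn\otimes_\mcv\mcp$, i.e.\ that the canonical map $\mcn\otimes\mcp\to\mcn\otimes_\mcv\mcp$ is generating. If the bar-construction argument proves awkward, I would instead dualize. A colimit-preserving functor out of $\mcn\otimes_\mcv\mcp$ is a $\mcv$-bilinear functor $\mcn\times\mcp\to\mathcal{E}$. So if $G\in\Fun^L(\mcn\otimes_\mcv\mcp,\mathcal{E})$ vanishes on, or restricts equivalently along, the subcategory $\mcn'$, then comparing it with the colimit-preserving right Kan extension/localization onto $\mcn'$ shows that the localization $L:\mcn\otimes_\mcv\mcp\to\mcn''$ (with $\mcn''$ the right orthogonal of $\mcn'$) corresponds to a bilinear functor that is zero on all $n\otimes p$. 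Hence $L$ is zero, which gives the same conclusion.
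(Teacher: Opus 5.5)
Your proof is correct, but it is organized differently from the paper's. The paper does not factor $F_1\otimes_\mcv F_2$. It writes both $\mcm_1\otimes_\mcv\mcm_2$ and $\mcn_1\otimes_\mcv\mcn_2$ as geometric realizations of bar constructions. It notes that the levelwise maps $F_1\otimes\id_{\mcv}^{\otimes k}\otimes F_2$ are generating by the absolute case, which it attributes to ``the construction of the Lurie tensor product''. It then concludes because the right adjoint of the induced map on realizations is the limit of the levelwise right adjoints, and a limit of conservative functors is conservative.

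You instead reduce to $F\otimes_\mcv\id_\mcp$. You use the bar construction only once, to show that the canonical map $\mcn\otimes\mcp\to\mcn\otimes_\mcv\mcp$ is generating, so pure tensors generate. The rest is an elementary one-variable colimit-closure argument. Your route is more self-contained: it makes explicit the absolute input the paper leaves implicit, and it gives \cref{tensorgenerator} essentially for free. The paper's route is shorter and treats both variables at once.

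Two small remarks. First, state precisely why the projection from the limit of the cosimplicial diagram of right adjoints to its zeroth term is conservative. The limit projections are jointly conservative, and every $[n]\in\Delta$ receives a map from $[0]$, so the zeroth projection alone already detects equivalences.

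Second, your fallback ``dual'' argument via the right orthogonal $\mcn''$ only works in the stable setting. In general, a colimit-closed subcategory with trivial right orthogonal need not be everything. For example, the colimit closure of $(*,*)$ in $\mcs\times\mcs$ contains only pairs $(X_1,X_2)$ with $X_1=\emptyset\iff X_2=\emptyset$, so it misses $(\emptyset,*)$. Yet its right orthogonal is just $\{(*,*)\}$. If you want a dual argument in the unstable case, do this instead:
\begin{itemize}
\item corestrict the $\mcv$-balanced bilinear map $\mcn\times\mcp\to\mcn\otimes_\mcv\mcp$ to the colimit closure of the pure tensors, which is a presentable $\mcv$-submodule;
\item apply the universal property to obtain a section of the inclusion.
\end{itemize}
Since your main argument does not rely on the fallback, the proof stands as written.
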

\begin{proof}
	By \cite[Theorem 4.4.2.8]{ha}, $\mcm_1\otimes_\mcv \mcm_2$ can be identified with the geometric realization of the following diagram in \prl
	$$\begin{tikzcd}[row sep=4em] 
		\vdots 
		\arrow[d, shift right=10pt] 
		\arrow[d] 
		\arrow[d, shift left=10pt] 
		\\
		\mcm_1 \otimes \mcv \otimes\mcm_2 
		\arrow[d, "d_0" description, shift right=10pt] 
		\arrow[d, "d_1" description, shift left=10pt] 
		\arrow[u, shift right=5pt] 
		\arrow[u, shift left=5pt] 
	\\
		\mcm_1 \otimes\mcm_2 
		\arrow[u, "s_0" description]                                                                                                                            
	\end{tikzcd}$$
 Combining the construction of Lurie tensor product and the fact that a limit of conservative functors in $\pr^R$ is still conservative, we are done.
\end{proof}
\begin{cor}\label{tensorgenerator}
	Let $\mcv \in \calg(\prl)$ and let $\mcm,\mcn \in \prlv$. 
	If $\{x_\alpha\} \subset \mcm$ and $\{y_\beta\} \subset \mcn$ generate $\mcm$ and $\mcn$, respectively, under small colimits and $\mcv$-actions, then the collection
	$
	\{x_\alpha \otimes y_\beta\}
	$
	generates $\mcm \otimes_\mcv \mcn$ under small colimits and $\mcv$-actions.
\end{cor}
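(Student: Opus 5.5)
The plan is to deduce this from \cref{tensorgenerating} together with \cref{generatingequiconds}, by encoding each generating family as a $\mcv$-linear functor out of a free $\mcv$-module. Let $A$ and $B$ be the (small) sets indexing $\{x_\alpha\}$ and $\{y_\beta\}$. Consider the free $\mcv$-modules $\prod_{A}\mcv\simeq\coprod_A\mcv$ and $\prod_B\mcv$ in $\prlv$ (coproducts and products agree in $\prl$). By the universal property of the free module, the family $\{x_\alpha\}$ determines a $\mcv$-linear colimit-preserving functor
\[
F_1\colon \textstyle\coprod_A\mcv\to\mcm,\qquad (V_\alpha)_\alpha\mapsto \coprod_\alpha V_\alpha\otimes x_\alpha,
\]
and similarly $F_2\colon\coprod_B\mcv\to\mcn$ sending the $\beta$-th unit to $y_\beta$. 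The image of $F_1$ contains every $V\otimes x_\alpha$, so $F_1$ generates $\mcm$ under colimits and $\mcv$-actions exactly when $\{x_\alpha\}$ does; by \cref{generatingequiconds} $(3)\Rightarrow(1)$, $F_1$ and $F_2$ are generating.

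Next apply \cref{tensorgenerating} to get that $F_1\otimes_\mcv F_2$ is generating. Since $\otimes_\mcv$ commutes with colimits in each variable in $\prlv$, the source identifies with
\[
\Big(\textstyle\coprod_A\mcv\Big)\otimes_\mcv\Big(\coprod_B\mcv\Big)\simeq\coprod_{A\times B}\mcv,
\]
and, tracing through the identification $\mcv\otimes_\mcv\mcv\simeq\mcv$ and the functoriality of $\otimes_\mcv$ on the summand indexed by $(\alpha,\beta)$, the composite sends the $(\alpha,\beta)$-unit $\mb{1}$ to $F_1(\mb{1}_\alpha)\otimes F_2(\mb{1}_\beta)=x_\alpha\otimes y_\beta$, where $x\otimes y$ denotes the image of $(x,y)$ under the canonical bilinear map $\mcm\times\mcn\to\mcm\otimes_\mcv\mcn$. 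Hence the essential image of $F_1\otimes_\mcv F_2$ consists of colimits of objects $V\otimes(x_\alpha\otimes y_\beta)$, and applying \cref{generatingequiconds} $(1)\Rightarrow(2)$ shows these generate $\mcm\otimes_\mcv\mcn$ under small colimits, in particular under colimits and $\mcv$-actions, as claimed.

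The only step requiring care is the identification of $F_1\otimes_\mcv F_2$ on the generating summands, i.e.\ checking that the naturality of the equivalence $\coprod_A\mcv\otimes_\mcv\coprod_B\mcv\simeq\coprod_{A\times B}\mcv$ is compatible with the bilinear map $\mcm\times\mcn\to\mcm\otimes_\mcv\mcn$; this follows from the universal property of the relative tensor product (bilinear maps out of $\mcv\times\mcv$ restricted to units), and I expect it to be routine bookkeeping rather than a real obstacle. A minor size point is that $\{x_\alpha\}$ should be a small set, which is implicit in the statement (and harmless since $\mcm$ is presentable, so one may shrink to a small generating subfamily).
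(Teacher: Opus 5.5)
Your proposal is correct and follows essentially the route the paper intends: the paper records this corollary as an immediate consequence of \cref{tensorgenerating}, without a separate proof. Your encoding of the families as maps $\coprod_A\mcv\to\mcm$ and $\coprod_B\mcv\to\mcn$ out of free $\mcv$-modules, together with both directions of \cref{generatingequiconds}, supplies exactly the bookkeeping needed, and your remark that one can first shrink to a small generating subfamily correctly handles the size issue.
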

We now turn from generating morphisms to generating objects. The relative analogue of compactness in a $\mathcal{V}$-module is the notion of a $\mathcal{V}$-atomic object.
\begin{de}
	Let $\mc{V}\in \calg(\prl)$ and $\mc{M}\in\prlv:=\modu_{\mc{V}}(\prl)$. 
	
	\enu{\item An object $x \in \mathcal{M}$ is called $\mathcal{V}$-atomic, or simply atomic if the base $\mathcal{V}$ is
		understood, if the $\mathcal{V}$-linear functor $\mathcal{V} \xrightarrow{-\otimes x} \mathcal{M}$ classifies an internal left adjoint, i.e. if $$\udmap_{\mathcal{M}}(x,-): \mathcal{M} \rightarrow \mathcal{V}$$ preserves small colimits and the canonical map $$v \otimes \udmap_{\mathcal{M}}(x, y) \rightarrow \udmap_{\mathcal{M}}(x, v \otimes y)$$ is an equivalence for all $v \in \mathcal{V}, y \in \mathcal{M}$.
		\item The $\mathcal{M}$ is said to be $\mathcal{V}$-atomically generated if the smallest full $\mathcal{V}$-submodule of $\mathcal{M}$ closed under colimits and containing the atomics of $\mathcal{M}$ is $\mathcal{M}$ itself.
	} 
\end{de}
\begin{rem}\label{atomictensor}
	If $x\in\mcm$ and $y\in \mcn$ are \mcv-atomic for two \mcv-modules \mcm and \mcn, then $x\otimes y\in \mcm\otimes_\mcv \mcn$ is \mcv-atomic too by \cref{monoidalprvil}.
\end{rem}
\begin{rem}
	Let $\mcm$ be a \mcv-module. If $x:\mcv\to \mcm$ is \mcv-atomic, then $1\otimes x:\mcw \to \mcw\otimes_{\mcv} \mcm$ is \mcw-atomic. Therefore, by \cref{tensorgenerating}, if \mcm is a \mcv-atomically generated \mcv-module, then $\mcw\otimes_{\mcv} \mcm$ is a \mcw-atomically generated \mcw-module.
\end{rem}
\begin{ex}
	For \mcv-itself,  \mcv-atomic objects in \mcv are exactly those dualizable objects, namely $$\mcv^{\op{at}}=\mcv^d.$$
\end{ex}
\begin{ex}
	For a presentable stable \infcat $\mcc$, an $\op{Sp}$-atomic object in $\mcc$ is precisely a compact object. For a presentable (semi-)additive \infcat $\mcd$, a ($\cmon(\mcs)$-atomic) $\spgeq$-atomic object in $\mcd$ is precisely a compact projective object.
	More generally, if $\mathcal{V}$ is a mode, an object $m$ in a $\mcv$-module $\mathcal{M}$ is $\mathcal{V}$-atomic if and only if $\udmap_{\mathcal{M}}(m,-): \mathcal{M} \rightarrow \mathcal{V}$ preserves colimits. The case over a mode was studied in \cite{ben2024higher}.
\end{ex}
We next recall the fundamental link between atomic generation and dualizability.
\begin{prop}[{\cite[Lemma 1.30, Proposition 1.40 and 1.41]{ramzi2024dualizable}}]\label{conserdual}
	Let $\mcv\in\calg(\prl)$.
	\enu{
		\item  Let $F: \mathcal{M} \rightarrow \mathcal{N}$ be a map in $\prlv$; and let $\iota: \mathcal{K}\rightarrow \mathcal{M}\in\prvil$ be an internal left adjoint such that  $\iota$ is generating. In this case, $F$ is an internal left adjoint if and only if  $F\circ\iota$ is.
		\item 
Let $\iota: \mathcal{K}\rightarrow \mathcal{M}\in\prvil$ be an internal left adjoint such that $\iota$ is generating. If $\mck$ is a dualizable \mcv-module (resp. atomically generated \mcv-module), then so is \mcm.
	\item Any atomically generated \mcv-module is dualizable.
	}
\end{prop}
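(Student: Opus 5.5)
The three parts reduce to one another: I prove \emph{(1)} first by a conservativity argument, then obtain dualizability in \emph{(2)} from a colimit presentation of $\mcm$ in $\prvdbl$, and deduce \emph{(3)} from \emph{(2)}. Write $\iota^R$ and $G$ for the right adjoints of $\iota$ and $F$.

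\emph{Part (1).} If $F$ is an internal left adjoint, then $F\circ\iota$ is a composite of internal left adjoints, hence one. Conversely, suppose $F\circ\iota$ is an internal left adjoint. Since $\iota$ is an internal left adjoint, $\iota^R$ preserves small colimits and its lax $\mcv$-linear structure is strict. Since $\iota$ is generating, $\iota^R$ is also conservative. By hypothesis $(F\iota)^R\simeq \iota^R G$ preserves colimits and is strictly $\mcv$-linear. I then check the two defining properties of $G$ after applying $\iota^R$.
\begin{itemize}
\item For colimits, apply $\iota^R$ to the comparison map $\colim_i G(n_i)\to G(\colim_i n_i)$. Since $\iota^R$ commutes with colimits, the result is the comparison map for $\iota^R G$, which is an equivalence. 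Conservativity of $\iota^R$ then gives the claim for $G$.
\item For linearity, apply $\iota^R$ to the lax structure map $v\otimes G(n)\to G(v\otimes n)$. By strict linearity of $\iota^R$ and the compatibility of lax structures under composition, this becomes the structure map of $\iota^R G$, which is an equivalence. Conservativity again finishes the argument.
\end{itemize}
The only care needed is the coherence identifying the composite lax structure, which is formal in the $(\infty,2)$-category $\prlv$.

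\emph{Part (2).} Atomic generation is easy: $\iota$ is an internal left adjoint, so it sends $\mcv$-atomic objects to $\mcv$-atomic objects. Because $\iota$ is generating, \cref{generatingequiconds} shows that the images of generators of $\mck$ generate $\mcm$ under colimits and $\mcv$-actions.

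Dualizability is the main obstacle, since it is not preserved by arbitrary colimits in $\prlv$. My approach:
\begin{itemize}
\item Since $\iota^R$ is conservative and colimit-preserving, the Barr--Beck--Lurie theorem gives $\mcm\simeq \mathrm{LMod}_T(\mck)$ for the monad $T=\iota^R\iota$.
\item Because $\iota^R$ is colimit-preserving and $\mcv$-linear, $T$ is a colimit-preserving $\mcv$-linear monad, so $\mcm$ is monadic over $\mck$ in $\prlv$.
\item The bar construction exhibits $\mcm$ as the geometric realization in $\prlv$ of a simplicial diagram whose terms are $\mck$ (free $T$-modules). Its face and degeneracy maps are assembled from $\iota$, $\iota^R$ and the monad structure, so they are internal left adjoints between dualizable modules.
\item By \cref{colimforget}, colimits in $\prvdbl$ are created in $\prlv$, so this geometric realization, namely $\mcm$, is dualizable.
\end{itemize}
If matching the bar diagram with an honest diagram in $\prvdbl$ proves awkward, I would instead try to exhibit $\mcm$ directly as a retract in $\prlv$ of a module built from $\mck$ using $\iota^R$.

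\emph{Part (3).} Let $\mcm$ be atomically generated, and let $\{x_\alpha\}$ be the set of atomic objects (up to equivalence, after bounding cardinality by presentability). The induced map $\bigoplus_\alpha \mcv\to\mcm$, i.e. the coproduct in $\prlv$, is an internal left adjoint because each component is. It is generating by \cref{generatingequiconds}. The source is dualizable, being a coproduct of copies of $\mcv$ (coproducts of dualizables are dualizable, or use \cref{colimforget}). Part (2) then shows that $\mcm$ is dualizable.
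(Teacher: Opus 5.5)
The paper gives no proof of this statement; it quotes it from Ramzi. Your part (1), the atomic-generation half of (2), and the deduction of (3) from (2) are fine.

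\textbf{The gap is the dualizability half of (2).} You claim the bar construction exhibits $\mathcal{M}\simeq\mathrm{LMod}_T(\mathcal{K})$ as a geometric realization in $\prlv$ of a simplicial object all of whose terms are $\mathcal{K}$. This is false. The bar resolution $X\simeq|T^{\bullet+1}X|$ is a statement about objects of $\mathcal{M}$. Categorically it only gives $\mathrm{id}_{\mathcal{M}}\simeq|\iota T^{\bullet}\iota^R|$ inside $\mathrm{Fun}^L_{\mathcal{V}}(\mathcal{M},\mathcal{M})$.

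The reason is structural. A simplicial object of the $(\infty,1)$-category $\prlv$ can only use $1$-morphisms and invertible $2$-cells, whereas the unit and multiplication of $T$ are non-invertible $2$-cells. $\mathrm{LMod}_T(\mathcal{K})$ is a Kleisli (and Eilenberg--Moore) object, i.e.\ a weighted colimit in the $(\infty,2)$-category. \cref{colimforget} says nothing about such colimits.

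Here is a concrete counterexample. Take $\mathcal{V}=\mathcal{K}=\mathrm{Sp}$, $\mathcal{M}=\mathrm{Sp}\times\mathrm{Sp}$, and $\iota$ the diagonal. This is a generating internal left adjoint, with right adjoint $(X,Y)\mapsto X\oplus Y$.
\begin{itemize}
\item In any simplicial object every face map has a section, since $d_is_i=\mathrm{id}$ and $d_ns_{n-1}=\mathrm{id}$.
\item If an endomorphism $B\otimes-$ of $\mathrm{Sp}$ in $\prl$ has a section $C\otimes-$, then $B\otimes C\simeq\mathbb{S}$, so $B$ is invertible.
\item Hence every simplicial object of $\prl$ with all terms $\mathrm{Sp}$ consists of equivalences. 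Since $\Delta^{\mathrm{op}}$ is weakly contractible, its realization is $\mathrm{Sp}\not\simeq\mathrm{Sp}\times\mathrm{Sp}$.
\end{itemize}
This $\mathcal{M}$ is also not a retract of $\mathrm{Sp}$ in $\prl$. So your fallback cannot simply be a retract of $\mathcal{K}$ and would need a genuinely new construction. Since your (3) relies on (2), it is not yet proved either.

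\textbf{A repair that keeps your monadicity step.} Fix $\mathcal{N}\in\prlv$.
\begin{itemize}
\item Restriction along $\iota$ identifies $\mathrm{Fun}^L_{\mathcal{V}}(\mathcal{M},\mathcal{N})$ with right $T$-modules in $\mathrm{Fun}^L_{\mathcal{V}}(\mathcal{K},\mathcal{N})$. Here $T\in\mathrm{Alg}(\mathrm{Fun}^L_{\mathcal{V}}(\mathcal{K},\mathcal{K}))$ acts by precomposition.
\item If $\mathcal{K}$ is dualizable, then $\mathrm{Fun}^L_{\mathcal{V}}(\mathcal{K},\mathcal{N})\simeq\mathcal{K}^\vee\otimes_{\mathcal{V}}\mathcal{N}$. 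Under this identification, right $T$-modules become left modules over the dual monad $T^\vee$ acting on the first factor.
\item The base-change formula for module categories \cite[Theorem 4.8.4.6]{ha} gives $\mathrm{LMod}_{T^\vee}(\mathcal{K}^\vee\otimes_{\mathcal{V}}\mathcal{N})\simeq\mathrm{LMod}_{T^\vee}(\mathcal{K}^\vee)\otimes_{\mathcal{V}}\mathcal{N}$, naturally in $\mathcal{N}$.
\end{itemize}
Consequently the canonical map $\mathrm{Fun}^L_{\mathcal{V}}(\mathcal{M},\mathcal{V})\otimes_{\mathcal{V}}\mathcal{N}\to\mathrm{Fun}^L_{\mathcal{V}}(\mathcal{M},\mathcal{N})$ is an equivalence for every $\mathcal{N}$. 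This is exactly dualizability of $\mathcal{M}$, with dual $\mathrm{LMod}_{T^\vee}(\mathcal{K}^\vee)$.
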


\begin{de}
Let $\mcv\in\calg(\prl)$.	We denote $\infty$-category $\pr_{\mathcal{V}}^{\mathrm{at}}$ as the full subcategory of $\prvdbl$ spanned by those atomically generated \mcv-modules.
\end{de}

\begin{ex}
	When $\mcv=\opsp$, we have a natural equivalence $$\prat_{\opsp}\xrightarrow[\sim]{(-)^\omega}\catper.$$  When $\mcv=\spgeq$, we have a natural equivalence $$\prat_{\spgeq}\xrightarrow[\sim]{(-)^{\op{cproj}}}\catadidem.$$
\end{ex}
\begin{prop}\label{tensorat}
	Let $\mcv \in \calg(\prl)$. The inclusion $\prvat\hookrightarrow\prvdbl$ is closed under small colimits and tensor products.
\end{prop}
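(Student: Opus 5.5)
We prove the two closure properties separately; in both, the point is to exhibit a family of atomic objects that generates the target.

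\emph{Colimits.} Let $p\colon K\to\prvat$ be a small diagram, $\mcm_k:=p(k)$, and let $\mcm:=\colim_k \mcm_k$ be its colimit in $\prvdbl$. By \cref{colimforget} this colimit is computed in $\prlv$, and each insertion $\iota_k\colon \mcm_k\to\mcm$ is an internal left adjoint. An internal left adjoint $\mcv$-linear functor sends atomic objects to atomic objects: for $x$ atomic, the composite $\mcv\xrightarrow{x}\mcm_k\xrightarrow{\iota_k}\mcm$ is a composite of internal left adjoints. It therefore suffices to show that the family $\{\iota_k(x)\}$, with $k\in K$ and $x\in\mcm_k^{\op{at}}$, generates $\mcm$ under colimits and $\mcv$-actions.

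Consider the induced functor $\coprod_k\mcm_k\to\mcm$ in $\prlv$. Its right adjoint is $(\iota_k^R)_k$, so it is generating by \cref{generatingequiconds} once that right adjoint is conservative. For this, write the colimit in $\prl$ as the limit in $\pr^R$ of the right adjoints, i.e.\ $\mcm\simeq\lim_k\mcm_k$ along the $\iota_k^R$. In a limit of $\infty$-categories the projections are jointly conservative, which gives the required conservativity.

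The images of the generating sets then generate $\mcm$. Concretely, let $\mcm'\subset\mcm$ be the localizing $\mcv$-submodule generated by the $\iota_k(x)$. It contains $\iota_k(\mcm_k)$ for every $k$, because $\iota_k$ preserves colimits and $\mcv$-actions and $\mcm_k$ is generated by its atomics. Hence $\mcm'\supset$ the image of the generating functor above, so $\mcm'=\mcm$ by \cref{generatingequiconds}.

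\emph{Tensor products.} Let $\mcm,\mcn\in\prvat$. The tensor product in $\prvdbl$ is the one in $\prlv$ by \cref{monoidalprvil}. By \cref{atomictensor}, for $x$ atomic in $\mcm$ and $y$ atomic in $\mcn$, the object $x\otimes y$ is atomic in $\mcm\otimes_\mcv\mcn$. By \cref{tensorgenerator}, these objects generate $\mcm\otimes_\mcv\mcn$ under colimits and $\mcv$-actions. The unit $\mcv$ is atomically generated by $\mathbf 1$, so the monoidal unit also lies in $\prvat$.

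The main obstacle is the colimit step: one must justify that the insertions into a colimit in $\prvdbl$ are jointly generating. This relies on \cref{colimforget}, which says the colimit is created in $\prlv$ and hence is the limit of the right adjoints in $\pr^R$. Under that description, joint conservativity of the projections from a limit of $\infty$-categories is standard.
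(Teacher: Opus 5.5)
Your proof is correct and follows the paper's route. The tensor-product part is exactly the paper's combination of \cref{atomictensor} and \cref{tensorgenerator}; for colimits the paper only cites the proof of Ramzi's Proposition~1.62, and your argument is a self-contained version of that step: the images of atomics under the insertions generate because $\coprod_k\mcm_k\to\mcm$ has the jointly conservative right adjoint $(\iota_k^R)_k$. It could be shortened by applying \cref{conserdual}(2) directly to the generating internal left adjoint $\coprod_k\mcm_k\to\mcm$, whose source is atomically generated.
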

\begin{proof}
	The closure statement for colimits appeared in the proof of \cite[Proposition 1.62]{ramzi2024dualizable}. The closure statement for tensor product follows by combining  \cref{tensorgenerator} and \cref{atomictensor}.
\end{proof}

We will also need a relative version of the fact that the subcategory generated by the image of a functor is again well behaved. In the present setting, dualizability is preserved by this construction.
\begin{prop}[{\cite[Lemma 2.61]{ramzi2024dualizable}}]\label{generatedual}
	Let $\mcv\in\calg(\prl)$ and $f: \mathcal{M} \rightarrow \mathcal{N}$ be a map in $\prvdbl$. The full subcategory of $\mathcal{N}$ generated under colimits and \mcv-tensors by the image of $\mathcal{M}$ is dualizable, and both its inclusion into $\mathcal{N}$ as well as the corestriction of $f$ are internal left adjoints.
\end{prop}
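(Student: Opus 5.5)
The statement is \cref{generatedual}. Let $\mcn'\subset\mcn$ be the full subcategory generated by $f(\mcm)$ under colimits and $\mcv$-tensors. The plan is to reduce everything to \cref{conserdual}, applied once to the corestriction $f'\colon\mcm\to\mcn'$ and once to the inclusion $i\colon\mcn'\hookrightarrow\mcn$. First, $\mcn'$ is presentable, being generated under colimits by a small set: take a set of $\kappa$-compact generators of $\mcm$ and of $\mcv$, with $\kappa$ large. It is a $\mcv$-submodule by construction, and $i$ is a $\mcv$-linear left adjoint. By \cref{generatingequiconds}(3)$\Rightarrow$(1), the corestriction $f'$ is generating.

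The key step is to show that $f'$ is an internal left adjoint. Once this is known, \cref{conserdual}(2) applied to $\iota=f'$, with $\mck=\mcm$ dualizable, gives that $\mcn'$ is dualizable. Then \cref{conserdual}(1), applied to $F=i$ with $\iota=f'$, shows that $i$ is an internal left adjoint, because $i\circ f'=f$ is one.

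To prove that $f'$ is an internal left adjoint, I would work with the right adjoints. Let $f^R\colon\mcn\to\mcm$ be the right adjoint of $f$; by assumption it preserves colimits and is $\mcv$-linear. The right adjoint of $f'$ is $f^R\circ i$, since $\Map(f'x,y)=\Map(fx,iy)=\Map(x,f^Riy)$. So it suffices to show that $i$ preserves colimits and is $\mcv$-linear, and that the composite inherits these properties from $f^R$. The inclusion $i$ preserves colimits and $\mcv$-tensors because $\mcn'$ is closed under them inside $\mcn$. The lax $\mcv$-linear structure on $f'^R$ comes from the adjunction, and its structure map is identified with that of $f^R$ restricted along $i$, which is an equivalence. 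Hence $f'^R$ is colimit-preserving and strictly $\mcv$-linear, i.e.\ $f'$ is an internal left adjoint.

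I expect the main obstacle to be this identification of the linearity constraint: one has to check that the mate of the $\mcv$-linear structure of $f'$ agrees with the restriction of that of $f$. Everything else is formal once \cref{conserdual} is available.
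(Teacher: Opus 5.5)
Your argument is correct and is essentially the standard proof behind the cited result \cite[Lemma 2.61]{ramzi2024dualizable}; the paper itself only cites it and gives no proof. The corestriction $f'$ is generating with right adjoint $f^R\circ i$, hence an internal left adjoint, after which \cref{conserdual}(2) and (1) give dualizability of the generated submodule and internality of the inclusion; the linearity step you flag is formal, since the mate of the constraint of $f'$ at $(v,y)$ is the lax constraint of $f^R$ at $(v,i(y))$ composed with $f^R$ applied to the linearity equivalence of $i$.
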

As a consequence, any internal left adjoint between dualizable modules can be factored through the submodule generated by its image. This gives the following factorization system:
\begin{cor}
	The pair $(\op{Gen},\op{Inc})$ forms a factorization system on \prvdbl in the sense of \cite[Definition 5.2.8.8]{htt}, where $\op{Gen}\subset \funct(\Delta^1,\prvdbl)$ denotes the class of generating morphisms, and $\op{Inc}\subset \funct(\Delta^1,\prvdbl)$ denotes the class of fully faithful morphisms.
\end{cor}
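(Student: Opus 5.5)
Every morphism $f\colon \mcm\to\mcn$ in \prvdbl will be factored through the full subcategory $\mcn'\subset\mcn$ generated under small colimits and \mcv-tensors by the essential image of $f$. By \cref{generatedual}, $\mcn'$ is dualizable, and both the corestriction $f'\colon\mcm\to\mcn'$ and the inclusion $i\colon\mcn'\hookrightarrow\mcn$ are internal left adjoints, so $f\simeq i\circ f'$ is a factorization inside \prvdbl. Since $\mcn'$ is generated by the image of $f'$ under colimits and \mcv-actions, \cref{generatingequiconds} (3)$\Rightarrow$(1) shows that $f'$ is generating, i.e.\ $f'\in\op{Gen}$. The inclusion $i$ is fully faithful, i.e.\ $i\in\op{Inc}$. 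This settles existence of factorizations.

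Next I will check the closure requirements of \cite[Definition 5.2.8.8]{htt}. Both classes contain all equivalences. Both are closed under retracts in $\funct(\Delta^1,\prvdbl)$: a retract of a functor with a conservative right adjoint again has a conservative right adjoint, because right adjoints of a retract diagram form a retract diagram and conservativity passes to retracts. Similarly, full faithfulness is stable under retracts.

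The main step is orthogonality. Suppose given a commutative square in \prvdbl with $g\colon\mca\to\mcb$ generating on the left, $h\colon\mcc\hookrightarrow\mcd$ fully faithful on the right, and horizontal maps $u\colon\mca\to\mcc$, $v\colon\mcb\to\mcd$. I must show that the space of lifts $\mcb\to\mcc$ is contractible.

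Since $h$ is fully faithful, $\mcc$ is a full subcategory of $\mcd$ closed under colimits and \mcv-tensors (as $h$ is a \mcv-linear left adjoint). The essential image of $v$ is generated under colimits and \mcv-actions by $v g(\mca)=h u(\mca)\subset\mcc$, using \cref{generatingequiconds} for $g$. So $v$ factors through $\mcc$, and such a factorization is unique up to contractible choice because $h$ is a monomorphism in $\prlv$; composition with a fully faithful functor is fully faithful on functor categories.

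It remains to show that the lift $w\colon\mcb\to\mcc$ is an internal left adjoint. Here I will use \cref{conserdual}(1): writing $h\circ w=v$, the right adjoint of $h$ is \mcv-linear because $h$ is an internal left adjoint, so $w\simeq h^R h w = h^R v$ is a composite of \mcv-linear colimit-preserving maps, namely the right adjoints' left partners. More cleanly, $w^R\simeq v^R h$, which is a composite of \mcv-linear colimit-preserving functors. The main obstacle I expect is making the uniqueness rigorous at the level of mapping spaces in the non-full subcategory \prvdbl. For this I will use that being an internal left adjoint is a property of a morphism, so mapping spaces in \prvdbl are unions of components of those in $\prlv$, and the contractibility computed in $\prlv$ transfers.
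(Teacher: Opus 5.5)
Your proposal is correct and follows the paper's route: the factorization comes from \cref{generatedual}, exactly as in the sentence preceding the corollary, and you also check retract-closure and orthogonality, which the paper leaves implicit. One tidy-up: your $w^R\simeq v^R h$ argument works, but the cleanest way to see that the lift $w$ is an internal left adjoint is the step you cite but never carry out, namely applying \cref{conserdual}(1) to $w\circ g\simeq u$ with $g$ generating.
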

Finally, we recall a presentability result about categories of dualizable and atomically generated modules themselves. This will be important when applying the theory of categorical ideals to these module categories.

\begin{thm}[{\cite[Theorem 3.1, Corollary 3.14 and 3.15]{ramzi2024dualizable}}]\label{dualprl}
	Let $\mcv\in\calg(\prl_\kappa)$ where $\kappa>\omega$ is a uncountable regular cardinal. Then: \enu{
		\item We have $\pr^{\op{dbl}}_\mcv\in \calg(\prl_\kappa)$.
		The natural (non-full) inclusion $\prvdbl\to \prl_{\kappa,\mcv}$ preserves and reflects $\lambda$-compact objects for any regular cardinal $\lambda\geq\kappa$.
		\item We have $\pr^{\op{at}}_\mcv$ is presentably symmetric monoidal.}
\end{thm}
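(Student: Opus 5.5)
The strategy is to prove presentability as accessibility plus cocompleteness. Cocompleteness is already available: by \cref{colimforget}, the forgetful functors $\prvdbl\to\pr^{iL}_\mcv\to\prlv$ create small colimits. For the symmetric monoidal part, \cref{monoidalprvil} equips $\prvdbl$ with the tensor product $\otimes_\mcv$. Colimits are computed in $\prlv$, where $\mcm\otimes_\mcv-$ preserves colimits. Hence $\otimes_\mcv$ on $\prvdbl$ preserves small colimits in each variable. The same argument applies to $\prvat$, using \cref{tensorat}: this inclusion is closed under colimits and tensor products. So everything reduces to accessibility and to identifying the $\lambda$-compact objects.

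\textbf{Accessibility of $\prvdbl$.} I will use the Efimov-style characterization of dualizability. A $\mcv$-module $\mcm$ is dualizable iff the colimit functor $\colim\colon \mathcal{P}_\mcv(\mcm^\kappa)\to\mcm$ (from $\mcv$-enriched presheaves) admits a $\mcv$-linear left adjoint $\hat y$. Equivalently, $\mcm$ is a $\mcv$-linear retract, via internal left adjoints, of the atomically generated module $\mathcal{P}_\mcv(\mcm^\kappa)$. Since $\kappa$ is uncountable, I expect dualizable modules to be $\kappa$-compactly generated, with $\hat y$ preserving $\kappa$-compact objects. Granting this, $\mcm$ is recorded by small data: the $\mcv^\kappa$-module $\mcm^\kappa$ together with $\hat y$ restricted to it. Internal left adjoints are exactly the functors that are compatible with these $\hat y$'s. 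I will then show that every $\mcm\in\prvdbl$ is a $\kappa$-filtered colimit, in $\prvdbl$, of dualizable submodules generated by $\kappa$-small sets of objects of $\mcm^\kappa$ that are closed under $\hat y$ (up to the relevant approximations). The required closure is obtained by a countable iteration: start from a $\kappa$-small set, adjoin the objects appearing in $\hat y$ of its elements, and repeat $\omega$ times. \emph{This is the step I expect to be the main obstacle}, and it is exactly where $\kappa>\omega$ is needed, so that a countable union of $\kappa$-small sets remains $\kappa$-small. One must also check, using \cref{generatedual}, that each such submodule is dualizable and that its inclusion into $\mcm$ is an internal left adjoint.

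\textbf{Compact objects.} The forgetful functor to $\prl_{\kappa,\mcv}$ preserves $\lambda$-filtered colimits for $\lambda\geq\kappa$, because colimits in $\prvdbl$ are created as above. Mapping spaces in $\prvdbl$ are the subspaces of $\mcv$-linear functors that commute with $\hat y$, a $\kappa$-small condition. This gives preservation of $\lambda$-compactness. For reflection, suppose $\mcm$ is dualizable and its underlying module is $\lambda$-compact. Writing $\mcm$ as the $\lambda$-filtered colimit of its $\lambda$-small dualizable pieces from the previous step, $\lambda$-compactness makes the identity of $\mcm$ factor through one of these pieces. So $\mcm$ is a retract of a $\lambda$-compact object of $\prvdbl$.

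\textbf{Part (2).} Atomically generated modules are determined by their small $\mcv$-enriched, Cauchy-complete subcategories of atomic objects. This gives $\prvat\simeq$ idempotent-complete small $\mcv$-enriched categories (for $\mcv=\opsp$ this specializes to the equivalence with $\catper$), which is presentable by the standard theory of enriched categories. Combined with closure under colimits and tensors from \cref{tensorat}, this shows $\prvat$ is presentably symmetric monoidal.
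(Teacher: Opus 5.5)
The paper gives no proof of this statement; it quotes it from Ramzi, as the relative version of Efimov's presentability theorem for dualizable stable categories. Your architecture is the right Efimov-type strategy: colimits via \cref{colimforget}, accessibility by writing each dualizable module as a $\kappa$-filtered colimit of small dualizable submodules obtained by a countable closure, then a comparison of compact objects. However, the lemma you build it on is false once $\mcv$ is not stable. Dualizability of $\mcm$ is \emph{not} equivalent to $\colim\colon\mathcal{P}_{\mcv}(\mcm^\kappa)\to\mcm$ admitting a $\mcv$-linear left adjoint, and a dualizable module need not be a retract of an atomically generated one.

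Take $\mcv=\mcs$, which satisfies the hypotheses, and $\mcm=\opsp$, which is dualizable in $\prl$. The $\mcs$-atomically generated modules are presheaf categories $\mathcal{P}(\mathcal{B})$, and their initial object is strict. A colimit-preserving $s\colon\opsp\to\mathcal{P}(\mathcal{B})$ therefore satisfies $s(x)\simeq\emptyset\sqcup_{s(\Omega x)}\emptyset$. This forces $s(\Omega x)\simeq\emptyset$, hence $s\simeq\emptyset$, so no retraction exists. Concretely, $\colim\colon\mathcal{P}(\opsp^\kappa)\to\opsp$ does not even preserve binary products. For $F=y(\mathbb{S})\sqcup y(0)$ one finds $\colim(F\times F)\simeq\mathbb{S}^{\oplus 4}$, whereas $\colim F\times\colim F\simeq\mathbb{S}^{\oplus 2}$.

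The same failure occurs for $\mcv=\spgeq$ and $\mcm=\opsp$, because a colimit-preserving functor from a stable category to a separated prestable one vanishes; this is exactly the base used in Part III. The reason is that enriched presheaves re-adjoin the finite colimits that $\mcm^\kappa$ already has. For $\mcv=\opsp$, spectral presheaves on the stable $\mcm^\kappa$ are automatically exact, so $\mathcal{P}_{\opsp}(\mcm^\kappa)\simeq\operatorname{Ind}(\mcm^\kappa)$; that is why the slip is invisible in Efimov's setting. The correct criterion uses an Ind-type completion that adjoins only filtered colimits, which is compactly but not atomically generated. In Efimov's case, $\hat y(x)$ is then a sequential colimit of representables along compact morphisms.

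Note also that in such a retraction only the section $\hat y$ can be an internal left adjoint. If $\colim$ were one too, it would be a generating internal left adjoint out of an atomically generated module. By \cref{conserdual} every dualizable module would then be atomically generated, which already fails over $\opsp$, for example for $\shv(\mathbb{R};\mcd(k))$.

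Consequently, your ``small data'' description of $\mcm$ and your closure step are phrased in terms of a $\hat y$ that does not exist in general. Even after this correction, the facts that carry the proof are only asserted, not proved.
\begin{itemize}
\item Dualizable modules are $\kappa$-compactly generated, with $\hat y$ preserving $\kappa$-compact objects. This is where $\kappa>\omega$ genuinely enters, through countable sequences of compact morphisms.
\item The submodule generated by a $\kappa$-small $\hat y$-closed set is dualizable, with an internal left adjoint inclusion. \cref{generatedual} does not give this: it requires a dualizable source already mapping in by an internal left adjoint, and a set of non-atomic objects provides none. You must instead exhibit the piece as a retract of the Ind-completion of the small subcategory, via the restricted $\hat y$.
\end{itemize}
For membership in $\calg(\prl_{\kappa})$ you also need that the unit $\mcv$ is $\kappa$-compact in $\prvdbl$, and that $\kappa$-compact objects are closed under $\otimes_{\mcv}$. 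Both follow from the compact-object comparison together with the corresponding facts in $\prl_{\kappa,\mcv}$, but they should be stated. Your treatment of (2) through small enriched categories is a legitimate alternative to deducing it from (1). It works provided you use Cauchy-complete, rather than merely idempotent-complete, $\mcv$-categories.
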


\section{Smashing Ideals and the Smashing Frames}\label{sec3}
The generalization from classical tensor-triangulated categories to arbitrary presentably symmetric monoidal $\infty$-categories necessitates a reformalization of smashing ideals and smashing frames. In this section, we study these objects through the lens of dualizable and rigid $\infty$-categories.
\subsection{Smashing frames}
Outside the stable setting, the poset of idempotent objects does not in general form a frame. Therefore the foundational definition of the smashing frame must be shifted toward the poset of \emph{coidempotent} objects, as indicated by Aoki \cite{aoki2023sheaves}.

We begin by recalling the relevant notions of idempotent and coidempotent objects, emphasizing the latter as it is the structure that retains good frame-theoretic properties in the non-stable setting.
	\begin{de}
	Let $\mcc$ be a symmetric monoidal $\infty$-category. A \textbf{coidempotent object} is an object $c: C \rightarrow \mb{1}$ in the overcategory $\mathcal{C}_{/ \mathbf{1}}$ such that $C \otimes c: C \otimes C \rightarrow C \otimes \mb{1} \simeq C$ is an equivalence. 
	A cocommutative coalgebra in $C\in\ccalg(\mcc)$ is called coidempotent  if its underlying $\mathbb{E}_0$-coalgebra is coidempotent. 
	
	We denote by $\op{Idem}(\mcc)\subset\mcc_{\mb{1}/}$ the full subcategory  spanned by idempotent objects. We denote by $\cidemc\subset\mcc_{/\mb{1}}$
	the full subcategory  spanned by coidempotent objects. 
\end{de}

\begin{rem}\label{stablecidemidem}
	Let $\mcc\in \calg(\op{Cat}^{\op{st}}_{\infty})$ be stably symmetric monoidal. 
	Then $$\cidemc \simeq \op{Idem}(\mcc)\simeq\calg(\mcc)^{\op{idem}}  ,$$ where the first equivalence follows from \cite[Proposition 2.14]{aoki2023sheaves} and the second one follows from  \cite[Proposition 4.8.2.9]{ha}. 
	
	Note that if $\mcc$ is not stable, then in general $\cidem(\mcc) \neq \Idem(\mcc)$. Indeed, the inclusion
	\[
	\cidem(\spgeq) \simeq \calg(\spgeq)^{\op{cn}, \idem} \subsetneq \calg(\spgeq)^{\idem} \simeq \Idem(\spgeq)
	\]
	is strict (where the first equivalence follows from \cref{connloc}). For instance, the map $\mathbb{S} \to \mathbb{S}_{(p)}$ is an idempotent algebra that is not $\pi_0$-epimorphic.
\end{rem}
Motivated by this stable comparison, and following Aoki, we use coidempotent objects to define the smashing frame in arbitrary presentably symmetric monoidal $\infty$-categories.
\begin{de}
	Let $\mcv\in \calg(\prl)$. We call $\cidem(\mcv)$ the \textbf{smashing frame} of \mcv. 
\end{de}
This terminology makes sense due to the following theorem by Aoki, which shows that the poset of coidempotent objects is not merely an auxiliary replacement for idempotents, but genuinely forms a frame and participates in a sheaf-theoretic adjunction.
\begin{thm}[{\cite[\textsection3.2]{aoki2023sheaves}}]\label{shvsmadj}
	If $\mcv\in \calg(\prl)$, then $\cidem(\mcv)$ is a frame. Furthermore, it induces  the following sheaves-spectrum adjunction:
	$$\begin{tikzcd}
		\frm  \arrow[r, "\op{Shv}(-)",hook, shift right=-1ex]  & \arrow[l,"\perp"', "\cidem(-)", shift right=-1ex] \calg(\prl) \end{tikzcd}$$
		In particular, $\cidem(-)$ preserves small limits.
\end{thm}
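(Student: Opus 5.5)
The statement just given (Aoki's theorem, \cref{shvsmadj}) has two parts: that $\cidem(\mcv)$ is a frame, and that it is right adjoint to $\op{Shv}(-)$. I would prove the frame property first and then build the adjunction by identifying $\cidem(\mcv)$ with a space of symmetric monoidal left adjoints.

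\textbf{Step 1: $\cidem(\mcv)$ is a poset.} For coidempotents $c\colon C\to\mathbf 1$ and $d\colon D\to\mathbf 1$, I would show
\[
\Map_{/\mathbf 1}(C,D)\simeq \Map_{/\mathbf 1}(C\otimes D,\,D)\simeq \Map_{/\mathbf 1}(C\otimes D, C\otimes D),
\]
using coidempotence of $D$. Then $\Map_{/\mathbf 1}(C,D)$ is either empty or contractible, and it is nonempty exactly when $C\otimes d$ is an equivalence. Thus $C\le D$ if and only if $C\otimes D\simeq C$ over $\mathbf 1$.

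\textbf{Step 2: meets and joins.} Binary meets are given by $C\otimes D$. Arbitrary joins are colimits in $\mcv_{/\mathbf 1}$ of the diagram of finite tensor products. The cleanest route is via smashing ideals: $C$ determines the full subcategory $\langle C\rangle=C\otimes\mcv$, which is closed under colimits and tensoring and is coreflective, with coreflector $C\otimes-$. A join of a family of coidempotents should correspond to the smashing ideal generated by their union. Its coreflector is computed as the colimit, over finite subsets $S$, of a cubical diagram assembled from the tensor products $C_S=\bigotimes_{s\in S}C_s$; this is the unstable analogue of the Mayer--Vietoris gluing. Infinite distributivity, $C\otimes\bigvee_i D_i\simeq\bigvee_i (C\otimes D_i)$, then follows because $\otimes$ preserves colimits in each variable. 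The main obstacle is showing that this colimit is again coidempotent, because without stability there are no cofiber sequences to work with. My first attempt would be to show that the colimit $E$ satisfies $E\otimes E\simeq E$ by checking that $E\otimes-$ is idempotent on the generating pieces $C_S$, using $C_S\otimes C_T\simeq C_{S\cup T}$. Smallness of $\cidem(\mcv)$ follows from $\kappa$-accessibility of $\mcv$.

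\textbf{Step 3: the adjunction.} For a frame $F$, $\op{Shv}(F)=\op{Shv}(F;\mcs)$ is presentably Cartesian. I would show
\[
\Map_{\calg(\prl)}(\op{Shv}(F),\mcv)\simeq \Hom_{\frm}(F,\cidem(\mcv)).
\]
In one direction, a symmetric monoidal left adjoint $\op{Shv}(F)\to\mcv$ sends the subterminal sheaves $y(U)\to *$ to coidempotents, since $y(U)\times y(U)\simeq y(U)$, and it preserves finite meets and joins. In the other direction, a frame map $F\to\cidem(\mcv)$ gives a functor $F\to\mcv$ that is symmetric monoidal, with finite meets going to tensor products, and that satisfies the covering conditions. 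By the universal property of sheaves as a symmetric monoidal localization of presheaves, it extends uniquely. Fully faithfulness of $\op{Shv}$ on frames is the localic statement $\cidem(\op{Shv}(F))=\op{Shv}(F)_{\le -1}\simeq F$. Preservation of limits by $\cidem(-)$ is then formal, since it is a right adjoint.
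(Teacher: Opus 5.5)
Your overall architecture is sound. The paper gives no proof and only cites Aoki, and your outline (presheaves on $(F,\wedge)$ with Day convolution, the symmetric monoidal localization $\mathcal P(F)\to\shv(F)$, joins in $\cidem(\mcv)$ as colimits of finite tensor products) is a reasonable route. Your Step 2 idea works without stability. Index by the poset $P$ of nonempty finite subsets under reverse inclusion. Then $C_S\otimes C_T\simeq C_{S\cup T}$, and $\cup\colon P\times P\to P$ is cofinal, because for each $S$ the comma category $\{(T,T')\mid T\cup T'\subseteq S\}$ has initial object $(S,S)$. Hence the colimit $E$ satisfies $E\otimes E\simeq E$.

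The genuine gap is in Step 3. You assert that a frame map $F\to\cidem(\mcv)$ ``gives a functor $F\to\mcv$ that is symmetric monoidal.'' Conversely, you treat the frame map read off from a symmetric monoidal left adjoint $\shv(F)\to\mcv$ as if it determined that functor together with its symmetric monoidal structure. An $\infty$-categorical symmetric monoidal functor carries an infinite tower of coherence data. The main content of the theorem is that $\Map_{\calg(\prl)}(\shv(F),\mcv)$ is a discrete space, identified with the set of frame maps. Knowing that $\cidem(\mcv)\subset\mcv_{/\mathbf 1}$ is a poset does not give this. What you need is that $\Fun^{\otimes}\bigl((F,\wedge),(\mcv,\otimes)\bigr)$ is equivalent to the poset of meet-preserving maps $F\to\cidem(\mcv)$. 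One standard way to get it:
\begin{enumerate}
\item Since $(F,\wedge)$ is Cartesian monoidal, symmetric monoidal functors $F\to\mcv$ are the same as finite-product-preserving functors $F\to\ccalg(\mcv)$. This dualizes the description of lax monoidal functors out of coCartesian structures in \cite[\textsection2.4.3]{ha}, using that products in $\ccalg(\mcv)$ are tensor products.
\item Every element of $F$ is subterminal, so such a functor lands in the subterminal objects of $\ccalg(\mcv)$. These are exactly the coidempotent coalgebras.
\item Coidempotent coalgebras are identified with $\cidem(\mcv)$ by the dual of \cite[Proposition 4.8.2.9]{ha}.
\end{enumerate}
The target is then a poset, so the space of such functors is the set of meet-preserving maps. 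Only after this can you impose the covering condition using your Step 2 join formula.

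Separately, the displayed chain in Step 1 is wrong: it has the variance of the idempotent (reflective) case. For coidempotent $D$, the functor $D\otimes-$ is right adjoint to the inclusion of $\langle D\rangle$. So it simplifies maps into $D$ from $D$-colocal objects, not maps out of an arbitrary $C$. Concretely, $\Map_{/\mathbf 1}(C\otimes D,D)$ is always contractible: it contains $c\otimes D$, and $C\otimes D\in\langle D\rangle$. But $\Map_{/\mathbf 1}(\mathbf 1,\emptyset)$ is empty unless $\mcv\simeq *$, so your chain would force $\cidem(\mcv)$ to be a single point. Your stated conclusion is still correct, by a different argument. If $d\otimes C$ is an equivalence, the coreflection gives $\Map_{/\mathbf 1}(C,D)\simeq\Map_{/\mathbf 1}(C,\mathbf 1)\simeq *$. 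Conversely, given $g\colon C\to D$ over $\mathbf 1$, the composite $C\simeq C\otimes C\xrightarrow{C\otimes g}C\otimes D\xrightarrow{C\otimes d}C$ is the identity. So $C$ is a retract of $C\otimes D\in\langle D\rangle$ and therefore lies in $\langle D\rangle$.
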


\begin{ex}\label{cartesiancidem}
Assume that $\mcx\in\calg(\prl)$ is Cartesian monoidal. Then	$$\cidem(\mcx)\simeq \mcx_{\leq -1}$$ due to the following diagram:
$$\begin{tikzcd}
	X \arrow[r, "\Delta"] \arrow[rd, Rightarrow, no head] & X\times X \arrow[d] \\
	& *\times X          
\end{tikzcd}$$
because $\Delta$ is an equivalence if and only if the vertical arrow is an equivalence.
\end{ex}
We will later compare smashing frames along symmetric monoidal functors. For this purpose, it is useful to know when such a comparison map is injective and when spatiality can be inherited. We first isolate two elementary frame-theoretic observations.
\begin{lem}\label{conserinjectfrms}
	Let $f:F\to L$ be a conservative finite-meet-preserving functor between posets admitting finite meets. Then $f$ is an embedding\footnote{An embedding between posets can be viewed as a fully faithful functor between $0$-categories.}.
\end{lem}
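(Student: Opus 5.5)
We must show that if $f(x)\le f(y)$ then $x\le y$. The point is to rewrite the inequality $x\le y$ as an equivalence between two elements that are comparable, and then apply conservativity.

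First, recall the convention. A poset is a $0$-category, and a functor between posets is conservative if a morphism $a\le b$ becomes an equivalence under $f$ only when it was already one. Concretely: whenever $a\le b$ and $f(a)=f(b)$, we have $a=b$.

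Second, note that $f$ is monotone, being a functor. Moreover, $f$ preserves binary meets, so $f(x\wedge y)=f(x)\wedge f(y)$.

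Now suppose $f(x)\le f(y)$. Then $f(x)\wedge f(y)=f(x)$, and hence $f(x\wedge y)=f(x)$. Consider the morphism $x\wedge y\le x$ in $F$. By the previous equation, $f$ sends it to an equivalence. Conservativity then gives $x\wedge y=x$, which means $x\le y$. Combined with monotonicity, this says $x\le y$ if and only if $f(x)\le f(y)$, so $f$ is fully faithful as a functor of $0$-categories. Injectivity on objects follows from antisymmetry.

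There is essentially no obstacle. The only care needed is to use conservativity only on a comparable pair, namely $x\wedge y\le x$, rather than on the arbitrary pair $x,y$. Only binary meets are used; the empty meet, i.e.\ preservation of the top element, is not needed.
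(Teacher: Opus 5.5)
Your proposal is correct and is essentially the paper's proof: both rewrite $x\le y$ as the morphism $x\wedge y\to x$ being an equivalence, then use meet-preservation together with conservativity to reflect this along $f$. Your extra remarks, that conservativity is applied only to a comparable pair and that only binary meets are needed, are accurate but do not change the route.
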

\begin{proof}
	Indeed, given a such map $f:F\to L$ and $x,y\in F$; in this case $x\leq y$ if and only if the natural map $x\wedge y \to x$ is an equivalence. Therefore $f(x)\leq f(y)\Leftrightarrow x\leq y$.
	
\end{proof}
\begin{de}
Let $L_0\subset L$ be an embedding of posets such that $L$ is a frame. We say $L_0$ is a \textbf{subframe} of $L$ if it is closed under small joins and finite meets.
\end{de}
\begin{lem}\label{subfrmspatial}
	Let $L$ be a spatial frame. Then any subframe of $L$ is spatial too.
\end{lem}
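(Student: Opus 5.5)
The plan is to use the characterization of spatiality by points: a frame $L$ is spatial if and only if for any $a \not\leq b$ in $L$ there is a frame homomorphism $p\colon L\to\{0,1\}$ with $p(a)=1$ and $p(b)=0$. Equivalently, $L$ is spatial if and only if every element is a meet of prime elements. I will work with the point formulation, since it transfers most directly along the inclusion $\iota\colon L_0\hookrightarrow L$.

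Let $L_0\subset L$ be a subframe, and take $a,b\in L_0$ with $a\not\leq b$ in $L_0$. Because $L_0\subset L$ is an embedding of posets, we also have $a\not\leq b$ in $L$. Spatiality of $L$ then provides a frame homomorphism $p\colon L\to\{0,1\}$ with $p(a)=1$ and $p(b)=0$.

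Next I restrict $p$ to $L_0$. The composite $p\circ\iota\colon L_0\to\{0,1\}$ is again a frame homomorphism, because $L_0$ is closed in $L$ under small joins and finite meets. Consequently the joins and finite meets computed in $L_0$ agree with those computed in $L$; this includes the empty cases, so the top and bottom elements of $L_0$ and $L$ coincide. Since $p$ preserves these operations, so does $p\circ\iota$.

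The homomorphism $p\circ\iota$ separates $a$ from $b$, so $L_0$ has enough points and is therefore spatial. The only point that needs care is the observation that joins and finite meets in $L_0$ agree with those in $L$; this is exactly what the definition of subframe as an embedding closed under small joins and finite meets guarantees. I do not expect any genuine obstacle beyond this.
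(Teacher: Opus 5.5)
Your proposal is correct and is essentially the paper's argument. The paper uses that the points $L\to\mathcal{S}_{\leq -1}$ of a spatial frame are jointly conservative and that this property survives restriction along the subframe inclusion; separating $a\not\leq b$ by a point, as you do, is an equivalent formulation.
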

\begin{proof}
	Let $i:L_0\subset L$ be a subframe. Since $L$ is spatial, the collection of all frame maps $\op{pt}(L):=\{F_\alpha:L\to \mcs_{\leq-1}\}$ is jointly conservative. Therefore the collection $\{F_\alpha\circ i:L_0\to \mcs_{\leq-1}|F_\alpha\in \op{pt}(L)\}$ is jointly conservative and $L_0$ is spatial.
\end{proof}

Combining these two elementary observations gives the following useful criterion for comparing smashing frames through conservative monoidal functors.
\begin{prop}\label{conservinjfrms}
Let $F:\mcc\to\mcd$ be a morphism in $\calg(\Cat_{\infty})$ whose underlying functor is conservative. Then the induced map of posets
	$$\cidem(\mcc)\to\cidem(\mcd)$$ is an embedding (cf. \cite[Lemma 3.4]{aoki2024smashing}). 
	
	Furthermore, suppose that both $\mcc,\mcd$ are presentably symmetric monoidal and the morphism $\mcc\to\mcd$ lies in $\calg(\prl)$. In this case if $\cidem(\mcd)$ is spatial, then so is  $\cidem(\mcc)$.
\end{prop}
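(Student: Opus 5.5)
The plan is to reduce the first claim to \cref{conserinjectfrms}, and the second to \cref{subfrmspatial}, after checking that the induced map is a map of frames.

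\emph{Step 1: the induced map.} Since $F$ is symmetric monoidal, it sends $\mb{1}$ to $\mb{1}$ and coidempotents to coidempotents. Indeed, if $c\colon C\to\mb{1}$ is coidempotent, then $F(C)\otimes F(c)$ identifies with $F(C\otimes c)$, which is an equivalence. So $F$ induces a map of posets $\cidem(\mcc)\to\cidem(\mcd)$.

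\emph{Step 2: finite meets.} I will first recall that the meet of two coidempotents $C,C'$ in $\cidem(\mcc)$ is $C\otimes C'\to\mb{1}$. This holds because coidempotents are closed under $\otimes$, and $C\otimes C'$ maps to both $C$ and $C'$. Moreover, any coidempotent $D$ mapping to both factors through $D\simeq D\otimes D\to C\otimes C'$. The top element is $\mb{1}$. Since $F$ is symmetric monoidal, it preserves both, so the map preserves finite meets.

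\emph{Step 3: conservativity.} Suppose $C'\le C$ in $\cidem(\mcc)$ and $F(C')\to F(C)$ is an equivalence. The comparison map over $\mb{1}$ is a morphism $C'\to C$ in $\mcc$. Conservativity of the underlying functor of $F$ makes it an equivalence, so $C'=C$. Here one should note that the order relation is realised by an actual morphism in $\mcc_{/\mb{1}}$, and that this morphism is essentially unique because $\cidem$ is a poset. By \cref{conserinjectfrms}, the map is therefore an embedding.

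\emph{Step 4: spatiality in the presentable case.} Now $F$ is colimit-preserving. Joins in $\cidem(\mcv)$ for presentable $\mcv$ are computed as colimits in $\mcv_{/\mb{1}}$ (Aoki), so $F$ also preserves small joins. Hence $\cidem(\mcc)\to\cidem(\mcd)$ is a frame map, and its image is a subframe of $\cidem(\mcd)$ isomorphic to $\cidem(\mcc)$. Alternatively, I would compose with the points of $\cidem(\mcd)$ directly, as in the proof of \cref{subfrmspatial}: the composites form a jointly conservative family of frame maps $\cidem(\mcc)\to\mcs_{\le -1}$. Then \cref{subfrmspatial} gives that $\cidem(\mcc)$ is spatial.

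The main obstacle is justifying that joins of coidempotents are colimits in $\mcv_{/\mb{1}}$. I would cite \cref{shvsmadj}, or \cite{aoki2023sheaves}, for this. Another route is functoriality of $\cidem(-)$ as a functor $\calg(\prl)\to\frm$, which is implicit in the adjunction, so that the induced map is a frame map by construction.
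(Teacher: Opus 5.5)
Your proof is correct and matches the paper's: meets in $\cidem$ are given by the tensor product, so \cref{conserinjectfrms} yields the embedding, and once the induced map is a frame map its image is a subframe, so \cref{subfrmspatial} gives spatiality. One correction to Step~4: joins in $\cidem(\mcv)$ are not literally colimits of the family in $\mcv_{/\mb{1}}$ (for instance, $\mb{1}\sqcup\mb{1}\to\mb{1}$ is not coidempotent in $\mcs$, although $\mb{1}\vee\mb{1}=\mb{1}$), since only filtered joins and pushouts over meets are computed that way, so the clean justification is your second route, the functoriality $\cidem(-)\colon\calg(\prl)\to\frm$ from \cref{shvsmadj}.
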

\begin{proof}
The embedding follows from \cref{conserinjectfrms}, because both $\cidem(\mcc)$ and $\cidem(\mcd)$ admit finite meets, which are given by the tensor product. The statement about the spatiality follows from \cref{subfrmspatial}.
\end{proof}

\subsection{Smashing ideals}
In this subsection, we first recall the identification between coidempotent objects and smashing ideals within the framework of dualizable $\mathcal{V}$-modules introduced in \cite{Zariski_Bal}. Based on this identification, we prove the $\kappa$-coherence of the smashing frame when $\mcv$ is $\kappa$-presentably symmetric monoidal for an uncountable $\kappa$.

	\begin{de}
	Let $\mcv\in \calg(\prl)$. We call a fully faithful map $i:\mci\hookrightarrow \mcv \in \prlv$ of \mcv-modules
	as  a \textbf{localizing ideal}.
	We call a fully faithful internal left adjoint $i:\mci\hookrightarrow \mcv \in \prvil$ of \mcv-modules as a \textbf{smashing ideal} of \mcv.
	
\end{de}
\begin{rem}
	A  localizing ideal can be identified with a presentable full subcategory  closed under colimits and \mcv-actions (see \cite[Proposition B.16]{Zariski_Bal}).
	A smashing ideal of \mcv is always a dualizable \mcv-module, since it is a retract of \mcv in \prlv.
\end{rem}
\begin{nota}	 	
	If $S\subset \mcv$ is a small set of objects, we define $\left<S\right>\subset\mcv$ to be the smallest \mcv-submodule (or equivalently smallest localizing ideal) containing $S$. 
\end{nota}

The following theorem from \cite{Zariski_Bal} identifies coidempotent objects of $\mcv$, coidempotent objects of $\prvdbl$, and smashing ideals of $\mcv$.
\begin{thm}[{\cite[Theorem 3.2.3 and 3.2.8]{Zariski_Bal}}]\label{smideal}
	Let $\mcv\in \calg(\prl)$. 
	\enu{ \item 
		Let $i:\mci\to \mcv\in\prvdbl$ be a \mcv-internal left adjoint between dualizable \mcv-modules. Then $i:\mci\to \mcv$ is coidempotent in \prvdbl  if and only if $i$ is fully faithful. Therefore we obtain the following one-to-one correspondence
		$$\cidem(\prvdbl)
		\simeq\{\text{smashing ideals of }\mcv\}.$$
		\item 	The assignment $x\mapsto \left<x \right>$ induces the following equivalence between coidempotent objects and smashing ideals $$\left<- \right>: \cidem(\mcv)\xrightarrow{\sim}\cidem(\prvdbl).$$
	}
\end{thm}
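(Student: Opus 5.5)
The plan is to prove (2) first, since it supplies the dictionary used in (1). Throughout I use that a $\mcv$-linear colimit-preserving endofunctor of $\mcv$ is $y\otimes-$ with $y$ its value on $\mb{1}$, since $\mcv$ is free of rank one in $\prlv$.

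\emph{Construction for (2).} Given a coidempotent $c\colon x\to\mb{1}$, consider the $\mcv$-linear functor $G=x\otimes-\colon\mcv\to\mcv$ together with $c\otimes-\colon G\Rightarrow\id$. Coidempotence says $G\circ G\to G$ is an equivalence, so $G$ is an idempotent comonad. I would check the following in order.
\begin{enumerate}
\item The essential image of $G$ consists of the $v$ with $c\otimes v$ an equivalence. It is closed under colimits and the $\mcv$-action, so it is a localizing ideal. It is contained in $\left<x\right>$, and since it contains $x$ it equals $\left<x\right>$.
\item The inclusion $i\colon\left<x\right>\hookrightarrow\mcv$ has right adjoint $x\otimes-$, with counit $c\otimes-$. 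This right adjoint is $\mcv$-linear and preserves colimits, so $i$ is an internal left adjoint, i.e. a smashing ideal.
\end{enumerate}
For the inverse, a smashing ideal $i$ has a $\mcv$-linear colimit-preserving right adjoint, so $ii^R\simeq y\otimes-$ with $y=ii^R(\mb{1})$. The counit gives $y\to\mb{1}$. Full faithfulness of $i$ makes the comonad $ii^R$ idempotent, and this is exactly the statement that $y\to\mb{1}$ is coidempotent. Both composites are identities: on one side $x\otimes\mb{1}=x$, and on the other the image of $ii^R$ is $\mci$. Both maps are monotone: $x\le x'$ forces $\left<x\right>\subset\left<x'\right>$, and conversely an inclusion of ideals gives $y\otimes y'\simeq y$.

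\emph{The direction ``fully faithful $\Rightarrow$ coidempotent'' in (1).} Let $i$ be a fully faithful internal left adjoint. By the construction above, $\mci$ is the retract of $\mcv$ in $\prvdbl$ cut out by the idempotent $e=ii^R=x\otimes-$. Since $\mcv\otimes_\mcv\mcv=\mcv$, the tensor $\mci\otimes_\mcv\mci$ is the retract of $\mcv$ cut out by $e\otimes e=(x\otimes x)\otimes-\simeq x\otimes-$, so it is again $\mci$. It remains to check that under this identification the map $\id\otimes i$ corresponds to the identity. I will verify this by comparing the splittings of the two idempotents.

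\emph{The converse in (1), which I expect to be the main obstacle.} Suppose $i\otimes\id_\mci\colon\mci\otimes_\mcv\mci\to\mci$ is an equivalence, and let $\eta\colon\id\to i^Ri$ be the unit. Because $-\otimes_\mcv\mci$ is a 2-functor (\cref{monoidalprvil}), the unit of $i\otimes\id$ is $\eta\otimes\id_\mci$, and this is an equivalence. To descend to $\eta$ itself, I would use that every object of $\mci$ is reached from $\mci\otimes_\mcv\mci$ through the $\mcv$-linear action functor $\mci\otimes_\mcv\mcv\to\mci$. Concretely, I would show that $\eta_m$ is a retract of $(\eta\otimes\id)$ evaluated at a suitable object, using the counit $ii^R\Rightarrow\id$ and the triangle identities. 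The triangle identities show $i$ is a retract of $ii^Ri$; combined with the equivalence $i\otimes\mci\simeq\mci$, this should force $\eta$ to be an equivalence.

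If that retract argument is awkward, the fallback is dualizability. Write $\mci^\vee$ for the dual of $\mci$. Then $\End(\mci)\simeq(\mci^\vee\otimes_\mcv\mci)$-valued maps, so a natural transformation of endofunctors of $\mci$ is determined after tensoring with $\mci$ via the identification $\mci\simeq\mci\otimes_\mcv\mci$. This again reduces the claim to $\eta\otimes\id_\mci$ being invertible.
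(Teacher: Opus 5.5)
Your argument for (2), and for the implication ``fully faithful $\Rightarrow$ coidempotent'' in (1), is essentially right. One slip: $\mci$ is a retract of $\mcv$ in $\prlv$, not in $\prvdbl$. The reason is that $i^R$ is an internal left adjoint only when $\mci$ is split (\cref{splitsmidl}). This is harmless, because $\otimes_{\mcv}$ is computed in $\prlv$. The identification you postpone is immediate. We have $(i\otimes\id_{\mcv})\circ(\id_{\mci}\otimes i)\simeq i\otimes i$. Both $i\otimes\id_{\mcv}\simeq i$ and $i\otimes i$ are fully faithful with essential image $\langle x\rangle$, so $\id_{\mci}\otimes i$ is an equivalence.

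The genuine gap is the converse. You correctly flag it as the crux, but you leave it at ``should force''.
\begin{itemize}
\item The triangle identity $\epsilon i\circ i\eta=\id_i$ holds for every adjunction, so it cannot by itself make $\eta$ invertible. You never say how it interacts with $\mci\otimes_{\mcv}\mci\simeq\mci$.
\item The dualizability fallback only restates what is needed, namely that $F\mapsto F\otimes\id_{\mci}$ is conservative on $\Fun^L_{\mcv}(\mci,\mci)$.
\item Conjugating by your identification $i\otimes\id_{\mci}$ does not turn $F\otimes\id_{\mci}$ back into $F$, since $(i\otimes\id_{\mci})\circ(F\otimes\id_{\mci})=(iF)\otimes\id_{\mci}$.
\end{itemize}

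The missing idea is to conjugate by the equivalence in the \emph{other} tensor factor. Let $\Phi:=\id_{\mci}\otimes i\colon\mci\otimes_{\mcv}\mci\to\mci\otimes_{\mcv}\mcv\simeq\mci$. This is an equivalence by hypothesis. By symmetry $i\otimes\id_{\mci}$ is also an equivalence, so its unit $\eta\otimes\id_{\mci}$ is invertible. Functoriality of $\otimes_{\mcv}$ on $2$-cells gives
\[
(\eta\otimes\id_{\mcv})\ast\Phi\;\simeq\;\eta\otimes\id_i\;\simeq\;\Phi\ast(\eta\otimes\id_{\mci})
\]
as $2$-cells $\Phi\Rightarrow i^Ri\otimes i$. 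The right-hand side is invertible and $\Phi$ is essentially surjective, so $\eta\otimes\id_{\mcv}$ is invertible. Under $\mci\otimes_{\mcv}\mcv\simeq\mci$ this map is $\eta$, so $i$ is fully faithful.

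Your object-level retract idea can be rescued the same way. Put $x:=ii^R(\mb{1})$ with counit $c\colon x\to\mb{1}$. The counit $\id_{\mci}\otimes\epsilon$ of $\Phi$ is invertible and is the action of $c$ on $\mci$. Naturality then identifies $\eta_m$ with $x\otimes\eta_m=\Phi\bigl((\eta\otimes\id_{\mci})_{m\boxtimes i^R(\mb{1})}\bigr)$. Neither argument uses dualizability of $\mci$.
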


\begin{rem}
	Note that the equivalence $\left<- \right>: \cidem(\mcv)\xrightarrow{\sim}\cidem(\prvdbl)$ is functorial in \mcv.
	Indeed, given $F:\mcv\to \mcw \in\calg(\prl) $ and a coidempotent object $x\in\cidem(\mcv)$. Then we have a natural equivalence $$\left<F(x)\right>\simeq \left<x\right>\otimes_\mcv \mcw,$$ because $\prvdbl\xrightarrow{(-)\otimes_\mcv\mcw}\prwdbl$ preserves both generating functors and fully faithful functors.
\end{rem}
\begin{rem}
	For two smashing ideals $\mci, \mck$ of $\mcv$, their meet in $\cidem(\prvdbl)$ is
	given by $\mci\otimes_{\mcv}\mck$, which can be identified with the
	intersection $\mci\cap\mck\subset\mcv$.
\end{rem}
Although the terminology may suggest that smashing ideals are kernels of
smashing localizations, in the non-stable setting they are, however, not in
general in one-to-one correspondence with smashing localizations. We therefore
pause to clarify the relation between the two notions.

\begin{de}\label{smloc}
	Let $\mcv\in\calg(\prl)$. We say that a morphism of \mcv-modules $(\mcv\to\mcw)\in \prlv$ is a \textbf{smashing localization} if it is both a \mcv-internal left adjoint and a localization.
\end{de}
\begin{rem}
	Since a smashing localization $\mcv\to \mcw$ preserves $\mcv$-actions by definition, it is compatible with the monoidal structure. Consequently, $\mcw$ inherits a natural symmetric monoidal structure.
\end{rem}
\begin{warn}\label{smaidllocnotmatch}
	We emphasize that, in the general non-stable setting, smashing ideals are not in one-to-one correspondence with smashing localizations. More explicitly, we have
	\[
	\{\text{smashing localizations of }\mcv\}\simeq \Idem(\mcv)
	\neq
	\cidem(\mcv)
	\simeq
	\{\text{smashing ideals of }\mcv\};
	\]
cf. \cref{stablecidemidem} and \cite[Remark 4.5.4]{Zariski_Bal}.
\end{warn}

Having clarified the relation between coidempotents, smashing ideals, and smashing localizations, we now turn to a finiteness property of the smashing frame. We end this subsection by proving its $\kappa$-coherence for uncountable $\kappa$.
\begin{de}
Let $\mcv \in \calg(\prl_\kappa)$, where $\kappa > \omega$ is an uncountable regular cardinal.	We let 
	\[
	\underline{\idl}(\prvdbl)\hookrightarrow(\prvdbl)_{/\mcv}
	\quad\text{and}\quad
	\underline{\idl}(\prl_{\kappa,\mcv})\hookrightarrow(\prl_{\kappa,\mcv})_{/\mcv}
	\]
	denote the full subcategories spanned by the fully faithful morphisms with target $\mcv$.
\end{de}
\begin{rem}
	By \cref{smideal}, the category $\underline{\idl}(\prvdbl)$ identifies with the poset $\cidem(\prvdbl)$ of smashing ideals. Note that $\underline{\idl}(-)$ here refers to a notion of $(\infty,2)$-ideal, rather than the $(\infty,1)$-ideals appearing in \cref{imagedef}. Since we will not use this notion again, we do not go into further details about $(\infty,2)$-ideals.
\end{rem}
The next proposition is the main technical input for the coherence statement.
\begin{prop}\label{unidl}
	Let $\mcv \in \calg(\prl_\kappa)$, where $\kappa > \omega$ is an uncountable regular cardinal. The full subcategory $\underline{\idl}(\prvdbl)\hookrightarrow(\prvdbl)_{/\mcv}$ is a $\kappa$-accessible reflective subcategory and is compatible with the monoidal structure on $(\prvdbl)_{/\mcv}$. Therefore the induced \syminfcat $\underline{\idl}(\prvdbl)$ lies in $\calg(\prl_{\kappa})$.

\end{prop}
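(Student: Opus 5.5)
The approach is to construct the reflector explicitly, as the factorization of a morphism through the submodule generated by its image. Concretely, given an object $f:\mcm\to\mcv$ of $(\prvdbl)_{/\mcv}$, \cref{generatedual} lets me take $\mcm'\subset\mcv$, the full subcategory generated under colimits and $\mcv$-tensors by the image of $f$. This $\mcm'$ is dualizable, its inclusion $\mcm'\hookrightarrow\mcv$ is an internal left adjoint, and $f$ factors as a generating morphism $\mcm\to\mcm'$ followed by a fully faithful morphism. By the factorization system $(\op{Gen},\op{Inc})$ on $\prvdbl$, the assignment $f\mapsto(\mcm'\hookrightarrow\mcv)$ is left adjoint to the inclusion $\underline{\idl}(\prvdbl)\hookrightarrow(\prvdbl)_{/\mcv}$. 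This is the standard fact that for a factorization system $(L,R)$ the right class over a fixed target is reflective in the slice, via \cite[5.2.8]{htt}. So reflectivity is formal.

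Next comes compatibility with the monoidal structure. The tensor product on $(\prvdbl)_{/\mcv}$ is $(\mcm\to\mcv)\otimes(\mcn\to\mcv)=(\mcm\otimes_\mcv\mcn\to\mcv\otimes_\mcv\mcv\simeq\mcv)$. I need reflector-equivalences, i.e.\ maps in $\op{Gen}$ over $\mcv$, to be stable under $-\otimes X$. This follows from \cref{tensorgenerating}, since tensoring a generating map with an identity is generating. Then \cite[Proposition 2.2.1.9]{ha} gives a symmetric monoidal structure on $\underline{\idl}(\prvdbl)$ making the reflector symmetric monoidal.

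The remaining step is presentability together with $\kappa$-accessibility, and I expect this to be the main obstacle. By \cref{dualprl}, $\prvdbl\in\calg(\prl_\kappa)$, so the slice $(\prvdbl)_{/\mcv}$ is $\kappa$-accessible and presentable, and colimits there are computed in $\prvdbl$. I would show that $\underline{\idl}(\prvdbl)$ is closed under $\kappa$-filtered colimits in the slice. Given a $\kappa$-filtered diagram of smashing ideals $\mci_\alpha\hookrightarrow\mcv$, its colimit in $\prvdbl$ is created in $\prlv$ by \cref{colimforget}, and as a full subcategory it should be the submodule generated by the union of the $\mci_\alpha$. I would check that the canonical map to $\mcv$ is fully faithful by passing to right adjoints. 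The transition maps are internal left adjoints, so the colimit is the limit along the right adjoints, and fully faithfulness reduces to each $\mci_\alpha\to\mcv$ having counit an equivalence compatibly. A cleaner route is to note that the reflected poset is just the poset of submodules, that joins are computed by generation, and that a filtered join of fully faithful inclusions is fully faithful.

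Once closure under $\kappa$-filtered colimits is established, an accessible reflective subcategory closed under $\kappa$-filtered colimits is $\kappa$-accessible; see \cite[5.5.4.2]{htt} and its variants. It is presentable because it is a reflective accessible localization of a presentable category. Combined with the monoidal compatibility and the fact that the tensor product preserves colimits separately, which is inherited from $\prvdbl$, this gives $\underline{\idl}(\prvdbl)\in\calg(\prl_\kappa)$. It is in fact a poset by \cref{smideal}.
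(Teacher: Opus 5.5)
Your overall plan matches the paper's: the reflector is the image factorization of \cref{generatedual}, monoidal compatibility comes from \cref{tensorgenerating}, and $\kappa$-presentability follows once $\underline{\idl}(\prvdbl)$ is closed under $\kappa$-filtered colimits in the $\kappa$-presentable slice $(\prvdbl)_{/\mcv}$. That closure, however, is the one non-formal step, and neither of your routes proves it.

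Your ``cleaner route'' is circular. Joins in $\underline{\idl}(\prvdbl)$ are computed by reflecting the colimit formed in the slice, and that reflection is trivially fully faithful over $\mcv$. Closure is the assertion that the colimit $F\colon\colim_\alpha\mci_\alpha\to\mcv$ formed in $(\prvdbl)_{/\mcv}$ (equivalently in $\prl$) is \emph{already} fully faithful, i.e.\ coincides with the join. This is not formal. Take the presentable poset $[0,1]$ and its finite sub-suplattices $I_n=\{0,1\}\cup\{1-\tfrac1k\mid 2\le k\le n+2\}$. These form a sequential diagram of fully faithful left adjoints over $[0,1]$, and the transition maps even have colimit-preserving right adjoints. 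Yet the colimit in $\prl$, computed along the right adjoints, contains the distinct compatible families $(1)_n$ and $(1-\tfrac1{n+2})_n$, and both are sent to $1$.

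The same example shows what is wrong with your first route. The input you name, that the \emph{transition} maps are internal left adjoints, does not suffice. The reduction you state is also not the right one: the counit $i_\alpha i_\alpha^R\to\id$ of a fully faithful inclusion is not an equivalence unless $\mci_\alpha=\mcv$. The missing idea is that the structure maps $i_\alpha\colon\mci_\alpha\hookrightarrow\mcv$ themselves are internal left adjoints, so every $i_\alpha^R$ preserves colimits.

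With that input, your first route works, and in fact for all filtered colimits. Let $t_{\alpha\beta}$ be the transition maps, and let $\iota_\alpha\dashv\mathrm{pr}_\alpha$ be the canonical adjunctions for $\colim_\alpha\mci_\alpha\simeq\lim_\alpha\mci_\alpha$, the limit taken along right adjoints.
\begin{enumerate}
\item The right adjoint of $F$ is $G=(i_\alpha^R)_\alpha$. It preserves colimits because every $i_\alpha^R$ and every transition right adjoint $i_\alpha^R i_\beta$ does.
\item Hence $\id\to GF$ is a transformation of colimit-preserving functors. It suffices to test it on the generators $\iota_\alpha(y)$, which generate because the $\mathrm{pr}_\alpha$ are jointly conservative.
\item Since $\mathrm{pr}_\beta\iota_\alpha\simeq t_{\alpha\beta}$ for $\beta\geq\alpha$, the $\beta$-component of the unit at $\iota_\alpha(y)$ is the unit of $i_\beta\dashv i_\beta^R$ at $t_{\alpha\beta}(y)$, which is an equivalence.
\item By cofinality of the $\beta\geq\alpha$, this suffices, so $F$ is fully faithful.
\end{enumerate}
The paper argues differently. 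It uses that image factorizations in $\prvdbl$ and in $\prl_{\kappa,\mcv}\simeq\modu_{\mcv^\kappa}(\Cat_\infty^{\kappa\text{-}\op{rex}})$ are compatible. In the latter, $\kappa$-filtered colimits reduce to colimits of small categories of $\kappa$-compact objects and visibly preserve fully faithful functors. This avoids the computation but only yields $\kappa$-filtered colimits.

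Two smaller points.
\begin{enumerate}
\item The local equivalences of the reflector are not just the generating maps over $\mcv$. It suffices, though, that the units $\mcm\to\mcm'$ remain local equivalences after tensoring, which \cref{tensorgenerating} gives.
\item Landing in $\calg(\prl_\kappa)$ needs a $\kappa$-compact unit and $\kappa$-compact objects closed under $\otimes$; separate cocontinuity of $\otimes$ is not enough. This follows because the reflector is symmetric monoidal and preserves $\kappa$-compact objects, while the slice lies in $\calg(\prl_\kappa)$.
\end{enumerate}
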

\begin{proof}
	By \cref{generatedual}, we see that the following diagram is horizontally left adjointable in the sense of
	\cite[Definition 4.7.4.13]{ha}:
		$$\begin{tikzcd}
		(\prvdbl)_{/\mcv} \arrow[d] \arrow[r, "\underline{\op{Im}}", dashed, shift left]    & \underline{\idl}(\prvdbl) \arrow[d] \arrow[l, hook', shift left]    \\
		{(\prl_{\kappa,\mcv})_{/\mcv}} \arrow[r, "\underline{\op{Im}}", dashed, shift left] & {\underline{\idl}(\prl_{\kappa,\mcv})} \arrow[l, hook', shift left]
	\end{tikzcd}$$
	Since $\prl_{\kappa,\mcv}\simeq \modu_{\mcv^\kappa}(\Cat_\infty^{\kappa\text{-}\op{rex}})$, the inclusion $\underline{\idl}(\prl_{\kappa,\mcv})\hookrightarrow(\prl_{\kappa,\mcv})_{/\mcv}$ is closed under $\kappa$-filtered colimits. Thus the inclusion $$\underline{\idl}(\prvdbl)\hookrightarrow (\prvdbl)_{/\mcv}$$ is closed under $\kappa$-filtered colimits. By \cite[Lemma 2.2.4]{Zariski_Bal}, we know the over category $(\prvdbl)_{/\mcv}$ is $\kappa$-presentable too, hence so is $\underline{\idl}(\prvdbl)$.
	
It remains to prove that the localization $(\prvdbl)_{/\mcv}  \xrightarrow{\underline{\im}} \underline{\idl}(\prvdbl)$ is compatible with tensor product. That follows from the fact that both generating morphisms (see \cref{tensorgenerating}) and fully faithful morphisms in \prvdbl are closed under tensor product.
\end{proof}

Via the identification of smashing ideals with coidempotent objects, this $\kappa$-presentability assertion immediately implies the coherence of the smashing frame, generalizing \cite[Proposition 3.4.1]{Zariski_Bal} to the non-stable setting:
\begin{cor}\label{cidemkappaprl}
	Let $\mcv \in \calg(\prl_\kappa)$, where $\kappa > \omega$ is an uncountable regular cardinal.
	Then its smashing frame $\cidem(\mcv)$ is a $\kappa$-coherent frame.
\end{cor}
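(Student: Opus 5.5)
The plan is to transport the statement through the equivalences already established and then read off $\kappa$-coherence from \cref{unidl}. By \cref{smideal}, $\langle-\rangle$ gives an equivalence of posets $\cidem(\mcv)\simeq\cidem(\prvdbl)$. By the remark following the definition of $\underline{\idl}$, the latter is identified with $\underline{\idl}(\prvdbl)$. This identification is compatible with meets: the meet of two smashing ideals is $\mci\otimes_\mcv\mck$, and this is the tensor product in $\underline{\idl}(\prvdbl)$, since the reflection $\underline{\im}$ is monoidal and $\mci\otimes_\mcv\mck\to\mcv$ is already fully faithful. So it suffices to show that the poset $\underline{\idl}(\prvdbl)$, with finite meets given by its monoidal structure, is a $\kappa$-coherent frame in the sense of \cref{defkcoh}.

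By \cref{unidl}, $\underline{\idl}(\prvdbl)\in\calg(\prl_\kappa)$. It is a poset, since fully faithful maps into $\mcv$ are $(-1)$-truncated in the relevant sense, and its unit is $\mcv$ itself, which is the top element. Hence it is a $\kappa$-presentable poset, i.e.\ a $\kappa$-algebraic complete lattice. Every element is a $\kappa$-filtered join of $\kappa$-compact elements, and the meet $-\otimes_\mcv-$ preserves colimits (joins) in each variable, which is exactly the frame distributivity; that it is a frame also follows from \cref{shvsmadj}. Membership in $\calg(\prl_\kappa)$ means the tensor product preserves $\kappa$-compact objects and the unit is $\kappa$-compact. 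Consequently the top element is $\kappa$-compact and $\kappa$-compact elements are closed under binary meets.

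Together with $\kappa$-algebraicity, these are exactly the conditions for $\kappa$-coherence. It remains to check that finite meets in the poset agree with the monoidal product, which was already noted above via the idempotence $\mci\otimes_\mcv\mci\simeq\mci$. The main obstacle is this bookkeeping: making sure that the symmetric monoidal structure on $\underline{\idl}(\prvdbl)$ produced in \cref{unidl} really is the cartesian (meet) structure of the poset, and that it matches the meet of $\cidem(\mcv)$ under $\langle-\rangle$. Since every object of $\underline{\idl}(\prvdbl)$ is coidempotent by \cref{smideal}(1), the monoidal structure on this poset with unit equal to the top is forced to be the meet, and this settles it.
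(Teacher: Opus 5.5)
Your proposal is correct and follows the paper's own argument. The paper transports along $\cidem(\mcv)\simeq\cidem(\prvdbl)\simeq\underline{\idl}(\prvdbl)$ from \cref{smideal} and reads off $\kappa$-coherence from \cref{unidl}; your extra check that the monoidal structure on this poset is the meet (the unit is the top element and every element is idempotent) is the bookkeeping the paper leaves implicit.
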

\begin{proof}
Since by \cref{smideal} we have the equivalences $\cidem(\mcv)\simeq\cidem(\prvdbl)\simeq\underline{\idl}(\prvdbl)$, the claim follows directly from \cref{unidl}.
\end{proof}

\subsection{Smashing ideals versus locally rigid localizations}
A smashing ideal can often be realized as a completion, a phenomenon that naturally motivates the notion of local rigidity. For instance, the smashing ideal $\mcd(\mathbb{Z})_{p\text{-}\op{nil}} \hookrightarrow \mcd(\mathbb{Z})$ of $p$-nilpotent complexes can be recovered via the $p$-completion functor $\mcd(\mathbb{Z}) \to \mcd(\mathbb{Z})_{p\text{-}\op{cpl}} \simeq \mcd(\mathbb{Z})_{p\text{-}\op{nil}}$, which exhibits local rigidity. We will show that this identification holds in full generality.

For a comprehensive treatment of local rigidity, we refer the reader to \cite{arinkin2020stack,krause2024sheaves, ramzi2024locally}. Here, we recall only the specialized formulation needed for our purposes, namely, for commutative algebras over a fixed presentably \syminfcat $\mcv$.
\begin{de}
	Let $\mc{V}\in \calg(\prl)$ and $\mc{W}\in\calg_\mc{V}:=\calg(\prlv)\simeq \calg(\prl)_{\mcv/}$. We say  $\mc{W}$ is a \textbf{locally rigid} $\mc{V}$-algebra, if the following (1) and (2) hold, a \textbf{rigid} $\mc{V}$-algebra if it further satisfies the (3).\enu{
		\item The $\mc{W}$ is a dualizable $\mc{V}$-module.
		\item the multiplication map $\mathcal{W} \otimes_{\mc{V}} \mathcal{W} \rightarrow \mathcal{W}$ is an internal left adjoint in $\operatorname{Mod}_{\mathcal{W} \otimes_{\mc{V}} \mathcal{W}}(\prl)$.
		
		\item The unit $\mb{1}_{\mc{W}}\in\mc{W}$ is $\mathcal{V}$-atomic.
	}
	
\end{de}
The first connection with smashing ideals is that the right adjoint to a smashing ideal inclusion is automatically compatible with the monoidal structure, and in fact produces a locally rigid algebra:
\begin{prop}\label{symlocal}
	Let $\mcv\in\calg(\prl)$ and $i:\mci\hookrightarrow \mcv$ be a smashing ideal. Then the localization $i^R$ is compatible with the symmetric monoidal structure of $\mcv$. Furthermore, the symmetric monoidal localization $i^R:\mcv\to \mci$ is a locally rigid map.
\end{prop}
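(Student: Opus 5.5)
Let $c\colon x\to\mathbf{1}$ be the coidempotent corresponding to $\mci$ under \cref{smideal}, so $\mci=\langle x\rangle$. The plan has three parts: identify $i^R$ concretely, read off monoidal compatibility, and then get local rigidity from a description of $\mci\otimes_\mcv\mci$.

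\textbf{Step 1: identify $i\circ i^R$.} I claim $i i^R\simeq x\otimes-$, with counit $c\otimes-$. The object $x\otimes v$ lies in $\mci$, since $\mci$ is a $\mcv$-submodule containing $x$. Let $y\in\mci$. The class of $y\in\mcv$ for which $c\otimes y$ is an equivalence contains $x$ (this is coidempotence) and is closed under colimits and $\mcv$-actions. Hence $c\otimes y$ is an equivalence for all $y\in\mci$. Then $\Map(y,x\otimes v)\to\Map(y,v)$ is an equivalence, because it has an inverse $f\mapsto (c\otimes y)^{-1}$ followed by $x\otimes f$, and the two composites agree by naturality of $c\otimes-$. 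So $x\otimes v\to v$ exhibits the $\mci$-coreflection.

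\textbf{Step 2: monoidal compatibility.} By Step 1, the $i^R$-equivalences are exactly the maps $f$ with $x\otimes f$ an equivalence. This class is closed under tensoring with any object of $\mcv$. Therefore the localization $i^R\colon\mcv\to\mci$ is compatible with the symmetric monoidal structure in the sense of \cite[2.2.1.6]{ha}. Consequently $\mci$ acquires a symmetric monoidal structure with unit $x$ and tensor $a\otimes_\mci b=x\otimes a\otimes b\simeq a\otimes b$, and $i^R$ is symmetric monoidal. Alternatively, $\mci$ is the module category $\modu_x$ for the idempotent coalgebra, i.e. the localization $x\otimes\mcv$.

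\textbf{Step 3: local rigidity.} Condition (1), dualizability of $\mci$ over $\mcv$, holds because $\mci$ is a retract of $\mcv$ in $\prlv$. For condition (2), the natural map $\mci\otimes_\mcv\mci\to\mcv\otimes_\mcv\mcv=\mcv$ is fully faithful, since fully faithful internal left adjoints are stable under tensor products. By the meet description of smashing ideals it identifies with $\mci\cap\mci=\mci$. Hence the multiplication $\mu\colon\mci\otimes_\mcv\mci\to\mci$ is an equivalence, and in particular an internal left adjoint over $\mci\otimes_\mcv\mci$.

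Because the identification in the last step must be compatible with the module structure, I would argue it as follows. Idempotence of $\mci$ in $\prvdbl$ (from \cref{smideal}, being coidempotent) says precisely that $\mathrm{id}\otimes i\colon\mci\otimes_\mcv\mci\to\mci\otimes_\mcv\mcv=\mci$ is an equivalence, and this map is $\mu$. An equivalence of $\mci\otimes_\mcv\mci$-modules is trivially an internal left adjoint. I expect the main obstacle to be this bookkeeping: matching the module structure and the monoidal structure induced from $i^R$ with the one coming from coidempotence in $\prvdbl$.
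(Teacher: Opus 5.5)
Your proof is correct, but it takes a different route from the paper, mainly for the local rigidity half.

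\textbf{Monoidal compatibility.} The paper does not need your Step~1. Since $i$ is an internal left adjoint, $i^R$ is already $\mcv$-linear. Hence $i^R(v\otimes f)\simeq v\otimes i^R(f)$, and $i^R$-equivalences are closed under tensoring. Your identification $ii^R\simeq x\otimes-$ re-derives this linearity from $\mci=\langle x\rangle$.

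If you keep Step~1, note that only one of the two composites follows from naturality alone. The other is postcomposition with the self-map $\bigl((x\otimes c)\circ(c\otimes x)^{-1}\bigr)\otimes v$ of $x\otimes v$. This map is an equivalence by coidempotence, so you conclude by two-out-of-six. Alternatively, invoke the standard fact that $x\otimes c\simeq c\otimes x$. Naturality by itself does not make these composites agree.

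\textbf{Local rigidity.} The paper simply cites \cite[Proposition 4.62]{ramzi2024locally}. You instead prove it directly:
\begin{itemize}
\item dualizability follows from the retraction $i^R i\simeq\mathrm{id}$ in $\prlv$;
\item condition (2) holds because the multiplication is an equivalence.
\end{itemize}
Your identification $\mu\simeq\mci\otimes_{\mcv} i$ is sound. After composing with the fully faithful functor $i$, both become $i\otimes_{\mcv} i$, using that $ii^R$ is the identity on $\mci$. Then $\mci\otimes_{\mcv} i$ is an equivalence precisely by the coidempotence of $i$ in $\prvdbl$ from \cref{smideal}(1). An equivalence of $\mci\otimes_{\mcv}\mci$-modules is trivially an internal left adjoint.

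\textbf{Comparison.} Your route is self-contained and exhibits $\mci$ directly as an idempotent $\mcv$-algebra. That is exactly the input the paper later extracts from this proposition in the proof of \cref{smidlvslocrig}. The paper's route is shorter and rests on the general statement for arbitrary locally rigid localizations.
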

\begin{proof}
	Given a morphism $f:x\to y \in \mcv$ such that $i^R(f)$ is an equivalence; we wish to show that $i^R(v\otimes f)$ is also an equivalence for any $v\in\mcv$. However by the definition of a smashing ideal, $i^R$ is \mcv-linear, so $i^R(v\otimes f)\simeq v\otimes i^R(f)$ is an equivalence indeed.
	
	For the local rigidity, see \cite[Proposition 4.62]{ramzi2024locally}.
\end{proof}
\begin{rem}
	In particular, a smashing ideal can be viewed as a ``cosmashing'' localization (cf. \cref{smloc}).
\end{rem}
To formulate the resulting correspondence precisely, we introduce the following terminology for localizations that arise in this way.
\begin{de}\label{localrigloc}
	Let $F:\mcv\to \mcw\in\calg(\prl)$. We say $F$ is a \textbf{locally rigid localization} if $F$ admits a fully faithful \mcv-linear left adjoint $F^L$.
\end{de}
\begin{rem}
	A locally rigid localization  is indeed a locally rigid map by \cite[Proposition 4.62]{ramzi2024locally}.
\end{rem}
The following theorem makes precise the slogan that smashing ideals encode the same thing as locally rigid localizations: 
\begin{thm}\label{smidlvslocrig}
	Let $\mcv\in\calg(\prl)$. The construction of \cref{symlocal} provides equivalences of posets $$\calg^{iR\text{-}\mathrm{loc}}(\prlv)^\opp\xrightarrow{\sim}\big((\prlv)_{\mcv/}^{iR\text{-}\mathrm{loc}}\big)^{\opp}\xrightarrow{\sim}\cidem(\mcv),$$
	where the first category denotes the full subcategory of $\calg(\prlv)^\opp$ spanned by locally rigid localizations out of \mcv, and the middle one denotes the full subcategory of $(\prlv)_{\mcv/}^\opp$ spanned by those \mcv-maps $\mcv \to\mcm$ that admits a fully faithful \mcv-left adjoint.
\end{thm}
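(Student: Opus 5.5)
The plan is to build both equivalences from the adjunction between a smashing ideal $i:\mci\hookrightarrow\mcv$ and its right adjoint $i^R$, and then use \cref{smideal} to pass to $\cidem(\mcv)$.

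\textbf{Step 1: the middle poset is the opposite of the poset of smashing ideals.} Consider an object of $(\prlv)_{\mcv/}^{iR\text{-}\mathrm{loc}}$, that is, a $\mcv$-linear map $G:\mcv\to\mcm$ with a fully faithful $\mcv$-linear left adjoint $G^L$. Then $G^L:\mcm\hookrightarrow\mcv$ is a fully faithful map of $\mcv$-modules whose right adjoint $G$ is $\mcv$-linear and colimit preserving. Hence $G^L$ is an internal left adjoint in $\prlv$, so it is a smashing ideal.

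Conversely, a smashing ideal $i$ gives $i^R$, which is $\mcv$-linear and colimit preserving because $i$ is an internal left adjoint. The two constructions are mutually inverse up to equivalence, since adjoints are unique.

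Next I check that this is an equivalence of posets, and in particular that the middle category is a poset. A map in $(\prlv)_{\mcv/}$ from $G_1:\mcv\to\mcm_1$ to $G_2:\mcv\to\mcm_2$ is a $\mcv$-linear $H:\mcm_1\to\mcm_2$ with $HG_1\simeq G_2$. Since $G_1$ is a localization, such an $H$ is unique if it exists, namely $H\simeq G_2G_1^L$. It exists exactly when $G_2$ inverts the $G_1$-equivalences. In terms of essential images, this says $\mci_2\subset\mci_1$. That inclusion is precisely the order on $\cidem(\prvdbl)$, reversed. Composing with \cref{smideal} then gives the second arrow into $\cidem(\mcv)$.

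\textbf{Step 2: the first arrow is an equivalence.} The first arrow forgets the commutative algebra structure. Its inverse on objects is \cref{symlocal}: for a smashing ideal $\mci$, the localization $i^R$ is compatible with the symmetric monoidal structure. By \cite[Proposition 2.2.1.9]{ha}, $\mci$ then acquires a commutative $\mcv$-algebra structure making $i^R$ symmetric monoidal. It is a locally rigid localization by \cref{localrigloc}, and it is locally rigid by \cite[Proposition 4.62]{ramzi2024locally}.

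It remains to see that the forgetful map $\calg(\prlv)_{\mcv/}\to(\prlv)_{\mcv/}$ is fully faithful on these objects. Equivalently, given algebra localizations $F_1,F_2$ out of $\mcv$, the space of algebra maps $H$ under $\mcv$ should be contractible when a module map exists, and empty otherwise. For this I use the universal property of symmetric monoidal localizations \cite[Proposition 4.1.7.4]{ha}. It identifies symmetric monoidal functors out of $\mcw_1$ with symmetric monoidal functors out of $\mcv$ that invert the $F_1$-local equivalences. So the space of algebra maps $\mcw_1\to\mcw_2$ under $\mcv$ is contractible when $F_2$ inverts the $F_1$-equivalences, and empty otherwise. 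This matches Step 1 exactly.

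\textbf{Main obstacle.} The delicate step is the essential surjectivity in Step 2. A locally rigid localization $F:\mcv\to\mcw$ in $\calg(\prl)$ carries its own symmetric monoidal structure. I must check that this structure agrees with the one induced on $\mci=F^L(\mcw)$ via \cref{symlocal}. I would handle this with the same universal property: $F$ is a symmetric monoidal localization, so its symmetric monoidal structure is uniquely determined by the class of morphisms it inverts. That class is also the class inverted by $i^R$. Hence the two structures coincide, and the first arrow is an equivalence.
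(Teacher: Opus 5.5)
Your argument is correct, but it takes a different route from the paper's.

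\textbf{The paper's route.} For the second arrow, the paper applies the general mate correspondence $(\prlv)^{iR,\opp}\simeq(\prlv)^{iL}$ of \cite[Theorem D.3.17]{heyer20246} and passes to slices under and over $\mcv$. For the first arrow, it invokes \cite[Proposition 4.8.2.9]{ha}: idempotent commutative algebras in $\prlv$ are the same as idempotent objects under the unit $\mcv$. The only remaining check is then that $i^R\otimes 1\colon \mcv\otimes_\mcv\mci\to\mci\otimes_\mcv\mci$ is an equivalence.

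\textbf{Your route.} You compute every mapping space directly. Precomposition with a localization is fully faithful, so the slice is a poset ordered by reverse inclusion of the corresponding smashing ideals. On the algebra side, the universal property of symmetric monoidal localizations \cite[Proposition 4.1.7.4]{ha} gives mapping spaces that are contractible or empty under the same condition.

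\textbf{Comparison.} The paper's proof is shorter and fits the idempotent-object viewpoint used later. Yours is more elementary and shows explicitly that all three categories are posets. It also works with the full slice $(\prlv)_{\mcv/}$ exactly as the statement defines it. The paper's slicing of $(\prlv)^{iR}$ tacitly uses that maps between such objects are internal right adjoints. That holds here, since $G_2G_1^L$ has the $\mcv$-linear left adjoint $G_1G_2^L$.

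\textbf{Small points.}
\begin{itemize}
\item You need full faithfulness of $(-)\circ G_1$ on $\mcv$-linear colimit-preserving functors, not just on plain functors. This holds because $G_1^L\dashv G_1$ is an adjunction in the $(\infty,2)$-category $\prlv$ with invertible unit. Hence $(-)\circ G_1$ is left adjoint to $(-)\circ G_1^L$ with invertible unit.
\item What you call the main obstacle is not needed. Essential surjectivity is immediate from \cref{symlocal}. Agreement of the two symmetric monoidal structures on $\mcw$ already follows from full faithfulness of the forgetful functor.
\item For the symmetric monoidal structure on $\mci$, cite \cite[Proposition 4.1.7.4]{ha} rather than \cite[Proposition 2.2.1.9]{ha}. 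The latter is stated for reflective localizations, whereas $i^R$ is a coreflection.
\end{itemize}
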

\begin{proof}
 Applying \cite[Theorem D.3.17]{heyer20246} to the $(\infty,2)$-category $\prlv$, we obtain an equivalence of $(\infty,1)$-categories $$(\prlv)^{iR,\opp}\simeq (\prlv)^{iL}.$$ This induces the second equivalence in the statement $$\big((\prlv)_{\mcv/}^{iR\text{-}\mathrm{loc}}\big)^{\opp}\simeq(\prlv)_{/\mcv}^{iL\text{-}\op{ff}}=\cidem(\prvdbl).$$
 
 For the first equivalence, by \cite[Proposition 4.8.2.9]{ha} it suffices to show that for any smashing ideal $\mci\xhookrightarrow{i}\mcv$, the right adjoint $i^R:\mcv\to\mci$ makes it idempotent in the sense that $\mcv\otimes_\mcv\mci\xrightarrow[i^R\otimes 1]{\sim}\mci\otimes_\mcv\mci$. That holds because by \cref{symlocal} it is a symmetric monoidal localization.
\end{proof}
One important advantage of this localization viewpoint is that it identifies the smashing frame of a smashing ideal with an open part of the original smashing frame.
\begin{cor}\label{cidemideal}
	Let $\mcv\in\calg(\prl)$ and $i:\mci\hookrightarrow \mcv$ be a smashing ideal. Then $i^R:\mcv\to \mci$ induces a localization of frames $\cidem(\mcv)\xrightarrow{} \cidem(\mci),$ whose reflective subcategory
	$$\cidem(\mcv)_{\leq \mci}\simeq\cidem(\mci)$$
can be identified with the open sublocale\footnote{We do not distinguish $\mci$ from its associated coidempotent object $x_\mci$, so that $\cidem(\mcv)_{\leq \mci}$ means $\cidem(\mcv)_{\leq x_\mci}$.} associated to $ \mci\in\cidem(\mcv)$.
\end{cor}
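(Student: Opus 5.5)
The plan is to combine \cref{smidlvslocrig} with the identification $\cidem(\mcv)\simeq\cidem(\prvdbl)$ of \cref{smideal}, and to compare smashing ideals of $\mci$ with smashing ideals of $\mcv$ contained in $\mci$.

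\emph{Step 1: the map on frames.} By \cref{symlocal}, $i^R\colon\mcv\to\mci$ is a morphism in $\calg(\prl)$. By the functoriality of coidempotents (\cref{shvsmadj}, and the remark following \cref{smideal}), it induces a frame map $\cidem(\mcv)\to\cidem(\mci)$, given by $x\mapsto i^R(x)$, or on smashing ideals by $\mck\mapsto \mck\otimes_\mcv\mci$.

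\emph{Step 2: a right adjoint.} Let $\mcj\hookrightarrow\mci$ be a smashing ideal of $\mci$, that is, a fully faithful $\mci$-internal left adjoint. Restricting scalars along $i^R$, the composite $\mcj\hookrightarrow\mci\overset{i}\hookrightarrow\mcv$ is fully faithful. It is also $\mcv$-linear, since $\mci$-linear maps are $\mcv$-linear via $i^R$, and $i$ is $\mcv$-linear.

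We must check that it is a $\mcv$-internal left adjoint. Its right adjoint is $(\mcj\hookrightarrow\mci)^R\circ i^R$, so this amounts to showing that the right adjoint of $\mcj\hookrightarrow\mci$ is $\mcv$-linear. That right adjoint is $\mci$-linear, and the $\mcv$-action on $\mci$ factors through $i^R$ symmetric monoidally, so $\mci$-linearity implies $\mcv$-linearity.

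Dualizability is not needed here, since \cref{smideal}(1) only requires fully faithful internal left adjoints into $\mcv$. This gives an order-preserving map $\cidem(\mci)\to\cidem(\mcv)_{\le\mci}$.

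\emph{Step 3: inverse equivalence.} Conversely, let $\mck\subset\mci$ be a smashing ideal of $\mcv$ contained in $\mci$. Then $\mck\otimes_\mcv\mci\simeq\mck\cap\mci=\mck$, by the remark on meets following \cref{smideal}. Hence $\mck$ is the base change of a smashing ideal along $\mcv\to\mci$, and therefore is a smashing ideal of $\mci$: base change along $-\otimes_\mcv\mci$ preserves fully faithful internal left adjoints, and $\mck\otimes_\mcv\mci\to\mcv\otimes_\mcv\mci=\mci$ is the inclusion.

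For $\mcj\in\cidem(\mci)$, the two constructions compose to the identity: $\mcj\otimes_\mcv\mci\simeq\mcj\otimes_\mci(\mci\otimes_\mcv\mci)\simeq\mcj$, using idempotence $\mci\otimes_\mcv\mci\simeq\mci$, which follows from \cref{smidlvslocrig}. Composing in the other order, Step 1 followed by Step 2 is $\mck\mapsto\mck\cap\mci$. This is the map $\cidem(\mcv)\to\cidem(\mcv)_{\le\mci}$, $a\mapsto a\wedge\mci$.

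\emph{Step 4: conclusion.} The map $a\mapsto a\wedge\mci$ is exactly the open-sublocale reflection onto $\downarrow\mci$, whose inclusion is right adjoint to $a\mapsto a\wedge\mci$. The equivalence of Step 3 then transports this to the statement about $i^R$.

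\emph{Main obstacle.} The main obstacle I expect is the bookkeeping in Step 3: making the identification $\mck\otimes_\mcv\mci\simeq\mck$ compatible with the $\mci$-module structure. I would handle this via \cref{smideal}(2) at the level of coidempotents, namely $i^R(x_\mck)\simeq x_\mck\otimes x_\mci\simeq x_\mck$ when $x_\mck\le x_\mci$.
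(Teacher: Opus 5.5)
Your proof is correct, but it works on the opposite side of \cref{smidlvslocrig} from the paper. The paper passes to locally rigid localizations. By \cref{smidlvslocrig}, it suffices that a morphism $G\colon\mci\to\mck$ in $\calg(\prl)$ is a locally rigid localization if and only if $G\circ i^R$ is. This cancellation property is quoted from \cite[Proposition 4.18]{ramzi2024locally}, using that $i^R$ is locally rigid. As a result, $\cidem(\mci)$ is identified with the locally rigid localizations of $\mcv$ lying under $i^R$, that is, with $\cidem(\mcv)_{\le\mci}$.

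You instead stay with fully faithful internal left adjoints. You use only three inputs:
\begin{itemize}
\item the $2$-functoriality of restriction and extension of scalars along $i^R$;
\item the idempotence $\mci\otimes_\mcv\mci\simeq\mci$;
\item the meet formula $\mck\otimes_\mcv\mci\simeq\mck\cap\mci$.
\end{itemize}
This avoids the external cancellation result. It also makes explicit something the paper leaves implicit: under the identification, the frame map induced by $i^R$ is $a\mapsto a\wedge\mci$. That is exactly the form in which the corollary is used later, e.g.\ in \cref{atopensmcovtcdescent}. The paper's route is shorter given the locally rigid machinery. It also matches its later treatment of $i^R$ as a locally rigid algebra in \cref{cidematideal} and \cref{atsmidlrealize}.

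There is one slip in Step 4. For $u$ in a frame $F$, the inclusion $F_{\le u}\hookrightarrow F$ is \emph{left} adjoint to $a\mapsto a\wedge u$, not right adjoint, since $b\le a\iff b\le a\wedge u$ whenever $b\le u$. The right adjoint of the frame surjection $a\mapsto a\wedge u$ is the Heyting implication $b\mapsto u\backslash b$. The reflective subposet of $F$ in the sense of the statement is the image of this right adjoint, which is order-isomorphic to $F_{\le u}$. This does not affect your conclusion, since you only need that the induced map is the surjective frame map $a\mapsto a\wedge\mci$.

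Also, \cref{smideal}(1) is stated for dualizable modules. The dualizability you need there is automatic, because a fully faithful internal left adjoint $\mcj\hookrightarrow\mcv$ exhibits $\mcj$ as a retract of $\mcv$ in $\prlv$.
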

\begin{proof}
	By \cref{smidlvslocrig}, it suffices to show that for a given morphism $G:\mci\to \mck\in \calg(\prl)$, $G$ is a locally rigid localization if and only if the composite $G\circ i^R:\mcv \to \mck$ is. That follows from \cite[Proposition 4.18]{ramzi2024locally}, because $\mcv\xrightarrow{i^R}\mci$ is a locally rigid map.
\end{proof}
This description is compatible with base change. In frame-theoretic terms, passing to a smashing ideal and then extending scalars produces a pushout square.
\begin{cor}\label{smidlpushoutfrms}
	Let $\mcv\to\mcw\in\calg(\prl)$ and $i:\mci\hookrightarrow \mcv$ be a smashing ideal. Then $$\mci\otimes_\mcv\mcw \hookrightarrow \mcv\otimes_\mcv\mcw\simeq\mcw$$ is again a smashing ideal, and the induced square of smashing frames is a pushout in $\frm$:
	$$\begin{tikzcd}[row sep=20pt, column sep=50pt]
		\cidem(\mcv) \arrow[d] \arrow[r, "\cidem(i^R)"]        & \cidem(\mci) \arrow[d]        \\
		\cidem(\mcw) \arrow[r, "\cidem(i^R\otimes_\mcv \mcw)"] & \cidem(\mci\otimes_\mcv \mcw)
	\end{tikzcd}$$
\end{cor}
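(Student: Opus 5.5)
First, $\mci\otimes_\mcv\mcw\to\mcw$ is a smashing ideal. The functor $-\otimes_\mcv\mcw\colon\prlv\to\prl_\mcw$ is a $2$-functor, so it carries the internal left adjoint $i$ to a $\mcw$-internal left adjoint $i\otimes_\mcv\mcw$. Full faithfulness is the point needing care, and there are two routes.

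\begin{itemize}
\item Coidempotent route. By \cref{smideal}(1), $i$ is coidempotent in $\prvdbl$. Base change $\prvdbl\to\prwdbl$ is symmetric monoidal, so it preserves coidempotents. Hence $i\otimes_\mcv\mcw$ is coidempotent in $\prwdbl$, and \cref{smideal}(1) makes it fully faithful.
\item Adjunction route. The unit $\id\to i^Ri$ is an equivalence, and the $2$-functor preserves it.
\end{itemize}

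The functoriality remark after \cref{smideal} identifies the corresponding coidempotent with $F(x_\mci)$, where $F\colon\mcv\to\mcw$. Under \cref{smidlvslocrig}, the associated locally rigid localization is $i^R\otimes_\mcv\mcw\colon\mcw\to\mci\otimes_\mcv\mcw$. This is the symmetric monoidal pushout $\mci\otimes_\mcv\mcw$ of $\mcv\to\mci$ and $\mcv\to\mcw$ in $\calg(\prl)$, so the square commutes.

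For the pushout claim I use \cref{cidemideal}. It identifies $\cidem(i^R)\colon\cidem(\mcv)\to\cidem(\mci)$ with the open-sublocale quotient $a\mapsto a\wedge x_\mci$ onto $\cidem(\mcv)_{\leq x_\mci}$. Likewise, the bottom map is $b\mapsto b\wedge F(x_\mci)$ onto $\cidem(\mcw)_{\leq F(x_\mci)}$, because $\mci\otimes_\mcv\mcw$ is the smashing ideal of $\mcw$ attached to $F(x_\mci)$.

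I then invoke a standard fact of locale theory. For a frame map $\phi\colon L\to M$ and $u\in L$, the square
\[
L\to L_{\leq u},\qquad M\to M_{\leq \phi(u)}
\]
is a pushout in $\frm$; that is, open sublocales are stable under pullback of locales. The right vertical map is $\cidem(\mci)\to\cidem(\mci\otimes_\mcv\mcw)$, and under the identifications it is $a\mapsto F(a)$ for $a\leq x_\mci$.

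If I cannot cite this, I verify it directly. Let $g\colon M\to N$ and $h\colon L_{\leq u}\to N$ be frame maps agreeing on $L$. Then $g(\phi(u))=h(u)=1_N$, since $u$ is the top element of $L_{\leq u}$. Hence $g$ factors uniquely through $M\to M_{\leq\phi(u)}$ via $m\mapsto g(m)$ restricted to $m\le\phi(u)$, because $g(m)=g(m\wedge\phi(u))$. Compatibility with $h$ is then automatic.

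The main obstacle is checking that the right vertical map from the square agrees with $a\mapsto F(a)$, so that the abstract square matches the frame-theoretic one. To do this, I compute $\cidem(\mci)\to\cidem(\mci\otimes_\mcv\mcw)$ on a coidempotent $a\leq x_\mci$, viewed via $i$ as a coidempotent of $\mcv$. Its image under base change is again the smashing ideal $\langle a\rangle\otimes_\mcv\mcw$. This agrees with $\langle F(a)\rangle$ by the functoriality remark, using the compatibility of $\langle-\rangle$ with base change along both $F$ and $i^R$.
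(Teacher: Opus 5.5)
Your proposal is correct, but for the pushout assertion it takes a different route from the paper. The paper applies the fully faithful, colimit-preserving functor $\shv(-)$ from frames to $\infty$-topoi (left adjoint to $(-)_{\leq-1}$). It identifies $\shv(\cidem(\mcv)_{\leq\mci})$ with the open subtopos $\shv(\cidem(\mcv))_{/h_\mci}$, and cites the stability of slices under pushouts of $\infty$-topoi \cite[Remark 6.3.5.8]{htt}. The general frame-theoretic fact you invoke is recorded only afterwards, as \cref{slicelocalepushout}, deduced from that topos-level argument.

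You instead verify the universal property directly in $\frm$. The key input is that $g(\phi(u))=h(u)=1_N$ forces $g(m)=g(m\wedge\phi(u))$. This is elementary and self-contained, and it dualizes verbatim to closed quotients $a\mapsto a\vee u$, where one gets $g(\phi(u))=0_N$. So it would also give the pushout in \cref{smlocpushoutfrms} without citing \cite[Proposition 7.3.2.12]{htt}. The paper's route is more conceptual and yields the statement simultaneously at the level of $\infty$-topoi. Your separate verification that $i\otimes_\mcv\mcw$ is fully faithful is fine by either of your two routes; the paper takes this from the remark following \cref{smideal}.

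The step you flag as the main obstacle is automatic. The functor $\cidem(i^R)$ is a localization, hence surjective, and the square commutes by functoriality of $\cidem(-)$. So the right vertical map is uniquely determined by commutativity. After your identifications it must be $a\mapsto\phi(a)\wedge\phi(u)=\phi(a)$ for $a\leq u$, and no further computation with $\langle-\rangle$ is needed.
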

\begin{proof}
By \cite[Section~6.4.5]{htt}, there exists a natural adjunction
\[
\begin{tikzcd}[column sep=large]
	\frm \simeq \mathcal{LT}\op{op}_0 \arrow[r, "\shv(-)", hook, shift left=1ex] & \arrow[l, "(-)_{\leq-1}", "\perp"', shift left=1ex] \mathcal{LT}\op{op}_\infty
\end{tikzcd}
\]
where $\mathcal{LT}\op{op}_n$ (for $n=0,\infty$) denotes the \infcat of $n$-topoi and left geometric morphisms.
	We denote $\mck:=\mci\otimes_\mcv \mcw$;	by \cref{cidemideal}, we have $$\cidem(\mci)\simeq \cidem(\mcv)_{\leq\mci} \,\,\text{ and }\,\,\cidem(\mck)\simeq \cidem(\mcw)_{\leq\mck}.$$ Therefore applying $\shv(-)$ to the diagram in question, we get 
	$$\begin{tikzcd}
		\shv(\cidem(\mcv)) \arrow[d] \arrow[r] & \shv(\cidem(\mcv))_{/h_{\mci}} \arrow[d] \\
		\shv(\cidem(\mcw)) \arrow[r]           & \shv(\cidem(\mcw))_{/h_{\mck}}          
	\end{tikzcd},$$
	which is a pushout diagram in $\mathcal{LT}\op{op}_\infty$ by \cite[Remark 6.3.5.8]{htt}. The result then follows because $\shv(-)$ creates small colimits.
\end{proof}
\begin{rem}\label{slicelocalepushout}
	In fact, the argument of \cref{smidlpushoutfrms} shows that for any frame map $\mcf\xrightarrow{f} \mcl \in \frm$ and any object $x\in \mcf$, the following diagram $$\begin{tikzcd}
		\mcf \arrow[d] \arrow[r] & \mcf_{/x} \arrow[d] \\
		\mcl \arrow[r]           & \mcl_{/f(x)}         
	\end{tikzcd}$$ is a pushout in $\frm$. Topologically, this corresponds to a pullback of spaces along an open subspace.
\end{rem}
\subsection{Split smashing ideals}

This subsection aims to address a natural question\footnote{We thank Yifan Jin for asking the author this question.}: when is a smashing ideal \mci\ rigid over \mcv?  The answer is governed by whether the right adjoint to the inclusion has one more layer of $\mathcal{V}$-linear adjointness. This leads to the following definition.
\begin{de}
	Let $\mcv\in\calg(\prl)$. A smashing ideal
	$\mci\xhookrightarrow{i}\mcv$ is said to be \textbf{split} if
	$i^R$ is a $\mcv$-internal left adjoint; equivalently, if
	$i^{RR}$ preserves colimits and is $\mcv$-linear.
\end{de}

\begin{rem}
	This condition is equivalent to saying that $\mci$ is a smooth
	dualizable $\mcv$-module, in the sense that its coevaluation functor
	is a $\mcv$-internal left adjoint
	(cf. \cite[Proposition~4.68]{ramzi2024locally}).
\end{rem}
To connect this adjointness condition with rigidity, we need a small observation about dualizable coidempotent objects by Ramzi.
\begin{lem}[{\cite{ramzi-small-idempotent}}]\label{idemdualretract}
	Let $\mcc\in\calg(\Cat_{\infty})$ be a \syminfcat and let $(x\xrightarrow{f} \mb{1})\in \cidem(\mcc)$. If $x$ is dualizable in \mcc, then the composite $$x\xrightarrow{f}\mb{1}\xrightarrow{f^\vee}x^\vee$$ is an equivalence. Similarly, if $(y\xrightarrow{g}\mb{1})\in \Idem(\mcc)$ is an idempotent object such that $y$ is dualizable in \mcc, then the composite $$y^\vee \xrightarrow{g^\vee }\mb{1}\xrightarrow{g}y$$ is an equivalence.
\end{lem}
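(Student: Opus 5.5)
The plan is to factor the composite $f^\vee\circ f$ through $x\otimes x^\vee$ and prove that each of the two factors is an equivalence. Set $y:=x^\vee$ and $e:=f^\vee\colon \mb{1}\simeq\mb{1}^\vee\to x^\vee=y$.

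\textbf{Step 1: $(y,e)$ is an idempotent object.} Coidempotence says $x\otimes f\colon x\otimes x\to x$ is an equivalence, and by symmetry so is $f\otimes x$. Duality $(-)^\vee$ is a symmetric monoidal anti-equivalence on the dualizable objects $\mcc^d$. Applying it gives that $y\otimes e\colon y\to y\otimes y$ is an equivalence, and likewise $e\otimes y$.

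\textbf{Step 2: factor the composite.} By naturality of the symmetry and unit constraints, the composite $e\circ f\colon x\to y$ factors as
\[
x\xrightarrow{\;x\otimes e\;} x\otimes y\xrightarrow{\;f\otimes y\;} \mb{1}\otimes y\simeq y .
\]
Under duality, $(f\otimes y)^\vee$ is identified with $e\otimes x = e\otimes y^\vee$, since $x^{\vee\vee}\simeq x$. So it suffices to prove one statement: for a dualizable coidempotent $x$ with $y=x^\vee$ idempotent, the map $x\otimes e\colon x\to x\otimes y$ is an equivalence. The dual argument, with the roles of $x$ and $y$ exchanged via $\mcc^{\opp}$, handles $f\otimes y$.

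\textbf{Step 3: construct a two-sided inverse.} I would define
\[
r\colon x\otimes y\xrightarrow{(x\otimes f)^{-1}\otimes y} x\otimes x\otimes y\xrightarrow{x\otimes \op{ev}} x ,
\]
where $\op{ev}\colon x\otimes x^\vee\to\mb{1}$ is the evaluation.

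The first identity to check is $r\circ(x\otimes e)\simeq \id_x$. Here $e=f^\vee$ is, by definition, the mate of $f$: the composite $x\xrightarrow{x\otimes e}x\otimes x^\vee\xrightarrow{\op{ev}}\mb{1}$ is $f$. Substituting this into $r$, the identity reduces to $(x\otimes f)\circ(x\otimes f)^{-1}\simeq\id$. One should write this out carefully using the triangle identity for $(\op{ev},\op{coev})$.

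The second identity is $(x\otimes e)\circ r\simeq\id_{x\otimes y}$. I expect this to be the main obstacle, because it needs both idempotence properties at once. My first attempt would be to transpose it under the adjunction $\mapp(x\otimes y,x\otimes y)\simeq \mapp(x\otimes y\otimes x^\vee\otimes x,\mb{1})$ and then cancel factors. On the $x$-side, cancel using $x\otimes f$ being invertible. On the $y$-side, use Step 1: since $y\otimes e$ is an equivalence, maps out of $y\otimes(-)$ are determined after precomposing with $e$.

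If this diagram chase becomes unwieldy, there is an alternative route. Since $r\circ(x\otimes e)\simeq\id_x$ already exhibits $x$ as a retract of $x\otimes y$, it suffices to show that the idempotent $(x\otimes e)\circ r$ is the identity. Tensoring with the equivalence $y\otimes e$ shows this idempotent becomes invertible after applying $-\otimes y$. Moreover, $x\otimes y\simeq (x\otimes y)\otimes y$ via $x\otimes y\otimes e$, and this naturality lets one descend the invertibility back.

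\textbf{Step 4: the second statement.} Let $(y,g)$ be an idempotent object with $y$ dualizable. Dualizing as in Step 1 shows that $(y^\vee, g^\vee)$ is coidempotent and dualizable. Applying the first statement to $(y^\vee,g^\vee)$ gives that $g^{\vee\vee}\circ g^\vee\simeq g\circ g^\vee$ is an equivalence $y^\vee\to y$, as claimed.
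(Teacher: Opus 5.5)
Your proof is correct, and Steps 1, 2 and 4 match the paper's structure. Both arguments factor $f^\vee\circ f$ as $(f\otimes x^\vee)\circ(x\otimes f^\vee)$ through $x\otimes x^\vee$ and show that each factor is an equivalence.

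The difference is in how you prove the factors are invertible. The paper uses only the triangle identity to see that $x$ is a retract of $x^\vee\otimes(x\otimes x)$. So $x$ is local for the idempotent $f^\vee\colon\mb{1}\to x^\vee$, and $x\otimes f^\vee$ is invertible. Symmetrically, $x^\vee$ is a retract of $x\otimes(x^\vee\otimes x^\vee)$, hence colocal for $f$, so $f\otimes x^\vee$ is invertible.

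You instead build an explicit retraction $r$ of $x\otimes e$ from $(x\otimes f)^{-1}$ and the mate identity $\mathrm{ev}\circ(x\otimes f^\vee)\simeq f$. Your fallback argument does finish the job:
\begin{itemize}
\item $x\otimes(e\otimes y)$ is an equivalence with left inverse $r\otimes y$, so $p\otimes y\simeq\id$ for $p=(x\otimes e)\circ r$.
\item Naturality of $-\otimes e$ conjugates $p$ to $p\otimes y$ by the equivalence $(x\otimes y)\otimes e$.
\item Hence the idempotent $p$ is invertible, so $p\simeq\id$.
\end{itemize}
You can therefore discard the transposition attempt. You already reduced $f\otimes y$ to $x\otimes e$ via $(f\otimes y)^\vee\simeq e\otimes x$ and the symmetry, so the extra appeal to $\mathcal{C}^{\mathrm{op}}$ is redundant.

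The paper's route is shorter and more conceptual. It rests on the standard fact that local (resp.\ colocal) objects for a smashing (co)idempotent are closed under retracts, and it handles the two factors by parallel arguments. Your route is more hands-on: it essentially unwinds that retract-closure for the specific section $x\otimes e$, and it avoids a second argument by using the duality anti-equivalence on $\mathcal{C}^d$.
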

\begin{proof}
	We only prove the first statement, because the second one is totally parallel. Since $x\xrightarrow{f}\mb{1}$ is coidempotent, $\mb{1}\xrightarrow{f^\vee}x^\vee$ is idempotent. Note that by dualizability, $x$ is a retract of $x\otimes x^\vee \otimes x$, so $x$ is $x^\vee$-local; similarly we can show that $x^\vee$ is $x$-colocal.
	Therefore we obtain the following diagram:
	$$\begin{tikzcd}
		x\otimes\mb{1} \arrow[d, "f\otimes 1"] \arrow[r, "1\otimes f^\vee", "\sim"'] & x\otimes x^\vee \arrow[d, "f\otimes 1", "\sim"'] \\
		\mb{1}\otimes\mb{1} \arrow[r, "1\otimes f^\vee"]                    & \mb{1}\otimes x^\vee      
	\end{tikzcd}$$
	as desired.
\end{proof}
This lemma shows that dualizability of the associated coidempotent object is equivalent to the existence of a retraction. Combining this with the description of the adjoints associated to a smashing ideal gives the following equivalent characterizations:
\begin{thm}\label{splitsmidl}
Let $\mcv\in\calg(\prl)$.	Let $ \mci\xhookrightarrow{i}\mcv$ be a smashing ideal and $x\xrightarrow{f} \mb{1}$ be the coidempotent object associated to \mci. Then the following are equivalent:
	\enu{
	\item \mci is a split smashing ideal.
	\item The symmetric monoidal localization $i^R:\mcv\to\mci$ exhibits $\mci$ as a rigid algebra over \mcv.
	\item  $x$  is dualizable.
	\item $x\xrightarrow{f} \mb{1}$ admits a retract.
	}
\end{thm}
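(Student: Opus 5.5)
I would organise the proof as the cycle $(3)\Leftrightarrow(4)$, $(1)\Leftrightarrow(2)$, $(2)\Rightarrow(3)$ and $(3)\Rightarrow(1)$.

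\emph{Step $(3)\Leftrightarrow(4)$.} This is almost purely formal. If $x$ is dualizable, \cref{idemdualretract} says that $f^\vee\circ f\colon x\to x^\vee$ is an equivalence. Composing $f^\vee$ with its inverse therefore gives a retraction $r\colon \mb{1}\to x$ of $f$. Conversely, suppose $f$ admits a retraction $r$, so that $r\circ f=\id_x$. Then $x$ is a retract of $\mb{1}$ in $\mcv$. Retracts of dualizable objects are dualizable, because $\mcv^d$ is idempotent complete. Hence $x$ is dualizable.

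\emph{Step $(1)\Leftrightarrow(2)$.} The smashing ideal is $\mci=\langle x\rangle$, and $i^R\colon\mcv\to\mci$ is a locally rigid map by \cref{symlocal}. So (2) amounts to the unit $\mb{1}_\mci=i^R(\mb{1})$ being $\mcv$-atomic in $\mci$. I will use the identifications $\mci\simeq\mcv\otimes_\mcv\mci$ and $i^R\simeq(-)\otimes x$, the latter because $i^R$ is $\mcv$-linear and colimit-preserving, with $i i^R(\mb{1})=x$. Under these, the $\mcv$-linear functor $\mcv\to\mci$ classified by $\mb{1}_\mci$ is exactly $i^R$. By definition, $\mb{1}_\mci$ is atomic if and only if this functor is a $\mcv$-internal left adjoint. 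That is precisely the definition of split, so (1) and (2) are equivalent.

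\emph{Step $(2)\Rightarrow(3)$.} If $\mb{1}_\mci$ is $\mcv$-atomic, then the composite $\mcv\xrightarrow{i^R}\mci\xrightarrow{i}\mcv$ is an internal left adjoint. It is $-\otimes x$, and it sends $\mb{1}$ to $x$. An object of $\mcv$ is $\mcv$-atomic exactly when it is dualizable, so $x$ is dualizable.

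\emph{Step $(3)\Rightarrow(1)$.} This is where I expect the main work. I need to show that $i^R$ is an internal left adjoint. Its candidate right adjoint is $i^{RR}$, which I would compute as $\uHom(x,-)$ restricted along $i$. When $x$ is dualizable, this is $x^\vee\otimes-$, which preserves colimits and is $\mcv$-linear.

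The subtle point is checking that $i^R\dashv x^\vee\otimes(-)|_{\mci}$ really is an adjunction in $\prlv$, with $\mcv$-linear unit and counit. For this I would use \cref{idemdualretract} again: $x\simeq x^\vee$ via $f^\vee f$, so $x^\vee\otimes y\simeq x\otimes y\simeq y$ for $y\in\mci$. Then $i^{RR}$ is identified with $i$ itself, with unit $\mb{1}\xrightarrow{r}x$ (the retraction from (4), viewed as $\mb{1}\to x^\vee\simeq x$) and counit the identity. The triangle identities then reduce to the equation $r\circ f=\id_x$.

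Alternatively, one could argue abstractly: $i^R\simeq(-)\otimes x\colon\mcv\to\mci$ is the $\mcv$-linear functor classified by $x\in\mci$. This object is atomic in $\mci$, since it is a retract of $i^R(\mb{1})$, or directly because $\udmap_\mci(x,-)\simeq x^\vee\otimes i(-)$ is colimit-preserving and $\mcv$-linear. An atomic object classifies an internal left adjoint by definition.

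I would present this second, direct route, as it avoids verifying the triangle identities by hand.
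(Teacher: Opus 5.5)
Your proof is correct and is essentially the paper's argument: both rest on $i\circ i^R\simeq x\otimes(-)$, whose right adjoint $\underline{\operatorname{Map}}_{\mathcal{V}}(x,-)\circ i$ computes $i^{RR}$ (this is your $\underline{\operatorname{Map}}_{\mathcal{I}}(\mathbf{1}_{\mathcal{I}},-)\simeq x^\vee\otimes i(-)$), together with \cref{idemdualretract} for (3)$\Leftrightarrow$(4). The one difference is that you put the ``by definition'' step at (1)$\Leftrightarrow$(2), whereas the paper puts it at (2)$\Leftrightarrow$(3); yours is arguably the more accurate bookkeeping.

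One small correction: drop the justification ``since it is a retract of $i^R(\mathbf{1})$''. It is vacuous, because $x$ viewed in $\mathcal{I}$ is $i^R(\mathbf{1})$ itself; the direct computation of $\underline{\operatorname{Map}}_{\mathcal{I}}(x,-)$ is what does the work.
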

\begin{proof}
	Since $\mci$ is locally rigid over \mcv, by definition, (2) is equivalent to (3). By \cref{idemdualretract}, (3) is equivalent to (4).
	It suffices to show $(1)\Leftrightarrow  (2)$.
	
	 Now consider the following adjunctions.
	$$\begin{tikzcd}
		\mcv \arrow[r, "i^R", shift left] \arrow[rr, "x\otimes-", bend left=49] & \mci \arrow[l, "i^{RR}", hook', shift left] \arrow[r, "i", hook, shift left] & \mcv \arrow[l, "i^R", shift left] \arrow[ll, "{\unmap_\mcv(x,-)}", bend left=49]
	\end{tikzcd}$$
	Assume that (1) holds; then $\unmap_\mcv(x,-)$ preserves colimits and \mcv-actions. Thus $x$ is atomic in \mcv and hence dualizable.
	
	Assume that (2) holds. Then $x$ is atomic in \mcv and hence $\unmap_\mcv(x,-)$ preserves colimits and \mcv-actions. Since $i^R$ is a localization that preserves colimits and \mcv-actions, we conclude that so is $i^{RR}$.
\end{proof}
Consequently, the previous correspondence between smashing ideals and locally rigid localizations restricts to a correspondence between split smashing ideals and rigid localizations.
\begin{cor}
	The equivalences in \cref{smidlvslocrig} restrict to the following equivalences of posets
$$\calg_{\op{rig}}^{iLR\text{-}\mathrm{loc}}(\prlv)^\opp\xrightarrow{\sim}\big((\prlv)_{\mcv/}^{iLR\text{-}\mathrm{loc}}\big)^{\opp}\xrightarrow{\sim}\cidem_{\op{spl}}(\mcv),$$ where the first one denotes the full subcategory of $\calg(\prlv)^\opp$ spanned by those rigid \mcv-algebras $\mcv\to\mcw$ that admits a fully faithful \mcv-left adjoint, and the middle one denotes the full subcategory of $(\prlv)_{\mcv/}^\opp$ spanned by those \mcv-module maps $\mcv \to\mcm$ that admits both a fully faithful \mcv-left adjoint and a \mcv-right adjoint.
\end{cor}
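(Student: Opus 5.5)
The plan is to restrict both equivalences of \cref{smidlvslocrig} to the relevant subposets and then check that the two restricted subposets correspond to $\cidem_{\op{spl}}(\mcv)$. Here $\cidem_{\op{spl}}(\mcv)$ is the subposet of coidempotents whose smashing ideal is split. Since \cref{smidlvslocrig} already gives equivalences of posets, any restriction to full subposets is automatically an equivalence onto its image. So the whole statement reduces to identifying the images, i.e.\ to three ``if and only if'' characterizations.

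For the right-hand equivalence, take a $\mcv$-map $F\colon\mcv\to\mcm$ with a fully faithful $\mcv$-linear left adjoint $F^L$. Under the equivalence $(\prlv)^{iR,\opp}\simeq(\prlv)^{iL}$ used in the proof of \cref{smidlvslocrig}, it corresponds to the smashing ideal $i=F^L\colon\mcm\hookrightarrow\mcv$, with $i^R\simeq F$. By definition, $\mci$ is split exactly when $i^R=F$ is a $\mcv$-internal left adjoint, i.e.\ when $F$ additionally admits a colimit-preserving $\mcv$-linear right adjoint. That is precisely the extra condition defining $(\prlv)_{\mcv/}^{iLR\text{-}\mathrm{loc}}$, so the middle poset maps exactly onto $\cidem_{\op{spl}}(\mcv)$.

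For the left-hand equivalence, a locally rigid localization $F\colon\mcv\to\mcw$ in $\calg(\prlv)$ corresponds to the smashing ideal $F^L\colon\mcw\hookrightarrow\mcv$. Here $\mcw\simeq\mci$ as symmetric monoidal categories, with the monoidal structure induced by the localization $i^R$ as in \cref{symlocal}. By \cref{splitsmidl}, (1)$\Leftrightarrow$(2), the ideal $\mci$ is split if and only if $i^R$ exhibits $\mci$ as a rigid $\mcv$-algebra. This matches the condition defining $\calg_{\op{rig}}^{iLR\text{-}\mathrm{loc}}(\prlv)$. There, ``admits a $\mcv$-right adjoint'' is either automatic from rigidity or equivalent to it, again by \cref{splitsmidl}. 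So the first poset maps onto the split part as well, and the two restrictions are compatible because they are restrictions of a commuting chain of equivalences.

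The only point needing care is the bookkeeping in the first identification. Rigidity of $\mcw$ should be understood via the monoidal structure transported along $F$, and this must agree with the structure on $\mci$ used in \cref{splitsmidl}. That agreement follows from the first step of the proof of \cref{smidlvslocrig}, which identifies $\mcw$ with $\mci$ as idempotent $\mcv$-algebras (\cite[Proposition 4.8.2.9]{ha}). I do not expect any real obstacle beyond this.
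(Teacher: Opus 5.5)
Your proposal is correct and follows the route the paper intends; the paper gives no separate proof and presents the corollary as an immediate consequence of \cref{splitsmidl}. The middle poset matches $\cidem_{\op{spl}}(\mcv)$ by the definition of a split smashing ideal (with $F\simeq i^R$ for $i=F^L$), and the first matches via \cref{splitsmidl}, (1)$\Leftrightarrow$(2), together with the uniqueness of the idempotent algebra structure used in the proof of \cref{smidlvslocrig}; your hedge about the right adjoint in the first poset can also be dropped, since rigidity makes $\mathbf{1}_{\mcw}$ $\mcv$-atomic, so $F\simeq(-)\otimes\mathbf{1}_{\mcw}$ is automatically a $\mcv$-internal left adjoint.
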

\begin{rem}
	Split smashing ideals are closed under finite meets in the smashing frame $\cidem(\mcv)$, 
	but not under small joins, and hence do not form a subframe of  $\cidem(\mcv)$ in general.
\end{rem}

\section{Generalized Telescope Conjecture}\label{sec4}
In this section, we introduce the main object of this paper: the \emph{atomic smashing frame}. The (generalized) telescope conjecture asks whether the canonical inclusion from the atomic smashing frame into the full smashing frame is an equivalence.
\subsection{Atomic smashing frames}
In this subsection, we define atomic smashing ideals as those localizing ideals generated by  $\mathcal{V}$-atomic objects. This approach modifies the classical framework by replacing (absolutely) compact generators with a relative counterpart, providing the necessary framework for our study. 

Recall from \cref{smideal} that smashing ideals of $\mcv$ can be identified with coidempotent objects of $\prvdbl$. Since $\prvat$ is the full subcategory of $\prvdbl$ spanned by atomically generated modules, it is natural to isolate those coidempotent objects which already lie in $\prvat$. Our starting point is the following identification.
\begin{prop}\label{atsmashingideals}
	Let $\mcv\in\calg(\prl)$.	The objects in $$\cidem(\prvat)\hookrightarrow\cidem(\prvdbl)$$ can be identified with those $\mcv$-atomically generated smashing ideals of \mcv. In particular, the subposet of atomically generated smashing ideals forms a subframe.
\end{prop}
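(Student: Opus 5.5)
We first identify the objects of $\cidem(\prvat)$ inside $\cidem(\prvdbl)$, and then show that the resulting subposet is closed under finite meets and small joins.

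\emph{Identifying the objects.} Since $\prvat\subset\prvdbl$ is a full subcategory and both are symmetric monoidal with $\mcv$ as unit, a coidempotent object of $\prvat$ is a coidempotent object $i:\mci\to\mcv$ of $\prvdbl$ whose source $\mci$ happens to lie in $\prvat$. Here we use \cref{tensorat} to see that the tensor product $\mci\otimes_\mcv\mci$ computed in $\prvat$ agrees with the one computed in $\prvdbl$. Conversely, any object of $\prvat$ that is coidempotent in $\prvdbl$ is coidempotent in $\prvat$. By \cref{smideal}, coidempotents in $\prvdbl$ are exactly the smashing ideals. Hence $\cidem(\prvat)$ is the full subposet of smashing ideals $\mci$ that are $\mcv$-atomically generated as $\mcv$-modules.

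\emph{Closure under finite meets.} The top element $\mcv$ is atomically generated by its unit. The meet of two smashing ideals is $\mci\otimes_\mcv\mck$, and this is atomically generated by \cref{tensorat}.

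\emph{Closure under joins.} We expect this to be the main point. The join of a family $\{\mci_\alpha\}$ in $\cidem(\prvdbl)\simeq\underline{\idl}(\prvdbl)$ should be the image of the map $\coprod_\alpha\mci_\alpha\to\mcv$. The coproduct is taken in $\prvdbl$, and it is created in $\prl_\mcv$ by \cref{colimforget}. The image is the submodule generated under colimits and $\mcv$-actions by the $\mci_\alpha$, as in \cref{generatedual} and the reflection $\underline{\im}$ of \cref{unidl}. By \cref{tensorat}, $\coprod_\alpha\mci_\alpha$ is atomically generated. The map $\coprod_\alpha\mci_\alpha\to\mcv$ is an internal left adjoint, so its corestriction to the image is a generating internal left adjoint by \cref{generatedual}. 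Then \cref{conserdual}(2) shows that the image is atomically generated.

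More concretely, internal left adjoints preserve atomic objects. So the image is generated under colimits and $\mcv$-actions by atomic objects of $\mcv$ that lie in it. These objects remain atomic in the image because the inclusion is fully faithful and is an internal left adjoint.

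\emph{Subtlety for joins.} A priori the set-theoretic join need not be the image in this form, since \cref{unidl} used a cardinal $\kappa$. We would argue directly. The submodule generated by the $\mci_\alpha$ is a smashing ideal, because it is the image of an internal left adjoint between dualizable modules, and its inclusion is an internal left adjoint by \cref{generatedual}. It is clearly the least smashing ideal containing every $\mci_\alpha$. Thus the joins in $\cidem(\prvdbl)$ are these images, and these images are atomically generated by the previous step.
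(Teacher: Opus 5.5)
Your proposal is correct. The identification of the objects matches the paper's argument, which consists only of combining \cref{smideal} with the definition of \prvat. This uses that $\prvat\subset\prvdbl$ is full and, by \cref{tensorat}, closed under tensor products.

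The paper gives no separate argument for the subframe assertion and treats it as a formal consequence of \cref{tensorat}. One way to make this precise is to apply $\cidem(-)$ to the fully faithful, colimit-preserving symmetric monoidal inclusion $\prvat\hookrightarrow\prvdbl$. Both categories are presentably symmetric monoidal by \cref{dualprl}, so this yields an injective frame map.

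You instead check closure under finite meets and arbitrary joins by hand. You identify the join as the localizing ideal generated by the $\mci_\alpha$, and show it is atomically generated via \cref{generatedual} and \cref{conserdual}(2), or directly via atomic generators.

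The formal route is shorter, but it relies on Aoki's functoriality and on the presentability of \prvat and \prvdbl. Your route is self-contained and makes the joins explicit. Your direct check that the generated submodule is a smashing ideal also avoids the cardinal hypothesis in \cref{unidl}.
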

\begin{proof}
	It follows by combining \cref{smideal} and the definition of \prvat.
\end{proof}
This motivates the following terminology.
\begin{de}
Let $\mcv\in\calg(\prl)$. We call $\mcv$-atomically generated smashing ideals \textbf{atomic smashing ideals} of $\mcv$.

\end{de}
Equivalently, an atomic smashing ideal is a smashing ideal whose underlying $\mcv$-module is generated by dualizable objects of \mcv:
\begin{rem}
		Since $\mcv^{\op{at}}\simeq\mcv^d$, a smashing ideal of \mcv is atomic if and only if it is a localizing ideal generated by a set of dualizable objects in \mcv.
\end{rem}
In the stable case this recovers the usual compact-generation condition, since atomic objects over spectra are precisely compact objects:
\begin{ex}
	Since \opsp-atomic objects coincide with compact objects, we have that $\prat_{\opsp}\simeq\Cat^{\op{perf}}$ and that $\cidem(\prat_{\opsp})$ can be identified with the Balmer frame $\op{Thick}(\opsp^\omega)$, i.e. the poset of thick ideals of $\opsp^\omega$.
\end{ex}
We now translate this subcollection of smashing ideals back into the language of coidempotent objects in $\mcv$.
	\begin{de}
	Using the equivalence in \cref{smideal},
	we define $\cidem_f(\mcv)\subset \cidem(\mcv)$ to be the subframe determined by the following commutative diagram:
	\[
	\begin{tikzcd}
		\cidem_f(\mcv) \arrow[r, hook] \arrow[d, "\sim", "\langle - \rangle"']
		& \cidem(\mcv) \arrow[d, "\sim" , "\langle - \rangle"'] \\
		\cidem(\prvat) \arrow[r, hook]
		& \cidem(\prvdbl).
	\end{tikzcd}
	\]
	Explicitly, the frame $\cidem_f(\mcv)$ consists precisely of those coidempotent objects
	$
	x \to \mathbf{1}
	$
	in $\mcv$ for which the associated smashing ideal
	\[
	\langle x \rangle \hookrightarrow \mcv
	\]
	is atomic.

	\end{de}

\begin{cov}
		Accordingly, we will freely identify $\cidem(\mcv)$ with $\cidem(\prvdbl)$, and $\cidem_f(\mcv)$ with $\cidem(\prvat)$.
\end{cov}
Although the terminology is suggestive, one should distinguish carefully between atomicity of the associated smashing ideal and atomicity of the coidempotent object itself:
\begin{warning}
	Beware that for a coidempotent object $x\in\cidem(\mcv)$, the smashing ideal $\langle x \rangle \hookrightarrow \mcv$ being atomic does not imply $x$ is \mcv-atomic. In other words, although every split smashing ideal is atomic, the converse need not hold.
	
	For example, let $\Gamma_p \mathbb{S}:=\ker(\mathbb{S}\to\mathbb{S}[p^{-1}])\in\opsp$. Then $\Gamma_p \mathbb{S}\to \mathbb{S}$ is coidempotent and $$\left<\Gamma_p \mathbb{S}\right>=\opsp_{p\text{-}\mathrm{nil}}\hookrightarrow\opsp$$ is an atomic smashing ideal of \opsp because $\opsp_{p\text{-}\mathrm{nil}}$ is compactly generated (\opsp-atomically generated), but $\Gamma_p \mathbb{S}$ itself is not compact.
\end{warning}
 We can now formulate the telescope conjecture for \mcv.
\begin{de}
	Let $\mcv\in\calg(\prl)$. We define $\cidem_f(\mcv)$ to be the \textbf{atomic smashing frame} of \mcv. We say that $\mcv$ satisfies the \textbf{telescope conjecture} if the canonical telescope inclusion is an equivalence of frames: $$\cidem_f(\mcv)\hookrightarrow\cidem(\mcv).$$
\end{de}

\begin{rem}
	By \cref{atsmashingideals}, $\mcv$ satisfies the telescope conjecture if and only if every smashing ideal of $ \mcv$ is an atomic smashing ideal.
\end{rem}
\begin{rem}
	In the literature, the poset $\op{Thick}(\mcc^d)$ of thick ideals in $\mcc^d$ sometimes serves as a kind of ``generalized Balmer frame'' for a big $tt$-category $\mcc\in\calg(\prlst)$. However, this poset differs from the atomic smashing frame $\cidem_f(\mcc)$, and the natural map of posets 
	$$\op{Thick}(\mcc^d)\xrightarrow{\mci \mapsto \langle\mci\rangle} \cidem(\mcc)$$ 
	is generally neither embedding nor a frame map. Consequently, it does not serve as a suitable candidate for \cref{qu2}.
\end{rem}
One advantage of the atomic smashing frame is its functoriality in \(\mcv\).
\begin{rem}
	Note that any morphism $\mcv\to\mcw$ in $\calg(\prl)$ induces a natural functor $\prvat\to\pr_{\mcw}^{\operatorname{at}}$, since base-change preserves atomic generation. Hence it induces a functor $\cidem_f(\mcv)\to\cidem_f(\mcw)$. Therefore, taking the atomic smashing frame defines a functor
	\[
	\cidem_f(-)\colon\calg(\prl)\to\frm.
	\]
	
\end{rem}
The following product-preservation property gives a first indication that the
atomic smashing frame behaves more naturally than \(\op{Thick}(\mcc^d)\); see
also \cite[Remark~1.6]{MR4814789}.
\begin{prop}\label{atsmfrmprod}
	Let \(\{\mcc_i\}_{i\in I}\subset \calg(\prl_*)\) be a small collection of pointed
	presentably symmetric monoidal \(\infty\)-categories. Then the natural
	map
	\[
	\cidem_f\Bigl(\prod_{i\in I}\mcc_i\Bigr)
	\longrightarrow
	\prod_{i\in I}\cidem_f(\mcc_i)
	\]
	is an equivalence of frames.
\end{prop}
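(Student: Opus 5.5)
Write $\mcc:=\prod_{i\in I}\mcc_i$. The plan is to first settle the statement for the full smashing frames, and then check that atomicity of a smashing ideal can be tested componentwise.

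For the full smashing frames, pointedness gives $\mcc\in\calg(\prl)$ together with idempotent central elements. The unit $\mb{1}_\mcc=(\mb{1}_i)_i$ splits, and for each $i$ the object $e_i:=(0,\dots,\mb{1}_i,\dots,0)\to\mb{1}$ is coidempotent. Since $\prl_*$ is semiadditive, products of pointed presentable categories agree with coproducts, so $\mcc$ is the coproduct of the $\mcc_i$ in $\calg(\prl_*)$. I would therefore argue the product statement directly. A coidempotent $x\to\mb{1}$ in $\mcc$ is a family $(x_i\to\mb{1}_i)$, and coidempotence, which is the condition $x\otimes x\simeq x$, is checked componentwise because the tensor product is componentwise. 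This gives the equivalence $\cidem(\mcc)\simeq\prod_i\cidem(\mcc_i)$ as posets, hence as frames. Correspondingly, the smashing ideal is $\langle x\rangle=\prod_i\langle x_i\rangle$. To see this, note that $\langle x\rangle$ contains $e_i\otimes y$ for each $y\in\langle x\rangle$, and every object of $\mcc$ is the product (equal to the coproduct) of its $e_i$-components. So a localizing ideal of $\mcc$ is exactly a product of localizing ideals of the $\mcc_i$.

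It remains to show that $\prod_i\mci_i\subset\mcc$ is $\mcc$-atomically generated if and only if each $\mci_i\subset\mcc_i$ is $\mcc_i$-atomically generated. Since $\mcc$-atomics in $\mcc$ are the dualizable objects, and atomic smashing ideals are those generated by dualizable objects of $\mcc$, I would proceed as follows.

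\emph{Dualizable objects.} An object $(y_i)$ of $\mcc$ is dualizable if and only if every $y_i$ is dualizable in $\mcc_i$. For the forward direction, apply the symmetric monoidal projections. For the converse, take the componentwise duals, evaluations and coevaluations, which work because the tensor product and unit are componentwise.

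\emph{Forward direction.} If $\prod\mci_i$ is generated by a set $S$ of dualizable objects of $\mcc$, then, since the projection $\mcc\to\mcc_i$ is symmetric monoidal, colimit preserving and essentially surjective, $\mci_i$ is generated by the images of $S$, which are dualizable.

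\emph{Converse.} If each $\mci_i$ is generated by dualizable objects $S_i$, then for $s\in S_i$ the object $e_i\otimes s$ (that is, $s$ placed in slot $i$ and $0$ elsewhere) is dualizable by the previous step. The localizing ideal these objects generate contains every object supported in a single slot of $\prod\mci_i$. It is also closed under coproducts, which agree with products. Hence it is all of $\prod\mci_i$.

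Finally, naturality of the comparison map with the identification above gives the frame equivalence.

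The main obstacle I expect is the claim that localizing ideals, and the generation they encode, decompose along the infinite product. For infinite $I$ this uses that the product $\prod_i\mcc_i$ in $\prl_*$ has coproducts computed componentwise, so that $\coprod_i e_i\otimes y\simeq y$ holds. Here pointedness is essential, because it is what makes $0$ available in the other slots. I would verify this carefully via the fact that $\prl_*$ is semiadditive, so that products coincide with coproducts in the relevant sense.
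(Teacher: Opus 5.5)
Your proposal is correct and follows essentially the paper's route. You identify $\cidem(\prod_i\mcc_i)$ with $\prod_i\cidem(\mcc_i)$ componentwise; the paper cites that $\cidem(-)$ preserves limits, and you also check by hand that the comparison map is well defined. You then prove surjectivity as the paper does, by placing dualizable generators of each $\mci_i$ in a single slot with $0$ elsewhere and recovering $\prod_i\mci_i$ from componentwise coproducts.

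There are two small corrections.

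\begin{itemize}
\item Your aside that $\prod_i\mcc_i$ is the coproduct in $\calg(\prl_*)$ is false. Coproducts of commutative algebras are Lurie tensor products, e.g.\ $\opsp\otimes\opsp\simeq\opsp\not\simeq\opsp\times\opsp$. You never use this claim, so nothing downstream is affected.

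\item You place the role of pointedness in the wrong spot. The decomposition $y\simeq\coprod_i e_i\otimes y$, and with it the product decomposition of localizing ideals, only needs the tensor product to preserve initial objects. It holds verbatim for $\mcs\times\mcs$. Pointedness is actually needed in your step ``$e_i\otimes s$ is dualizable by the previous step''. That step silently requires the zero object of each $\mcc_j$, $j\neq i$, to be dualizable. This holds in the pointed case, with dual $0$, since $0\otimes0\simeq0$ and all structure maps are forced. It is precisely what fails for $\emptyset\in\mcs$, which is why the statement breaks for $\mcs\times\mcs$. You should state this check explicitly, as the paper does in its footnote.
\end{itemize}
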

\begin{proof}
 Consider the following diagram: 
	$$\begin{tikzcd}
		\cidem_f(\prod_i\mcc_i) \arrow[d, hook] \arrow[r] & \prod_i\cidem_f(\mcc_i) \arrow[d, hook] \\
		\cidem(\prod_i\mcc_i) \arrow[r, "\sim"]           & \prod_i\cidem(\mcc_i) .                 
	\end{tikzcd}$$
	Since \(\cidem(-)\) preserves small products, the top map is an
	embedding. It therefore suffices to prove surjectivity.
	
	Let
	$
	(\mci_i)_{i\in I}
	\in
	\prod_{i\in I}\cidem_f(\mcc_i).
	$
	For every \(i\), choose a small collection
	$
	S_i\subset \mci_i^{\mcc_i\text{-}\op{at}}
	$
	generating \(\mci_i\) as an \(\mcc_i\)-module. If
	\(\mci_i=0\), we may take
	$
	S_i=\{0\},
	$
	because \(\mcc_i\) is pointed and hence the unique  zero
	object is \(\mcc_i\)-atomic\footnote{This fails in the unpointed setting. For example, $\emptyset \in \mcs$ is not dualizable.}.
	
	 Given an \(i\in I\) and
	\(s\in S_i\), let
	\[
	\widetilde s^{\,i}
	:=
	(s_j)_{j\in I}
	\in
	\prod_j\mci_j,
	\qquad
	s_i=s,\quad
	s_j=0\;(j\neq i).
	\]
	Since atomicity in a product module is detected componentwise,
	every \(\widetilde s^{\,i}\) is
	\(\prod_j\mcc_j\)-atomic.
	Let \(e_i\in\prod_j\mcc_j\) be the object whose \(i\)-th component is
	the tensor unit and whose other components are zero. Then
	\[
	e_i\otimes\widetilde s^{\,i}
	=
	(0,\ldots,0,s,0,\ldots),
	\]
	so the \(\prod_j\mcc_j\)-submodule generated by the
	\(\widetilde s^{\,i}\)'s contains all coordinatewise generators.
	Since colimits in the product are computed componentwise, these objects
	generate
	$
	\prod_i\mci_i.
	$
	Hence \(\prod_i\mci_i\) is an atomic smashing ideal.
\end{proof}
\begin{rem}
Note that \cref{atsmfrmprod} fails in the unpointed setting. For example 
	$$\{0,1\}=\cidem_f(\mcs\times \mcs)\neq \cidem_f(\mcs)\times \cidem_f(\mcs)=\{0,1\}^2;$$
	see \cref{tctopos}.
\end{rem}
	One may naturally ask whether 
$\cidem_f(-)$
preserves small limits (in the pointed setting), as $\cidem(-)$ does by \cref{shvsmadj}. An affirmative answer would imply that the validity of telescope conjecture is closed under small limits in $\calg(\prl_*)$. 
However, $\cidem_f(-)$ does not preserve small limits in general, and the telescope conjecture is not closed under small limits, even in $\calg(\prlst)$, as the following example shows:
\begin{ex}

	For example, let the indexing category be $I = \mathbb{Z}$, viewed as a poset. Let $k$ be a field. For each $n \in \mathbb{Z}$, define the $\infty$-category
	\[
	\mathcal{C}_n := \mcd(k[x]),
	\]
	the unbounded derived category of $k[x]$. They satisfy the telescope conjecture by \cite{neeman1992chromatic}, because $k[x]$ is Noetherian. Define all transition morphisms $
	F_n \colon \mathcal{C}_n \to \mathcal{C}_{n+1}
	$ in $\calg(\prl)$
	to be induced by base change along the ring homomorphism $k[x] \xrightarrow{ x^2} k[x]$. 	Now consider the limit $\infty$-category
	\[
	\mathcal{C}_\infty = \lim_{n \in \mathbb{Z}} \mathcal{C}_n.
	\]
	We claim that $\mcc_\infty$ does \emph{not} satisfy the telescope conjecture.
	
	Note that $k[x]$ is a free module of rank $2$ over $k[x]$ via $x \mapsto x^2$ (with basis $1, x$), so the map $k[x]\xrightarrow{x^2}k[x]$ is flat.
	In each $\mathcal{C}_n$, define $\mathcal{I}_n$ to be the localizing ideal of $x$-nilpotent complexes. The ideal $\mathcal{I}_n$ is generated by dualizable (compact) objects, i.e, it is an atomic smashing ideal; one perfect generator is the Koszul complex
	$
	K(x) = \cofib\bigl(k[x] \xrightarrow{x} k[x]\bigr).
	$
	We first verify that
	$
	\mathcal{I}_n \otimes_{\mathcal{C}_n} \mathcal{C}_{n+1}=\mathcal{I}_{n+1}.
	$
	The functor $F_n$ sends the generator $K(x)$ to
	\[
	K(x) \otimes_{x \mapsto x^2} k[x] \simeq K(x^2).
	\]
	Since a complex is $x$-nilpotent if and only if it is $x^2$-nilpotent, the localizing ideal generated by $K(x^2)$ coincides with that generated by $K(x)$, namely $\mathcal{I}_{n+1}$. Thus, the image of $\mathcal{I}_n \otimes_{\mathcal{C}_n} \mathcal{C}_{n+1}$ generates $\mathcal{I}_{n+1}$, and the condition holds.
	
	We will show that $\mathcal{C}_\infty^d \cap \mathcal{I}_\infty = \{0\}$ and $\mathcal{I}_\infty \neq 0$; therefore, $\mathcal{I}_\infty\hookrightarrow\mcc_\infty$ is no longer an atomic smashing ideal and $\mcc_\infty$ does not satisfy the telescope conjecture.
	
	Suppose $P \in \mathcal{C}_\infty^d \cap \mathcal{I}_\infty$. Then $P$ is a sequence $(P_n)_{n \in \mathbb{Z}}$ with $P_n \in \mathcal{C}_n^d \cap \mathcal{I}_n$, i.e., each $P_n$ is an $x$-nilpotent perfect complex. We have transition equivalences
	\[
	P_{n+1} \simeq P_n \otimes_{x \mapsto x^2} k[x].
	\]
Since \(P_n\) is perfect and \(x\)-nilpotent, each \(H_i(P_n)\) is a
finitely generated \(x\)-power-torsion \(k[x]\)-module. By the structure
theorem over the PID \(k[x]\), each \(H_i(P_n)\) is a finite direct sum
of modules of the form \(k[x]/(x^m)\). In particular,
\(H_i(P_n)\) is finite-dimensional over \(k\). Its length coincides with its dimension as a $k$-vector space:
	\[
	\operatorname{length}_{k[x]}\bigl(H_i(P_n)\bigr)
	=
	\dim_k\bigl(H_i(P_n)\bigr)
	<
	\infty.
	\]
	Define the “size”
	\[
	l(P_n) = \sum_{i \in \mathbb{Z}} \dim_k\bigl(H_i(P_n)\bigr).
	\]
	Since $P_n$ has only finitely many nonvanishing degrees, $l(P_n)$ is a nonnegative integer (and $P_n \simeq 0 \iff l(P_n)=0$). Because $F_n$ is t-exact and doubles the rank, we have
	\[
	H_i(P_{n+1}) \simeq H_i(P_n) \otimes_{x \mapsto x^2} k[x],
	\]
	which doubles the length of the homology groups:
	$
	l(P_{n+1}) = 2\,l(P_n).
$
	This holds for all $n \in \mathbb{Z}$, including negative $n$. Hence, for any $k>0$,
	$
	l(P_0) = 2^k \, l(P_{-k}).
	$
	Since lengths are integers, this forces $l(P_0)$ to be divisible by arbitrarily large powers of $2$, and therefore $l(P_0)=0$. Thus $P_0 \simeq 0$, and consequently all $P_n \simeq 0$. We conclude that $\mathcal{C}_\infty^d \cap \mathcal{I}_\infty = \{0\}$.
	
	Finally, we prove $\mathcal{I}_\infty\neq0$. An object $(M_n)_{n \in \mathbb{Z}}$ in $\mathcal{I}_\infty$ must satisfy the base-change equivalences, and each $M_n$ must be $x$-nilpotent. We construct an infinite-dimensional torsion module that is strictly preserved under base change: the Prüfer module (the injective hull of $k$)
	\[
	M_n = k[x,x^{-1}]/k[x]
	\]
	(concentrated in degree $0$). This module is torsion: every element is annihilated by some power of $x$, so it is supported at $x=0$, hence $M_n \in \mathcal{I}_n$. In fact, the underlying complex of the idempotent \ein-algebra associated to $\mci_n$ has homology identified with a suspension of this Prüfer module.
	Taking the base change of the short exact sequence
	\[
	0 \to k[x] \xrightarrow{} k[x,x^{-1}] \to M_n \to 0
	\]
	along the flat map $k[x] \xrightarrow{x^2} k[x]$, we again obtain
	\[
	0 \to k[x] \xrightarrow{} k[x,x^{-1}] \to M_n \otimes_{x \mapsto x^2}k[x] \to 0,
	\]
	because
	$
	k[x,x^{-1}] \otimes_{x \mapsto x^2} k[x]
	\xrightarrow[\sim]{x\otimes 1\mapsto x^2}
	k[x,x^{-1}].
	$
	Thus the quotient satisfies
	\[
	M_n \otimes_{ x \mapsto x^2} k[x]
	\simeq
	k[x,x^{-1}]/k[x]
	=
	M_{n+1}.
	\]
	Therefore, the sequence $\bigl(k[x,x^{-1}]/k[x]\bigr)_{n \in \mathbb{Z}}$ defines a strictly nonzero object in $\mathcal{I}_\infty$. 
\end{ex}
\subsection{Recovering the classical case}
In this subsection, we show that our generalized telescope conjecture recovers the classical formulation. In particular, we prove that when \mcv is stable and compactly-rigidly generated, its atomic smashing frame coincides with the classical Balmer frame.

The key point is that, under a rigid or locally rigid change of base, atomic generation can be detected after restricting scalars. We begin with the rigid case.

\begin{prop}\label{atomicmodu}
	Let $\mcv\in\calg(\prl)$. Let $\mcw\in \calg(\prlv)$ be a $\mcv$-atomically generated rigid algebra over \mcv. Let $\mcm\in\prlw$ be a \mcw-module. Then $\mcm$ is $\mcw$-atomically generated if and only if \mcm, regarded as a \mcv-module, is \mcv-atomically generated. Consequently, the equivalence $\modu_{\mcw}(\prvdbl)\simeq \modu_{\mcw}(\prlv)^{\op{dbl}}$ in \cite[Corollary 4.49]{ramzi2024locally} restricts to an equivalence $$\modu_{\mcw}(\prvat)\simeq \modu_{\mcw}(\prlv)^{\op{at}}.$$
\end{prop}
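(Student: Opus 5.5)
We prove the two implications separately, organizing everything around the fact that, for a rigid $\mcw$, the notions of $\mcw$-atomic and $\mcv$-atomic objects are closely related. The two key inputs from \cite{ramzi2024locally} are:
\begin{itemize}
\item Restriction of scalars $\prlw\to\prlv$ has both adjoints given by $\mcw\otimes_\mcv-$. Since $\mcw$ is rigid, it is self-dual as a $\mcv$-module.
\item A $\mcw$-linear functor is a $\mcw$-internal left adjoint if and only if it is a $\mcv$-internal left adjoint. For $\mcw$ merely locally rigid this is \cite[Corollary 4.49]{ramzi2024locally}-type input; rigidity gives the stronger form.
\end{itemize}

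\emph{($\Leftarrow$) Suppose $\mcm$ is $\mcv$-atomically generated.} Take a $\mcv$-atomic $m\in\mcm$, i.e.\ $m\otimes-:\mcv\to\mcm$ is a $\mcv$-internal left adjoint.
\begin{itemize}
\item Its $\mcw$-linear extension $\mcw\otimes_\mcv\mcv\to\mcm$ factors as
\[
\mcw\xrightarrow{\ \id\otimes m\ }\mcw\otimes_\mcv\mcm\xrightarrow{\ \mathrm{act}\ }\mcm .
\]
\item The first map is a $\mcw$-internal left adjoint by the base-change remark on atomics.
\item The action map $\mcw\otimes_\mcv\mcm\to\mcm$ is a $\mcw$-internal left adjoint. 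This is the defining property of local rigidity: the multiplication $\mcw\otimes_\mcv\mcw\to\mcw$ is an internal left adjoint, and we base change it along $\mcw\otimes_\mcv\mcw\to\mcm$ using the module structure.
\item Hence $m$ is $\mcw$-atomic.
\end{itemize}
Since the $\mcv$-atomics generate $\mcm$ under colimits and $\mcv$-actions, they generate it a fortiori under colimits and $\mcw$-actions. So $\mcm$ is $\mcw$-atomically generated. This direction should use only local rigidity.

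\emph{($\Rightarrow$) Suppose $\mcm$ is $\mcw$-atomically generated.} Let $x\in\mcm$ be $\mcw$-atomic, i.e.\ $x\otimes-:\mcw\to\mcm$ is a $\mcw$-internal left adjoint.
\begin{itemize}
\item Because $\mcw$ is rigid, this map is also a $\mcv$-internal left adjoint. Here the unit $\mb 1_\mcw$ being $\mcv$-atomic is exactly the statement that $\mcv\to\mcw$ is a $\mcv$-internal left adjoint.
\item Now choose $\mcv$-atomic generators $w_\beta$ of $\mcw$, which exist by hypothesis. Each composite $\mcv\xrightarrow{w_\beta}\mcw\xrightarrow{x\otimes-}\mcm$ is a $\mcv$-internal left adjoint, so every $w_\beta\otimes x$ is $\mcv$-atomic.
\item The objects $w_\beta\otimes x$ generate the $\mcw$-submodule spanned by $x$ under colimits and $\mcv$-actions. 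Indeed, $\mcw\otimes x$ is generated by the $w_\beta\otimes x$ under colimits and $\mcv$-actions, since $w\mapsto w\otimes x$ preserves both.
\item Letting $x$ range over $\mcw$-atomic generators, the $\mcv$-atomics $w_\beta\otimes x$ generate $\mcm$.
\end{itemize}

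\emph{Consequence for module categories.} By \cref{conserdual}(3), atomically generated modules are dualizable. The equivalence of \cite[Corollary 4.49]{ramzi2024locally} identifies $\mcw$-modules in $\prvdbl$ with $\mcw$-modules in $\prlw$ that are dualizable, with matching morphisms (internal left adjoints). Restricting to full subcategories on both sides by the atomic-generation condition just proved gives $\modu_\mcw(\prvat)\simeq\modu_\mcw(\prlv)^{\op{at}}$.

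The main obstacle is the claim that a $\mcw$-internal left adjoint is automatically a $\mcv$-internal left adjoint. This requires showing that the $\mcw$-linear right adjoint is $\mcv$-linear and colimit preserving, which is immediate, and that the 2-categorical adjunction data descend, which is the subtle part. I would deduce it from the fact that restriction $\prlw\to\prlv$ is a 2-functor, and hence preserves adjunctions in the $(\infty,2)$-sense. That is trivial once stated, so the real care lies in the action-map step of ($\Leftarrow$).
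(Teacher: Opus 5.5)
Your proof is correct. It has the same skeleton as the paper's: for ``if'', show that $\mcv$-atomic objects of $\mcm$ are $\mcw$-atomic; for ``only if'', generate $\mcm$ as a $\mcv$-module by objects $w\otimes x$ with $w\in\mcw$ $\mcv$-atomic and $x$ $\mcw$-atomic, and show these are $\mcv$-atomic. The difference is that you justify each step with weaker inputs.

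\begin{itemize}
\item \emph{Paper's route.} The ``if'' direction cites \cite[Corollary 4.55]{ramzi2024locally}: for rigid $\mcw$, the $\mcv$-atomic and $\mcw$-atomic objects of $\mcm$ coincide. The ``only if'' direction uses \cite[Proposition 4.11]{ramzi2024locally}: the action map $\mcw\otimes_{\mcv}\mcm\to\mcm$ is a $\mcw$-internal left adjoint.
\item \emph{Your route.} You use the action map in the ``if'' direction instead. Via the factorization $\mcw\to\mcw\otimes_{\mcv}\mcm\to\mcm$, you derive the only half of that corollary you need, namely $\mcv$-atomic $\Rightarrow$ $\mcw$-atomic. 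For ``only if'' you need nothing beyond composing the $\mcv$-internal left adjoints $\mcv\xrightarrow{w_\beta}\mcw\xrightarrow{x\otimes -}\mcm$.
\item \emph{What each buys.} Your argument never uses rigidity. It needs only local rigidity (in ($\Leftarrow$)) and $\mcv$-atomic generation of $\mcw$ (in ($\Rightarrow$)). So it actually proves the paper's later \cref{localrigatgen1}, whose proof is essentially yours. The paper's route is shorter, but it rests on a black box that genuinely requires rigidity.
\end{itemize}

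Two small inaccuracies, neither a gap:
\begin{itemize}
\item In ($\Rightarrow$), ``because $\mcw$ is rigid'' is the wrong reason. A $\mcw$-internal left adjoint is a $\mcv$-internal left adjoint for any $\mcv\to\mcw$, simply because restriction of scalars is a 2-functor. You note this yourself at the end, so there is no real obstacle. The remark about $\mathbf{1}_{\mcw}$ being $\mcv$-atomic plays no role in that step.
\item The action map is not a ``base change along $\mcw\otimes_{\mcv}\mcw\to\mcm$''. It is obtained by applying the 2-functor $-\otimes_{\mcw}\mcm$ (tensoring over one factor) from $\mcw\otimes_{\mcv}\mcw$-modules to $\mcw$-modules to the multiplication map. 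That is exactly the content of \cite[Proposition 4.11]{ramzi2024locally}.
\end{itemize}
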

\begin{proof}
	Let $\mcm$ be a \mcw-module. By \cite[Corollary 4.55]{ramzi2024locally},    \mcv-atomic objects coincide with \mcw-atomic objects in $\mcm$. Thus the ``if'' direction holds.
	
	For the  ``only if'' direction, assume that \mcm is \mcw-atomically generated as a \mcw-module. Then \mcm is generated by $\{w\otimes x\mid w\in \mcw \text{ is \mcv-atomic and } x \in \mcm \text{ is \mcv-atomic} \}$ as a \mcv-module. So it suffices to show that such $w\otimes x$ is \mcv-atomic in \mcm, but that follows from \cite[Proposition 4.11]{ramzi2024locally}.
\end{proof}
Applying this comparison to smashing ideals gives the following criterion for the telescope conjecture over a rigid algebra.
\begin{cor}\label{compactrig}
	Let $\mcw\in \calg(\prlv)$ be a $\mcv$-atomically generated rigid algebra over \mcv. Then $\mcw$ satisfies the telescope conjecture if and only if every smashing ideal $\mci\hookrightarrow\mcw$, regarded as a \mcv-module, is $\mcv$-atomically generated.
\end{cor}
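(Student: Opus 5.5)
The corollary follows by applying \cref{atomicmodu} to each smashing ideal of $\mcw$ in turn. Recall from the remark after the definition of the telescope conjecture (via \cref{atsmashingideals}) that $\mcw$ satisfies the telescope conjecture if and only if every smashing ideal $\mci\hookrightarrow\mcw$ is an atomic smashing ideal of $\mcw$. By definition, this means that $\mci$ is $\mcw$-atomically generated as a $\mcw$-module. So it suffices to prove, for each smashing ideal $\mci$ of $\mcw$, that $\mci$ is $\mcw$-atomically generated if and only if its underlying $\mcv$-module is $\mcv$-atomically generated.

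For this, take $\mcm:=\mci\in\prlw$. This is a $\mcw$-module via the inclusion $\mci\hookrightarrow\mcw$, since smashing ideals are in particular $\mcw$-submodules closed under colimits and the $\mcw$-action. The hypotheses of \cref{atomicmodu} are exactly those of the corollary: $\mcw$ is a $\mcv$-atomically generated rigid $\mcv$-algebra. The proposition then gives the equivalence for $\mcm=\mci$ directly. Quantifying over all smashing ideals $\mci\hookrightarrow\mcw$ yields the claim.

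The only point needing care is that the $\mcv$-module structure on $\mci$ used in the statement agrees with the restriction of scalars of its $\mcw$-module structure along $\mcv\to\mcw$. This holds by construction, since both structures are induced from the action on $\mcw$. There is no real obstacle beyond this bookkeeping: all the substance (that $\mcv$-atomic and $\mcw$-atomic objects agree in a $\mcw$-module, and that $w\otimes x$ is $\mcv$-atomic) is already contained in \cref{atomicmodu}.
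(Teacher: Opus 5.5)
Your proposal is correct and matches the paper, which states this corollary as an immediate consequence of \cref{atomicmodu} applied to $\mcm=\mci$, together with the reformulation of the telescope conjecture as ``every smashing ideal is atomic.'' The restriction-of-scalars bookkeeping you flag is indeed the only point to note, and it holds as you say.
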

Specializing to the spectral base recovers the familiar compactly generated formulation:
\begin{ex}
	In particular when $\mcv=\opsp$,  \cref{compactrig} indicates that for a compactly-rigidly generated $tt$-category \mcw, a smashing ideal $\mci\hookrightarrow\mcw$ is  atomic if and only if \mci is compactly generated. In this case, the atomic smashing frame $\cidem_f(\mcw)$ can be identified with the Balmer frame $\op{Thick}(\mcw^\omega)$.
	Consequently, \mcw satisfies the telescope conjecture if and only if every smashing ideal of \mcw is compactly generated, reducing to the classical sense.
\end{ex}
The same statement has a connective analogue over the additive base $\spgeq$.
\begin{ex}\label{projrigidtc}
	When $\mcv=\spgeq$, \cref{compactrig} indicates that  for a projectively rigid \spgeq-algebra \mcw, a smashing ideal $\mci\hookrightarrow\mcw$ is  atomic if and only if \mci is compact projectively generated.  We will discuss this more in \cref{tcadd}.
\end{ex}
The rigidity assumption in the preceding comparison can in fact be weakened. The same conclusion holds for locally rigid algebras\footnote{We thank Anish Chedalavada and Yifan Jin for pointing this out.}. 

\begin{prop}\label{localrigatgen1}
		Let $\mcw\in \calg(\prlv)$ be a locally rigid algebra over \mcv such that \mcw is \mcv-atomically generated. Let $\mcm$ be a \mcw-module. Then $\mcm$ is  \mcw-atomically generated if and only if $\mcm$, as a \mcv-module, is \mcv-atomically generated.
\end{prop}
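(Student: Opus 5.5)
We will follow the structure of the proof of \cref{atomicmodu} and replace the two rigid inputs from \cite{ramzi2024locally} by their locally rigid analogues. What must be compared are the $\mcw$-atomic and the $\mcv$-atomic objects of a $\mcw$-module $\mcm$.

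We first record the key observation. Let $x\in\mcm$ be $\mcv$-atomic, so $\mcv\xrightarrow{-\otimes x}\mcm$ is a $\mcv$-internal left adjoint. Let $w\in\mcw$ be $\mcv$-atomic. Then the composite
\[
\mcv\xrightarrow{w}\mcw\xrightarrow{-\otimes x}\mcm
\]
is a $\mcv$-internal left adjoint. Here we use that $-\otimes x:\mcw\to\mcm$ is $\mcv$-internal left adjoint whenever $x$ is $\mcw$-atomic; this holds because restriction of scalars along the locally rigid map $\mcv\to\mcw$ preserves internal left adjoints (\cite[Proposition 4.11]{ramzi2024locally} and its locally rigid form). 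Hence $w\otimes x$ is $\mcv$-atomic in $\mcm$ whenever $x$ is $\mcw$-atomic.

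For the ``only if'' direction, assume $\mcm$ is $\mcw$-atomically generated. Then $\mcm$ is generated under colimits and $\mcw$-actions by its $\mcw$-atomic objects $x$. Since $\mcw$ is $\mcv$-atomically generated, $\mcw$ itself is generated under colimits and $\mcv$-actions by $\mcv$-atomic objects $w$. By \cref{tensorgenerator} (applied to the action map $\mcw\otimes_\mcv\mcm\to\mcm$, which is generating), the objects $w\otimes x$ generate $\mcm$ as a $\mcv$-module. They are $\mcv$-atomic by the observation above.

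For the ``if'' direction, assume $\mcm$ is $\mcv$-atomically generated. We will show that the $\mcw$-submodule generated by $\mcw$-atomics contains every $\mcv$-atomic $y\in\mcm$. In the rigid case every $\mcv$-atomic is $\mcw$-atomic, but for merely locally rigid $\mcw$ this fails (e.g.\ the unit of $\mcw$ need not be $\mcv$-atomic). So we argue via \cref{conserdual}(1) instead. The $\mcv$-linear map $y\otimes-:\mcv\to\mcm$ induces a $\mcw$-linear map $\mcw\to\mcm$ by extension of scalars, namely $\mcw\otimes_\mcv\mcv\to\mcm$, $w\mapsto w\otimes y$. Precomposing with the generating internal left adjoint $\mcw^{\op{at}}$-generation map $\iota:\bigoplus_{w}\mcv\to\mcw$ indexed by $\mcv$-atomics $w$ gives the maps $v\mapsto v\otimes w\otimes y$. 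Local rigidity gives that $\mcw$-internal left adjoints out of $\mcw$ are detected as $\mcv$-internal left adjoints after forgetting (\cite[Corollary 4.49]{ramzi2024locally}: $\modu_\mcw(\prvdbl)\simeq\modu_\mcw(\prlv)^{\op{dbl}}$, with internal adjoints matching). Hence it suffices that each $w\otimes y$ is $\mcv$-atomic, and we will obtain this from $\mcv$-atomicity of $w$ and $y$ through the action $\mcw\otimes_\mcv\mcm\to\mcm$ being a $\mcv$-internal left adjoint. This uses local rigidity and dualizability of $\mcm$, which follows from \cref{conserdual}(3). Then the map $\mcw\to\mcm$, $w\mapsto w\otimes y$, is $\mcw$-internal left adjoint, so $y=\mb{1}_\mcw\otimes y$ is $\mcw$-atomic.

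The main obstacle is this last step: the precise statement that for locally rigid $\mcw$ the $\mcw$-atomics of $\mcm$ are exactly the objects $y$ with $w\otimes y$ $\mcv$-atomic for all $\mcv$-atomic $w$ (the locally rigid replacement of \cite[Corollary 4.55]{ramzi2024locally}). We would first try to prove it via the self-duality of $\mcw$ as a $\mcv$-module furnished by local rigidity, identifying $\unmap_\mcm^{\mcw}(y,-)$ with the $\mcw$-linearization of $\unmap_\mcm^{\mcv}(y,-)$.
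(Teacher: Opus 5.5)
Your ``only if'' direction is the paper's argument: $w\otimes x$ is $\mcv$-atomic because $\mcv\xrightarrow{w}\mcw\xrightarrow{-\otimes x}\mcm$ is a composite of $\mcv$-internal left adjoints. Your first sentence should take $x$ to be $\mcw$-atomic, not $\mcv$-atomic. This step needs no local rigidity, since restriction of scalars $\prlw\to\prlv$ is a $2$-functor and preserves internal left adjoints.

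Also, \cref{tensorgenerator} does not apply verbatim, because your $x$'s generate $\mcm$ only under colimits and $\mcw$-actions. The generation claim is nevertheless direct. Let $\mcn$ be the localizing $\mcv$-submodule generated by the $w\otimes x$. For fixed $x$, the $w'\in\mcw$ with $w'\otimes x\in\mcn$ form a localizing $\mcv$-submodule containing every $\mcv$-atomic $w$, hence all of $\mcw$. Then the $m\in\mcm$ with $w'\otimes m\in\mcn$ for all $w'\in\mcw$ form a localizing $\mcw$-submodule containing every $x$, hence all of $\mcm$.

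In the ``if'' direction your framing is backwards, and this is what produces your ``main obstacle''. For locally rigid $\mcw$ it is the implication $\mcw$-atomic $\Rightarrow$ $\mcv$-atomic that fails, which is what your unit example shows. The implication you need, $\mcv$-atomic $\Rightarrow$ $\mcw$-atomic, holds, and your own argument ends up proving it.

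It follows in one line from the input you already invoke, used at full strength. By \cite[Proposition 4.11]{ramzi2024locally}, the action map $a_{\mcm}\colon\mcw\otimes_{\mcv}\mcm\to\mcm$ is a $\mcw$-internal left adjoint, not merely a $\mcv$-internal one. So for $\mcv$-atomic $y$, the functor $-\otimes y\colon\mcw\to\mcm$ is the composite $\mcw\simeq\mcw\otimes_{\mcv}\mcv\to\mcw\otimes_{\mcv}\mcm\xrightarrow{a_{\mcm}}\mcm$. Its first map is the base change of the $\mcv$-internal left adjoint $-\otimes y\colon\mcv\to\mcm$. Both maps are $\mcw$-internal left adjoints, so $y$ is $\mcw$-atomic. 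Since $\mcm$ is generated as a $\mcv$-module by such $y$, it is $\mcw$-atomically generated.

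The detour through $\iota$, \cref{conserdual}(1) over $\mcv$, and the detection of $\mcw$-internal left adjoints after forgetting is therefore unnecessary. If you want that detection statement anyway, it also comes from the action map. Suppose $F\colon\mcm\to\mcn$ is $\mcw$-linear and a $\mcv$-internal left adjoint. Then $F\circ a_{\mcm}\simeq a_{\mcn}\circ(\mcw\otimes_{\mcv}F)$ is a $\mcw$-internal left adjoint. Applying \cref{conserdual}(1) over $\mcw$ to the generating map $a_{\mcm}$ shows that $F$ is one too.

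With this repair your proof is correct. It differs from the paper's only in being objectwise. The paper applies \cref{conserdual}(2) to the generating $\mcw$-internal left adjoint $a_{\mcm}$, using that $\mcw\otimes_{\mcv}\mcm$ is $\mcw$-atomically generated by base change, and never identifies individual atomic objects. Both routes rest on the same input. Yours yields the sharper pointwise statement that $\mcv$-atomic objects of any $\mcw$-module are $\mcw$-atomic. Since $\mcw$ is $\mcv$-atomically generated, it also gives that $y$ is $\mcw$-atomic if and only if $w\otimes y$ is $\mcv$-atomic for every $\mcv$-atomic $w\in\mcw$. That is exactly the locally rigid replacement of \cite[Corollary 4.55]{ramzi2024locally} you were looking for.
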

\begin{proof}
	Assume that \mcm is \mcw-atomically generated. Since $\mcm$, as a \mcv-module, can be generated by $$\{w\otimes m\mid w\in \mcw^{\mcv\text{-}\op{at}}, m \in \mcm^{\mcw\text{-}\op{at}}\} \subset \mcm$$ by assumption, it suffices to show that any such $w\otimes m\in \mcm$ is \mcv-atomic. That follows from the natural equivalence
	$$\unmap_{\mcm}^{\mcv}(w\otimes m,-)\simeq \unmap_{\mcm}^{\mcv}\big(w, \unmap_{\mcm}^{\mcw}(m,-)\big).$$
	
	Conversely, assume that \mcm, as a \mcv-module, is \mcv-atomically generated. By \cite[Proposition 4.11]{ramzi2024locally}, the action map $\mcw\otimes_{\mcv}\mcm\to\mcm$ is a \mcw-internal left adjoint (which is clearly a generating functor). By \cref{conserdual}, it suffices to observe that $\mcw\otimes_\mcv \mcm$ is \mcw-atomically generated, but that holds because basechange preserves atomic generation.
\end{proof}
\begin{cor}\label{localrigatgen}
	Let $\mcw\in \calg(\prlv)$ be a locally rigid algebra over \mcv such that \mcw is \mcv-atomically generated. Given a smashing ideal $\mci\hookrightarrow\mcw$, then \mci is an atomic smashing ideal of \mcw if and only if, regarded as a \mcv-module, \mci is \mcv-atomically generated.
\end{cor}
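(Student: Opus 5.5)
This is a direct specialization of \cref{localrigatgen1} to the case $\mcm=\mci$. A smashing ideal $\mci\hookrightarrow\mcw$ is by definition a $\mcw$-module, namely a fully faithful $\mcw$-internal left adjoint into $\mcw$. By the definition preceding \cref{atsmashingideals}, $\mci$ is an atomic smashing ideal of $\mcw$ exactly when this $\mcw$-module is $\mcw$-atomically generated. The plan is to apply \cref{localrigatgen1} to $\mcm:=\mci$, with $\mcw$ locally rigid and $\mcv$-atomically generated as in the hypotheses. This yields the equivalence
\[
\mci \text{ is } \mcw\text{-atomically generated} \iff \mci \text{ is } \mcv\text{-atomically generated as a } \mcv\text{-module}.
\]

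The only thing to check is that the $\mcv$-module structure on $\mci$ in the statement is the one used in \cref{localrigatgen1}. That proposition regards $\mcm$ as a $\mcv$-module by restricting scalars along the symmetric monoidal functor $\mcv\to\mcw$. This restricted structure agrees with viewing $\mci$ as a full subcategory of $\mcw$ closed under the $\mcv$-action $v\otimes(-)=f(v)\otimes(-)$, where $f\colon\mcv\to\mcw$ is the structure map. The phrase ``regarded as a $\mcv$-module'' in the corollary refers to exactly this structure, so the two notions of $\mcv$-atomic generation coincide.

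The main step is therefore \cref{localrigatgen1} itself, which we take as given. I expect no further obstacle beyond this bookkeeping of module structures.
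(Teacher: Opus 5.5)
Your proposal is correct and matches the paper: the corollary is stated there without separate proof, as the immediate specialization of \cref{localrigatgen1} to $\mcm=\mci$. The only input is that an atomic smashing ideal is, by definition, a $\mcw$-atomically generated smashing ideal, with the $\mcv$-module structure obtained by restriction of scalars along $\mcv\to\mcw$, exactly as you note.
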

\begin{ex}
	Suppose that $\mcc\in\calg(\prlst)$ is locally rigid stable and compactly generated (e.g. $\opsp_{T(n)}, \opsp_{K(n)}$ and $\opsp_{p\text{-}\op{cpl}}$). Then a smashing ideal $\mci\hookrightarrow \mcc$ is atomic if and only if it is compactly generated.
\end{ex}

\subsection{Basic properties of atomic smashing frames}
In this subsection, we study some basic properties of atomic smashing ideals and atomic smashing frames. In particular, we prove that if $\mcv$ has compact unit, then its atomic smashing frame is coherent, and hence spatial. 

\begin{prop}\label{cidematideal}
	Let $i:\mci\hookrightarrow\mcv$ be an atomic smashing ideal. Then the following diagram induced by the symmetric monoidal localization $i^R:\mcv\to \mci$ (see \cref{symlocal}) is horizontally right adjointable.
	$$\begin{tikzcd}
		\cidem_f(\mcv) \arrow[d, hook] \arrow[r] & \cidem_f(\mci) \arrow[d, hook] \arrow[l, dashed, hook', shift left=2] \\
		\cidem(\mcv) \arrow[r]                   & \cidem(\mci) \arrow[l, dashed, hook', shift left=2]                  
	\end{tikzcd}$$
	In particular, we have $$\cidem_f(\mci)\simeq \cidem_f(\mcv)_{\leq \mci}.$$
\end{prop}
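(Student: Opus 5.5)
The plan is to identify the bottom row first and then show that the right adjoint of the bottom row restricts to atomic frames. By \cref{cidemideal}, $i^R$ induces a frame localization $\cidem(\mcv)\to\cidem(\mci)$. Its fully faithful right adjoint identifies $\cidem(\mci)$ with $\cidem(\mcv)_{\leq\mci}$. Concretely, in the language of \cref{smideal}, a smashing ideal $\mcj\hookrightarrow\mci$ is sent to the composite $\mcj\hookrightarrow\mci\xhookrightarrow{i}\mcv$. The map $\cidem(\mcv)\to\cidem(\mci)$ is base change $\mck\mapsto\mck\otimes_\mcv\mci\simeq\mck\cap\mci$. The top horizontal map is the restriction of the bottom one to atomic frames, via the functoriality of $\cidem_f(-)$ along $i^R$. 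So horizontal right adjointability amounts to two facts:
\enu{
\item the bottom right adjoint carries $\cidem_f(\mci)$ into $\cidem_f(\mcv)$;
\item the resulting square of right adjoints commutes.
}
The second is automatic once the first holds, because all vertical maps are full embeddings of posets.

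For the first fact, take an atomic smashing ideal $\mcj\hookrightarrow\mci$, so $\mcj$ is generated as an $\mci$-module by $\mci$-atomic objects. I must show that $\mcj$, as a $\mcv$-module via $i^R$, is generated by $\mcv$-atomic objects. I would use \cref{localrigatgen1} with $\mcw=\mci$. Its hypotheses hold: $\mci$ is a locally rigid $\mcv$-algebra by \cref{symlocal}, and it is $\mcv$-atomically generated precisely because $\mci$ is an atomic smashing ideal. That proposition then says $\mcj$ is $\mcv$-atomically generated as a $\mcv$-module.

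It remains to check that the $\mcv$-module structure on $\mcj$ obtained by restricting along $i^R$ agrees with its structure as a submodule of $\mcv$ via $i$. This holds because $i$ is $\mcv$-linear and $v\otimes_\mci j\simeq i^R(v)\otimes j\simeq v\otimes j$ for $j\in\mci$, since $\mci$ is an ideal. Hence $\mcj\hookrightarrow\mcv$ is a smashing ideal of $\mcv$, as a composite of fully faithful internal left adjoints, and it is generated by $\mcv$-atomic objects. So $\mcj\in\cidem_f(\mcv)$, and it lies below $\mci$.

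For the final identification, the right adjoint $\cidem_f(\mci)\hookrightarrow\cidem_f(\mcv)$ is fully faithful, and its image lies in $\cidem_f(\mcv)_{\leq\mci}$. Conversely, let $\mck\in\cidem_f(\mcv)$ with $\mck\leq\mci$. Then $\mck$ is the image of $\mck\otimes_\mcv\mci\simeq\mck$, which is atomic over $\mci$ by functoriality of $\cidem_f$. So the image is exactly $\cidem_f(\mcv)_{\leq\mci}$.

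I expect the main obstacle to be the compatibility of module structures under restriction along $i^R$ versus $i$. There is a risk of conflating the $\mcv$-action coming from the algebra map $i^R$ with the one coming from the inclusion. I would address this by using $\mcv$-linearity of $i$ together with the fact that $i^R$ restricted to $\mci$ is the identity.
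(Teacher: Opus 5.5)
Your proposal is correct and follows essentially the paper's route. You identify the bottom row via \cref{cidemideal}, then apply \cref{localrigatgen1} (equivalently \cref{localrigatgen}) to $\mcw=\mci$ to see that the equivalence $\cidem(\mci)\simeq\cidem(\mcv)_{\leq\mci}$ restricts to the atomic frames. The only differences are minor: you spell out the Beck--Chevalley check and the compatibility of $\mcv$-module structures, which the paper leaves implicit. You also get the reverse inclusion from base-change functoriality of $\cidem_f$ rather than from the ``if'' direction of \cref{localrigatgen}, which rests on the same base-change argument.
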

\begin{proof}
	By \cref{cidemideal}, we have $\cidem(\mci)\simeq \cidem(\mcv)_{\leq \mci}$. Applying \cref{localrigatgen} to $(\mcv\to\mcw) = (i^R:\mcv\to \mci)$, we see that the equivalence $\cidem(\mci)\simeq \cidem(\mcv)_{\leq \mci}$ restricts to $$\cidem_f(\mci)\simeq \cidem_f(\mcv)_{\leq \mci},$$
	as desired.
\end{proof}
This compatibility implies that the atomicity of a smashing ideal inside an atomic smashing ideal $\mci$ can be detected in \mcv:
\begin{rem}\label{atsmidltc}
	The induced diagram from adjointability in \cref{cidematideal}
	$$
	\begin{tikzcd}
		\cidem_f(\mcv) \arrow[d, hook]  & \cidem_f(\mci) \arrow[d, hook] \arrow[l, hook'] \\
		\cidem(\mcv)                   & \cidem(\mci) \arrow[l, hook']
	\end{tikzcd}
	$$
	is a pullback diagram\footnote{Note that the horizontal inclusions are not frame morphisms, so this is not a pullback diagram in $\frm$.} of posets by \cref{localrigatgen}.
	In particular, if $\mci\hookrightarrow\mcv$ is an atomic smashing ideal and $\mcv$ satisfies the telescope conjecture, then  $\mci$ satisfies the telescope conjecture too.
\end{rem}
\begin{cor}\label{atsmidlpushoutfrms}
	Let
	$
	F:\mcv\to\mcw
	$
	be a morphism in \(\calg(\prl)\), and let
	$
	i:\mci\hookrightarrow\mcv
	$
	be an atomic smashing ideal. Then
	$
	\mci\otimes_\mcv\mcw\hookrightarrow\mcw
	$
	is again an atomic smashing ideal, and the induced square
	\[
	\begin{tikzcd}[row sep=20pt, column sep=50pt]
		\cidem_f(\mcv) \arrow[r] \arrow[d]
		&
		\cidem_f(\mci) \arrow[d]
		\\
		\cidem_f(\mcw) \arrow[r]
		&
		\cidem_f(\mci\otimes_\mcv\mcw)
	\end{tikzcd}
	\]
	is a pushout square in \(\frm\).
\end{cor}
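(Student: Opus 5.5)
First I show that $\mci\otimes_\mcv\mcw\hookrightarrow\mcw$ is an atomic smashing ideal. By \cref{smidlpushoutfrms} it is already a smashing ideal. For atomicity, recall that base change $-\otimes_\mcv\mcw\colon\prvdbl\to\prwdbl$ sends $\mcv$-atomically generated modules to $\mcw$-atomically generated ones: a $\mcv$-atomic $x$ goes to the $\mcw$-atomic $1\otimes x$, and \cref{tensorgenerating} gives generation. Applying this to $\mci\in\prvat$ shows $\mci\otimes_\mcv\mcw\in\prwat$. Equivalently, if $\mci=\langle S\rangle$ with $S\subset\mcv^d$, then $\mci\otimes_\mcv\mcw=\langle F(S)\rangle$ and $F(S)\subset\mcw^d$.

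Next, I identify the square with a slice square. By \cref{cidematideal}, applied to $\mci\hookrightarrow\mcv$ and to $\mck:=\mci\otimes_\mcv\mcw\hookrightarrow\mcw$, we have
\[
\cidem_f(\mci)\simeq\cidem_f(\mcv)_{\leq\mci},\qquad \cidem_f(\mck)\simeq\cidem_f(\mcw)_{\leq\mck}.
\]
Under these identifications the horizontal maps are the frame localizations $a\mapsto a\wedge\mci$ and $b\mapsto b\wedge\mck$. This holds because the right adjoints in \cref{cidematideal} are the inclusions, so the horizontal maps are the restrictions of those in \cref{cidemideal}. The right vertical map is then $\cidem_f(F)$ restricted to the slice. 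Since $\cidem_f(F)(\mci)=\mck$ by the first step, it is exactly $\cidem_f(\mcv)_{\leq\mci}\to\cidem_f(\mcw)_{\leq F(\mci)}$. Commutativity of the square follows from the commutative square in \cref{smidlpushoutfrms}, because all maps are restrictions of those maps to subframes.

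The pushout property is now a purely frame-theoretic instance of \cref{slicelocalepushout}. Take the frame map $f=\cidem_f(F)\colon\cidem_f(\mcv)\to\cidem_f(\mcw)$ and the element $x=\mci\in\cidem_f(\mcv)$. The remark says that
\[
\begin{tikzcd}
\cidem_f(\mcv)\arrow[r]\arrow[d] & \cidem_f(\mcv)_{\leq\mci}\arrow[d]\\
\cidem_f(\mcw)\arrow[r] & \cidem_f(\mcw)_{\leq f(\mci)}
\end{tikzcd}
\]
is a pushout in $\frm$. The proof of that remark passes through $\shv(-)$ and slice topoi (\cite[Remark 6.3.5.8]{htt}), and it applies to any frame.

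The main point needing care is the second step: checking that the equivalences of \cref{cidematideal} are compatible with base change. Concretely, the composite $\cidem_f(\mcv)_{\leq\mci}\simeq\cidem_f(\mci)\to\cidem_f(\mck)\simeq\cidem_f(\mcw)_{\leq\mck}$ must agree with $a\mapsto F(a)$. The first equivalence sends a sub-ideal $\mcj\subset\mci$ to itself viewed in $\mci$. Base change sends it to $\mcj\otimes_\mci\mck$, and one checks that $\mcj\otimes_\mci\mck\simeq\mcj\otimes_\mcv\mcw$ as subideals of $\mcw$, using $\mcj\otimes_\mcv\mci\simeq\mcj$ and associativity of relative tensor products. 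This is the same compatibility already implicit in \cref{smidlpushoutfrms}, so I expect it to reduce to that corollary.
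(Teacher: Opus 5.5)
Your proposal is correct and follows the same route as the paper. Base change preserves atomic smashing ideals, \cref{cidematideal} identifies the horizontal maps with the open localizations onto $\cidem_f(\mcv)_{\leq\mci}$ and $\cidem_f(\mcw)_{\leq\mci\otimes_\mcv\mcw}$, and \cref{slicelocalepushout} then gives the pushout. The base-change compatibility you flag in your last paragraph follows formally from commutativity of the square and $\cidem_f(F)(\mci)=\mci\otimes_\mcv\mcw$, because the top horizontal map restricts to the identity on $\cidem_f(\mcv)_{\leq\mci}$.
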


\begin{proof}
	Atomic smashing ideals are preserved under base change. By
	\cref{cidematideal}, the horizontal maps identify with the principal
	open localizations
	\[
	\cidem_f(\mcv)
	\longrightarrow
	\cidem_f(\mcv)_{\leq\mci}
	\]
	and
	\[
	\cidem_f(\mcw)
	\longrightarrow
	\cidem_f(\mcw)_{\leq\mci\otimes_\mcv\mcw}.
	\]
	The claim therefore follows from \cref{slicelocalepushout}.
\end{proof}

\begin{prop}\label{atsmfrminjsurj}
	Let \(f\colon \mcc\to \mcd\) be a morphism in \(\calg(\prl)\). Then:
	\enu{
		\item If \(f\) is conservative, then the induced map
		$
		\cidem_f(\mcc)\to \cidem_f(\mcd)
		$
		is an embedding.
		
		\item If \(f^d\colon \mcc^d\to \mcd^d\) is essentially surjective on
		dualizable objects, then the induced map
		$
		\cidem_f(\mcc)\to\cidem_f(\mcd)
		$
		is surjective.
	}
\end{prop}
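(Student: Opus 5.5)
For (1), I would reduce directly to \cref{conservinjfrms}. Since $f$ is a conservative morphism in $\calg(\prl)$, that proposition already gives that $\cidem(\mcc)\to\cidem(\mcd)$ is an embedding of posets. The map $\cidem_f(\mcc)\to\cidem_f(\mcd)$ is the restriction of this map along the subframe inclusions $\cidem_f(-)\subset\cidem(-)$. These restrictions are compatible because base change sends atomic smashing ideals to atomic smashing ideals, and the map on $\cidem$ is $x\mapsto f(x)$ with $\langle f(x)\rangle\simeq\langle x\rangle\otimes_\mcc\mcd$. A restriction of an embedding of posets to full subposets is again an embedding, so (1) follows. Alternatively, one can rerun \cref{conserinjectfrms}: $\cidem_f$ is closed under finite meets (tensor products), $f$ preserves tensor products, and conservativity of $f$ shows that $f(x\otimes y)\to f(x)$ being an equivalence forces $x\otimes y\to x$ to be one.

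For (2), I would take $\mcj\in\cidem_f(\mcd)$, with coidempotent $y\to\mb{1}_\mcd$, and choose a set $T\subset\mcd^d\cap\mcj$ of dualizable objects generating $\mcj$ as a localizing ideal. By essential surjectivity of $f^d$, each $t\in T$ is of the form $f(s_t)$ with $s_t\in\mcc^d$. The natural candidate preimage is the atomic smashing ideal generated by the $s_t$. However, the localizing ideal $\langle s_t\rangle$ need not be smashing, and it need not have image inside $\mcj$ in the right sense. I would therefore instead work with atomic smashing ideals generated by single dualizable objects.

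The key step is to show that for a dualizable $s\in\mcc$, the localizing ideal $\langle s\rangle$ is an atomic smashing ideal, and that $\langle s\rangle\otimes_\mcc\mcd\simeq\langle f(s)\rangle$. The second claim is routine: base change of the inclusion $\langle s\rangle\hookrightarrow\mcc$ is generated by the image of $s$, and \cref{generatedual} together with the $(\op{Gen},\op{Inc})$ factorization identifies the image with $\langle f(s)\rangle$. For the first claim, $\langle s\rangle$ is the image of the internal left adjoint $\mcc\xrightarrow{-\otimes s}\mcc$, which is internal left adjoint since $s$ is atomic. By \cref{generatedual}, the image inclusion is an internal left adjoint between dualizable modules, hence a smashing ideal by \cref{smideal}. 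It is atomically generated because it is generated by $s$.

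Granting this, I would write $\mcj=\bigvee_{t\in T}\langle t\rangle$ in $\cidem_f(\mcd)$. The candidate preimage is then $\bigvee_t\langle s_t\rangle\in\cidem_f(\mcc)$. Its image is $\bigvee_t\langle f(s_t)\rangle=\mcj$, since $\cidem_f(f)$ is a frame map and so preserves joins, the join of smashing ideals being the localizing ideal generated by their union.

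The main obstacle I expect is the claim that $\langle s\rangle$ is smashing for dualizable $s$. This is where the internal-left-adjoint image argument of \cref{generatedual} must be applied carefully, in the target $\prvdbl$ with $\mcv=\mcc$. Since $\mcc$ itself is dualizable over $\mcc$, the map $-\otimes s$ lies in $\prvdbl$, so the argument should go through.
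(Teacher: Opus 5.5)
Your proposal is correct and follows the paper's proof. For (1) you restrict the embedding $\cidem(\mcc)\hookrightarrow\cidem(\mcd)$ from \cref{conservinjfrms} to the full subposets. For (2) you lift dualizable generators along $f^d$ and use $\langle S\rangle_\mcc\otimes_\mcc\mcd\simeq\langle f(S)\rangle_\mcd$; you do this one generator at a time and take joins, whereas the paper does it for the whole set at once. Your hedge that the ideal generated by all the $s_t$ might fail to be smashing is unnecessary: by your own join description it equals $\bigvee_t\langle s_t\rangle$. Your explicit \cref{generatedual} argument that $\langle s\rangle$ is smashing fills in a step the paper leaves implicit.
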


\begin{proof}
	For (1), consider the commutative diagram
	\[
	\begin{tikzcd}
		\cidem_f(\mcc) \arrow[d, hook] \arrow[r]
		& \cidem_f(\mcd) \arrow[d, hook] \\
		\cidem(\mcc) \arrow[r, hook]
		& \cidem(\mcd).
	\end{tikzcd}
	\]
	The bottom horizontal map is an embedding by \cref{conserinjectfrms};
	hence the top horizontal map is also an embedding.
	
	For (2), it suffices to observe that, for every full subcategory
	\(S\subset \mcc^d\), there is a canonical identification\footnote{This identification need not hold if \(S\) is not contained in
		\(\mcc^d\).}
	\[
	\mcd\otimes_{\mcc}\langle S\rangle_{\mcc}
	\simeq
	\langle f(S)\rangle_{\mcd}
	\subset \mcd .
	\]
	Since \(f^d\colon \mcc^d\to \mcd^d\) is essentially surjective, every
	atomic smashing ideal of \(\mcd\) is generated by the image of one from
	\(\mcc\). Thus \(\cidem_f(\mcc)\to\cidem_f(\mcd)\) is surjective.
\end{proof}

We now state the main theorem of this section.
\begin{thm}\label{atsmfrmcoh}
Let $\mcv\in\calg(\prl)$ and let $\kappa\geq\omega$ be a regular cardinal. If the unit $\mb{1}_{\mcv}$ is
	$\kappa$-compact, then the atomic smashing frame
	$\cidem_f(\mcv)$ is $\kappa$-coherent. In particular, when $\kappa=\omega$, $\cidem_f(\mcv)$ is a coherent frame (and hence spatial).
	
	Moreover, its \(\kappa\)-compact elements are exactly the
	\(\kappa\)-small joins of principal atomic smashing ideals:
	\[
	\cidem_f(\mcv)^\kappa
	=
	\left\{
	\bigvee_{\alpha\in A}\langle x_\alpha\rangle
	\;\middle|\;
	A \text{ is }\kappa\text{-small,}\quad
	x_\alpha\in\mcv^d
	\right\}.
	\]
\end{thm}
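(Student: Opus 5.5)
The plan is to show that every atomic smashing ideal is a join of principal ones $\langle x\rangle$ with $x\in\mcv^d$, and that each such principal ideal is $\kappa$-compact in $\cidem_f(\mcv)$. Here $\langle x\rangle$ means the smashing ideal generated by $x$, which needs an argument since a general dualizable $x$ need not generate a smashing ideal.

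\emph{Step 1: principal atomic ideals.} For $x\in\mcv^d$, consider the $\mcv$-module map $\mcv\to\mcv$, $v\mapsto x\otimes v$. It is an internal left adjoint, with $\mcv$-linear colimit-preserving right adjoint $x^\vee\otimes-$. By \cref{generatedual}, the submodule $\langle x\rangle$ generated by its image is dualizable and its inclusion is an internal left adjoint. Hence $\langle x\rangle$ is a smashing ideal, and it is atomically generated since it is generated by the atomic object $x$. Every atomic smashing ideal $\mci$ is generated by a set of dualizable objects $\{x_\alpha\}$. The join in $\cidem(\prvat)$ is the submodule generated by the union, because joins are computed by taking images of colimits (\cref{colimforget}, \cref{tensorat}). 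So $\mci=\bigvee_\alpha\langle x_\alpha\rangle$, and it remains to show each $\langle x\rangle$ is $\kappa$-compact. Then $\kappa$-small joins of them are $\kappa$-compact, every element is a $\kappa$-filtered join of these, and conversely any $\kappa$-compact element is a $\kappa$-small join of principal ones. This gives both $\kappa$-compact generation and the displayed description.

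\emph{Step 2: compactness of $\langle x\rangle$ (main obstacle).} Suppose $\langle x\rangle\le\bigvee_{\beta\in B}\mci_\beta$ with $B$ a $\kappa$-filtered poset of atomic smashing ideals. Write $e_\beta\to\mb1$ for the coidempotents and $e\simeq\colim_\beta e_\beta$ for that of the join, using that joins in $\cidem(\mcv)$ are colimits in $\mcv_{/\mb1}$. The condition $\langle x\rangle\le\mci$ is equivalent to the map $e\otimes x\to x$ being an equivalence, i.e.\ $x$ being $e$-colocal. Dualizing, it is equivalent to $\mb1\to \underline{\Hom}(x,x)=x^\vee\otimes x$ factoring through $e\otimes x^\vee\otimes x$. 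Because $x$ is dualizable, $x^\vee\otimes x$ is $\kappa$-compact whenever $\mb1$ is; I expect to check this directly from $\Map(x^\vee\otimes x,-)\simeq\Map(\mb1,x\otimes x^\vee\otimes-)$, where tensoring preserves $\kappa$-filtered colimits. The identity-type point $\mb1\to \colim_\beta e_\beta\otimes x^\vee\otimes x$ then factors through some $e_\beta\otimes x^\vee\otimes x$, using compactness of $\mb1$. From this I would deduce that $x$ is a retract of $e_\beta\otimes x$. Since $e_\beta\otimes x$ lies in the ideal $\mci_\beta$ and ideals are closed under retracts, this gives $x\in\mci_\beta$, i.e.\ $\langle x\rangle\le\mci_\beta$. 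The delicate point is the passage from a factorization of $\mb1\to x^\vee\otimes x$ to a retraction $x\to e_\beta\otimes x\to x$; I will do this by adjunction, transposing $\mb1\to e_\beta\otimes x^\vee\otimes x$ to $x\to e_\beta\otimes x$ and composing with $e_\beta\otimes x\to x$.

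\emph{Step 3: closure under finite meets.} Coherence also requires that $\kappa$-compact elements are closed under finite meets, and that the top element is compact. The top $\langle\mb1\rangle=\mcv$ is principal. For meets, $\langle x\rangle\wedge\langle y\rangle=\langle x\rangle\otimes_\mcv\langle y\rangle=\langle x\otimes y\rangle$ by \cref{tensorgenerator}, which is again principal with $x\otimes y\in\mcv^d$. Distributivity then handles $\kappa$-small joins. Finally, for $\kappa=\omega$, spatiality follows from the standard fact that coherent frames are spatial.
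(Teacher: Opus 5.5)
Your proposal is correct and follows essentially the same route as the paper. It shows that the principal ideals $\langle x\rangle$ with $x\in\mcv^d$ are join-dense, are $\kappa$-compact (write the coidempotent of a join as a $\kappa$-filtered colimit and extract a retraction $x\to c_F\otimes x\to x$), and are closed under binary meets via $\langle x\rangle\wedge\langle y\rangle=\langle x\otimes y\rangle$. The differences are cosmetic: the paper uses $\kappa$-compactness of $x$ directly where you transpose through the unit $\mb1\to e\otimes x^\vee\otimes x$, which is the same adjunction, and you spell out via \cref{generatedual} that $\langle x\rangle$ is a smashing ideal, which the paper leaves implicit.
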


\begin{proof}
	Set
	$$
	L:=\cidem_f(\mcv),
	\qquad
	L_0:=
	\{\langle x\rangle\mid x\in\mcv^d\}\subset L.
	$$
	It suffices to prove that \(L_0\) is a join-dense collection of $\kappa$-compact
	elements of \(L\), closed under finite meets. This implies that \(L\) is $\kappa$-coherent.
	
	We first claim that
	$
	L_0\subset L^\kappa.
	$ Since $\mb{1}_{\mcv}$ is $\kappa$-compact, every dualizable object
	$x\in\mcv^d$ is $\kappa$-compact.
	Let $x\in\mcv^d$ and suppose that
	\[
	\langle x\rangle
	\leq
	\bigvee_{\alpha\in A}\mathcal I_\alpha.
	\]
	Let $\mathcal P_{<\kappa}$ denote the $\kappa$-filtered poset of subsets
	$F\subset A$ of cardinality $<\kappa$. Put
	$
	\mathcal I_F:=\bigvee_{\alpha\in F}\mathcal I_\alpha,
	$
	and let
	$
	c_F\to\mb{1}_{\mcv}$
	 and
$	c\to\mb{1}_{\mcv}
	$
	be the corresponding coidempotent objects. Then
	$
	c\simeq
	\colim_{F\in\mathcal P_{<\kappa}}c_F.
	$
	Since $\langle x\rangle\leq\bigvee_\alpha\mathcal I_\alpha$,
	we have
	\[
	x\simeq c\otimes x
	\simeq
	\colim_F(c_F\otimes x).
	\]
	As $x$ is $\kappa$-compact, the inverse of this equivalence factors
	through $c_F\otimes x$ for some $\kappa$-small $F$. Hence $x$ is
	a retract of $c_F\otimes x$, so
	\[
	\langle x\rangle
	\leq
	\mathcal I_F
	=
	\bigvee_{\alpha\in F}\mathcal I_\alpha.
	\]
	Thus $\langle x\rangle$ is $\kappa$-compact in $L$.
	
	By definition of the atomic smashing frame, every
	$\mathcal I\in L$ is of the form
	\[
	\mathcal I
	=
	\langle S\rangle
	=
	\bigvee_{x\in S}\langle x\rangle
	\]
	for some $S\subset\mcv^d$. Hence $L_0$ is join-dense in $L$. Therefore we obtain
	$
	L\simeq\op{Ind}_\kappa(L^\kappa).
	$
	
	It remains to show that $L^\kappa$ is closed under finite meets. The
	top element is
	$
	\langle\mb{1}_{\mcv}\rangle\in L,
	$
	and is $\kappa$-compact by the first part. It remains to show that \(L_0\) is closed under binary meets. Let
	\(x,y\in\mcv^d\). Since \(\langle x\rangle\) and
	\(\langle y\rangle\) are smashing ideals,
	\[
	\langle x\rangle\wedge \langle y\rangle
	\simeq
	\langle x\otimes y\rangle .
	\]
	But
	$
	x\otimes y\in\mcv^d .
	$
	Hence
	$
	\langle x\rangle\wedge\langle y\rangle\in L_0 .
	$
	
\end{proof}

We next turn to the locally rigid case. Here one should not expect global
coherence in general; instead, under a natural generation hypothesis, the
atomic smashing frame is locally coherent. The key step is to recognize when
a locally rigid \(\mcv\)-algebra can be realized as an atomic smashing ideal
inside a rigid \(\mcv\)-algebra.

\begin{prop}\label{atsmidlrealize}
	Let \(\mcv\to\mcw\) be a morphism in \(\calg(\prl)\). Then the following
	are equivalent:
	\enu{
		\item \(\mcw\) can be realized as an atomic smashing ideal of a rigid
		\(\mcv\)-algebra.
		
		\item \(\mcw\) is locally rigid over \(\mcv\), and
		$
		\left\langle \mcw^{\mcv\text{-}\op{at}}\right\rangle
		=
		\mcw .$\footnote{This condition is weaker than being \(\mcv\)-atomically
			generated as a \(\mcv\)-module.}
	}
\end{prop}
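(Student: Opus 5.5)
We prove the two implications separately. For (1)\(\Rightarrow\)(2), let \(\mcw\simeq\mci\hookrightarrow\mcu\) be an atomic smashing ideal of a rigid \(\mcv\)-algebra \(\mcu\). The maps \(\mcv\to\mcu\) and \(i^R\colon\mcu\to\mci\) are locally rigid, the latter by \cref{symlocal}. Local rigidity is stable under composition (the transitivity statements of \cite[Section~4]{ramzi2024locally} used in \cref{cidemideal}), so \(\mcw\) is locally rigid over \(\mcv\).

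For the generation condition, \(\mci\) is generated as a \(\mcu\)-module by \(\mcu\)-atomic objects \(x\in\mcu^d\cap\mci\). By \cite[Corollary 4.55]{ramzi2024locally}, in a module over the rigid algebra \(\mcu\) the \(\mcu\)-atomic and \(\mcv\)-atomic objects coincide, so each such \(x\) is \(\mcv\)-atomic in \(\mcu\). Since \(i\) is a fully faithful \(\mcv\)-internal left adjoint, its right adjoint \(i^R\) preserves colimits and is \(\mcv\)-linear, and \(\unmap^\mcv_{\mci}(x,-)\simeq\unmap^\mcv_{\mcu}(x,i(-))\). Hence \(x\) remains \(\mcv\)-atomic in \(\mci\). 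Finally, the localizing \(\mcw\)-ideal generated by these \(x\) equals the \(\mcu\)-submodule they generate, because \(\mcu\) acts on \(\mci\) through \(i^R\). This gives \(\langle\mcw^{\mcv\text{-at}}\rangle=\mcw\).

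For (2)\(\Rightarrow\)(1), we must construct a rigid \(\mcv\)-algebra containing \(\mcw\) as an atomic smashing ideal. Take \(\mcu\) to be the \(\mcv\)-submodule of \(\mcw\) generated under colimits and \(\mcv\)-actions by \(\mcw^{\mcv\text{-at}}\). By \cref{conserdual}, \(\mcu\) is \(\mcv\)-atomically generated, hence dualizable, and the inclusion \(\mcu\hookrightarrow\mcw\) is an internal left adjoint. The heart of the argument is to show that \(\mcu\) is closed under the tensor product of \(\mcw\), and then to modify it so that it becomes rigid.

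Closure under the tensor product should follow from local rigidity. For \(\mcv\)-atomic \(a,b\in\mcw\), one expects \(a\otimes b\) to be \(\mcv\)-atomic, using that the multiplication \(\mcw\otimes_\mcv\mcw\to\mcw\) is a \(\mcw\otimes_\mcv\mcw\)-internal left adjoint together with \cref{atomictensor}. However, \(\mcu\) is typically nonunital, since \(\mb{1}_\mcw\) need not be \(\mcv\)-atomic. The natural candidate is therefore \(\mcu:=\op{Ind}\)-type completion: realize \(\mcw\) as a smashing ideal in the ``rigidification'' of \(\mcw\), namely \(\mcu:=\mathrm{Fun}^{L}_\mcv\)-style category generated by \(\mcw^{\mcv\text{-at}}\) with the unit adjoined. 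Concretely, this is the category of modules over the nonunital algebra of atomics in \(\mcv\) made unital; in the stable case over \(\opsp\) it is \(\op{Ind}(\mcw^{\omega}\cup\{\mb{1}\})\).

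This is the step we expect to be the main obstacle. We would try first the rigidification of \cite{ramzi2024locally}: it provides a rigid \(\mcv\)-algebra \(\mcu\) with a locally rigid localization \(\mcu\to\mcw\) whose left adjoint is fully faithful exactly when the atomics generate \(\mcw\) as an ideal. Given such a \(\mcu\), \cref{smidlvslocrig} identifies \(\mcw\) with a smashing ideal of \(\mcu\). This ideal is generated by the images of the \(\mcv\)-atomics, which are \(\mcu\)-atomic by \cite[Corollary 4.55]{ramzi2024locally}, so \(\mcw\) is an atomic smashing ideal of \(\mcu\).
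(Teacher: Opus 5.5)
Your final argument is the paper's: (1)$\Rightarrow$(2) rests on the same observation that the fully faithful, colimit-preserving, $\mcv$-linear inclusion of the ideal reflects $\mcv$-atomics, together with the $\mcu$- and $\mcw$-actions on the ideal agreeing, and (2)$\Rightarrow$(1) uses Ramzi's rigidification $\op{Rig}_\mcv(\mcw)\to\mcw$, so you can delete the submodule and $\op{Ind}(\mcw^{\omega}\cup\{\mb{1}\})$ detours. Two points in that last step need correcting: first, the rigidification has a fully faithful $\mcv$-linear left adjoint $i$ for \emph{every} locally rigid $\mcw$, so your ``exactly when'' is false (e.g.\ $\shv(\mathbb{R};\opsp)$ has no nonzero compact objects, yet it is an open smashing ideal of the rigid $\shv(S^1;\opsp)$), and the generation hypothesis is used only to make the ideal atomic; second, you must invoke \cite[Lemma 4.63]{ramzi2024locally} to see that $i$ is $\mcu$-linear, so that $\mcw$ is a smashing ideal of $\mcu$, and you should note that $i$ is a $\mcv$-internal left adjoint (its right adjoint is colimit-preserving and $\mcv$-linear), hence sends $\mcw^{\mcv\text{-}\op{at}}$ into $\mcu^{\mcv\text{-}\op{at}}$, which equals $\mcu^{d}$ by \cite[Corollary 4.55]{ramzi2024locally}.
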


\begin{proof}
	We first record an observation which will be used in both directions.
	Suppose that
	$
	p\colon \mcd\longrightarrow \mcw
	$
	is a locally rigid localization in \(\calg(\prlv)\), with \(\mcd\) rigid
	over \(\mcv\). Denote its fully faithful \(\mcd\)-linear left adjoint by
	$
	i\colon \mcw\hookrightarrow \mcd .
	$
	Then, for every \(w\in\mcw\), we have
	\[
	w\in\mcw^{\mcv\text{-}\op{at}}
	\quad\Longleftrightarrow\quad
	i(w)\in\mcd^{\mcv\text{-}\op{at}}.
	\]
	Equivalently,
	$
	i\bigl(\mcw^{\mcv\text{-}\op{at}}\bigr)
	=
	i(\mcw)\cap \mcd^{\mcv\text{-}\op{at}}.
$
	Since \(\mcd\) is rigid over \(\mcv\), we moreover have
	$
	\mcd^{\mcv\text{-}\op{at}}=\mcd^{d}.
	$
	
	We now prove the equivalence. Assume first that \(\mcw\) can be realized
	as an atomic smashing ideal of a rigid \(\mcv\)-algebra \(\mcd\). Thus
	there is a locally rigid localization
	$
	p\colon \mcd\longrightarrow \mcw
	$
	and a collection
	\[
	K\subset \mcw\cap\mcd^{d}
	\simeq \mcw^{\mcv\text{-}\op{at}}
	\]
	such that
	$
	\mcw\simeq \langle K\rangle_{\mcd}.
	$
	In particular, \(\mcw\) is locally rigid over \(\mcv\). By the observation
	above, we obtain
	\[
	\mcw
	\subset
	\langle K\rangle_{\mcw}
	\subset
	\left\langle\mcw^{\mcv\text{-}\op{at}}\right\rangle_{\mcw}
	\subset
	\mcw.
	\]
	Hence
	$
	\mcw=
	\left\langle\mcw^{\mcv\text{-}\op{at}}\right\rangle_{\mcw},
	$
	which proves \((1)\Rightarrow(2)\).
	
	Conversely, suppose that \(\mcw\) is locally rigid over \(\mcv\) and that
	$
	\left\langle\mcw^{\mcv\text{-}\op{at}}\right\rangle_{\mcw}
	=
	\mcw.
	$
	Consider the canonical rigidification
	\[
	p\colon
	\mcd:=\op{Rig}_{\mcv}(\mcw)
	\longrightarrow
	\mcw
	\]
	of \cite[Theorem 4.67]{ramzi2024locally}. By the rigidification theorem,
	\(\mcd\) is rigid over \(\mcv\), and \(p\) admits a fully faithful
	\(\mcv\)-linear left adjoint
	$
	i\colon\mcw\hookrightarrow\mcd.
	$
	By \cite[Lemma 4.63]{ramzi2024locally}, this left adjoint is
	automatically \(\mcd\)-linear. Thus \(i:\mcw\hookrightarrow\mcd\) is a smashing
	ideal. By the observation above, we have
	\[
	\mcw\cap\mcd^{d}
	\simeq
	\mcw^{\mcv\text{-}\op{at}}.
	\]
	The assumed generation condition therefore shows that \(\mcw\) is an
	atomic smashing ideal of \(\mcd\), generated by
	\(\mcw\cap\mcd^{d}\). This proves \((2)\Rightarrow(1)\).
\end{proof}

We recall the frame-theoretic notion which will describe the output in the
locally rigid case.

\begin{de}
	A frame \(L\) is called \textbf{locally \(\kappa\)-coherent} if
	\(L\) is \(\kappa\)-compactly generated and the \(\kappa\)-compact elements of \(L\)
	are closed under binary meets. When \(\kappa=\omega\), we simply say that \(L\) is
	\textbf{locally coherent}.
\end{de}

\begin{rem}
	A locally coherent frame is spatial. Moreover, by Stone duality
	\cite[Proposition A.1.3.3]{sag}, locally coherent frames correspond to
	quasi-separated sober spaces admitting a basis of compact open subsets.
\end{rem}

\begin{ex}
	If \(F\) is a $\kappa$-coherent frame and \(x\in F\), then the open sublocale\footnote{Note that this is not  a subframe.}
	\(F_{\leq x}\) is locally  $\kappa$-coherent.
\end{ex}

We can now combine the realization criterion with the coherent case proved
above. The idea is that, under the generation hypothesis, a locally rigid
\(\mcv\)-algebra can be embedded as an atomic smashing ideal in a rigid one.

\begin{cor}\label{locrigloccoh}
	Let \(\mcv\in\calg(\prl)\) have $\kappa$-compact unit, and let
	\(\mcw\in\calg(\prl_{\mcv})\) be locally rigid over \(\mcv\). Suppose that
	\[
	\left\langle \mcw^{\mcv\text{-}\op{at}}\right\rangle
	=
	\mcw .
	\]
	Then \(\cidem_f(\mcw)\) is a locally $\kappa$-coherent frame.
\end{cor}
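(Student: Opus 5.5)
By \cref{atsmidlrealize}, the hypotheses on \(\mcw\) let us realize it as an atomic smashing ideal of a rigid \(\mcv\)-algebra. Concretely, take \(\mcd:=\op{Rig}_{\mcv}(\mcw)\); it is rigid over \(\mcv\), and the fully faithful left adjoint \(i\colon \mcw\hookrightarrow\mcd\) exhibits \(\mcw\) as an atomic smashing ideal of \(\mcd\). Then \cref{cidematideal}, applied to \(\mcd\) and this ideal, gives
\[
\cidem_f(\mcw)\simeq \cidem_f(\mcd)_{\leq \mcw}.
\]
So it suffices to show that \(\cidem_f(\mcd)\) is \(\kappa\)-coherent. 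The open sublocale \(F_{\leq x}\) of a \(\kappa\)-coherent frame \(F\) is locally \(\kappa\)-coherent, by the example recorded just before the statement.

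To get \(\kappa\)-coherence of \(\cidem_f(\mcd)\), I would apply \cref{atsmfrmcoh} to \(\mcd\), which requires the unit \(\mb{1}_{\mcd}\) to be \(\kappa\)-compact in \(\mcd\). By rigidity, \(\mb{1}_{\mcd}\) is \(\mcv\)-atomic. Hence \(\Map_{\mcd}(\mb{1}_{\mcd},-)\simeq \Map_{\mcv}(\mb{1}_{\mcv},\unmap^{\mcv}_{\mcd}(\mb{1}_{\mcd},-))\), and \(\unmap^{\mcv}_{\mcd}(\mb{1}_{\mcd},-)\) preserves colimits. Since \(\mb{1}_{\mcv}\) is \(\kappa\)-compact by hypothesis, the composite preserves \(\kappa\)-filtered colimits, so \(\mb{1}_{\mcd}\) is \(\kappa\)-compact. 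More generally, every \(\mcv\)-atomic object of \(\mcd\), equivalently every dualizable object of \(\mcd\), is \(\kappa\)-compact. \Cref{atsmfrmcoh} then shows that \(\cidem_f(\mcd)\) is \(\kappa\)-coherent, with compact elements the \(\kappa\)-small joins of \(\langle x\rangle\) for \(x\in\mcd^d\).

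It remains to justify the example. This is the step that needs the most care, because \(F_{\leq x}\) is not a subframe and its top element \(x\) need not be compact. First, joins in \(F_{\leq x}\) are computed in \(F\), so an element \(c\leq x\) that is \(\kappa\)-compact in \(F\) is \(\kappa\)-compact in \(F_{\leq x}\). Second, every \(y\leq x\) is a join of \(\kappa\)-compact elements of \(F\) that lie below \(y\), hence below \(x\), so these elements generate \(F_{\leq x}\). Third, the \(\kappa\)-compact elements of \(F_{\leq x}\) are exactly the compacts of \(F\) lying below \(x\): a compact of \(F_{\leq x}\) is a join of generators of the previous kind, hence a retract of a \(\kappa\)-small join of them, and a \(\kappa\)-small join of compacts of \(F\) is compact in \(F\). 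Binary meets in \(F_{\leq x}\) agree with those in \(F\), and \(F^\kappa\) is closed under binary meets, so the compacts of \(F_{\leq x}\) are closed under binary meets. Therefore \(\cidem_f(\mcw)\simeq\cidem_f(\mcd)_{\leq\mcw}\) is locally \(\kappa\)-coherent.
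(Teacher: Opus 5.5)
Your proposal is correct and follows the paper's proof step for step. You realize \(\mcw\) as an atomic smashing ideal of the rigid \(\mcv\)-algebra \(\mcd=\op{Rig}_{\mcv}(\mcw)\) via \cref{atsmidlrealize}, identify \(\cidem_f(\mcw)\) with the open sublocale \(\cidem_f(\mcd)_{\leq\mcw}\) via \cref{cidematideal}, obtain \(\kappa\)-coherence of \(\cidem_f(\mcd)\) from \cref{atsmfrmcoh} using the \(\kappa\)-compactness of the \(\mcv\)-atomic unit \(\mb{1}_{\mcd}\), and pass to the open sublocale. Your extra checks, namely that \(\mb{1}_{\mcd}\) is \(\kappa\)-compact and that \(F_{\leq x}\) is locally \(\kappa\)-coherent when \(F\) is \(\kappa\)-coherent, are correct and spell out steps the paper asserts without argument.
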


\begin{proof}
	By \cref{atsmidlrealize}, the hypotheses imply that \(\mcw\) can be
	realized as an atomic smashing ideal of a rigid \(\mcv\)-algebra
	\(\mcd\). By \cref{cidematideal}, this identifies
	\(\cidem_f(\mcw)\) with an open sublocale of \(\cidem_f(\mcd)\).
	Since \(\mcd\) is rigid over \(\mcv\), its unit is \(\mcv\)-atomic and hence $\kappa$-compact.
	Hence \(\cidem_f(\mcd)\) is $\kappa$-coherent by \cref{atsmfrmcoh}. An open
	sublocale of a $\kappa$-coherent frame is locally $\kappa$-coherent, so
	\(\cidem_f(\mcw)\) is locally $\kappa$-coherent.
\end{proof}
In fact, local rigidity is not essential for local coherence of the atomic
smashing frame.\footnote{We thank Ishan Levy for pointing this out.} It would
be enough to realize \(\mcw\) as an atomic smashing ideal in a
\(\mcv\)-algebra whose unit is \(\mcv\)-atomic. One way to do this is through
a categorical one-point compactification. Since setting up that construction
would be unnecessarily elaborate for the present purpose, we give instead a
direct argument of the following stronger result than \cref{locrigloccoh}.
\begin{thm}\label{atsmfrmloccoh}
	Let $\mcv\in\calg(\prl)$. Suppose that
	$
	\left\langle
	\mcv^\kappa\cap\mcv^d
	\right\rangle
	=
	\mcv .$\footnote{Note that this is a weaker condition than having $\kappa$-compact unit.}
	Then the atomic smashing frame $\cidem_f(\mcv)$ is locally $\kappa$-coherent.
	
	Moreover, its \(\kappa\)-compact elements are exactly the
	\(\kappa\)-small joins of principal atomic smashing ideals generated by
	\(\kappa\)-compact dualizable objects:
	\[
	\cidem_f(\mcv)^\kappa
	=
	\left\{
	\bigvee_{\alpha\in A}\langle x_\alpha\rangle
	\;\middle|\;
	A \text{ is }\kappa\text{-small,}\quad
	x_\alpha\in\mcv^\kappa\cap\mcv^d
	\right\}.
	\]
\end{thm}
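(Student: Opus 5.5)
The plan is to mimic the proof of \cref{atsmfrmcoh}, replacing the dualizable generators by $\kappa$-compact dualizable ones. Set $L:=\cidem_f(\mcv)$ and
\[
L_0:=\{\langle x\rangle\mid x\in\mcv^\kappa\cap\mcv^d\}\subset L .
\]
We will show three things: every element of $L_0$ is $\kappa$-compact in $L$; $L_0$ is join-dense in $L$; and $L_0$ is closed under binary meets. Together these give $L\simeq\op{Ind}_\kappa(L^\kappa)$. They also show that $L^\kappa$ consists exactly of the $\kappa$-small joins of elements of $L_0$, using that a $\kappa$-compact element is a retract, hence in a poset equal to, a $\kappa$-small subjoin. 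Finally, $L^\kappa$ is closed under binary meets by distributivity: $(\bigvee_\alpha a_\alpha)\wedge(\bigvee_\beta b_\beta)=\bigvee_{\alpha,\beta}a_\alpha\wedge b_\beta$ is again a $\kappa$-small join of elements of $L_0$. Unlike the coherent case, we do not need the top element to be compact, which is exactly why the conclusion is only local coherence.

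\textbf{Compactness of $L_0$.} This is verbatim the argument in \cref{atsmfrmcoh}, now using that $x$ is $\kappa$-compact by hypothesis rather than via compactness of the unit. Suppose $\langle x\rangle\le\bigvee_{\alpha\in A}\mci_\alpha$, and let $c\to\mb{1}$ and $c_F\to\mb{1}$ be the coidempotents of the full join and of the $\kappa$-small subjoins indexed by $F\subset A$. Then $c\simeq\colim_F c_F$, a $\kappa$-filtered colimit; this step uses that joins in $\cidem(\mcv)$ are computed as colimits in $\mcv_{/\mb{1}}$. Hence $x\simeq c\otimes x\simeq\colim_F(c_F\otimes x)$, and $\kappa$-compactness of $x$ makes $x$ a retract of some $c_F\otimes x$. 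Therefore $\langle x\rangle\le\mci_F$.

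\textbf{Meets.} For $x,y\in\mcv^\kappa\cap\mcv^d$ we have $\langle x\rangle\wedge\langle y\rangle=\langle x\otimes y\rangle$, and $x\otimes y$ is dualizable. It is also $\kappa$-compact: $\Map(x\otimes y,-)\simeq\Map(x,y^\vee\otimes-)$, where $y^\vee\otimes-$ preserves colimits and $x$ is $\kappa$-compact.

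\textbf{Join-density, the main obstacle.} An arbitrary atomic smashing ideal is $\langle S\rangle$ with $S\subset\mcv^d$, but its generators need not be $\kappa$-compact. We must show that every $\langle x\rangle$ with $x\in\mcv^d$ is a join of elements of $L_0$.

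Take $x\in\mcv^d$. By hypothesis $\mcv=\langle\mcv^\kappa\cap\mcv^d\rangle$, so $\mcv$ is the colimit-and-tensor closure of $T:=\mcv^\kappa\cap\mcv^d$. Tensoring this description with $x$ (using that $x\otimes-$ preserves colimits), we get $\langle x\rangle=\langle x\otimes t\mid t\in T\rangle$, since $\langle x\rangle=x\otimes\mcv$ as a smashing ideal.

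Each $x\otimes t$ is dualizable. It is also $\kappa$-compact, by the same argument as before: $\Map(x\otimes t,-)\simeq\Map(t,x^\vee\otimes-)$, and $t$ is $\kappa$-compact. Hence $\langle x\rangle=\bigvee_{t\in T}\langle x\otimes t\rangle$ with each term in $L_0$.

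The point to check carefully is the identity $\langle x\rangle=\langle x\otimes T\rangle$. The inclusion $\supseteq$ is clear. For $\subseteq$, the ideal $\langle x\otimes T\rangle$ is closed under colimits and $\mcv$-action and contains $x\otimes t$ for all $t\in T$. Hence it contains $x\otimes v$ for every $v$ in the colimit-and-tensor closure of $T$, which is all of $\mcv$, and in particular it contains $x\simeq x\otimes\mb{1}$.
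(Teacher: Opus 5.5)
Your proposal is correct and follows essentially the same route as the paper's proof: the same family $L_0$ of principal ideals on $\kappa$-compact dualizable objects, the compactness argument of \cref{atsmfrmcoh}, closure under binary meets via $\Map(x\otimes y,-)\simeq\Map(x,y^\vee\otimes-)$, and join-density via $s\simeq s\otimes\mb{1}\in\langle s\otimes a\mid a\in\mcv^\kappa\cap\mcv^d\rangle$ (you also spell out why these facts give local $\kappa$-coherence and the stated description of $\cidem_f(\mcv)^\kappa$, which the paper leaves implicit). The only slip is the aside ``$\langle x\rangle=x\otimes\mcv$'', which is false if read as an essential image (for odd $p$, $\mathbb{S}/p^2\in\langle\mathbb{S}/p\rangle$ is not of the form $\mathbb{S}/p\otimes Y$), but your last paragraph proves the needed identity $\langle x\rangle=\langle x\otimes T\rangle$ directly, so nothing depends on it.
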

\begin{proof}
	Set
	\[
	L:=\cidem_f(\mcv),\qquad
	L_0:=\{\langle x\rangle\mid x\in \mcv^\kappa\cap \mcv^d\}\subset L .
	\]
	It suffices to prove that \(L_0\) is a join-dense collection of \(\kappa\)-compact
	elements of \(L\), closed under binary meets. This implies that \(L\) is
	locally \(\kappa\)-coherent.
	
	We first note that $\langle x\rangle$ lies in $ L^\kappa$ for every
	\(x\in\mcv^\kappa\cap\mcv^d\) by the same argument as \cref{atsmfrmcoh}.
	
	We now prove that \(L_0\) is join-dense in \(L\). Let \(\mathcal I\in L\).
	By definition of the atomic smashing frame, there exists
	\(S\subset \mcv^d\) such that
	$
	\mathcal I=\langle S\rangle .
	$
	For \(s\in S\), we claim
	\[
	\langle s\rangle
	=
	\bigl\langle s\otimes a\mid a\in \mcv^\kappa\cap\mcv^d\bigr\rangle .
	\]
	The inclusion \(\supset\) is clear. Conversely, since
	$
	\bigl\langle \mcv^\kappa\cap\mcv^d\bigr\rangle=\mcv ,
	$
	we have
	$
	\mb 1_{\mcv}\in \bigl\langle \mcv^\kappa\cap\mcv^d\bigr\rangle .
	$
	Applying the colimit-preserving functor \(s\otimes-\), we get
	\[
	s=s\otimes \mb 1_{\mcv}
	\in
	\bigl\langle s\otimes a\mid a\in \mcv^\kappa\cap\mcv^d\bigr\rangle .
	\]
	This proves the claim.
	
	Moreover, if \(s\in\mcv^d\) and
	\(a\in\mcv^\kappa\cap\mcv^d\), then
	$
	s\otimes a\in \mcv^\kappa\cap\mcv^d .
	$
	Indeed, \(s\otimes a\) is dualizable, and it is \(\kappa\)-compact since \(a\) is
	\(\kappa\)-compact and \(s^\vee\otimes-\) preserves $\kappa$-filtered colimits:
	\[
	\Map_{\mcv}(s\otimes a,-)
	\simeq
	\Map_{\mcv}(a,s^\vee\otimes -).
	\]
	Consequently,
	\[
	\mathcal I
	=
	\bigvee_{s\in S}\langle s\rangle
	=
	\bigvee_{\substack{s\in S\\ a\in\mcv^\kappa\cap\mcv^d}}
	\langle s\otimes a\rangle ,
	\]
	and every term on the right belongs to \(L_0\). Hence \(L_0\) is
	join-dense in \(L\).
	
	It remains to show that \(L_0\) is closed under binary meets. Let
	\(x,y\in\mcv^\kappa\cap\mcv^d\). Since \(\langle x\rangle\) and
	\(\langle y\rangle\) are smashing ideals,
	\[
	\langle x\rangle\wedge \langle y\rangle
	\simeq
	\langle x\otimes y\rangle .
	\]
	But
	$
	x\otimes y\in\mcv^\kappa\cap\mcv^d .
	$
	Hence
	$
	\langle x\rangle\wedge\langle y\rangle\in L_0 .
	$
	Thus \(L_0\) is closed under binary meets.
\end{proof}
We end this discussion with a computation of the atomic smashing frame of
\(k\)-valued sheaves, which shows that local rigidity alone does not imply
local coherence of the atomic smashing frame.
\begin{de}[{\cite{MR4074766}}]\label{defzdimcorefl}
	
	For a frame $L$, let
	$
	\op{Comp}(L)\subset L
$
	denote the Boolean algebra of complemented elements, and let
	\[
	\op{Z}(L)
	:=
	\left\{
	\bigvee_{\alpha}x_{\alpha}
	\;\middle|\;
	x_{\alpha}\in\op{Comp}(L)
	\right\}
	\subset L
	\]
	be the subframe generated by them. We call $\op{Z}(L)$ as the
	\textbf{zero-dimensional coreflection}\footnote{When $L=\mco(X)$ for a space $X$, one can see that $\op{Z}(L)=\mco_p(X).$} of $L$.
\end{de}

\begin{prop}\label{atsmfrmsheaf}
Let $\mathcal X$ be an $\infty$-topos and let $k$ be a field. 
By Aoki's computation of the smashing frame
\cite[Theorem A]{aoki2024smashing}, there is a canonical isomorphism
$$
\cidem\big(\op{Shv}(\mathcal X;\mcd(k))\big)
\simeq
\mcx_{\leq -1}.
$$
Under this identification, the atomic smashing frame is precisely the
zero-dimensional part:
\[
\cidem_f\big(\op{Shv}(\mathcal X;\mcd(k))\big)
\simeq
\op{Z}\bigl(\mcx_{\leq -1}\bigr).
\]
\end{prop}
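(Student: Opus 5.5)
Write $\mcv:=\shv(\mcx;\mcd(k))$. We take Aoki's identification as given: an element $U\in\mcx_{\leq-1}$ corresponds to the coidempotent $j_!\underline{k}_U\to\underline{k}$, with smashing ideal $\shv(\mcx_{/U};\mcd(k))$ embedded by extension by zero. The argument has two inclusions. First we show that every complemented $U$ gives an atomic smashing ideal. Then we show that every atomic smashing ideal is a join of complemented subterminals. Because $\cidem_f(\mcv)$ is a subframe by \cref{atsmashingideals}, the first inclusion gives $\op{Z}(\mcx_{\leq-1})\subset\cidem_f(\mcv)$: joins of complemented elements are then joins of atomic ideals.

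\emph{Complemented implies atomic.} Suppose $U\sqcup U^c=*$. Then $\mcx\simeq\mcx_{/U}\times\mcx_{/U^c}$, and so $\mcv\simeq\shv(\mcx_{/U};\mcd(k))\times\shv(\mcx_{/U^c};\mcd(k))$. In particular $\underline{k}\simeq j_!\underline{k}_U\oplus j_!\underline{k}_{U^c}$. The coidempotent $j_!\underline{k}_U\to\underline{k}$ therefore admits a retraction, and by \cref{splitsmidl} the object $j_!\underline{k}_U$ is dualizable. Its smashing ideal is generated by this dualizable object, hence is atomic.

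\emph{Atomic implies a join of complemented elements.} The key step is to understand dualizable objects of $\mcv$ and their supports. Let $P\in\mcv^d$. I would show that $P$ is locally constant with perfect stalk complexes, i.e.\ locally a constant sheaf on a perfect $k$-complex. Over a field, I would then use homology sheaves: each $\mathcal{H}_i(P)$ is dualizable in $\shv(\mcx;\mathrm{Vect}_k)$, hence a locally free sheaf of finite rank. The nonvanishing locus of $P$ is then the subterminal $V_P:=\bigvee_{n\geq 1}\{\sum_i\op{rk}\mathcal{H}_i(P)= n\}$. Its complement $\{\op{rk}=0\}$ is also open, because rank is locally constant. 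The rank decomposition is the key point: the rank strata $\{\op{rk}\mathcal{H}_i=n\}$ are subterminals that are pairwise disjoint and cover $*$, so $V_P$ is complemented.

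It remains to check that $\langle P\rangle$ equals the smashing ideal of $V_P$. $P$ is supported on $V_P$, so $\langle P\rangle\subset\langle j_!\underline{k}_{V_P}\rangle$. Conversely, $P|_{V_P}$ is nonzero with invertible-rank homology, and over a field its stalks contain $k$ as a retract. I would show locally that $\underline{k}_{V_P}$ is a retract of $P\otimes P^\vee$ via the trace. The cleaner route is that $\op{ev}:P\otimes P^\vee\to\underline{k}$ is split surjective on $V_P$, since the Euler characteristic need not be invertible but the trace map on $\underline{\End}(P)$ is surjective wherever $P\neq0$. This gives $\underline{k}_{V_P}\in\langle P\rangle$. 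Hence any atomic ideal $\langle S\rangle=\bigvee_{P\in S}\langle P\rangle$ is a join of complemented elements.

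\emph{Expected obstacle.} The main obstacle is the first half of the second step: describing dualizable objects of $\shv(\mcx;\mcd(k))$ for a general, possibly non-hypercomplete, $\infty$-topos well enough to make the rank-stratification argument rigorous. I would try to avoid stalks by working internally. I would use that $\underline{\End}(P)$ is a dualizable $\mathbb{E}_1$-algebra, and define the support $V_P$ directly as the subterminal where $\op{ev}$ is an epimorphism on $\mathcal{H}_0$. I would then prove that its complement is also open using dualizability of $P^\vee$ and the decomposition of the unit: since $\underline{k}$ is a sheaf of fields, any sub-unit ideal generated by a section is locally either $0$ or everything.
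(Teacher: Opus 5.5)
Your outline is correct and is essentially the paper's argument, packaged differently.

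\textbf{The hard direction and the key input.} The paper also rests the hard direction on the classification of dualizable objects of $\shv(\mcx;\mcd(k))$ as locally constant sheaves with perfect values. It does not prove this; it cites it from Martini--Wolf [Cor.~2.5.4.12]. So the ``expected obstacle'' you describe is exactly the one external input. You should import it rather than attempt the internal argument you sketch, and you should drop the references to stalks, which a general $\infty$-topos need not have.

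\textbf{How the paper concludes, versus your route.} Given the classification, the paper pulls back along an effective epimorphism $q\colon V\to *$ on whose pieces $F$ is constant with perfect value. Since a nonzero perfect complex generates $\mcd(k)$, the subterminal $q^*U_F$ is locally $0$ or $1$, hence complemented, and complemented subterminals descend along $q$. Your rank stratification builds the complement of $V_P$ by hand instead of invoking this descent, and that works. To see that the zero and nonzero strata are disjoint, note that on $W\neq\emptyset$ a constant sheaf with nonzero perfect value $M$ has $\underline k_W$ as a retract of $\underline M\otimes\underline M^\vee$, so it is nonzero. Two small corrections:
\begin{itemize}
\item The homology sheaves are unnecessary, since the local values of $P$ already define the strata.
\item It is local constancy, not the vanishing of $\mathcal H_*$, that gives $P|_{V_P^c}\simeq 0$ in a non-hypercomplete topos.
\end{itemize}
For the other inclusion, your retract-of-the-unit argument via \cref{splitsmidl} is more elementary than the paper's second appeal to local constancy.

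\textbf{A false step.} The ``cleaner route'' asserts that $\op{ev}\colon P\otimes P^\vee\to\underline k$ is split over $V_P$. It is only \emph{locally} split. For example, take $k=\mathbf F_2$ and $\mcx=\mcs_{/BS_3}$, and let $P$ be the local system given by the $2$-dimensional irreducible representation of $S_3\cong\GL_2(\mathbf F_2)$. Then $V_P=*$. A section of $\op{ev}$ would be an element of $\End_{S_3}(P)=\mathbf F_2$ of trace $1$, but every scalar $c$ has trace $2c=0$. So you need a gluing step to conclude $j_!\underline k_{V_P}\in\langle P\rangle$. Either of the following works.
\begin{itemize}
\item Since $k[-]\colon\mcx\to\shv(\mcx;\mcd(k))$ preserves colimits and is symmetric monoidal, $j_!\underline k_{V_P}=k[V_P]$ is the geometric realization of $k[-]$ applied to the \v{C}ech nerve of a trivializing cover $\coprod_\alpha W_\alpha\to V_P$. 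Each term is a sum of tensor products of the $j_!\underline k_{W_\alpha}$. These lie in $\langle P\rangle$, because $j_!\underline k_{W_\alpha}$ is a retract of $j_!\underline k_{W_\alpha}\otimes P\otimes P^\vee$.
\item Argue in the frame, as the paper does. Write $U_P$ for the subterminal corresponding to $\langle P\rangle$. Restriction to $\mcx_{/W}$ sends $\langle P\rangle$ to $\langle P|_W\rangle$, by the base-change identification $\langle S\rangle\otimes_{\mcv}\mcw\simeq\langle f(S)\rangle$ for $S\subset\mcv^d$ from the proof of \cref{atsmfrminjsurj}. Hence $W\le U_P$ whenever $P|_W$ is constant and nonzero, and so $V_P\le U_P$.
\end{itemize}
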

\begin{proof}
	Set
	$
	\mcc:=\op{Shv}(\mathcal X;\mcd(k)).
	$ By \cite[Corollary 2.5.4.12]{martini2025presentabilitytopoiinternalhigher}, the dualizable objects of $\mcc$ are
	precisely the locally constant sheaves with perfect values. Let
	$F\in\mcc^d$, and let
	$
	U_F\in\mcx_{\leq -1}
	$
	be the element corresponding to the principal smashing ideal
	$\langle F\rangle_\mcc$. Since $F$ is locally constant, there is an
	effective epimorphism
	$
	q\colon V\longrightarrow \mb{1}_{\mathcal X}
	$
	such that, after decomposing $V$ into a coproduct if necessary,
	$
	q^*F\simeq\underline P
	$
	on each component, for some $P\in\op{Perf}(k)$. Since $k$ is a field,
	a nonzero perfect $k$-complex generates $\mcd(k)$ as a localizing
	tensor ideal. Consequently, after pullback to $V$, the smashing ideal
	generated by $F$ is either zero or the whole category according as
	$P=0$ or $P\neq0$. Equivalently, $q^*U_F$ is locally either $0$ or $1$.
	Hence $q^*U_F$ is complemented. Since complemented subterminal objects
	satisfy effective descent, $U_F$ is complemented as well.
	
	Conversely, let
	$
	U\in\op{Comp}\bigl(\mcx_{\leq -1}\bigr)
	$
	with complement $U^c$. Then
	$$
	\mb{1}_{\mathcal X}\simeq U\amalg U^c,
	$$
	so $\mathcal X$ decomposes into the corresponding two open pieces.
	Let $k_U\in\mcc$ be equal to the constant sheaf $k$ over $U$ and to
	zero over $U^c$. This is locally constant with perfect values, hence
	dualizable, and under Aoki's identification the principal smashing ideal
	$\langle k_U\rangle_\mcc$ corresponds exactly to $U$. Thus principal
	smashing ideals generated by dualizable objects correspond precisely to
	complemented subterminal objects. Taking arbitrary joins gives
	$
	\cidem_f(\mcc)
	\simeq
	\op{Z}\bigl(\mcx_{\leq -1}\bigr).
	$
\end{proof}
\begin{rem}
	In particular, let \(X\) be a topological space. If \(zX\) denotes the
	zero-dimensional reflection of \(X\), namely the topology on the underlying
	set of \(X\) generated by its clopen subsets, then
	\[
	\cidem_f\bigl(\op{Shv}(X;\mcd(k))\bigr)
	\simeq
	\op{Z}\bigl(\mathcal O(X)\bigr)
	\simeq
	\mco_p(X)
	\simeq
	\mathcal O(zX).
	\]
\end{rem}

\begin{ex}
	There exists a locally rigid \opsp-algebra whose atomic smashing frame is not
	locally coherent. Indeed, there exists a locally compact Hausdorff space
	\(Y\) whose zero-dimensional reflection is not locally coherent; see
	\cite[Example 2]{MR1832154}. Therefore
	\[
	\cidem_f\bigl(\op{Shv}(Y;\mcd(k))\bigr)
	\]
	is not locally coherent, although \(\op{Shv}(Y;\mcd(k))\) is locally rigid over \opsp
	(see \cite[Example 4.10]{ramzi2024locally}).
\end{ex}

\subsection{Descent for the telescope conjecture}
In this subsection, we study the descent properties of the telescope conjecture with respect to various Grothendieck topologies. The resulting descent theorems show that the validity of the telescope conjecture is preserved under atomic Hopf and smashing-open coverings.

We begin with a class of morphisms for which dualizability and atomic generation can be detected after base change.
\begin{de}
	Let $f \colon \mcc \to \mcd$ be a symmetric monoidal functor between \syminfcats. We say $f$ is a \textbf{Hopf functor} if it admits a $\mathcal{C}$-linear left adjoint $f^L$.
\end{de}
\begin{rem}
	This is equivalent to the condition that $f$ admits a left adjoint $f^L$ whose natural oplax $\mathcal{C}$-linear structure is $\mathcal{C}$-linear. The terminology stems from the notion of a Hopf adjunction (see \cite{Bruguieres2011}).
\end{rem}
\begin{ex}
	A locally rigid localization (see \cref{localrigloc}) is always a Hopf functor.
\end{ex}
\begin{de}
	Let $\mathcal{V} \in \operatorname{CAlg}(\prl)$, and let $\{f_i: \mathcal{V} \to \mathcal{V}_i\}_{i \in I}$ be a set of morphisms in $\operatorname{CAlg}(\prl)$. We say that this collection is a \textbf{Hopf covering} of $\mathcal{V}$ if the following conditions hold:
	\begin{enumerate}[label=(\arabic*), font=\normalfont]
		\item For each $i \in I$, the functor $f_i: \mathcal{V} \to \mathcal{V}_i$ admits a $\mathcal{V}$-linear left adjoint $f_{i}^L$.
		\item The collection of functors $\{f_i: \mathcal{V} \to \mathcal{V}_i\}_{i \in I}$ is jointly conservative.
	\end{enumerate}
	
	Furthermore, we say that this collection is an \textbf{atomic Hopf covering} of $\mathcal{V}$ if, in addition, each $\mathcal{V}_i$ is $\mathcal{V}$-atomically generated.
\end{de}
\begin{rem}\label{preservinternalhom}
	For any Hopf morphism $f:\mcv\to \mcw \in\calg(\prl)$,  $f$ preserves internal homs.
\end{rem}
\begin{proof}
	Let $f^L: \mcw \to \mcv$ denote the left adjoint to $f$. 
	 We can verify this by checking the universal property via the Yoneda lemma. For any test object $w \in \mcw$, we compute the mapping anima:
	\begin{align*}
		\mathrm{Map}_{\mcw}\big(w, f(\underline{\mathrm{Hom}}_{\mcv}(v_1, v_2))\big) 
		&\simeq \mathrm{Map}_{\mcv}\big(f^L(w), \underline{\mathrm{Hom}}_{\mcv}(v_1, v_2)\big) \quad &&(\text{adjunction } f^L \dashv f) \\
		&\simeq \mathrm{Map}_{\mcv}\big(f^L(w) \otimes v_1, v_2\big) \quad &&(\text{adjunction } \otimes \dashv \underline{\mathrm{Hom}} \text{ in } \mcv) \\
		&\simeq \mathrm{Map}_{\mcv}\big(f^L(w \otimes f(v_1)), v_2\big) \quad &&(\text{by } \mcv\text{-linearity of } f^L) \\
		&\simeq \mathrm{Map}_{\mcw}\big(w \otimes f(v_1), f(v_2)\big) \quad &&(\text{adjunction } f^L \dashv f) \\
		&\simeq \mathrm{Map}_{\mcw}\big(w, \underline{\mathrm{Hom}}_{\mcw}(f(v_1), f(v_2))\big) \quad &&(\text{adjunction } \otimes \dashv \underline{\mathrm{Hom}} \text{ in } \mcw)
	\end{align*}
	Since this equivalence holds naturally for all test objects $w \in \mcw$, the Yoneda lemma implies the desired equivalence:
	$$ f(\underline{\mathrm{Hom}}_{\mcv}(v_1, v_2)) \simeq \underline{\mathrm{Hom}}_{\mcw}(f(v_1), f(v_2)) $$
\end{proof}

\begin{prop}\label{hopfcovproperties}
	Let $\mcv\in\calg(\prl)$.
	\enu{
	\item For a Hopf covering $\{\mcv\to \mcv_i\}$ of \mcv, it jointly reflects dualizable objects. 
	\item If $\{\mcv\to \mcv_i\}$ is an atomic Hopf covering, then for any \mcv-module $\mcm\in\prlv$, it is \mcv-atomically generated if and only if each $\mcv_i\otimes_{\mcv}\mcm$ is $\mcv_i$-atomically generated.
	}

\end{prop}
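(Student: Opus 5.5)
For (1), I will show that if $v\in\mcv$ has each $f_i(v)$ dualizable in $\mcv_i$, then $v$ is dualizable. The natural candidate dual is $v^\vee:=\underline{\Hom}_\mcv(v,\mb{1})$. Recall that $v$ is dualizable exactly when the canonical map
\[
\theta_{v,w}\colon v^\vee\otimes w\to \underline{\Hom}_\mcv(v,w)
\]
is an equivalence for all $w\in\mcv$; it suffices to take $w=v$, because that already yields the coevaluation.

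By \cref{preservinternalhom}, each Hopf functor $f_i$ preserves internal homs. It is also symmetric monoidal, so $f_i(\theta_{v,w})$ identifies with
\[
\theta_{f_i v,f_i w}\colon f_i(v)^\vee\otimes f_i(w)\to \underline{\Hom}_{\mcv_i}(f_iv,f_iw).
\]
This map is an equivalence because $f_i(v)$ is dualizable. The family $\{f_i\}$ is jointly conservative, so $\theta_{v,w}$ is an equivalence, and $v$ is dualizable.

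For (2), the forward direction is the earlier remark that base change preserves atomic generation. For the converse, assume each $\mcv_i\otimes_\mcv\mcm$ is $\mcv_i$-atomically generated. I aim to show that $\mcm$ is generated under colimits and $\mcv$-actions by its $\mcv$-atomic objects. The plan uses the $\mcv$-linear left adjoints $f_i^L$ and the fact that each $\mcv_i$ is $\mcv$-atomically generated.

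The first step is to show that each $\mcv_i\otimes_\mcv\mcm$ is $\mcv$-atomically generated as a $\mcv$-module. Its generators $w\otimes m'$, with $w\in\mcv_i^{\mcv\text{-at}}$ and $m'$ a $\mcv_i$-atomic object, are $\mcv$-atomic by the adjunction formula from the proof of \cref{localrigatgen1}:
\[
\unmap^{\mcv}(w\otimes m',-)\simeq\unmap^\mcv\bigl(w,\unmap^{\mcv_i}(m',-)\bigr).
\]

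The second step transfers these generators back to $\mcm$. Tensoring $f_i^L$ with $\mcm$ gives a $\mcv$-linear functor $f_i^L\otimes\mcm\colon\mcv_i\otimes_\mcv\mcm\to\mcm$. I will try to show that it is a $\mcv$-internal left adjoint. The reason to expect this is that $f_i^L$ is $\mcv$-linear with $\mcv$-linear right adjoint $f_i$, so it is an internal left adjoint in $\prlv$, and this property survives $-\otimes_\mcv\mcm$ by the 2-functoriality in \cref{monoidalprvil}. Since internal left adjoints preserve atomic objects, the images of the atomic generators of $\mcv_i\otimes_\mcv\mcm$ are $\mcv$-atomic in $\mcm$.

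The third step checks generation. Joint conservativity of $\{f_i\}$ should make $\coprod_i f_i^L\otimes\mcm$ generating: its right adjoint is $(f_i\otimes\mcm)_i$, and I would check conservativity via the Hopf-type projection formula $f_i^Lf_i(v)\otimes m\simeq f_i^L(\mb{1})\otimes v\otimes m$. Once the functor is generating, \cref{generatingequiconds} shows that the images of the generators generate $\mcm$.

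I expect this last point, conservativity of $(f_i\otimes\mcm)_i$ for an arbitrary module $\mcm$, to be the main obstacle. Joint conservativity on $\mcv$ does not obviously survive tensoring with $\mcm$. Two routes seem available. One is to use the $\mcv$-linear adjunction to see that the unit of $\mcv$ lies in the localizing ideal generated by the objects $f_i^L(\mb{1}_{\mcv_i})$, and then tensor with objects of $\mcm$. The other is to first reduce to dualizable $\mcm$ using (1)-style arguments together with \cref{conserdual}, and argue there.
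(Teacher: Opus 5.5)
Your part (1) is the paper's argument: Hopf functors preserve internal homs, and you additionally spell out the joint-conservativity step. Steps 1 and 2 of part (2) are also correct and follow the paper's outline, with one small difference. The paper gets $\mcv$-atomic generation of $\mcv_i\otimes_\mcv\mcm$ from \cref{conserdual}(2), whereas you use the composition formula for $\unmap$. Your 2-functoriality remark justifies that $f_i^L\otimes\mcm$ is a $\mcv$-internal left adjoint, which the paper uses without comment.

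The genuine gap is Step 3. You leave it open and call it the main obstacle, but it is routine: \cref{generatingequiconds} lets you test "generating" on images instead of proving conservativity of $(f_i\otimes\mcm)_i$ directly. The argument runs as follows.
\begin{itemize}
\item Since $(f_i)_i$ is jointly conservative, $\sum_i f_i^L\colon\bigoplus_i\mcv_i\to\mcv$ is generating.
\item Hence $\mcv$ is generated under colimits by the objects $f_i^L(w)$ with $w\in\mcv_i$.
\item For $m\in\mcm$, the functor $-\otimes m\colon\mcv\to\mcm$ preserves colimits. So $m\simeq\mb{1}\otimes m$ lies in the colimit closure of the objects $f_i^L(w)\otimes m\simeq(f_i^L\otimes\mcm)(w\otimes m)$.
\item Therefore $\sum_i f_i^L\otimes\mcm$ is generating.
\end{itemize}
This is exactly \cref{tensorgenerating} applied with $\id_\mcm$, which is how the paper closes the step. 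It then applies \cref{conserdual}(2) to this generating internal left adjoint, which packages your Steps 2 and 3 together. So joint conservativity does survive tensoring with $\mcm$, in the only sense you need.

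Your route (a) can also be made to work, but only with an argument you did not supply. Let $\mathcal L=\langle f_i^L(\mb 1_{\mcv_i})\rangle$. To show $\mb 1_\mcv\in\mathcal L$, you must show that the right adjoint of $\mathcal L\hookrightarrow\mcv$ is conservative. If that right adjoint inverts $g\colon y\to z$, then the equivalences $\mapp(f_i^L(\mb 1)\otimes v,-)\simeq\mapp_{\mcv_i}(f_iv,f_i(-))$, taken with $v=y$ and $v=z$, force $f_ig$ to be invertible, and hence $g$ is invertible. Using all of $f_i^L(\mcv_i)$ instead makes the step immediate. Route (b), reducing to dualizable $\mcm$, is unnecessary.
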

\begin{proof}
	(1) Note that for an object $x\in\mcv$, it is dualizable if and only if for any $y\in \mcv$, the natural map $\underline{\op{Hom}}_{\mcv}(x,\mb{1})\otimes y\to \underline{\op{Hom}}_{\mcv}(x,y)$ is an equivalence. Therefore it suffices to show that each $\mcv\to\mcv_i$ preserves internal homs, but that follows from \cref{preservinternalhom}.\\
	(2) It only suffices to show the ``if'' direction.  Let $\mcm$ be a \mcv-module such that each $\mcv_i\otimes_{\mcv}\mcm$ is $\mcv_i$-atomically generated.  By assumption, the (\mcv-module) map $\bigoplus_i \mcv_i\xrightarrow{\sum_i f_i^L}\mcv$ is generating. Therefore, the \mcv-module map $$\bigoplus_i (\mcv_i\otimes_{\mcv} \mcm) \xrightarrow{\sum_i f_i^L\otimes \op{id}}  \mcv\otimes_{\mcv}\mcm=\mcm$$ is generating by \cref{tensorgenerating}. Since both $\mcv_i$ and $\mcv_i\otimes_{\mcv}\mcm$ are $\mcv$-atomically generated, applying \cref{conserdual} to a $\mcv_i$-atomic generating set $$\bigoplus_{J_i}\mcv_i \to \mcv_i\otimes_{\mcv}\mcm$$ of $\mcv_i\otimes_{\mcv}\mcm$,  we deduce that $\mcv_i\otimes_{\mcv}\mcm$ is \mcv-atomically generated for each $i\in I$. Applying \cref{conserdual} again to the \mcv-module map $\bigoplus_i (\mcv_i\otimes_{\mcv} \mcm) \xrightarrow{\sum_i f_i^L\otimes \op{id}}  \mcv\otimes_{\mcv}\mcm=\mcm$, we conclude that $\mcm$ is \mcv-atomically generated, as desired.
\end{proof}
This detection statement immediately yields descent for the telescope conjecture along atomic Hopf coverings.
\begin{thm}\label{tcdescent}
	Let $\mcv\in\calg(\prl)$, and let $\{\mcv\to \mcv_i\}$ be an atomic Hopf covering of \mcv. If each $\mcv_i$ satisfies the telescope conjecture, then so does \mcv.
\end{thm}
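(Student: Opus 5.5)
Let $\mci\hookrightarrow\mcv$ be an arbitrary smashing ideal; we must show it is an atomic smashing ideal, i.e.\ that $\mci$ is $\mcv$-atomically generated as a $\mcv$-module. The strategy is to test this after base change along the covering, using \cref{hopfcovproperties}(2). That result says a $\mcv$-module $\mcm$ is $\mcv$-atomically generated as soon as each $\mcv_i\otimes_\mcv\mcm$ is $\mcv_i$-atomically generated. So it suffices to show that each $\mcv_i\otimes_\mcv\mci$ is $\mcv_i$-atomically generated.

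\textbf{Step 1: the base changes are smashing ideals of the $\mcv_i$.} By \cref{smidlpushoutfrms}, applied to $f_i\colon\mcv\to\mcv_i$, the base change $\mci\otimes_\mcv\mcv_i\hookrightarrow\mcv_i$ is again a smashing ideal of $\mcv_i$. Equivalently, by the functoriality remark after \cref{smideal}, it is $\langle f_i(x)\rangle$, where $x$ is the coidempotent object associated to $\mci$. Here we must use the symmetric monoidal identification $\mcv_i\otimes_\mcv\mci\simeq\mci\otimes_\mcv\mcv_i$, which is harmless.

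\textbf{Step 2: invoke the hypothesis on each $\mcv_i$.} By hypothesis $\mcv_i$ satisfies the telescope conjecture. Hence every smashing ideal of $\mcv_i$ is atomic, so $\mcv_i\otimes_\mcv\mci$ is $\mcv_i$-atomically generated.

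\textbf{Step 3: descend.} Apply \cref{hopfcovproperties}(2) to $\mcm=\mci$. This uses that the family is an atomic Hopf covering: the $f_i$ are jointly conservative with $\mcv$-linear left adjoints, and each $\mcv_i$ is $\mcv$-atomically generated. We conclude that $\mci$ is $\mcv$-atomically generated, so $\mci\in\cidem_f(\mcv)$. Since $\mci$ was arbitrary, $\cidem_f(\mcv)\hookrightarrow\cidem(\mcv)$ is surjective, and hence an equivalence.

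The real content sits in \cref{hopfcovproperties}(2), which we are allowed to assume. Given that, the only point needing care is Step 1: we need the module $\mcv_i\otimes_\mcv\mci$ appearing in the descent criterion to coincide with the smashing ideal of $\mcv_i$ obtained by base change. In particular, the map $\mcv_i\otimes_\mcv\mci\to\mcv_i$ must remain fully faithful. This holds because $-\otimes_\mcv\mcv_i$ is a 2-functor preserving internal left adjoints, so it preserves coidempotents in $\prvdbl$, and then \cref{smideal}(1) applies.
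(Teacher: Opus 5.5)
Your proof is correct and is essentially the paper's argument. The paper's proof is the single line that the statement follows directly from \cref{hopfcovproperties}(2); you make explicit the implicit intermediate step, namely that each base change $\mcv_i\otimes_\mcv\mci\hookrightarrow\mcv_i$ is again a smashing ideal of $\mcv_i$ (via \cref{smidlpushoutfrms}), and is therefore atomic by the hypothesis on $\mcv_i$.
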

\begin{proof}
	It directly follows from \cref{hopfcovproperties}(2).
\end{proof}
\begin{ex}
	Let $\mcv\in\calg(\prl)$. Then for any connected anima $X$ with a basepoint $x$, the evaluation functor
	$$x^*:\funct(X,\mcv)\to \mcv$$ forms a Hopf covering, but in general it is not an atomic Hopf covering.
\end{ex}
There is another natural class of coverings adapted more directly to the smashing frame itself, namely coverings by smashing opens.
\begin{de}\label{defsmopencov}
		Let $\mathcal{V} \in \operatorname{CAlg}(\prl)$, and let $\{f_i: \mathcal{V} \to \mathcal{V}_i\}_{i \in I}$ be a (small) collection of morphisms in $\operatorname{CAlg}(\prl)$. We say that this collection is a \textbf{smashing-open covering} of $\mathcal{V}$ if each $\mathcal{V}\to\mcv_i$ is a locally rigid localization and $\{\cidem(\mcv)\to\cidem(\mcv_i)\}$ is jointly conservative\footnote{or equivalently, $\cidem(\mcv)\to \prod_{i}\cidem(\mcv_i)$ is an embedding (see \cref{conserinjectfrms})}.
		
			Furthermore, we say that this collection is an \textbf{atomic smashing-open covering} of $\mathcal{V}$ if, in addition, each $\mathcal{V}_i$ is $\mathcal{V}$-atomically generated.
\end{de}
\begin{thm}\label{atopensmcovtcdescent}
	Let $\mcv\in\calg(\prl)$, and let $\{f_i:\mcv\to \mcv_i\}$ be an atomic smashing-open covering of \mcv. Then $\mcv$ satisfies the telescope conjecture if and only if each $\mcv_i$ does.
\end{thm}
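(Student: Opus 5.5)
Each $f_i\colon\mcv\to\mcv_i$ is a locally rigid localization, so it has a fully faithful $\mcv$-linear left adjoint. I would first identify $\mcv_i$, via \cref{smidlvslocrig}, with a smashing ideal $\mci_i\hookrightarrow\mcv$. Under this identification $f_i$ becomes $i^R$. The key first step is to check that $\mci_i$ is an \emph{atomic} smashing ideal. By hypothesis $\mcv_i\simeq\mci_i$ is $\mcv$-atomically generated as a $\mcv$-module, which is exactly the definition. Then \cref{cidematideal} applies to each $i$: it gives $\cidem(\mcv_i)\simeq\cidem(\mcv)_{\leq\mci_i}$, and this restricts to $\cidem_f(\mcv_i)\simeq\cidem_f(\mcv)_{\leq\mci_i}$. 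The map $\cidem(\mcv)\to\cidem(\mcv_i)$ becomes $a\mapsto a\wedge\mci_i$. Moreover, by \cref{atsmidltc}, an element $a\leq\mci_i$ lies in $\cidem_f(\mcv)$ if and only if it lies in $\cidem_f(\mcv_i)$.

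The ``only if'' direction is then immediate from \cref{atsmidltc}: an atomic smashing ideal of a category satisfying the telescope conjecture again satisfies it.

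For the ``if'' direction, I would use joint conservativity to show $\bigvee_i\mci_i=\mathbf{1}$ in $\cidem(\mcv)$. Put $J=\bigvee_i\mci_i$. Then $J\wedge\mci_i=\mci_i=\mathbf{1}\wedge\mci_i$ for every $i$, so the inequality $J\leq\mathbf{1}$ is inverted by every map $\cidem(\mcv)\to\cidem(\mcv_i)$. Joint conservativity then forces $J=\mathbf{1}$. Now let $a\in\cidem(\mcv)$ be arbitrary. Since frames satisfy the infinite distributive law, we get $a=a\wedge\bigvee_i\mci_i=\bigvee_i(a\wedge\mci_i)$. Each $a\wedge\mci_i$ corresponds to an element of $\cidem(\mcv_i)=\cidem_f(\mcv_i)$, so by the pullback of \cref{atsmidltc} it lies in $\cidem_f(\mcv)$. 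Since $\cidem_f(\mcv)$ is a subframe (\cref{atsmashingideals}), it is closed under joins, and hence $a\in\cidem_f(\mcv)$.

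The main obstacle is the identification at the start. One must make sure that the equivalence of \cref{smidlvslocrig} turns the given $f_i$ into $i^R$ for an actual smashing ideal, with the left adjoint $\mcv$-linear and fully faithful. One must also check that ``$\mcv_i$ is $\mcv$-atomically generated'' matches the notion of an atomic smashing ideal $\mci_i$, where $\mci_i$ is generated by atomics of $\mcv$ lying in $\mci_i$. For the second point, \cref{localrigatgen} applied to $\mcv\to\mci_i$, together with the definition of atomic smashing ideals, should settle the comparison.
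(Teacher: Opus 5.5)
Your proposal is correct and follows the paper's proof essentially step for step. You identify each $f_i$ with an atomic smashing ideal $\mci_i$ and use \cref{cidematideal} and \cref{atsmidltc} for the open pieces and the ``only if'' direction; for the converse you use joint conservativity to get $\bigvee_i\mci_i=\mathbf{1}$, distributivity to write $a=\bigvee_i(a\wedge\mci_i)$, and closure of $\cidem_f(\mcv)$ under joins. The identification worry you flag at the end is harmless: $f_i^L$ realizes $\mcv_i$ itself as a smashing ideal of $\mcv$, so its atomicity is literally the hypothesis, and \cref{localrigatgen} is not needed at that step.
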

\begin{proof}
	For each \(i\), let
	\(
	\mcu_i\hookrightarrow\mcv
	\)
	be the atomic smashing ideal corresponding to the locally rigid
	localization
	\(
	f_i:\mcv\longrightarrow\mcv_i.
	\)
By \cref{cidemideal} the induced
	map on smashing frames identifies with the principal open localization
	$$
	\cidem(\mcv)
	\longrightarrow
	\cidem(\mcv)_{\leq\mcu_i},
	\qquad
	\mci\longmapsto \mci\wedge\mcu_i.
	$$
	Since the covering is atomic, each \(\mcu_i\) is an atomic smashing
	ideal, and by \cref{cidematideal} we similarly have
	\(
	\cidem_f(\mcv_i)
	\simeq
	\cidem_f(\mcv)_{\leq\mcu_i}.
	\)
	
The ``only if'' direction follows from \cref{atsmidltc}.
	
	Conversely, suppose that every \(\mcv_i\) satisfies the telescope
	conjecture. We claim first that
	\[
	\bigvee_{i\in I}\mcu_i=\mb{1}
	\]
	in \(\cidem(\mcv)\). Indeed, for every \(j\in I\), the images of
	\(\mb{1}\) and \(\bigvee_i\mcu_i\) in
	\(\cidem(\mcv_j)\simeq\cidem(\mcv)_{\leq\mcu_j}\) are both equal to
	\(\mcu_j\):
	$
	\mb{1}\wedge\mcu_j=\mcu_j
	$
	and
	\[
	\left(\bigvee_i\mcu_i\right)\wedge\mcu_j
	=
	\bigvee_i(\mcu_i\wedge\mcu_j)
	=
	\mcu_j,
	\]
	since the join contains the term
	\(\mcu_j\wedge\mcu_j=\mcu_j\).
	As the family
	$
	\left\{
	\cidem(\mcv)\longrightarrow\cidem(\mcv_i)
	\right\}_{i\in I}
	$
	is jointly conservative, it follows that
	$
	\bigvee_i\mcu_i=\mb{1}.
	$
	
	Now let
	$
	\mci\in\cidem(\mcv)
	$
	be an arbitrary smashing ideal. Its image in \(\cidem(\mcv_i)\) is
	$
	\mci\wedge\mcu_i.
	$
	Since \(\mcv_i\) satisfies the telescope conjecture, this image is
	atomic. Using
$
	\cidem_f(\mcv_i)
	\simeq
	\cidem_f(\mcv)_{\leq\mcu_i},
	$
	we conclude that
	$
	\mci\wedge\mcu_i
	\in
	\cidem_f(\mcv)
	$
	for every \(i\).
	
	Finally, using infinite distributivity in the frame
	\(\cidem(\mcv)\), we obtain
	\[
	\mci
	=
	\mci\wedge\mb{1}
	=
	\mci\wedge\bigvee_i\mcu_i
	=
	\bigvee_i(\mci\wedge\mcu_i).
	\]
	Each term on the right belongs to \(\cidem_f(\mcv)\), and
	\(\cidem_f(\mcv)\subset\cidem(\mcv)\) is a subframe, hence is closed
	under arbitrary joins. Therefore
	$
	\mci\in\cidem_f(\mcv).
	$
	Thus every smashing ideal of \(\mcv\) is atomic, and \(\mcv\)
	satisfies the telescope conjecture.	
\end{proof}

\begin{rem}
	It is natural to ask whether a descent result holds for a \emph{smashing-closed} counterpart. We will show that in the compactly-rigidly generated stable case, a similar descent property does hold for atomic smashing-closed coverings (see \cref{finiteatsmcloseddescent}). However, this fails in the unstable setting; the primary reason is that without stability, the canonical correspondence between smashing localizations and smashing ideals fails.
\end{rem}

\subsection{Smashing fields}
In the unstable context, particularly within the framework of $\infty$-topoi, the telescope conjecture admits a simple characterization. In this subsection, we introduce an important class of symmetric monoidal $\infty$-categories satisfying the telescope conjecture, which we call \emph{smashing fields}. We then show that an $\infty$-topos satisfies the telescope conjecture if and only if it is a smashing field.
The terminology is meant to emphasize the analogy with tensor-triangular fields.
\begin{de}
	Let $\Cat_{\infty}^\emptyset$ denote the \infcat of small \infcats admitting an initial object, with morphisms given by functors that preserve the initial object. This category inherits a symmetric monoidal structure from \cite[Corollary 4.8.1.4]{ha}.
	
	 An object $\mcc \in \calg(\Cat_{\infty}^\emptyset)$ corresponds to a symmetric monoidal \infcat whose underlying category has an initial object and whose tensor product preserves the initial object in each variable.
We say that such a symmetric monoidal $\infty$-category $\mcc$ is a \textbf{smashing field} if its frame of coidempotent objects is the two-element frame, that is,
\[
\cidem(\mcc)=\{\emptyset,\mb{1}\}.
\]
\end{de}
\begin{rem}
	A smashing field is always nontrivial because $\emptyset\neq \mb{1}$ in it. 
\end{rem}

The property of being a smashing field can be detected along conservative symmetric monoidal functors.
\begin{prop}\label{consercatfield}
	Let $\mcv\to\mcw\in\calg(\Cat_{\infty}^\emptyset)$ be a conservative symmetric monoidal functor preserving the initial object. If $\mcw$ is a smashing field, then so is \mcv.
\end{prop}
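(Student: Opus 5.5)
The plan is to reduce everything to the embedding criterion \cref{conservinjfrms} (equivalently \cref{conserinjectfrms}). Let $F:\mcv\to\mcw$ be the given functor. It is symmetric monoidal, so it sends coidempotent objects $x\to\mb{1}_\mcv$ to coidempotent objects $F(x)\to\mb{1}_\mcw$. This gives a map of posets
\[
\cidem(\mcv)\to\cidem(\mcw),
\]
which preserves finite meets (these are computed by the tensor product) and the top element $\mb{1}$. Since the underlying functor of $F$ is conservative, \cref{conservinjfrms} shows that this map is an embedding. That proposition is stated in $\calg(\Cat_\infty)$, so it applies here.

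Next we check the two elements of $\cidem(\mcv)$ that must exist.
\begin{itemize}
\item The unit $\mb{1}_\mcv\xrightarrow{\id}\mb{1}_\mcv$ is coidempotent.
\item Because $\mcv$ lies in $\calg(\Cat_\infty^\emptyset)$, the tensor product preserves the initial object in each variable. Hence $\emptyset\otimes\emptyset\simeq\emptyset$, and the unique map $\emptyset\to\mb{1}_\mcv$ is coidempotent.
\end{itemize}
So $\{\emptyset,\mb{1}\}\subset\cidem(\mcv)$. Moreover, $F$ preserves the initial object, so it sends $\emptyset_\mcv\to\mb{1}$ to $\emptyset_\mcw\to\mb{1}$, and it sends $\mb{1}_\mcv$ to $\mb{1}_\mcw$.

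Now let $x\in\cidem(\mcv)$ be arbitrary. Its image $F(x)$ lies in $\cidem(\mcw)=\{\emptyset,\mb{1}\}$.
\begin{itemize}
\item If $F(x)=\mb{1}=F(\mb{1})$, then injectivity of the embedding gives $x=\mb{1}$.
\item If $F(x)=\emptyset=F(\emptyset)$, then likewise $x=\emptyset$.
\end{itemize}
Hence $\cidem(\mcv)=\{\emptyset,\mb{1}\}$.

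It remains to see that these two elements are distinct in $\mcv$. If $\emptyset_\mcv\simeq\mb{1}_\mcv$, then applying $F$ gives $\emptyset_\mcw\simeq\mb{1}_\mcw$. This contradicts the fact that $\mcw$ is a smashing field, which is nontrivial. Therefore $\cidem(\mcv)$ is the two-element frame, and $\mcv$ is a smashing field.

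The only step needing care is the embedding claim. One could check it directly instead of citing \cref{conservinjfrms}. Suppose $x\le y$ in $\cidem(\mcw)$ after applying $F$, i.e.\ $F(x)\le F(y)$. Then the map $x\otimes y\to x$ becomes an equivalence after applying $F$, because $F(x)\otimes F(y)\simeq F(x)$. By conservativity, $x\otimes y\to x$ is already an equivalence in $\mcv$, so $x\le y$. This uses only that the order on coidempotents is detected by the meet $x\wedge y=x\otimes y$. No presentability is needed.
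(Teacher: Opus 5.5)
Your proof is correct and is essentially the paper's argument. The paper's proof is only the citation of \cref{conservinjfrms}; you fill in the details it leaves implicit, namely that $\emptyset\to\mb{1}$ and $\mb{1}$ are coidempotent in $\mcv$ (because $\otimes$ preserves $\emptyset$ in each variable), that $F$ sends them to the two elements of $\cidem(\mcw)$, and that they are distinct because $F$ detects $\emptyset_\mcw\not\simeq\mb{1}_\mcw$.
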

\begin{proof}
	It follows from  \cref{conservinjfrms}.
\end{proof}

\begin{prop}\label{catfieldsTC}
	Let $\mcv \in \calg(\prl)$ be a smashing field. Then it satisfies the telescope conjecture.
\end{prop}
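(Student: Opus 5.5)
The plan is to check directly that both elements of $\cidem(\mcv)=\{\emptyset,\mb{1}\}$ lie in the subframe $\cidem_f(\mcv)$. Then the inclusion $\cidem_f(\mcv)\hookrightarrow\cidem(\mcv)$ is surjective, and being an embedding of posets, it is an equivalence. By the remark following \cref{atsmashingideals}, it suffices to show that the two smashing ideals $\langle\emptyset\rangle$ and $\langle\mb{1}\rangle$ are atomic, i.e. generated as localizing ideals by dualizable objects.

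For the top element $\mb{1}$, the associated smashing ideal is $\langle\mb{1}\rangle=\mcv$ itself. It is generated by the unit $\mb{1}_\mcv$, which is dualizable (indeed $\mcv^{\op{at}}=\mcv^d$ and $\mb{1}$ is self-dual). Hence this ideal is atomic.

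For the bottom element $\emptyset\to\mb{1}$, the associated smashing ideal $\langle\emptyset\rangle$ is the smallest localizing ideal containing the initial object. Since the tensor product preserves the initial object in each variable, $v\otimes\emptyset\simeq\emptyset$, so this ideal is the full subcategory spanned by initial objects. This is the localizing ideal generated by the empty set of objects: it is the colimit closure of the empty set, since the empty colimit is $\emptyset$. Thus it is generated by the vacuous set $S=\varnothing\subset\mcv^d$.

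The only subtle point is this bottom case. In the definition, atomic generation means that $\mcm$ is the smallest $\mcv$-submodule closed under colimits containing its atomics. For $\mci\simeq\{\emptyset\}\simeq *$ (the terminal presentable category), this condition holds trivially: any colimit-closed submodule contains the empty colimit, which is the whole of $\mci$. I will make this explicit so that the unpointed footnote in \cref{atsmfrmprod} ($\emptyset\in\mcs$ not dualizable) causes no issue. We do not need $\emptyset$ to be atomic, only the vacuous generation. Equivalently, the least element of any subframe is the empty join, so $\emptyset\in\cidem_f(\mcv)$ automatically because $\cidem_f(\mcv)$ is a subframe by \cref{atsmashingideals}. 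This gives the cleanest argument and is the one I would write.
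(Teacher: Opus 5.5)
Your proposal is correct and is essentially the paper's argument: the proof there just observes that both $\emptyset$ and $\mb{1}$ lie in $\cidem_f(\mcv)$, with the zero ideal atomically generated vacuously and the unit ideal generated by the dualizable tensor unit. Your alternative observation, that the bottom element lies in $\cidem_f(\mcv)$ automatically because a subframe is closed under the empty join, is a valid shortcut.
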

\begin{proof}
	It suffices to observe that both ${\emptyset,\mb{1}}$ lie in $\cidem_f(\mcv)$.
\end{proof}
\begin{rem}
	If $\mcv\in\calg(\prl)$ is a smashing field, then for any connected anima $X$, the \infcat $\funct(X,\mcv)$ is a smashing field too, by the same argument as \cref{ex:borel-spectra}.
\end{rem}
\begin{ex}\label{smfieldex}
	\begin{enumerate}
		\item For any field $k$, the derived category $\mcd(k)$ is a smashing field by \cite{neeman1992chromatic}.
	\item The \infcat $\opsp_{K(n)}$ of $K(n)$-local spectra is a smashing field (see \cite[Theorem 7.5]{hovey1999morava}).
	\item The \infcat $\mcd(\mathbb{Z})_{p\text{-}\op{cpl}}$ of $p$-complete complexes is a smashing field, because the functor $$\mcd(\mathbb{Z})_{p\text{-}\op{cpl}}\xrightarrow{-\otimes^L \mathbb{F}_p} \mcd(\mathbb{F}_p)$$ satisfies the condition in \cref{consercatfield}.
	\item (Counterexample) Note that, however, the \infcat $\sppcpl$ of $p$-complete spectra is \emph{not} a smashing field, because $$\sppcpl\to \opsp_{K(1)}$$ is a non-trivial smashing localization, due to the following diagram
	$$\begin{tikzcd}
		\sppcpl \arrow[r] \arrow[d] & \opsp_{(p)} \arrow[r] \arrow[d] & \opsp_{\mathbb{Q}} \arrow[d, "\sim"] \\
		\opsp_{K(1)} \arrow[r]      & L_1 \opsp \arrow[r]           & L_0 \opsp  .                       
	\end{tikzcd}$$
	\item The \infcat 	\(\op{Sp}_{\mathbb F_p}
	\)
	of $\mathbb F_p$-local spectra is a smashing field (see \cite[Proposition 6.2]{MR3374070}).
	\item 	The \infcat \(
	\op{Sp}_{I_p}
	\) 	of $I_p$-local spectra is a smashing field (see \cite[Proposition 6.6]{MR3374070}). Here $I_p$ denotes the $p$-local Brown--Comenetz dual characterized by the natural equivalences
	\[
	\pi_n\unmap_{\op{Sp}}(X,I_p)
	\simeq
	\op{Hom}_{\op{Ab}}
	\bigl(\pi_{-n}X,\mathbb Q/\mathbb Z_{(p)}\bigr).
	\]
	\item  We will see that the \infcat of connective spectra \spgeq is a smashing field in \cref{noethertelescope}.
	\end{enumerate}
	
\end{ex}

For Cartesian monoidal categories, the smashing frame is especially simple: by \cref{cartesiancidem}, it is identified with the frame of subterminal objects. This leads to the following characterization of the telescope conjecture for an $\infty$-topos.
\begin{thm}\label{tctopos}
Let \(\mcx\in\calg(\prl)\) be presentably Cartesian symmetric monoidal
and nontrivial. Then
\[
\cidem_f(\mcx)
=
\{\emptyset_\mcx,*\}.
\]
Consequently, the following conditions are equivalent:
	\enu{\item $\mcx$ satisfies the telescope conjecture.
		\item $\mcx$ is a smashing field.
		\item $\mcx_{\leq-1}$ consists of only two elements.
	}
\end{thm}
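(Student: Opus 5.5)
The plan is to identify the atomic smashing ideals of $\mcx$ directly, using that in a Cartesian monoidal category the dualizable objects are very restricted. By \cref{cartesiancidem}, $\cidem(\mcx)\simeq\mcx_{\leq-1}$, and a subterminal $U$ corresponds to the smashing ideal $\langle U\rangle\simeq\mcx_{/U}$, embedded via $j_!$. Since $\mcv^{\op{at}}=\mcv^d$, an atomic smashing ideal is a smashing ideal generated as a localizing ideal by dualizable objects of $\mcx$. So the whole theorem reduces to classifying the dualizable objects of $\mcx$.

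\textbf{Step 1: dualizable objects.} In a Cartesian monoidal category, an object $x$ is dualizable if and only if $x\simeq *$. The argument is the standard one. The tensor unit is $*$, and the evaluation $x^\vee\times x\to *$ is the unique map. The zigzag identity says that $x\to x\times x^\vee\times x\to x$ is the identity. This composite is $(\mathrm{coev},\mathrm{id})$ followed by projection to the last factor, and that projection forgets the coevaluation. Running the zigzag in the other order, the composite
\[
x^\vee\xrightarrow{(\mathrm{id},\mathrm{coev})} x^\vee\times x\times x^\vee\to x^\vee
\]
is the identity. This exhibits $\mathrm{coev}\colon *\to x\times x^\vee$ as providing sections. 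More cleanly, I would use naturality: for any $y$, $\Map(y,x)\simeq\Map(y\times x^\vee,*)\simeq *$. So $x$ is terminal, and dually $x^\vee\simeq *$.

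The case to worry about is the initial object. We have $\emptyset\times\emptyset\simeq\emptyset$, but $\emptyset$ is dualizable only if $\mcx$ is trivial, since $\Map(*,\emptyset)\simeq\Map(*\times\emptyset^\vee,*)=*$ forces $*\to\emptyset$ and hence $*\simeq\emptyset$. Nontriviality excludes this. Thus $\mcx^d=\{*\}$, or $\mcx^d$ is empty up to the zero case, which does not arise.

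\textbf{Step 2: atomic ideals.} An atomic smashing ideal is therefore generated by some subset of $\{*\}$. The empty set generates the zero ideal, which is $\mcx_{/\emptyset}\simeq\{\emptyset\}$ and corresponds to $\emptyset_\mcx\in\mcx_{\leq-1}$; it counts as atomic vacuously. The set $\{*\}$ generates all of $\mcx$, corresponding to $*$. Hence $\cidem_f(\mcx)=\{\emptyset_\mcx,*\}$, and these two elements are distinct by nontriviality.

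\textbf{Step 3: the equivalences.} Condition (1) says $\cidem_f(\mcx)=\cidem(\mcx)$, that is, $\cidem(\mcx)=\{\emptyset,*\}$, which is the definition of (2). Via \cref{cartesiancidem}, this is in turn equivalent to $\mcx_{\leq-1}$ having exactly two elements, which is (3). For (2) to make sense I also need $\mcx\in\calg(\Cat_\infty^\emptyset)$, meaning the product preserves $\emptyset$ in each variable; this holds because $-\times y$ preserves colimits by presentability.

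The main obstacle is Step 1, namely making the dualizability computation rigorous in the $\infty$-categorical setting. I will use the adjunction $\Map(y\otimes x,z)\simeq\Map(y,x^\vee\otimes z)$, taking $z=*$ and then swapping the roles of $x$ and $x^\vee$. This gives $\Map(y,x)\simeq\Map(y\times x^\vee,*)\simeq *$ for all $y$, so $x$ is terminal, and it avoids any diagram chasing.
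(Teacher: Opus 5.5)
Your proof is correct. Steps 2 and 3 match the paper, but your Step 1 takes a different route to the key fact $\mcx^d=\{*\}$.

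The paper argues through the triangle identity. In the Cartesian setting, $\mathrm{ev}$ is the unique map to $*$, and $\mathrm{coev}$ amounts to a pair of global points $a\colon *\to A$ and $a'\colon *\to A^\vee$. The zigzag $A\to A\times A^\vee\times A\to A$ is therefore constant at $a$, so $\id_A$ factors through $*$ and $A$ is a retract of the terminal object.

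Your argument, $\Map(y,x)\simeq\Map(y\times x^\vee,*)\simeq *$, replaces this diagram chase with a one-line Yoneda argument. It needs no pointwise reading of the composite. It also uses only that the unit is terminal, so it proves more: in any symmetric monoidal $\infty$-category with terminal unit, the unit is the only dualizable object.

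Your first sketch of the zigzag, which you then abandon, is backwards. The map $\id_x\times\mathrm{ev}$ collapses the last two factors, so the composite is projection onto the \emph{first} factor. It does not forget $\mathrm{coev}$; that is exactly why the composite is constant, which is the paper's argument.

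Your separate treatment of $\emptyset$ is redundant, since it is a special case of $\mcx^d=\{*\}$. Drop the remark that $\mcx^d$ might be empty, because $*$ is always dualizable.
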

\begin{proof}
	By \cref{cartesiancidem},
	\[
	\cidem(\mcx)\simeq\mcx_{\leq-1}.
	\]
	We first compute the atomic smashing frame. We claim that the only dualizable
	object of \(\mcx\) is the unit;
	then the atomic smashing frame $$\cidem_f(\mcx)\simeq\{\emptyset_\mcx,*\}$$ consists of only two objects. 
	
	Suppose \(A \in \mcx\) is a dualizable object with dual \(A^\vee\). By definition, there exist maps
	\[
	\operatorname{ev}\colon A^\vee \times A \to * \qquad \text{and} \qquad \operatorname{coev}\colon * \to A \times A^\vee
	\]
	satisfying the triangle identity, i.e., the composite
	\[
	A \simeq * \times A \xrightarrow{\ \operatorname{coev} \times \operatorname{id}_A\ } A \times A^\vee \times A \xrightarrow{\ \operatorname{id}_A \times \operatorname{ev}\ } A \times * \simeq A
	\]
	is the identity \(\operatorname{id}_A\).
	We now analyze these maps in the Cartesian setting. Since \(*\) is terminal, the evaluation map \(\operatorname{ev}\colon A^\vee \times A \to *\) is uniquely determined. The coevaluation map \(\operatorname{coev}\colon * \to A \times A^\vee\) is equivalently a global point of \(A \times A^\vee\). By the universal property of the product, this is the same as specifying a pair of global points
	\[
	a\colon * \to A \qquad \text{and} \qquad a'\colon * \to A^\vee.
	\]
	Substituting these observations into the triangle identity, we interpret the composite geometrically as follows: for any \(x \in A\),
	\[
	x \longmapsto (*,x) \longmapsto (a,a',x) \longmapsto (a,*) \simeq a.
	\]
	Hence, the composite required to be \(\operatorname{id}_A\) is in fact a constant map with value \(a\). Therefore, the identity \(\operatorname{id}_A\) factors through the terminal object:
	\[
	\operatorname{id}_A \colon A \to * \xrightarrow{\ a\ } A.
	\]
	So \(A\) is a retract of \(*\).  Therefore,
	$
	A \simeq *.
	$
	
	Consequently
	\[
	\cidem_f(\mcx)=\{\emptyset_\mcx,*\}.
	\]
	The three stated conditions are now immediately equivalent.
\end{proof}
\begin{cor}
The Cartesian \syminfcats $\mcs$ and $\mcs_{\leq n}$, corresponding to animas and $n$-truncated animas respectively,  are smashing fields and hence satisfy the telescope conjecture for all $n \geq -1$. 
\end{cor}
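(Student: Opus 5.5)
The plan is to deduce the corollary directly from \cref{tctopos}, applied to $\mcx=\mcs$ and $\mcx=\mcs_{\leq n}$ for $n\geq -1$. For this we need three facts about each such $\mcx$. It must be presentable. Its Cartesian product must preserve colimits separately in each variable. It must be nontrivial. After that we compute $\mcx_{\leq -1}$ and check that it has exactly two elements.

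\emph{Presentability and the monoidal structure.} $\mcs$ is an $\infty$-topos, so its product preserves colimits in each variable. For $n\geq 0$, $\mcs_{\leq n}$ is an $(n+1)$-topos. Its inclusion into $\mcs$ has a left adjoint $\tau_{\leq n}$, which preserves finite products. Hence the product on $\mcs_{\leq n}$, computed as in $\mcs$, preserves colimits in each variable: colimits in $\mcs_{\leq n}$ are $\tau_{\leq n}$ of colimits in $\mcs$, and $\tau_{\leq n}(X\times Y)\simeq \tau_{\leq n}X\times\tau_{\leq n}Y$. For $n=-1$, $\mcs_{\leq -1}=\{\emptyset,*\}$ is the presentable poset $\{0<1\}$ with product given by meet. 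Meet distributes over joins, so this case is also covered. Nontriviality holds in every case because $\emptyset\neq *$.

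\emph{Subterminal objects.} By \cref{cartesiancidem}, $\cidem(\mcx)\simeq\mcx_{\leq -1}$, the poset of $(-1)$-truncated objects, i.e.\ the subterminal objects. In $\mcs$ a $(-1)$-truncated anima is either empty or contractible. For $n\geq -1$, the $(-1)$-truncated objects of $\mcs_{\leq n}$ are the same as those of $\mcs$, since $(-1)$-truncated animas are already $n$-truncated. So $\mcx_{\leq -1}=\{\emptyset,*\}$ in every case.

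\emph{Conclusion.} Condition (3) of \cref{tctopos} therefore holds, and the theorem gives that each $\mcx$ is a smashing field and satisfies the telescope conjecture. For smashing fields one could alternatively cite \cref{catfieldsTC}.

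The only mildly delicate point is the $n=-1$ case (and in general the $\mcs_{\leq n}$ cases): one must make sure \cref{tctopos} applies, since it only requires a nontrivial presentably Cartesian symmetric monoidal category and not an $\infty$-topos. The verifications above handle this.
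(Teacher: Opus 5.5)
Your proposal is correct and is exactly the argument the paper intends: the corollary is stated as an immediate consequence of \cref{tctopos}, using that the subterminal objects of $\mcs$ and of $\mcs_{\leq n}$ are just $\emptyset$ and $*$. Your extra checks of the hypotheses (presentability, the product preserving colimits in each variable, and nontriviality, which is exactly what excludes $n=-2$) fill in details the paper leaves implicit.
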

\begin{rem}
	Note that when $n=-2$, $\mcs_{\leq n}=*$  satisfies the telescope conjecture trivially, though it is not a smashing field.
\end{rem}
We now show that the symmetric monoidal $\infty$-categories of pointed animas and of $\mathbb{E}_\infty$-animas are also smashing fields and hence satisfy the telescope conjecture. The proofs use a simple criterion: a coidempotent object admitting a section of its counit must already be trivial:
\begin{lem}\label{retractcidem}
	Let $\mathcal{D}$ be an $\mathbb{E}_1$-monoidal $\infty$-category with unit object $\mathbf{1}$. Let $X \in \mathcal{D}$ be a coidempotent object, meaning it is equipped with a morphism $f: X \to \mathbf{1}$ such that the natural morphisms induced by the unit constraints,
	\[ f \otimes \id_X: X \otimes X \xrightarrow{\sim} \mathbf{1} \otimes X \simeq X \text{\quad and\quad} \id_X \otimes f: X \otimes X \xrightarrow{\sim} X \otimes \mathbf{1} \simeq X, \]
	are equivalences in $\mathcal{D}$. 
	If there exists a section $g: \mathbf{1} \to X$ such that $f \circ g \simeq \id_{\mathbf{1}}$, then $f$ is an equivalence.
\end{lem}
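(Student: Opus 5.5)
The plan is to show that $f$ has a two-sided inverse, and the natural candidate is the section $g$ itself. The relation $f\circ g\simeq\id_{\mathbf 1}$ holds by hypothesis, so everything reduces to proving $g\circ f\simeq\id_X$. The idea is to use the coidempotence equivalence $\id_X\otimes f\colon X\otimes X\to X$, together with naturality of the unit constraints, to move $f$ and $g$ across the tensor product.

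First compute the composite
\[
X\simeq X\otimes\mathbf 1\xrightarrow{\id_X\otimes g}X\otimes X\xrightarrow{\id_X\otimes f}X\otimes\mathbf 1\simeq X .
\]
By functoriality of $X\otimes-$ it equals $\id_X\otimes(f\circ g)\simeq\id_X$. Since $\id_X\otimes f$ is an equivalence, its inverse must therefore be $\id_X\otimes g$ (under the unit identification). In particular $\id_X\otimes g$ is an equivalence inverse to $\id_X\otimes f$.

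Next compare $g\circ f$ with this inverse. Bifunctoriality of $\otimes$ gives two factorizations of $f\otimes g\colon X\otimes\mathbf 1\to\mathbf 1\otimes X$:
\[
(\id_{\mathbf 1}\otimes g)\circ(f\otimes\id_{\mathbf 1})\simeq(f\otimes\id_X)\circ(\id_X\otimes g).
\]
Transporting the left-hand side along the unit constraints, it becomes $g\circ f\colon X\to X$. On the right-hand side, $\id_X\otimes g$ is the inverse of $\id_X\otimes f$ by the previous step. So under the identifications $X\otimes\mathbf 1\simeq X\simeq\mathbf 1\otimes X$ we get
\[
g\circ f\simeq(f\otimes\id_X)\circ(\id_X\otimes f)^{-1}.
\]
This is a composite of two equivalences, using both coidempotence hypotheses, so $g\circ f$ is an equivalence.

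This last step is not yet what we want: we need $g\circ f\simeq\id_X$, not just that $g\circ f$ is invertible. There are two ways to finish.

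The cheap way avoids identifying $g\circ f$ at all. Let $e:=g\circ f$. Then
\[
e\circ e=g\circ(f\circ g)\circ f\simeq e,
\]
so $e$ is an idempotent endomorphism. An idempotent that is also an equivalence is the identity, since $e=e^{-1}\circ e\circ e\simeq e^{-1}\circ e=\id_X$. For the homotopy-level conclusion "$f$ is an equivalence" this is enough: $f$ has the two-sided homotopy inverse $g$, with $f\circ g\simeq\id_{\mathbf 1}$ and $g\circ f\simeq\id_X$.

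The second way is to prove directly that $(f\otimes\id_X)\circ(\id_X\otimes f)^{-1}$ is homotopic to the identity, i.e. that the two maps $X\otimes X\to X$ agree. That would require extra coherence input, such as symmetry or commutativity of the coalgebra structure. The idempotent trick removes the need for it, so that is the route to take.

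The main thing to check carefully is the naturality square for $f\otimes g$ in an $\mathbb E_1$-monoidal setting. One must verify that the unit isomorphisms $X\otimes\mathbf 1\simeq X$ and $\mathbf 1\otimes X\simeq X$ transport $(\id_{\mathbf 1}\otimes g)\circ(f\otimes\id_{\mathbf 1})$ to $g\circ f$. This follows from naturality of the left and right unitors with respect to $f$ and $g$, together with the coherence $\lambda_{\mathbf 1}=\rho_{\mathbf 1}$.
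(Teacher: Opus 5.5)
Your proof is correct and is essentially the paper's argument with left and right swapped. The paper shows that $g\otimes\id_X$ is inverse to the equivalence $f\otimes\id_X$, identifies $g\circ f$ with $(\id_X\otimes f)\circ(g\otimes\id_X)$ by bifunctoriality, and then uses the same fact you do: the invertible idempotent $g\circ f$ must be $\id_X$. Your aside that the direct route would need extra coherence is unnecessary, since $\lambda_X\circ(f\otimes\id_X)\simeq\rho_X\circ(\id_X\otimes f)$ follows a posteriori from $g\circ f\simeq\id_X$, but your argument does not depend on it.
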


\begin{proof}
	Since $f \circ g \simeq \id_{\mathbf{1}}$, applying the functor $(-) \otimes X$ yields:
	\[ (f \circ g) \otimes \id_X \simeq \id_{\mathbf{1}} \otimes \id_X \simeq \id_X. \]
	By the bifunctoriality (interchange law) of the tensor product, we can rewrite the left-hand side as:
	\[ (f \otimes \id_X) \circ (g \otimes \id_X) \simeq \id_X. \]
	By the coidempotent hypothesis, $f \otimes \id_X$ is an equivalence. The equation above implies that $g \otimes \id_X$ is its right inverse, and therefore $g \otimes \id_X$ is also an equivalence.
	
	Now consider the endomorphism $h = g \circ f : X \to X$ and the following diagram:
	$$\begin{tikzcd}
		\mathbf{1} \otimes X \arrow[d, "\id_{\mathbf{1}} \otimes f"] \arrow[r, "g \otimes \id_X"] &  X \otimes X \arrow[d, "\id_X \otimes f"] \\
		\mathbf{1} \otimes \mathbf{1} \arrow[r, "g\otimes \id_{\mathbf{1}} "]                     &  X \otimes \mathbf{1}                    
	\end{tikzcd}$$
	We get:
	\[  g \otimes f= (\id_X \otimes f) \circ (g \otimes \id_X)  \simeq g \circ f = h. \]
	Since both $g \otimes \id_X$ and $\id_X \otimes f$ are equivalences, their composition $h$ must be an equivalence.
	Finally, we compute $h^2$:
	\[ h^2 = (g \circ f) \circ (g \circ f) \simeq g \circ (f \circ g) \circ f \simeq g \circ \id_{\mathbf{1}} \circ f \simeq g \circ f = h. \]
	We have an equivalence $h$ satisfying $h^2 \simeq h$. Multiplying both sides by $h^{-1}$ yields $h \simeq \id_X$. 
	Since $g \circ f \simeq \id_X$ and $f \circ g \simeq \id_{\mathbf{1}}$, $f$ and $g$ are mutually inverse equivalences.
\end{proof}
We first apply this criterion to pointed animas.
\begin{ex}\label{sptsmfield}
	The pointed mode $\mcs_*$, i.e. the  \syminfcat of pointed animas, is a smashing field.
\end{ex}

\begin{proof}
	Let $X$ be a coidempotent object in $\mathcal{S}_*$ with structure map $f: X \to S^0$. The anima $S^0$ consists of two contractible connected components: the basepoint component $\{0\}$ and the non-basepoint component $\{1\}$. We proceed by analyzing the image of $X$ under $f$.
	
	Assume that the map $f$ does not hit the component $\{1\}$.
	In this case, the essential image of $f$ is contained entirely in the basepoint component $\{0\} \simeq *$. Consequently, $f$ factors through the point and is therefore nullhomotopic. 
	By the definition of a coidempotent object, the following composition is an equivalence:
	\[  X \wedge X \xrightarrow{\id_X \wedge f} X \wedge S^0 \simeq X. \]
	Since $f$ factors through $*$, the morphism $\id_X \wedge f$ must factor through $X \wedge * \simeq *$. This implies that the equivalence $X \xrightarrow{\sim} X$ factors through the zero object $*$, which immediately forces $X \simeq *$.
	
	Assume that the map $f$ hits the component $\{1\}$.
	Suppose there exists a point $x \in X$ such that $f(x)$ lies in the component $\{1\}$. We can construct a pointed map $g: S^0 \to X$ by mapping the basepoint $0 \in S^0$ to the basepoint $*_X \in X$, and mapping the non-basepoint $1 \in S^0$ to $x \in X$.
 Therefore, $f \circ g \simeq \id_{S^0}$ is a section.
	Applying \cref{retractcidem} directly yields $X \simeq S^0$.
	
	Combining both cases, we conclude that any coidempotent object in $\mathcal{S}_*$ must be equivalent to either $*$ or $S^0$.
\end{proof}
The next example gives a discrete algebraic model which will be used in the $\mathbb{E}_\infty$-anima case.
\begin{ex}\label{cmonsetsmfield}
	Let $\cmon(\mathrm{Set})$ be the (1-)category of commutative monoids with the symmetric monoidal structure given by tensor product. Then $\cmon(\mathrm{Set})$ is a smashing field.
\end{ex}

\begin{proof}
	The unit in $\cmon(\mathrm{Set})$ is the additive monoid of natural numbers $\mathbb{N}$.  Let  $\epsilon: C \to \mathbb{N}$ be a coidempotent object.  We wish to show that $C$ is either $0$ or the unit $\mathbb{N}$. 
	
	The counit $\epsilon: C \to \mathbb{N}$ admits a unique cocommutative coalgebra structure.
	The inverse of $\Delta$ is given by $f = (\mathrm{id} \otimes \epsilon): C \otimes C \to C$, which acts on pure tensors as $f(x \otimes y) = \epsilon(y)x$ (meaning $x$ added to itself $\epsilon(y)$ times). 
	By the symmetry of the tensor product, we must have $$\epsilon(y)x = \epsilon(x)y$$ for all $x, y \in C$. Let $I = \epsilon(C) \subset \mathbb{N}$ be the image of the counit. Since $\epsilon$ is a homomorphism, $I$ is an additive submonoid of $\mathbb{N}$. Since $f$ is an isomorphism, it is surjective, so any $z \in C$ can be written as $z = \sum \epsilon(y_i)x_i$. Applying $\epsilon$ to both sides yields $\epsilon(z) = \sum \epsilon(y_i)\epsilon(x_i)$. 
	This implies that $I = I \cdot I$.
	However, the only additive submonoids of $\mathbb{N}$ satisfying $I = I \cdot I$ are $0$ and $\mathbb{N}$ itself. 
	
	If $I = 0$, then $\epsilon(y) = 0$ for all $y \in C$. The isomorphism $f$ then yields $x = f(x \otimes e) = 0 \cdot x = 0$, which forces $C \simeq 0$.
	
	If $I = \mathbb{N}$, there exists an element $e \in C$ such that $\epsilon(e) = 1$. Using the symmetry equation, for any $y \in C$, $y = 1 \cdot y = \epsilon(e)y = \epsilon(y)e$. The map $\mathbb{N} \to C$ given by $n \mapsto ne$ is therefore an isomorphism with inverse $\epsilon$. Hence $C \simeq \mathbb{N}$.
\end{proof}

Passing from discrete commutative monoids to $\mathbb{E}_\infty$-animas, we combine the previous calculation on $\pi_0$ with group completion.

\begin{ex}\label{cmonssmfield}
	The (0-)semiadditive mode $\cmon(\mcs)$, i.e. the \syminfcat of \ein-animas, is a smashing field.
\end{ex}

\begin{proof}
	Let $C \in \cmon(\mathcal{S})$ be a coidempotent object with counit $C \to \op{Fin}^\simeq$. We wish to show that $C$ is either $0$ or the unit $\op{Fin}^\simeq$.
	
	Consider the group completion functor $(-)^{\op{gp}}: \cmon(\mathcal{S}) \to \opsp_{\geq 0}$, which is strong symmetric monoidal. The object $E := C^{\op{gp}}$ is therefore a coidempotent object in connective spectra over the sphere spectrum $\mathbb{S}$. Since the functor $\pi_0: \cmon(\mathcal{S}) \to \cmon(\mathrm{Set})$ is also strong symmetric monoidal, $\pi_0 C$ is a coidempotent commutative monoid over $\pi_0(\op{Fin}^\simeq) \simeq \mathbb{N}$. By \cref{cmonsetsmfield}, $\pi_0 C$ is either $0$ or $\mathbb{N}$.
	
	Assume that $\pi_0 C \simeq 0$.
	In this case, $C$ is a connected $\mathbb{E}_\infty$-anima and hence has been group-like, i.e. $$C \xrightarrow{\sim} C^{\op{gp}}=E.$$ Furthermore, $\pi_0(E) = (\pi_0 C)^{\op{gp}} = 0$. 
	By \cref{noethertelescope}, $E$ is either $0$ or $\mathbb{S}$, so we must have $C\simeq E=0$.
	
	Assume that $\pi_0 C \simeq \mathbb{N}$. 
Recall that the unit object $\op{Fin}^\simeq \simeq \coprod_{n \geq 0} B\Sigma_n$ is the free $\mathbb{E}_\infty$-anima generated by a single point $* \simeq B\Sigma_1$. By the universal property, specifying an $\mathbb{E}_\infty$-morphism from $\op{Fin}^\simeq$ to $C$ is equivalent to choosing a point in the underlying anima of $C$.
Since $\pi_0(f)$ is an isomorphism, there exists a unique connected component $C_1 \subset C$ that maps to the component $(\op{Fin}^\simeq)_1 \simeq B\Sigma_1 \simeq *$ under $f$. We choose an arbitrary point $x: * \to C_1$. 
By the free universal property, the point $x$ extends to an $\mathbb{E}_\infty$-morphism $g: \op{Fin}^\simeq \to C$. 
 Since $f(x)$ lies in the contractible anima $(\op{Fin}^\simeq)_1$, $f(x)$ is homotopic to the standard generator of $\op{Fin}^\simeq$ and the universal property indicates that $f \circ g \simeq \id_{\op{Fin}^\simeq}$. Thus, $g$ is a section of $f$.
Since $C$ is a coidempotent object in the monoidal $\infty$-category $\cmon(\mathcal{S})$ and $f$ admits a section $g$, \cref{retractcidem} immediately implies that $f: C \xrightarrow{} \op{Fin}^\simeq$ is an equivalence.
\end{proof}
These examples suggest that higher semiadditive modes may also behave like smashing fields. We leave the following question open.
\begin{qu}
	Let $m\geq-2$ be an integer. Is the $m$-semiadditive mode $\cmon_m(\mcs)$ appearing in \cite{Harpaz_2020} a smashing field? (We have shown that it holds when $m\in[-2,0]$.)
\end{qu}

\section{Recollement and Gluing of Smashing Frames}
\label{secrecollement}

In both the stable and prestable settings, a smashing ideal admits a
complementary quotient, and the smashing frame of the ambient category can be
reconstructed from the smashing frames of these two pieces.
The resulting structure is naturally described by Artin gluing.

\subsection{Smashing quotients in the prestable setting}
The stable and prestable cases are formally very similar, but there is one
important distinction. In the stable setting, smashing ideals and smashing
localizations are related by the usual fiber--cofiber correspondence. In the
prestable setting, the corresponding quotients are instead the
\emph{connective smashing localizations}. We begin by recalling this
prestable analogue.

\begin{de}
	Let \(\mcc\in\calg(\prlad)\) be prestable. We define
	\[
	\calg(\mcc)^{\op{cn\text{-}idem}}
	\subset
	\calg(\mcc)^{\op{idem}}
	\]
	to be the full subcategory spanned by those idempotent algebras
	\(\mb{1}\to R\) for which
	$
	\pi_0\mb{1}\longrightarrow\pi_0R
	$
	is an epimorphism in \(\mcc^\heartsuit\).
\end{de}

The following lemma identifies these connective idempotents with
coidempotent objects.

\begin{lem}\label{connloc}
	Let \(\mcc\in\calg(\prlad)\) be prestable. Then there are natural
	equivalences
	\[
	\cidem(\mcc)
	\simeq
	\Idem(\mcc)^{\op{cn}}
	\simeq
	\calg(\mcc)^{\op{cn\text{-}idem}},
	\]
	where \(\Idem(\mcc)^{\op{cn}}\) denotes the full subposet of
	idempotent objects
	$
	\mb{1}\longrightarrow R
	$
	whose induced morphism on \(\pi_0\) is an epimorphism.
\end{lem}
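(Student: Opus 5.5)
The plan is to pass through the stabilization $\mcc\hookrightarrow\opsp(\mcc)$. There, coidempotents and idempotents correspond via fiber/cofiber sequences (\cref{stablecidemidem}). Then cut down to the connective part using the $t$-structure.

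\textbf{Step 1: from a coidempotent to an idempotent.} Since $\mcc$ is presentable, additive and prestable, and the tensor product preserves colimits in each variable, $\opsp(\mcc)=\mcc\otimes\opsp$ inherits a presentably symmetric monoidal structure. In this structure $\mcc=\opsp(\mcc)_{\geq0}$ is closed under tensor products and contains the unit, and $\Sigma^\infty$ is symmetric monoidal and fully faithful.

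Given $c\colon C\to\mb{1}$ in $\cidem(\mcc)$, its image is coidempotent in $\opsp(\mcc)$. Put $R:=\cofib(c)$ in $\opsp(\mcc)$. Then $\mb{1}\to R$ is idempotent, because $C\otimes R\simeq0$ gives $R\otimes R\simeq R$. Two facts give $R\in\mcc$ and the $\pi_0$-epimorphism:
\begin{itemize}
\item $C$ and $\mb{1}$ are connective, so $R$ is connective.
\item The long exact sequence gives $\pi_0\mb{1}\to\pi_0R\to\pi_{-1}C=0$, so $\pi_0\mb{1}\to\pi_0R$ is an epimorphism in $\mcc^\heartsuit$.
\end{itemize}
Note that in $\mcc$ itself the cofiber of $c$ is computed as in $\opsp(\mcc)$, since $\mcc$ is closed under colimits.

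\textbf{Step 2: the converse.} Given an idempotent $\mb{1}\to R$ in $\mcc$ with $\pi_0$-epimorphic unit, let $C:=\fib(\mb{1}\to R)$ in $\opsp(\mcc)$. This is coidempotent in $\opsp(\mcc)$, and $\pi_{-1}C=\coker(\pi_0\mb{1}\to\pi_0R)=0$, so $C\in\mcc$. The fully faithful symmetric monoidal inclusion detects the equivalence $C\otimes C\to C$, so $C$ is coidempotent in $\mcc$.

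These two constructions are inverse order-preserving bijections, because the fiber/cofiber equivalence of posets in the stable case restricts to them. This yields $\cidem(\mcc)\simeq\Idem(\mcc)^{\op{cn}}$.

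\textbf{Step 3: the second equivalence.} The equivalence $\Idem(\mcc)\simeq\calg(\mcc)^{\op{idem}}$ of \cite[Proposition 4.8.2.9]{ha} holds for any symmetric monoidal $\infty$-category, not only stable ones. Restricting it along the $\pi_0$-epimorphism condition, which depends only on the underlying map $\mb{1}\to R$, gives $\Idem(\mcc)^{\op{cn}}\simeq\calg(\mcc)^{\op{cn\text{-}idem}}$.

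\textbf{Main obstacle.} The delicate point is Step 1's setup: justifying that $\opsp(\mcc)$ carries a presentably symmetric monoidal structure for which $\mcc$ is the connective part of a $t$-structure closed under $\otimes$. I would cite the standard result that for $\mcc\in\calg(\prl)$ prestable, $\mcc\otimes\opsp$ is a symmetric monoidal stabilization with $\Sigma^\infty$ fully faithful (Lurie, SAG C.3). The rest consists of short homotopy-group computations, and naturality in $\mcc$ follows from functoriality of stabilization.
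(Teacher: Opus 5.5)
Your proposal is correct and is essentially the paper's proof: pass to $\opsp(\mcc)$, restrict the stable fiber/cofiber equivalence $\cidem(\opsp(\mcc))\simeq\Idem(\opsp(\mcc))$ by checking connectivity and the $\pi_0$-epimorphism condition via the long exact sequence, and then invoke \cite[Proposition 4.8.2.9]{ha} for the algebra version. The one step you should state explicitly is that $\fib(\mb{1}\to R)$ lies in $\opsp(\mcc)_{\geq -1}$, being the fiber of a map between connective objects, so the vanishing of its $\pi_{-1}$ does place it in $\mcc$.
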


\begin{proof}
	Passing to the stabilization gives
	\[
	\cidem(\opsp(\mcc))
	\simeq
	\Idem(\opsp(\mcc))
	\simeq
	\calg(\opsp(\mcc))^{\op{idem}}.
	\]
	Let
	$
	x\longrightarrow\mb{1}
	$
	be a coidempotent object of \(\mcc\). Its cofiber in
	\(\opsp(\mcc)\),
	$
	\mb{1}\longrightarrow R,
	$
	is idempotent. Conversely, for an idempotent object
	\(\mb{1}\to R\) lying in \(\mcc\), its fiber lies in \(\mcc\) precisely
	when
	$
	\pi_0\mb{1}\longrightarrow\pi_0R
	$
	is an epimorphism. Thus the stable equivalence between coidempotents and
	idempotents restricts to
	\[
	\cidem(\mcc)
	\simeq
	\Idem(\mcc)^{\op{cn}}.
	\]
	The second equivalence follows from the usual identification of
	idempotent objects with idempotent commutative algebras.
\end{proof}
This motivates the following terminology for smashing localizations of prestable $\infty$-categories (cf. \cite[Definition 3.16]{levy-liang-sosnilo:dualizable}).
\begin{de}
	Let \(\mcc\in\calg(\prlad)\) be prestable. A smashing localization
	$
	L:\mcc\longrightarrow\mcd
	$
	is called a \textbf{connective smashing localization} if, for every
	\(x\in\mcc\), the unit morphism
	\[
	x\longrightarrow L^RL(x)
	\]
	induces an epimorphism on \(\pi_0\).
\end{de}

Thus connective smashing localizations are exactly the prestable
localizations classified by the idempotents of \cref{connloc}.

\begin{cor}\label{connlocandclosub}
	Let \(\mcc\in\calg(\prlad)\) be prestable. Taking kernel and quotient
	induces mutually inverse equivalences of posets
	\[
	\begin{tikzcd}[column sep=large]
		\{\text{smashing ideals of }\mcc\}
		\arrow[r, shift left, "\mci\mapsto \mcc/\mci"]
		&
		\{\text{connective smashing localizations out of }\mcc\}
		\arrow[l, shift left, "\ker"]
	\end{tikzcd}.
	\]
	Equivalently, there is a commutative diagram of equivalences
	\[
	\begin{tikzcd}
		\{\text{smashing ideals of }\mcc\}
		\arrow[r, "\sim"]
		\arrow[d, "\sim"']
		&
		\{\text{connective smashing localizations of }\mcc\}
		\arrow[d, "\sim"]
		\\
		\cidem(\mcc)
		\arrow[r, "\cofib"', "\sim"]
		&
		\Idem(\mcc)^{\op{cn}}.
	\end{tikzcd}
	\]
\end{cor}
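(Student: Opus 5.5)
The plan is to build the horizontal equivalence by composing three correspondences: smashing ideals with coidempotents (\cref{smideal}), coidempotents with connective idempotents (\cref{connloc}), and connective idempotents with connective smashing localizations. The only new work is to identify the kernel of the localization attached to a coidempotent with the smashing ideal that coidempotent generates.

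Concretely, fix a smashing ideal $i:\mci\hookrightarrow\mcc$ with coidempotent $x\to\mb{1}$, so $\mci=\langle x\rangle$ and $i i^R\simeq x\otimes-$. Let $\mb{1}\to R$ be the cofiber of $x\to\mb{1}$ in $\opsp(\mcc)$. By \cref{connloc}, $R$ lies in $\mcc$, the map $\pi_0\mb{1}\to\pi_0 R$ is an epimorphism, and $R$ is an idempotent algebra. Set $\mcc/\mci:=\modu_R(\mcc)$ and $L:=R\otimes-$. This is a smashing localization by \cref{smloc} and the usual idempotent-algebra argument. It is connective because, for every $y\in\mcc$, the map $y\to R\otimes y$ has fiber $x\otimes y\in\mcc$. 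Prestability then makes $\pi_0 y\to\pi_0(R\otimes y)$ an epimorphism, since its cokernel injects into $\pi_{-1}(x\otimes y)=0$. The kernel of $L$ consists of those $y$ with $R\otimes y\simeq 0$, that is, with $x\otimes y\simeq y$. This is exactly the essential image of $x\otimes-$, namely $\mci$. Hence $\ker(\mcc\to\mcc/\mci)=\mci$.

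For the inverse direction, start with a connective smashing localization $L:\mcc\to\mcd$ and put $R:=L^RL(\mb{1})$. Then $\mb{1}\to R$ is an idempotent object, smashing gives $L^RL\simeq R\otimes-$, and connectivity says $\pi_0\mb{1}\to\pi_0R$ is epi. So $R\in\Idem(\mcc)^{\op{cn}}$, and \cref{connloc} produces a coidempotent $x=\fib(\mb{1}\to R)\in\mcc$. As in the previous paragraph, $\ker L=\langle x\rangle$, and this is a smashing ideal by \cref{smideal}. Moreover $\mcd\simeq\modu_R(\mcc)$ as localizations of $\mcc$. Both composites are therefore the identity: a localization is determined by its idempotent $L^RL(\mb{1})$, and an ideal is determined by its coidempotent. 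Both assignments are order-preserving, since $\mci\subset\mcj$ exactly when the corresponding localization $\mcc\to\mcc/\mci$ factors through $\mcc\to\mcc/\mcj$. This gives the poset equivalence, and the commutative square follows because every arrow was defined through $\cofib$ and $\fib$.

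The main obstacle is the kernel identification. Specifically, one must show that $R\otimes y\simeq0$ forces $y\in\langle x\rangle$, using only the cofiber sequence $x\otimes y\to y\to R\otimes y$ computed in $\opsp(\mcc)$. This requires that the tensor product on $\mcc$ extends exactly to $\opsp(\mcc)$ and that the inclusion $\mcc\subset\opsp(\mcc)$ is conservative and detects zero, both of which hold for a prestable $\mcc\in\calg(\prlad)$. I would verify these compatibilities first.
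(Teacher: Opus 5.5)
Your proposal is correct and is essentially the paper's argument. Both compose \cref{smideal} with \cref{connloc} and identify $\ker(R\otimes-)$ with $\langle x\rangle$ through the sequence $x\otimes y\to y\to R\otimes y$ in $\opsp(\mcc)$; you write out the connectivity and kernel checks in more detail than the paper does, with one harmless slip: if $\mci\subset\mathcal{J}$, then $\mcc\to\mcc/\mathcal{J}$ factors through $\mcc\to\mcc/\mci$, not the other way around.
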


\begin{proof}
	Let
	$
	i:\mci\hookrightarrow\mcc
	$
	be a smashing ideal, and let
	$
	x\longrightarrow\mb{1}
	$
	be its corresponding coidempotent object. By \cref{connloc}, the
	cofiber
	\[
	\mb{1}\longrightarrow c:=\cofib(x\to\mb{1})
	\]
	is a connective idempotent object. The quotient
	$
	\mcc\longrightarrow\mcc/\mci
	$
	is therefore the associated connective smashing localization, and its
	kernel is \(\mci\).
	
	Conversely, let
	$
	L:\mcc\longrightarrow\mcd
	$
	be a connective smashing localization. Its associated idempotent object
	\(\mb{1}\to R\) belongs to
	\(\Idem(\mcc)^{\op{cn}}\), so by \cref{connloc} its fiber
	\[
	x:=\fib(\mb{1}\to R)\longrightarrow\mb{1}
	\]
	is coidempotent. The corresponding smashing ideal is precisely
	\(\ker(L)\), and quotienting by it recovers \(L\).
\end{proof}

\begin{rem}\label{remshortex1add}
	The preceding statement is the prestable analogue of the familiar
	stable correspondence between smashing ideals and smashing
	localizations. However, \cref{connlocandclosub} fails in the setting of additive $1$-categories. This suggests that smashing ideals exhibit better structural properties in the prestable context.
\end{rem}
\begin{rem}
	Let $\mcc \in \calg(\prlad)$ be a prestable $\infty$-category. Connective smashing localizations of $\mcc$ correspond precisely to normal epimorphisms in $\prcdbl$ (in the sense of \cite{Zariski_Bal}). Furthermore, the kernel of any connective smashing localization forms a bifiber sequence in $\prcdbl$ (see \cite[Section~3]{levy-liang-sosnilo:dualizable}).
\end{rem}
\subsection{Recollement and Artin gluing}

We now treat the stable and prestable cases simultaneously.

\begin{thm}\label{recollementfrms}
	Let \(\mcc\in\calg(\prlad)\) be prestable.
	Let
	$
	\mci\xhookrightarrow{i}\mcc\xrightarrow{L}\mcc/\mci
	$
	be the quotient associated to a smashing ideal
	\(\mci\hookrightarrow\mcc\). Then there are natural equivalences
	\[
	\cidem(\mci)
	\simeq
	\cidem(\mcc)_{\leq\mci},
	\qquad
	\cidem(\mcc/\mci)
	\simeq
	\cidem(\mcc)_{\geq\mci}.
	\]
	Consequently, the canonical morphism
	\[
	\cidem(\mcc)
	\xrightarrow{(i^R,L)}
	\cidem(\mci)\times\cidem(\mcc/\mci)
	\]
	is an embedding of frames.
	
	Moreover, under the preceding identifications,
	\(\cidem(\mcc)\) is obtained by Artin gluing:
	\[
	\cidem(\mcc)
	\simeq
	\cidem(\mci)
	\overleftarrow{\times}_{\!\phi_\mci}
	\cidem(\mcc/\mci),
	\]
	where the gluing functor is explicitly given by
	\[
	\phi_\mci(a)
	=
	\mci\vee(\mci\backslash a),
	\qquad
	a\in\cidem(\mcc)_{\leq\mci}.
	\]
	Here \(\mci\backslash a\) denotes the Heyting implication in the frame
	\(\cidem(\mcc)\).
\end{thm}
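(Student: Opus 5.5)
The plan has four steps: identify the open piece, identify the closed piece, deduce the embedding, and then assemble the Artin gluing.

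\textbf{Open piece.} The equivalence $\cidem(\mci)\simeq\cidem(\mcc)_{\leq\mci}$ is exactly \cref{cidemideal}. The reflection is $a\mapsto a\wedge\mci$, induced by the symmetric monoidal localization $i^R$.

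\textbf{Closed piece.} For the equivalence $\cidem(\mcc/\mci)\simeq\cidem(\mcc)_{\geq\mci}$ I would pass through \cref{connlocandclosub}. It identifies $\cidem(\mcc)$ with connective smashing localizations of $\mcc$, and reverses order in the sense that larger ideals correspond to smaller quotients. Let $\mcc\to\mcc/\mci$ have idempotent $\mb{1}\to R_\mci$. Connective smashing localizations of $\mcc/\mci$ are then the same as connective smashing localizations of $\mcc$ that factor through $\mcc/\mci$. This uses two facts: a composite of smashing localizations is smashing, and $\pi_0$-epimorphic unit maps compose, since $L$ preserves $\pi_0$-epis as a connective localization. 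Factoring through $\mcc/\mci$ means the kernel contains $\mci$. Translating back via \cref{connlocandclosub} for $\mcc/\mci$, whose prestability is inherited from $\mcc$, gives the identification. It sends $b\in\cidem(\mcc)$ to $L(b)$, which corresponds to the ideal $b\vee\mci$. Hence $L$ induces the closed-sublocale map $b\mapsto b\vee\mci$. I expect the main obstacle to be the bookkeeping: checking that $L$ on coidempotents really corresponds to the join $b\vee\mci$, i.e.\ that $\ker(\mcc\to(\mcc/\mci)/L\langle b\rangle)$ is the ideal generated by $\mci$ and $b$. To handle it I would compare with the stabilization, where idempotent algebras satisfy $R_{b\vee\mci}=R_b\otimes R_\mci$. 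Then \cref{connloc} lets me transport this identity to $\mcc$.

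\textbf{Embedding.} Both maps preserve finite meets; for $L$ this uses distributivity of the frame. Joint conservativity is the frame identity: if $a\wedge\mci=b\wedge\mci$ and $a\vee\mci=b\vee\mci$, then $a=b$. This holds in any distributive lattice, so \cref{conserinjectfrms} gives the embedding.

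\textbf{Artin gluing.} For a frame $F$ and an element $u\in F$, the classical result (Johnstone) exhibits $F$ as the Artin gluing of the open sublocale $F_{\leq u}$ and the closed sublocale $F_{\geq u}$ along the functor $a\mapsto u\vee(u\backslash a)$. The map is $x\mapsto(x\wedge u,\,x\vee u)$. Its image is the set of pairs $(a,c)$ with $c\leq u\vee(u\backslash a)$, and the inverse sends $(a,c)$ to $c\wedge(u\backslash a)$. I would verify this directly with Heyting algebra identities, using distributivity to check the inverse. The statement then follows by taking $F=\cidem(\mcc)$ and $u=\mci$, combined with the open and closed identifications above.
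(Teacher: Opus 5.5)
Your proposal is correct and follows essentially the same route as the paper: the open part via \cref{cidemideal}, the closed part via \cref{connlocandclosub} by identifying connective smashing localizations of $\mcc/\mci$ with those of $\mcc$ that factor through $L$, and the gluing via the frame-theoretic decomposition of \cref{frmopencloseddecomp}, which you reprove directly. You also check explicitly that $L$ acts on coidempotents by $b\mapsto b\vee\mci$ (via $R_{b\vee\mci}\simeq R_{b}\otimes R_{\mci}$ in the stabilization), which makes precise a compatibility the paper leaves implicit; one small correction is that for the composite $qL$ to be connective, it is the forgetful functor $L^R$ that must preserve $\pi_0$-epimorphisms (it does, since it preserves colimits and hence cofibers and suspensions), whereas $L$ preserving them is what shows that the induced localization of $\mcc/\mci$ is connective.
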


\begin{proof}
	The first equivalence
	$
	\cidem(\mci)
	\simeq
	\cidem(\mcc)_{\leq\mci}
	$
	holds without any stability assumption by \cref{cidemideal}.
	
	It remains to identify the closed part.	
 By \cref{connlocandclosub}, smashing ideals of \(\mcc/\mci\) correspond
	to connective smashing localizations out of \(\mcc/\mci\).
	Composition with
	\[
	L:\mcc\longrightarrow\mcc/\mci
	\]
	identifies these with connective smashing localizations out of \(\mcc\)
	whose kernel contains \(\mci\). Indeed, any such localization factors
	uniquely through \(L\), and the induced localization of
	\(\mcc/\mci\) is again connective smashing. Hence
	\[
	\cidem(\mcc/\mci)
	\simeq
	\cidem(\mcc)_{\geq\mci}.
	\]
	
	The Artin gluing description now follows formally from
	\cref{frmopencloseddecomp}. Under the identifications
	\[
	\cidem(\mci)\simeq\cidem(\mcc)_{\leq\mci},
	\qquad
	\cidem(\mcc/\mci)\simeq\cidem(\mcc)_{\geq\mci},
	\]
	the gluing functor is precisely
	$
	a\longmapsto
	\mci\vee(\mci\backslash a).
	$
\end{proof}

\begin{rem}
	The theorem gives a precise sense in which a smashing ideal is an
	``open part'' of the smashing spectrum, while its quotient is the
	complementary ``closed part''. Thus a smashing localization sequence
	\[
	\mci\hookrightarrow\mcc\longrightarrow\mcc/\mci
	\]
	induces a recollement of smashing locales.
\end{rem}
\begin{rem}\label{smlocpullback}
	The closed-part equivalence in \cref{recollementfrms} admits a concrete
	description by pullback.
	Let
	$
	L:\mcc\longrightarrow\mcc/\mci
	$
	be the connective smashing localization associated to a smashing ideal
	\(\mci\subset\mcc\). For a smashing ideal
	$
	\mck\hookrightarrow\mcc/\mci,
	$
	let
	$
	q:\mcc/\mci\longrightarrow\mcd
	$
	be the corresponding connective smashing localization, so that
	\(\ker(q)=\mck\). Then the composite
	\[
	qL:\mcc\longrightarrow\mcd
	\]
	is again a connective smashing localization, and hence its kernel is a
	smashing ideal of \(\mcc\). Moreover,
	\[
	\ker(qL)
	=
	\{x\in\mcc\mid L(x)\in\mck\}
	\simeq
	\mck\times_{\mcc/\mci}\mcc.
	\]
	Thus the equivalence
	$
	\cidem(\mcc/\mci)
	\simeq
	\cidem(\mcc)_{\geq\mci}
	$
	is explicitly given by
	$$\begin{tikzcd}[column sep=5em] \cidem(\mcc/\mci) \arrow[r, "\mck\mapsto \mck\times_{\mcc/\mci}\mcc", shift right=-1ex] & \arrow[l,"\sim"', "\mcm/\mci \reflectbox{$\mapsto$}\mcm", shift right=-1ex] \cidem(\mcc)_{\geq \mci} \end{tikzcd}.$$
\end{rem}

The recollement is compatible with arbitrary
symmetric monoidal base change.

\begin{prop}\label{smlocpushoutfrms}
	Let
	$
	F:\mcc\longrightarrow\mcd
	$
	be a morphism in \(\calg(\prlad)\) such that both are prestable, let
	\(\mci\hookrightarrow\mcc\) be a smashing ideal, and set
$
	\mck:=\mci\otimes_\mcc\mcd.
	$
	Then the diagram
	\[
	\begin{tikzcd}
		\mci
		\arrow[r, hook, shift left, "i"]
		\arrow[d]
		&
		\mcc
		\arrow[l, shift left, "i^R"]
		\arrow[r, shift left]
		\arrow[d]
		&
		\mcc/\mci
		\arrow[l, hook', shift left]
		\arrow[d]
		\\
		\mck
		\arrow[r, hook, shift left]
		&
		\mcd
		\arrow[l, shift left]
		\arrow[r, shift left]
		&
		\mcd/\mck
		\arrow[l, hook', shift left]
	\end{tikzcd}
	\]
	is horizontally right adjointable.
	Therefore, after identifying the smashing frames with their Artin gluing
	descriptions via \cref{recollementfrms}, we obtain a natural commutative
	diagram
	\[
	\begin{tikzcd}
		\cidem(\mcc)
		\arrow[r, "\sim"]
		\arrow[d]
		&
		\cidem(\mci)
		\overleftarrow{\times}_{\!\phi_\mci}
		\cidem(\mcc/\mci)
		\arrow[r, hook]
		\arrow[d]
		&
		\cidem(\mci)\times\cidem(\mcc/\mci)
		\arrow[d]
		\\
		\cidem(\mcd)
		\arrow[r, "\sim"]
		&
		\cidem(\mck)
		\overleftarrow{\times}_{\!\phi_\mck}
		\cidem(\mcd/\mck)
		\arrow[r, hook]
		&
		\cidem(\mck)\times\cidem(\mcd/\mck).
	\end{tikzcd}
	\]
	
	Furthermore, the square
	\[
	\begin{tikzcd}[row sep=20pt, column sep=50pt]
		\cidem(\mcc)
		\arrow[r]
		\arrow[d]
		&
		\cidem(\mcc/\mci)
		\arrow[d]
		\\
		\cidem(\mcd)
		\arrow[r]
		&
		\cidem(\mcd/\mck)
	\end{tikzcd}
	\]
	is a pushout in \(\frm\).
\end{prop}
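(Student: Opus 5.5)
The argument has three parts: establish the base change on categories, read off the commutative diagrams, and deduce the pushout from the open-part pushout together with the Artin gluing description.

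\emph{Base change of the recollement.} By \cref{smidlpushoutfrms}, $\mck=\mci\otimes_\mcc\mcd\hookrightarrow\mcd$ is a smashing ideal, and the left square, with $i^R$ and $i^R\otimes_\mcc\mcd$, commutes because base change is a $2$-functor and so preserves the adjunction $i\dashv i^R$. For the quotient, I would use \cref{connlocandclosub}. Write $\mb{1}_\mcc\to R$ for the connective idempotent $\cofib(x_\mci\to\mb{1})$. Since $F$ is exact on stabilizations and symmetric monoidal, $F(x_\mci)\to\mb{1}_\mcd$ is the coidempotent of $\mck$, by the remark after \cref{smideal}. Hence $\mb{1}_\mcd\to F(R)$ is the corresponding connective idempotent, and $\mcd/\mck\simeq\modu_{F(R)}(\mcd)\simeq\mcd\otimes_\mcc\mcc/\mci$. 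The right adjoints are the forgetful functors from modules, and $F$ commutes with them because forgetting $R$-module structure commutes with base change. This gives horizontal right adjointability. Applying $\cidem(-)$ then produces the squares in the diagram. The compatibility with the gluing descriptions reduces to checking that the vertical maps send $\cidem(\mcc)_{\le\mci}$ into $\cidem(\mcd)_{\le\mck}$ and $\cidem(\mcc)_{\ge\mci}$ into $\cidem(\mcd)_{\ge\mck}$. This holds because the induced map $F_*$ is a frame map sending $\mci$ to $\mck$, and meets and joins with $\mci$ are preserved.

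\emph{Pushout.} I would pass to locales, where $\cidem(\mcc/\mci)\simeq\cidem(\mcc)_{\ge\mci}$ is the closed sublocale complementary to the open $\mci$. The claim becomes that the closed sublocale pulls back along the locale map $\cidem(\mcd)\to\cidem(\mcc)$ to the closed sublocale of $F_*(\mci)=\mck$. The frame-theoretic fact needed is the closed analogue of \cref{slicelocalepushout}: for a frame map $f\colon\mcf\to\mcl$ and $x\in\mcf$, the square with corners $\mcf$, $\mcf_{\ge x}$, $\mcl$, $\mcl_{\ge f(x)}$, and horizontal maps $y\mapsto y\vee x$ and $y\mapsto y\vee f(x)$, is a pushout in $\frm$. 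I would prove this directly. A frame map $g\colon\mcl\to\mcm$ together with $h\colon\mcf_{\ge x}\to\mcm$ compatible over $\mcf$ forces $g(f(x))=h(x)=\top$. Conversely, any frame map out of $\mcl$ with $g(f(x))=\top$ factors uniquely through the quotient $\mcl\to\mcl_{\ge f(x)}$, since this quotient is the universal frame map inverting $f(x)$ to the top element. The same universal property applied to $\mcf$ gives uniqueness and compatibility of $h$. Combined with the identifications above, this proves the square is a pushout.

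\emph{Main obstacle.} The step I expect to be the main obstacle is the identification $\mcd/\mck\simeq\mcd\otimes_\mcc(\mcc/\mci)$ with the correct right adjoint in the prestable setting. One must check that the base-changed idempotent $F(R)$ is still connective, meaning $\pi_0$-epimorphic; this should follow from right exactness of $F$ on $\pi_0$. One must also check that the kernel of $\mcd\to\modu_{F(R)}(\mcd)$ is exactly $\mck$ rather than something larger. If a direct argument fails, I would instead deduce both facts from the stable case by passing to $\opsp(\mcc)\to\opsp(\mcd)$ and using \cref{connloc}.
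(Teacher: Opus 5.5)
Your proof of the closed pushout lemma has the universal property backwards. The map $\mcl\to\mcl_{\geq f(x)}$, $y\mapsto y\vee f(x)$, sends $f(x)$ to the \emph{bottom} element of $\mcl_{\geq f(x)}$, not the top. Likewise $x$ is the bottom of $\mcf_{\geq x}$, so compatibility forces $g(f(x))=h(x)=\bot$, because frame maps preserve empty joins.

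The universal frame map sending $f(x)$ to $\top$ is instead $y\mapsto y\wedge f(x)$ onto the open part $\mcl_{\leq f(x)}$. That is exactly \cref{slicelocalepushout}, so as written your argument characterizes the wrong sublocale and does not prove the stated square.

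The repair is immediate: replace $\top$ by $\bot$ throughout.
\begin{itemize}
\item If $g(f(x))=\bot$, then $g(y)=g(y)\vee g(f(x))=g(y\vee f(x))$. Hence $g$ factors through $\mcl\to\mcl_{\geq f(x)}$ via its restriction, which is a frame map.
\item The factorization is unique because $y\mapsto y\vee f(x)$ is surjective.
\item For $y\geq x$ we have $h(y)=h(y\vee x)=g(f(y))$, so the factorization is compatible with $h$.
\end{itemize}
With this correction your direct frame-level argument proves the pushout.

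Apart from this, your route is valid and differs from the paper's in two places.
\begin{itemize}
\item For the right square, the paper cites \cite[Proposition A.16]{ramzi2024dualizable}. You argue concretely via $\mcc/\mci\simeq\modu_R(\mcc)$, $\mcd\otimes_\mcc\modu_R(\mcc)\simeq\modu_{F(R)}(\mcd)$, and compatibility of forgetful functors with base change.
\item For the pushout, the paper applies $\shv(-)$ as in \cref{smidlpushoutfrms} and uses that closed subtopoi are stable under pullback \cite[Proposition 7.3.2.12]{htt}. You work purely in $\frm$.
\end{itemize}
Your versions are more elementary and self-contained; the paper's are shorter, given the citations.

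Finally, the step you flag as the main obstacle is already settled by results in the paper. Since $F$ preserves cofibers, $F(R)\simeq\cofib(F(x_\mci)\to\mathbf{1}_\mcd)$, and $F(x_\mci)\to\mathbf{1}_\mcd$ is the coidempotent of $\mck$ by the remark after \cref{smideal}. Then \cref{connloc} and \cref{connlocandclosub} give both the connectivity of $F(R)$ and $\ker(\mcd\to\modu_{F(R)}(\mcd))=\mck$.
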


\begin{proof}
	The left square of categories is formally right adjointable. The right
	square is right adjointable by
	\cite[Proposition A.16]{ramzi2024dualizable}\footnote{This also works in the prestable setting}. The induced compatibility
	of the Artin gluing descriptions follows from
	\cref{recollementfrms}.
	
	The pushout assertion follows by the same argument as
	\cref{smidlpushoutfrms}, together with
	\cite[Proposition 7.3.2.12]{htt}.
\end{proof}

\begin{rem}
	Topologically, the pushout in \cref{smlocpushoutfrms} corresponds to
	pullback along the closed part of the associated recollement. This is
	the closed analogue of the open base-change square of
	\cref{smidlpushoutfrms}.
\end{rem}

Artin gluing also gives an immediate local criterion for spatiality.

\begin{cor}\label{recollementspatial}
	Let \(\mcc\in\calg(\prlad)\) be prestable, and let
	$
	\mci\hookrightarrow\mcc\longrightarrow\mcc/\mci
	$
	be a smashing localization sequence. Then
	$
	\cidem(\mcc)
	$
	is spatial if and only if both
	$
	\cidem(\mci)$ and $
	\cidem(\mcc/\mci)
	$
	are spatial.
\end{cor}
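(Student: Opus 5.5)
By \cref{recollementfrms}, $\cidem(\mcc)$ is the Artin gluing $\cidem(\mci)\overleftarrow{\times}_{\phi_\mci}\cidem(\mcc/\mci)$, with open sublocale $U:=\cidem(\mcc)_{\leq\mci}\simeq\cidem(\mci)$ and complementary closed sublocale $C:=\cidem(\mcc)_{\geq\mci}\simeq\cidem(\mcc/\mci)$. The corollary then becomes a statement about frames: a frame $L$ with an element $u$ is spatial if and only if the open sublocale $L_{\leq u}$ and the closed sublocale $L_{\geq u}$ are both spatial. The plan is to prove this purely frame-theoretically, using points $p\colon L\to\mcs_{\leq-1}$, and to transport it back along these identifications.

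\emph{The ``only if'' direction.} Assume $L$ is spatial. The frame surjections are $L\to L_{\leq u}$, $a\mapsto a\wedge u$, and $L\to L_{\geq u}$, $a\mapsto a\vee u$. Since $L\cong\mco(X)$, set $W:=$ the open set corresponding to $u$. Then $L_{\leq u}\cong\mco(W)$, because opens of $W$ are exactly opens of $X$ contained in $W$. Likewise $L_{\geq u}\cong\mco(X\setminus W)$: an open $V\supseteq W$ corresponds to $V\cap(X\setminus W)$, and this is a bijection because $V=W\cup(V\cap(X\setminus W))$. Both are spatial. Equivalently, one can argue with points: the points of $L$ that send $u$ to $1$ (resp.\ to $0$) factor through $L_{\leq u}$ (resp.\ $L_{\geq u}$), and one checks that they separate elements there.

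\emph{The ``if'' direction.} Assume both pieces are spatial and let $a\not\leq b$ in $L$; we need a point $p$ with $p(a)=1$ and $p(b)=0$. By \cref{conserinjectfrms}, the map $L\to L_{\leq u}\times L_{\geq u}$ is a frame embedding, since it is conservative and preserves meets. Conservativity of $(a\mapsto a\wedge u,\ a\mapsto a\vee u)$ holds because $a=(a\vee u)\wedge(a\vee(u\backslash a))$ is recovered from the gluing data. So either $a\wedge u\not\leq b\wedge u$, or $a\vee u\not\leq b\vee u$. In the first case, a point of $L_{\leq u}$ separating $a\wedge u$ from $b\wedge u$, composed with $L\to L_{\leq u}$, separates $a$ from $b$, because frame maps are monotone and $p(a\wedge u)\leq p(a)$, $p(b\wedge u)=p(b)\wedge p(u)=p(b)$ when $p(u)=1$. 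The second case is symmetric, using $L\to L_{\geq u}$. Points of $L$ are thus jointly conservative, so $L$ is spatial. Phrased categorically: a product of spatial frames is spatial, and a subframe of a spatial frame is spatial by \cref{subfrmspatial}.

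\emph{Main obstacle.} The only delicate point is confirming that the map $(i^R,L)$ of \cref{recollementfrms} really is a frame embedding whose components are these two frame quotients. That is exactly the statement of \cref{recollementfrms}, so the ``if'' direction is immediate from \cref{subfrmspatial} together with the fact that products of spatial frames are spatial, since their points are jointly conservative componentwise. The ``only if'' direction relies on the standard fact that open and closed sublocales of spatial locales are spatial, as checked above via $\mco(X)$.
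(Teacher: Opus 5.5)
Your proposal is correct and follows the paper's proof. The ``if'' direction uses the frame embedding $\cidem(\mcc)\hookrightarrow\cidem(\mci)\times\cidem(\mcc/\mci)$ from \cref{recollementfrms} together with \cref{subfrmspatial}, and the ``only if'' direction uses the fact that open and closed sublocales of a spatial locale are spatial. You add details the paper leaves implicit: that products of spatial frames are spatial, the $\mathcal{O}(X)$ verification for $L_{\leq u}$ and $L_{\geq u}$, and the reconstruction identity behind conservativity.
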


\begin{proof}
	If both pieces are spatial, then
	\[
	\cidem(\mcc)
	\hookrightarrow
	\cidem(\mci)\times\cidem(\mcc/\mci)
	\]
	exhibits \(\cidem(\mcc)\) as a subframe of a spatial frame, so it is
	spatial by \cref{subfrmspatial}.
	
	Conversely, by \cref{recollementfrms},
	\(\cidem(\mci)\) is an open sublocale and
	\(\cidem(\mcc/\mci)\) is its closed complement. Open and closed
	sublocales of a spatial locale are spatial.
\end{proof}

\begin{rem}
	The prestable hypothesis is essential for the closed part of
	the recollement. For a general
	\(\mcv\in\calg(\prl)\), smashing ideals are classified by
	coidempotent objects, whereas smashing localizations are classified by
	idempotent objects, and these two notions need not agree; see
	\cref{smaidllocnotmatch}.
\end{rem}

\subsection{Split smashing ideals and clopen decompositions}

The recollement becomes a product decomposition precisely when the
corresponding smashing ideal is split.

\begin{prop}\label{splitstablesmid}
	Let \(\mcc\in\calg(\prlad)\) be prestable. Let
	$
	\mci\xhookrightarrow{i}\mcc\xrightarrow{L}\mcc/\mci
	$
	be a smashing localization sequence, and let
	$
	x\xrightarrow{f}\mb{1}
	$
	be the coidempotent object associated to \(\mci\). Write
	$
	c:=\cofib(f).
	$
	Then the following conditions are equivalent:
	\enu{
		\item \(\mci\) is a split smashing ideal;
		\item \(c\) is dualizable;
		\item the canonical symmetric monoidal functor
		$
		\mcc
		\xrightarrow{(i^R,L)}
		\mci\times\mcc/\mci
		$
		is an equivalence;
		\item the canonical morphism of frames
		$
		\cidem(\mcc)
		\longrightarrow
		\cidem(\mci)\times\cidem(\mcc/\mci)
		$
		is an equivalence.
	}
\end{prop}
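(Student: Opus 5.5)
We prove the cycle $(1)\Leftrightarrow(2)$, $(1)\Rightarrow(3)\Rightarrow(4)\Rightarrow(1)$. Throughout we work in the stabilization $\opsp(\mcc)$ where needed, using that $x\to\mb{1}\to c$ is a cofiber sequence there with $c$ the connective idempotent of \cref{connloc}, and that $L^R L\simeq c\otimes-$ while $ii^R\simeq x\otimes-$.

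For $(1)\Leftrightarrow(2)$: by \cref{splitsmidl}, $\mci$ is split iff $x$ is dualizable iff $f$ admits a retraction $r:\mb{1}\to x$ with $rf\simeq\id_x$. We claim this is equivalent to the existence of a decomposition $\mb{1}\simeq x\oplus c$ compatible with $x\to\mb{1}\to c$. Given a retraction, additivity of $\mcc$ gives $\mb{1}\simeq x\oplus\fib(r)$, and comparing cofibers identifies $\fib(r)\simeq c$, which is then a retract of $\mb{1}$, hence dualizable. Conversely, if $c$ is dualizable, apply the second half of \cref{idemdualretract} to the idempotent $g:\mb{1}\to c$: $c^\vee\to\mb{1}\to c$ is an equivalence, so $g$ has a section. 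This splits the cofiber sequence $x\to\mb{1}\to c$, so $f$ admits a retraction and $x$ is a retract of $\mb{1}$, hence dualizable. I expect the only subtle point to be the prestable bookkeeping here: a split cofiber sequence in $\opsp(\mcc)$ between objects of $\mcc$ is a direct sum in $\mcc$, because $\mcc\subset\opsp(\mcc)$ is closed under finite sums and retracts.

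For $(1)\Rightarrow(3)$: with $\mb{1}\simeq x\oplus c$ and $x\otimes c\simeq 0$ (since $x\otimes x\simeq x$ forces $x\otimes c=0$), the object $x$ and $c$ are complementary orthogonal idempotents. Every object $y$ then splits as $y\simeq (x\otimes y)\oplus(c\otimes y)$, and morphisms between the $x$-part and the $c$-part vanish. This gives $\mcc\simeq \mcc_x\times\mcc_c$ with $\mcc_x=\mci$ and $\mcc_c=\mcc/\mci$, compatibly with the functor $(i^R,L)$. The step $(3)\Rightarrow(4)$ is immediate, since $\cidem(-)$ preserves products (\cref{shvsmadj}).

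For $(4)\Rightarrow(1)$: the element $(\mb{1}_\mci,0)$ of the product is of the form $(i^R,L)(\mcj)$ for some $\mcj$, and under \cref{recollementfrms} this forces $\mcj=\mci$. Moreover, $(0,\mb{1})$ gives an element $\mcj'$ with $\mci\wedge\mcj'=0$ and $\mci\vee\mcj'=\mb{1}$, so $\mci$ is complemented in $\cidem(\mcc)$. We then show that a complemented coidempotent $x$ with complement $x'$ satisfies $\mb{1}\simeq x\oplus x'$. Since $x\otimes x'\simeq 0$, the join is computed as a pushout of $x\leftarrow x\otimes x'\to x'$, i.e.\ $x\oplus x'$, and this maps equivalently to $\mb{1}$. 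Hence $x$ is a retract of $\mb{1}$ and is dualizable, and \cref{splitsmidl} gives (1). The main obstacle is justifying this join formula, that is, that binary joins in $\cidem(\mcc)$ are pushouts over the tensor product, which I would verify in the stabilization via the stable description of coidempotents.
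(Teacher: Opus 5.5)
Your proof is correct. The steps (1)$\Leftrightarrow$(2), the decomposition $y\simeq(x\otimes y)\oplus(c\otimes y)$ for (3), and (3)$\Rightarrow$(4) are the paper's arguments. The paper derives the decomposition from (2) and writes the first summand as $y\otimes x^\vee$, which is the same thing since $x\simeq x^\vee$ by \cref{idemdualretract}.

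You genuinely differ only in closing the cycle. The paper proves (4)$\Rightarrow$(2) directly. A coidempotent $g\colon a\to\mb{1}$ lifting $(0,\mb{1})$ satisfies $a\otimes x\simeq 0$ and $a\otimes c\simeq c$, so $a\to a\otimes c$ is an equivalence. The interchange square for $g$ and $\mb{1}\to c$ then shows that $a\xrightarrow{g}\mb{1}\to c$ is an equivalence, so $c$ is a retract of $\mb{1}$. This uses nothing about how joins in $\cidem(\mcc)$ are computed.

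You instead show that $\mci$ is complemented and then that a complemented pair splits the unit. In effect this proves the implication ``complemented $\Rightarrow$ split'' of \cref{splitclopen} along the way, at the price of a join lemma. That lemma is easier than you expect and needs no stabilization here. If $x\otimes x'\simeq 0$, then $(x\oplus x')^{\otimes 2}\simeq(x\otimes x)\oplus(x'\otimes x')$. The counit map is the sum of the two equivalences $x\otimes x\to x$ and $x'\otimes x'\to x'$, so $x\oplus x'\to\mb{1}$ is coidempotent. It is the coproduct of $x$ and $x'$ in $\mcc_{/\mb{1}}$ and lies in the full subposet $\cidem(\mcc)$, so it is their join. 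Hence it is the top element, i.e.\ $x\oplus x'\to\mb{1}$ is an equivalence.

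Finally, your identification of the preimage of $(\mb{1},0)$ is unnecessary. The map $(i^R,L)$ is a frame embedding with $\mci\mapsto(\mb{1},0)$ and $\mathcal{J}'\mapsto(0,\mb{1})$. So $\mci\wedge\mathcal{J}'$ and $\mci\vee\mathcal{J}'$ have the same images as $0$ and $\mb{1}$, which gives the complement relations directly.
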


\begin{proof}
	By \cref{splitsmidl}, condition (1) is equivalent to the dualizability
	of \(x\).
	
	Assume first that \(x\) is dualizable. Again by
	\cref{splitsmidl}, the coidempotent morphism
	$
	x\longrightarrow\mb{1}
	$
	splits. Hence the cofiber sequence
	\[
	x\longrightarrow\mb{1}\longrightarrow c
	\]
	is split, so \(c\) is a retract of the tensor unit and therefore
	dualizable. Thus (1) implies (2).
	
	Conversely, suppose that \(c\) is dualizable. Since
	$
	\mb{1}\longrightarrow c
	$
	is idempotent, \cref{idemdualretract} implies that it admits a section.
	Hence the preceding cofiber sequence splits, and \(x\) is a retract of
	the tensor unit. In particular \(x\) is dualizable, and
	\cref{splitsmidl} implies that \(\mci\) is split. Thus (2) implies (1).
	
	We next prove that (2) implies (3). By
	\cref{idemdualretract}, the splitting above identifies the two
	complementary summands of the unit as
	\[
	\mb{1}
	\simeq
	x^\vee\oplus c.
	\]
	Consequently, every \(y\in\mcc\) admits a natural decomposition
	\[
	y
	\simeq
	(y\otimes x^\vee)
	\oplus
	(y\otimes c).
	\]
	The first summand lies in \(\mci\), while the second is local for the
	quotient \(\mcc/\mci\). This identifies
	$
	\mcc
	\simeq
	\mci\times\mcc/\mci
	$
	compatibly with the symmetric monoidal structures.
	
	The implication (3) \(\Rightarrow\) (4) is immediate.
	
	Finally, assume (4). Consider
	\[
	(0,1)
	\in
	\cidem(\mci)\times\cidem(\mcc/\mci).
	\]
	By surjectivity, it is represented by a coidempotent object
	$
	a\xrightarrow{g}\mb{1}
	$
	of \(\mcc\) satisfying
	\[
	a\otimes x\simeq0,
	\qquad
	a\otimes c\simeq c.
	\]
	Hence the commutative square
	\[
	\begin{tikzcd}
		a\otimes\mb{1}
		\arrow[r, "1\otimes f_c", "\sim"']
		\arrow[d, "g\otimes1"]
		&
		a\otimes c
		\arrow[d, "g\otimes1", "\sim"']
		\\
		\mb{1}\otimes\mb{1}
		\arrow[r, "1\otimes f_c"]
		&
		\mb{1}\otimes c
	\end{tikzcd}
	\]
	shows that \(c\) is a retract of the tensor unit. Thus \(c\) is
	dualizable, proving (4) \(\Rightarrow\) (2).
\end{proof}

The preceding proposition admits a frame-theoretic interpretation. In particular, we show that in the prestable setting, the atomic smashing frame contains the zero-dimensional part of the smashing frame.

\begin{thm}\label{splitclopen}
	Let \(\mcc\in\calg(\prlad)\) be prestable, and let
	\(\mci\in\cidem(\mcc)\) be a smashing ideal. Then the following are
	equivalent:
	\enu{
		\item \(\mci\) is a split smashing ideal;
		\item \(\mci\) is a complemented element of the frame
		\(\cidem(\mcc)\).
	}
	In particular, we obtain the following inclusion from the zero-dimensional coreflection
	\[
	\op{Z}\bigl(\cidem(\mcc)\bigr)
	\subset
	\cidem_f(\mcc).
	\]
\end{thm}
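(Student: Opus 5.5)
We prove (1)$\Rightarrow$(2) by producing an explicit complement, and (2)$\Rightarrow$(1) by translating a frame-level complement into a splitting of the unit.

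\textbf{(1)$\Rightarrow$(2).} Suppose $\mci$ is split. By \cref{splitstablesmid}, the functor $\mcc\xrightarrow{(i^R,L)}\mci\times\mcc/\mci$ is an equivalence. It then induces an equivalence of frames $\cidem(\mcc)\simeq\cidem(\mci)\times\cidem(\mcc/\mci)$. Under this equivalence, $\mci$ corresponds to $(1,0)$: it is the top element of its open part and is killed by $L$. The element $(0,1)$ is then a complement of $(1,0)$ in the product frame. Concretely, this complement is the smashing ideal $\ker(i^R)=\mcc/\mci$, viewed inside $\mcc$ via the splitting $\mb{1}\simeq x^\vee\oplus c$. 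Hence $\mci$ is complemented.

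\textbf{(2)$\Rightarrow$(1).} Let $\mcj$ be a complement of $\mci$, with coidempotent $y\to\mb{1}$. The meet condition $\mci\wedge\mcj=0$ says $x\otimes y\simeq 0$. The join condition $\mci\vee\mcj=\mb{1}$ will be unpacked through the recollement of \cref{recollementfrms}.
\begin{itemize}
\item Under $\cidem(\mcc)\hookrightarrow\cidem(\mci)\times\cidem(\mcc/\mci)$, the element $\mcj$ maps to $(\mcj\wedge\mci,\,L(\mcj))=(0,\,L(\mcj))$.
\item Since $L(\mci\vee\mcj)=L(\mcj)$ and $\mci\vee\mcj=\mb{1}$, the join condition gives $L(\mcj)=1$.
\item Therefore $\mcj\mapsto(0,1)$.
\end{itemize}
This is exactly the element used in the proof of (4)$\Rightarrow$(2) in \cref{splitstablesmid}. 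We repeat that argument verbatim. The equalities $a\otimes x\simeq 0$ and $a\otimes c\simeq c$ hold with $a=y$: the second because $L(\mcj)=1$ means $y\otimes c\to c$ is an equivalence, since the coidempotent of $\mcc/\mci$ corresponding to $L(\mcj)$ is $L(y)$, and $L(y)\simeq y\otimes c$ under the identification of $\mcc/\mci$ with $c$-modules. The commutative square in that proof then exhibits $c$ as a retract of $\mb{1}$, so $c$ is dualizable, and \cref{splitstablesmid} (2)$\Rightarrow$(1) shows that $\mci$ is split.

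The main point to check carefully is this translation of $L(\mcj)=1$ into $y\otimes c\simeq c$. I would handle it via \cref{connlocandclosub}, using that the localization $L$ is $c\otimes-$ on the stabilization.

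\textbf{The inclusion.} By definition, $\op{Z}(\cidem(\mcc))$ is generated under joins by complemented elements. By the equivalence just proved, each complemented element is split. By \cref{splitsmidl}, its coidempotent $x$ is dualizable, so $\mci=\langle x\rangle$ is generated by a dualizable object and is therefore an atomic smashing ideal. Finally, $\cidem_f(\mcc)$ is a subframe by \cref{atsmashingideals}, hence closed under arbitrary joins, which gives $\op{Z}(\cidem(\mcc))\subset\cidem_f(\mcc)$.
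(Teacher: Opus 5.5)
Your proof is correct and follows essentially the paper's route: both arguments reduce to \cref{splitstablesmid} together with the recollement of \cref{recollementfrms}, and the final inclusion is handled identically. The only difference is that the paper cites the general frame fact that $F\to F_{\leq u}\times F_{\geq u}$ is an equivalence if and only if $u$ is complemented, whereas you unpack this by hand. Concretely, you observe that a complement $\mathcal{J}$ is exactly a preimage of $(0,1)$, which is all that the $(4)\Rightarrow(2)$ argument of \cref{splitstablesmid} actually uses.
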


\begin{proof}
	By \cref{splitstablesmid}, splitness is equivalent to the canonical
	morphism
	\[
	\cidem(\mcc)
	\longrightarrow
	\cidem(\mci)\times\cidem(\mcc/\mci)
	\]
	being an equivalence. By \cref{recollementfrms}, this identifies with
	the canonical morphism
	\[
	F
	\longrightarrow
	F_{\leq u}\times F_{\geq u},
	\qquad
	F=\cidem(\mcc),\quad u=\mci.
	\]
	For any frame \(F\), this morphism is an equivalence if and only if
	\(u\) is complemented. This is in turn equivalent to the associated
	open sublocale being clopen.
	
	Finally, every split smashing ideal is atomic by
	\cref{splitsmidl}. Hence every complemented element of
	\(\cidem(\mcc)\) belongs to \(\cidem_f(\mcc)\). Since
	\(\cidem_f(\mcc)\) is a subframe, it contains the subframe generated by
	all complemented elements:
	\[
	\op{Z}\bigl(\cidem(\mcc)\bigr)
	\subset
	\cidem_f(\mcc).
	\]
\end{proof}

\begin{rem}
	The prestable hypothesis in \cref{splitclopen} cannot be
	removed.	
	For example, let
	\[
	\mcv=\mathcal S\times\mathcal S
	\]
	with its Cartesian symmetric monoidal structure. Then
	$
	\cidem(\mcv)
	\simeq
	\mcv_{\leq-1}
	$
	is the four-element Boolean frame, so every coidempotent object is
	complemented.
	
	On the other hand, the only dualizable object of \(\mcv\) is its tensor
	unit \((*,*)\). The two nontrivial proper smashing ideals corresponding
	to
	\[
	(*,\emptyset),
	\qquad
	(\emptyset,*)
	\]
	contain no dualizable objects of \(\mcv\), and hence are not atomic.
	In particular, they are not split by \cref{splitsmidl}.
\end{rem}

\subsection{Atomic recollement in the stable setting}

We now turn to atomic smashing ideals. The open part of a recollement behaves
well for atomic smashing ideals in complete generality, by
\cref{cidematideal}. The closed part is subtler: atomic generators of a
quotient need not lift to atomic generators of the ambient category.

Nevertheless, for compactly-rigidly generated stable categories, the required lifting is
provided by the Thomason--Neeman theorem.

\begin{thm}\label{recollementatsmfrms}
	Let \(\mcc\in\calg(\prlst)\) be compactly-rigidly generated, and let
	$
	\mci\xhookrightarrow{i}\mcc\xrightarrow{L}\mcc/\mci
	$
	be a smashing localization such that \(\mci\) is atomic. Then
	\[
	\cidem_f(\mci)
	\simeq
	\cidem_f(\mcc)_{\leq\mci},
	\qquad
	\cidem_f(\mcc/\mci)
	\simeq
	\cidem_f(\mcc)_{\geq\mci}.
	\]
	Consequently,
	\[
	\cidem_f(\mcc)
	\simeq
	\cidem_f(\mci)
	\overleftarrow{\times}_{\!\phi^f_\mci}
	\cidem_f(\mcc/\mci),
	\]
	where
	$
	\phi^f_\mci(a)
	=
	\mci\vee
	(\mci\backslash_f a),
	$
	and \(\backslash_f\) denotes Heyting implication in the frame
	\(\cidem_f(\mcc)\).
\end{thm}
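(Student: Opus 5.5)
The open half is already available. By \cref{cidematideal}, the identification $\cidem(\mci)\simeq\cidem(\mcc)_{\leq\mci}$ of \cref{recollementfrms} restricts to $\cidem_f(\mci)\simeq\cidem_f(\mcc)_{\leq\mci}$, because $\mci$ is atomic. The work is therefore in the closed half, and then in deducing the gluing formula.

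For the closed part I would use the explicit description in \cref{smlocpullback}. There, $\cidem(\mcc/\mci)\simeq\cidem(\mcc)_{\geq\mci}$ sends $\mck$ to $\mck\times_{\mcc/\mci}\mcc$, with inverse $\mcm\mapsto\mcm/\mci$. I would show that this bijection matches atomic ideals on both sides.

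One direction is formal. If $\mcm\geq\mci$ is generated by a set $S\subset\mcc^d=\mcc^\omega$, then $\mcm/\mci=L(\mcm)$ is generated by $L(S)$. These objects are dualizable, since $L$ is symmetric monoidal, so $\mcm/\mci$ is atomic. This is the base-change statement already used in \cref{atsmidlpushoutfrms}.

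The converse is the main obstacle: lifting atomic generators from the quotient. Since $\mci$ is compactly generated by $\mci\cap\mcc^\omega$, the Thomason--Neeman localization theorem identifies $(\mcc/\mci)^\omega$ with the idempotent completion of $\mcc^\omega/(\mci\cap\mcc^\omega)$. Moreover, $\mcc/\mci$ is again compactly-rigidly generated, so its dualizable objects are its compact objects. Let $\mck$ be atomic, generated by compacts $k_\alpha$. Each $k_\alpha\oplus\Sigma k_\alpha$ lifts to a compact object $y_\alpha\in\mcc^\omega$; this is Thomason's trick for removing the idempotent-completion obstruction. I would then set $\mcm:=\langle \mci\cap\mcc^\omega,\ y_\alpha\rangle$. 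This ideal is generated by compact, hence dualizable, objects, so it is atomic. It contains $\mci$, and $L(\mcm)=\langle k_\alpha\rangle=\mck$. The remaining point is that an atomic ideal of $\mcc$ is automatically smashing. In the compactly-rigidly generated case, every compactly generated localizing ideal is smashing (\cref{compactrig} and the surrounding discussion). Hence $\mcm\in\cidem_f(\mcc)_{\geq\mci}$ and $\mcm/\mci=\mck$, which proves $\cidem_f(\mcc/\mci)\simeq\cidem_f(\mcc)_{\geq\mci}$.

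Finally, the gluing formula. Since $\cidem_f(\mcc)$ is a frame (\cref{atsmashingideals}) and $\mci\in\cidem_f(\mcc)$, the general open/closed decomposition \cref{frmopencloseddecomp} applies to $F=\cidem_f(\mcc)$ and $u=\mci$. It gives $F\simeq F_{\leq u}\overleftarrow{\times}_{\phi}F_{\geq u}$ with $\phi(a)=u\vee(u\backslash_f a)$, where the Heyting implication is computed in $F$. Substituting the two identifications above yields the stated description. I would note, but not need, that $\backslash_f$ generally differs from the implication in $\cidem(\mcc)$, because the inclusion of frames does not preserve Heyting implication.
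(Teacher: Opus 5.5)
Your proposal is correct and follows essentially the paper's route. The open half comes from \cref{cidematideal}, the closed half from lifting $k_\alpha\oplus\Sigma k_\alpha$ via Thomason--Neeman together with compact generators of $\mci$, and the gluing formula from \cref{frmopencloseddecomp}. The one cosmetic difference is in the closed half: the paper starts from the smashing ideal of $\mcc$ lying over your quotient ideal, checks via \cref{smlocpullback} that the lifts lie in it, and concludes by conservativity of $\cidem(\mcc)\to\cidem(\mci)\times\cidem(\mcc/\mci)$, whereas you build $\mcm$ directly and use injectivity of $\mcm\mapsto\mcm/\mci$ on $\cidem(\mcc)_{\geq\mci}$. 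Your remark that $\mcm$ is smashing makes explicit a point the paper leaves implicit.
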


\begin{proof}
	By \cref{cidematideal},
	$
	\cidem_f(\mci)
	\simeq
	\cidem_f(\mcc)_{\leq\mci}.
	$
	It remains to identify the closed part.
	
	By \cref{recollementfrms},
	$
	\cidem(\mcc/\mci)
	\simeq
	\cidem(\mcc)_{\geq\mci}.
	$
	Thus it suffices to prove the following: if
	\[
	\mci\subset\mck\subset\mcc
	\]
	are two smashing ideals of \mcc, then
	\[
	\mck\text{ is atomic in }\mcc
	\quad\Longleftrightarrow\quad
	\mck/\mci\text{ is atomic in }\mcc/\mci.
	\]
	
	The forward implication follows from preservation of atomic smashing
	ideals under base change.
	
	Conversely, assume that
	\[
	\mck/\mci
	\hookrightarrow
	\mcc/\mci
	\]
	is atomic. Since \(\mcc\) is compactly-rigidly generated, atomic
	smashing ideals are precisely compactly generated smashing ideals.
	Moreover, since \(\mci\) is atomic, the localization sequence induces a
	Karoubi sequence
	\[
	\mci^\omega
	\longrightarrow
	\mcc^\omega
	\xrightarrow{L}
	(\mcc/\mci)^\omega.
	\]
	
	Choose compact generators
	$
	\{y_\alpha\}_{\alpha\in A}
	\subset
	(\mck/\mci)^\omega
	$
	for \(\mck/\mci\). By the Thomason--Neeman theorem for Verdier
	quotients, an individual \(y_\alpha\) need not lift to an object of
	\(\mcc^\omega\), but
	\[
	y_\alpha\oplus\Sigma y_\alpha
	\]
	does. Thus there exists
	$
	c_\alpha\in\mcc^\omega
	$
	with
	$
	L(c_\alpha)
	\simeq
	y_\alpha\oplus\Sigma y_\alpha.
	$
	Since \(L(c_\alpha)\in\mck/\mci\) and
	\(\mci\subset\mck\), \cref{smlocpullback} implies that \(c_\alpha\in\mck\).
	
	On the other hand, choose compact generators
	$
	\{x_\beta\}_{\beta\in B}
	\subset
	\mci^\omega
	$
	for \(\mci\). The collection
	\[
	\{x_\beta\}_{\beta\in B}
	\cup
	\{c_\alpha\}_{\alpha\in A}
	\]
	consists of compact objects of \(\mck\). Their images generate both the
	open part \(\mci\) and the closed part \(\mck/\mci\). Since
	\[
	\cidem(\mcc)
	\hookrightarrow
	\cidem(\mci)\times\cidem(\mcc/\mci)
	\]
	is conservative by \cref{recollementfrms}, they generate
	\(\mck\) itself. Hence \(\mck\) is compactly generated, and therefore
	atomic.
	
	The Artin gluing description and the formula for \(\phi^f_\mci\)
	follow from \cref{frmopencloseddecomp}.
\end{proof}

The compact-rigid generation hypothesis in
\cref{recollementatsmfrms} is not merely technical.

\begin{rem}\label{qu510}
	In a general stable presentably symmetric monoidal
	\(\infty\)-category, atomicity need not glue across a recollement.
	More precisely, one may have smashing ideals
	\[
	\mci\subset\mck\subset\mcc
	\]
	such that \(\mci\) is atomic and
	$
	\mck/\mci
	\hookrightarrow
	\mcc/\mci
	$
	is atomic, while \(\mck\) itself is not atomic. Thus the closed-part
	statement of \cref{recollementatsmfrms} fails without a hypothesis
	ensuring the lifting of atomic generators from the quotient.
	
	An explicit example is given in
	\cref{exnonliftableatomicquotient}.
\end{rem}
As an immediate consequence, the telescope conjecture passes to atomic
smashing quotients.

\begin{cor}\label{atsmloctc}
	Let \(\mcc\in\calg(\prlst)\) be compactly-rigidly generated, and let
	$
	\mci\hookrightarrow\mcc\longrightarrow\mcc/\mci
	$
	be a smashing localization sequence with \(\mci\) atomic. If
	\(\mcc\) satisfies the telescope conjecture, then so does
	\(\mcc/\mci\).
\end{cor}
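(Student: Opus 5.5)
The plan is to read the corollary off the closed-part identification in \cref{recollementatsmfrms}, combined with the closed-part identification for ordinary smashing frames in \cref{recollementfrms}. Both theorems apply under the standing hypotheses: \(\mcc\) is compactly-rigidly generated and stable, hence prestable, and \(\mci\) is atomic. Together they give a commutative square
\[
\begin{tikzcd}
\cidem_f(\mcc/\mci) \arrow[r, hook] \arrow[d, "\sim"'] & \cidem(\mcc/\mci) \arrow[d, "\sim"] \\
\cidem_f(\mcc)_{\geq\mci} \arrow[r, hook] & \cidem(\mcc)_{\geq\mci}
\end{tikzcd}
\]
in which both vertical maps are the pullback construction \(\mck\mapsto\mck\times_{\mcc/\mci}\mcc\) of \cref{smlocpullback}.

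The first step is to check that this square genuinely commutes. The equivalence in \cref{recollementatsmfrms} was constructed as the restriction of the one in \cref{recollementfrms}. Its proof shows that for smashing ideals \(\mci\subset\mck\subset\mcc\), the ideal \(\mck\) is atomic exactly when \(\mck/\mci\) is atomic. So the bottom-right equivalence carries the subposet \(\cidem_f(\mcc/\mci)\) onto \(\cidem_f(\mcc)_{\geq\mci}=\cidem_f(\mcc)\cap\cidem(\mcc)_{\geq\mci}\).

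The second step uses the hypothesis. The telescope conjecture for \(\mcc\) gives \(\cidem_f(\mcc)=\cidem(\mcc)\), and intersecting with \(\cidem(\mcc)_{\geq\mci}\) shows that the bottom inclusion is an equality. Since both vertical maps are equivalences, the top inclusion \(\cidem_f(\mcc/\mci)\hookrightarrow\cidem(\mcc/\mci)\) is an equivalence as well. This is precisely the telescope conjecture for \(\mcc/\mci\).

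Concretely, take a smashing ideal \(\mck'\) of \(\mcc/\mci\) and set \(\mck:=\mck'\times_{\mcc/\mci}\mcc\). This is a smashing ideal of \(\mcc\) containing \(\mci\), so \(\mck\) is atomic by hypothesis. The forward implication recalled above then shows that \(\mck/\mci\simeq\mck'\) is atomic; that implication is just preservation of atomic smashing ideals under base change along \(L\). I do not expect any real obstacle here, since the Thomason--Neeman lifting is only needed for the converse direction, which this corollary does not use. The only point requiring care is that \(\mck/\mci\simeq\mck'\), which is the inverse equivalence recorded in \cref{smlocpullback}.
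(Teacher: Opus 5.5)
Your argument is correct and is essentially the paper's proof: the paper uses exactly this commutative square, with the vertical equivalences coming from \cref{recollementfrms} and \cref{recollementatsmfrms} and the bottom inclusion an equality because \(\mcc\) satisfies the telescope conjecture. Your concrete rephrasing is a fair sharpening, since it shows that only the base-change (forward) direction of the atomic closed-part comparison is used, so the Thomason--Neeman lifting is not actually needed for this corollary.
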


\begin{proof}
	Consider the commutative diagram
	\[
	\begin{tikzcd}
		\cidem_f(\mcc/\mci)
		\arrow[r, hook]
		\arrow[d, "\sim"']
		&
		\cidem(\mcc/\mci)
		\arrow[d, "\sim"]
		\\
		\cidem_f(\mcc)_{\geq\mci}
		\arrow[r, hook]
		&
		\cidem(\mcc)_{\geq\mci}.
	\end{tikzcd}
	\]
	The vertical equivalences follow from
	\cref{recollementfrms} and \cref{recollementatsmfrms}, while the bottom map is an
	equivalence when \(\mcc\) satisfies the telescope conjecture.
\end{proof}

The preceding atomic recollement gives a closed version of the descent
theorem for compactly-rigidly generated stable categories.

\begin{de}
	Let \(\mcc\in\calg(\prlst)\), and let
	$
	\{f_i:\mcc\to\mcc_i\}_{i\in I}
	$
	be a small family of smashing localizations.
	
	We call this family a \textbf{smashing-closed covering} if the induced
	family
	\[
	\{\cidem(\mcc)\to\cidem(\mcc_i)\}_{i\in I}
	\]
	is jointly conservative.
	
	It is called an \textbf{atomic smashing-closed covering} if, in
	addition, the kernel of every \(f_i\) is an atomic smashing ideal.
\end{de}

\begin{thm}\label{finiteatsmcloseddescent}
	Let \(\mcc\in\calg(\prlst)\) be compactly-rigidly generated, and let
	\(
	\{f_i:\mcc\to\mcc_i\}_{i=1}^n
	\)
	be a finite atomic smashing-closed covering. Then \(\mcc\) satisfies
	the telescope conjecture if and only if every \(\mcc_i\) does.
\end{thm}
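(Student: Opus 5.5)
The plan is to reduce to the recollement theorems by induction on $n$, using that a finite jointly conservative family of closed pieces corresponds to a finite meet of ideals being zero.

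Write $\mci_i:=\ker(f_i)$. Each $\mci_i$ is an atomic smashing ideal, and $\mcc_i\simeq\mcc/\mci_i$ by \cref{connlocandclosub}. By \cref{recollementfrms}, the map $\cidem(\mcc)\to\cidem(\mcc_i)$ identifies with $a\mapsto a\vee\mci_i$ onto $\cidem(\mcc)_{\geq\mci_i}$.

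First I would show that joint conservativity forces $\bigwedge_i\mci_i=0$. Put $\mcj:=\bigwedge_i\mci_i$. Then $\mcj\vee\mci_i=\mci_i=0\vee\mci_i$ for every $i$, so joint conservativity gives $\mcj=0$. Each $\mci_i$ is atomic, and $\cidem_f(\mcc)$ is a subframe, so finite meets of the $\mci_i$ stay atomic.

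The ``only if'' direction is then immediate from \cref{atsmloctc}. For the ``if'' direction, let $\mck\in\cidem(\mcc)$ be arbitrary. Each $\mck\vee\mci_i$ lies in $\cidem(\mcc)_{\geq\mci_i}\simeq\cidem(\mcc_i)$. Since $\mcc_i$ satisfies the telescope conjecture, \cref{recollementatsmfrms} shows that $\mck\vee\mci_i\in\cidem_f(\mcc)$. Distributivity, applied finitely many times, gives
\[
\bigwedge_{i=1}^n(\mck\vee\mci_i)=\mck\vee\bigvee_{\emptyset\neq S\subsetneq\{1,\dots,n\}}\Bigl(\mck\wedge\bigwedge_{i\notin S}\mci_i\Bigr)\vee\bigwedge_i\mci_i .
\]
This is awkward, so I would argue by induction instead.

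The induction goes as follows. Set $\mcj_k:=\mci_1\wedge\cdots\wedge\mci_k$, which is atomic, and prove by descending induction on $k$ that $\mck\vee\mcj_k$ is atomic. The case $k=1$ is the previous paragraph, and $k=n$ gives $\mck\vee 0=\mck$. For the step, use distributivity:
\[
\mck\vee\mcj_{k+1}=\mck\vee(\mcj_k\wedge\mci_{k+1})=(\mck\vee\mcj_k)\wedge(\mck\vee\mci_{k+1}).
\]
This is a finite meet of atomic ideals, provided $\mck\vee\mcj_k$ is atomic (the inductive hypothesis) and $\mck\vee\mci_{k+1}$ is atomic (the base argument). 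Since $\cidem_f(\mcc)$ is closed under finite meets, the step follows. So the argument is a forward induction with base case $k=1$.

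The main obstacle is the step that converts the telescope conjecture on $\mcc_i$ into atomicity of $\mck\vee\mci_i$ in $\mcc$. This relies on the closed-part identification $\cidem_f(\mcc/\mci_i)\simeq\cidem_f(\mcc)_{\geq\mci_i}$ of \cref{recollementatsmfrms}, and hence on the Thomason--Neeman lifting, which is exactly where compact-rigid generation is used. Finiteness is essential because infinite meets of atomic ideals need not be atomic.
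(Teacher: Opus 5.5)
Your proposal is correct and follows the paper's proof essentially verbatim. The paper also derives $\bigwedge_i\ker(f_i)=0$ from joint conservativity, gets $\mck\vee\ker(f_i)\in\cidem_f(\mcc)$ from the atomic recollement \cref{recollementatsmfrms}, and concludes via $\mck=\bigwedge_i(\mck\vee\ker(f_i))$ together with closure of $\cidem_f(\mcc)$ under finite meets; your induction is just the standard proof of that finite distributive identity (your ``awkward'' expansion already collapses to $\mck\vee\bigwedge_i\mci_i$ by absorption, and the induction runs forward from $k=1$, not ``descending'').
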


\begin{proof}
	For each \(i\), set
	\(
	\mck_i:=\ker(f_i)\in\cidem_f(\mcc).
	\)
	By \cref{recollementfrms}, the induced map on smashing frames identifies
	with the principal closed localization
	$$
	\cidem(\mcc)
	\longrightarrow
	\cidem(\mcc)_{\geq\mck_i},
	\qquad
	\mci\longmapsto\mci\vee\mck_i,
	$$
	and
	\(
	\cidem(\mcc_i)
	\simeq
	\cidem(\mcc)_{\geq\mck_i}.
	\)
	Since \(\mcc\) is compactly-rigidly generated and \(\mck_i\) is atomic,
	atomic recollement \cref{recollementatsmfrms} moreover gives
	\(
	\cidem_f(\mcc_i)
	\simeq
	\cidem_f(\mcc)_{\geq\mck_i}.
	\)

	Suppose first that \(\mcc\) satisfies the telescope conjecture. Then
	every quotient \(\mcc_i\) satisfies the telescope conjecture by
	\cref{atsmloctc}.
	
	Conversely, suppose that every \(\mcc_i\) satisfies the telescope
	conjecture. We first claim that
	\[
	\bigwedge_{i=1}^n\mck_i=0
	\]
	in \(\cidem(\mcc)\). Put
	$
	\mck:=\bigwedge_{i=1}^n\mck_i.
	$
	For every \(i\), since \(\mck\leq\mck_i\), the images of \(\mck\) and
	\(0\) under
	\[
	\cidem(\mcc)
	\longrightarrow
	\cidem(\mcc_i)
	\simeq
	\cidem(\mcc)_{\geq\mck_i}
	\]
	coincide:
	$
	\mck\vee\mck_i
	=
	\mck_i
	=
	0\vee\mck_i.
	$
	Since the family is a smashing-closed covering, the map
	$
	\cidem(\mcc)
	\longrightarrow
	\prod_{i=1}^n\cidem(\mcc_i)
	$
	is an embedding. Hence
	$
	\mck=0,
	$
	as claimed.
	
	Now let
	$
	\mci\in\cidem(\mcc)
	$
	be an arbitrary smashing ideal. Its image in \(\cidem(\mcc_i)\) is
	represented by
	$
	\mci\vee\mck_i
	\in
	\cidem(\mcc)_{\geq\mck_i}.
	$
	Since \(\mcc_i\) satisfies the telescope conjecture, this element is
	atomic in \(\mcc_i\). By atomic recollement,
	$
	\cidem_f(\mcc_i)
	\simeq
	\cidem_f(\mcc)_{\geq\mck_i},
	$
	it follows that
	\[
	\mci\vee\mck_i
	\in
	\cidem_f(\mcc)
	\]
	for every \(i\).
	Finally, by finite distributivity and the equality
	\(\bigwedge_i\mck_i=0\),
	\[
		\mci=
		\mci\vee0
		=
		\mci\vee\bigwedge_{i=1}^n\mck_i
		=
		\bigwedge_{i=1}^n
		(\mci\vee\mck_i).
	\]
	Each factor on the right belongs to \(\cidem_f(\mcc)\), and
	\(\cidem_f(\mcc)\subset\cidem(\mcc)\) is a subframe, hence is closed
	under finite meets. Therefore
	$
	\mci\in\cidem_f(\mcc).
	$
	Thus every smashing ideal of \(\mcc\) is atomic, and \(\mcc\)
	satisfies the telescope conjecture.	
\end{proof}
\begin{qu}
	Can \cref{finiteatsmcloseddescent} be extended to arbitrary, possibly
	infinite, atomic smashing-closed coverings?
\end{qu}

\part{Stable and Chromatic Applications}
\section{Telescope Conjecture for Big $tt$-Categories}\label{sec5}
In this section, we apply our framework to arbitrary big tensor-triangulated $\infty$-categories. Utilizing the techniques developed in the preceding sections, we analyze the  smashing frame of $\opsp$ and show that the spatiality of it reduces to the monochromatic layers $\opsp_{T(n)}$.

\subsection{Some basic examples}
As a primary application of our framework, we are able to investigate the Balmer spectrum and the telescope conjecture for a big $tt$-category $\mathcal{C} \in \mathrm{CAlg}(\prlst)$ that is not necessarily compactly-rigidly generated. We firstly record some basic examples which motivate the need for the atomic version of the smashing frame.
\begin{ex}\label{ex:borel-spectra}
	The \infcat $\opsp^{BG}$ of Borel $G$-spectra does not satisfy the telescope conjecture. Indeed, otherwise $\opsp$ would be a retract of $\opsp^{BG}$ in $\calg(\prl)$ and would therefore also satisfy the telescope conjecture. However, this is false by \cite{burklund2023k}.
	Furthermore,  the telescope conjecture for $\opsp^{BG}$ is equivalent to that for $\opsp$.
	We claim that the forgetful functor induces the following equivalence of telescope inclusions $$\begin{tikzcd}
		\cidem_f(\opsp^{BG}) \arrow[d, "\sim"] \arrow[r, hook] & \cidem(\opsp^{BG}) \arrow[d, "\sim"] \\
		\cidem_f(\opsp) \arrow[r, hook]                        & \cidem(\opsp)  .                     
	\end{tikzcd}$$
	
	Indeed, since $\opsp^{BG}\simeq \lim_{BG}\opsp$ is a constant limit in $\calg(\prl)$, we have the following equivalences by \cref{shvsmadj} $$\begin{aligned}
		\cidem(\opsp^{BG}) &\simeq \lim_{BG}\cidem(\opsp) \simeq \funct(BG,\cidem(\opsp)) \\
		&\simeq \funct(h_{\leq 0}BG,\cidem(\opsp)) \simeq \funct(*,\cidem(\opsp)) \simeq \cidem(\opsp)
	\end{aligned}$$
	where $h_{\leq0}(-)$ denotes the $0$-truncation. In particular, the left vertical arrow is an injection. Moreover, it is a surjection because $\opsp$ is a retract of $\opsp^{BG}$ in $\calg(\prl)$.
	
	In fact, this argument is also able to show that for any anima  $X$, we have the following equivalences
$$\begin{tikzcd}
	\cidem_f(\opsp^{X}) \arrow[d, "\sim"] \arrow[r, hook] & \cidem(\opsp^{X}) \arrow[d, "\sim"] \\
	\prod_{ \pi_0X}\cidem_f(\opsp) \arrow[r, hook]                        & \prod_{ \pi_0X}\cidem(\opsp) .                      
\end{tikzcd}$$
\end{ex}
A similar phenomenon appears in another elementary source of large tensor-triangulated categories, namely categories of graded spectra.
\begin{ex}\label{ex:graded-spectra}
	The \infcat $\opsp^{\mathbb{N}}$ of $\mathbb{N}$-graded spectra (equipped with the Day convolution symmetric monoidal structure) does not satisfy the telescope conjecture. The core reason is that the monoidal structure forces both dualizable objects and smashing localizations to be entirely concentrated in degree zero.
	
	Consider the inclusion functor $\opsp \xrightarrow{X \mapsto X[0]} \opsp^{\mathbb{N}}$, which places a spectrum purely in degree $0$. This functor induces a strictly commutative square of telescope inclusions:
	$$
	\begin{tikzcd}
		\cidem_f(\opsp) \arrow[d, "\sim"] \arrow[r, hook] & \cidem(\opsp) \arrow[d, "\sim"] \\
		\cidem_f(\opsp^{\mathbb{N}}) \arrow[r, hook] & \cidem(\opsp^{\mathbb{N}})
	\end{tikzcd}
	$$
	Both vertical arrows are equivalences:
	\begin{enumerate}

		\item  A smashing ideal corresponds to an idempotent object $E$ satisfying $E \otimes E \simeq E$. In the $\mathbb{N}$-graded setting, the degree $0$ part must satisfy $E_0 \otimes E_0 \simeq E_0$. For higher degrees, the lack of negative degrees restricts the algebraic relations such that any idempotent object in $\opsp^{\mathbb{N}}$ is canonically generated by its degree $0$ component. Thus, the idempotent objects of $\opsp^{\mathbb{N}}$ are exactly the idempotent objects of $\opsp$ pushed into degree $0$, yielding $\cidem(\opsp) \simeq \cidem(\opsp^{\mathbb{N}})$.
		\item  The tensor unit in $\opsp^{\mathbb{N}}$ is $\mathbb{S}[0]$, which is concentrated in degree $0$. For an object $X$ to be dualizable, there must exist a dual $X^\vee$ such that the evaluation map $X \otimes X^\vee \to \mathbb{S}[0]$ exhibits the duality. Because the grading is strictly additive ($i+j=n$) and contains no negative degrees to cancel out positive ones, $X$ and $X^\vee$ must both be purely concentrated in degree $0$. Thus, the inclusion induces an equivalence of dualizable objects $\opsp^d \xrightarrow{\sim} (\opsp^{\mathbb{N}})^d$, which yields $\cidem_f(\opsp) \simeq \cidem_f(\opsp^{\mathbb{N}})$.
	\end{enumerate}
	Since the vertical maps are equivalences, the telescope inclusion for $\opsp^{\mathbb{N}}$ is isomorphic to the telescope inclusion for $\opsp$. Because the telescope conjecture has been known to fail for $\opsp$ \cite{burklund2023k}, it must consequently fail for $\opsp^{\mathbb{N}}$.
\end{ex}

\subsection{Structure of the smashing frame of \opsp}\label{subsecsmsp}
It is an open problem whether $\Idem(\opsp)$ is spatial; see \cite{balchin2021big,aoki2024smashing}. 
The goal of this subsection is to show that this question reduces to the spatiality of $\Idem(\opsp_{T(n)})$.

\begin{cov}
	For convenience, we will use the notation $\Idem(\opsp)$ instead of $\cidem(\opsp)$ throughout the subsection \ref{subsecsmsp}. In the stable setting, they are naturally equivalent by \cref{stablecidemidem}.
\end{cov}
We begin with the standard prime decomposition of spectra, which allows us to compare global smashing localizations with their $p$-local counterparts. Since the functor
\[
\opsp \longrightarrow 
\prod_{p}^{/\opsp_{\mathbb{Q}}} \opsp_{(p)}
:= 
\opsp_{(2)} \times_{\opsp_{\mathbb{Q}}} 
\opsp_{(3)} \times_{\opsp_{\mathbb{Q}}} \cdots
\]
is conservative and $\Idem(-):\calg(\prlst)\to \frm$ preserves limits, it induces, by \cref{conservinjfrms}, an embedding of frames\footnote{Note that $\opsp_{\mathbb{Q}} \simeq \mcd(\mathbb{Q})$ is a smashing field. For simplicity, we will denote $\prod_{p}^{/\mathbb{Q}}:=\prod_{p}^{/\Idem(\opsp_{\mathbb{Q}})}=\prod_{p}^{/\Idem_f(\opsp_{\mathbb{Q}})}.$}
\[
\Idem(\opsp)
\hookrightarrow
\prod_{p}^{/\mathbb{Q}} \Idem(\opsp_{(p)}).
\]
In fact,  the embedding above is an equivalence, as we will show in \cref{primedecomp}.

The next lemma recalls the chromatic bounds on smashing ideals in $\opsp_{(p)}$.
\begin{lem}[{\cite[Lecture 29]{luriechromatic}}]\label{smidlstructure}
	Let $p$ be a prime.  Any smashing ideal $\ker(L)$ of $\opsp_{(p)}$ is either $0$ or $\opsp_{(p)}$, or else lies between for some $n\geq 0$
	\[
	\ker(L_n^f) \subset \ker(L) \subset \ker(L_n).
	\] Such $n$ is uniquely determined by $$n=\op{Max}\{m\geq0\mid LK(m)\neq0 \}=\op{Max}\{m\geq0\mid K(m)\xrightarrow{\sim}LK(m)\}.$$ 

\end{lem}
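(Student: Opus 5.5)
We prove \cref{smidlstructure} using the chromatic machinery: the thick subcategory theorem, the nilpotence theorem, and the fact that Morava $K$-theories are fields for the Bousfield lattice. Let $L$ be a smashing localization of $\opsp_{(p)}$ with idempotent algebra $L\mathbb{S}_{(p)}$, and recall that $\ker(L)=\{X\mid X\otimes L\mathbb{S}\simeq 0\}$. The key input is that each $K(m)$ is a field object: for any spectrum $E$, the smash product $K(m)\otimes E$ is a wedge of shifts of $K(m)$. Consequently $LK(m)=K(m)\otimes L\mathbb{S}$ is either $0$ or has $K(m)$ as a retract. Since $LK(m)$ is an $L\mathbb{S}$-module, hence $L$-local, and $L$ is idempotent, in the nonzero case we obtain $K(m)\xrightarrow{\sim}LK(m)$. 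Indeed, $K(m)$ is then a retract of an $L$-local object and so is $L$-local itself. This gives the second equality in the formula for $n$.

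The first step is to show that the set $S=\{m\mid LK(m)\neq 0\}$ is downward closed, together with the corresponding statement for $m=\infty$, meaning $H\mathbb{F}_p$. I would argue via finite spectra. The intersection $\ker(L)\cap\opsp^\omega_{(p)}$ is a thick ideal, so by the thick subcategory theorem it equals $\mathcal{C}_{\geq k}$ for some $k\in[0,\infty]$.

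Next, $L$-acyclicity of a type-$k$ complex $F(k)$ should be equivalent to $L K(m)=0$ for all $m\ge k$. The direction from local vanishing to acyclicity is the delicate point. I would deduce it from the nilpotence theorem: $F(k)\otimes L\mathbb{S}$ is a ring-like object, a module over the ring spectrum $\operatorname{End}(F(k))\otimes L\mathbb{S}$. If all Morava $K$-theories $K(m)$ for $m\ge k$ kill it, and also $H\mathbb{F}_p$ kills it, then its identity is nilpotent. This step is the hardest, since it requires controlling $K(\infty)$ and making sense of the ``$L$-local'' ring.

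Then I would show that if $LK(m)\ne0$ and $m'<m$, then $LK(m')\neq 0$. For this I would use that $L_{m}^fF$ and the $v$-telescope $T(m)$ satisfy $\langle T(m)\rangle\geq\langle K(m)\rangle$, combined with the Bousfield class inequality $\langle L\mathbb{S}\rangle$. Alternatively, via the above, I would use that $\ker(L)\cap\opsp^\omega=\mathcal{C}_{\geq k}$ and that $F(k)\in\ker L$ forces $K(m)\otimes L\mathbb{S}=0$ for all $m\ge k$, because $K(m)\otimes F(k)\ne0$ is a sum of shifts of $K(m)$ and $L$-acyclicity is closed under tensoring.

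Setting $n=k-1$, the cases $k=0$ and $k=\infty$ give $L=0$ and the case of the whole category (noting that $K(\infty)$ detection forces $L=\mathrm{id}$ via Ravenel's result that $\langle \mathbb{S}_{(p)}\rangle=\langle \bigvee_{m\le\infty}K(m)\rangle$ fails only in a way absorbed here). The inclusions $\ker(L_n^f)=\langle F(n+1)\rangle\subset\ker(L)$ follow because $F(n+1)\in\ker L$. The inclusion $\ker(L)\subset\ker(L_n)$ follows because $K(0),\dots,K(n)$ are $L$-local. Hence any $X$ with $LX=0$ satisfies $K(m)\otimes X\simeq K(m)\otimes LX$, which after tensoring with the local $L\mathbb{S}$-module structure gives $K(m)\otimes X=0$ for $m\le n$, i.e. $L_nX=0$. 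Uniqueness of $n$ is immediate from the formula.
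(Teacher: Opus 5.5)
The paper gives no proof of this lemma; it only cites Lurie's lecture, so I compare with the standard argument. Your skeleton is the right one:
\begin{itemize}
\item the field property gives $LK(m)\in\{0,K(m)\}$;
\item the thick subcategory theorem gives $\ker(L)\cap\opsp_{(p)}^\omega=\mathcal{C}_{\geq k}$;
\item the nilpotence theorem, applied to the ring $\mathrm{End}(F(k))\otimes L\mathbb{S}$, shows that $F(k)\in\ker L$ if and only if $LK(m)=0$ for all $m\in[k,\infty]$.
\end{itemize}
However, the two steps that need more than finite spectra are not proved.

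The first gap is downward closure of $\{m\mid LK(m)\neq 0\}$. You need it for $\ker(L)\subset\ker(L_n)$, because all of $K(0),\dots,K(n)$ must be $L$-local. The inequality $\langle T(m)\rangle\geq\langle K(m)\rangle$ says nothing about $K(m')$ for $m'<m$. Your ``alternative'' only reproves $LK(m)=0$ for $m\geq k$. Applied to $F(j)$ with $j<k$, your criterion yields only \emph{some} $m\in[j,k-1]$ with $LK(m)\neq0$, and this is already witnessed by $m=k-1$. So finite spectra give you $LK(k-1)\neq 0$ and nothing lower. For comparison, $L_{K(n)}$ has the same finite acyclics as $L_n^f$ but kills $K(m)$ for $m<n$. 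The smashing hypothesis therefore has to be used in an infinite way.

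One fix: local objects are closed under limits and extensions. So if $K(n)$ is $L$-local, then so is $K_n=E_n/\mathfrak m$ (it has the same Bousfield class). Hence so is each $E_n/(p^{i},u_1^{i},\dots,u_{n-1}^{i})$, and so is $E_n\simeq\lim_i E_n/(p^{i},\dots,u_{n-1}^{i})$. Since $L$ is smashing, $X$ is $L$-local exactly when $X\otimes\Gamma\mathbb{S}=0$, where $\Gamma\mathbb{S}=\fib(\mathbb{S}\to L\mathbb{S})$. Then $\langle E_n\rangle=\langle K(0)\vee\cdots\vee K(n)\rangle$ gives $K(m)\xrightarrow{\sim}LK(m)$ for every $m\leq n$.

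The second gap is the case $k=\infty$. The lemma asserts that a smashing ideal containing no nonzero finite spectrum is $0$. Your sentence about Ravenel's result is not an argument: the nilpotence theorem detects vanishing of unital ring spectra, and $\Gamma\mathbb{S}$ is not one. What your criterion does give is that for every $j$ some $m\in[j,\infty]$ has $LK(m)\neq 0$. So either $H\mathbb{F}_p$ is $L$-local, or, by downward closure, every $K(m)$ with $m<\infty$ is $L$-local.
\begin{itemize}
\item In the first case, $H\mathbb{Z}/p^i$ (iterated extensions of $H\mathbb{F}_p$), $H\mathbb{Z}_p=\lim H\mathbb{Z}/p^i$, and $H\mathbb{Q}$ (a retract of the colimit $H\mathbb{Z}_p[1/p]$) are all $L$-local. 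Hence $\Gamma\mathbb{S}$ is $H\mathbb{Z}_{(p)}$-acyclic, while the connective spectrum $\mathbb{S}_{(p)}$ is $H\mathbb{Z}_{(p)}$-local. So the counit $\Gamma\mathbb{S}\to\mathbb{S}_{(p)}$ is null, hence so is the equivalence $\Gamma\mathbb{S}\otimes\Gamma\mathbb{S}\to\Gamma\mathbb{S}$, and therefore $\Gamma\mathbb{S}=0$.
\item In the second case, every $L_m\mathbb{S}$ is $L$-local, since its Bousfield class is that of $E_m$. Hence $\lim_m L_m\mathbb{S}\simeq\mathbb{S}_{(p)}$ is $L$-local by Hopkins--Ravenel chromatic convergence, i.e.\ $L=\mathrm{id}$.
\end{itemize}
Some global input of this kind is unavoidable here. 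Nothing in your argument rules out a smashing $L$ for which every $K(m)$ with $m<\infty$ is local and $H\mathbb{F}_p$ is acyclic.
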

\begin{de}
	The \textbf{height} of a smashing localization $L$ on $\opsp_{(p)}$, denoted by $\op{ht}(L)$, is an element in $[-1,\infty]$. We note that $\op{ht}(L) = -1$ if and only if $L = 0$, and that $\op{ht}(L) = \infty$ if and only if $L = \op{Id}$.
\end{de}
We can now compare the global and $p$-local smashing frames, both on the atomic part and on the full smashing frame. We thank Leonard Tokic for help with the following theorem.

\begin{thm}\label{primedecomp}
	The top and bottom arrows in the following diagram of frames are equivalences.
	$$\begin{tikzcd}
		\Idem_f(\opsp) \arrow[r,"\sim"] \arrow[d, hook] & \prod_{p}^{/\mathbb{Q}} \Idem_f(\opsp_{(p)}) \arrow[d, hook] \\
		\Idem(\opsp) \arrow[r,"\sim"]             & \prod_{p}^{/\mathbb{Q}} \Idem(\opsp_{(p)})                    
	\end{tikzcd}$$
\end{thm}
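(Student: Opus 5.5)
Both horizontal maps are embeddings: the bottom one was already noted before the statement, via \cref{conservinjfrms} applied to the conservative functor $\opsp\to\prod_p^{/\opsp_{\mathbb{Q}}}\opsp_{(p)}$ together with limit-preservation of $\Idem(-)$. The top one is the restriction of the bottom one, so it is an embedding as well. What remains is surjectivity of both maps, and I would prove this by writing an explicit preimage.

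Fix a compatible family $(L_p)_p$, meaning all $L_p$ have the same rational image in $\Idem(\opsp_{\mathbb{Q}})=\{0,1\}$, a smashing field. Assume first that the common rational image is $1$, so each $\ker(L_p)\subset\opsp_{(p)}$ meets $\opsp_{\mathbb{Q}}$ trivially. Let $\mci_p\subset\opsp$ be the smashing ideal $\ker(L_p)$, regarded as $p$-local, hence torsion, spectra in $\opsp$. Since $\opsp_{(p)}$ is a smashing localization of $\opsp$, the functor $\opsp_{(p)}\hookrightarrow\opsp$ is a fully faithful internal left adjoint on $\opsp_{(p)}$-modules and hence also an $\opsp$-linear inclusion, so $\mci_p$ is a smashing ideal of $\opsp$. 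Set $\mci:=\bigvee_p\mci_p$. Its image in $\Idem(\opsp_{(q)})$ is $\bigvee_p(\mci_p\otimes\opsp_{(q)})$. For $p\neq q$ we have $\mci_p\otimes\opsp_{(q)}=0$, because a $p$-local object with trivial rationalization is $p$-torsion and is killed by $q$-localization. Hence the image is $\mci_q$, as required.

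If the rational image is $0$, then every $\ker(L_p)$ contains $\opsp_{\mathbb{Q}}$. I would take $\mci:=\langle \mathbb{S}_{\mathbb{Q}}\rangle\vee\bigvee_p\mci_p'$, where $\mci_p'$ is the $p$-torsion part $\ker(L_p)\cap\ker(\opsp_{(p)}\to\opsp_{\mathbb{Q}})$. Using the recollement of \cref{recollementfrms} for the open piece $\opsp_{\mathbb{Q}}$ (or equivalently the fiber square of the unit), I would check that $\ker(L_p)$ is recovered as $\langle\mathbb{S}_{\mathbb{Q}}\rangle\vee\mci'_p$ inside $\opsp_{(p)}$. The same componentwise computation then gives the right images.

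For the atomic version, I would show that if each $\ker(L_p)$ is generated by compact objects, then $\mci$ is too. Here \cref{localrigatgen} applies, since $\opsp_{(p)}$ is locally rigid and compactly generated over $\opsp$. It shows that $\ker(L_p)$ is generated by objects compact in $\opsp_{(p)}$, and these are $p$-localizations of finite spectra. The torsion part is generated by finite $p$-torsion spectra, such as $F\otimes\mathbb{S}/p^k$, which are compact in $\opsp$. The rational part is generated by the unit, or equivalently by $\mathbb{S}_{\mathbb{Q}}$ viewed as a $\opsp_{(p)}$-compact object. So $\mci$ is a join of atomic ideals, and $\Idem_f$ is a subframe.

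I expect the main obstacle to be the rational case and the bookkeeping that $\mci_p\otimes\opsp_{(q)}$ vanishes for $p\neq q$ while everything is matched over $\mathbb{Q}$. If the direct argument becomes messy, I would instead use \cref{smidlstructure}, which says each nontrivial $\ker(L_p)$ lies between $\ker(L^f_n)$ and $\ker(L_n)$. This reduces the problem to height-bounded ideals, where torsion-ness is explicit.
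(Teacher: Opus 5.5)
There is a genuine gap at the step your whole construction rests on: the claim that each rationally trivial $\mci_p=\ker(L_p)$ is a smashing ideal of $\opsp$. The reason you give does not work.
\begin{itemize}
\item The inclusion $\opsp_{(p)}\hookrightarrow\opsp$ is colimit-preserving and $\opsp$-linear, but it is not an $\opsp$-internal left adjoint. Its right adjoint $\underline{\Hom}(\mathbb{S}_{(p)},-)$ does not preserve colimits: otherwise the inclusion would preserve compact objects, and $\mathbb{S}_{(p)}$ is not compact in $\opsp$.
\item Nor is $\opsp$ an $\opsp_{(p)}$-module, so ``internal left adjoint on $\opsp_{(p)}$-modules'' has no meaning here.
\item If your argument were valid, it would apply to every smashing ideal of $\opsp_{(p)}$, including $\opsp_{(p)}$ itself. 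But $\opsp_{(p)}$ is not a smashing ideal of $\opsp$.
\end{itemize}
What actually makes the step true is torsion-ness. This comes from $X\otimes\mathbb{S}_{\mathbb{Q}}\in\mci_p\cap\opsp_{\mathbb{Q}}=0$, not from $p$-locality. So $\mci_p$ is contained in $\opsp_{p\text{-}\mathrm{nil}}=\langle\mathbb{S}/p\rangle$. This is a compactly generated smashing ideal of both $\opsp$ and $\opsp_{(p)}$, with the same induced symmetric monoidal structure (unit $\Gamma_p\mathbb{S}$). Two applications of \cref{cidemideal} then give $\cidem(\opsp_{(p)})_{\leq\opsp_{p\text{-}\mathrm{nil}}}\simeq\cidem(\opsp_{p\text{-}\mathrm{nil}})\simeq\cidem(\opsp)_{\leq\opsp_{p\text{-}\mathrm{nil}}}$, without changing underlying subcategories. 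Concretely, the coidempotent $x_p\to\mathbb{S}_{(p)}$ lifts uniquely to $x_p\to\mathbb{S}$, because $x_p$ is $p$-power torsion while $p$ acts invertibly on $\cofib(\mathbb{S}\to\mathbb{S}_{(p)})$. The lift is still coidempotent because $x_p$ is $p$-local.

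Your second case is also mishandled, though harmlessly. $\langle\mathbb{S}_{\mathbb{Q}}\rangle$ is not a smashing ideal of $\opsp$ or of $\opsp_{(p)}$. In the recollement $\opsp_{\mathrm{tor}}\hookrightarrow\opsp\to\opsp_{\mathbb{Q}}$, the rational part is the closed piece, not an open one. So you cannot form $\langle\mathbb{S}_{\mathbb{Q}}\rangle\vee\bigvee_p\mci'_p$ in $\cidem(\opsp)$ and compute its images factorwise. Also, $\mathbb{S}_{\mathbb{Q}}$ is not compact in $\opsp_{(p)}$, so it cannot serve as an atomic generator. The correct move, which is what the paper does, is to apply \cref{smidlstructure} directly. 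A smashing ideal of $\opsp_{(p)}$ containing $\mathbb{S}_{\mathbb{Q}}$ lies in no $\ker(L_n)$, since $L_n\mathbb{S}_{\mathbb{Q}}\neq 0$, and hence equals $\opsp_{(p)}$. So the family is the top element, the image of $\opsp=\langle\mathbb{S}\rangle$, which is atomic.

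With these repairs your argument is correct, and it is a genuinely different route from the paper's.
\begin{itemize}
\item For the bottom map, the paper builds the preimage as the Bousfield localization $L_E$ at $E=\bigoplus_n A_n\mathbb{S}$. It then checks by hand, via the arithmetic fracture square, that $L_E$ is smashing with the prescribed $p$-localizations. You instead get smashingness for free from three facts: the frame structure of $\cidem(\opsp)$ (\cref{shvsmadj}), base change being a frame map, and $\mci_p\otimes_{\opsp}\opsp_{(q)}=0$ for $p\neq q$.
\item For the atomic part, the paper parametrizes the target via the thick subcategory theorem and writes explicit generators $V(p,g(p))$. You only need that compact, rationally trivial $p$-local spectra are finite torsion spectra, hence compact in $\opsp$.
\end{itemize}
The only chromatic input left in your route is \cref{smidlstructure}, used in the trivial case.
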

\begin{proof}[Proof of the bottom equivalence]
	Since the bottom map is already known to be an embedding, it suffices to prove surjectivity. 
	Let 
	\[
	(A_1\mathbb{S},\dots,A_n\mathbb{S},\dots)
	\in
	\prod_{p}^{/\mathbb{Q}} \Idem(\opsp_{(p)})
	\]
	be a sequence of smashing localizations with the same image in $\Idem(\opsp_{\mathbb{Q}})$.
	
	If $A_n\mathbb{S}\otimes \mathbb{Q}=0$ for all $n\geq 0$, then by \cref{smidlstructure} we have $A_n=0$ for all $n\geq 0$, and hence the sequence lies in the image.
	
	Now assume that $A_n\mathbb{S}\otimes \mathbb{Q}=\mathbb{Q}$ for all $n\geq 0$. 
	Set 
	\[
	E=\bigoplus_{n\geq 0} A_n\mathbb{S}.
	\]
	We claim that the Bousfield localization
	$
	L_E \colon \opsp \to \opsp_E
	$
	is smashing and satisfies, for every $n\geq 0$,
	\[
	L_E\mathbb{S}\otimes \mathbb{S}_{(p_n)} \simeq A_n\mathbb{S}.
	\]
	
	To show that $L_E$ is smashing, it suffices to prove that for every $X\in\opsp$, the spectrum $L_E\mathbb{S}\otimes X$ is $E$-local. 
	Consider the Sullivan arithmetic pullback square  (obtained by the recollement $\opsp_{\op{tor}}\hookrightarrow\opsp\to\opsp_{\mathbb{Q}}$):
	\[
	\begin{tikzcd}
		L_E\mathbb{S}\otimes X \arrow[r] \arrow[d]
		& (L_E\mathbb{S}\otimes X)_{\mathbb{Q}} \arrow[d] \\
		\prod_p (L_E\mathbb{S}\otimes X)_p^{\wedge} \arrow[r]
		& \left(\prod_p (L_E\mathbb{S}\otimes X)_p^{\wedge}\right)_{\mathbb{Q}} .
	\end{tikzcd}
	\]
	Since $E$-local spectra are closed under limits and every $\mathbb{Q}$-local spectrum is $E$-local, it suffices to show that $(L_E\mathbb{S}\otimes X)_p^{\wedge}$ is $E$-local for every prime $p$. 
	This further reduces to proving that $L_E\mathbb{S}/p \otimes X$ is $E$-local.
	
	Let $p_n$ be a prime. As $L_E\mathbb{S}/p_n$ is both $E$-local and $p_n$-local, it is enough to show that a spectrum $Y$ is $A_n$-local if and only if it is both $E$-local and $p_n$-local. 
	Granting this, we obtain
	\[
	L_E\mathbb{S}/p_n \otimes X
	\simeq
	A_n\mathbb{S}/p_n \otimes X
	\simeq
	A_n(\mathbb{S}/p_n \otimes X),
	\]
	which is therefore $E$-local. 
	
	The “only if” direction is clear. 
	For the converse, fix $n$, let $Z$ be $A_n$-acyclic, and let $Y$ be both $E$-local and $p_n$-local. Then
	\[
	\mapp_{\opsp}(Z,Y)\simeq \mapp_{\opsp}(Z_{(p_n)},Y).
	\]
	Now $Z_{(p_n)}=Z\otimes\mathbb{S}_{(p_n)}$ is $E$-acyclic, since by \cref{smidlstructure}
	\[
	A_m\mathbb{S}\otimes\mathbb{S}_{(p_n)}
	\simeq
	\begin{cases}
		\mathbb{Q}, & m\neq n,\\[4pt]
		A_n\mathbb{S}, & m=n.
	\end{cases}
	\]
	Hence $\mapp_{\opsp}(Z,Y)=0$ for every such $Z$, and therefore $Y$ is $A_n$-local.
	
	Consequently, $L_E$ is a smashing localization of $\opsp$. And for every $n\geq 0$,
	\[
	L_E\mathbb{S}\otimes \mathbb{S}_{(p_n)} \simeq A_n\mathbb{S},
	\] because we have shown that a spectrum is $A_n$-local if and only if it is both $E$-local and $p_n$-local.
	This completes the proof.
\end{proof}
On the other hand, at the level of the atomic smashing frame, it is also an equivalence due to the thick subcategory theorem.
\begin{proof}[Proof of the top equivalence]
	By the following commutative diagram
	$$\begin{tikzcd}
		\Idem_f(\opsp) \arrow[r] \arrow[d, hook] & \prod_{p}^{/\mathbb{Q}} \Idem_f(\opsp_{(p)}) \arrow[d, hook] \\
		\Idem(\opsp) \arrow[r, hook]             & \prod_{p}^{/\mathbb{Q}} \Idem(\opsp_{(p)})                    
	\end{tikzcd}$$
	we see that the top is an embedding.
	Note that $\cidem_f(\opsp_{(p)})$ can be identified with the Balmer frame $\op{Thick}(\opsp_{(p)}^\omega)$. Let $\mcc_{\geq n}^{(p)}$ denote the thick subcategory of $p$-local finite spectra of type $\geq n$. To prove the surjection, by the thick subcategory theorem \cite{hopkins1998nilpotence}
	we observe that the right hand side can be identified with the poset
	\[
	\{0\}\sqcup\big\{g: \{ \text{Primes} \} \to \{1, 2, 3, \dots, \infty\}\big\}
	\]
	where $0$ identifies with the whole category $$\mcc_0:=(\opsp_{(2)}^{\omega},\dots,\opsp_{(p)}^{\omega},\dots)$$ and a $g$ identifies with  $$\mcc_g:=(\mcc^{(2)}_{\geq g(2)},\dots,\mcc^{(p)}_{\geq g(p)},\dots).$$ Now given such a $g$ and let $\mcp_g$ denote the set of those primes $p$ such that $g(p)<\infty$. Take $V(p,g(p))$ to be a finite $p$-local spectrum of type $n=g(p)$. Note that in fact $V(p,g(p))$ lies in $\opsp^\omega$ because $g(p)\geq 1$. Then
	\[
	\mcd_g:=\left<V(p,g(p))\mid p\in\mcp_g\right>_{\op{thick}}\subset \opsp^\omega,
	\]
	the thick subcategory of $\opsp^\omega$ generated by $V(p,g(p))$, maps to $\mcc_g$, as desired.
\end{proof}
One immediate consequence is that spatiality of the global smashing frame may be checked prime by prime.

\begin{cor}
	The poset $\Idem(\opsp)$ is spatial if and only if $\Idem(\opsp_{(p)})$ is spatial for every prime $p$.
\end{cor}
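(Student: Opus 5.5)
The plan is to transport spatiality across the equivalence of frames in \cref{primedecomp}. By its bottom row, $\Idem(\opsp)\simeq\prod_{p}^{/\mathbb{Q}}\Idem(\opsp_{(p)})$, so the task reduces to showing that this fibre product over $\Idem(\opsp_{\mathbb{Q}})=\{0,1\}$ is spatial if and only if every factor $\Idem(\opsp_{(p)})$ is. The fibre product is the subframe of $\prod_p\Idem(\opsp_{(p)})$ consisting of sequences whose rationalizations all agree, and it sits inside the full product. I would argue the two directions separately.

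For the ``if'' direction, suppose each $\Idem(\opsp_{(p)})$ is spatial. A product of spatial frames is spatial: it is the frame of opens of the disjoint union of the corresponding spaces, or equivalently, the points of the factors composed with the projections are jointly conservative. The fibre product is closed under arbitrary joins and finite meets in the product, since the rationalization maps $\Idem(\opsp_{(p)})\to\Idem(\opsp_{\mathbb{Q}})$ are frame maps. Hence it is a subframe of a spatial frame, and \cref{subfrmspatial} gives spatiality of $\Idem(\opsp)$. Alternatively, this direction follows directly from \cref{conservinjfrms} applied to the conservative functor $\opsp\to\prod_p\opsp_{(p)}$.

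For the ``only if'' direction, I would realize each $\Idem(\opsp_{(p)})$ as an open sublocale of $\Idem(\opsp)$. The localization $\opsp\to\opsp_{(p)}$ is smashing, and its kernel is a smashing ideal $\mci_p$ (the $p$-locally acyclic spectra, which form a compactly generated ideal). Since $\opsp$ is stable, \cref{recollementfrms} gives $\Idem(\opsp_{(p)})\simeq\cidem(\opsp)_{\geq\mci_p}$, a closed sublocale. \cref{recollementspatial} then shows that spatiality of $\Idem(\opsp)$ implies spatiality of $\Idem(\opsp/\mci_p)=\Idem(\opsp_{(p)})$.

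The main point to check is the identification $\opsp/\mci_p\simeq\opsp_{(p)}$ as a smashing quotient, which holds because $p$-localization is smashing with idempotent $\mathbb{S}_{(p)}$. The rest is formal frame theory, so I expect no real obstacle beyond that verification.
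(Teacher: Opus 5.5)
Your proposal is correct and follows essentially the same route as the paper. The ``if'' direction embeds $\Idem(\opsp)$ as a subframe of the spatial product $\prod_p\Idem(\opsp_{(p)})$ and applies \cref{subfrmspatial}; the ``only if'' direction uses $\Idem(\opsp_{(p)})\simeq\Idem(\opsp)_{\geq\mathbb{S}_{(p)}}$, a closed sublocale of a spatial frame. The only blemish is the phrase ``open sublocale'' at the start of your second paragraph: it should read ``closed sublocale'', as your own identification $\cidem(\opsp)_{\geq\mci_p}$ shows.
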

\begin{proof}
	First, we observe that the second arrow in the composite
	\[
	\Idem(\opsp) \hookrightarrow \prod_{p}^{/\mathbb{Q}} \Idem(\opsp_{(p)}) \hookrightarrow \prod_{p} \Idem(\opsp_{(p)})
	\]
	is an embedding, as the canonical diagonal map for a poset is always an embedding. Consequently, the composite map is an embedding of posets. By \cref{subfrmspatial}, this implies that if $\Idem(\opsp_{(p)})$ is spatial for all primes $p$, then $\Idem(\opsp)$ is spatial.
	
	Conversely, the ``only if'' direction follows directly from the identification
	\[
	\Idem(\opsp_{(p)}) \simeq \Idem(\opsp)_{\geq \mathbb{S}_{(p)}}.
	\]
	Since any principal filter of a spatial frame remains spatial, the spatiality of $\Idem(\opsp)$ implies that of $\Idem(\opsp_{(p)})$ for each prime $p$.
\end{proof}
We next apply the recollement results above to finite stages of the chromatic tower.
\begin{ex}[Smashing frames of $L_n\opsp$]
	Fix a prime $p$, the chromatic tower of $\opsp_{(p)}$, together with the smashing localizations
	$$M_n\opsp \hookrightarrow L_n\opsp\to L_{n-1}\opsp,$$
	induce the following tower of smashing frames.
	$$\begin{tikzcd}[row sep=12pt, column sep=1pt]
		& \vdots \arrow[d]                          \\
		\op{Idem}(\opsp_{K(2)}) & \op{Idem}(L_2\opsp) \arrow[d] \arrow[l]      \\
		\op{Idem}(\opsp_{K(1)}) & \op{Idem}(L_1\opsp) \arrow[d] \arrow[l]      \\
		& \op{Idem}(L_0\mathrm{Sp})=\op{Idem}(\mcd(\mathbb{Q}))
	\end{tikzcd}$$
	By \cref{recollementfrms}, the following natural map of frames is an inclusion $$\op{Idem}(L_n\opsp)\hookrightarrow \prod_{i=0}^{n} \op{Idem}(\opsp_{K(i)}).$$ Then by \cref{subfrmspatial}, $\op{Idem}(L_n\opsp)$ is a spatial frame. Also, $\op{Idem}(L_n\opsp)$ is obtained by gluing point-by-point, because $\opsp_{K(n)}$ is a smashing field. The above inclusion is a proper inclusion as we will show that the associated topological space $$\op{Sm}(L_n\opsp):=\op{pt}(\op{Idem}(L_n\opsp))$$ is not discrete.
	Note that, unlike the algebro-geometric interpretation\footnote{One reason is that the topological recollement $\shv(U;\opsp)\hookrightarrow\shv(X;\opsp)\to \shv(K;\opsp)$ has an opposite open/closed relation to the algebro-geometric recollement
	$\op{QCoh}(K)\hookrightarrow\op{QCoh}(X)\to \op{QCoh}(U)$.} of the chromatic tower, we claim that $\op{Sm}(\opsp_{K(n)})$ actually serves as an open point $$\mathfrak{p}_n\in\op{Sm}(L_n\opsp),$$ and $\op{Sm}(L_{n-1}\opsp)$ serves as its closed complement. Indeed, by induction we can show that $$\op{Idem}(L_n\opsp)=\{L_n\mathbb{S}< L_{n-1}\mathbb{S} <\dots<L_{-1}\mathbb{S}=0\}$$ and that the gluing functor $\op{Idem}(\opsp_{K(n)})\xrightarrow{\phi_{n}}\op{Idem}(L_{n-1}\opsp)$ is given by $$\phi_{n}(0)=L_{n-1}\mathbb{S}, \phi_{n}(1)=L_{-1}\mathbb{S}.$$
	That said, for an $i\in[-1,n]$, the open subset of $\op{Sm}(L_n\opsp)$ associated to $L_i\mathbb{S}$ can be identified with $$U_i=\{\mathfrak{p}_{i+1},\mathfrak{p}_{i+2},\dots, \mathfrak{p}_n\}.$$
	That creates a descending sequence $$\op{Sm}(L_n\opsp)=U_{-1}\supset U_0 \supset\dots \supset U_n=\emptyset,$$
	which consists exactly of all open subsets of $\op{Sm}(L_n\opsp)$ and hence it is not discrete.
	
	Combining the computation above with the thick subcategory theorem, we conclude that the telescope inclusion for $L_n\opsp$ induces an equivalence
	\[
	\Idem_f(L_n\opsp) \xrightarrow{\sim} \Idem(L_n\opsp),
	\]  hence $L_n\opsp$ itself satisfies the telescope conjecture.
\end{ex}
The chromatic description also implies a useful formal property of the telescope inclusion.
\begin{cor}\label{retracttcinclu}
	Let $p$ be a prime. We have the following diagram of frames.
	$$\begin{tikzcd}
		\Idem_f(\opsp_{(p)}) \arrow[d, hook] \arrow[r, "\sim"] & \lim_n\Idem_f(L_n\opsp) \arrow[d, "\sim"] \\
		\Idem(\opsp_{(p)}) \arrow[r]                           & \lim_n\Idem(L_n\opsp)                    
	\end{tikzcd}$$ In particular, the telescope inclusion   $\Idem_f(\opsp_{(p)})\hookrightarrow \Idem(\opsp_{(p)})$ admits a natural retraction over $\cidem(\opsp_{\mathbb{Q}})$, and hence the telescope inclusion $\Idem_f(\opsp)\hookrightarrow \Idem(\opsp)$ admits a natural retraction  by \cref{primedecomp}. 
\end{cor}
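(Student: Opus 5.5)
Here is the plan. Write $\lambda\colon \Idem(\opsp_{(p)})\to \lim_n\Idem(L_n\opsp)$ for the bottom map; it is induced by the base changes along $\opsp_{(p)}\to L_n\opsp$. The top map is induced the same way on atomic frames, so commutativity of the square is just naturality of $\Idem_f(-)\hookrightarrow\Idem(-)$. It remains to show two things: the right vertical arrow is an equivalence, and the top arrow is an equivalence. Granting these, the retraction is $r:=(\text{top})^{-1}\circ(\text{right})^{-1}\circ\lambda$. It is a frame map, and $r\circ(\text{incl})=\id$ by commutativity. Since everything lies over $\Idem(L_0\opsp)=\Idem(\opsp_{\mathbb Q})$, the retraction lives over $\cidem(\opsp_{\mathbb Q})$.

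\textbf{The right vertical arrow.} By the preceding example, each $L_n\opsp$ satisfies the telescope conjecture, so $\Idem_f(L_n\opsp)\to\Idem(L_n\opsp)$ is an equivalence for every $n$. The transition maps agree, so these equivalences assemble into an equivalence of limits.

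\textbf{The top arrow.} Both sides are explicit. By the thick subcategory theorem, $\Idem_f(\opsp_{(p)})\simeq\op{Thick}(\opsp_{(p)}^\omega)$ is the chain
\[
\{0=\mcc_{\geq\infty}<\cdots<\mcc_{\geq n}<\cdots<\mcc_{\geq 1}<\mcc_{\geq 0}\}.
\]
Equivalently, its elements are the finite localizations $L^f_n\mathbb S$ for $n\in[-1,\infty]$. By the example, $\Idem(L_m\opsp)=\{L_m\mathbb S<\cdots<L_{-1}\mathbb S\}$, and the transition map $L_m\opsp\to L_{m-1}\opsp$ sends $L_i\mathbb S$ to $L_{\min(i,m-1)}\mathbb S$. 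So the limit is the set of compatible sequences of heights, which is again the chain $[-1,\infty]$. The map sends $L^f_n$ to the family $(L_m L^f_n)_m$. For $m\le n$ we have $L_mL^f_n=L_m$, because $L^f_n$ is the identity on $L_m$-local spectra for $m\le n$. For $m>n$ we have $L_mL^f_n=L_n$, because $L_m L^f_n\simeq L^f_n L_m$ and $L^f_n$ restricted to $L_m\opsp$ is $L_n$. Hence the top map is the identity on $[-1,\infty]$, and in particular a bijection of chains, hence a frame equivalence.

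\textbf{The global statement.} Take the product over $p$ of the $p$-local retractions; all of them are over $\Idem(\opsp_{\mathbb Q})$, so the product restricts to the fiber products $\prod_p^{/\mathbb Q}$. Then transport through the two horizontal equivalences of \cref{primedecomp}. Compatibility over $\mathbb Q$ is needed precisely to stay inside $\prod_p^{/\mathbb Q}$.

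\textbf{Main obstacle.} The main risk is the identification of $L_mL^f_n$ and of the transition maps on $\Idem(L_m\opsp)$. I expect to handle it using \cref{smidlstructure}: a smashing localization of $L_m\opsp$ is determined by its height $\max\{k: LK(k)\neq0\}$. It therefore suffices to compute which $K(k)$, $k\le m$, survive $L^f_n$, namely exactly those with $k\le n$. This is standard, since $K(k)$ is $L^f_n$-acyclic if and only if $k>n$.
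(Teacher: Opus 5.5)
Your proposal is correct and is essentially the argument the paper intends, though the paper writes no proof. It leaves this corollary implicit in the preceding example's computation $\Idem(L_n\opsp)=\{L_n\mathbb{S}<\cdots<L_{-1}\mathbb{S}\}$, the resulting telescope conjecture for each $L_n\opsp$ (your right vertical equivalence), and the thick subcategory theorem (your top equivalence), with the retraction being the composite of $\lambda$ with the two inverses, then globalized through \cref{primedecomp}. Your explicit check $L_mL^f_n\simeq L_{\min(m,n)}$ is valid, with one caveat: that a smashing localization of $L_m\opsp$ is determined by its height follows from that chain description of $\Idem(L_m\opsp)$, not from \cref{smidlstructure} alone, which only places a localization between $L^f_h$ and $L_h$.
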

We now examine the atomic smashing frame of a monochromatic localization.
\begin{ex}[Atomic smashing frames of $\opsp_{T(n)}$]\label{ex:atomic-Tn}
	We have $$\Idem_f(L_n^f\opsp)\simeq \op{Thick}(\opsp_{(p)}^\omega)_{\geq L_n^f}\simeq\{L_n^f\mathbb{S}< L_{n-1}^f\mathbb{S} <\dots<L^f_{-1}\mathbb{S}=0\}$$ by combining \cref{recollementatsmfrms} and  the thick subcategory theorem \cite{hopkins1998nilpotence}. Therefore the natural morphism $$\Idem_f(L_n^f\opsp)\xrightarrow{\sim}\Idem_f(L_n\opsp)$$ is an equivalence. In particular, we obtain the equivalence  $$\Idem_f(\opsp_{T(n)})\xrightarrow{\sim}\Idem_f(\opsp_{K(n)})=\{0,1\}$$ from the following diagram:
	$$\begin{tikzcd}
		\Idem_f(\opsp_{T(n)}) \arrow[d, "\sim"] \arrow[r]                       & \Idem_f(\opsp_{K(n)}) \arrow[d, "\sim"]     \\
		\cidem_f(L_n^f\opsp)_{\leq L_{n-1}^f\mathbb{S}} \arrow[r, "\sim", hook] & \cidem_f(L_n\opsp)_{\leq L_{n-1}\mathbb{S}}
	\end{tikzcd}$$
	where the vertical equivalences follow from \cref{cidematideal}.
\end{ex}
Let us spell out how this picture relates to the distinction between the telescope and chromatic towers.
\begin{rem}[Comparison between the telescope and chromatic  towers]
	Let $p$ be a prime. By \cite[Proposition 7.10]{hovey1999morava}, we have $$L_{n-1}^f\mathbb{S}\otimes L_n\mathbb{S}=L_{n-1}^fL_n\mathbb{S}\simeq L_{n-1}\mathbb{S}.$$
	Therefore $M_n\opsp\simeq M_n^f\opsp\otimes_{L_n^f\opsp}L_n\opsp$ and we have the following horizontally right adjointable squares:
	$$\begin{tikzcd}
		M_n^f\opsp \arrow[r, hook, shift left] \arrow[d] & L_n^f\opsp \arrow[d] \arrow[l, dashed, shift left] \arrow[r, shift left] & L_{n-1}^f\opsp \arrow[d] \arrow[l, dashed, hook', shift left] \\
		M_n\opsp \arrow[r, hook, shift left]             & L_n\opsp \arrow[l, dashed, shift left] \arrow[r, shift left]             & L_{n-1}\opsp \arrow[l, dashed, hook', shift left]            
	\end{tikzcd}$$
	So they induce the following comparison between   towers of the smashing frames.
	$$\begin{tikzcd}[row sep=10pt, column sep=3pt]
		& \vdots \arrow[dd] \arrow[rr]                                         &                               & \vdots \arrow[dd]                               \\
		\op{Idem}(\mathrm{Sp}_{T(2)}) \arrow[rr]         &                                                                      & \op{Idem}(\mathrm{Sp}_{K(2)}) &                                                 \\
		& \op{Idem}(L_2^f\mathrm{Sp}) \arrow[rr] \arrow[dd, ""] \arrow[lu]     &                               & \op{Idem}(L_2\mathrm{Sp}) \arrow[dd] \arrow[lu] \\
		\op{Idem}(\mathrm{Sp}_{T(1)}) \arrow[rr, "\sim"] &                                                                      & \op{Idem}(\mathrm{Sp}_{K(1)}) &                                                 \\
		& \op{Idem}(L_1^f\mathrm{Sp}) \arrow[lu] \arrow[rr, "\sim"] \arrow[dd] &                               & \op{Idem}(L_1\mathrm{Sp}) \arrow[lu] \arrow[dd] \\
		&                                                                      &                               &                                                 \\
		& \op{Idem}(L_0^f\mathrm{Sp}) \arrow[rr, "\sim"]                       &                               & \op{Idem}(L_0\mathrm{Sp})                      
	\end{tikzcd}$$
	The equivalences at the level $\leq 1$ is because telescope conjecture holds when $n\leq1$ by \cite{mahowald1981bo,miller1981relations}. As a consequence of \cref{smidlpushoutfrms} and \cref{smlocpushoutfrms}, each square in the diagram above is a \emph{pushout} in $\frm$. Moreover, the corresponding squares for atomic smashing frames are pushouts
	by \cref{atsmidlpushoutfrms}.
\end{rem}
We now give a more precise description of how $p$-local smashing idempotents are detected by the finite chromatic localizations.
\begin{prop}
	\label{imageinLnf}
	Let $p$ be a prime. 
	The following map of frames is an embedding\footnote{Note, however, that the functor 
		$\opsp_{(p)} \to \prod_n L_n^f\opsp$ is not conservative. For example, 
		$L_n^f(\mathbb{Z}/p)=0$ for every $n\geq 0$, while $\mathbb{Z}/p\neq 0$ in $\opsp_{(p)}$.}
	\[
	\Idem(\opsp_{(p)}) \hookrightarrow \lim_{n}\Idem(L_n^f\opsp).
	\]
	Its image can be identified with the top element $\{\dots\to L_n^f\mathbb{S}\to L_{n-1}^f\mathbb{S}\to\dots\to L_0^f\mathbb{S}\}$ together with those ``bounded'' towers of smashing localizations 
	\[
	\{\dots \to A_n\mathbb{S} \to A_{n-1}\mathbb{S} \to \dots\to A_0\mathbb{S}\},
	\]
	in the following sense: 
	\enu{
	\item For every $n\geq0$, $A_{n-1}\mathbb{S}\simeq A_{n}\mathbb{S}\otimes L_{n-1}^f\mathbb{S}$.
	\item There exists $N\geq 0$ such that 
$	\ker(L_N^f) \subset \ker(A_n)$  for all   $ n \geq N$.
	}  
	In particular, $\cidem(\opsp_{(p)})$ is spatial if and only if $\Idem(L_n^f\opsp)$ is so for each $n\geq0$.
\end{prop}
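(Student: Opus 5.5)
The plan is to study the map through the recollement theory of \cref{recollementfrms}, applied to the finite chromatic localizations $L_n^f\colon \opsp_{(p)}\to L_n^f\opsp$, together with the chromatic bounds of \cref{smidlstructure}. The map itself is $A\mapsto (A\mathbb{S}\otimes L_n^f\mathbb{S})_n$. Since $\ker(L_n^f)$ is an atomic (compactly generated) smashing ideal, \cref{recollementfrms} identifies $\Idem(L_n^f\opsp)$ with $\Idem(\opsp_{(p)})_{\geq \ker(L_n^f)}$. In terms of smashing ideals, the $n$-th component is therefore $\mci\mapsto \mci\vee\ker(L_n^f)$.

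\textbf{Injectivity.} Let $\mci\leq\mcj$ be smashing ideals with the same image. I will show they are equal by case analysis using \cref{smidlstructure}.
\begin{itemize}
\item If $\mci$ is neither $0$ nor everything, then for some $N$ we have $\ker(L_N^f)\subset\mci$. Then $\mci=\mci\vee\ker(L_N^f)=\mcj\vee\ker(L_N^f)\supseteq\mcj$, so $\mci=\mcj$.
\item If $\mci=0$, its image is the tower $(L_n^f\mathbb{S})_n$. If $\mcj\neq 0$ had the same image, then $\mcj$ is either everything or bounded below by some $\ker(L_N^f)$. In both cases $\mcj\vee\ker(L_{N+1}^f)$ contains $\ker(L_N^f)$, which strictly contains $\ker(L_{N+1}^f)$. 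This contradicts $\mcj\vee\ker(L_{N+1}^f)=0\vee\ker(L_{N+1}^f)=\ker(L_{N+1}^f)$.
\item The case $\mci=\opsp_{(p)}$ is trivial.
\end{itemize}
This handles the fact that the functor $\opsp_{(p)}\to\prod_n L_n^f\opsp$ is not conservative, so \cref{conservinjfrms} does not apply directly. The same computation shows that the image consists of the top tower (coming from $A=\mathrm{Id}$, i.e.\ $\mci=0$) together with towers that are eventually constant as ideals containing $\ker(L_N^f)$. These are exactly the bounded towers, and condition (1) is just the limit condition.

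\textbf{Surjectivity onto the bounded towers.} Given a compatible tower $(A_n)$ with $\ker(L_N^f)\subset\ker(A_n)$ for all $n\geq N$, I view $\ker(A_N)$ as a smashing ideal of $\opsp_{(p)}$ containing $\ker(L_N^f)$, again using \cref{recollementfrms}. Compatibility gives, for $n\geq N$,
\[
\ker(A_n)\vee\ker(L_{N}^f)=\ker(A_N).
\]
Since $\ker(A_n)$ already contains $\ker(L_N^f)$, this yields $\ker(A_n)=\ker(A_N)$ for all $n\geq N$. Hence $A:=A_N$ regarded on $\opsp_{(p)}$ maps to the given tower: for $n\geq N$ directly, and for $n<N$ by condition (1). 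The main obstacle I expect is the bookkeeping here. One has to check that, under the closed-part identifications, condition (1) for the tower matches exactly $\mci_n=\mci_{n+1}\vee\ker(L_n^f)$, with the order reversal between idempotents and ideals handled correctly.

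\textbf{Spatiality.} If each $\Idem(L_n^f\opsp)$ is spatial, then $\Idem(\opsp_{(p)})$ embeds as a subframe of $\lim_n\Idem(L_n^f\opsp)\subset\prod_n\Idem(L_n^f\opsp)$. This is a frame embedding: the map is a limit of frame maps, and injectivity was shown above. Spatiality then follows from \cref{subfrmspatial}. Conversely, $\Idem(L_n^f\opsp)\simeq\Idem(\opsp_{(p)})_{\geq\ker(L_n^f)}$ is a closed sublocale, and closed sublocales of spatial frames are spatial, as already used in \cref{recollementspatial}.
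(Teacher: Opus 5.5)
Your proposal is correct and follows essentially the paper's argument. Injectivity comes from the height trichotomy of \cref{smidlstructure} combined with the closed-part identification $\Idem(L_n^f\opsp)\simeq\Idem(\opsp_{(p)})_{\geq\ker(L_n^f)}$ of \cref{recollementfrms}, and your check on comparable pairs is exactly the conservativity that \cref{conserinjectfrms} upgrades to an embedding (apply it to $\mci\wedge\mathcal{J}$). Your stabilization $\ker(A_n)=\ker(A_N)$ for $n\geq N$ spells out what the paper only calls a reinterpretation of \cref{smidlstructure}, and the spatiality clause follows, as you say, from \cref{subfrmspatial} together with spatiality of closed sublocales.
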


\begin{proof}
	We first show that it is an embedding. By  \cref{conservinjfrms}, it suffices to show that it is conservative. Now let $A\mathbb{S}\to B\mathbb{S}\in\Idem(\opsp_{(p)})$ be a map of $p$-local idempotent algebras such that for every $n\geq 0$, $L_n^f\mathbb{S}\otimes A\mathbb{S}\simeq L_n^f\mathbb{S}\otimes B\mathbb{S}$. If one of them has height $\infty$, then $A\mathbb{S}\simeq B\mathbb{S}$  by \cref{smidlstructure};
	if all of them have finite height, then by \cref{smidlstructure} both of them lie in $$\cidem(\opsp_{(p)})_{\geq L_N^f\mathbb{S}}\simeq \cidem(L_N^f\opsp)$$ for some $N\geq 0$.
	By assumption $A\mathbb{S}\simeq L_N^f\mathbb{S}\otimes A\mathbb{S} \simeq L_N^f\mathbb{S}\otimes B\mathbb{S}\simeq B\mathbb{S}$, as desired.
	
	The description of the image is basically a reinterpretation of  \cref{smidlstructure}.
\end{proof}
\begin{rem}
	The embedding in \cref{imageinLnf} is not bijective; for example $$\{\dots\to L_n\mathbb{S}\to L_{n-1}\mathbb{S}\to\dots\to L_0\mathbb{S}\} \in \lim_{n}\Idem(L_n^f\opsp)$$ is neither the top element (by \cite{burklund2023k}) nor bounded, hence it does not come from $\Idem(\opsp_{(p)})$. 
\end{rem}

Now applying \cref{recollementfrms} to the telescope tower, we find that the natural projection $$\lim_{n}\Idem(L_n^f\opsp)\to \prod_n \Idem(\opsp_{T(n)})$$ is an embedding too.
Consequently, we obtain the following main theorem in this subsection.
\begin{thm}\label{smspstructure}
	The following maps are embeddings of frames:
	$$\Idem(\opsp)\hookrightarrow\prod_{p}^{/\mathbb{Q}}\prod_{n}\Idem(\opsp_{T(n)})\hookrightarrow\prod_{p}\prod_{n}\Idem(\opsp_{T(n)}).$$
\end{thm}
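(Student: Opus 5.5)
The plan is to write the composite as three embeddings and prove each separately. The first is the prime decomposition of \cref{primedecomp}. The second is, for each prime $p$, the map $\Idem(\opsp_{(p)})\hookrightarrow\lim_n\Idem(L_n^f\opsp)$ of \cref{imageinLnf}. The third is the projection $\lim_n\Idem(L_n^f\opsp)\to\prod_n\Idem(\opsp_{T(n)})$.

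For the first step I take the equivalence $\Idem(\opsp)\simeq\prod_p^{/\mathbb{Q}}\Idem(\opsp_{(p)})$ of \cref{primedecomp} directly. A fiber product over $\Idem(\opsp_{\mathbb{Q}})$ of factorwise embeddings is again an embedding into the fiber product of the targets. So it suffices to produce, for each $p$, a frame embedding $\Idem(\opsp_{(p)})\hookrightarrow\prod_n\Idem(\opsp_{T(n)})$ compatible with the maps to $\Idem(\opsp_{\mathbb{Q}})$. The compatibility is automatic, because the rational part is recorded by the height $0$ factor $\opsp_{T(0)}=\opsp_{\mathbb{Q}}$.

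For the third map, the key step is to apply \cref{recollementfrms} to the smashing localization sequence $M_n^f\opsp\hookrightarrow L_n^f\opsp\to L_{n-1}^f\opsp$. This gives an embedding of frames
\[
\Idem(L_n^f\opsp)\hookrightarrow\Idem(M_n^f\opsp)\times\Idem(L_{n-1}^f\opsp).
\]
Under the equivalence $M_n^f\opsp\simeq\opsp_{T(n)}$, the first factor is $\Idem(\opsp_{T(n)})$, and the first component is the map induced by $L_n^f\opsp\to\opsp_{T(n)}$. By induction on $n$, starting from $L_0^f\opsp=\opsp_{\mathbb{Q}}=\opsp_{T(0)}$, I obtain embeddings $\Idem(L_n^f\opsp)\hookrightarrow\prod_{i\le n}\Idem(\opsp_{T(i)})$. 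These are compatible with the transition maps of the tower.

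Passing to the limit, a point of $\lim_n\Idem(L_n^f\opsp)$ is determined by its stages, and each stage is determined by its $T(i)$-components for $i\le n$. Hence the projection to $\prod_n\Idem(\opsp_{T(n)})$ is injective. Because it is a frame map, injectivity makes it an embedding by \cref{conserinjectfrms}.

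Composing with the embedding of \cref{imageinLnf} gives the desired embedding $\Idem(\opsp_{(p)})\hookrightarrow\prod_n\Idem(\opsp_{T(n)})$. The final map $\prod_p^{/\mathbb{Q}}\prod_n\to\prod_p\prod_n$ is the inclusion of a fiber product into a product, which is evidently an embedding and a frame map.

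I expect the main obstacle to be checking that the recollement components really are the maps induced by the natural projections $\opsp_{(p)}\to\opsp_{T(n)}$. The issue is that the identification $\cidem(\mci)\simeq\cidem(\mcc)_{\le\mci}$ uses $i^R$, so one must confirm that $i^R$ on $M_n^f$ agrees with $T(n)$-localization under $M_n^f\opsp\simeq\opsp_{T(n)}$. I would argue this via the standard equivalence between monochromatic and $T(n)$-local spectra, noting that it is symmetric monoidal once $M_n^f\opsp$ carries the localized tensor product of \cref{symlocal}.
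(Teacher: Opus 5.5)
Your proposal is correct and follows the paper's proof: the map is factored as the prime decomposition of \cref{primedecomp}, then the embedding $\Idem(\opsp_{(p)})\hookrightarrow\lim_n\Idem(L_n^f\opsp)$ of \cref{imageinLnf}, then the projection to the $T(n)$-layers, which is an embedding by applying \cref{recollementfrms} along the telescope tower; the last map is the inclusion of the fiber product into the full product. Your inductive treatment of the recollement step, and your check that the components are the maps induced by $L_{T(n)}$, spell out details the paper leaves implicit.
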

\begin{proof}
 The embedding property follows from the decomposition $$\Idem(\opsp)\xrightarrow{\sim}\prod_{p}^{/\mathbb{Q}} \Idem(\opsp_{(p)})\hookrightarrow \prod_{p}^{/\mathbb{Q}}\lim_{n}\Idem(L_n^f\opsp)\hookrightarrow\prod_{p}^{/\mathbb{Q}}\prod_{n}\Idem(\opsp_{T(n)}).$$ 
\end{proof}
This structural embedding reduces the spatiality problem for $\Idem(\opsp)$ to the corresponding monochromatic problems.
\begin{cor}\label{sptnspatial}
		In particular, $\Idem(\opsp)$ is spatial if and only if for each prime $p$ and each $n\geq 0$, $\Idem(\opsp_{T(n)})$ is spatial.
\end{cor}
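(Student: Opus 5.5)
The plan has two directions, glued by the two-stage decomposition of \cref{smspstructure}:
\[
\Idem(\opsp)\xrightarrow{\sim}\prod_{p}^{/\mathbb{Q}}\Idem(\opsp_{(p)})\hookrightarrow\prod_{p}^{/\mathbb{Q}}\lim_n\Idem(L_n^f\opsp)\hookrightarrow\prod_p\prod_n\Idem(\opsp_{T(n)}).
\]

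\emph{Sufficiency.} Suppose every $\Idem(\opsp_{T(n)})$ is spatial. A product of spatial frames is spatial: it is the frame of opens of the disjoint union of the spaces, or equivalently the points of the factors, composed with the projections, are jointly conservative. So the right-hand frame $\prod_p\prod_n\Idem(\opsp_{T(n)})$ is spatial. The composite displayed above is an embedding of frames, since it preserves joins and finite meets. Hence $\Idem(\opsp)$ is a subframe of a spatial frame, and \cref{subfrmspatial} shows it is spatial.

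\emph{Necessity.} Suppose $\Idem(\opsp)$ is spatial, and fix a prime $p$ and a height $n$. I would realize $\Idem(\opsp_{T(n)})$ as an open-in-closed sublocale of $\Idem(\opsp)$ in three steps.
\begin{enumerate}
\item From the corollary preceding \cref{imageinLnf}, $\Idem(\opsp_{(p)})\simeq\Idem(\opsp)_{\geq\mathbb{S}_{(p)}}$ is a closed sublocale, so it is spatial.
\item By \cref{recollementfrms} applied to the smashing localization $\opsp_{(p)}\to L_n^f\opsp$, whose kernel is the smashing ideal $\ker(L_n^f)$, we get $\Idem(L_n^f\opsp)\simeq\Idem(\opsp_{(p)})_{\geq\ker(L_n^f)}$. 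This is again a closed sublocale.
\item Apply \cref{cidemideal} (or \cref{recollementfrms}) to the smashing ideal $M_n^f\opsp\hookrightarrow L_n^f\opsp$. This exhibits $\Idem(M_n^f\opsp)$ as an open sublocale of $\Idem(L_n^f\opsp)$.
\end{enumerate}
Finally, $M_n^f\opsp\simeq\opsp_{T(n)}$ as symmetric monoidal categories, via the inverse equivalences $L_{T(n)}$ and monochromatic layer. This is the identification already used in \cref{ex:atomic-Tn}, through $\cidem_f(L_n^f\opsp)_{\leq L_{n-1}^f\mathbb{S}}$.

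To conclude, I use that open and closed sublocales of spatial locales are spatial. In point language, the points of $F_{\leq u}$ are the points of $F$ lying in $u$, and the points of $F_{\geq u}$ are those not lying in $u$; joint conservativity restricts to each. This is the same fact used in \cref{recollementspatial}. Applying it at each of the three steps shows $\Idem(\opsp_{T(n)})$ is spatial.

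\emph{Main obstacle.} The delicate point is step (3), namely identifying the frame of $\opsp_{T(n)}$ with the open piece $\Idem(L_n^f\opsp)_{\leq M_n^f}$. This needs the monochromatic equivalence to be symmetric monoidal for the localized tensor products, together with the locally rigid identification of \cref{symlocal} and \cref{cidemideal}. I would justify it by citing the equivalence $M_n^f\opsp\simeq\opsp_{T(n)}$ as locally rigid localizations of $L_n^f\opsp$ and invoking \cref{smidlvslocrig}. The inclusions in the sufficiency direction, by contrast, are exactly \cref{smspstructure}, so that direction is formal.
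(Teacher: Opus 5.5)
Your proposal is correct and follows essentially the paper's proof. For sufficiency, you realize $\Idem(\opsp)$ as a subframe of the spatial product $\prod_p\prod_n\Idem(\opsp_{T(n)})$ via \cref{smspstructure} and \cref{subfrmspatial}. For necessity, you exhibit $\Idem(\opsp_{T(n)})\simeq(\Idem(\opsp)_{\geq L_n^f\mathbb{S}})_{\leq L_{n-1}^f\mathbb{S}}$ as a locally closed sublocale, as the paper does; your two successive closed steps (first $\opsp_{(p)}$, then $L_n^f\opsp$) simply factor the paper's single closed step. Your point-theoretic argument that open and closed sublocales inherit spatiality also fills in what the paper leaves implicit.
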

\begin{proof}
	The ``if'' direction follows from \cref{subfrmspatial}. The ``only if'' direction follows from that $$\Idem(\opsp_{T(n)})\simeq (\Idem(\opsp)_{\geq L_n^f\mathbb{S}})_{\leq L_{n-1}^f\mathbb{S}}$$ is a locally closed sublocale of $\Idem(\opsp)$.
\end{proof}
The monochromatic frames appearing in \cref{sptnspatial} are therefore the essential remaining objects to understand.
\begin{rem}
When $n\geq2$, $\Idem(\opsp_{T(n)})$ contains at least 3 elements and hence is not a smashing field, because $\opsp_{T(n)}\to \opsp_{K(n)}$ is a nontrivial smashing localization by \cite{burklund2023k}. Consequently, for $n\geq 2$, there is a strictly increasing sequence between $L_n^f\mathbb{S}$ and $L_n\mathbb{S}$  
\[
L_n^f\mathbb{S}
=
L_{n,0}\mathbb{S}
<
L_{n,1}\mathbb{S}
<
\cdots
<
L_{n,n-1}\mathbb{S}
=
L_n\mathbb{S}.
\]
In fact, one can construct explicitly 
\[ 
L_{n,i}
:=
L_{
	T(0)\oplus\cdots\oplus T(n-i)
	\oplus
	K(n-i+1)\oplus\cdots\oplus K(n)
},
 \] which can be identified with the meet in $\Idem(\opsp_{(p)})$:
\[ L_{n,i}\mathbb{S} \simeq L_{n-i}^f\mathbb{S}\wedge L_n\mathbb{S} \simeq L_{n-i}^f\mathbb{S} \times_{ L_{n-i}^f\mathbb{S}\otimes L_n\mathbb{S} } L_n\mathbb{S}, \]
 where the second equivalence follows from \cref{meetinstableidem}. However, it is still not known whether these are all smashing localizations lying between $L_n^f$ and $L_n$.

\end{rem}
Consequently, the global structure of $\Idem(\opsp)$ is controlled by the collection of monochromatic frames $\Idem(\opsp_{T(n)})$, together with the gluing conditions imposed by the telescope tower. This leaves the following natural questions.
\begin{qu}\,
	\enu{
		\item How can one compute the smashing frame $\Idem(\opsp_{T(n)})$ when $n\geq 2$? Is it spatial? 
		
		\item Assuming that $\Idem(\opsp_{T(n)})$ has been computed, how can one compute the gluing functor $\phi_{L_n^f}$ in the following equivalence from \cref{recollementfrms}
		\[
		\Idem(L_n^f\opsp)
		\simeq
		\Idem(\opsp_{T(n)})
		\overleftarrow{\times}_{\!\!\phi_n^f}
		\Idem(L_{n-1}^f\opsp),
		\]
		in order to compute $\Idem(L_n^f\opsp)$?
		\item How can one characterize the image of the  embedding
		$$
		\Idem(\opsp_{(p)})	\hookrightarrow
		\prod_{n}\Idem(\opsp_{T(n)}) \, ?
		$$
	}
\end{qu}
\part{Prestable and Additive Applications}
\section{Telescope Conjecture in the Additive Setting}\label{tcadd}
In this section, we extend the study of the generalized telescope conjecture to the additive and prestable contexts by utilizing the framework of higher almost algebra. We focus particularly on the projectively rigid case, for which a comprehensive higher algebra theory has been built in \cite{hattt}. In this setting, the telescope conjecture reduces to the question of whether every smashing ideal is compactly projectively generated (see \cref{projrigidtc}).

We begin by recalling the necessary background on dualizable additive $\infty$-categories and projectively rigid \spgeq-algebras, and then compare their smashing frames with the corresponding frames of their hearts. 

\subsection{Preliminaries: dualizable additive \infcats}
In this subsection, we briefly review the theory of \dualaddinfcats and projectively rigid algebras. We refer the reader to \cite{levy-liang-sosnilo:dualizable} for a more detailed exposition. 

The following characterization will be used repeatedly to pass between the abstract dualizability condition and more classical exactness properties of Grothendieck prestable $\infty$-categories.

\begin{thm}[{\cite[Theorem 2.22]{levy-liang-sosnilo:dualizable}}]\label{dualaddab6}
	Let $\mcc\in\prlad$. Then $\mcc$ is dualizable additive if and only if $\mcc$ is a left complete Grothendieck prestable \infcat satisfying $\mathrm{AB4}^*$ and $\mathrm{AB6}$.
\end{thm}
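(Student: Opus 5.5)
This is a cited result (Theorem 2.22 of \cite{levy-liang-sosnilo:dualizable}), so the plan is to reconstruct its proof. The strategy is to pass through the stable case via Efimov's characterization of dualizable presentable stable categories: a stable $\mcd\in\prlst$ is dualizable if and only if the colimit functor $\colim:\Ind(\mcd^{\kappa})\to\mcd$ has a colimit-preserving left adjoint $\hat{y}$. Equivalently, $\mcd$ is a retract in $\prlst$ of a compactly generated category. The analogous criterion holds over $\spgeq$: an additive $\mcc\in\prlad$ is dualizable if and only if it is a retract, along a pair of left adjoints, of a compact projectively generated category $\mathcal{P}_\Sigma(\mcc^{\kappa,\op{proj}})$, or more generally $\Ind$ of something suitable. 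So the first step is to record that dualizability over $\spgeq$ is equivalent to $\mcc$ being a retract in $\prlad$ of an atomically generated $\spgeq$-module. This follows from \cref{conserdual} together with the retract characterization of dualizable objects in $\prl_{\spgeq}$.

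For the forward direction, suppose $\mcc$ is a retract of a compact projectively generated $\mathcal{P}$, via left adjoints $F:\mcc\to\mathcal{P}$ and $G:\mathcal{P}\to\mcc$ with $GF\simeq\id$.
\begin{enumerate}
\item $\mathcal{P}=\mathcal{P}_\Sigma(\mathcal{A})$ is a left complete Grothendieck prestable category in which products are exact ($\mathrm{AB4}^*$) and filtered colimits commute with products in the AB6 sense, since both are computed pointwise in animas and abelian groups.
\item These properties are stable under retracts along a pair of adjunctions in which $F$ is an internal left adjoint, so its right adjoint preserves colimits. The retraction $G$ preserves colimits. Using $F^R$ and $F$, products and filtered colimits in $\mcc$ are computed by transporting along $F$ and applying $G$. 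Exactness then follows because $G$, $F$ and $F^R$ are all exact functors between Grothendieck prestable categories.
\item Left completeness transfers because $G$ commutes with Postnikov truncations up to the retraction.
\end{enumerate}

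For the converse, given a left complete Grothendieck prestable $\mcc$ satisfying $\mathrm{AB4}^*$ and $\mathrm{AB6}$, I will produce the left adjoint $\hat{y}$ of $\colim:\Ind(\mcc^\kappa)\to\mcc$ (or its additive version). First I pass to the stabilization $\opsp(\mcc)$, with its t-structure. AB6 together with $\mathrm{AB4}^*$ is exactly what the stable criterion requires: filtered colimits distribute over products. Via Efimov's theorem (for a stable category with a t-structure, dualizability is equivalent to AB6 plus AB4$^*$ on the heart together with left completeness), this makes $\opsp(\mcc)$ dualizable. Second, I descend from $\opsp(\mcc)$ to $\mcc=\opsp(\mcc)_{\geq0}$: I check that $\hat{y}$ restricts to connective objects and that the connective cover functor is compatible, so that $\mcc$ is a retract of $\Ind(\mcc^\kappa)$.

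The main obstacle is the converse step. Constructing $\hat{y}$ requires showing that the colimit functor preserves limits of the relevant shape, or equivalently that $\kappa$-filtered colimits in $\mcc$ commute with products in the strong sense. Reducing this to the heart requires left completeness, used via Postnikov towers, and convergence of towers. I would try to follow Efimov's proof for the stable case verbatim, replacing "compact" by "compact projective" and checking that each limit used is a product or a Postnikov limit, which are controlled by $\mathrm{AB4}^*$ and left completeness respectively.
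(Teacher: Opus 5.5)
The paper does not prove this statement; it quotes it from \cite{levy-liang-sosnilo:dualizable}. So I can only assess your outline on its own terms, and as it stands it has a genuine gap in each direction.

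\textbf{Forward direction.} The retract from your first step lives in $\prlad$ along functors $F,G$ that only preserve colimits. Neither preserves limits, so nothing involving finite limits or products can be transported along it: not the Grothendieck condition, not $\mathrm{AB4}^*$, not $\mathrm{AB6}$, not left completeness. In particular, your claim that products in $\mcc$ are obtained by transporting along $F$ and applying $G$ is false for an arbitrary retraction $G$.

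Assuming in addition that $F$ is an internal left adjoint does not follow from that step, nor from \cref{conserdual}, and even then it only helps if the retraction is $F^R$. What you actually need is the $\spgeq$-linear form of Efimov's criterion. If $\mcc$ is dualizable, the colimit functor $G\colon\mathcal{P}_\Sigma(\mcc^\kappa;\spgeq)\to\mcc$ should admit a colimit-preserving left adjoint $F=\hat y$ with $GF\simeq\id$. This is a genuine theorem, and you must prove or cite it.

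Granting it, the transfer should be phrased through $G$. Since $G$ preserves all limits and colimits and $GF\simeq\id$, every limit or colimit in $\mcc$ is $G$ applied to the corresponding construction on $F$-images in $\mathcal{P}_\Sigma$. This gives prestability, left exactness of filtered colimits, $\mathrm{AB4}^*$ and $\mathrm{AB6}$. It also gives left completeness, because $G$ commutes with truncations. You do not need $F$ to be left exact, and you cannot presuppose that $\mcc$ is Grothendieck prestable, as your step (2) does.

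\textbf{Converse.} The result you attribute to Efimov, characterizing dualizable stable categories with a t-structure by $\mathrm{AB4}^*$ and $\mathrm{AB6}$ on the heart plus left completeness, is not a known theorem, and its forward half is false. Take $\shv(\mathbf{R};\opsp)$: it is dualizable, as for any locally compact Hausdorff space. Yet products in its heart $\shv(\mathbf{R};\op{Ab})$ are not exact. For each $n$, let $\{U_{n,k}\}_k$ be a cover of $\mathbf{R}$ by intervals of length $2/n$. The product over $n$ of the epimorphisms $\bigoplus_k j_{U_{n,k}!}\underline{\mathbf{Z}}\to\underline{\mathbf{Z}}$ is not surjective on the stalk at $0$.

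The same example shows your descent step cannot be formal. The category $\mcc=\shv(\mathbf{R};\spgeq)$ has dualizable stabilization, but it is not dualizable over $\spgeq$. So checking that $\hat y$ restricts to connective objects is not a compatibility check. It is the entire content of the theorem, and it is exactly where $\mathrm{AB4}^*$, $\mathrm{AB6}$ and left completeness of $\mcc$ itself must be used. Your proposal leaves this open.

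A direct route is to show that $G\colon\mathcal{P}_\Sigma(\mcc^\kappa;\spgeq)\to\mcc$ preserves small limits. Then $G$ has a left adjoint and exhibits $\mcc$ as a retract of a dualizable category.
\begin{itemize}
\item Left exactness of $G$ for large $\kappa$ is a Gabriel--Popescu-type statement for Grothendieck prestable categories.
\item For products, $\mathrm{AB4}^*$ and left completeness let you reduce along Postnikov towers to discrete objects.
\item On discrete objects, $\mathrm{AB4}^*$ handles products of the representables $\pi_0 y(a)$, and $\mathrm{AB6}$ handles products of direct sums of them.
\end{itemize}
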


We next recall the rigidity conditions on additive symmetric monoidal $\infty$-categories.
\begin{de}
	Let $\mcc \in \calg(\prlad)$ be a presentably  symmetric monoidal additive $\infty$-category. 
	We say that $\mcc$ is a \textbf{projectively rigid} $\spgeq$-algebra if $\mcc$ is atomically generated and rigid over $\spgeq$, or equivalently, if $\mcc$ is compact projectively generated and satisfies $\mcc^{\op{cproj}}=\mcc^d$. 

\end{de}
\begin{rem}
	Note that if $\mcc$ is projectively rigid \spgeq-algebra, then so is $\modu_R(\mcc)$ for any $R\in \calg(\mcc)$. 
\end{rem}

	\begin{ex}\label{exalgttt}
		We list several common examples of projectively rigid $\spgeq$-algebras.
		\enu{
			\item  The \infcat of connective spectra \spgeq itself.
			\item The \infcat\ of filtered connective spectra $\op{Fil}(\spgeq)$ and the $\infty$-category $\op{Fil}(\opsp)_{h\geq0}$ of connective filtered spectra with respect to the homotopy $t$-structure. Here the connective part is defined by
			\[
			\op{Fil}(\opsp)_{h\geq0}
			\overset{\mathrm{def}}{=}
			\{\, X_* \mid X_n \in \opsp_{\geq n} \text{ for each } n\geq0 \,\}.
			\]
			
			\item The \infcat $\op{Sp}_{G,\geq0}=\mc{P}_{\Sigma}(\op{Span}(\op{Fin}_{G});\spgeq)$ of connective genuine $G$-spectra for a finite group $G$.
			
			\item The \infcat $\op{Shv}(X,\spgeq)$ of sheaves on a profinite space $X$ (it is in fact a module category of $\spgeq$; see \cref{sheafcattc}).
			\item For an $\mathbb{E}_\infty$-ring $R$, the $\infty$-category of (connective) synthetic $R$-modules $\mathrm{Syn}_{R,\geq 0} = \mathcal{P}_\Sigma(\mathrm{Mod}_R^{\mathrm{ff}}(\op{Sp}))$ \cite{hesselholt2023dirac}. Here, $\mathrm{Mod}_R^{\mathrm{ff}}(\op{Sp}) \subset \modu_R(\opsp)$ denotes the smallest full subcategory containing $R$ that is closed under finite direct sums, shifts, and retractions. Its heart can be identified with $\mathrm{Syn}_R^\heartsuit \simeq \mathrm{Mod}_{R_*}(\mathrm{GrAb})$.
			\item The \infcat $\op{SH}(k)^{\op{A}}_{\geq0}$ of connective Artin motivic spectra over a perfect field $k$; see \cite{burklund2020galois} for a detailed investigation.
			\item $\op{Qcoh}(X)_{\geq0}$, where $X$ is an affine quotient stack, i.e.\ a stack of the form $\spec(R)/G$ for a linearly reductive group $G$ acting on $\spec(R)$. In this case the compact projective objects are generated, under retracts, by pullbacks of $G$-representations, and the dual is given by the pullback of the dual representation.
			
			\item The connective Voevodsky's \infcat $\op{DM}(k,\mathbb{Z}[1/p])_{c\geq0}$ over a perfect field $k$ with coefficients in $\mathbb{Z}[1/p]$  (\cite[see][Chapter 14]{bachmann2021norms}, where $p$ is the characteristic of $k$, or $p=1$ if $k$ is a $\mathbb{Q}$-algebra) with respect to the Chow $t$-structure generated by smooth projective varieties and their $\mathbb{P}^1$-desuspensions \cite{bondarko2010weight}. The mapping spectra between smooth projective varieties are connective, so they form compact projective generators, and they are also dualizable within the retract-closed subcategory generated by them.
		}
	\end{ex}
	\begin{ex}
	We also remark that there exist rigid $\spgeq$-algebras which are not projectively rigid. For example, if $R$ is a connective adic \einfring, then the \infcat $\nuc(R)_{\geq0}$ of connective nuclear $R$-modules (in the sense of Clausen--Scholze \cite{scholze2026lecturesanalyticgeometry}) is rigid but not projectively rigid; see \cite[Theorem~C]{levy-liang-sosnilo:dualizable}.
	\end{ex}
\subsection{Smashing frames of additive \infcats}

In this subsection, we show that when \mcc is a sufficiently well-behaved prestable \infcat, both its atomic smashing frame and its smashing frame coincide with those of its heart.

We begin with the following identification of the smashing frame with idempotent ideals\footnote{It may be more appropriate to refer to idempotent ideals as ``coidempotent'' ideals, since they can in fact be identified with the coidempotent objects in $\idl(\mca)$.} in the  setting of abelian 1-categories.
\begin{thm}\label{idemidlabel}
	Let $\mca\in\calg(\prladone)$ be a presentably symmetric monoidal   additive $1$-category such that \mca is abelian. Then the map  $$F:\cidem(\mca)\xrightarrow{\sim}\cidem(\idl(\mca))$$ induced by $\im_\mca:\mca_{/\mb{1}}\to\idl(\mca)$ is an equivalence of frames.
\end{thm}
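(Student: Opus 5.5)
We will show that $F$ is well defined, order-preserving, and injective, and then construct an explicit inverse. The monoidal structure on $\idl(\mca)$ is $I\cdot J=\im(I\otimes J)$. Hence a coidempotent object of $\idl(\mca)$ is simply an ideal $I\hookrightarrow\mb{1}$ with $\im(I\otimes I\to\mb{1})=I$; call these idempotent ideals. Since $\im_\mca$ is symmetric monoidal (\cref{imagedef}), it sends coidempotents to coidempotents, so $F$ is a well-defined map of posets.

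\textbf{Step 1: coidempotents have monic counit.} Let $c\colon C\to\mb{1}$ be coidempotent and set $K=\ker(c)$. Since $\mca$ is presentably monoidal and abelian, $C\otimes-$ is right exact, so tensoring $0\to K\to C\to \im(c)\to0$ with $C$ gives an exact sequence $C\otimes K\to C\otimes C\to C\otimes\im(c)\to0$. Because $C\otimes C\to C$ is an isomorphism, I expect this to force $C\otimes K\to C\otimes C$ to vanish.

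Left exactness, however, is not available. The cleaner route is probably to work with the factorization $C\to\im(c)\hookrightarrow\mb{1}$ and prove the following claim:
\begin{equation*}
\text{for a coidempotent } C,\quad C\otimes\im(c)\to C \text{ is an isomorphism.}
\end{equation*}
The map $C\otimes\im(c)\to C\otimes\mb{1}\simeq C$ has a section. Indeed, $C\simeq C\otimes C\to C\otimes\im(c)$, and the composite back to $C$ is $C\otimes c$, which is an isomorphism. So $C\otimes\im(c)\to C$ is a split epimorphism.

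I do not plan to prove that $C\to\im(c)$ itself is an isomorphism, since coidempotents need not be subobjects a priori. Instead I will define the inverse map $G$ and compare.

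\textbf{Step 2: the inverse.} For an idempotent ideal $I$, I will produce a coidempotent $G(I)\to\mb{1}$ as a colimit. Take the sequential colimit of $\cdots\to I^{\otimes 3}\to I^{\otimes 2}\to I$, or rather its cofiltered analogue. The natural candidate is the coidempotent classifying the smashing ideal $\langle I\rangle$. Via \cref{smideal}, this is the largest coidempotent $x$ with $x\otimes-$ fixing the localizing ideal $\mca_I=\{M: I\otimes M\to M \text{ iso}\}$.

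I intend to show that $\mca_I$ is a localizing ideal whose inclusion is an internal left adjoint. Its right adjoint should be $M\mapsto \colim(\cdots)$ built from the maps $I\otimes M\to M$; idempotency $I\cdot I=I$ should make the iterated tensor stabilize up to image. This produces $x_I=i i^R(\mb{1})\in\cidem(\mca)$. The construction is monotone in $I$, and $\im(x_I)=I$ should hold because $I$ is generated by images of tensor powers.

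\textbf{Step 3: $G\circ F=\id$ and frame structure.} For a coidempotent $C$, Step 1 shows that $C$ is $\im(c)$-local in the sense above, so $C\le x_{\im(c)}$. Conversely, $x_{\im(c)}\otimes C\simeq C$ gives $x_{\im(c)}\le C$ after tensoring with $C$ and using that $\im(c)$ and $C$ generate the same localizing ideal. Once $F$ is a bijection of posets with monotone inverse, it is automatically an isomorphism of frames, because both sides are frames and order isomorphisms preserve all joins and meets.

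The main obstacle will be Step 2: constructing the right adjoint of the inclusion of $\mca_I$ and proving $\mcv$-linearity without left exactness of $\otimes$. If the direct colimit approach fails, I would first try to reduce to the case where $\mb{1}$ is replaced by $I$ and argue using the fact that $I\otimes I\to I$ is an epimorphism.
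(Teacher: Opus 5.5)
\textbf{The gap is Step 2.} Your Step 1 is correct once completed: $C\otimes C\to C\otimes\im(c)$ is an epimorphism by right exactness and a split monomorphism by your composite, so $C\otimes\im(c)\to C$ is an isomorphism. The paper also uses this. Step 2, however, carries the whole content of the theorem and is left as a hope.

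\begin{itemize}
\item The tower $\cdots\to I^{\otimes 3}\to I^{\otimes 2}\to I$ is an inverse system, so its colimit is just $I$, and a limit would not commute with $\otimes$.
\item Nothing you write shows that $\mca_I=\{M\mid I\otimes M\xrightarrow{\sim}M\}$ is a smashing ideal.
\end{itemize}

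The missing idea is that this tower already stabilizes at the second stage. So the inverse is simply $G(I)=\bigl(I\otimes I\xrightarrow{i\otimes i}\mb{1}\bigr)$, with no detour through smashing ideals.

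The key lemma is that $1_I\otimes\mu_I\colon I\otimes I\otimes I\to I\otimes I$ is an isomorphism. Here $\mu_I\colon I\otimes I\to I$ is the multiplication. It agrees with both $1\otimes i$ and $i\otimes 1$ because $i$ is a monomorphism, and it is an epimorphism because $I^2=I$. The proof runs as follows.
\begin{itemize}
\item Let $\kappa\colon K\to I\otimes I$ be the kernel of $\mu_I$.
\item Bifunctoriality and $\mu_I=i\otimes 1=1\otimes i$ give $(1_I\otimes\kappa)\circ(\mu_I\otimes 1_K)=(\mu_I\otimes 1_{I\otimes I})\circ(1_{I\otimes I}\otimes\kappa)=(1_{I\otimes I}\otimes\mu_I)\circ(1_{I\otimes I}\otimes\kappa)=0$.
\item Cancelling the epimorphism $\mu_I\otimes 1_K$ gives $1_I\otimes\kappa=0$.
\item Right exactness of $I\otimes-$ on $0\to K\to I\otimes I\to I\to 0$ then makes $1_I\otimes\mu_I$ an isomorphism.
\end{itemize}
Coidempotence of $I\otimes I\to\mb{1}$ follows, and $F(G(I))=I\cdot I=I$. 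This lemma is also exactly what your $\mca_I$ route would need: it identifies $\mca_I$ with the smashing ideal $\langle I\otimes I\rangle$, whose right adjoint is $(I\otimes I)\otimes-$.

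\textbf{Step 3 rests on a false claim.} In general $\im(c)$ and $C$ do not generate the same localizing ideal. Write $I=\im(c)$. Then $\langle C\rangle=\{M\mid C\otimes M\xrightarrow{\sim}M\}$, and since $C\simeq I\otimes I$ by the theorem, $C\otimes I\simeq I\otimes I\otimes I\simeq I\otimes I$. Hence $I\in\langle C\rangle$ only if $\mu_I$ is an isomorphism.

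This fails for $A=k[x^q\mid q\in\mathbb{Q}_{\geq0}]/(x^q\mid q\geq1)$ and the idempotent ideal $I=(x^q\mid 0<q<1)$. In degree $1$ of $I\otimes_A I$, all classes $x^a\otimes x^{1-a}$ with $0<a<1$ coincide and span a copy of $k$, which $\mu_I$ sends to $x^1=0$. For the same reason $I\notin\mca_I$ in this example, so the localizing ideal generated by $I$ differs from your $\mca_I$, and Step 2 conflates the two.

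What $G\circ F=\id$ actually requires is that $p_I\otimes p_I\colon C\otimes C\to I\otimes I$ is an isomorphism, where $p_I\colon C\to I$ is the corestriction. The paper obtains this from your Step 1 as follows.
\begin{itemize}
\item Let $j\colon J\to C$ be the kernel of $p_I$. Step 1 makes $p_I\otimes 1_C$ an isomorphism, so $j\otimes 1_C=0$.
\item Then $(j\otimes 1_I)\circ(1_J\otimes p_I)=(1_C\otimes p_I)\circ(j\otimes 1_C)=0$, and cancelling the epimorphism $1_J\otimes p_I$ gives $j\otimes 1_I=0$.
\item Right exactness then yields $C\otimes I\xrightarrow{\sim}I\otimes I$.
\end{itemize}
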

\begin{proof}
	We claim the inverse $G: \cidem(\idl(\mca))\to\cidem(\mca)$ is given by sending an idempotent ideal $I=I^2\xhookrightarrow{i} \mb{1}$ to the coidempotent object $$G(I)=(I\otimes I \to \mb{1}).$$
	
	We first show that $I\otimes I \to \mb{1}$ is indeed coidempotent. By \cite[Lemma 5.19]{hattt}, $I$ is a non-unital algebra whose multiplication $\mu_I:I \otimes I\to I$ satisfies $\mu_I=1\otimes i=i\otimes 1$. Since $I=I^2$, $\mu_I$ is an epimorphism. Let $k \colon K \to I \otimes I$ be the kernel of $\mu_I$, so we have a short exact sequence
	$$0\xrightarrow{} K \xrightarrow{k} I \otimes I \xrightarrow{\mu_I} I \to 0.$$ Now consider the following commutative diagram.
	$$\begin{tikzcd}
		I\otimes I\otimes K \arrow[d, "\mu_I \otimes 1"] \arrow[r, "1\otimes k"] & I\otimes I\otimes I\otimes I \arrow[d, "\mu_I\otimes 1"] \\
		I\otimes K \arrow[r, "1\otimes k"]                                       & I\otimes I\otimes I                                     
	\end{tikzcd}$$
	By associativity, the top-right corner $ (\mu_I\otimes 1)\circ(1\otimes k)= (1\otimes \mu_I)\circ(1\otimes k)=0$, so by right-cancellation for epimorphisms the bottom morphism must be zero:
	$I\otimes K \xrightarrow{1_I \otimes k\;=\; 0}I\otimes I\otimes I   .$
	Therefore $$I\otimes I\otimes I\xrightarrow{1_I\otimes\mu_I}I\otimes I$$ is an equivalence. By associativity $I\otimes I \to \mb{1}$ is a coidempotent object.
	
	It is clear that $F\circ G=\op{Id}$, now it suffices to show that any coidempotent object $C\xrightarrow{p} \mb{1}$ can be obtained in the above way. Let $i:I\hookrightarrow \mb{1}$ be the image of $p$. Since $C\xrightarrow{p} \mb{1}$ is coidempotent, $I$ is an idempotent ideal. It suffices to show that $$C\otimes C\xrightarrow{p_I\otimes p_I}I\otimes I$$ is an equivalence. Consider the following short exact sequence:
	$$0\xrightarrow{} J \xrightarrow{j} C \xrightarrow{p_I} I \to 0.$$
	Tensoring with $C$ we get the following short exact sequence.
	$$\begin{tikzcd}
		J\otimes C \arrow[r, "j\otimes 1"] & C\otimes C \arrow[r, "p_I\otimes 1"] \arrow[rd, "p\otimes 1"', "\sim"] & I\otimes C \arrow[r] \arrow[d, "i\otimes 1"] & 0 \\
		&                                                             & \mb{1}\otimes C                              &  
	\end{tikzcd}$$ 
	Thus $p_I\otimes 1$ is both monomorphic and epimorphic, and hence $p_I\otimes 1$ is an equivalence and $j\otimes 1=0$. It remains to show that $I\otimes C\xrightarrow{1\otimes p_I}I\otimes I$ is an equivalence. 
	Consider the following commutative diagram:
	$$\begin{tikzcd}
		J \otimes C \arrow[r,"1 \otimes p_I"] \arrow[d,"j \otimes 1"'] & J \otimes I \arrow[d,"j \otimes 1"] \\
		C \otimes C \arrow[r,"1 \otimes p_I"'] & C \otimes I
	\end{tikzcd}
	$$
	By previous step, $j \otimes 1_C = 0$, so the lower-left path is $0$. Hence the upper-right path
	$$(j \otimes 1_I) \circ (1_J \otimes p_I) \;=\; 0.$$
	Since $1_J \otimes p$ is an epimorphism, the right vertical morphism must be zero:
	$j \otimes 1_I \;=\; 0.$
	Apply the functor $- \otimes I$ to the short exact sequence
	$$0 \to J \xrightarrow{j} C \xrightarrow{p_I} I \to 0,$$
	to obtain
	$$J \otimes I \xrightarrow{j \otimes 1} C \otimes I \xrightarrow{p_I \otimes 1} I \otimes I \to 0.$$
	Since we proved $j \otimes 1_I = 0$, this forces $p \otimes 1_I$ to be an isomorphism:
	$$C \otimes I \xrightarrow{\ \sim\ } I \otimes I.$$
\end{proof}
We now pass from abelian $1$-categories to prestable $\infty$-categories.  
The key input is the following theorem on higher almost algebra, which extends the main result of \cite{hebestreit2024note} to the general prestable setting and provides a bridge between prestable smashing frames, abelian smashing frames, and idempotent ideals in the heart.

\begin{thm}[{\cite[Theorem 5.17]{hattt}}]\label{almostalg}
Let $\mcc\in\calg(\prlad)$ such that \mcc is a left complete\footnote{It is equivalent to the Postnikov-complete.} prestable \infcat.	Let $R \in \calg(\mcc)$ be a  \ein-algebra. 
Then the functor
$$
\calg_R^{\op{cn\text{-}idem}} \longrightarrow\cidem(\idl(\mcc^\heartsuit))= \left\{I \subset \pi_0 R \mid I^2=I\right\}, \quad \varphi \longmapsto \operatorname{Ker}(\pi_0 \varphi)
$$
is an equivalence of categories, where  the target is a poset via the inclusion ordering. 

The inverse image of some $I \subset \pi_0(R)$ can be described more directly as $R / I^{\infty}$, where
$$
I^{\infty}=\lim _{n \in \mathbb{N}^{\mathrm{op}}} J_I^{\otimes_R n}
$$
with $J_I \rightarrow R$ the fibre of the canonical map $R \rightarrow \pi_0(R) / I$. Furthermore, this inverse system stabilises on $\pi_i$ for $n>i+1$. 
Furthermore, the image of the fully faithful restriction functor $$\operatorname{Mod}_{R / I^{\infty}}(\mcc)\rightarrow \operatorname{Mod}_R(\mcc)$$ consists exactly of those modules whose homotopy groups are killed by $I$.
\end{thm}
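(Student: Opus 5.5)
The plan is to pass to $\mcd:=\modu_R(\mcc)$, which is again a presentably symmetric monoidal, left complete prestable $\infty$-category, now with unit $R$ and $\pi_0\mb{1}_\mcd=\pi_0R$. Then $\calg_R^{\op{cn\text{-}idem}}=\calg(\mcd)^{\op{cn\text{-}idem}}$, and \cref{connloc} identifies this poset with $\cidem(\mcd)$ by taking fibres. So it suffices to prove the following: $\pi_0$ induces an equivalence $\cidem(\mcd)\simeq\cidem(\idl(\mcd^\heartsuit))$, and this equivalence is compatible with the description $I\mapsto R/I^\infty$.

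\textbf{Forward map.} Let $x\to R$ be coidempotent. Since $\pi_0:\mcd\to\mcd^\heartsuit$ is symmetric monoidal on connective objects, $\pi_0x\to\pi_0R$ is coidempotent in the abelian category $\mcd^\heartsuit$. By \cref{idemidlabel}, its image $I$ is an idempotent ideal. The cofibre $\varphi:R\to R/x$ satisfies $\op{Ker}(\pi_0\varphi)=\op{Im}(\pi_0x\to\pi_0R)=I$, because $\pi_0$ is right exact. This gives a well-defined, order-preserving map.

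\textbf{Inverse map.} Given $I=I^2$, define $J_I=\fib(R\to\pi_0R/I)$. It is connective with $\pi_0J_I=I$. Set $I^\infty=\lim_n J_I^{\otimes_R n}$. For the stabilization, compute the connectivity of $\cofib(J_I^{\otimes n+1}\to J_I^{\otimes n})$. On $\pi_0$ the map is $I^{\otimes n+1}\to I^{\otimes n}$, which is surjective because $I\otimes I\to I$ is an epimorphism. One then shows inductively, using the short exact sequence from the proof of \cref{idemidlabel} and the fact that $J_I\to I$ has $1$-connective fibre, that this cofibre is roughly $(n-1)$-connective. Hence $\pi_i$ of the tower is eventually constant, giving stabilization for $n>i+1$.

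Next, use left completeness to commute $-\otimes_R J_I$ with this Milnor-type limit on each truncation $\tau_{\le i}$. This yields $J_I\otimes I^\infty\simeq I^\infty$, and then $I^\infty\otimes I^\infty\simeq I^\infty$. So $I^\infty\to R$ is coidempotent, and its $\pi_0$-image is $I$ by stabilization at $i=0$. Together with the forward map, this gives $F\circ G=\id$.

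For $G\circ F=\id$, start from $x$ coidempotent with image $I$. The map $x\simeq x^{\otimes n}\to J_I^{\otimes n}$ exists because $x\to R\to\pi_0R/I$ is null. Passing to the limit gives a comparison $x\to I^\infty$. Checking that it is an equivalence on homotopy can be done after tensoring with $x$ and with $I^\infty$, using coidempotence of both sides and the heart statement \cref{idemidlabel}.

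\textbf{Module characterization.} A module $M$ lies in the image of $\modu_{R/I^\infty}(\mcd)$ if and only if $I^\infty\otimes M=0$. By left completeness and induction on Postnikov towers, this reduces to $N\in\mcd^\heartsuit$. There $I^\infty\otimes N=0$ if and only if $I\otimes N\to N$ is zero, and by $I=I^2$ this holds if and only if $IN=0$.

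I expect the main obstacle to be the connectivity estimate for the tower $J_I^{\otimes n}$, together with justifying that tensor products commute with $\lim_n$ using only left completeness. I would first try to prove the estimate in $\mcd^\heartsuit$-filtered form, by inducting on $n$ with the exact sequences $0\to K\to I\otimes I\to I\to 0$ from \cref{idemidlabel}.
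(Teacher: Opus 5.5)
The paper gives no proof of this statement; it is quoted from \cite{hattt}. I am therefore judging your outline on its own. The architecture is right:
pass to $\mcd=\modu_R(\mcc)$, use \cref{connloc}, send $x$ to the image of $\pi_0x$, build $I^\infty$ from the tower, and commute $M\otimes_R-$ with $\lim_n$ one truncation at a time. That last step is fine: if each $\tau_{\le i}$ of a tower is eventually constant, left completeness gives $\tau_{\le i}\lim_nX_n\simeq\tau_{\le i}X_n$ for $n\gg0$, and $\tau_{\le i}(M\otimes_R-)$ depends only on $\tau_{\le i}$ of its input.

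\textbf{The gap: the connectivity estimate.} Everything above rests on an estimate you do not actually prove. Surjectivity of $I^{\otimes n+1}\to I^{\otimes n}$ only shows that each cofibre is $1$-connective. The sequence $0\to K\to I\otimes I\to I\to0$ plays no role. Neither it nor the $1$-connectivity of $\fib(J_I\to I)$ makes the connectivity grow with $n$ by itself. Also, ``roughly $(n-1)$-connective'' would only give stabilisation for $n>i+2$, not the asserted $n>i+1$.

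The missing idea is that the cofibre is a module over $Q:=\pi_0R/I$. It is $Q\otimes_RJ_I^{\otimes n}\simeq(Q\otimes_RJ_I)^{\otimes_Qn}$, and $\pi_0(Q\otimes_RJ_I)\simeq Q\otimes_{\pi_0R}I\simeq I/I^2=0$. So the cofibre is $n$-connective, which is exactly the bound $n>i+1$. In your inductive form: $C_n=C_{n-1}\otimes_RJ_I$ sits in a cofibre sequence $C_{n-1}\otimes_R\tau_{\ge1}R\to C_n\to C_{n-1}\otimes_RI$. The lowest group of the last term is $\pi_{n-1}C_{n-1}\otimes_{\pi_0R}I$, and it vanishes precisely because $\pi_{n-1}C_{n-1}$ is a $Q$-module.

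\textbf{The $G\circ F$ step needs the same mechanism.} Tensoring with $x$ and $I^\infty$ and invoking \cref{idemidlabel} only gives agreement on $\pi_0$; propagating this to higher homotopy is the real content. Since $\cidem(\mcd)$ is a poset, no comparison map is needed.

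Put $A=\cofib(x\to R)$. Then $\pi_0A\simeq Q$, so $Q\simeq\tau_{\le0}A$ is an $A$-module, and $x\otimes_RQ\simeq(x\otimes_RA)\otimes_AQ\simeq0$. Hence $x\otimes_RJ_I\simeq x$, so $x\otimes_RI^\infty\simeq x$ by your truncation argument, i.e.\ $x\le I^\infty$.

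Conversely, $z:=A\otimes_RI^\infty$ is coidempotent in $\modu_A(\mcd)$. Using $\pi_0I^\infty\simeq I\otimes_{\pi_0R}I$, we get $\pi_0z\simeq Q\otimes_{\pi_0R}I\otimes_{\pi_0R}I=0$. Since $z\simeq z^{\otimes_An}$, it is $n$-connective for every $n$, hence zero by left completeness; this says $I^\infty\le x$. You should state this lemma (a $1$-connective coidempotent vanishes) explicitly.

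With this lemma and the estimate above, the rest of your argument goes through. That includes the module characterisation, where you also need that $\modu_{R/I^\infty}(\mcd)\subset\mcd$ is closed under truncations, extensions and limits.
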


Combining the abelian identification above with the higher almost algebra classification, we obtain the following comparison theorem for coidempotent frames.

\begin{thm}\label{smashidlidemidliden}
	Let $\mcc\in\calg(\prlad)$ such that \mcc is a left complete prestable \infcat.  Then all arrows in the following natural diagram 
	 are equivalences. $$\begin{tikzcd}
		\cidem(\mcc) \arrow[d, "\sim", "\im_{\mcc}"'] \arrow[r,"\sim"',"\pi_0"] & \cidem(\mcc^\heartsuit) \arrow[d, "\sim","\im_{\mcc^\heartsuit}"'] \\
		\cidem(\idl(\mcc)) \arrow[r, "\sim"']     & \cidem(\idl(\mcc^\heartsuit))    
	\end{tikzcd}$$
\end{thm}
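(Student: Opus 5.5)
The strategy is to show that three of the four arrows are equivalences; the fourth then follows from commutativity of the square. The right vertical arrow $\im_{\mcc^\heartsuit}$ is an equivalence by \cref{idemidlabel}. For this we only need that $\mcc^\heartsuit$ is a presentably symmetric monoidal abelian $1$-category. This holds because $\mcc$ is Grothendieck prestable, so its heart is a Grothendieck abelian category, and the monoidal structure $\pi_0(x\otimes y)$ is colimit-preserving in each variable.

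For the bottom arrow, we first note that $\idl(\mcc)\to\idl(\mcc^\heartsuit)$ is an isomorphism of posets. A monomorphism $I\to\mb{1}$ in a prestable $\infty$-category is $(-1)$-truncated. Taking the fiber sequence $F\to I\to \mb{1}$ in the stabilization, $(-1)$-truncatedness together with $I\in\mcc$ forces $I$ to be discrete, so that $I\simeq\pi_0 I$, and $\pi_0 I\to\pi_0\mb{1}$ is a mono in the heart. Conversely, any subobject of $\pi_0\mb{1}$ pulls back along $\mb{1}\to\pi_0\mb{1}$ to a subobject of $\mb{1}$. Thus ideals of $\mcc$ correspond to subobjects of $\pi_0\mb{1}$. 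The products also match: $\im(I\otimes J)$ is computed on $\pi_0$, since $\pi_0$ is symmetric monoidal and preserves images. Hence the bottom arrow is an isomorphism of the associated posets of idempotents $I=I^2$.

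For the left vertical arrow, we combine \cref{connloc} with \cref{almostalg} applied to $R=\mb{1}$. By \cref{connloc}, $\cidem(\mcc)\simeq\calg(\mcc)^{\op{cn\text{-}idem}}$ via $(x\to\mb{1})\mapsto\cofib$. By \cref{almostalg}, $\calg(\mcc)^{\op{cn\text{-}idem}}\simeq\{I\subset\pi_0\mb{1}\mid I^2=I\}$ via $\varphi\mapsto\ker(\pi_0\varphi)$. It remains to check that the composite agrees with $\im_\mcc$. For a coidempotent $x\to\mb{1}$ with cofiber $R$, the long exact sequence on $\pi_0$ shows that $\ker(\pi_0\mb{1}\to\pi_0R)$ is the image of $\pi_0x\to\pi_0\mb{1}$. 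By the previous paragraph, this is exactly $\im_\mcc(x)$ viewed inside $\idl(\mcc^\heartsuit)$.

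Finally, we check that the square commutes: $\im_{\mcc^\heartsuit}(\pi_0x\to\pi_0\mb{1})=\im(\pi_0 x\to\pi_0\mb{1})$ coincides with $\pi_0\im_\mcc(x)$. This again follows because $\pi_0$ preserves images, being right exact on $\mcc$ and preserving epimorphisms and the monomorphisms in question. Since the two vertical arrows and the bottom arrow are bijections, $\pi_0$ is an equivalence as well. All maps are order-preserving bijections of posets, so they are equivalences of frames.

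The main obstacle is the identification of $(-1)$-truncated maps into $\mb{1}$ with discrete subobjects of $\pi_0\mb{1}$, together with the compatibility of images with $\pi_0$. Here care is needed: $(-1)$-truncated in $\mcc$ does not literally mean mono in the stabilization, so one must verify, using left completeness if necessary, that a $(-1)$-truncated $I\to\mb{1}$ has $\pi_n I=0$ for $n>0$ and injective $\pi_0$.
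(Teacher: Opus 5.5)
Your overall structure matches the paper's proof. You get the right arrow from \cref{idemidlabel}, the composite $\cidem(\mcc)\to\cidem(\idl(\mcc^\heartsuit))$ from \cref{connloc} and \cref{almostalg}, and you reduce everything else to a bijection $\idl(\mcc)\cong\idl(\mcc^\heartsuit)$. The gap is in your argument for that bijection. You claim that a monomorphism $I\to\mb{1}$ forces $I$ to be discrete, i.e.\ that $\pi_nI=0$ for $n>0$. This is false, so the verification you postpone to the end cannot be carried out, and left completeness does not help.

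For $f\colon X\to Y$ in a prestable $\mcc$, the pullback $X\times_YX$ in $\mcc$ is $\tau_{\geq0}$ of the pullback in $\opsp(\mcc)$. That pullback is $X\oplus\tau_{\geq0}\fib(f)$, with the diagonal as the first summand. Hence $f$ is $(-1)$-truncated iff $\tau_{\geq0}\fib(f)=0$, iff $\cofib(f)$ is discrete; this is \cite[Remark C.2.3.4]{sag}, which the paper uses. The long exact sequence then gives $\pi_0I\hookrightarrow\pi_0\mb{1}$, but $\pi_nI\cong\pi_n\mb{1}$ for $n\geq1$. The unit ideal $\mathrm{id}\colon\mathbb{S}\to\mathbb{S}$ in the left complete category $\spgeq$ is already a counterexample. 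So are the pullbacks $\mb{1}\times_{\pi_0\mb{1}}J$ from your own converse construction, which means the two halves of your argument contradict each other.

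The bijection is still true, but injectivity must be argued through the cofiber rather than through discreteness. Since $\cofib(I\to\mb{1})$ is discrete with $\pi_0$ equal to $\pi_0\mb{1}/\pi_0I$, one has $I\simeq\fib(\mb{1}\to\pi_0\mb{1}/\pi_0I)$, so $I$ is determined by $\pi_0I$. Moreover $I\leq I'$ iff $\pi_0I\subset\pi_0I'$, because maps from a connective object to a discrete one factor through $\pi_0$. Equivalently, $\cofib$ identifies $\idl(\mcc)$ with $\pi_0$-surjections $\mb{1}\to Q$ onto discrete $Q$, i.e.\ with quotients of $\pi_0\mb{1}$ in the heart. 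This is exactly how the paper proves the bottom equivalence.

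The same description gives $\im_\mcc(x)\simeq\fib\bigl(\mb{1}\to\pi_0\cofib(x\to\mb{1})\bigr)$. This makes your compatibility check $\pi_0\im_\mcc(x)=\im(\pi_0x\to\pi_0\mb{1})$ immediate, instead of resting on an unexplained claim that $\pi_0$ preserves images.

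A smaller point: $\mcc$ is not assumed Grothendieck, but you do not need it. The heart of a presentable prestable $\infty$-category is a presentable abelian category with tensor product $\pi_0(x\otimes y)$, and that is all \cref{idemidlabel} requires.
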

\begin{proof}
	The right vertical equivalence follows from \cref{idemidlabel}. The composite $\cidem(\mcc)\to \cidem(\idl(\mcc^\heartsuit))$ is an equivalence by \cref{almostalg}. Therefore, it suffices to show the bottom arrow is an equivalence.
	
	In fact, we can prove a stronger result that $$\idl(\mcc)\xrightarrow{\im\circ\pi_0}\idl(\mcc^\heartsuit)$$ is an equivalence of frames, so their coidempotent frames are equivalent too, as desired. Now consider the following diagram:
	$$\begin{tikzcd}
		\idl(\mcc) \arrow[d, "\cofib"] \arrow[r, "\im\circ\pi_0"] & \idl(\mcc^\heartsuit) \arrow[d, "\coker"] \\
		{(\mcc_{\mb{1}/})^{\op{disc},\pi_0\text{-epi}}} \arrow[r, "\pi_0"]        & (\mcc_{\pi_0\mb{1}/}^{\heartsuit}  )^{\pi_0\text{-epi}}         
	\end{tikzcd}$$
	where $(\mcc_{\mb{1}/})^{\op{disc},\pi_0\text{-epi}}$ consists of those $\pi_0$-epimorphic maps $\mb{1}\to Y$ such that $Y$ is discrete.
	 By \cite[Remark C.2.3.4]{sag}, a map $f:X\to \mb{1}$ is a monomorphism if and only if $\cofib(f)$ is discrete, so the left vertical arrow is an equivalence. The bottom and right vertical arrows are clearly equivalences. That completes the proof.
\end{proof}
\begin{rem}[Recollement on the heart]
	Suppose that \(\mcc\in\calg(\prlad)\) is left complete prestable, and let
	$
	\mci\hookrightarrow\mcc\longrightarrow\mcc/\mci
	$
	be a smashing localization sequence. Let
	$
	I\subset\pi_0\mb{1}
	$
	be the idempotent ideal corresponding to \(\mci\) under
	\cref{smashidlidemidliden}. Then
	$
	\mcc/\mci
	\simeq
	\modu_{\mb{1}/I^\infty}(\mcc),
	$
	and the recollement of \cref{recollementfrms} identifies with the
	decomposition
	\[
	\cidem(\mci)
	\simeq
	\{J\in\cidem(\idl(\mcc^\heartsuit))\mid J\subset I\},
	\]
	\[
	\cidem(\mcc/\mci)
	\simeq
	\{J\in\cidem(\idl(\mcc^\heartsuit))\mid J\supset I\}.
	\]
\end{rem}
The comparison theorem also gives a concrete description of the heart of the corresponding almost module \infcat.
\begin{cor}
	Let $\mcc\in\calg(\prlad)$ such that \mcc is a left complete prestable \infcat.	Let $R \in \calg(\mcc)$ and $I\subset\pi_0R$ be an idempotent ideal. Then we have $$\pi_0I^\infty\simeq I \otimes_{\pi_0R}I.$$
In particular, the  smashing ideal $$\amodu_{(R,I)}(\mcc)^\heartsuit\xhookrightarrow{i\otimes \mcs_{\leq 0}} \modu_R(\mcc)^\heartsuit \simeq \modu_{\pi_0R}(\mcc^\heartsuit)$$
can be identified with those $\pi_0R$-modules $M$ such that the natural map $I \otimes_{\pi_0R}I\otimes_{\pi_0R}M\to M$ is an equivalence.
\end{cor}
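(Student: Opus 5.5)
The corollary has two parts: first compute $\pi_0 I^\infty$, then identify the heart of the almost module category by applying \cref{idemidlabel} inside the heart. The main tool is \cref{almostalg}. It presents $R/I^\infty$ as the connective idempotent algebra attached to $I$, and it presents $I^\infty = \lim_n J_I^{\otimes_R n}$ as an inverse system that stabilises on $\pi_0$ for $n>1$. So $\pi_0 I^\infty \simeq \pi_0(J_I\otimes_R J_I)$.

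\textbf{Step 1: compute $\pi_0 J_I$.} We have the fibre sequence $J_I\to R\to \pi_0R/I$. Since $\pi_0 R\to \pi_0R/I$ is surjective, the long exact sequence gives $\pi_0 J_I\simeq I$, and $J_I$ is connective.

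\textbf{Step 2: compute $\pi_0(J_I\otimes_R J_I)$.} In the prestable setting, $\pi_0$ of a relative tensor product of connective modules is the tensor product of the $\pi_0$'s over $\pi_0R$ in the heart. This holds because $\pi_0=\tau_{\le0}$ is symmetric monoidal from $\modu_R(\mcc)$ to $\modu_{\pi_0R}(\mcc^\heartsuit)$. It yields
\[
\pi_0 I^\infty\simeq \pi_0(J_I\otimes_RJ_I)\simeq I\otimes_{\pi_0R}I.
\]
An alternative route, which double-checks the result, is to note that $I^\infty\to R$ is the coidempotent object corresponding to $I$. Applying $\pi_0$, which the top arrow of \cref{smashidlidemidliden} identifies with the functor $\cidem(\mcc)\to\cidem(\mcc^\heartsuit)$, we get the coidempotent object of the heart corresponding to $I$. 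The proof of \cref{idemidlabel} shows this object is $G(I)=(I\otimes I\to \mb{1})$. Since $\im\circ\pi_0$ is the composite equivalence, both routes agree.

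\textbf{Step 3: identify the heart of the almost module category.} The smashing ideal $\amodu_{(R,I)}(\mcc)$ is $\langle I^\infty\rangle$. Tensoring the inclusion with $\mcs_{\le0}$ produces the smashing ideal of the heart generated by the coidempotent $\pi_0 I^\infty=I\otimes_{\pi_0R}I$; this uses the functoriality of $\langle-\rangle$ under base change recorded after \cref{smideal}. A smashing ideal $\langle x\rangle$ consists precisely of the objects $M$ with $x\otimes M\to M$ an equivalence, because the colocalization is $x\otimes-$. With $x=I\otimes_{\pi_0R}I$, this gives exactly the condition $I\otimes_{\pi_0R}I\otimes_{\pi_0R}M\xrightarrow{\sim}M$.

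\textbf{Expected obstacle.} The delicate point is Step 3: justifying that $(-)\otimes\mcs_{\le0}$ sends the prestable ideal to the heart ideal with coidempotent $\pi_0I^\infty$. One must check that truncation is a symmetric monoidal localization compatible with $\langle-\rangle$, and that the identification $\modu_R(\mcc)\otimes\mcs_{\le 0}\simeq\modu_{\pi_0R}(\mcc^\heartsuit)$ holds in this generality.
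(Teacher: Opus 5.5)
Your proposal is correct. Your ``alternative route'' in Step~2 is exactly the paper's proof: $\pi_0I^\infty\to\pi_0R$ is the coidempotent object of $\modu_{\pi_0R}(\mcc^\heartsuit)$ attached to $I$, so \cref{idemidlabel} and \cref{smashidlidemidliden} give $\pi_0I^\infty\simeq I\otimes_{\pi_0R}I$. The paper leaves the ``in particular'' clause implicit. Your Step~3 (base change of $\langle I^\infty\rangle$ along $\tau_{\le 0}$, then $ii^R\simeq x\otimes-$) fills it in correctly, and it rightly does not treat $i\otimes\mcs_{\le 0}$ as the restriction of $i$.

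Your primary route is genuinely different: it computes through the explicit model $I^\infty=\lim_nJ_I^{\otimes_Rn}$ and the monoidality of $\pi_0$. It is more concrete and explains the stabilisation index. Indeed $\pi_0J_I^{\otimes_Rn}\simeq I^{\otimes n}$, and the transition $I^{\otimes 3}\to I^{\otimes 2}$ is the isomorphism $1_I\otimes\mu_I$ from the proof of \cref{idemidlabel}. By contrast, $I^{\otimes 2}\to I$ is $\mu_I$, which need not be an isomorphism.

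The cost is two points you leave implicit:
\begin{itemize}
\item You need that stabilisation on homotopy actually computes $\pi_0$ of the limit. This is where left completeness is used: each tower $\tau_{\le m}J_I^{\otimes_Rn}$ is eventually constant, and writing the terms as limits of their Postnikov towers lets you interchange limits to get $\tau_{\le 0}I^\infty\simeq\tau_{\le 0}(J_I\otimes_RJ_I)$.
\item You should record that this identification lies over $\pi_0R$, via $\mu_I$ followed by the inclusion, since Step~3 uses it as a coidempotent.
\end{itemize}
The paper's argument avoids both points. It only needs that $I^\infty\to R$ is coidempotent with $\pi_0$-image $I$; the heart-level classification then determines $\pi_0I^\infty$, together with its structure map, without reference to any model of $I^\infty$.
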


\begin{proof}
	Note that $\pi_0I^\infty\to\pi_0R$ is a coidempotent object in $\modu_{\pi_0R}(\mcc^\heartsuit)$ corresponding to the idempotent ideal $I$. It follows that $$\pi_0I^\infty\simeq I \otimes_{\pi_0R}I$$ by combining \cref{idemidlabel} and \cref{smashidlidemidliden}.
\end{proof}
\begin{rem}
	Note that the inclusion $i\otimes \mcs_{\leq 0}$ is not the restriction of $i$, because $i:\amodu_{(R,I)}(\mcc)\hookrightarrow\modu_R(\mcc)$ is not necessarily left exact. 
\end{rem}
To compare the atomic parts of the two smashing frames, we need a heart-level notion of those idempotent ideals which come from atomic smashing ideals.  This leads to the following definition.
\begin{de}\label{defpierce}
	Let $\mca\in\calg(\prladone)$ be a presentably symmetric monoidal   additive $1$-category such that \mca is abelian. We say that an idempotent ideal $I\in\cidem(\idl(\mca))$ is a \textbf{Pierce ideal} if its associated smashing ideal of \mca from \cref{idemidlabel} is an atomic smashing ideal. We denote the subposet of Pierce ideals by $\idl_p(\mca)$, which is equivalent to the atomic smashing frame $ \cidem_f(\mca)$ by definition.
\end{de}
We also recall the following notion of projectivity for $n$-categories, which will be used to compare compact projective objects in a prestable category with those in its heart.
\begin{de}
	Let $\mcc$ be an $n$-category. We say that an object $P\in \mcc$ is \textbf{$n$-projective} if the Yoneda functor $$h_P:\mcc\to \mcs_{\leq n-1}$$ preserves small $n$-sifted colimits.  We refer the reader to \cite[\textsection3.1]{stefanich2023classification} or \cite[Appendix A]{mao2021revisiting} for a detailed discussion.
\end{de}
The following theorem shows that, for projectively rigid $\spgeq$-algebras, atomicity of a smashing ideal is detected on the heart.
\begin{thm}\label{comparisontcinc}
	Let $\mcc\in\calg(\prlad)$ be a projectively rigid \spgeq-algebra. Given an idempotent ideal $I\subset\pi_0\mb{1}$. Then the associated smashing ideal $\amodu_{(\mb{1},I)}(\mcc)\hookrightarrow\mcc$ is atomic if and only if  $I$ is Pierce. In particular, we obtain the following equivalences of inclusions.
	$$\begin{tikzcd}
		\cidem_f(\mcc) \arrow[d, "\sim"] \arrow[r, hook]           & \cidem(\mcc) \arrow[d, "\pi_0"', "\sim"]           \\
		\cidem_f(\mcc^\heartsuit) \arrow[r, hook] \arrow[d, "\sim"] & \cidem(\mcc^\heartsuit) \arrow[d, "\im"', "\sim"] \\
		\idl_p(\mcc^\heartsuit) \arrow[r, hook]                     & \cidem(\idl(\mcc^\heartsuit))            
	\end{tikzcd}$$
\end{thm}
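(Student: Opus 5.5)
The proof compares generators on both sides. By \cref{projrigidtc}, a smashing ideal of $\mcc$ is atomic exactly when it is generated by compact projective (equivalently dualizable) objects of $\mcc$. On the heart side, a Pierce ideal is by \cref{defpierce} one whose associated smashing ideal of $\mcc^\heartsuit$ is generated by $\mcc^\heartsuit$-atomic, i.e.\ dualizable, objects.

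The first step is to identify the dualizable objects of $\mcc^\heartsuit$. Since $\mcc\simeq\mcp_\Sigma(\mcc^{\op{cproj}})$, its heart is $\mcp^{1}_\Sigma(h\mcc^{\op{cproj}})$. The $1$-projective compact objects of this heart are the retracts of the $\pi_0P$ for $P\in\mcc^{\op{cproj}}$. Idempotents in the homotopy category of an idempotent-complete additive $\infty$-category lift, so these retracts are again of the form $\pi_0P$. Because $\pi_0=-\otimes\mcs_{\leq0}$ is symmetric monoidal, duality data pass to the heart, so $\pi_0P$ is dualizable. Hence $(\mcc^\heartsuit)^d=\pi_0(\mcc^d)$ and $\mcc^\heartsuit$ is again projectively rigid. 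With this, the vertical equivalences in the diagram reduce to the claimed equivalence together with \cref{smashidlidemidliden}.

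The forward direction ($\mcc$-atomic implies Pierce) is base change. The heart-level smashing ideal attached to $I$ is the base change of $\amodu_{(\mb{1},I)}(\mcc)$ along $\pi_0\colon\mcc\to\mcc^\heartsuit$, by functoriality of $\langle-\rangle$ together with \cref{smashidlidemidliden}. Atomic smashing ideals are preserved under base change, so if $\amodu_{(\mb{1},I)}(\mcc)$ is generated by a set $S\subset\mcc^d$, then its heart-level counterpart is generated by $\pi_0(S)$.

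For the converse, suppose the heart ideal is generated by dualizables $Q_\alpha$. Lift each to $P_\alpha\in\mcc^d$ with $\pi_0P_\alpha\simeq Q_\alpha$. The main obstacle is to show $P_\alpha\in\amodu_{(\mb{1},I)}(\mcc)$. By \cref{almostalg}, membership in this ideal means that all homotopy groups are killed by $I$, i.e.\ lie in the heart ideal. A priori we only control $\pi_0P_\alpha$.

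The trick I would use is as follows.
\begin{itemize}
\item Consider the $\mathbb{E}_1$-algebra $A:=P_\alpha^\vee\otimes P_\alpha$. Then $P_\alpha$ is a retract of $P_\alpha\otimes A$, and the ideal is closed under retracts and tensoring with $P_\alpha$, so it suffices to show $A$ lies in the ideal.
\item Each $\pi_nA$ is a module over the unital ring $\pi_0A\simeq Q_\alpha^\vee\otimes Q_\alpha$. This ring lies in the heart ideal, since $Q_\alpha$ does and the heart ideal is a tensor ideal.
\item For any $\pi_0A$-module $N$ we have $N\simeq\pi_0A\otimes_{\pi_0A}N$. Hence $I\otimes I\otimes N\to N$ is an isomorphism whenever it is so for $\pi_0A$, and every $\pi_nA$ is killed by $I$.
\end{itemize}

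Finally, the $P_\alpha$ all lie in $\amodu_{(\mb{1},I)}(\mcc)$, and their images generate the heart ideal. The ideal $\langle P_\alpha\rangle$ they generate in $\mcc$ sits inside $\amodu_{(\mb{1},I)}(\mcc)$. It remains to check that it equals the whole ideal: an object of $\amodu_{(\mb{1},I)}(\mcc)$ that is right orthogonal to all $P_\alpha\otimes V$ has vanishing homotopy. I would verify this by a Postnikov induction using left completeness, comparing with the heart, where the $Q_\alpha$ generate. This shows the ideal is atomic.
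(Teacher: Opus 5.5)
Your reductions are fine: the heart is again projectively rigid, the direction ``atomic $\Rightarrow$ Pierce'' follows by base change along $\pi_0\colon\mcc\to\mcc^\heartsuit$, and the dualizable generators $Q_\alpha$ lift to $P_\alpha\in\mcc^d$.

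The gap is the step showing $P_\alpha\in\amodu_{(\mb{1},I)}(\mcc)$. \cref{almostalg} describes the \emph{quotient} $\modu_{\mb{1}/I^\infty}(\mcc)$: it consists of the objects whose homotopy groups are killed by $I$, i.e.\ $I\cdot\pi_nM=0$. The ideal itself is $\ker\bigl(f=\mb{1}/I^\infty\otimes-\bigr)$ by \cref{connlocandclosub}. Membership in it is not detected by asking that each $\pi_nM$ lie in the heart ideal; a homotopy-group description is only available under the Serre hypothesis of \cref{serrealmostmod}.

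The criterion you actually use fails even for a Pierce ideal in a projectively rigid category. Take $\mcc=\opsp_{C_p,\geq0}$ and $\underline I\subset\underline A$ as in \cref{counterex3}. The coidempotent is $EC_{p+}\to S^0$, and $\underline{\pi}_0(EC_{p+})\cong\underline I$, so the discrete object $H\underline I$ has its only homotopy group in the heart ideal. But $\Phi^{C_p}H\underline I\simeq\cofib\bigl((H\mathbb{Z})_{hC_p}\xrightarrow{\mathrm{tr}}H\mathbb{Z}\bigr)$ has $\pi_2\cong\mathbb{Z}/p\neq0$. Hence $H\underline I$ is not in the ideal $\{X\mid\Phi^{C_p}X=0\}$. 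Your computation with $A=P_\alpha^\vee\otimes P_\alpha$ proves a true statement about $\pi_*A$, but it does not imply that $A$ lies in the ideal.

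The missing idea is projectivity, which makes the higher homotopy groups irrelevant.
\begin{itemize}
\item The base change $f$ has a colimit-preserving right adjoint, so $f(P_\alpha)$ is projective in the prestable category $\modu_{\mb{1}/I^\infty}(\mcc)$.
\item Moreover $\pi_0f(P_\alpha)\simeq\pi_0(\mb{1}/I^\infty)\otimes Q_\alpha=0$, because $Q_\alpha\simeq\pi_0I^\infty\otimes Q_\alpha$ and $\mb{1}/I^\infty\otimes I^\infty\simeq0$.
\item A projective object $P$ with $\pi_0P=0$ vanishes: $P\simeq\Sigma Y$ with $Y$ connective, and $\pi_0\Map(P,\Sigma Y)=0$, so $\mathrm{id}_P=0$.
\end{itemize}
Hence $f(P_\alpha)=0$, i.e.\ $P_\alpha\in\ker(f)=\amodu_{(\mb{1},I)}(\mcc)$. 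This is how the paper argues, and no trick with $P_\alpha^\vee\otimes P_\alpha$ is needed. The paper then concludes via \cite[Theorem C.2.1.6]{sag}, lifting $\pi_0$-surjections from sums of the $Q_\alpha$ to maps from sums of the $P_\alpha$.

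Your orthogonality/Postnikov step can be made to work instead, with two points you left implicit. First, right orthogonality detects generation in the prestable setting only because the generators $P_\alpha\otimes V$ ($V\in\mcc^d$) are projective; this is what makes the cofiber of the counit of $\langle P_\alpha\rangle\hookrightarrow\amodu_{(\mb{1},I)}(\mcc)$ right orthogonal to them. Second, the induction uses that the ideal is closed under desuspending $1$-connective objects, which holds because $f$ commutes with $\Sigma$.
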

\begin{proof}
	The ``if'' direction is obvious. 
	For the ``only if'' direction, assume that 
	$$\amodu_{(\mb{1},I)}(\mcc)^\heartsuit \hookrightarrow \mcc^\heartsuit$$ 
	is an atomic smashing ideal of $\mcc^\heartsuit$. We wish to show that 
	$$\amodu_{(\mb{1},I)}(\mcc) \hookrightarrow \mcc$$ 
	is an atomic smashing ideal of $\mcc$.
	
	By \cref{projrigidtc}, it suffices to show that $\amodu_{(\mb{1},I)}(\mcc)$ is compact projectively generated. 
	By \cref{dualaddab6}, $\mcc$ is separated Grothendieck, thus by \cite[Theorem C.2.1.6]{sag} it suffices to show that for any $X\in \amodu_{(\mb{1},I)}(\mcc)$ there exists a collection of compact projective objects $\{P_\alpha\}\subset \amodu_{(\mb{1},I)}(\mcc)^{\op{cproj}}$ and a $\pi_0$-epimorphism
	$$\bigoplus_\alpha P_\alpha \to X.$$
	
	Consider the following diagram:
	$$\begin{tikzcd}
		{\amodu_{(\mb{1},I)}(\mcc)} \arrow[d, "\pi_0^I"] \arrow[r, "i", hook] 
		& \mcc \arrow[d, "\pi_0"] \arrow[r, "f"] 
		& \modu_{\mb{1}/I^\infty}(\mcc) \arrow[d, "\pi_0'"] \\
		{\amodu_{(\mb{1},I)}(\mcc)^\heartsuit} \arrow[r, "i_{\leq0}", hook]   
		& \mcc^\heartsuit \arrow[r, "f_{\leq0}"] 
		& \modu_{\pi_0\mb{1}/I}(\mcc^\heartsuit)
	\end{tikzcd}$$
	where $i_{\leq 0}=i \otimes \mcs_{\leq 0}$ and $f_{\leq 0}=f \otimes \mcs_{\leq 0}$.
	By assumption, there exists a collection of compact $1$-projective objects 
	$\{P^I_\alpha\}\subset (\amodu_{(\mb{1},I)}(\mcc)^\heartsuit)^{1\text{-}\op{cproj}}$ 
	and a $\pi_0$-epimorphism
	$$P^I=\bigoplus_\alpha P^I_\alpha \to \pi_0 X.$$
	By \cite[Proposition 2.4.8]{stefanich2023classification}, the functor 
	$\pi_0:\mcc^{\op{cproj}} \to (\mcc^\heartsuit)^{1\text{-}\op{cproj}}$ 
	induces an equivalence on homotopy categories. Therefore there exists a $\pi_0$-epimorphism 
	$\phi:P=\bigoplus_\alpha P_\alpha\to i(X)$ such that for each $\alpha$ we have 
	$P_\alpha\in \mcc^{\op{cproj}}$ and $\pi_0P_\alpha\simeq i_{\leq0}P^I_\alpha$, as illustrated in the following diagram:
	$$\begin{tikzcd}
		& P \arrow[d, two heads] \arrow[rr,"\phi", two heads, dashed] \arrow[ld, two heads] 
		& & i(X) \arrow[d, two heads] \\
		\pi_0P \arrow[r, "\sim"] 
		& i_{\leq0}P^I \arrow[r, two heads] 
		& i_{\leq0}\pi_0^IX \arrow[r, "\sim"] 
		& \pi_0i(X)
	\end{tikzcd}$$
	By construction, $\pi_0'f(P)=0$, and hence $f(P)=0$ since $P$ is projective. 
	Since by \cref{connlocandclosub} we have $\ker(f)\simeq \amodu_{(\mb{1},I)}(\mcc)$, it follows that $\phi$ lies in $\amodu_{(\mb{1},I)}(\mcc)$, as desired.
\end{proof}
\begin{rem}\label{1addker}
Note that the equation $\amodu_{(\mb{1},I)}(\mcc)= \ker(f)$ in the prestable level does not hold  in the heart level, i.e.,   $$\amodu_{(R,I)}(\mcc)^\heartsuit \subsetneq\ker(f_{\leq0})$$ is a proper inclusion in general (cf. \cref{remshortex1add}). For example, let $(R,\mathfrak{m})$ be a non-discrete valuation ring. Then $\mathfrak{m}$ itself lies in $\ker(f_{\leq0})$ but not in $\amodu_{(R,\mathfrak{m})}(\op{Ab})$, because the following sequence in $\modu_R(\op{Ab})$ is not left exact
$$\mathfrak{m}\otimes_R \mathfrak{m}\to R\to R/\mathfrak{m}\to0.$$
\end{rem}
As an immediate consequence, the telescope conjecture itself is invariant under passage to the heart in the projectively rigid setting.
\begin{cor}\label{tcaddandheart}
Let $\mcc\in \calg(\prlad)$ be a  projectively rigid \spgeq-algebra. The telescope conjecture holds for $\mcc$ if and only if it holds for its heart $\mcc^\heartsuit$.
\end{cor}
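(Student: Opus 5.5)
The corollary should follow directly from the diagram in \cref{comparisontcinc}. The telescope conjecture for $\mcc$ says that the top horizontal inclusion $\cidem_f(\mcc)\hookrightarrow\cidem(\mcc)$ is an equivalence. The telescope conjecture for $\mcc^\heartsuit$ says the same for the middle row $\cidem_f(\mcc^\heartsuit)\hookrightarrow\cidem(\mcc^\heartsuit)$. Here $\mcc^\heartsuit$ is viewed as an object of $\calg(\prladone)$, with the symmetric monoidal structure induced by the localization $\mcc\to\mcc^\heartsuit=\mcc\otimes\mcs_{\leq0}$. It is an abelian presentably symmetric monoidal $1$-category, so its telescope conjecture makes sense.

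The plan is as follows.

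First, I would recall from \cref{comparisontcinc} that the top square commutes and that both of its vertical arrows are equivalences. The right vertical arrow is $\pi_0$, which is an equivalence by \cref{smashidlidemidliden}; this uses that $\mcc$ is left complete prestable, which holds by \cref{dualaddab6} since a projectively rigid algebra is dualizable additive. The left vertical arrow is an equivalence by the atomicity criterion in \cref{comparisontcinc}.

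Second, I would use the two-out-of-three property for equivalences in a commutative square whose vertical maps are equivalences. The top inclusion is an equivalence if and only if the middle inclusion is. Since both are embeddings of posets, being an equivalence is the same as being surjective.

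The only point needing care is that the middle horizontal map really is the telescope inclusion of $\mcc^\heartsuit$ in the sense of the paper's definition. That is, one must check that $\cidem_f(\mcc^\heartsuit)$, as used in \cref{comparisontcinc}, means the $\mcc^\heartsuit$-atomic smashing frame and not some auxiliary subposet. By \cref{defpierce}, the Pierce ideals are by definition exactly those idempotent ideals whose associated smashing ideals of $\mcc^\heartsuit$ are atomic, so the identification holds and the argument is complete.

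For safety I would also say explicitly that the square in \cref{comparisontcinc} commutes at the level of the identifications $\cidem(\mcc)\simeq\cidem(\mcc^\heartsuit)$ sending $\amodu_{(\mb{1},I)}(\mcc)$ to $\amodu_{(\mb{1},I)}(\mcc)^\heartsuit$. I expect this to be the only mild obstacle, and it is already supplied by that theorem.
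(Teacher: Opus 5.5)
Your proposal is correct and is the paper's argument: the paper records this corollary as an immediate consequence of \cref{comparisontcinc}. In the top square there, the vertical maps are equivalences. The right one comes from \cref{smashidlidemidliden}, and the left one from the atomicity criterion combined with \cref{defpierce}. So the top telescope inclusion is an equivalence exactly when the middle one is.
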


\begin{qu}
	Does \cref{comparisontcinc} work for any rigid (but not necessarily projectively rigid) \spgeq-algebra? 
\end{qu}

\subsection{Over a connective \einfring}
In this subsection, we establish an algebraic criterion for the telescope conjecture to hold within the $\infty$-category of connective modules over a connective \einfring. This criterion relates the condition of compact projective generation to Pierce ideals. 

The first step is to understand which idempotent ideals of the heart can arise from compact atomic smashing ideals. We begin with a general finiteness statement for compact atomic smashing ideals.
\begin{lem}\label{compatsmidl}
	Let \mcc be a projectively rigid $\spgeq$-algebra. Then its atomic smashing frame $\cidem_f(\mcc)$  is coherent, and there is an inclusion $$\cidem_f(\mcc)^\omega \hookrightarrow\idl(\mcc^\heartsuit)^\omega\cap\cidem(\idl(\mcc^\heartsuit)).$$
 In other words, for any compact atomic smashing ideal, its associated idempotent ideal is compact in $\idl(\mcc^\heartsuit)$.
\end{lem}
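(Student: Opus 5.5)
Coherence is immediate from \cref{atsmfrmcoh}: a projectively rigid $\spgeq$-algebra $\mcc$ is rigid over $\spgeq$, so its unit is $\spgeq$-atomic, i.e.\ compact projective, and in particular compact. Hence $\cidem_f(\mcc)$ is coherent. The same theorem describes the compact elements: they are the finite joins $\bigvee_{k=1}^m\langle x_k\rangle$ with $x_k\in\mcc^d=\mcc^{\op{cproj}}$.

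For the inclusion, recall from \cref{smashidlidemidliden} that the frame equivalence $\cidem(\mcc)\simeq\cidem(\idl(\mcc^\heartsuit))$ preserves joins. It therefore suffices to show two things. First, for each dualizable $x$, the idempotent ideal $I_x$ attached to the principal smashing ideal $\langle x\rangle$ is compact in $\idl(\mcc^\heartsuit)$. Second, finite joins of compact ideals in $\idl(\mcc^\heartsuit)$ are compact. The second point is formal: joins in $\cidem(\idl(\mcc^\heartsuit))$ are computed as joins in $\idl(\mcc^\heartsuit)$, since a join of idempotent ideals is again idempotent, and finite joins of compact elements in the compactly generated preframe $\idl(\mcc^\heartsuit)$ are compact by \cref{idlprefrm}.

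For the first point, let $c\to\mb{1}$ be the coidempotent with $\langle c\rangle=\langle x\rangle$, and identify $I_x=\im(\pi_0c\to\pi_0\mb{1})$. Write $I_x$ as a filtered union of compact ideals of the form $\im(P\to\pi_0\mb{1})$ with $P$ compact projective in the heart; these exist by \cref{idlprefrm}(1), applied with $S$ the images of $\mcc^{\op{cproj}}$. I then want a single such compact ideal $J\subset I_x$ with $J^2=I_x$, or more directly $J=I_x$.

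The natural candidate uses the compactness of $\langle x\rangle$ in $\cidem_f(\mcc)$, which the proof of \cref{atsmfrmcoh} realises concretely. From $x\simeq c\otimes x$ one gets a factorisation of $x\to\mb{1}$-type data through a compact piece of $c$. Concretely, I take the images $\im(\pi_0 x^\vee\otimes\pi_0 x\to\pi_0\mb{1})$ coming from $\mathrm{ev}$, together with the maps $x\otimes x^\vee\to c\otimes x\otimes x^\vee\to c\to\mb{1}$. This gives a compact ideal $J=\im(\pi_0(x\otimes x^\vee)\to\pi_0\mb{1})\subset I_x$, using that $\pi_0(x\otimes x^\vee)$ is compact projective in the heart. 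Conversely, since $x\in\langle J$-generated ideal$\rangle$ and $\langle x\rangle$ is the smallest smashing ideal containing $x$, I expect $I_x\subset\sqrt{J}$, and in fact $I_x=J^\infty$-saturation. Idempotence of $I_x$ together with $J\subset I_x\subset$ (ideal generated by $J$ acting on $x$) should force $I_x=J\cdot I_x\subset J$.

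The main obstacle is this last step: showing that the compact ideal $J$ built from the evaluation map equals $I_x$ rather than merely having the same radical or the same associated almost category. My first attempt would use \cref{almostalg}. An object $y$ lies in $\langle x\rangle=\amodu_{(\mb{1},I_x)}(\mcc)$ iff its homotopy is $I_x$-almost zero-complementary; since $x$ is a retract of $x\otimes x^\vee\otimes x$, the ideal $J$ already acts invertibly on $x$, hence on all of $\langle x\rangle$, giving $I_x\subset J$. Failing that, I would fall back to the weaker but sufficient statement that $I_x$ is a finite join of compact idempotent-generated pieces.
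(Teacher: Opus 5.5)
\textbf{There is a genuine gap at the key step.}

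Your reduction steps are fine. Coherence follows from \cref{atsmfrmcoh} since the unit of $\mcc$ is compact projective. The compact elements of $\cidem_f(\mcc)$ are finite joins of principal ideals $\langle x\rangle$ with $x\in\mcc^d=\mcc^{\mathrm{cproj}}$; equivalently a single $\langle x_1\oplus\cdots\oplus x_m\rangle$, which avoids the join bookkeeping. Your candidate $J=\im\bigl(\pi_0(x\otimes x^\vee)\xrightarrow{\mathrm{ev}}\pi_0\mb{1}\bigr)$ is exactly the ideal the paper uses, and $J\subset I_x$ is correct because $\mathrm{ev}$ factors through $c\to\mb{1}$.

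But the entire content of the lemma is the reverse inclusion $I_x\subset J$, and you do not prove it: you label it the main obstacle and offer heuristics that do not work as stated.
\begin{itemize}
\item The triangle identity does not make $J$ ``act invertibly'' on $x$. It only shows that $\mathrm{ev}\otimes x\colon x\otimes x^\vee\otimes x\to x$ has a section, i.e.\ is surjective on $\pi_0$.
\item A description of $\langle x\rangle$ by homotopy groups is not available for a general smashing ideal; the criterion $I\cdot\pi_*M=\pi_*M$ of \cref{serrealmostmod} needs the Serre condition.
\item An inclusion $I_x\subset\sqrt{J}$ would not suffice, since distinct idempotent ideals can have the same radical.
\item Your fallback, that $I_x$ is a finite join of compact pieces, is a restatement of what has to be shown and comes with no argument.
\end{itemize}

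The missing idea is to transport that $\pi_0$-surjectivity from $x$ to the unit $c$ of $\langle x\rangle$. Let $\mathcal{E}\subset\mcc$ be the full subcategory of those $M$ for which $\mathrm{ev}\otimes M\colon x\otimes x^\vee\otimes M\to M$ is $\pi_0$-surjective, i.e.\ $\cofib(\mathrm{ev}\otimes M)\in\Sigma\mcc$. The functor $M\mapsto\cofib(\mathrm{ev}\otimes M)$ preserves colimits and commutes with $V\otimes-$, and $\Sigma\mcc$ is closed under both operations. Hence $\mathcal{E}$ is closed under colimits and under the $\mcc$-action. It contains $x$ by the triangle identity, after permuting tensor factors.

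Therefore $c\in\langle x\rangle\subset\mathcal{E}$, so $\pi_0(x\otimes x^\vee\otimes c)\to\pi_0c$ is surjective. Since $x\otimes x^\vee\otimes c\to c\to\mb{1}$ factors through $\mathrm{ev}$, this gives $I_x=\im(\pi_0c\to\pi_0\mb{1})\subset J$.

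The paper obtains the same surjectivity differently. The map $\mathrm{ev}\otimes c$ is the counit at $c$ of the adjunction between $x\otimes-\colon\mcc\to\langle x\rangle$ and its right adjoint $x^\vee\otimes-$. That right adjoint is conservative on $\langle x\rangle$ by \cref{generatingequiconds}, and it is left exact and colimit-preserving, so it reflects effective epimorphisms. After applying it, the counit acquires a section from the triangle identity, so the counit itself is an effective epimorphism.
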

\begin{proof}
	Since $\mcc$ is projectively rigid, \mcc 
	lies in $\calg(\prl_\omega)$. Therefore  by \cref{atsmfrmcoh}, $\cidem_f(\mcc)$  is a coherent frame.
	
	Now let $(i:\mci\subset \mcc)\in \cidem_f(\mcc)^\omega$ be a compact atomic smashing ideal. Then according to \cref{atomicmodu}, \mci can be generated by a single compact projective element $x\in \mci^{\op{cproj}}\subset \mcc^{\op{cproj}}$ as a \mcc-module. Consider the (internal left adjoint) $\mcc$-module map $\mcc\xrightarrow{x_{\mci}}\mci$ induced by $x$. Then the right adjoint $x_\mci^R\circ i^R:\mcc\to \mcc$ to the composition $i\circ x_{\mci}$ can be identified with $$\mcc\xrightarrow{x^\vee\otimes-}\mcc.$$
	Let $I\subset \pi_0\mb{1}$ denote the corresponding idempotent ideal of the smashing ideal $\mci$ through \cref{almostalg}. Then $x_\mci^R (I^\infty) \simeq x_\mci^R \circ i^R(\mb{1})   \simeq x^\vee $ is compact in \mcc and hence $\pi_0 x_\mci^R (I^\infty)$ is compact in $\mcc^\heartsuit$. Since $x_\mci^R$ is left exact and conservative, $x_\mci^R$ reflects effective epimorphisms (i.e. $\pi_0$-surjections). However, $x_\mci^R x_\mci  x_\mci^R(I^\infty)\to x_\mci^R(I^\infty)$ admits a retract by the adjunction property and hence an effective epimorphism in \mcc. That implies the counit map $x_\mci  x_\mci^R(I^\infty)\to I^\infty$ is effective epimorphic in \mci. Consequently, $$I \simeq \im(\pi_0I^\infty \to \pi_0\mb{1})\simeq \im(\pi_0x_\mci  x_\mci^R(I^\infty)\to\pi_0I^\infty \to \pi_0\mb{1})$$ is a finitely generated ideal of $\pi_0\mb{1}$, i.e. $I\in \idl(\mcc^\heartsuit)^\omega\cap\cidem(\idl(\mcc^\heartsuit))$.
\end{proof} 

\begin{rem}\label{ex:mackey-non-pierce}
	Note that the inclusion $$\cidem_f(\mcc)^\omega\hookrightarrow\idl(\mcc^\heartsuit)^\omega\cap\cidem(\idl(\mcc^\heartsuit))$$ in \cref{compatsmidl} is  generally not an equivalence. In \cref{counterex2}, we provide an explicit counterexample within the $\infty$-category of connective $G$-equivariant spectra for a finite group $G$.
\end{rem}
However, when working over a connective $\mathbb{E}_\infty$-ring, no additional equivariant or additive complications occur: the inclusion in \cref{compatsmidl} becomes an equivalence, and compact projective generation is controlled entirely by classical idempotents in $\pi_0R$.
\begin{thm}\label{main2}
	Let $R\in\calg(\spgeq)$ be a connective \einfring and $I\subset\pi_0R$ be an idempotent ideal. Then $\amodu_{(R,I)}(\spgeq)$ is compact projectively generated if and only if $I$ can be generated by a family of idempotent elements $\{e_\alpha\}$. 
\end{thm}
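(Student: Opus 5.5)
The plan is to reduce everything to the heart and to finitely generated idempotent ideals, using \cref{comparisontcinc} and \cref{compatsmidl}. Since $\modu_R(\spgeq)$ is projectively rigid (it is a module category over the projectively rigid algebra $\spgeq$), \cref{projrigidtc} shows that $\amodu_{(R,I)}(\spgeq)$ is compact projectively generated if and only if it is an atomic smashing ideal. By \cref{comparisontcinc}, this in turn holds if and only if $I$ is Pierce in $\modu_{\pi_0R}(\op{Ab})$. So the statement becomes purely algebraic: an idempotent ideal $I\subset\pi_0R$ is Pierce if and only if it is generated by idempotent elements.

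For the ``if'' direction, let $e\in\pi_0R$ be idempotent. It lifts to a splitting $R\simeq R_e\times R_{1-e}$ of the $\mathbb{E}_\infty$-ring, since idempotents lift uniquely along $\pi_0$. The ideal $(e)$ then corresponds to the split smashing ideal $\modu_{R_e}^{\op{cn}}\hookrightarrow\modu_R^{\op{cn}}$. This ideal is generated by the compact projective (indeed dualizable) summand $R_e$, so it is atomic. Equivalently, $(e)$ is a complemented element and one can invoke \cref{splitclopen}. A general ideal generated by idempotents is $I=\bigvee_\alpha(e_\alpha)$, and joins of atomic smashing ideals are atomic because $\cidem_f$ is a subframe. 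Hence $I$ is Pierce.

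For the ``only if'' direction, $\cidem_f$ is coherent by \cref{compatsmidl}, so every Pierce ideal is a join of compact Pierce ideals. It therefore suffices to treat a compact $I$, and \cref{compatsmidl} says such an $I$ is finitely generated. A finitely generated ideal with $I=I^2$ is generated by a single idempotent, by the determinant trick (Nakayama). Indeed, $I=I\cdot I$ with $I$ finitely generated gives some $a\in I$ with $(1-a)I=0$. Then $a(1-a)=0$, so $a$ is idempotent and $I=(a)$. Thus every Pierce ideal is a join, that is a sum, of ideals $(e_\alpha)$ with $e_\alpha$ idempotent.

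The main subtlety is the compactness step. One must make sure that compact elements of $\cidem_f$ map to finitely generated ideals in the sense of $\idl(\modu_{\pi_0R}(\op{Ab}))^\omega$, which is exactly the content of \cref{compatsmidl}. One must also check that joins in $\cidem_f$ correspond to sums of ideals under the identification of \cref{smashidlidemidliden}. The latter holds because that identification is a frame equivalence, and joins of idempotent ideals are sums.
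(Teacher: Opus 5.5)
Your proposal is correct and follows essentially the same route as the paper. The ``if'' direction uses the splitting $R\simeq R[e^{-1}]\times R[(1-e)^{-1}]$ to identify the smashing ideal of $(e)$ with $\modu_{R[e^{-1}]}(\spgeq)$ and then closes under joins; the ``only if'' direction uses coherence of $\cidem_f$, finite generation of compact elements from \cref{compatsmidl}, and the Nakayama argument of \cref{fgidemidl}, which you prove inline. The detour through \cref{comparisontcinc} and the heart is harmless but unnecessary, since you actually argue both directions directly in $\modu_R(\spgeq)$.
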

\begin{proof}
For the ``if'' direction, the equivalence of frames
\[
\cidem(\modu_R(\spgeq))\simeq \cidem(\idl(\pi_0R))
\]
allows us to reduce to the case where $I$ is generated by a single idempotent element $e\in I$. We claim that $$\amodu_{(R,I)}(\spgeq)\simeq \modu_{R[e^{-1}]}(\spgeq)$$ and then we are done.
	To prove the claim, we observe that the natural map $R\to R[(1-e)^{-1}]\times R[e^{-1}]$ is an equivalence of \einfrings, because it is an \'{e}tale map and one only needs to check at the $\pi_0$-level. Therefore by \cite[Lemma D.3.5.5]{sag} the base-change functor induces the following equivalence $$\modu_R(\spgeq)\xrightarrow{\sim}\modu_{R[(1-e)^{-1}]}(\spgeq)\times \modu_{R[e^{-1}]}(\spgeq).$$
	
	For the ``only if'' direction, suppose  that $\amodu_{(R,I)}(\spgeq)$ is compact projectively generated, i.e. it is an atomic smashing ideal. We first note that $$\amodu_{(R,I)}(\spgeq)\simeq \bigvee_\alpha \mci_\alpha$$ can be written as a filtered join of compact atomic smashing ideals by \cref{atsmfrmcoh}, because $\modu_R(\spgeq)\in \calg(\prl_\omega)$.
	However, by \cref{compatsmidl}, the correspondent idempotent ideal $I_\alpha$ of a compact atomic smashing ideal $\mci_\alpha$ is finitely generated, thus by \cref{fgidemidl}, $I_\alpha$ is generated by a single idempotent element. Consequently, $I$ is generated by a family of idempotent elements $\{e_\alpha\}$.
\end{proof}
\begin{rem}
	In other words, when \(\mcc=\modu_R(\spgeq)\) for a connective
	\(\mathbb{E}_\infty\)-ring \(R\), the inclusion
	\[
	\op{Z}\bigl(\cidem(\mcc)\bigr)
	\hookrightarrow
	\cidem_f(\mcc)
	\]
	of \cref{splitclopen} is an equivalence. However, this need not hold for
	an arbitrary projectively rigid \(\spgeq\)-algebra; see, for example,
	\cref{counterex3}.
\end{rem}
Specializing to ordinary commutative rings, we obtain the following result, which justifies the term ``Pierce ideal''.
\begin{cor}
	Let $A$ be a commutative ring.
	An idempotent ideal $I$ of $A$ is a Pierce ideal in the sense of \cref{defpierce} if and only if $I$  is Pierce in the classical sense, i.e., it is generated by a family of idempotent elements. 
	
	In particular, $\modu_{A}(\op{Ab})$ satisfies the telescope conjecture if and only if every idempotent ideal of $A$ is Pierce.
\end{cor}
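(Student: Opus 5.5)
The plan is to specialize \cref{main2} to $R=HA$ and then pass to the heart. Take $\mcc=\modu_{HA}(\spgeq)$. This is a projectively rigid $\spgeq$-algebra: it is compact projectively generated by $HA$, and its compact projectives are the retracts of finite free modules, which are exactly its dualizable objects. Its heart is $\mcc^\heartsuit\simeq\modu_A(\op{Ab})$, and $\pi_0\mb{1}=A$.

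First apply \cref{comparisontcinc}. Under the identifications of \cref{smashidlidemidliden}, an idempotent ideal $I\subset A$ gives an atomic smashing ideal $\amodu_{(HA,I)}(\spgeq)$ of $\mcc$ exactly when the corresponding smashing ideal of the heart $\modu_A(\op{Ab})$ is atomic. By \cref{defpierce}, the latter condition says precisely that $I$ is Pierce in the categorical sense.

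Next, by \cref{projrigidtc}, $\amodu_{(HA,I)}(\spgeq)$ is atomic if and only if it is compact projectively generated. \cref{main2} says this holds if and only if $I$ is generated by a family of idempotent elements of $A$. Chaining these equivalences shows that the categorical and classical notions of Pierce ideal agree.

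For the final assertion, recall that the telescope conjecture for $\modu_A(\op{Ab})$ means $\cidem_f=\cidem$. Via \cref{idemidlabel} (or the vertical equivalences in \cref{comparisontcinc}), this becomes $\idl_p(\modu_A(\op{Ab}))=\cidem(\idl(\modu_A(\op{Ab})))$. In words, every idempotent ideal of $A$ must be Pierce, and by the first part this is the classical condition.

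The only step needing real care is checking that $\modu_{HA}(\spgeq)$ is projectively rigid, i.e.\ that dualizable objects coincide with compact projective ones. This follows because a dualizable connective module has a connective dual, so it is perfect of Tor-amplitude $\le 0$; such a module is a retract of a finite free module. Everything else is a direct chaining of results already stated in the paper.
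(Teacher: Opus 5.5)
Your proposal is correct and follows the paper's proof. The paper simply applies \cref{comparisontcinc} and \cref{main2} to $\mcd(A)_{\geq0}\simeq\modu_{HA}(\spgeq)$; you additionally spell out the step via \cref{projrigidtc} from atomicity to compact projective generation, and you check projective rigidity of $\modu_{HA}(\spgeq)$, both of which the paper leaves implicit.
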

\begin{proof}
	It follows by applying \cref{comparisontcinc} and \cref{main2} to the \syminfcat $\mcd(A)_{\geq0}$.
\end{proof}
The preceding corollary also connects the atomic smashing frame with the classical Pierce spectrum of a ring.
\begin{rem}[Pierce spectrum]
	Let $A$ be a commutative ring. We denote the poset of Pierce ideals by $\idl_p(A)$. The poset $\idl_p(A)$ forms a subframe of $\cidem(\idl(A))$. Moreover, $\idl_p(A)$ is a coherent frame whose subposet $\idl_p(A)^\omega$ spanned by compact elements consists exactly of those finitely generated idempotent ideals (they are automatically principle), i.e. $$\idl_p(A)^\omega\simeq\idl(A)^\omega\cap \cidem(\idl(A)). $$ 
	The poset $\idl_p(A)^\omega$ is a Boolean algebra, given by the complement assignment $(e)^c=(1-e)$. Therefore $\spec(\idl_p(A))$ is a stone space, which we call the Pierce spectrum of $A$. 
	
	In fact, we have a natural identification of posets
	$$\idl_p(A)\simeq \{\text{Pierce open subsets in }\spec(A)\};$$ see \cref{pierceidlopen}. We refer the reader to \cite[\textsection V.2.]{Johnstone82} and \cite[\textsection8]{borceux2024galois}  for more details about the Pierce spectrum of a ring. 
\end{rem}

\begin{rem}
	By \cref{main2}, the telescope conjecture for $\modu_R(\spgeq)$ over a connective \ein-ring $R$ is an algebraic condition, which only depends on $\pi_0R$ and is much  simpler than the usual telescope conjecture in the stable case. 
\end{rem}
We record some immediate consequences and examples of this algebraic criterion.
\begin{ex}\label{noethertelescope}
	Let $R\in\calg(\spgeq)$ be a connective \einfring such that $\pi_0R$ is Noetherian. Then any idempotent ideal is finitely generated and hence a Pierce ideal. Therefore $\modu_R(\spgeq)$ satisfies the telescope conjecture. 

	In particular, the \infcat  $\spgeq$ of connective spectra satisfies the telescope conjecture. Furthermore, by the computation that $$\cidem(\idl(\mathbb{Z}))\simeq\{0,\mathbb{Z}\},$$ we see that \spgeq is a smashing field.
\end{ex}
\begin{ex}
	Let $R\in\calg(\spgeq)$ be a connective \einfring such that $\pi_0R$ is absolutely flat. Then  $\modu_R(\spgeq)$ satisfies the telescope conjecture by \cref{abflatchar}.
\end{ex}
\begin{ex}[Counterexamples]\label{counterextcadd}
	We list some non-Pierce idempotent ideals $I\subset A$, which make the telescope conjecture fail for $\mcd(A)_{\geq0}$.
	\enu{
		\item 
		Let $A = C(\mathbb{R})$ be the ring of all real-valued continuous functions on~$\mathbb{R}$ and  $I \subset A$ be the ideal consisting of all functions with compact support. Then $I$ is non-Pierce idempotent by \cref{counterpiercepure}.
		\item 
		Let $(A,\mathfrak{m})$ be a non-discrete valuation ring.
		Let
		$
		I := \mathfrak{m}
		$
		be the maximal ideal of $A$.
		Then $I$ is non-Pierce idempotent by \cref{counterpiercepure}.
		\item Let $k$ be a field and $p$ be a prime. Consider the ring $A = k[x^{1/p^\infty}]$, which is formally defined as the union $\colim_{n} k[x^{1/p^n}]$. 
		Let $I = (x^{1/p^\infty})$ be the ideal generated by the set $\{x^{1/p^n} \mid n \geq 0\}$. Then $I$ is non-Pierce idempotent by \cref{counterpiercepure}.
		
	}

\end{ex}

\section{Serre Smashing Frames and the Flat Telescope Conjecture}\label{sec7}
Inspired by the observation in \cref{pierceincpure} that for ordinary rings, pure ideals strictly interpolate between Pierce ideals and idempotent ideals, 
we investigate flat objects in the dualizable setting and study the properties of pure ideals and Serre smashing ideals. This framework leads to a weaker variant of the telescope conjecture, which we call the \emph{flat telescope conjecture}.

\subsection{Dualizable flatness}
Classically, a left module $M$ over an associative ring $A$ is flat if the relative tensor product functor 
\[ \rmodu_A(\op{Ab})\xrightarrow{(-)\otimes_A M} \op{Ab} \] 
is left exact. This characterization extends naturally to the setting of dualizable categories. In this subsection, we study flat objects in a dualizable $\mathcal{V}$-module and establish some basic properties of flat objects.

The following definition packages flatness in terms of the dual of an object.
In this form, it applies uniformly to additive and prestable examples.

\begin{de}
	Let \(\mcv\in\calg(\prl)\), and let \(\mcm\) be a dualizable
	\(\mcv\)-module. Given an object \(x\in\mcm\), equivalently a
	\(\mcv\)-module map
	\[
	\mcv \xrightarrow{x} \mcm,
	\]
	we say that \(x\) is \textbf{\(\mcv\)-flat}, or simply
	\textbf{flat} if the base \(\mcv\) is clear, if the dual map
	\[
	\mcm^\vee \xrightarrow{x^\vee} \mcv
	\]
	is left exact.
\end{de}

This definition recovers the expected behavior in the two extremal cases
which will be used repeatedly below.

\begin{rem}
	Let \(\mcv\in\calg(\prl)\).
	\enu{
		\item The unit \(\mathbf{1}\in\mcv\) is always \(\mcv\)-flat.
		
		\item Suppose that \(\mcv\) is stable. Then, for every dualizable
		\(\mcv\)-module \(\mcm\), every object \(x\in\mcm\) is
		\(\mcv\)-flat.
	}
\end{rem}

\begin{rem}
	This notion of flatness is relative to the chosen base \(\mcv\). In
	particular, it distinguishes categorical flatness from $\otimes$-flatness.
let \(\mcc\in\calg(\prlad)\) be such that its underlying
\(\infty\)-category is dualizable additive. Then for  an $x\in \mcc$,  categorical flatness is
	\(\spgeq\)-flatness, while $\otimes$-flatness is \(\mcc\)-flatness (cf. \cite[Remark 4.5]{levy-liang-sosnilo:dualizable}).
\end{rem}
The definition is also functorial with respect to the internal left adjoints.  This compatibility is essential for relating flatness to atomic objects and hence to the telescope-type questions considered below.
\begin{prop}\label{preserverefleflats}
	Let $\mcv\in\calg(\prl)$. 
	\enu{ \item Let $f:\mcm \to \mcn\in\prvdbl$ be an internal left adjoint between dualizable \mcv-modules. Then $f$ preserves \mcv-flat objects.
		\item Let $f:\mcm \hookrightarrow \mcn\in\prvdbl$ be a fully faithful internal left adjoint between dualizable \mcv-modules. Then $f$ reflects \mcv-flat objects.
		\item Let $\mcm$ be a dualizable \mcv-module. If $x\in \mcm$ is \mcv-atomic, then $x$ is \mcv-flat.
	}
\end{prop}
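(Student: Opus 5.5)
The plan is to work throughout with the duality pairing and with the fact that an internal left adjoint between dualizable modules has a dual that is again a $\mcv$-linear functor. I will use the standard fact that an internal left adjoint $f:\mcm\to\mcn$ between dualizable modules has dual $f^\vee:\mcn^\vee\to\mcm^\vee$, and that $f^\vee$ is identified with the right adjoint of $f$ after the identification $\mcm^\vee\simeq\mcm^{\op{op}}$-type pairing. In particular, $f^\vee$ preserves small limits, hence it is left exact.

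For (1), let $x\in\mcm$ be flat, classified by $x:\mcv\to\mcm$. The object $f(x)$ is classified by the composite $f\circ x$, and its dual is $(f\circ x)^\vee\simeq x^\vee\circ f^\vee$. This is a composite of left exact functors, so it is left exact, and hence $f(x)$ is flat.

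For (2), suppose $f$ is fully faithful and $f(x)$ is flat. Since $f$ is an internal left adjoint, it has a $\mcv$-linear colimit-preserving right adjoint $f^R$, and full faithfulness gives $f^R f\simeq\id$. Dualizing gives $f^\vee\circ (f^R)^\vee\simeq \id_{\mcm^\vee}$, where $(f^R)^\vee$ is the left adjoint of $f^\vee$. Then
\[x^\vee\simeq x^\vee\circ f^\vee\circ (f^R)^\vee\simeq (f(x))^\vee\circ (f^R)^\vee.\]
The main obstacle is to show that $(f^R)^\vee$ is left exact. I would argue that $(f^R)^\vee$ is fully faithful, being the left adjoint in a coreflection that is a section of $f^\vee$, and then deduce left exactness from this. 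If that fails, I would instead express $x^\vee$ directly as a retract of $(f(x))^\vee\circ(f^R)^\vee$ and check finite limits using the fact that $f^\vee$ is conservative on the essential image.

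For (3), if $x$ is atomic, then $x:\mcv\to\mcm$ is an internal left adjoint. Applying (1) to $x$ itself, viewed as a map out of the dualizable module $\mcv$, and using that $\mathbf 1$ is flat, we get that $x=x(\mathbf 1)$ is flat. Alternatively, $x^\vee$ is the right adjoint $\unmap_\mcm(x,-)$ under the self-duality, and a right adjoint is left exact.
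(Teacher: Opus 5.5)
Parts (1) and (3) are correct and coincide with the paper's proof. One correction in (1): the heuristic $\mcm^\vee\simeq\mcm^{\op{op}}$ is wrong (for $\mcm=\Ind(C)$ one has $\mcm^\vee\simeq\Ind(C^{\op{op}})$). The conclusion still holds for the reason you give in (2): dualization sends $f\dashv f^R$ to an adjunction $(f^R)^\vee\dashv f^\vee$, so $f^\vee$ is a right adjoint.

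The gap is in (2), at exactly the step you flag, and the route you propose cannot work. Full faithfulness of a colimit-preserving functor does not imply left exactness, and $(f^R)^\vee$ is in fact not left exact in general. Here is a counterexample.
\begin{itemize}
\item Take $\mcv=\spgeq$, $\mcc=\opsp_{C_p,\geq0}$, and $f=i\colon\mci=\langle EC_{p+}\rangle\hookrightarrow\mcc$, the smashing ideal with $ii^R\simeq EC_{p+}\otimes-$.
\item The rigid $\spgeq$-algebra $\mcc$ is self-dual via the pairing $(a,b)\mapsto(a\otimes b)^{C_p}$. For this pairing $EC_{p+}\otimes-$ is self-dual, so $(i^R)^\vee i^\vee\simeq EC_{p+}\otimes-$ and $i^\vee(i^R)^\vee\simeq\id$.
\item Hence $(i^R)^\vee$ is, up to an equivalence $\mci^\vee\simeq\mci$, the inclusion $i$ itself.
\item But $i$ is not left exact. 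Let $Y=EC_{p+}\otimes H\underline{\mathbb Z}$. The fiber of $0\to Y$ computed in $\mcc$ is $\tau_{\geq0}\Sigma^{-1}Y$. Its underlying spectrum vanishes, but its fixed points have $\pi_0\simeq H_1(C_p;\mathbb Z)\neq0$ (Adams isomorphism). The fiber computed in $\mci$ is $EC_{p+}\otimes\tau_{\geq0}\Sigma^{-1}Y\simeq0$.
\end{itemize}
Your fallback adds nothing, because $x^\vee$ is already equivalent to $(f(x))^\vee\circ(f^R)^\vee$, and the difficulty is precisely the non-exact factor.

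The missing idea is to use the other section of $f^\vee$. As a morphism of $\prlv$, $f^\vee$ has a right adjoint $(f^\vee)^R$. In the adjoint triple $(f^R)^\vee\dashv f^\vee\dashv(f^\vee)^R$, the unit $\id\to f^\vee(f^R)^\vee$ and the counit $f^\vee(f^\vee)^R\to\id$ are mates. Since the unit is an equivalence (you showed $(f^R)^\vee$ is fully faithful), so is the counit, and $(f^\vee)^R$ is fully faithful too. Thus $f^\vee$ is a left exact localization, and
\[x^\vee\simeq x^\vee f^\vee(f^\vee)^R\simeq (f(x))^\vee\circ(f^\vee)^R.\]
This is a left exact functor composed with a right adjoint, hence left exact. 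This is exactly the paper's argument.
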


\begin{proof}
	(1) Let $x:\mcv\to\mcm$ be a \mcv-flat object. Since $f$ is an internal left adjoint, the dual map $f^\vee:\mcn^\vee\to\mcm^\vee$ is an internal right adjoint. Consequently, the composition $x^\vee f^\vee$ is left exact.
	
	(2) By assumption, we deduce that $f^\vee:\mcn^\vee\to\mcm^\vee$ is a left exact localization. Therefore, if $f^\vee x^\vee:\mcn^\vee\to\mcv$ is left exact, then so is $x^\vee$.
	
	(3) This follows immediately by applying (1) to $x:\mcv\to \mcc$, since $\mb{1}\in \mcv$ is  flat.
\end{proof}

In particular, flatness is automatic for the objects which generate atomic smashing ideals.  We next record the corresponding relative terminology for algebras and morphisms.
\begin{de}\label{flatover}
Let $\mcv\in\calg(\prl)$.
\enu{
\item 	Let $R\in\alg(\mcv)$ be an $\eone$-algebra and $M\in\lmodu_R(\mcv)$ be a left module. We say $M$ is \mcv-flat over $R$ if $M\in \lmodu_R(\mcv)$ is \mcv-flat.

\item 	Let $R\to S\in\calg(\mcv)$ be a map of \ein-algebras. We say $R\to S$ is a flat morphism if $S$ is \mcv-flat over $R$, or equivalently, if the base change functor $\modu_R(\mcv)\to \modu_S(\mcv)$ is left exact.
}

\end{de}
\begin{rem}
	Note that a left $R$-module $M\in\lmodu_R(\mcv)$ is \mcv-flat over $R$ if and only if the relative tensor product $$\rmodu_R(\mcv)\xrightarrow{-\otimes_R M}\mcv$$ is left exact, because $\lmodu_R(\mcv)^\vee\simeq \rmodu_R(\mcv)$ by \cite[Remark 4.8.4.8]{ha}.
\end{rem}
We shall use the following elementary closure properties without further comment in later.
\begin{prop}\label{flatmapsclosure}
	Let $\mcv\in\calg(\prl)$. Then:
	\enu{\item The  flat morphisms in $\calg(\mcv)$ are closed under base change.
	\item Suppose that \mcv satisfies AB5 (i.e. filtered colimits commute with finite limits in \mcv). Then for any $R\in\alg(\mcv)$, flat left $R$-modules are closed under filtered colimits in $\lmodu_R(\mcv)$. In particular, flat morphisms in $\calg(\mcv)$ are closed under filtered colimits.
	\item If \mcv is semiadditive,  then  flat morphisms in $\calg(\mcv)$ are closed finite products.
	} 
\end{prop}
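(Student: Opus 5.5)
Throughout, I use the reformulation in \cref{flatover}: a morphism $R\to S$ in $\calg(\mcv)$ is flat exactly when the base change functor $S\otimes_R-\colon\modu_R(\mcv)\to\modu_S(\mcv)$ is left exact. Equivalently, a left $R$-module $M$ is flat over $R$ when $-\otimes_R M\colon\rmodu_R(\mcv)\to\mcv$ is left exact. Each of the three closure properties then becomes a statement about left exactness of composites, colimits and products of functors.

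For (1), let $R\to S$ be flat and $R\to R'$ arbitrary, and put $S':=R'\otimes_R S$. Take a finite diagram $\{M_j\}$ in $\modu_{R'}(\mcv)$; I must show that $S'\otimes_{R'}(\lim_j M_j)\simeq\lim_j S'\otimes_{R'}M_j$ in $\modu_{S'}(\mcv)$. The forgetful functor $\modu_{S'}(\mcv)\to\modu_{R'}(\mcv)\to\mcv$ is conservative and preserves limits, so it suffices to check the claim after forgetting. Under the forgetful functor to $\modu_R(\mcv)$, there is a natural equivalence $S'\otimes_{R'}M\simeq S\otimes_R M$, and the forgetful functor $\modu_{R'}(\mcv)\to\modu_R(\mcv)$ preserves limits. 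Flatness of $S$ over $R$ then gives the comparison, because finite limits of $R'$-modules are computed in $\modu_R(\mcv)$ and hence in $\mcv$. The one point needing care is the associativity identification $S'\otimes_{R'}M\simeq S\otimes_R M$ compatibly with the limit comparison maps; this is standard from \cite[\S4.5]{ha}.

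For (2), let $M=\colim_\alpha M_\alpha$ be a filtered colimit of flat left $R$-modules. Then $-\otimes_R M\simeq\colim_\alpha(-\otimes_R M_\alpha)$ as functors $\rmodu_R(\mcv)\to\mcv$, since the relative tensor product commutes with colimits in each variable. Each $-\otimes_R M_\alpha$ is left exact, and by AB5 a filtered colimit of left exact functors valued in $\mcv$ is again left exact, because finite limits in functor categories are computed pointwise. For the ``in particular'', note that if $S=\colim S_\alpha$ is a filtered colimit in $\calg(\mcv)_{R/}$, then its underlying $R$-module is the filtered colimit of the underlying modules, since filtered colimits of algebras are computed underlying. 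The claim then follows.

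For (3), let $R\to S_1$ and $R\to S_2$ be flat; the case of the empty product, $R\to0$, is trivial. Semiadditivity makes finite products into biproducts, and they are computed underlying in $\calg(\mcv)$, so $S_1\times S_2\simeq S_1\oplus S_2$ as $R$-modules. Hence $-\otimes_R(S_1\oplus S_2)\simeq(-\otimes_R S_1)\oplus(-\otimes_R S_2)$. A finite direct sum of left exact functors is left exact, because products commute with limits. I do not expect a serious obstacle; the most delicate step is the natural-identification bookkeeping in (1).
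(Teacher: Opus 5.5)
Your proofs of (1) and of the module statement in (2) are correct and essentially follow the paper. For (1), your identification $S'\otimes_{R'}M\simeq S\otimes_R M$ over $\modu_R(\mcv)$, together with conservativity of the forgetful functors, is exactly the right adjointability of the base-change square. For (2), you use $-\otimes_R M\simeq\colim_\alpha(-\otimes_R M_\alpha)$ together with AB5, as the paper does.

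The gap is in the two statements about flat \emph{morphisms}: you prove them only for a fixed source, i.e.\ for filtered colimits and finite products in $\calg(\mcv)_{R/}$. Closure of a class of morphisms refers to colimits and products in $\funct(\Delta^1,\calg(\mcv))$, where the sources vary, and this is what the paper proves.

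In (2), take a filtered diagram of flat maps $A_\alpha\to B_\alpha$ with colimit $A\to B$. You are missing the reduction to a constant source: $B\simeq\colim_\alpha(B_\alpha\otimes_{A_\alpha}A)$. This holds because it is a colimit of pushouts in $\calg(\mcv)$, and the constant diagram at $A$ over a filtered, hence weakly contractible, index has colimit $A$. Each $A\to B_\alpha\otimes_{A_\alpha}A$ is flat by (1), and only after this does your fixed-source argument apply.

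In (3), given flat $A\to A'$ and $B\to B'$, you must show that $A\times B\to A'\times B'$ is flat. Your biproduct computation over a single $R$ does not address this. The paper uses $\modu_{A\times B}(\mcv)\simeq\modu_A(\mcv)\times\modu_B(\mcv)$ for semiadditive $\mcv$. Under this equivalence, base change along $A\times B\to A'\times B'$ becomes the product of the two left exact base-change functors.

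Within your own framework you could instead write $A'\times B'\simeq A'\oplus B'$ as $A\times B$-modules and $-\otimes_{A\times B}A'\simeq(-\otimes_{A\times B}A)\otimes_A A'$. Here $-\otimes_{A\times B}A$ is left exact because it is a direct summand of the identity, since $A\times B\simeq A\oplus B$.

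Your fixed-source version is then a special case of this. The diagonal $R\to R\times R$ is flat, since its base change is $M\mapsto(M,M)$, and flat morphisms compose.
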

\begin{proof}
	(1) It follows by observing that given $B'\simeq B\otimes_A A'$ in $\calg(\mcv)$, the following diagram is horizontally right adjointable.
	$$\begin{tikzcd}
		\modu_A(\mcv) \arrow[d] \arrow[r] & \modu_{A'}(\mcv) \arrow[d] \\
		\modu_B(\mcv) \arrow[r]           & \modu_{B'}(\mcv)          
	\end{tikzcd}$$
	(2) Given a filtered colimit of flat left $R$-modules $M\simeq\colimit_\alpha M_\alpha$. By \cref{flatover}, it suffices to observe that $(-)\otimes_R M\simeq\colimit_\alpha ((-)\otimes_RM_\alpha)$ is left exact.
	
	Now given a filtered colimit of flat morphisms $A\to B \simeq \colimit_\alpha (A_\alpha\to B_\alpha)$. Denote $B_\alpha'=B_\alpha\otimes_{A_\alpha}A$; we find that $A\to B\simeq \colimit_\alpha (A\to B_\alpha')$. Thus without loss of generality we can assume that $\{A_\alpha\}$ is constant. Then the result follows immediately from the argument above for flat modules.
	\\ 
	(3) It follows by observing that $$\modu_{A\times B}(\mcv)\simeq \modu_{A}(\mcv)\times\modu_{ B}(\mcv)$$ when \mcv is semiadditive.
\end{proof}
Finally, flatness is compatible with truncation. 
\begin{prop}\label{truncatedflat}
	Let $\mcv\in\calg(\prl)$  and $\mcm\in\prvdbl$ be a dualizable \mcv-module. If $x\in\mcm$ is a \mcv-flat object, then for any $n\geq -2$, $\tau_{\leq n}x\in\mcm_{\leq n}$ is a $\mcv_{\leq n}$-flat object.
\end{prop}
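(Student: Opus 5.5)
We have to show that $\tau_{\leq n}x$ is $\mcv_{\leq n}$-flat. Our strategy is to identify the dual of the truncated module and then see that the dual map of $\tau_{\leq n}x$ is the truncation of $x^\vee$.

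We begin by recording the standard facts about truncation. Here $\mcv_{\leq n}\simeq \mcv\otimes\mcs_{\leq n}$ is a presentably symmetric monoidal localization of $\mcv$, and $\mcm_{\leq n}\simeq\mcm\otimes\mcs_{\leq n}\simeq \mcm\otimes_\mcv\mcv_{\leq n}$ (cf. \cite[Example 4.8.1.22]{ha}). Base change $-\otimes_\mcv\mcv_{\leq n}\colon\prlv\to\pr^L_{\mcv_{\leq n}}$ is symmetric monoidal. Hence $\mcm_{\leq n}$ is a dualizable $\mcv_{\leq n}$-module with dual $\mcm^\vee\otimes_\mcv\mcv_{\leq n}\simeq(\mcm^\vee)_{\leq n}$.

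Base change also carries the $\mcv$-module map $x\colon\mcv\to\mcm$ to $\mcv_{\leq n}\to\mcm_{\leq n}$, which sends $\mb{1}$ to $\tau_{\leq n}x$. The latter is the map classified by $\tau_{\leq n}x$. Duality commutes with this symmetric monoidal base change, so the dual map $(\tau_{\leq n}x)^\vee$ is identified with
\[
x^\vee\otimes_\mcv\mcv_{\leq n}\colon (\mcm^\vee)_{\leq n}\to\mcv_{\leq n}.
\]
It therefore remains to show that this functor is left exact, given that $x^\vee\colon\mcm^\vee\to\mcv$ is left exact.

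To do this, we use that for a colimit-preserving functor $F\colon\mcc\to\mcd$, the induced functor $F\otimes\mcs_{\leq n}\colon\mcc_{\leq n}\to\mcd_{\leq n}$ is the restriction of $\tau_{\leq n}\circ F$ to $\mcc_{\leq n}$. This holds because the inclusion $\mcc_{\leq n}\subset\mcc$ has left adjoint $\tau_{\leq n}$, and $F$ commutes with these localizations, i.e. $\tau_{\leq n}F\simeq\tau_{\leq n}F\tau_{\leq n}$. Now finite limits in $\mcc_{\leq n}$ are computed in $\mcc$. The functor $x^\vee$ preserves them, and the result of applying $x^\vee$ is again $n$-truncated. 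Indeed, a left exact functor preserves $n$-truncated objects, since truncatedness is characterized by iterated diagonals being equivalences, which is a finite limit condition. Thus on $(\mcm^\vee)_{\leq n}$ the functor $\tau_{\leq n}x^\vee$ agrees with $x^\vee$, and it preserves finite limits.

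The main obstacle is the compatibility step: checking that the identification of duals and of the dual map survives base change along $\mcv\to\mcv_{\leq n}$. The point is that dualizability data and dual morphisms are preserved by any symmetric monoidal 2-functor, and base change along a map in $\calg(\prl)$ is such a functor. Once that is in place, the left exactness argument above finishes the proof.
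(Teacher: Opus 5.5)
Your proposal is correct and follows essentially the same route as the paper. Both arguments identify the $\mcv_{\leq n}$-dual of $\tau_{\leq n}x$ with $x^\vee\otimes\mcs_{\leq n}$ via symmetric monoidal base change, and then use that a left exact colimit-preserving functor preserves $n$-truncated objects \cite[Proposition 5.5.6.16]{htt}, so that its truncation is simply its restriction and hence left exact; the paper phrases this last step as vertical right adjointability of the truncation square, which is exactly your observation $\tau_{\leq n}F|_{\mcc_{\leq n}}\simeq F|_{\mcc_{\leq n}}$.
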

\begin{proof}
Since the $\mcv_{\leq n}$-dual map of $$\mcv_{\leq n}\simeq \mcs_{\leq n} \otimes \mcv \xrightarrow{ \tau_{\leq n}x\simeq \mb{1}\otimes x}\mcs_{\leq n}  \otimes \mcm$$ can be identified with 
$$\mcs_{\leq n} \otimes \mcm^\vee \xrightarrow{\mb{1}\otimes x^\vee}\mcs_{\leq n} \otimes \mcv\simeq\mcv_{\leq n},$$	
it suffices to show that the functor $\prl\xrightarrow{-\otimes \mcs_{\leq n}}\prl_{n+1}$ preserves left exact morphisms. Indeed, given a left exact morphism $F:\mcn\to \mck \in \prl$; then $\tau_{\leq n}\mcn\xrightarrow{F\otimes \mcs_{\leq n}} \tau_{\leq n}\mck$ is left exact, because 
the following diagram is vertically right adjointable  by \cite[Proposition 5.5.6.16]{htt}.
$$\begin{tikzcd}
	\mcn \arrow[r, "F"] \arrow[d, shift right]                                   & \mck \arrow[d, shift right]                            \\
	\tau_{\leq n}\mcn \arrow[r, "F\otimes\mcs_{\leq n}"] \arrow[u, dashed, hook, shift right] & \tau_{\leq n}\mck \arrow[u, dashed, hook, shift right]
\end{tikzcd}$$
\end{proof}
\subsection{Pure frames}
In this subsection, we investigate flat objects and pure ideals in an abelian category. We study the structural properties of the resulting pure frame, with a particular focus on establishing its spatiality and quasi-compactness. This frame provides a categorical framework for classifying and analyzing the global behavior of flat algebras within a given abelian category.

We now specialize the preceding notion of flatness to the setting of the additive heart. The resulting pure ideals form the algebraic counterpart of the Serre smashing ideals considered later in the prestable setting. The following generation result guarantees an ample supply of flat objects in the additive case.

\begin{thm}[{\cite{kanda2024module, levy-liang-sosnilo:dualizable}}]\label{flatgenerateone}
	Let $\mcm \in \prladone$ be a presentable additive $1$-category.
	\begin{enumerate}[label=(\arabic*)]
		\item $\mcm$ is a dualizable $\op{Ab}$-module if and only if $\mcm$ is a Grothendieck abelian category satisfying $\mathrm{AB4}^*$ and $\mathrm{AB6}$.
		\item If $\mcm \in \pradone^{\op{dbl}}$ is a dualizable additive $1$-category, then $\mcm$ is generated under small colimits by its $\omega_1$-compact $\op{Ab}$-flat objects.
	\end{enumerate}
\end{thm}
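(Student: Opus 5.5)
Part (1) follows from \cref{dualaddab6} by passing to hearts. Part (2) I would prove by transporting a suitable flat generation statement from the prestable level down along $\tau_{\leq 0}$.

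For (1), observe that $\modu_{\op{Ab}}(\prl)\simeq\prladone$ sits inside $\prlad$ as the $\spgeq$-modules killed by $-\otimes\mcs_{\leq 0}$. Dualizability of $\mcm$ over $\op{Ab}$ should then be equivalent to dualizability over $\spgeq$ of some prestable model whose heart is $\mcm$.
\begin{itemize}
\item If $\mcm$ is $\op{Ab}$-dualizable, the natural model is $\mcm$ itself viewed as an $\spgeq$-module. Since $\op{Ab}$ is a retract of $\spgeq$ in the relevant sense, via the unit $\spgeq\to\op{Ab}$ and its right adjoint, one can try to deduce $\spgeq$-dualizability from $\op{Ab}$-dualizability using the coevaluation and evaluation over $\op{Ab}$.
\item Conversely, for a Grothendieck abelian $\mcm$ satisfying $\mathrm{AB4}^*$ and $\mathrm{AB6}$, take its connective derived $\infty$-category $\check{\mcd}(\mcm)_{\geq 0}$. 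This is left complete Grothendieck prestable. Because $\mathrm{AB4}^*$ and $\mathrm{AB6}$ are heart-level conditions, it also satisfies them, so \cref{dualaddab6} makes it $\spgeq$-dualizable.
\item Base change along $\spgeq\to\op{Ab}$ preserves dualizability and sends this model to its heart $\mcm$.
\end{itemize}
The delicate point in (1) is the reverse direction: showing that an $\op{Ab}$-dualizable $1$-category is abelian with the AB axioms. I would handle it by forming the $\spgeq$-dualizable category $\modu_{\mathbb{Z}}$-module obtained by restricting scalars along the retraction, and then applying \cref{dualaddab6} and taking hearts.

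For (2), let $\mcm$ be dualizable over $\op{Ab}$. The plan is first to obtain generation at the prestable level: a dualizable additive $\infty$-category is generated under colimits by its $\omega_1$-compact $\spgeq$-flat objects. The heuristic is Efimov-style: write $\mcm$ as a retract of a compactly assembled presentation $\op{Ind}(\mcm^{\omega_1})$, and use that the image of the colimit functor's left adjoint $\hat{y}$ on $\omega_1$-compact objects consists of sequential colimits of compact-type maps. I would then check that such objects are flat, meaning their dual functional $\mcm^\vee\to\spgeq$ is left exact, because it is a filtered colimit of corepresentable, hence left exact, functionals; this uses AB5-type commutation in $\spgeq$. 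Given this, apply \cref{truncatedflat} with $n=0$: the truncation $\tau_{\leq 0}$ sends $\spgeq$-flat objects to $\op{Ab}$-flat objects. It also preserves $\omega_1$-compactness and generation under colimits, since $\tau_{\leq 0}$ is a colimit-preserving essentially surjective localization.

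The main obstacle is the flatness of the generators in (2), namely that the dual functional of $\hat{y}(x)$ is left exact. I would first try expressing it as a sequential colimit of functors $\unmap(x_n,-)$ on the dual category, and check left exactness termwise. Failing that, I would cite \cite{kanda2024module, levy-liang-sosnilo:dualizable} directly, as the attribution suggests.
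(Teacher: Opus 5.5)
The paper gives no proof of this statement; it only cites \cite{kanda2024module, levy-liang-sosnilo:dualizable}. Measured against that, your proposal has genuine gaps in (1) and (2).

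\textbf{The implication in (1) from $\op{Ab}$-dualizability to the AB conditions does not go through.} A nonzero presentable additive $1$-category $\mcm$ is never dualizable over $\spgeq$. Indeed, for colimit-preserving $F\colon\mcm\to\spgeq$ one has $\Sigma F(X)\simeq F(0\sqcup_X0)=F(0)=0$, because such pushouts vanish in a $1$-category. Hence $\funct^{L}(\mcm,\spgeq)=0$, and if $\mcm$ were dualizable this would be $\mcm^\vee$, forcing $\mcm\simeq\mcm^{\vee\vee}=0$. This agrees with \cref{dualaddab6}, which forces prestability.

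Your ``retraction'' $\spgeq\rightleftarrows\op{Ab}$ is not a retraction in $\prl$, since the inclusion of the heart does not preserve colimits. Restricting scalars also does not change the underlying category. So \cref{dualaddab6} never receives a prestable input. Passing to $\mcd(\mcm)_{\geq0}$ instead presupposes that $\mcm$ is abelian, which is exactly what you are trying to prove.

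You need an argument internal to $\op{Ab}$-modules. One route: characterize $\op{Ab}$-dualizability by requiring that the colimit functor $r\colon\funct^{\mathrm{add}}((\mcm^{\kappa})^{\opp},\op{Ab})\to\mcm$ admit an $\op{Ab}$-linear left adjoint $\hat y$, necessarily fully faithful. Then $r$ is both a left and a right adjoint, hence exact, and $r\hat y\simeq\id$. Kernels, cokernels, products and filtered colimits in $\mcm$ are obtained by applying $r$ to the corresponding constructions on $\hat y(-)$ in the functor category. Abelianness, $\mathrm{AB5}$, $\mathrm{AB4}^*$ and $\mathrm{AB6}$ then transfer.

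\textbf{The converse in (1) is fine in outline, with one correction.} Use $\mcd(\mcm)_{\geq0}$, which is left complete by $\mathrm{AB4}^*$, rather than $\check{\mcd}(\mcm)_{\geq0}$, which is in general not even separated. $\mathrm{AB4}^*$ and $\mathrm{AB6}$ then pass from the heart by checking on $\pi_n$.

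\textbf{For (2), your plan depends on (1) and the flatness step is wrong.} By the obstruction above, the prestable category you truncate must be $\mcd(\mcm)_{\geq0}$, not $\mcm$ itself, so (2) depends on (1). With that model, \cref{flatgenerate} and \cref{truncatedflat} give the claim, together with the fact that $\mcm\hookrightarrow\mcd(\mcm)_{\geq0}$ preserves filtered colimits (so $\tau_{\leq0}$ keeps $\omega_1$-compactness).

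However, your sketch of \cref{flatgenerate} is wrong, not merely incomplete. The functionals $\mathrm{ev}_{x_n}\colon\mcm^\vee\to\spgeq$ are colimit-preserving but not corepresentable. Absolute compactness of the transition maps does not change this. Your argument applies verbatim to a compact object presented by the constant sequence of identities, and would therefore show that $\mathbb{S}/2\in\spgeq$ is flat. It is not: $\mathbb{S}/2\otimes-$ does not preserve the fibre in $\spgeq$ of $2\colon H\mathbb{Z}\to H\mathbb{Z}$. That fibre is $0$, while the fibre in $\spgeq$ of $\mathbb{S}/2\otimes2=0\colon H\mathbb{Z}/2\to H\mathbb{Z}/2$ is $H\mathbb{Z}/2$.

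Flat generators have to be singled out relative to the base $\spgeq$, for example through the $\spgeq$-linear left adjoint $\hat y$ into $\mcp_{\spgeq}(\mcm^{\kappa})$ combined with \cref{preserverefleflats}(2). As written, this step rests entirely on the citation, which is also how the paper treats it.
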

With this supply of flat objects in hand, we can define purity for ideals by asking the corresponding quotient algebra to remain flat over the unit.
\begin{de}
	Let $\mca\in\calg(\prladone)$ such that \mca is an abelian 1-category. An ideal $I \subset \mb{1}$ is said to be
	\textbf{pure} if 
	$\mb{1}/I$ is $\mca$-flat over $\mb{1}$. We denote the poset of pure ideals by $\idl_u(\mca).$
\end{de}
The first basic consequence is that pure ideals are automatically idempotent in the sense used throughout the paper.
\begin{prop}\label{pureisidemidl}
	Let $\mca\in\calg(\prladone)$ such that \mca is an abelian 1-category. Then any pure ideal $I\subset \mb{1}$ is an idempotent ideal.
\end{prop}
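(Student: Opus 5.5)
The plan is to mimic the classical argument that $A/I$ flat implies $I=I^2$. Write $i\colon I\hookrightarrow\mb{1}$ for the ideal and $q\colon\mb{1}\to\mb{1}/I$ for its cokernel, so that
\[
0\to I\xrightarrow{i}\mb{1}\xrightarrow{q}\mb{1}/I\to 0
\]
is a short exact sequence in $\mca$. The goal is to show that the multiplication $I\otimes I\to I$ (the map $i\otimes 1$, equivalently $1\otimes i$, after identifying $\mb{1}\otimes I\simeq I$) is an epimorphism. Then $I^2=\im(I\otimes I\to\mb{1})$ equals $I$.

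The key step translates purity into a statement about tensoring. By the remark following \cref{flatover}, $\mb{1}/I$ being $\mca$-flat over $\mb{1}$ means that $-\otimes\mb{1}/I\colon\mca\to\mca$ is left exact. Since $\mca$ is abelian and this functor is additive and right exact (it preserves colimits), it is then exact. We apply it to the monomorphism $i\colon I\hookrightarrow\mb{1}$. This shows that
\[
I\otimes\mb{1}/I\xrightarrow{i\otimes 1}\mb{1}\otimes\mb{1}/I\simeq\mb{1}/I
\]
is a monomorphism.

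Next we identify this map. The composite $I\to\mb{1}\to\mb{1}/I$ is zero, so $q\circ i=0$. Under the unit isomorphism, the map above is the map $I\otimes\mb{1}/I\to\mb{1}/I$ obtained by tensoring $q\circ i=0$ with $\mb{1}/I$ on the other side, using commutativity of $\otimes$ and naturality of the unitors. It is therefore zero. A zero monomorphism forces $I\otimes\mb{1}/I\simeq 0$.

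Finally, we tensor the exact sequence $I\to\mb{1}\to\mb{1}/I\to0$ with $I$. Right exactness of $I\otimes-$ gives an exact sequence
\[
I\otimes I\to I\otimes\mb{1}\simeq I\to I\otimes\mb{1}/I\simeq 0.
\]
So $I\otimes I\to I$ is an epimorphism, and composing with $i$ shows that $\im(I\otimes I\to\mb{1})=I$, that is, $I=I^2$ in $\idl(\mca)$. This says exactly that $I$ is an idempotent ideal, equivalently a coidempotent object of $\idl(\mca)$.

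The main point requiring care is the identification of $i\otimes 1_{\mb{1}/I}$ with a zero map. One has to make sure that the symmetry and unit isomorphisms in $\mca$ carry $i\otimes 1_{\mb{1}/I}$ to the zero map induced by $q\circ i=0$. The rest is formal right exactness.
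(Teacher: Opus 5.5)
Your outline is the paper's proof. Flatness makes $\psi:=i\otimes 1_{\mathbf{1}/I}\colon I\otimes\mathbf{1}/I\to\mathbf{1}\otimes\mathbf{1}/I\simeq\mathbf{1}/I$ a monomorphism. One then shows $\psi=0$, so $I\otimes\mathbf{1}/I\simeq 0$, and right exactness of $I\otimes-$ makes $I\otimes I\to I$ an epimorphism. The gap is in the step you flagged, and it is not a coherence issue: $\psi$ is \emph{not} obtained from $q\circ i$ by tensoring with an identity. Tensoring $q\circ i$ with $\mathbf{1}/I$ gives $(q\otimes 1_{\mathbf{1}/I})\circ\psi\colon I\otimes\mathbf{1}/I\to\mathbf{1}/I\otimes\mathbf{1}/I$, whose target is $\mathbf{1}/I\otimes\mathbf{1}/I$, not $\mathbf{1}/I$. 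To deduce $\psi=0$ from the vanishing of this map you would need $q\otimes 1_{\mathbf{1}/I}$ to be a monomorphism. But by right exactness of $-\otimes\mathbf{1}/I$ its kernel is the image of $\psi$, so that is equivalent to the claim itself. No amount of symmetry and unitor bookkeeping turns the factor $1_{\mathbf{1}/I}$ into $q$.

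The missing idea is to \emph{precompose} with an epimorphism. Because $I\otimes-$ is right exact, $1_I\otimes q\colon I\otimes\mathbf{1}\to I\otimes\mathbf{1}/I$ is an epimorphism. By bifunctoriality,
\[
\psi\circ(1_I\otimes q)\;=\;i\otimes q\;=\;(1_{\mathbf{1}}\otimes q)\circ(i\otimes 1_{\mathbf{1}}).
\]
Under the unit isomorphisms $I\otimes\mathbf{1}\simeq I$, $\mathbf{1}\otimes\mathbf{1}\simeq\mathbf{1}$ and $\mathbf{1}\otimes\mathbf{1}/I\simeq\mathbf{1}/I$, this composite is $q\circ i=0$. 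Cancelling the epimorphism gives $\psi=0$. This is exactly the paper's commutative square with $1_I\otimes q$ on top. With this repair the rest of your proof goes through unchanged.
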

\begin{proof}
	Consider the short exact sequence in \mca:
\[
0 \to I \xrightarrow{i} \mb{1} \xrightarrow{q} \mb{1}/I \to 0.
\]
Let $\psi: I \otimes (\mb{1}/I) \xrightarrow{i\otimes \id} \mb{1}/I$ be the natural canonical morphism. Because $\mb{1}/I$ is \mca-flat over $\mb{1}$, we deduce that $\psi$ must be a monomorphism.
Applying the right exact functor $I \otimes -$ to the epimorphism $q: \mb{1} \twoheadrightarrow \mb{1}/I$, we obtain an epimorphism $1_I \otimes q: I \otimes \mb{1} \twoheadrightarrow I \otimes (\mb{1}/I)$. We can analyze the composition of these morphisms via the following commutative diagram:
\[
\begin{tikzcd}
	I \otimes \mb{1} \arrow[r, "1_I \otimes q"] \arrow[d, "i"', hook] & I \otimes (\mb{1}/I) \arrow[d, "\psi", hook] \\
	\mb{1} \arrow[r, "q"]                                       & \mb{1}/I                                    
\end{tikzcd}
\]
The composition $q \circ i$ is the zero morphism. This implies $\psi \circ (1_I \otimes q) = 0$. Since $1_I \otimes q$ is an epimorphism, it can be cancelled from the right, yielding $\psi = 0$. 
We have established that $\psi: I \otimes (\mb{1}/I) \to \mb{1}/I$ is both a monomorphism and the zero morphism. 
Hence, $I \otimes (\mb{1}/I) \simeq 0$.

Apply the right exact functor $I \otimes -$ to the short exact sequence $0 \to I \to \mb{1} \to \mb{1}/I \to 0$ to obtain the exact sequence:
\[
I \otimes I \xrightarrow{\phi} I \otimes \mb{1} \xrightarrow{1_I \otimes q} I \otimes (\mb{1}/I) \to 0.
\]
Substitute the canonical isomorphism $I \otimes \mb{1} \simeq I$ and the result from Step 1, $I \otimes (\mb{1}/I) \simeq 0$, into the sequence:
\[
I \otimes I \xrightarrow{\phi} I \to 0.
\]
So $\phi: I \otimes I \to I$ is an epimorphism and $I^2=I$.
\end{proof}
Rigidity allows the ambient categorical flatness condition to be tested after forgetting to the underlying abelian category.
\begin{lem}\label{rigidflatone}
	Let $\mca\in\calg(\prladone)$ be a commutative rigid $\op{Ab}$-algebra and \mcm be a dualizable \mca-module. Then for an object $x\in\mcm$, it is \mca-flat if and only if it is $\op{Ab}$-flat.
\end{lem}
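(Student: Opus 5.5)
Throughout, write \(\mcm^\vee\) for the \(\mca\)-dual of \(\mcm\) and \(\mcm^{\vee_{\op{Ab}}}\) for the \(\op{Ab}\)-dual of the underlying additive category. The plan is to compare the two dual maps
\[
x^{\vee_\mca}\colon \mcm^{\vee}\to\mca,
\qquad
x^{\vee_{\op{Ab}}}\colon \mcm^{\vee_{\op{Ab}}}\to\op{Ab},
\]
using rigidity of \(\mca\) over \(\op{Ab}\).

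The first step is to identify the duals. By \cite[Corollary 4.49]{ramzi2024locally}, or its 1-categorical analogue, an \(\mca\)-module dualizable over \(\mca\) is dualizable over \(\op{Ab}\) when \(\mca\) is rigid. I also expect the underlying \(\op{Ab}\)-module of \(\mcm^\vee\) to be \(\mcm^{\vee_{\op{Ab}}}\), with \(\mca\)-action transported along the self-duality of \(\mca\). Under this identification, the \(\op{Ab}\)-dual of \(\op{Ab}\xrightarrow{x}\mcm\) should factor as
\[
\mcm^{\vee_{\op{Ab}}}\simeq \mcm^\vee\xrightarrow{x^{\vee_\mca}}\mca\simeq \mca^{\vee_{\op{Ab}}}\xrightarrow{\Gamma}\op{Ab}.
\]
Here the last map is the dual of the unit \(\op{Ab}\to\mca\), namely the functor \(\Gamma=\Hom_\mca(\mb{1},-)\). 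The reason is that \(\op{Ab}\xrightarrow{x}\mcm\) factors as \(\op{Ab}\xrightarrow{\mb{1}}\mca\xrightarrow{x}\mcm\), and dualizing over \(\op{Ab}\) reverses the composite. This is where rigidity enters: the \(\op{Ab}\)-dual of an \(\mca\)-linear map between \(\mca\)-modules coincides with its \(\mca\)-dual, since the self-duality \(\mca\simeq\mca^{\vee_{\op{Ab}}}\) is induced by the Frobenius form \(\Gamma\circ\mu\).

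With this factorization the ``only if'' direction is formal. Since \(\mb{1}\) is \(\op{Ab}\)-atomic in a rigid \(\op{Ab}\)-algebra, \(\Gamma\) is an internal right adjoint and in particular left exact. So if \(x^{\vee_\mca}\) is left exact, so is the composite \(x^{\vee_{\op{Ab}}}\). This is essentially an application of \cref{preserverefleflats}(1) to the internal left adjoint \(\op{Ab}\to\mca\).

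For the converse, suppose \(\Gamma\circ x^{\vee_\mca}\) is left exact; we must show \(x^{\vee_\mca}\) is. Since \(\mca\) is rigid, it is generated by its \(\op{Ab}\)-atomic, i.e.\ dualizable, objects \(d\). The family \(\Hom_\mca(d,-)\simeq\Gamma(d^\vee\otimes-)\) is jointly conservative, and each member is exact on finite limits once composed appropriately. It therefore suffices to show that, for each dualizable \(d\), the functor \(\Gamma(d^\vee\otimes x^{\vee_\mca}(-))\) is left exact. By \(\mca\)-linearity,
\[
d^\vee\otimes x^{\vee_\mca}(y)\simeq x^{\vee_\mca}(d^\vee\otimes y),
\]
and \(d^\vee\otimes-\) is exact on \(\mcm^\vee\), being an equivalence-preserving functor with both adjoints since \(d\) is dualizable. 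Hence each such functor is the left exact composite \(\Gamma\circ x^{\vee_\mca}\) precomposed with an exact functor. Joint conservativity of \(\{\Gamma(d^\vee\otimes-)\}\), together with each of these functors preserving finite limits, then detects preservation of finite limits, so \(x^{\vee_\mca}\) is left exact.

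The main obstacle is the first step. One has to justify carefully that the \(\op{Ab}\)-dual of \(\mcm\) is the underlying category of its \(\mca\)-dual, and that the dual map factors through \(\Gamma\) in the 1-categorical abelian setting. I would try to import the \(\infty\)-categorical statements of \cite{ramzi2024locally} by applying \(-\otimes\mcs_{\leq0}\), as in the proof of \cref{truncatedflat}.
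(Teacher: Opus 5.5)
Your direction from $\mca$-flat to $\op{Ab}$-flat is the paper's argument, imported from \cref{rigidflat}. Rigidity identifies the $\op{Ab}$-dual of $x$ with $\Gamma\circ x^{\vee_{\mca}}$, and $\Gamma=\Hom_{\mca}(\mb{1},-)$ is left exact.

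The converse has a genuine gap. You assert that a rigid $\mca$ is generated by its dualizable objects, so that $\{\Hom_{\mca}(d,-)\}_{d}$ is jointly conservative. Rigidity means dualizable over $\op{Ab}$, multiplication an internal left adjoint, and atomic unit; it contains no generation hypothesis. The paper reserves the name $1$-projectively rigid for rigid \emph{and} atomically generated. In the stable analogue, $\shv([0,1];\opsp)$ is rigid over $\opsp$, but its dualizable objects are the constant perfect-valued sheaves, and these do not generate.

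Without dualizable generators your detection argument cannot be run. For a non-dualizable generator $a$, the functor $\Hom_{\mca}(a,x^{\vee_\mca}(-))$ is not of the form $\Gamma(x^{\vee_\mca}(a'\otimes -))$. Replacing it by $\Gamma(a\otimes x^{\vee_\mca}(-))$ would require exactness of $a\otimes-$ on $\mcm^\vee$, which is itself an instance of the statement being proved. So as written you only obtain the lemma for $1$-projectively rigid $\mca$. This matters because precisely this direction ($\op{Ab}$-flat implies $\mca$-flat) is used for merely rigid $\mca$ in \cref{flatness_criterion}.

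The paper's route avoids generators altogether:
\begin{enumerate}
\item Since $\mcm$ is $\op{Ab}$-dualizable and $x^{\vee_{\op{Ab}}}$ is left exact, the object $\mb{1}_{\mca}\otimes x\in\mca\otimes\mcm$ is $\mca$-flat. Its $\mca$-dual map is $\mca\otimes x^{\vee_{\op{Ab}}}$, and tensoring with $\mca$ preserves colimit-preserving left exact functors (the abelian analogue of \cref{basechangeflat}).
\item The action map $\mca\otimes\mcm\to\mcm$ is an $\mca$-internal left adjoint exactly because $\mca$ is rigid.
\item By \cref{preserverefleflats}(1), this action map carries $\mb{1}_{\mca}\otimes x$ to an $\mca$-flat object, namely $x$.
\end{enumerate}
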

\begin{proof}
	It basically follows from the same argument as \cref{rigidflat}.
\end{proof}

The following criterion rewrites purity in the familiar module-theoretic form in \cref{pureidl}.
\begin{lem} \label{flatness_criterion}
	Let $\mca\in\calg(\prladone)$ be a commutative rigid $\op{Ab}$-algebra.
	Let $I \hookrightarrow \mb{1}$ be an ideal. Then $I$ is pure if and only if for every object $M \in \mathcal{A}$, the natural morphism $I \otimes M \to M$ is a monomorphism.
\end{lem}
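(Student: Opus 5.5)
By definition, $I$ is pure exactly when $\mb{1}/I$ is $\mca$-flat over $\mb{1}$. By \cref{rigidflatone} (applied to $\mcm=\mca$, which is dualizable as an $\mca$-module), this is equivalent to $\mb{1}/I$ being $\op{Ab}$-flat. So the plan is to reformulate both sides in terms of the functor $-\otimes\mb{1}/I:\mca\to\mca$ and then read off the monomorphism condition from a long exact sequence.

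The first step is to convert flatness into left exactness of tensoring. Since $\mca$ is rigid, it is self-dual as an $\op{Ab}$-module, with $\mca^\vee\simeq\mca$ via the duality pairing $\mca\otimes\mca\xrightarrow{\otimes}\mca\xrightarrow{\Hom(\mb{1},-)}\op{Ab}$. Under this identification, the dual of the map $\op{Ab}\xrightarrow{x}\mca$ becomes $\Hom(\mb{1},-\otimes x)$. Rigidity makes $\Hom(\mb{1},-)$ colimit preserving. It is also conservative on the relevant objects, or at least it detects monomorphisms after testing against dualizable objects $d$ through $\Hom(d,-)\simeq\Hom(\mb{1},d^\vee\otimes-)$. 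From this, $\mca$-flatness of $\mb{1}/I$ should be equivalent to $-\otimes\mb{1}/I$ preserving monomorphisms in $\mca$. This is the same identification used in the proof of \cref{pureisidemidl}, where flatness of $\mb{1}/I$ was used to deduce that $I\otimes\mb{1}/I\to\mb{1}/I$ is a monomorphism. I expect this step, making the passage from the abstract ``dual map is left exact'' to ``$-\otimes\mb{1}/I$ is exact'' rigorous, to be the main obstacle. I would argue it exactly as in \cref{rigidflat}, using that a right exact functor between abelian categories is left exact iff it preserves monomorphisms.

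Next comes the homological argument. Fix $M\in\mca$ and tensor the short exact sequence $0\to I\to\mb{1}\to\mb{1}/I\to0$ with $M$. Using the left derived functor $\op{Tor}_1(M,-)$, which makes sense because $\mca$ is Grothendieck, AB4$^*$/AB6, and has enough flat objects by \cref{flatgenerateone}, we obtain the exact sequence
\[
\op{Tor}_1(M,\mb{1})=0\to\op{Tor}_1(M,\mb{1}/I)\to M\otimes I\to M\to M\otimes\mb{1}/I\to0.
\]
Hence $I\otimes M\to M$ is a monomorphism iff $\op{Tor}_1(M,\mb{1}/I)=0$.

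Finally, $\op{Tor}_1(M,\mb{1}/I)=0$ for all $M$ is equivalent to $-\otimes\mb{1}/I$ being exact. For the direction from monomorphisms to flatness, take a monomorphism $M'\hookrightarrow M$ with cokernel $M''$. The Tor sequence $\op{Tor}_1(M'',\mb{1}/I)=0\to M'\otimes\mb{1}/I\to M\otimes\mb{1}/I$ then shows that injectivity is preserved. If Tor is unavailable, I would instead run a snake lemma argument. Write $M\otimes I\to M$ and $M'\otimes I\to M'$ as monomorphisms and compare the two rows $M'\otimes I\to M\otimes I\to M''\otimes I\to0$ and $0\to M'\to M\to M''\to0$. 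The snake lemma gives exactness of $\ker(M''\otimes I\to M'')=0\to M'\otimes\mb{1}/I\to M\otimes\mb{1}/I$. This elementary route avoids derived functors entirely, so I would present it as the main argument.
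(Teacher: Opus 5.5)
There is a genuine gap in the forward direction: pure $\Rightarrow$ $I\otimes M\to M$ is a monomorphism. The elementary snake-lemma argument you want to present as the main one proves only the converse. That part is correct and is the paper's argument; in fact only injectivity of $I\otimes M''\to M''$ is needed.

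For the forward direction you fall back on $\op{Tor}$, and the argument is circular there. In a Grothendieck abelian category with a cocontinuous tensor product, exactness of $-\otimes\mb{1}/I$ does not formally imply that $M\otimes-$ keeps $0\to I\to\mb{1}\to\mb{1}/I\to 0$ exact. The remark after \cref{thmpuresubfrm} singles out exactly this implication as the delicate one. You assert that $\op{Tor}_1(M,-)$ ``makes sense'', with long exact sequences in both variables: you use the first-variable one for $(\Leftarrow)$ and the second-variable one for $(\Rightarrow)$. That presupposes that $M\otimes-$ preserves short exact sequences $0\to P'\to P\to P''\to 0$ with $P,P''$ flat. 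Applied to $0\to I\to\mb{1}\to\mb{1}/I\to 0$, that is literally the forward implication.

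The missing idea is to cover $M$ by a $\otimes$-flat object and run a second snake lemma.
\begin{itemize}
\item By \cref{flatgenerateone}, $\mca$ is generated under colimits by $\op{Ab}$-flat objects. So there is an epimorphism $P\twoheadrightarrow M$ from a coproduct $P$ of them, which is again $\op{Ab}$-flat.
\item By \cref{rigidflatone}, $P$ is then $\mca$-flat, i.e.\ $-\otimes P$ is exact. This is where rigidity genuinely enters.
\item Put $K:=\ker(P\to M)$ and compare the rows $I\otimes K\to I\otimes P\to I\otimes M\to 0$ and $0\to K\to P\to M\to 0$. The snake lemma yields an exact sequence $\ker(I\otimes P\to P)\to\ker(I\otimes M\to M)\to K\otimes\mb{1}/I\to P\otimes\mb{1}/I$.
\item The first term vanishes because $P$ is flat. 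The last map is injective because $\mb{1}/I$ is flat. Hence $I\otimes M\to M$ is a monomorphism.
\end{itemize}

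Separately, the step you flag as the main obstacle is immediate. By the remark after \cref{flatover}, with the algebra taken to be $\mb{1}$ so that $\rmodu_{\mb{1}}(\mca)\simeq\mca$, saying $\mb{1}/I$ is $\mca$-flat over $\mb{1}$ means by definition that $-\otimes\mb{1}/I\colon\mca\to\mca$ is left exact. The detour through $\op{Ab}$-flatness and properties of $\Hom(\mb{1},-)$ can be dropped.
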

\begin{proof}
	$(\Rightarrow)$ Assume $\mb{1}/I$ is \mca-flat. For any object $M$,  by \cref{flatgenerateone} we can choose an epimorphism $P \twoheadrightarrow M$ where $P$ is $\op{Ab}$-flat. Note that $P$ is also \mca-flat by \cref{rigidflatone}. Let  $R$ be its kernel. This yields a short exact sequence $$0 \to R \to P \to M \to 0.$$ By the right exactness of the tensor product, applying $I \otimes -$ yields the following commutative diagram with exact rows:
	\[
	\begin{tikzcd}
		& I \otimes R \arrow[r] \arrow[d, "\alpha"] & I \otimes P \arrow[r] \arrow[d, "\beta"] & I \otimes M \arrow[r] \arrow[d, "\gamma"] & 0 \\
		0 \arrow[r] & R \arrow[r, hook]                         & P \arrow[r]                              & M \arrow[r]                               & 0
	\end{tikzcd}
	\]
	Because $P$ is \mca-flat, the functor $- \otimes P$ is exact. Applying it to the exact sequence $0 \to I \to \mb{1} \to \mb{1}/I \to 0$ shows that $\beta: I \otimes P \to P$ is a monomorphism, meaning $\ker(\beta) = 0$. 
	By the Snake Lemma, we obtain an exact sequence of kernels and cokernels:
	\[
	\ker(\beta) \to \ker(\gamma) \to \operatorname{coker}(\alpha) \xrightarrow{\Phi} \operatorname{coker}(\beta)
	\]
	Since $\ker(\beta) = 0$, we have an exact sequence $0 \to \ker(\gamma) \to \operatorname{coker}(\alpha) \xrightarrow{\Phi} \operatorname{coker}(\beta)$. Note that the morphism $\Phi$ can be  identified with the induced morphism $R \otimes \mb{1}/I \to P \otimes \mb{1}/I$. Because $\mb{1}/I$ is \mca-flat by hypothesis, the functor $- \otimes \mb{1}/I$ is exact, which implies that $\Phi$ is a monomorphism. Therefore, the kernel of $\Phi$ must be $0$. The exactness of the sequence then forces $\ker(\gamma) = 0$, concluding that $\gamma: I \otimes M \to M$ is a monomorphism.
	
	$(\Leftarrow)$ Assume $I \otimes M \to M$ is a monomorphism for all $M$. Let $0 \to M \to N \to L \to 0$ be an arbitrary short exact sequence. Tensoring with the exact sequence $0 \to I \to \mb{1} \to \mb{1}/I \to 0$ gives the following commutative diagram:
	\[
	\begin{tikzcd}
		& I \otimes M \arrow[r] \arrow[d, hook] & I \otimes N \arrow[r] \arrow[d, hook] & I \otimes L \arrow[r] \arrow[d, hook] & 0 \\
		0 \arrow[r] & M \arrow[r, hook]                     & N \arrow[r]                           & L \arrow[r]                           & 0
	\end{tikzcd}
	\]
	By assumption, all three vertical morphisms are monomorphisms. Applying the Snake Lemma, we obtain a short exact sequence of their cokernels:
	\[
	0 \to \operatorname{coker}(I \otimes M \to M) \to \operatorname{coker}(I \otimes N \to N) \to \operatorname{coker}(I \otimes L \to L) \to 0
	\]
	Since these cokernels are canonically isomorphic to $M \otimes \mb{1}/I$, $N \otimes \mb{1}/I$, and $L \otimes \mb{1}/I$ respectively, the functor $- \otimes \mb{1}/I$ preserves short exact sequences. Hence, $\mb{1}/I$ is \mca-flat over $\mb{1}$.
\end{proof}
These characterizations also show that pure ideals are closed under the frame operations.
\begin{thm}\label{thmpuresubfrm}
	Let $\mca\in\calg(\prladone)$ be a commutative rigid $\op{Ab}$-algebra. Then 
	\enu{
\item For any pure ideal $I\subset \mb{1}$, we have $I\otimes I\simeq I$.
\item The inclusion $$\idl_u(\mca)\hookrightarrow\cidem(\idl(\mca))$$ forms a subframe.
		}
\end{thm}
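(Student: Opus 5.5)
For (1), I would work directly from \cref{flatness_criterion} and \cref{pureisidemidl}. Let $I\subset\mb{1}$ be pure. By \cref{pureisidemidl}, the multiplication $\phi\colon I\otimes I\to I$, i.e. the map $1_I\otimes i$, is an epimorphism. Applying \cref{flatness_criterion} with $M=I$ shows that the natural map $I\otimes I\to I$ is a monomorphism. A morphism in an abelian category that is both monic and epic is an isomorphism, so $I\otimes I\simeq I$. One small point needs checking: the map in \cref{flatness_criterion} for $M=I$ is $i\otimes 1_I$, while the map in \cref{pureisidemidl} is $1_I\otimes i$. These agree up to the symmetry isomorphism, and since both have image $I^2=I$ this identification is harmless. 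In particular $I\xrightarrow{\id} \mb{1}$-restricted, i.e. $I\to\mb{1}$, is coidempotent in $\mca$, so it lies in $\cidem(\idl(\mca))$ as well as in $\cidem(\mca)$ under \cref{idemidlabel}.

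For (2), I would show that $\idl_u(\mca)$ contains the bottom and top elements and is closed under arbitrary joins and binary meets in $\cidem(\idl(\mca))$. The ideals $0$ and $\mb{1}$ are pure, since $I\otimes M\to M$ is $0\to M$ or the identity. The frame operations need to be made explicit first. Joins of idempotent ideals are the images $\sum_\alpha I_\alpha$ of $\bigoplus I_\alpha\to\mb{1}$, and these are again idempotent. The binary meet in $\cidem(\idl(\mca))$ is $I\cdot J=\im(I\otimes J)$. By (1), for pure $I,J$ the tensor $I\otimes J$ already injects into $I\otimes\mb{1}=I\subset\mb{1}$ by purity of $J$ applied to $M=I$, so the meet is just $I\otimes J$.

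Meets are then easy via \cref{flatness_criterion}. The map $(I\otimes J)\otimes M\to M$ factors as $I\otimes(J\otimes M)\to J\otimes M\to M$. Both factors are monic, since $I$ is pure with respect to $J\otimes M$ and $J$ is pure with respect to $M$. So $I\otimes J$ is pure.

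Joins are the main obstacle. I would first handle a filtered join $I=\colim_\alpha I_\alpha$. Here $\mb{1}/I=\colim\mb{1}/I_\alpha$, and flatness is closed under filtered colimits by the argument of \cref{flatmapsclosure}(2), using that $\mca$ is Grothendieck and hence AB5. This reduces the problem to binary joins $I+J$.

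For binary joins I would use the short exact sequence $0\to I\cap J\to I\oplus J\to I+J\to 0$ together with the identification $I\cap J=I\otimes J$. That identification holds because $I\otimes J\to J$ is monic, and $I\cap J\subset J$ is killed by $\mb{1}/I$-flatness in the sense that $I\otimes(J/(I\otimes J))\hookrightarrow J/(I\otimes J)$. Tensoring with $M$ and comparing with $M$ through the snake lemma should give the monomorphism $(I+J)\otimes M\to M$.

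If the snake-lemma route gets messy, I would try a more direct alternative. The ring-theoretic proof writes $\mb{1}/(I+J)=\mb{1}/I\otimes\mb{1}/J$ and uses that flat objects are closed under tensor products, which holds because $-\otimes(\mb{1}/I\otimes\mb{1}/J)$ is a composite of exact functors. This second argument is cleaner, and I would adopt it. It requires only the right-exactness identity $\mb{1}/I\otimes\mb{1}/J\simeq\mb{1}/(I+J)$, which follows from tensoring the presentations of $\mb{1}/I$ and $\mb{1}/J$.
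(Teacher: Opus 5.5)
Your proposal is correct and follows essentially the same route as the paper. Part (1) combines the epimorphism from \cref{pureisidemidl} with the monomorphism given by \cref{flatness_criterion} at $M=I$, and part (2) treats finite joins via $\mb{1}/(I+J)\simeq \mb{1}/I\otimes\mb{1}/J$ with closure of flat objects under tensor products, filtered joins via \cref{flatmapsclosure}(2), and meets by identifying $I\cdot J\simeq I\otimes J$ and factoring $(I\otimes J)\otimes M\to M$ into monomorphisms; the snake-lemma detour (whose justification of $I\cap J\simeq I\otimes J$ is muddled) is unnecessary, and you were right to discard it.
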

\begin{proof}
	(1) 
By \cref{pureisidemidl}, $\phi: I \otimes I \to I$ is an epimorphism. 
	By \cref{flatness_criterion}, we see that $\phi$ is a monomorphism and hence an equivalence.\\
	(2)
		For a finite collection of pure ideals $\{I_i\}$, the tensor product $\mb{1}/(\Sigma_i I_i)\simeq \bigotimes_i \mb{1}/I_i$ is \mca-flat. Therefore the inclusion $$i:\idl_u(\mca)\subset \cidem(\idl(\mci))$$ is closed under finite joins. However, flat modules are closed under filtered colimits by \cref{flatmapsclosure}, we conclude that $i$ is closed under filtered joins and hence all joins.
	
	Note that the meet of two idempotent ideals $I_1,I_2\in\cidem(\idl(A))$ is given by $I_1\cdot I_2$. Now given two pure ideals $I,J$, we wish to show $I\cdot J$ is pure. 
	 Because $\mb{1}/I$ is flat,  \cref{flatness_criterion} implies that the morphism $I \otimes J \to J$ is a monomorphism. The ideal product $IJ$ is the image of the composite morphism $I \otimes J \to J \hookrightarrow \mb{1}$. Since this is a composition of two monomorphisms, the composite $I \otimes J \to \mb{1}$ is itself a monomorphism. In any abelian category, a monomorphism is isomorphic to its image, which gives us a canonical isomorphism $$IJ \simeq I \otimes J.$$
Now, let $M \in \mathcal{C}$ be an arbitrary object. We wish to show that the natural morphism $$IJ \otimes M \to M$$ is a monomorphism. We construct this morphism as a composition of several isomorphisms and monomorphisms:
\begin{enumerate}[label=(\roman*)]
	\item Tensoring the isomorphism $IJ \simeq I \otimes J$ with $M$ yields an isomorphism $IJ \otimes M \xrightarrow{\sim} (I \otimes J) \otimes M$.

	\item Because $\mb{1}/J$ is flat, Lemma \cref{flatness_criterion} implies that $J \otimes M \to M$ is a monomorphism. Let $JM \hookrightarrow M$ denote its image. Thus, we have an isomorphism $J \otimes M \xrightarrow{\sim} JM$. Tensoring this with $I$ yields an isomorphism $I \otimes (J \otimes M) \xrightarrow{\sim} I \otimes JM$.
	\item  Because $\mb{1}/I$ is flat, applying Lemma \cref{flatness_criterion} again shows that $I \otimes JM \to JM$ is a monomorphism.
	\item Finally, the inclusion $JM \hookrightarrow M$ is a monomorphism by definition.
\end{enumerate}
Concatenating these morphisms, the canonical map $IJ \otimes M \to M$ factors as follows:
\[
IJ \otimes M \xrightarrow{\sim} (I \otimes J) \otimes M \xrightarrow{\sim} I \otimes (J \otimes M) \xrightarrow{\sim} I \otimes JM \hookrightarrow JM \hookrightarrow M
\]
Because the composition of isomorphisms and monomorphisms is a monomorphism, $IJ \otimes M \to M$ is a monomorphism. By Lemma \cref{flatness_criterion}, this concludes that $\mb{1}/(IJ)$ is flat over $\mb{1}$.
\end{proof}
\begin{rem}
	The assumption in \cref{thmpuresubfrm}(2) that $\mathcal A$ is rigid cannot in general be replaced merely by requiring that $\mathcal A$ be Grothendieck abelian. 
	The issue is that, if
	\[
	0\to I\to \mb{1}\to \mb{1}/I\to 0
	\]
	is exact and $\mb{1}/I$ is flat, flatness of the quotient alone does not in general imply that the sequence remains exact after tensoring with an arbitrary object $M$\footnote{See
		\url{https://mathoverflow.net/questions/57651/tame-abelian-tensor-categories}.}. This is precisely the implication used in \cref{flatness_criterion} to prove that the product of two pure ideals is again pure.
	
	The argument does, however, remain valid under the weaker assumption that $\mathcal A$ is Grothendieck abelian and has enough \mca-flat objects. Under this hypothesis, the proof of \cref{flatness_criterion} goes through without using rigidity, and consequently $\idl_u(\mathcal A)$ is again a subframe of $\cidem(\idl(\mathcal A))$.
\end{rem}

We shall use the following notation for the frame of pure ideals in a module category.
\begin{de}
	Let \(\mca\in\calg(\prladone)\) be a rigid
	\(\op{Ab}\)-algebra, and let \(A\in\calg(\mca)\). We call
	$
	\idl_u(A)
	$
	 as the \textbf{pure frame} of \(A\), namely the subframe of $\idl(A)$
	consisting of pure ideals.
\end{de}
To compare purity with compact generation, we recall the appropriate notion in the 1-additive category.
\begin{de}
	Let $\mca\in\calg(\prladone)$ be an $\omega$-presentably symmetric monoidal additive $1$-category such that \mca is an abelian category. We say that an object $X\in\mca$ is \textbf{finitely generated} if there exists a compact object $C$ and an epimorphism $C\to X$.
\end{de}
\begin{rem}
	For such an $\mca$, it is automatically Grothendieck abelian by compact generation. Furthermore, by \cref{idlprefrm}, an ideal $I \subset \mb{1}$ is compact in $\idl(\mca)$ if and only if it is finitely generated.
\end{rem}
\begin{de}
	Let $\mca\in\calg(\prladone)$. We say that it is \textbf{$1$-projectively rigid} over $\op{Ab}$ if $\mca$ is atomically generated and rigid over $\op{Ab}$, or equivalently, if $\mca$ is compact-1-projectively generated and satisfies $\mca^{1\text{-}\op{cproj}}=\mca^d$.
\end{de}
Under this hypothesis, purity can be checked by the usual intersection criterion as \cref{pureidlproperties}, and in fact it suffices to test it on compact projective generators.
\begin{prop}\label{charpureidl}
	Let $\mca\in\calg(\prladone)$ be a 1-projectively rigid $\op{Ab}$-algebra and $A\in \calg(\mca)$. Suppose $I\subset A$ be an ideal. Then the following conditions are equivalent:
	\enu{
		\item $I$ is pure.
		\item For any $A$-module $M$ and  $A$-submodule $N\subset M$, we have $N\cap I M=I N$.
		\item For any compact 1-projective $A$-module $P$ and any finitely generated $A$-submodule $K\subset P$ we have $K\cap I P=I K$.
	}
\end{prop}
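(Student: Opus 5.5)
The plan is to prove $(1)\Rightarrow(2)\Rightarrow(3)\Rightarrow(1)$, working in $\modu_A(\mca)$. Since $\mca$ is $1$-projectively rigid, so is $\modu_A(\mca)$ over $\op{Ab}$, so \cref{flatness_criterion} and \cref{rigidflatone} apply there. Throughout, for an $A$-module $M$ and submodule $N$, $IM$ denotes the image of $I\otimes_A M\to M$. The key reformulation is \cref{flatness_criterion}: $I$ is pure if and only if $I\otimes_A M\to M$ is a monomorphism for all $M$, so that $I\otimes_A M\simeq IM$. Equivalently, $M\otimes_A A/I\simeq M/IM$, and $-\otimes_A A/I$ is exact.

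For $(1)\Rightarrow(2)$, take $N\subset M$. By flatness of $A/I$, the map $N/IN\simeq N\otimes_A A/I\to M\otimes_A A/I\simeq M/IM$ is a monomorphism. Its kernel is $(N\cap IM)/IN$, so $N\cap IM=IN$. The implication $(2)\Rightarrow(3)$ is trivial.

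For $(3)\Rightarrow(1)$, I would verify the criterion of \cref{flatness_criterion} in stages.
\begin{enumerate}
\item First take $M=P$ compact projective. Then $I\otimes_A P$ is a retract of sums of $I$, because $P$ is dualizable and hence flat. Concretely, $P$ is flat by \cref{preserverefleflats}(3) and \cref{rigidflatone}, so $I\otimes_A P\to P$ is a monomorphism.
\item Next take $M=P/K$ with $K\subset P$ finitely generated. Tensor $0\to K\to P\to M\to 0$ with $A/I$. By right exactness the only issue is injectivity of $K/IK\to P/IP$, whose kernel is $(K\cap IP)/IK=0$ by (3). Since $P$ is flat, the long exact sequence gives $\op{Tor}_1(M,A/I)\simeq\ker(K\otimes A/I\to P\otimes A/I)=0$. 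Here I would phrase $\op{Tor}_1$ via the snake lemma exactly as in the proof of \cref{flatness_criterion}, to avoid derived functors.
\item For general $M$, write $M$ as a filtered colimit of finitely presented quotients $P/K$. I expect this to be possible because $\modu_A(\mca)$ is compactly generated by compact $1$-projectives, so every object is a filtered colimit of cokernels of maps between finite sums of them. Injectivity of $I\otimes M\to M$ then passes to filtered colimits by AB5, giving purity by \cref{flatness_criterion}.
\end{enumerate}

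Two points in stage 2 need checking. The first is that \cref{flatness_criterion} only needs vanishing of $\op{Tor}_1(-,A/I)$ on such quotients. The second is that a finitely generated $K\subset P$ is the right notion: it is the image of a compact, so $P/K$ is finitely presented.

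The main obstacle I anticipate is stage 3, namely justifying that every $A$-module is a filtered colimit of objects $P/K$ with $P$ compact $1$-projective and $K$ finitely generated. It must also be checked that the vanishing of $\op{Tor}_1$ and the monomorphism condition commute with these colimits in this abstract abelian setting, rather than in $\modu_A(\op{Ab})$.
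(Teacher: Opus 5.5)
Your proposal is correct and is essentially the paper's proof. The paper also gets $(1)\Rightarrow(2)$ from injectivity of $N/IN\to M/IM$, and $(3)\Rightarrow(1)$ from the snake-lemma chase on $0\to N\to P\to M\to 0$ using flatness of $P$; it differs only in that it reduces to finitely generated $M=P/N$ and transfers (3) to arbitrary $N=\bigcup_\alpha K_\alpha$ by AB5, while you instead pass the monomorphism $I\otimes_A M\to M$ through filtered colimits. That also settles your stage-3 worry: every $M$ is a filtered union of finitely generated submodules, each of the form $P/N$, and $P/N\simeq\colim_\alpha P/K_\alpha$. The aside in stage 1 that $I\otimes_A P$ is a retract of sums of $I$ is false in general, since compact $1$-projective $A$-modules are retracts of $A\otimes Q$ rather than of free modules, but it is not needed: flatness of $P$ alone gives the monomorphism.
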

\begin{proof}
	$(2)\implies(3)$ is obvious. For $(1)\implies(2)$, tensoring $A/I$ with the inclusion $N\hookrightarrow M$, we get a map $$N/IN\to M/IM.$$
	It is still a monomorphism by assumption, so its kernel $N\cap I M/IN$ is zero, i.e. $N\cap I M=I N$.
	
For $(3)\implies(1)$, we wish to show $A/I$ is flat over $A$. By \cref{flatness_criterion}, it suffices to prove that for any $A$-module $M$, the natural map $I \otimes_A M \to M$ is a monomorphism.  Note that $M$ can be expressed as a filtered colimit of its finitely generated submodules, without loss of generation we can assume that $M$ is a finitely generated module and hence there exists an epimorphism $P \twoheadrightarrow M$ from a compact 1-projective $A$-module $P$. Let $N$ be its kernel, giving a short exact sequence
$$0 \to N \to P \to M \to 0.$$
Note that $N$ can be expressed as a filtered colimit of its finitely generated submodules, say $N \simeq \colim_\alpha K_\alpha$. By assumption (3), $K_\alpha \cap IP = I K_\alpha$ for all $\alpha$. Because filtered colimits are exact and commute with the tensor product, taking the intersection with $IP$ and multiplying by $I$ both commute with the colimit. Thus,
$$N \cap IP \simeq (\colim_\alpha K_\alpha) \cap IP \simeq \colim_\alpha (K_\alpha \cap IP) \simeq \colim_\alpha I K_\alpha \simeq IN.$$
Because $N \cap IP = IN$, the canonical map $N/IN \to P/IP$ is a monomorphism. Consequently, we have the following commutative diagram with exact rows:
\[
\begin{tikzcd}
	0 \arrow[r] & N \arrow[r] \arrow[d] & P \arrow[r] \arrow[d] & M \arrow[r] \arrow[d] & 0 \\
	0 \arrow[r] & N/IN \arrow[r] & P/IP \arrow[r] & M/IM \arrow[r] & 0
\end{tikzcd}
\]
The vertical maps are the canonical quotients. By applying the Snake Lemma to this diagram, their kernels form a short exact sequence:
$$0 \to IN \to IP \to IM \to 0.$$
Now, apply the right exact functor $I \otimes_A -$ to the sequence $0\to N \to P \to M \to 0$, and map it naturally to the sequence of images we just established:
\[
\begin{tikzcd}
	I \otimes_A N \arrow[r] \arrow[d, twoheadrightarrow, "\alpha"] & I \otimes_A P \arrow[r] \arrow[d, "\beta", "\simeq"'] & I \otimes_A M \arrow[r] \arrow[d, "\gamma"] & 0 \\
	IN \arrow[r, hook] & IP \arrow[r, twoheadrightarrow] & IM \arrow[r] & 0
\end{tikzcd}
\]
Since $P$ is a compact 1-projective, it is flat by \cref{preserverefleflats}(3), ensuring that $I \otimes_A P \to P$ is a monomorphism, which makes $\beta: I \otimes_A P \xrightarrow{\sim} IP$ an isomorphism. 
A standard categorical diagram chase on this structure (where $\alpha$ is an epimorphism, $\beta$ is an isomorphism, and the rows are right exact) forces $\gamma: I \otimes_A M \to IM$ to be an isomorphism as well.
Finally, the natural map $I \otimes_A M \to M$ factors as the composition 
$$I \otimes_A M \xrightarrow{\gamma} IM \hookrightarrow M.$$ 
Since $\gamma$ is an isomorphism and the inclusion is a monomorphism, their composition is a monomorphism, completing the proof that $I$ is pure.
\end{proof}
One useful consequence is that pure ideals are determined by their radical (cf. \cref{purenilrad}).
\begin{cor}\label{radpureidls}
	Let $\mca\in\calg(\prladone)$ be a 1-projectively rigid $\op{Ab}$-algebra and $A\in \calg(\mca)$. Suppose that $I, J \subset A$ are pure ideals. If $\sqrt{I}=\sqrt{J}$, then $I=J$.
\end{cor}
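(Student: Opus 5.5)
The plan is to show that $I$ and $J$ determine each other through the characterization of purity in \cref{charpureidl}, arguing symmetrically. It suffices to prove $I\subset J$. First I reduce to the case where one ideal contains the other: by \cref{thmpuresubfrm}, $I\cdot J=I\wedge J$ is again pure. Moreover $\sqrt{IJ}=\sqrt{I}\cap\sqrt{J}=\sqrt{I}$, since $\sqrt{IJ}=\sqrt{I\cap J}$ holds in general (the inclusion $\sqrt{I\cap J}\subset\sqrt{IJ}$ follows because a compact $K$ with $K^n\subset I\cap J$ satisfies $K^{2n}\subset IJ$, using the radical description in \cref{idlprefrm}). So if I can prove the statement when $J\subset I$, applying it to the pairs $IJ\subset I$ and $IJ\subset J$ gives $I=IJ=J$.

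Now assume $J\subset I$ are pure with $\sqrt{I}=\sqrt{J}$. The aim is to show $I\subset J$, i.e.\ $I=IJ$ (using $J=J^2\subset IJ\subset J$). The key trick is to apply \cref{charpureidl}(2) for the pure ideal $J$ with $M=A$ and the submodule $N=K$ for a compact (finitely generated) ideal $K\subset I$. This gives $K\cap J=JK$. Since $K\subset I\subset\sqrt{J}$ and $K$ is compact, \cref{idlprefrm} yields some $n$ with $K^n\subset J$.

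The step I expect to be the main obstacle is bootstrapping from $K^n\subset J$ to $K\subset J$. To do it, I will use that $I$ is idempotent (\cref{pureisidemidl}): $I=I^n$, so $K\subset I^n$. Since $I^n$ is the filtered join of the products $K_1\cdots K_n$ of compact subideals of $I$, and $K$ is compact, $K$ lies in some $K'^n$ with $K'\subset I$ compact, where $K'$ can be enlarged to contain $K$. Then $K\subset K'^n\subset J$, applying the previous paragraph to $K'$. Hence every compact subideal of $I$ lies in $J$, so $I\subset J$ by compact generation of $\idl(\mca)$. This last compactness argument sidesteps the need for the intersection identity $K\cap J=JK$; I will keep that identity only as a fallback in case the join description of $I^n$ requires more care, in which case one iterates $K\cap J=JK$ together with $K^n\subset J$.
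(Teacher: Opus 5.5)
Your bootstrapping step is circular. The exponent $n$ with $K^n\subset J$ depends on $K$. Idempotence of $I$ lets you find a compact $K'\subset I$ with $K\subset K'^n$. However, your previous paragraph applied to $K'$ only produces some $n'=n(K')$ with $K'^{n'}\subset J$, and nothing forces $n'\le n$. Replacing $n$ by $n'$ forces you to choose a new $K''$ with $K\subset K''^{n'}$, which comes with its own exponent, and so on.

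This is not a repairable technicality. Apart from $J\subset I$ and $\sqrt I=\sqrt J$, your argument (including the fallback via $K\cap J=JK$) only uses that $J$ is pure and that $I$ is idempotent, and under those hypotheses the conclusion is false. Take the example following \cref{purenilrad}, $A=k[x^q\mid q\in\mathbb{Q}_{\geq0}]/(x^q\mid q\geq 1)$, with $J=0$ (which is pure) and $I=(x^q\mid 0<q<1)$ (which is idempotent with $\sqrt I=\sqrt J$). Then $K=(x^{1/2})$ has $K^2\subset J$ and $K\subset K'^2$ for $K'=(x^{1/4})$, yet $K'^2\not\subset J$. The identity $K\cap J=JK$ holds trivially here, so it cannot rescue the argument.

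The missing idea is to use the purity of $I$ itself, not just its idempotence. Apply \cref{charpureidl}(2) to the pure ideal $I$ with $M=A$ and $N=K$; this gives $K=K\cap IA=IK$. Since $IK$ is the filtered join of the $LK$ over compact $L\subset I$ and $K$ is compact, you get $K=LK$ for a single compact $L\subset I$, hence $K=L^mK$ for every $m$. Now the exponent can be chosen after $L$: pick $m$ with $L^m\subset J$, which is possible since $L\subset I\subset\sqrt J$ and $L$ is compact. Then $K\subset J$.

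This shows $I\subset J$ directly, and the reverse inclusion is symmetric. Your preliminary reduction to $J\subset I$ via $IJ$ therefore becomes unnecessary. This is exactly the paper's proof.
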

\begin{proof}
	Let $f:P \to I$ be a map of $A$-module from a compact  $A$-module $P$. Then $\im(f)$ is a finitely generated ideal. Applying \cref{charpureidl}(2) to $\im(f)\subset I$ we have $$\im(f)=\im(f)\cap( I\cdot I)\simeq \im(f)\cdot I.$$  Therefore $\im(f) = \im(f)\cdot\im(g)$ for some $g:P'\to I$ where $P'$ is compact, because $\im(f)$ is finitely generated. By assumption we have $\im(g)\subset I\subset \sqrt{J}$, therefore by  \cref{idlprefrm}, $\im(g)^n \subset J$ for
	some $n \geq 1$. Thus $\im(f) = \im(f)\cdot\im(g)^n \subset J$. Hence, $I \subset J$. Similarly, we also have
	$J \subset I$ and hence $I=J$.
\end{proof}

We obtain the following spatiality of the pure frame, similar as the classical case \cref{idlusptial}.
\begin{thm}\label{purefrmspatial}
	Let $\mca\in\calg(\prladone)$ be a 1-projectively rigid $\op{Ab}$-algebra and $A\in \calg(\mca)$. Then the pure frame $\idl_u(A)$ is spatial and $\spec(\idl_u(A))$ is quasi-compact.
\end{thm}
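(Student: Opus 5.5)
We follow the classical proof that the pure spectrum of a commutative ring is spatial and quasi-compact (cf.\ \cref{idlusptial}). The plan is to show that $\idl_u(A)$ embeds as a subframe into a spatial frame. Spatiality then follows from \cref{subfrmspatial}. Quasi-compactness is a separate, direct argument.

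The candidate ambient frame is the coherent frame $\idl(A)_{\op{rad}}$ of radical ideals. It is coherent by \cref{idlprefrm}(3), since $\modu_A(\mca)$ is compactly generated ($\mca$ is 1-projectively rigid), and coherent frames are spatial. We consider the map
\[
\sqrt{-}\colon \idl_u(A)\longrightarrow \idl(A)_{\op{rad}},\qquad I\mapsto \sqrt I .
\]
Injectivity is exactly \cref{radpureidls}. It then remains to check that $\sqrt{-}$ is a frame map.

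\begin{itemize}
\item \emph{Finite meets.} By \cref{thmpuresubfrm} the meet of pure ideals $I,J$ is $IJ$, and $\sqrt{IJ}=\sqrt I\cap\sqrt J$ is the meet of radical ideals. This should follow from the description of radicals via compact ideals in \cref{idlprefrm}(2).
\item \emph{Joins.} Pure joins are sums of ideals (\cref{thmpuresubfrm}), and the radical join is $\sqrt{\sum\sqrt{I_\alpha}}=\sqrt{\sum I_\alpha}$. For this we use the last clause of \cref{idlprefrm}: a compact ideal with some power inside $\sum\sqrt{I_\alpha}$ has some power inside a finite subsum.
\end{itemize}

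An injective map of posets preserving finite meets is an embedding by \cref{conserinjectfrms}. Hence $\idl_u(A)$ is identified with a subframe of a spatial frame, and so it is spatial. The step I expect to need the most care is compatibility with joins, since it requires manipulating radicals through compact ideals in the categorical setting. If this becomes messy, I would instead produce enough points directly: for each prime (or maximal) ideal $\mathfrak p$ of $\idl(A)$, take the frame map $\idl_u(A)\to\{0,1\}$ sending $I\mapsto[\,I\not\subset\mathfrak p\,]$, and check joint conservativity using \cref{radpureidls}.

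Quasi-compactness amounts to compactness of the top element $A$ in $\idl_u(A)$. Suppose $A=\sum_\alpha I_\alpha$ with each $I_\alpha$ pure. Since the unit is compact in $\idl(A)$ (it is finitely generated), $A$ is already a finite sum $I_{\alpha_1}+\dots+I_{\alpha_n}$. Hence $A$ is compact in $\idl_u(A)$, so $\spec(\idl_u(A))$ is quasi-compact. No use of radicals is needed here.
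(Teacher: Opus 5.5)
Your proposal is correct and follows essentially the same route as the paper: embed $\idl_u(A)$ into the coherent, hence spatial, frame $\idl(A)_{\op{rad}}$ via $\sqrt{-}$ (injective by \cref{radpureidls}), conclude by \cref{subfrmspatial}, and get quasi-compactness from compactness of the unit ideal in $\idl(A)$. The compatibility with joins that you flagged as delicate is in fact formal: $\sqrt{-}\colon\idl(A)\to\idl(A)_{\op{rad}}$ is a symmetric monoidal reflector, so it automatically preserves all joins and products of ideals, and no argument through compact ideals is needed.
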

\begin{proof}
	By \cref{radpureidls}, the composition
	\[
	\idl_u(A)\hookrightarrow \cidem(\idl(A))\hookrightarrow \idl(A)\xrightarrow{\sqrt{-}}\idl(A)_{\op{rad}}
	\]
	is a frame embedding. Since  $\idl_{\op{rad}}(A)$ is a coherent frame by  \cref{idlprefrm}, it is spatial. It follows from \cref{subfrmspatial} that the pure frame $\idl_u(A)$ is also spatial.
	
	Since the unit ideal $A\in \idl(A)$ is a compact ideal,  it is also compact in $\idl_u(A)$. Consequently, $\spec(\idl_u(A))$ is quasi-compact.
\end{proof}

\subsection{Serre smashing frames}
In this subsection, we study the categorical analogue of pure ideals, which we define as Serre smashing ideals. For a presentably symmetric monoidal separated Grothendieck prestable $\infty$-category, we prove that the poset of Serre smashing ideals forms a subframe of the smashing frame. This subframe structure provides the necessary geometric framework for analyzing the flat algebras.

We first record a pullback criterion for module categories and flatness, which will be used to prove closure under meets.
\begin{lem}\label{pullbackmoducats}
Let $\mcc\in \calg(\prlad)$ such that \mcc is a separated  prestable \infcat. Suppose we are given a pullback diagram
$$\begin{tikzcd}
	A \arrow[r,"p_0"] \arrow[d,"p_1"] & A_0 \arrow[d,"f"] \\
	A_1 \arrow[r,"g"'] & A_{01}
\end{tikzcd}$$
in $\mathrm{CAlg}(\mcc)$. If the map $f$ induces an epimorphism $\pi_0 A_0 \twoheadrightarrow \pi_0 A_{01}$ in $\mcc^\heartsuit$, then the induced functor
$$\mathrm{Mod}_{A}(\mathcal{C}) \longrightarrow
\mathrm{Mod}_{A_0}(\mathcal{C}) \times_{\mathrm{Mod}_{A_{01}}(\mathcal{C})} \mathrm{Mod}_{A_1}(\mathcal{C})$$
is an equivalence of symmetric monoidal $\infty$-categories.

In particular, if $f$ and $g$ are flat, then $p_0$ and $p_1$ are also flat. Moreover, for any map $h:R\to A$ in $\calg(\mcc)$, the map $h$ is flat if and only if both composites $p_0h$ and $p_1h$ are flat.
\end{lem}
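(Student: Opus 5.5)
The plan is to deduce the equivalence from Milnor-type gluing for modules over a pullback of connective rings, which in the spectral setting is Lurie's result for pullback squares along $\pi_0$-surjections \cite[Theorem 16.2.0.2]{sag}. Here it must be transported to a general separated prestable $\mcc$. I would first pass to the stabilization $\opsp(\mcc)$. There the square remains a pullback in $\calg(\opsp(\mcc))$, and the comparison functor
\[
\modu_A(\opsp(\mcc))\to\modu_{A_0}(\opsp(\mcc))\times_{\modu_{A_{01}}(\opsp(\mcc))}\modu_{A_1}(\opsp(\mcc))
\]
is fully faithful. To see this, compute the unit $M\to M_0\times_{M_{01}}M_1$ on $M=A$: it is the given pullback. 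Both sides are exact in $M$ and commute with colimits in $M$, since tensoring is exact and finite limits commute with colimits stably. Because $\modu_A$ is generated by $A$ under colimits and shifts, the unit is an equivalence for all $M$.

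Next I would restrict to the connective parts, defined via the $t$-structure with $\mcc=\opsp(\mcc)_{\ge0}$. The right adjoint sends a triple $(M_0,M_1,\alpha)$ to $M_0\times_{M_{01}}M_1$. The $\pi_0$-surjectivity of $f$ gives that $M_0\to M_{01}=A_{01}\otimes_{A_0}M_0$ is $\pi_0$-surjective when $M_0$ is connective: $\pi_0$ of a base change is the base change on $\pi_0$, by right exactness in the heart. The long exact sequence then shows the fiber product stays connective, so the adjunction restricts. Fully faithfulness on connective objects follows from the stable step.

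For conservativity of the right adjoint on connective triples, suppose $M_0\times_{M_{01}}M_1=0$. Then $M_0\to M_{01}$ and $M_1\to M_{01}$ are equivalences. Tensoring with $A_0$ (resp. $A_1$) via the unit equivalence already proved should give $M_0=0$, since $M_0\simeq A_0\otimes_A 0$ once we know the triple is in the image. This is where I would instead argue through the heart: inspect $\pi_0$, and use separatedness, i.e. that $\bigcap\mcc_{\ge n}=0$, to get an inductive connectivity argument $M_0\in\mcc_{\ge n}$ for all $n$. Separatedness is genuinely needed here, because without left completeness one can only detect vanishing on all homotopy objects. I expect this conservativity (essential surjectivity) step to be the main obstacle.

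Symmetric monoidality is formal, since all functors are base changes and the fiber product of symmetric monoidal categories is computed componentwise.

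For the flatness consequences, I would use \cref{flatover}: a map is flat iff base change is left exact. Given $f,g$ flat, the base change along $p_0$ is a component of the equivalence. I would check left exactness by computing finite limits in the fiber product componentwise; this requires that $\modu_{A_0}\to\modu_{A_{01}}$ be left exact, which is exactly flatness of $f$, and similarly for $g$. Then the projection to $\modu_{A_0}$ is left exact, so $p_0$, and symmetrically $p_1$, is flat.

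For the last claim, base change along $h$ composed with the equivalence is $(p_0h,p_1h)$ into the fiber product, whose limits are componentwise with left exact transitions. So $h$ is flat iff both components are, with "only if" following from closure of flatness under composition.
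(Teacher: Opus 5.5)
Your architecture (stabilize, get full faithfulness from exactness of the relative tensor product, restrict to connective objects using $\pi_0$-surjectivity of $f$, then prove the right adjoint $G$ conservative using separatedness) is exactly the argument of \cite[Proposition 16.2.2.1]{sag} that the paper invokes. Your treatment of the flatness consequences is correct. It must be read, as you implicitly do, under the hypothesis that $f,g$ are flat: then finite limits in the fiber product are computed componentwise, and the projections are left exact and jointly detect left exactness.

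Two small repairs are needed in the stable step. First, $\modu_A(\opsp(\mcc))$ is in general not generated by $A$ under colimits and shifts; you would need the free modules $A\otimes X$. But no generation argument is needed, since $GF(M)\simeq (A_0\times_{A_{01}}A_1)\otimes_A M\simeq M$ directly by exactness of $-\otimes_A M$. Second, the square is a pullback in $\opsp(\mcc)$ only because $\pi_0A_0\to\pi_0A_{01}$ is surjective, so that the stable fiber product is already connective.

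The genuine gap is the conservativity step, which is the actual content of the lemma. From $M_0\times_{M_{01}}M_1\simeq 0$ it does \emph{not} follow that $M_0\to M_{01}$ and $M_1\to M_{01}$ are equivalences. The square is bicartesian in $\opsp(\mcc)$, so you only get $M_{01}\simeq M_0\oplus M_1$ with the two structure maps as summand inclusions. In particular $M_0\to M_{01}$ is an equivalence iff $M_1\simeq 0$, which is what is to be shown. Your fallback ``$M_0\simeq A_0\otimes_A 0$ once we know the triple is in the image'' is circular. The phrase ``inspect $\pi_0$ and induct'' does not yet say what the induction step is.

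The missing step is as follows. Suppose $M_0,M_1\in\mcc_{\geq n}$.
\begin{enumerate}
\item Then $\pi_nM_{01}\simeq \pi_0A_{01}\otimes_{\pi_0A_0}\pi_nM_0$, so $\pi_nM_0\to\pi_nM_{01}$ is surjective by the hypothesis on $f$.
\item This map is also the inclusion of the summand complementary to $\pi_nM_1$, so surjectivity forces $\pi_nM_1=0$, i.e.\ $M_1\in\mcc_{\geq n+1}$.
\item Then $M_{01}\simeq A_{01}\otimes_{A_1}M_1\in\mcc_{\geq n+1}$, so its summand $\pi_nM_0$ vanishes as well, and $M_0\in\mcc_{\geq n+1}$.
\end{enumerate}
Inducting from $n=0$ gives $M_0,M_1\in\bigcap_n\mcc_{\geq n}$, which is zero by separatedness. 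The reduction of conservativity to detecting zero objects is fine: cofibers of connective triples are connective, and $G$ commutes with cofibers because it is computed in $\opsp(\mcc)$. With this inserted, your proof is complete.
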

\begin{proof}
It is the same argument as  \cite[Proposition 16.2.2.1]{sag}\footnote{Note that although \cite[Proposition 16.2.2.1]{sag} assumes the Grothendieck condition, it is not used in the proof.}.
\end{proof}

The relevant meet operation is most easily computed in the ambient stable category.
\begin{lem}\label{meetinstableidem}
	Let $\mcc\in\calg(\op{Cat}_{\op{st}})$ be a stably symmetric monoidal \infcat. Then the meet of two idempotent algebras $A,B$ in $\calg^{\idem}(\mcc)$ can be identified with $$A\wedge B\simeq A\times_{A\otimes B}B.$$
\end{lem}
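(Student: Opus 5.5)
We work entirely within the poset $\calg^{\idem}(\mcc)$ of idempotent commutative algebras, where $A\leq B$ means there is a (necessarily unique) algebra map $A\to B$. By \cite[Proposition 4.8.2.9]{ha}, for an idempotent algebra $A$, the forgetful functor identifies $\modu_A(\mcc)$ with the full subcategory of $A$-local objects, those $X$ with $X\xrightarrow{\sim}A\otimes X$. Moreover, $A\leq B$ holds exactly when $B$ is $A$-local, i.e.\ when $A\otimes B\simeq B$. So the meet $A\wedge B$ is the largest idempotent $C$ with both $A$ and $B$ being $C$-local. Set
\[
D:=A\times_{A\otimes B}B,
\]
formed in $\calg(\mcc)$ using the maps $A\simeq A\otimes\mb{1}\to A\otimes B$ and $B\to A\otimes B$. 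The plan is to show that $D$ is idempotent, that $D\leq A$ and $D\leq B$, and that $D$ is maximal with these properties.

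\textbf{Idempotence of $D$.} We compute $D\otimes D$ using stability: $-\otimes D$ is exact, so it preserves the defining pullback (equivalently, the fiber sequence $D\to A\oplus B\to A\otimes B$). First we tensor this sequence with $A$. We have $A\otimes A\simeq A$, $A\otimes B\simeq A\otimes B$ and $A\otimes A\otimes B\simeq A\otimes B$, so the map $A\oplus(A\otimes B)\to A\otimes B$ has a component that is an equivalence. Hence $A\otimes D\simeq A$, and symmetrically $B\otimes D\simeq B$. It follows that
\[
D\otimes D\simeq (A\otimes D)\times_{A\otimes B\otimes D}(B\otimes D)\simeq A\times_{A\otimes B}B=D.
\]
We still have to check that this equivalence is the multiplication map $D\otimes D\to D$, and not just an abstract equivalence. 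Tensoring the unit $\mb{1}\to D$ with $D$ and comparing the two pullback presentations termwise gives the map $D\to D\otimes D$; each component of this comparison is the corresponding map for $A$ or $B$, which is an equivalence. So $D$ is idempotent, and the computations $A\otimes D\simeq A$ and $B\otimes D\simeq B$ say precisely that $D\leq A$ and $D\leq B$.

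\textbf{Maximality.} Let $C$ be idempotent with $C\leq A$ and $C\leq B$. Then $A$, $B$ and $A\otimes B$ are all $C$-local. The $C$-local objects form the essential image of $\modu_C(\mcc)$, which is closed under limits, so the pullback $D$ is $C$-local. By the characterization above this means $C\leq D$, so $D$ is the meet.

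The main obstacle is making sure that the pullback in $\calg(\mcc)$ agrees with the pullback in $\mcc$, so that the exactness argument applies; this holds because the forgetful functor $\calg(\mcc)\to\mcc$ preserves limits. A second point needing care is that the equivalence $D\otimes D\simeq D$ must be induced by the unit. If that becomes awkward to verify directly, I would fall back on the criterion that $D$ is idempotent if and only if $\cofib(\mb{1}\to D)\otimes D\simeq 0$. We have
\[
\cofib(\mb{1}\to D)\otimes A\simeq\cofib(A\to A\otimes D)\simeq 0,
\]
and likewise $\cofib(\mb{1}\to D)\otimes B\simeq 0$ and $\cofib(\mb{1}\to D)\otimes A\otimes B\simeq 0$. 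Since $D$ is a finite limit of $A$, $B$ and $A\otimes B$, and $\cofib(\mb{1}\to D)\otimes-$ is exact, this gives $\cofib(\mb{1}\to D)\otimes D\simeq 0$.
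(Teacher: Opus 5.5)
Your proposal is correct and follows essentially the same route as the paper: the core computation $A\otimes D\simeq A$, $B\otimes D\simeq B$, and hence $D\otimes D\simeq (A\otimes D)\times_{A\otimes B\otimes D}(B\otimes D)\simeq A\times_{A\otimes B}B$ by exactness of $-\otimes D$, is exactly the paper's argument. You go further than the paper in two places, both of which it leaves implicit in ``it only suffices to show that $M$ is idempotent'': you check that this equivalence is the one induced by the unit, and you verify directly that $D$ is the greatest lower bound (via closure of $C$-local objects under finite limits).
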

\begin{proof}
	It only suffices to show that $M:=A\times_{A\otimes B}B$ is idempotent.
		First, we compute the tensor product $A \otimes M$. We can commute the tensor product with the pullback:
		$$ A \otimes M \simeq A \otimes (A \times_{A \otimes B} B) \simeq (A \otimes A) \times_{A \otimes B \otimes A} (A \otimes B) .$$
		Because $A$ is idempotent, the multiplication map induces an equivalence $A \otimes A \simeq A$. Substituting this into our pullback expression yields:
		$$ A \otimes M \simeq A \times_{A \otimes B} (A \otimes B) \simeq A. $$
		By a symmetric argument utilizing the flatness and idempotence of $B$, we obtain $B \otimes M \simeq B$. Therefore, tensoring $M$ with $A \otimes B$ yields $(A \otimes B) \otimes M \simeq A \otimes B$.
		Consequently, $$M\otimes M\simeq (A\otimes M)\times_{A\otimes B\otimes M}(B\otimes M)\simeq A\otimes_{A\otimes B} B\simeq M$$
		 is idempotent.

\end{proof}

Combining the pullback criterion with the formula for meets, we obtain the frame-theoretic closure property needed for the definition of Serre smashing frame.
\begin{thm}\label{pureidlfrm}
	Let $\mcc\in\calg(\prlad)$ 
	be a presentably symmetric monoidal separated Grothendieck prestable \infcat. Then the inclusion $$\calg(\mcc)_{\op{flat}}^{\op{cn\text{-}idem}}\hookrightarrow\calg(\mcc)^{\op{cn\text{-}idem}}=\cidem(\mcc)$$ forms a subframe, where the left hand side denotes the subposet of those  $\pi_0$-surjective flat idempotent algebras.
\end{thm}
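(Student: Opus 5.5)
We must show that flat $\pi_0$-surjective idempotent algebras are closed under finite meets and arbitrary joins inside $\cidem(\mcc)\simeq\calg(\mcc)^{\op{cn\text{-}idem}}$ (the identification is \cref{connloc}, via $x\mapsto \cofib(x\to\mb{1})$). Write $\mb{1}\to R$ for the idempotent algebra attached to a coidempotent $x$. The order is reversed: a larger smashing ideal corresponds to a larger kernel, i.e.\ a "smaller" quotient algebra. So joins of smashing ideals are computed by tensor products and filtered colimits of the algebras $R_\alpha$, and meets by the stable formula of \cref{meetinstableidem}.

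\emph{Finite joins and top/bottom.} The bottom element (zero ideal) corresponds to $R=\mb{1}$, which is flat. The top corresponds to $R=0$, which is flat since base change to $0$ is trivially left exact. For two ideals with algebras $R_1,R_2$, the join corresponds to $R_1\otimes R_2$, since $\cofib(x_1\vee x_2\to\mb1)$ is the pushout of the algebras. Then $\mb1\to R_1\to R_1\otimes R_2$ is a composite of flat morphisms: $R_1\to R_1\otimes R_2$ is the base change of $\mb1\to R_2$, so it is flat by \cref{flatmapsclosure}(1). Composites of left exact base-change functors are left exact, and $\pi_0$-surjectivity is preserved.

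\emph{Arbitrary joins.} An arbitrary join is the filtered colimit of the finite joins. The corresponding algebras form a filtered colimit $\colim_F R_F$ of flat $\mb1$-algebras. Since $\mcc$ is Grothendieck prestable, filtered colimits are left exact (AB5), so \cref{flatmapsclosure}(2) gives flatness of the colimit. One should check that the colimit in $\calg(\mcc)$ agrees with the one computed in $\cidem(\mcc)$; this holds because cofiber and filtered colimits commute and the forgetful functor from $\calg$ preserves filtered colimits.

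\emph{Binary meets (main step).} Let $A,B$ be the algebras for two Serre smashing ideals. By \cref{meetinstableidem}, computed in $\opsp(\mcc)$, the meet is $M=A\times_{A\otimes B}B$.

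First I would check that $M$ lies in $\mcc$ and is $\pi_0$-surjective. Since $\pi_0 B\to\pi_0(A\otimes B)$ is surjective (it is a base change of the surjection $\pi_0\mb1\to\pi_0 A$), the fiber sequence shows that $M$ is connective and that $\pi_0M\to \pi_0 A\times_{\pi_0(A\otimes B)}\pi_0B$ is an isomorphism. This receives a surjection from $\pi_0\mb1$ by a short diagram chase using that both legs are surjective.

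Then apply \cref{pullbackmoducats} to this pullback square, with $f\colon A\to A\otimes B$ $\pi_0$-surjective as above. The maps $f$ and $g$ are flat, being base changes of $\mb1\to B$ and $\mb1\to A$. The "moreover" clause of \cref{pullbackmoducats} says $h\colon\mb1\to M$ is flat iff $\mb1\to A$ and $\mb1\to B$ are flat, which holds. Separatedness is needed only to invoke \cref{pullbackmoducats}.

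I expect the main obstacle to be verifying that the pullback in $\calg(\opsp(\mcc))$ really lands in $\calg(\mcc)$, and that it coincides with the meet in $\cidem(\mcc)$, i.e.\ that connective truncation commutes here. The fiber-sequence argument should handle this.
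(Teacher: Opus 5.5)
Your argument follows the paper's proof essentially step for step. Finite and filtered joins come from \cref{flatmapsclosure}(1),(2). Binary meets come from $A\wedge B\simeq A\times_{A\otimes B}B$ (\cref{meetinstableidem}) together with the last clause of \cref{pullbackmoducats}, whose hypotheses ($f,g$ flat and $\pi_0$-surjective, $\mcc$ separated) you check correctly.

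The one step that does not hold as you justify it is the $\pi_0$-surjectivity of $\mb{1}\to M$. The long exact sequence of $M\to A\oplus B\to A\otimes B$ only shows that $\pi_0M\to\pi_0A\times_{\pi_0(A\otimes B)}\pi_0B$ is surjective. Its kernel is $\coker\bigl(\pi_1A\oplus\pi_1B\to\pi_1(A\otimes B)\bigr)$. Surjectivity of $\pi_0B\to\pi_0(A\otimes B)$ gives connectivity of $M$, but it says nothing about this cokernel.

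Concretely, write $I$ and $J$ for the kernels of $\pi_0\mb{1}\to\pi_0A$ and $\pi_0\mb{1}\to\pi_0B$. The map in question is then $\pi_0\mb{1}/IJ\to\pi_0\mb{1}/(I\cap J)$. So your claimed isomorphism amounts to $I\cap J=IJ$, which is a purity-type statement, not a formal consequence of the fiber sequence. Without it, surjectivity of $\pi_0\mb{1}$ onto the fiber product does not give surjectivity onto $\pi_0M$.

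The repair is easy, and there are two ways to do it.

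\emph{The paper's route.} Under \cref{connloc}, the meet in $\cidem(\mcc)$ is $x_A\otimes x_B$, which is computed the same way in $\opsp(\mcc)$. Hence $\fib(\mb{1}\to A\wedge B)\simeq K_A\otimes K_B$, where $K_A=\fib(\mb{1}\to A)$ and $K_B=\fib(\mb{1}\to B)$ are connective. This gives $A\wedge B\in\mcc$ and $\pi_0$-surjectivity at once. It also removes the ``main obstacle'' you anticipated, since $\cidem(\mcc)\subset\cidem(\opsp(\mcc))$ is already a subframe.

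\emph{Keeping your argument.} Use flatness of $A$: left exactness of $A\otimes-$ gives $\pi_1(A\otimes B)\cong\pi_0A\,\overline{\otimes}\,\pi_1B$. This receives a surjection from $\pi_1B$, so the cokernel vanishes. Your isomorphism and diagram chase then go through.
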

\begin{proof}
	By \cref{connloc} the inclusion  $\calg(\mcc)^{\op{cn\text{-}idem}}\hookrightarrow \calg(\opsp(\mcc))^{\op{idem}}$ can be identified with $\cidem(\mcc)\hookrightarrow\cidem(\opsp(\mcc))$, so it is a subframe. It suffices to show that the inclusion $$i:\calg(\mcc)_{\op{flat}}^{\op{cn\text{-}idem}}\hookrightarrow \calg(\opsp(\mcc))^{\op{idem}}$$ is closed under small joins and finite meets. 
	
	Since $\mcc$ is Grothendieck prestable, by \cref{flatmapsclosure} flat \ein-morphisms are closed under base changes and filtered colimits. Thus $i$ is closed under finite joins and filtered joins, and hence small joins. Now given two flat and $\pi_0$-surjective idempotent  algebras $\mb{1}\to A, \mb{1}\to B$, we wish to show that $$\mb{1}\to A\wedge B=A\times_{A\otimes B}B$$ is flat and $\pi_0$-surjective  too. Since $\calg(\opsp(\mcc))^{\op{idem}}\simeq \cidem(\opsp(\mcc))$, the fiber $$K \to \mb{1}\to A\wedge B$$ in $\opsp(\mcc)$ can be identified with $K_A\otimes K_B$. Note that the tensor product $K_A\otimes K_B$ is connective, so $\mb{1}\to A\wedge B$ is $\pi_0$-surjective. Now it suffices to verify the flatness of $\mb{1}\to A\wedge B$, but that follows from \cref{pullbackmoducats}.
\end{proof}
We now define the prestable analogue of pure idempotent ideals.
\begin{de}[Serre smashing ideals]\label{defserre}
	Let $\mcc \in \calg(\prlad)$ 	be a presentably symmetric monoidal  prestable \infcat.
	A smashing ideal $\mci \in \cidem(\prcdbl)$ is called a
	\textbf{Serre smashing ideal} if $\mcc\to \mcc/\mci$ is left exact.
	A coidempotent object $x \to \mb{1}$ is called
	\textbf{Serre coidempotent} if the corresponding ideal
	$\langle x \rangle \subset \mcc$ is a Serre smashing ideal. 
	We denote by
	\[
	\cidem_s(\mcc) \subset \cidemc
	\]
	the sub-posets consisting of Serre coidempotent objects.

\end{de}
Although the definition is phrased in terms of the associated idempotent algebra, it can also be recognized intrinsically from the exactness properties of the localization.
\begin{rem}
	Let $\mcc \in \calg(\prlad)$ 	be a presentably symmetric monoidal  prestable \infcat.
	Combining \cref{connlocandclosub} and \cite[Proposition 1.38]{levy-liang-sosnilo:dualizable}, a smashing ideal $\mci \xhookrightarrow{i}\mcc$ is Serre if and only if it satisfies the following conditions:
	\enu{
		\item Given a cofiber sequence $C^{\prime} \rightarrow C \rightarrow C^{\prime \prime}$ in $\mathcal{C}$, if any two of the objects $C, C^{\prime}, C^{\prime \prime}$ belong to $\mathcal{I}$, then so does the third.
		\item Given a cofiber sequence $C^{\prime} \rightarrow C \rightarrow C^{\prime \prime}$ in $\mathcal{C}$ where $C \in \mathcal{I}$ and $C^{\prime \prime} \in \mathcal{C}^{\heartsuit}$, we have $C^{\prime} \in \mathcal{I}$.
	}
\end{rem}
\begin{rem}
		By \cref{pureidlfrm}, if $\mcc$ is separated Grothendieck prestable, then $
		\cidem_s(\mcc) \subset \cidemc$ forms a
		 subframe, fitting into the following commutative diagram of frames.
	\[
\begin{tikzcd}
	\calg(\mcc)_{\op{flat}}^{\op{cn\text{-}idem}} \arrow[r, hook] \arrow[d, "\sim"] & \calg(\mcc)^{\op{cn\text{-}idem}} \arrow[d, "\sim", "\ker"'] \\
	\cidem_s(\mcc) \arrow[r, hook]                                                  & \cidem(\mcc)                                        
\end{tikzcd}
	\]
\end{rem}
This comparison leads to the flat version of the telescope conjecture.
\begin{de}
	Let $\mcc\in\calg(\prl)$ such that \mcc is a separated Grothendieck prestable \infcat.  We call $\cidem_s(\mcc)$ as the \textbf{Serre smashing frame} of \mcc. We say \mcc satisfies the \textbf{flat telescope conjecture} if the inclusion $$\cidem_s(\mcc)\hookrightarrow \cidem(\mcc)$$ is an equivalence.

\end{de}
In the almost module case, the Serre condition imposes a familiar condition on homotopy groups.
\begin{prop}\label{serrealmostmod}
	Let $\mcc \in \calg(\prlad)$ 	be a presentably symmetric monoidal left complete prestable \infcat. Let $I\subset \pi_0\mb{1}$ be an idempotent ideal. If the associated smashing ideal $\amodu_{(\mb{1},I)}(\mcc)$ is a Serre smashing ideal, then $\amodu_{(\mb{1},I)}(\mcc)\subset\mcc$ can be identified with those objects $M$ in \mcc such that $$I\cdot \pi_*M=\pi_*M.$$
\end{prop}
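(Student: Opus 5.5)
We must show: if the almost module category $\mci:=\amodu_{(\mb{1},I)}(\mcc)$ is Serre, then $M\in\mci$ exactly when $I\cdot\pi_nM=\pi_nM$ for every $n$. Here $I\cdot\pi_nM$ denotes the image of $I\otimes\pi_nM\to\pi_nM$ in $\mcc^\heartsuit$. The plan is to first settle the heart, then pass to all objects using Postnikov towers.

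\textbf{Step 1: the heart.} By \cref{almostalg} and \cref{connlocandclosub}, the quotient $L:\mcc\to\mcc/\mci$ is base change to $\mb{1}/I^\infty$, and $\mci=\ker(L)$. The Serre hypothesis says that $L$ is left exact, so $L$ commutes with truncations and with $\pi_0$. Since $\pi_0(\mb{1}/I^\infty)=\pi_0\mb{1}/I$, for a discrete $N\in\mcc^\heartsuit$ we get
\[
L(N)\simeq \pi_0\bigl(N\otimes\mb{1}/I^\infty\bigr)\simeq N/IN .
\]
Left exactness guarantees that $L(N)$ is discrete, so no higher terms survive. Hence a discrete $N$ lies in $\mci$ if and only if $N=IN$.

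\textbf{Step 2: closure under homotopy objects.} Take $M\in\mci$. Because $L$ is left exact and exact on cofiber sequences, it commutes with $\tau_{\le n}$ and $\tau_{\ge n}$. Therefore $L(\Sigma^n\pi_nM)$ is computed from $L(M)=0$, and it vanishes.

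To pass from $\Sigma^n\pi_nM$ to $\pi_nM$, I would use Serre condition (2) of the remark after \cref{defserre}. The cofiber sequences $\tau_{\ge n+1}M\to\tau_{\ge n}M\to\Sigma^n\pi_nM$ show, via closure under cofibers, that each $\Sigma^n\pi_nM$ lies in $\mci$. One then desuspends using the sequence $\pi_nM\to 0\to\Sigma\pi_nM$ together with condition (2) repeatedly; equivalently, exactness of $L$ on the heart gives $L(\pi_nM)=\pi_n L(M)=0$. Step 1 then yields $I\pi_nM=\pi_nM$.

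\textbf{Step 3: converse.} Suppose every $\pi_nM$ satisfies $I\pi_nM=\pi_nM$. By Step 1 each $\pi_nM$ lies in $\mci$, hence so does $\Sigma^n\pi_nM$, since $\mci$ is a localizing ideal closed under colimits. Extensions in the Postnikov tower show that each $\tau_{\le n}M$ lies in $\mci$, because $L$ is exact on cofiber sequences.

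The main obstacle is passing to the limit $M=\lim_n\tau_{\le n}M$: the ideal $\mci$ is closed under colimits, not limits. I would instead apply $L$ directly. Left exactness gives $\tau_{\le n}L(M)\simeq L(\tau_{\le n}M)=0$ for all $n$. Since $\mcc$ is left complete, and hence so is the module category $\mcc/\mci$, it follows that $L(M)=0$, i.e. $M\in\mci$.

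An alternative route to the converse is the description in \cref{almostalg} of $\modu_{\mb{1}/I^\infty}$ as the modules killed by $I$. Under that description, $L(M)$ has homotopy groups $\pi_nM/I\pi_nM=0$, which reaches the same conclusion.
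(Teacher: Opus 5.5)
Your proof is correct and is essentially the paper's argument: left exactness of $L=\mb{1}/I^\infty\otimes-$ makes it commute with $\pi_n$, giving $\pi_nL(M)\simeq\pi_0\mb{1}/I\,\overline{\otimes}\,\pi_nM\simeq\pi_nM/I\pi_nM$, and left completeness of $\mcc$ then detects $L(M)=0$ from the vanishing of these homotopy objects. The first route in your Step 2 is unnecessary and, as written, cites the wrong Serre condition: $\Sigma\pi_nM$ is not in the heart, so condition (2) does not apply and it is condition (1) that desuspends. This does not matter, because your ``equivalently'' argument via $L(\pi_nM)\simeq\pi_nL(M)=0$ already suffices.
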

\begin{proof}
 Let $\mci=\amodu_{(\mb{1},I)}(\mcc)$.	By \cref{almostalg} and \cref{connlocandclosub}, $\mci\xhookrightarrow{i}\mcc$ can be identified with the kernel of the functor
 $$\mcc\to \modu_{\mb{1}/I^\infty}(\mcc).$$
 Therefore, it suffices to show that for an  object $M\in \mcc$,  $\mb{1}/I^\infty\otimes M=0$ if and only if $I\cdot \pi_*M=\pi_*M$. Since the base change is left exact by assumption, we have\footnote{where $\overline{\otimes}$ denotes the tensor product in $\mcc^\heartsuit$.} $$\pi_n(\mb{1}/I^\infty\otimes M)\simeq \mb{1}/I^\infty\otimes \pi_nM\simeq \pi_{0}\mb{1}/I \overline{\otimes} \pi_nM.$$
 Consequently, $\mb{1}/I^\infty\otimes M=0$ if and only if $I\cdot \pi_*M=\pi_*M$ because \mcc is left complete.
\end{proof}
The next example shows that Serre smashing ideals may be generated by compact projective pieces without the corresponding idempotent ideal itself being compact projective.
\begin{ex}\label{infinitepierce}
	Let $R=\prod_{i=1}^\infty k$ and $I=\bigoplus_{i=1}^\infty k$. Then by \cref{serrealmostmod} the objects in $$\amodu_{(R,I)}(\spgeq)\hookrightarrow \modu_R(\spgeq)\simeq\mcd(R)_{\geq0}$$ can be identified with those connective $R$-modules $X$ such that $I\cdot \pi_*(X)=\pi_*(X)$, because $R\to R/I$ is flat. Therefore, $\amodu_{(R,I)}(\spgeq)$ is generated by compact projective $R$-modules $\{k\cdot e_i|i\geq 1\}$, where $e_i=(0...0,1,0...)\in I$, although $I$ itself is not compact projective ($I$ is projective but not compact).
\end{ex}
\subsection{Serre smashing frames versus pure frames}
In this subsection, we investigate the relationship between Serre smashing frames and pure frames by analyzing flat objects in the prestable $\infty$-categorical setting. For a rigid $\spgeq$-algebra, our primary goal is to show that its Serre smashing frame can be identified with the pure frame of its heart.

 We begin by recalling that a dualizable additive \infcat always has enough flat objects, as demonstrated by the following result.
\begin{thm}[{\cite[Theorem 4.13]{levy-liang-sosnilo:dualizable}}]\label{flatgenerate}
	Let $\mcm\in\praddbl$ be a \dualaddinfcat. Then $\mcm$ is generated by $\omega_1$-compact \spgeq-flat objects under small colimits.
\end{thm}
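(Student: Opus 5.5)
The statement is \cref{flatgenerate}: every dualizable additive \infcat $\mcm$ is generated under small colimits by its $\omega_1$-compact $\spgeq$-flat objects. I would prove it by transporting the $1$-categorical statement \cref{flatgenerateone}(2) up a Postnikov tower, using the characterization \cref{dualaddab6}.

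First, reduce to a detection statement. By \cref{dualaddab6}, $\mcm$ is a left complete Grothendieck prestable \infcat satisfying $\mathrm{AB4}^*$ and $\mathrm{AB6}$. Write $\mcm'$ for the localizing subcategory generated by the $\omega_1$-compact flat objects. Since $\mcm$ is separated Grothendieck, it suffices to show two things:
\begin{itemize}
\item every object $X\in\mcm$ receives a $\pi_0$-epimorphism $\bigoplus_\alpha F_\alpha\to X$ with each $F_\alpha$ an $\omega_1$-compact flat object;
\item the class of such $F$ is closed under enough operations.
\end{itemize}
Then one resolves $X$ by a simplicial object whose terms are sums of flats, in the style of \cite[Theorem C.2.1.6]{sag}. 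Left completeness then gives $X\in\mcm'$ as a geometric realization, with convergence checked on homotopy objects.

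Second, produce the $\pi_0$-epimorphisms from the heart. The heart $\mcm^\heartsuit\simeq\mcs_{\leq0}\otimes\mcm$ is a dualizable $\op{Ab}$-module, since dualizability is preserved by base change. By \cref{flatgenerateone}(2), $\pi_0X$ is then a quotient of a sum of $\omega_1$-compact $\op{Ab}$-flat objects $G_\alpha$. The key lifting step is that each flat $G\in\mcm^\heartsuit$ should lift to a flat $F\in\mcm$ with $\tau_{\leq0}F\simeq G$.

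I would build the lift as a limit of a tower $F_n\in\mcm_{\leq n}$ of $\mcs_{\leq n}$-flat objects. Each step is a square-zero-type extension governed by the dual functor $x^\vee$. Flatness of $x^\vee\colon\mcm^\vee\to\spgeq$ (left exactness) should be equivalent to compatible left exactness at every truncation level, by \cref{truncatedflat} and its converse via left completeness of $\mcm^\vee$. The dual $\mcm^\vee$ is again dualizable additive, hence left complete. Equivalently, $x\mapsto x^\vee$ identifies flat objects with left exact colimit-preserving functors $\mcm^\vee\to\spgeq$. A left exact functor $\mcm^{\vee}\to\spgeq$ is determined by its restriction to the heart, namely a left exact functor $(\mcm^\vee)^\heartsuit\to\op{Ab}$, and prestable Postnikov completeness lets one extend it uniquely. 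This realizes $F$ directly, as the object corresponding to the extension of $G^\vee$, with no tower needed. Given $\pi_0F_\alpha\simeq G_\alpha$, the map $\bigoplus G_\alpha\to\pi_0X$ lifts to $\bigoplus F_\alpha\to X$, using that maps from flat $F$ into $X$ surject onto maps $\pi_0F\to\pi_0X$ after passing to a cover. If that surjectivity fails, I would refine by further $\pi_0$-surjective covers of $X$.

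The main obstacle is the extension claim: that left exact colimit-preserving functors $(\mcm^\vee)^\heartsuit\to\op{Ab}$ extend to left exact colimit-preserving functors $\mcm^\vee\to\spgeq$. This is where $\mathrm{AB6}$ and $\mathrm{AB4}^*$ for $\mcm^\vee$ must enter, to make the extension preserve colimits. I would try to define the extension as a right Kan extension along $\tau_{\leq0}$ using Postnikov towers, and then check that colimits are preserved using that the towers converge uniformly, which is what $\mathrm{AB4}^*$ provides. $\omega_1$-compactness of the lift should follow since it is controlled by $\omega_1$-compactness of $G$ at each of countably many Postnikov stages.
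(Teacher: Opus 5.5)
The paper does not prove this statement; it quotes it from \cite[Theorem 4.13]{levy-liang-sosnilo:dualizable}, so I can only judge your argument on its own terms. Your reduction is fine. Since $\mcm$ is separated by \cref{dualaddab6}, it suffices that every $X\in\mcm$ receive a $\pi_0$-epimorphism from a sum of $\omega_1$-compact flat objects: iterate on fibres, pass to the colimit, and note that the error term is $\infty$-connective, hence zero. The gap is in the step that lifts flat objects from the heart.

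You claim that left exact colimit-preserving functors $\mcm^\vee\to\spgeq$ are determined by, and extend uniquely from, their restrictions $(\mcm^\vee)^\heartsuit\to\op{Ab}$. This is false. Together with \cref{truncatedflat}, it would make $\tau_{\le0}$ an equivalence from the $\infty$-category of flat objects of $\mcm$ onto the $1$-category of flat objects of $\mcm^\heartsuit$. Yet for $\mcm=\spgeq$ the flat object $\mathbb S$ has $\Map(\mathbb S,\mathbb S)\simeq\Omega^\infty\mathbb S$, while $\tau_{\le0}\mathbb S=\mathbb Z$ has discrete endomorphisms. Left completeness only says that a flat object is the limit of its Postnikov tower; it does not construct the tower, i.e.\ the $k$-invariants.

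Existence is not established either. Take a left exact $g\colon(\mcm^\vee)^\heartsuit\to\op{Ab}$. Its canonical extension, the right Kan extension along the inclusion of the heart, is $g\circ\tau_{\le0}$, and this is not left exact. For $g=\mathrm{id}_{\op{Ab}}$ it is $\pi_0\colon\spgeq\to\op{Ab}$, which does not preserve the pullback $0\times_{\Sigma\mathbb Z}0\simeq\mathbb Z$. Nothing in $\mathrm{AB4}^*$ or $\mathrm{AB6}$ repairs this.

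Your first idea is the honest version: lift $G$ stage by stage along $\mcm_{\le n+1}\to\mcm_{\le n}$ while keeping flatness. But that is an obstruction problem, and you never address the obstructions. In the module case one sidesteps it by writing an $\omega_1$-compact flat $\pi_0R$-module as a sequential colimit of finitely generated projectives and lifting the sequence. That trick uses projective objects.

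There is a second gap. Even granting flat lifts $F_\alpha$, the map $\bigoplus_\alpha G_\alpha\to\pi_0X$ need not lift to $\bigoplus_\alpha F_\alpha\to X$. Surjectivity of $\pi_0\Map(F,X)\to\Hom(\pi_0F,\pi_0X)$ is a projectivity property, not a consequence of flatness. For $F=\colim_nP_n$ with each $P_n$ projective it follows from the Milnor sequence. A general dualizable additive $\mcm$, such as sheaves of connective spectra on a manifold, does not have enough projectives to reduce to that case. Refining covers of $X$ does not help, because the obstruction sits on the source $F$, not on $X$.

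So the heart-level input \cref{flatgenerateone}(2) does not propagate upward formally. You need to produce flat objects of $\mcm$, together with their maps to $X$, directly from the dualizability of $\mcm$. One possible route is the fully faithful internal left adjoint $\hat y\colon\mcm\hookrightarrow\mcp_\Sigma(\mcm^{\kappa})$ of the colimit functor, for a suitable $\kappa$, which detects flatness by \cref{preserverefleflats}(1),(2).
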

The next two propositions explain how this flatness behaves under base change and under passage to rigid algebras.
\begin{prop}\label{basechangeflat}
	Let \(\mcc\in\calg(\prlad)\) be such that its underlying
	\(\infty\)-category is Grothendieck prestable, and let \(\mcm\in\praddbl\). If
	\(x\in\mcm\) is \(\spgeq\)-flat, then
	\(\mathbf{1}_{\mcc}\otimes x\in\mcc\otimes\mcm\) is \(\mcc\)-flat.
\end{prop}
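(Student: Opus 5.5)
We must show that for $x\in\mcm$ which is $\spgeq$-flat, the object $\mathbf{1}_\mcc\otimes x$ is $\mcc$-flat in the $\mcc$-module $\mcc\otimes\mcm$. The plan is to unwind $\mcc$-flatness via duality and reduce it to the given left exactness of $x^\vee$. Since $\mcm$ is dualizable over $\spgeq$, the base change $\mcc\otimes\mcm$ is dualizable over $\mcc$, with $\mcc$-dual $\mcc\otimes\mcm^\vee$. Under this identification, the $\mcc$-dual of the $\mcc$-linear map $\mcc\simeq\mcc\otimes\spgeq\xrightarrow{1\otimes x}\mcc\otimes\mcm$ is
\[
\mcc\otimes\mcm^\vee\xrightarrow{1\otimes x^\vee}\mcc\otimes\spgeq\simeq\mcc .
\]
This is the same identification used in the proof of \cref{truncatedflat} for $\mcs_{\leq n}$ in place of $\mcc$. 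We are therefore reduced to showing that $\mcc\otimes(-)$ sends the left exact functor $x^\vee:\mcm^\vee\to\spgeq$, a colimit-preserving left exact functor between dualizable additive $\infty$-categories, to a left exact functor.

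For the reduction we use that $\mcc$ is Grothendieck prestable. Then $\mcc\otimes\mcn\simeq\funct^{R}(\mcc^{\opp}... )$; more usefully, for $\mcn$ dualizable additive we have $\mcc\otimes\mcn\simeq\funct^L(\mcn^\vee,\mcc)$. I would rather use the description that avoids duals on the target. Write $\mcc$ as a left exact localization of a presheaf-type category: by Lurie's theorem (SAG C.2), $\mcc$ is a left exact, accessible localization $L:\mcp_\Sigma(\mca;\spgeq)\to\mcc$ of connective additive presheaves on a small additive $\mca$, with fully faithful right adjoint.

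Tensoring is then handled in two steps.

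\emph{Presheaf case.} Here $\mcp_\Sigma(\mca;\spgeq)\otimes\mcn\simeq\funct^{\times}(\mca^\opp,\mcn)$, and $1\otimes F$ is postcomposition with $F$. Finite limits in these functor categories are computed pointwise, so postcomposition with a left exact colimit-preserving $F$ is left exact.

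\emph{Localization.} The localization $L\otimes\mcn$ remains left exact with fully faithful right adjoint. The right adjointability in the style of \cite[Proposition 5.5.6.16]{htt}, used in \cref{truncatedflat}, gives the square
\[
(L\otimes \mcm^\vee)\circ(1\otimes x^\vee)\simeq(1\otimes x^\vee)\circ(L\otimes\mcm^\vee)
\]
together with compatibility with the right adjoints. The functor $1_\mcc\otimes x^\vee$ is then identified with $L\circ(1\otimes x^\vee)\circ j$, where $j$ is the fully faithful right adjoint. This is a composite of left exact functors: $j$ preserves limits, the middle functor is left exact by the presheaf case, and $L$ is left exact.

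The main obstacle is the adjointability step: we must know that $(1\otimes x^\vee)\circ j\simeq j\circ(1\otimes x^\vee)$ up to the localization. This requires $L\otimes\mcm^\vee$ to again be a left exact localization whose local objects are detected pointwise. I would verify this through the description of left exact localizations of Grothendieck prestable categories as localizations at a set of monomorphism-type maps (topologies), which is preserved by tensoring with dualizable $\mcm^\vee$ because $\mcm^\vee$ is a retract of a presheaf category in $\prl$. Left exactness is stable under retracts, so this suffices.
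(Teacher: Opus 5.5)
Your reduction to the left exactness of $1_{\mcc}\otimes x^\vee\colon\mcc\otimes\mcm^\vee\to\mcc\otimes\spgeq\simeq\mcc$ is exactly the paper's first step. For the second step the paper simply cites \cite[Proposition C.4.4.1]{sag}: colimit-preserving left exact functors between Grothendieck prestable $\infty$-categories are closed under Lurie tensor product. This applies because $\mcm^\vee$ is Grothendieck prestable by \cref{dualaddab6}.

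You instead prove the needed special case by hand from a Gabriel--Popescu presentation $L\colon\mcp:=\mcp_{\Sigma}(\mca;\spgeq)\to\mcc$. Your factorization
\[
1_{\mcc}\otimes x^\vee\simeq L\circ(1_{\mcp}\otimes x^\vee)\circ j
\]
already finishes the proof:
\begin{itemize}
\item $j$ is a right adjoint, so it preserves limits;
\item $1_{\mcp}\otimes x^\vee$ is postcomposition with $x^\vee$ on $\funct^{\times}(\mca^\opp,-)$, where finite limits are computed pointwise;
\item $L$ is left exact by construction.
\end{itemize}
This gives a self-contained argument that never needs $L\otimes\mcm^\vee$ to be left exact. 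It uses nothing about $\mcm^\vee$ beyond presentability; dualizability of $\mcm$ enters only through the identification of the dual. It works because the target is $\mcc\otimes\spgeq\simeq\mcc$, so the only localization on the target side is $L$ itself. The paper's citation is a one-liner and covers arbitrary Grothendieck prestable targets, which your argument does not.

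Your last paragraph, however, addresses an obstacle that does not exist and should be deleted. The same goes for the unproved claim that $L\otimes\mcn$ remains left exact.

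No adjointability is involved. Bifunctoriality of $\otimes$ gives $(1_{\mcc}\otimes x^\vee)\circ(L\otimes 1_{\mcm^\vee})\simeq L\otimes x^\vee\simeq L\circ(1_{\mcp}\otimes x^\vee)$. This is the correctly typed version of your displayed square, which as written does not typecheck. Now precompose with $j$. Since $j$ is fully faithful, the counit $(L\otimes 1_{\mcm^\vee})\circ j\to\id$ is an equivalence, and the factorization follows.

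This is just as well, because the verification you sketch would not work as stated. A retraction $\mcm^\vee\xrightarrow{i}\mcp_{\Sigma}(\mathcal{B})\xrightarrow{r}\mcm^\vee$ only exhibits $L\otimes\mcm^\vee$ as $(1_{\mcc}\otimes r)\circ(L\otimes\mcp_{\Sigma}(\mathcal{B}))\circ(1_{\mcp}\otimes i)$. Since $i$ and $r$ are merely colimit-preserving, left exactness does not formally pass to this composite.
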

\begin{proof}
	Since the \mcc-dual map of $\mcc\simeq \mcc \otimes \spgeq \xrightarrow{\mb{1}_\mcc\otimes x}\mcc \otimes \mcm$ can be identified with 
	$$\mcc\otimes \mcm^\vee \xrightarrow{\mb{1}_\mcc\otimes x^\vee}\mcc\otimes \spgeq\simeq\mcc,$$	it suffices to show that colimit-preserving left exact functors between Grothendieck prestable \infcats are closed under Lurie tensor product. However, that follows from \cite[Proposition C.4.4.1]{sag}.
\end{proof}
Usually, categorically flat objects are different from $\otimes$-flat objects. However, in the rigid case, they coincide with each other, as follows.
\begin{prop}\label{rigidflat}
	Let $\mcc\in\calg(\prlad)$ be a rigid \spgeq-algebra and \mcm be a dualizable \mcc-module. Then for an object $x\in\mcm$, it is \mcc-flat if and only if it is \spgeq-flat.
\end{prop}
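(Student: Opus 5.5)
We need to prove: for $\mcc$ a rigid $\spgeq$-algebra and $\mcm$ a dualizable $\mcc$-module, an object $x\in\mcm$ is $\mcc$-flat if and only if it is $\spgeq$-flat. The plan is to compare the two duals of $\mcm$ and show that the two dual maps of $x$ differ by composition with a functor that both preserves and reflects left exactness.

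Since $\mcc$ is rigid over $\spgeq$, by \cite[Corollary 4.49]{ramzi2024locally} (as used in \cref{atomicmodu}) $\mcm$ is also dualizable as an $\spgeq$-module. Moreover the $\spgeq$-dual and the $\mcc$-dual agree: $\mcm^{\vee_{\spgeq}}\simeq\mcm^{\vee_{\mcc}}$. Concretely, this uses the self-duality $\mcc^\vee\simeq\mcc$ over $\spgeq$, under which $\Fun^L_{\spgeq}(\mcm,\spgeq)\simeq\Fun^L_{\mcc}(\mcm,\mcc)$.

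Under this identification, a $\mcc$-linear functor $\phi\colon\mcm\to\mcc$ corresponds to $\spgeq$-linear functionals by postcomposition with the counit $\epsilon\colon\mcc\to\spgeq$ of the rigid structure, namely $\epsilon=\Map_\mcc(\mb{1},-)$, the right adjoint of the unit map. For the map $\spgeq\xrightarrow{x}\mcm$, factored as $\spgeq\xrightarrow{\mb{1}}\mcc\xrightarrow{x}\mcm$, dualizing gives
\[
x^{\vee_{\spgeq}}\simeq \epsilon\circ x^{\vee_{\mcc}}\colon \mcm^\vee\to\mcc\to\spgeq .
\]
Here I use that the $\spgeq$-dual of the unit $\spgeq\to\mcc$ is exactly $\mcc^\vee\simeq\mcc\to\spgeq$, i.e.\ $\Map(\mb{1},-)$, which holds because the unit is atomic.

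It then suffices to check that $\epsilon=\Map_\mcc(\mb{1},-)$ both preserves and reflects finite limits.
\begin{itemize}
\item Preservation is immediate, since $\epsilon$ is a right adjoint. So $\mcc$-flat implies $\spgeq$-flat.
\item For the converse, assume $\epsilon\circ x^\vee$ is left exact. Since $\mcc$ is rigid, the $\mcc$-linear functor $x^\vee$ satisfies $\Map_\mcc(d,x^\vee(-))\simeq\Map_\mcc(\mb{1},d^\vee\otimes x^\vee(-))\simeq\epsilon\,x^\vee(d^\vee\otimes-)$ for dualizable $d$. The functor $d^\vee\otimes-$ on $\mcm^\vee$ is exact, being both a left and a right adjoint. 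Hence each $\Map_\mcc(d,x^\vee(-))$ preserves finite limits.
\item Rigidity also gives that $\mcc$ is generated under colimits by objects $d$ that are retracts of $\mcc$-atomic, hence dualizable, objects, possibly through $\omega_1$-filtered colimits of such. Granting this, the functors $\Map_\mcc(d,-)$ are jointly conservative and preserve limits, so they jointly detect finite limits. Therefore $x^\vee$ is left exact.
\end{itemize}

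The main obstacle is this generation claim. In the projectively rigid case it is immediate. In the general rigid case (e.g.\ nuclear modules) I would instead use that rigidity makes the multiplication $\mcc\otimes\mcc\to\mcc$ have a $\mcc$-bilinear right adjoint. This exhibits $x^\vee$ as the composite
\[
\mcm^\vee\to\mcc\otimes\mcm^\vee\xrightarrow{1\otimes\epsilon x^\vee}\mcc\otimes\spgeq=\mcc,
\]
and left exactness is then preserved by \cref{basechangeflat} (tensoring left exact functors between Grothendieck prestable categories, \cite[Proposition C.4.4.1]{sag}). This route avoids generators entirely, so I would try it first.
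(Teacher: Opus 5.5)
Your proposal is correct once you commit to the fallback route, and it is essentially the paper's proof. The direction $\mcc$-flat $\Rightarrow$ $\spgeq$-flat is exactly the paper's identification $x^{\vee}_{\spgeq}\simeq\mapp_{\mcc}(\mb{1},-)\circ x^{\vee}_{\mcc}$. Your factorization of $x^{\vee}_{\mcc}$ as the (left exact) right adjoint of the action followed by $\mathrm{id}_{\mcc}\otimes x^{\vee}_{\spgeq}$ is the $\mcc$-dual of the tautological factorization $x_{\mcc}\simeq a\circ(\mb{1}_{\mcc}\otimes x)$ through the action map $a\colon\mcc\otimes\mcm\to\mcm$. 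The paper runs that factorization through \cref{basechangeflat} and \cref{preserverefleflats}; phrased this way, identifying your composite with $x^{\vee}_{\mcc}$ needs no separate check.

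Discard the generator route altogether, since it genuinely fails beyond the projectively rigid case: dualizable objects of a rigid $\spgeq$-algebra need not generate it, e.g.\ for $\nuc(R)_{\geq0}$.
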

\begin{proof}
	Assume that $x\in\mcm$ is \spgeq-flat. Since by \cite[Proposition 4.17]{ramzi2024dualizable}, \mcm is dualizable over \spgeq, by \cref{basechangeflat}, $\mb{1}_\mcc\otimes x \in \mcc\otimes \mcm$ is \mcc-flat. However, by \cite[Proposition 4.11]{ramzi2024dualizable}, $\mcc\otimes\mcm\to\mcc$ is a \mcc-internal left adjoint, so by \cref{preserverefleflats}, $x\in\mcm$ is \mcc-flat.
	
	Now assume that $x\in\mcm$ is \mcc-flat. Then $x_{\spgeq}:\spgeq \to \mcm$ is equivalent to the composition $\spgeq\to\mcc\xrightarrow{x_\mcc}\mcm$. By the rigidity we have the following commutative diagram.
	$$\begin{tikzcd}
		{\funct^L(\mcm,\spgeq)} \arrow[d] \arrow[r, "\sim"] & {\funct^L_{\mcc}(\mcm,\funct^L(\mcc,\spgeq))} \arrow[r, "\sim"] \arrow[d] & {\funct^L_{\mcc}(\mcm,\mcc)} \arrow[d] \\
		{\funct^L(\mcc,\spgeq)} \arrow[r, "\sim"] \arrow[d] & {\funct^L_{\mcc}(\mcc,\funct^L(\mcc,\spgeq))} \arrow[r, "\sim"]           & {\funct^L_{\mcc}(\mcc,\mcc)} \arrow[d] \\
		{\funct^L(\spgeq,\spgeq)} \arrow[rr, "\sim"]        &                                                                           & \spgeq                                
	\end{tikzcd}$$ Therefore the functor $x_{\spgeq}^{\vee}:\mcm_{\spgeq}^{\vee}\to \spgeq$ can be identified with the composition
	$$\mcm_{\spgeq}^{\vee}\simeq \mcm_{\mcc}^{\vee}\xrightarrow{x_{\mcc}^{\vee}}\mcc\xrightarrow{\mapp_{\mcc}(\mb{1},-)}\spgeq.$$
	Consequently, $x\in\mcm$ is \spgeq-flat.
\end{proof}
It is natural to ask whether this comparison holds in the same generality for arbitrary rigid algebras over an arbitrary base \mcv.
\begin{qu}
 Let $\mcv\in\calg(\prl)$ be arbitrary. Let $\mcw\in\calg^{\op{rig}}(\prlv)$ be a rigid \mcv-algebra and \mcm be a dualizable \mcw-module. Is it true that an object $x\in\mcm$ is \mcw-flat if and only if it is \mcv-flat?
\end{qu}

In the rigid case, we have the following useful criterion of flatness over a discrete algebra, generalizing a result in \cite[Proposition 3.19]{hattt} to any rigid \spgeq-algebra.
\begin{prop}\label{discflat}
	Let $\mcc\in\calg(\prlad)$ be a rigid $\spgeq$-algebra and $R\in \alg(\mcc^\heartsuit)$ be a discrete $\eone$-algebra. Given a left $R$-module $M \in \lmodu_R(\mcc)$; then $M$ is \mcc-flat over $R$ if and only if $M$ is discrete and $\pi_0M$ is $\mcc^\heartsuit$-flat over $\pi_0R$.
\end{prop}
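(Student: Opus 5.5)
The plan is to reduce everything to a statement about hearts. The reduction uses two facts. First, a colimit-preserving functor between Grothendieck prestable $\infty$-categories is left exact exactly when it carries discrete objects to discrete objects; I will invoke the relevant statement from \cite[Appendix C]{sag}. Second, by \cref{rigidflat} we may use $\mcc$-flatness and $\spgeq$-flatness interchangeably, since $\lmodu_R(\mcc)$ is a dualizable $\mcc$-module. Under the remark following \cref{flatover}, $M$ is $\mcc$-flat over $R$ precisely when
\[
-\otimes_R M\colon \rmodu_R(\mcc)\longrightarrow\mcc
\]
is left exact.

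For the forward direction, I will use that a left exact colimit-preserving functor preserves discrete objects. Since $R$ is discrete, it is a discrete object of $\rmodu_R(\mcc)$, and therefore $M\simeq R\otimes_R M$ is discrete. To see that $\pi_0M$ is flat over $\pi_0R$, I will apply \cref{truncatedflat} with $n=0$. Here I need the identifications
\[
\lmodu_R(\mcc)_{\leq 0}\simeq \lmodu_{\pi_0R}(\mcc^\heartsuit),\qquad \mcc_{\leq0}\simeq\mcc^\heartsuit .
\]
Under these, the $\mcc_{\leq0}$-dual of $\tau_{\leq0}M$ is $-\otimes_{\pi_0R}\pi_0M$ on right $\pi_0R$-modules in the heart. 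Its left exactness is exactly $\mcc^\heartsuit$-flatness of $\pi_0M$ over $\pi_0R$.

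For the converse, assume $M$ is discrete and $\pi_0M$ is flat in the heart. By the criterion above, it suffices to show that $N\otimes_R M$ is discrete for every discrete right $R$-module $N$, that is, that $\pi_i(N\otimes_R M)=0$ for $i>0$. I will compute $N\otimes_R M$ as the geometric realization of the bar construction $\operatorname{Bar}(N,R,M)$. This yields a convergent Tor-type spectral sequence: its $E_2$-page is the homology of the simplicial object in $\mcc^\heartsuit$ obtained by applying $\pi_0$ levelwise, and it abuts to $\pi_*(N\otimes_R M)$. The levels are $N\otimes R^{\otimes k}\otimes M$, and here rigidity enters again: it lets me identify the $\pi_0$ of these tensor products with the heart tensor products $\pi_0(-)\,\overline{\otimes}\,\pi_0(-)$. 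With that identification, the $E_2$-term is the classical bar complex computing $\operatorname{Tor}^{\pi_0R}_*(N,\pi_0M)$ in $\mcc^\heartsuit$. Flatness of $\pi_0M$ over $\pi_0R$ makes this bar complex acyclic in positive degrees. Since both $N$ and $M$ are discrete, the higher homotopy of each level also has to be controlled. I would handle this by resolving $N$ in the heart by $\pi_0R$-modules that are flat (using \cref{flatgenerate}, or its $1$-categorical version \cref{flatgenerateone}), and then checking that $P\otimes_R M$ is discrete for $P$ flat in the heart.

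The main obstacle is this last step, where heart-level flatness of $\pi_0M$ has to control the higher homotopy of derived tensor products. The base case is that $R\otimes_R M=M$ is discrete, and I would try to bootstrap from it to all discrete $N$. My first attempt would be to argue by d\'evissage along the flat resolution of $N$, keeping track of higher homotopy through each cofiber sequence. If that does not close, I would instead reduce via \cite[Proposition 3.19]{hattt}, which is the case $\mcc=\spgeq$, together with the identification of hearts above.
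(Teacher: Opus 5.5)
Your forward direction is fine and matches the paper's. $M\simeq R\otimes_R M$ is discrete because a left exact colimit-preserving functor preserves discrete objects, and heart-flatness of $\pi_0M$ follows from \cref{truncatedflat}.

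The converse, however, is not established, and each of the three routes you sketch fails.
\begin{itemize}
\item The bar-construction spectral sequence does not give what you need. The levels $N\otimes R^{\otimes k}\otimes M$ are tensor products over the unit of $\mcc$. These have higher homotopy even when $N$, $R$, $M$ are discrete, already for $\mcc=\spgeq$ and $R=N=M=\mathbf{Z}$. So only the bottom row is controlled by heart data. Heart-flatness of $\pi_0M$ does kill the positive homology of that row, but it says nothing about the rows $q>0$.
\item Your fallback is circular. Suppose $P$ is a discrete right $R$-module with $\pi_0P$ flat in the heart. Then the claim ``$P\otimes_R M$ is discrete for discrete $M$'' is exactly the right-module version of the statement you are trying to prove.
\item Reducing to \cite[Proposition 3.19]{hattt} by identifying hearts does not work either. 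Derived tensor products in $\mcc$ are not determined by $\mcc^\heartsuit$.
\end{itemize}

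The missing idea is to resolve $N$ by objects that are flat in the categorical sense, not merely in the heart, and to let rigidity turn this into flatness over $R$.
\begin{itemize}
\item Apply \cref{flatgenerate} to $\rmodu_R(\mcc)$ to pick $f\colon F\to N$, surjective on $\pi_0$, with $F$ $\spgeq$-flat.
\item By \cref{rigidflat}, $F$ is $\mcc$-flat over $R$, so $F\otimes_R-$ is left exact. Hence $F$ is discrete by the forward direction, so $\fib(f)$ is discrete too. Also $F\otimes_R M$ is discrete because $M$ is; no heart information is needed for this.
\item Only now does heart-flatness of $\pi_0M$ enter. Using $\pi_0(X\otimes_R M)\simeq\pi_0X\otimes_{\pi_0R}\pi_0M$, the map $\pi_0(\fib(f)\otimes_R M)\to\pi_0(F\otimes_R M)$ is injective.
\item The long exact sequence of $\fib(f)\otimes_R M\to F\otimes_R M\to N\otimes_R M$ then gives $\pi_1(N\otimes_R M)=0$. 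It also gives $\pi_{n+1}(N\otimes_R M)\simeq\pi_n(\fib(f)\otimes_R M)$ for $n\geq 1$.
\item Since $\fib(f)$ is again discrete, induction on $n$, run over all discrete $N$ simultaneously, shows $N\otimes_R M$ is discrete. \cite[Proposition C.3.2.1]{sag} then finishes.
\end{itemize}
This is the d\'evissage you hoped for, but it only closes because $F$ is categorically flat rather than just flat in the heart.
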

\begin{proof}
	The ``only if'' direction follows from \cref{truncatedflat} and \cite[Proposition C.3.2.1]{sag}.
	
	For the ``if'' direction, assume that $M$ is discrete and $\pi_0M$ is $\mcc^\heartsuit$-flat over $\pi_0R$. Now given a discrete right $R$-module $N$; by \cite[Proposition C.3.2.1]{sag} it suffices to show that $N\otimes_RM$ is discrete too.
	
	By \cref{flatgenerate}, there exists a map $f:F\to N\in \rmodu_{R}(\mcc)$ such that $F$ is \spgeq-flat in $\rmodu_{R}(\mcc)$ and $f$ induces an epimorphism on $\pi_0$. Note that by \cref{rigidflat}, $F$ is \mcc-flat over $R$, so by the ``only if'' direction we actually have $F$ is discrete. Then we have a cofiber-fiber sequence in $\rmodu_{R}(\mcc)$ $$\op{fib}(f)\to F\to N,$$ hence $\op{fib}(f)$ is discrete too.
	Now right tensoring with $M$, we get a cofiber-fiber sequence in $\mcc$
	$$\op{fib}(f)\otimes_RM\to F\otimes_RM\to N\otimes_RM. $$
 Consider the following diagram;
	$$\begin{tikzcd}
		& \pi_0(\operatorname{fib}(f)\otimes_R M) \arrow[r] \arrow[d, "\sim"'] & \pi_0(F\otimes_R M) \arrow[r] \arrow[d, "\sim"'] & \pi_0(N\otimes_R M) \arrow[r] \arrow[d, "\sim"'] & 0 \\
		0 \arrow[r] & \pi_0\operatorname{fib}(f)\otimes_{\pi_0R} \pi_0M \arrow[r]          & \pi_0F\otimes_{\pi_0R} \pi_0M \arrow[r]          & \pi_0N\otimes_{\pi_0R} \pi_0M \arrow[r]          & 0
	\end{tikzcd} $$
		the bottom short exactness is obtained from the $\mcc^\heartsuit$-flatness of $\pi_0M$ over $\pi_0R$.
	Because $F\otimes_R M$ is discrete, we see that $\pi_1(N\otimes_RM)=0$ by the long exact sequence. 
	On the other hand, $\pi_n(\op{fib}(f)\otimes_RM)=\pi_{n+1}(N\otimes_RM)=0$ for each $n\geq 1$, which implies $N\otimes_RM$ is discrete.
\end{proof}
We can now identify the Serre condition for quotients by idempotent ideals with the purity condition introduced above.
\begin{thm}\label{equidefpureidls}
	Let $\mcc\in\calg(\prlad)$ be a rigid $\spgeq$-algebra and $R\in \calg(\mcc)$. Then for an idempotent ideal $I\subset \pi_0R$, the \ein-algebra $R/I^\infty$ is $\mcc$-flat over $R$ if and only if $I$ is pure.
\end{thm}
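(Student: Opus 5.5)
We reduce both directions to a statement about discrete modules, using \cref{discflat} to pass between the prestable and abelian settings. Write $A:=\pi_0R$ and $\bar A:=\pi_0R/I\simeq\pi_0(R/I^\infty)$. The latter identification comes from \cref{almostalg}, which also gives that $\modu_{R/I^\infty}(\mcc)$ consists of the $R$-modules whose homotopy groups are killed by $I$. Flatness of $R\to R/I^\infty$ means that the base change functor $\modu_R(\mcc)\to\modu_{R/I^\infty}(\mcc)$ is left exact. By \cref{flatmapsclosure}(1), flat morphisms are stable under base change.

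For the forward direction, assume $R\to R/I^\infty$ is flat. We base change along the $\pi_0$-truncation $R\to\tau_{\le0}R=A$, which gives a flat map $A\to A\otimes_R R/I^\infty$. We then identify the target with $A/I^\infty$, the idempotent algebra over $A$ attached to the same ideal $I$ under \cref{almostalg}; this should follow from the base-change compatibility of coidempotents recorded after \cref{smideal}. Since $A$ is discrete, \cref{discflat} applies. It shows that $A/I^\infty$ is discrete and that $\bar A=\pi_0A/I$ is $\mcc^\heartsuit$-flat over $A$. This says precisely that $I$ is pure in $\modu_A(\mcc^\heartsuit)$, which is rigid over $\op{Ab}$ because $\mcc$ is rigid.

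For the converse, assume $I$ is pure. We first treat the discrete case $R=A$. By \cref{discflat}, flatness of $A/I^\infty$ over $A$ is equivalent to two conditions: $A/I^\infty$ is discrete, and $\pi_0=A/I$ is flat. Flatness of $A/I$ is the purity hypothesis. For discreteness, we compute $I^\infty=\lim_n J_I^{\otimes_A n}$ with $J_I=I$ discrete. Purity gives that $I\otimes_A(A/I)$ vanishes as a derived tensor product: flatness kills the higher Tor groups, and $I/I^2=0$ by \cref{pureisidemidl}. Hence $I\otimes_A I\to I$ is an equivalence, using \cref{thmpuresubfrm}(1) together with the flatness of $A/I$. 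The tower is therefore constant, so $I^\infty\simeq I$ and $A/I^\infty\simeq A/I$ is discrete. This gives flatness in the discrete case.

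For general $R$, we test flatness via \cite[Proposition C.3.2.1]{sag}: it suffices to show that $N\otimes_R R/I^\infty$ is discrete for every discrete $R$-module $N$. Such an $N$ is an $A$-module, and
\[
N\otimes_R R/I^\infty\simeq N\otimes_A(A\otimes_R R/I^\infty)\simeq N\otimes_A A/I^\infty.
\]
The right-hand side is discrete by the discrete case, so $R\to R/I^\infty$ is flat.

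The main obstacle is the identification $A\otimes_R R/I^\infty\simeq A/I^\infty$, since both directions depend on it. We would prove it by noting that $R\to A$ is $\pi_0$-surjective and that base change preserves connective idempotents. The kernel on $\pi_0$ of $A\to A\otimes_R R/I^\infty$ is then the image of $I$, namely $I$ itself, and \cref{almostalg} identifies the resulting connective idempotent with $A/I^\infty$.
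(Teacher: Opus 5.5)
Your proof is correct, and for the ``if'' direction it takes a genuinely different route from the paper.

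For ``only if'', the paper applies \cref{truncatedflat} to truncate $R/I^\infty$ to $\pi_0R/I$ inside $\modu_R(\mcc)_{\leq 0}$. Your base change to $A=\pi_0R$ followed by \cref{discflat} amounts to the same thing. There you only need $\pi_0(A\otimes_R R/I^\infty)\simeq A/I$, not the full identification with $A/I^\infty$.

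For ``if'', the paper uses the deformation theory of \cite{hattt}. Since $\pi_0R\to\pi_0R/I$ is flat and L-\'etale, it lifts to a flat L-\'etale $R\to R'$ with $\pi_0R'\simeq\pi_0R/I$. The comparison map $f\colon R/I^\infty\to R'$ satisfies $L_{R'/(R/I^\infty)}=0$, because both $R/I^\infty$ and $R'$ have vanishing cotangent complex over $R$. Then \cite[Corollary 6.31]{hattt} makes $\cofib(f)$ $\infty$-connective, so $f$ is an equivalence.

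You avoid L-\'etale lifting altogether. Your steps are:
\begin{itemize}
\item Over the discrete ring $A$, you show $I\otimes_A A/I\simeq 0$ as a derived tensor product. This is where rigidity enters, through \cref{discflat} upgrading heart-flatness of $A/I$ to $\mcc$-flatness.
\item Hence the tower defining $I^\infty$ is constant, and $A/I^\infty\simeq A/I$ is flat.
\item The uniqueness in \cref{almostalg} identifies the connective idempotent $A\otimes_R R/I^\infty$ with $A/I$.
\item The discrete-objects criterion \cite[Proposition C.3.2.1]{sag} concludes, since every discrete $R$-module is restricted from $A$.
\end{itemize}

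Your argument is more elementary: it uses only results already established in the paper together with that SAG criterion. The paper's argument, in exchange, identifies $R/I^\infty$ as the L-\'etale lift of $\pi_0R/I$. This places the statement within the \'etale-rigidity framework of \cite{hattt}.

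A small simplification: your appeal to \cref{thmpuresubfrm}(1) is unnecessary. The cofiber sequence $I\otimes_A I\to I\to I\otimes_A A/I$ already yields the equivalence once the third term vanishes.
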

\begin{proof}
	The ``only if'' direction follows from \cref{truncatedflat}.
		
Now	assume that $I$ is pure, i.e. $\pi_0R/I$ is $\mcc^\heartsuit$-flat over $\pi_0R$; by \cref{discflat}, $\pi_0R/I$ is $\mcc$-flat over $\pi_0R$ too. 
		By \cite[Corollary 6.12]{hattt}, $\pi_0R\to\pi_0R/I$ is an L-étale morphism in $\calg(\mcc)$.
	Therefore by \cite[Theorem 6.20(1)]{hattt}, there exists a (unique) $R'\in \calg(\mcc)_{R/}$ that is  L-étale and \mcc-flat  over $R$ such that  $\pi_0R'\simeq\pi_0R/I$.
	Then we have the following diagram in $\calg(\mcc)$.
	$$\begin{tikzcd}
		R \arrow[r] \arrow[d, Rightarrow, no head] & R/I^\infty \arrow[d, dashed,"f"] \\
		R \arrow[d] \arrow[r]                      & R' \arrow[d]                 \\
		\pi_0R \arrow[r]                           & \pi_0R/I                    
	\end{tikzcd}$$
	Let $B:=R/I^\infty$.
	Consider the cofiber sequence of cotangent complexes $$R'\otimes_{B} L_{B/R}\to L_{R'/R}\to L_{R'/B};$$
	since the first and second ones are trivial, we get $L_{R'/B}=0$. By \cite[Corollary 6.31]{hattt}, we conclude that $\op{cofib}(f)$ is $\infty$-connective and hence $R/I^\infty=B\simeq R'$, so $R/I^\infty$ is flat over $R$ as desired. 
\end{proof}
As a first consequence, the idempotent approximation of a pure ideal is itself flat as a module.
\begin{cor}
		Let $\mcc\in\calg(\prlad)$ be a rigid $\spgeq$-algebra and $R\in \calg(\mcc)$. Then for any pure ideal $I\subset \pi_0R$, we have that $I^\infty$ is a flat $R$-module.
\end{cor}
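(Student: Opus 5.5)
The plan is to exhibit $I^\infty$ as the fiber of the flat map $R\to R/I^\infty$ and to deduce its flatness from a fiber sequence. By \cref{almostalg}, the coidempotent object attached to $I$ is $I^\infty\to R$, and its cofiber in $\opsp(\modu_R(\mcc))$ is $R/I^\infty$. This gives a cofiber sequence of $R$-modules
\[
I^\infty\longrightarrow R\longrightarrow R/I^\infty .
\]
Since $I$ is pure, \cref{equidefpureidls} tells us that $R/I^\infty$ is $\mcc$-flat over $R$. By definition this means the functor $-\otimes_R R/I^\infty\colon\modu_R(\mcc)\to\mcc$ is left exact. The unit $R$ is trivially flat.

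The key step is a two-out-of-three property for flatness. Given a cofiber sequence $M'\to M\to M''$ of $R$-modules in which $M$ and $M''$ are flat, we want $M'$ to be flat as well. After tensoring with $N$, the sequence $N\otimes_R M'\to N\otimes_R M\to N\otimes_R M''$ is a cofiber sequence, natural in $N$. In the prestable setting it is also a fiber sequence whenever the map $N\otimes_R M\to N\otimes_R M''$ is $\pi_0$-surjective. For $M=R$ and $M''=R/I^\infty$ that map is $N\to N\otimes_R R/I^\infty$, which is $\pi_0$-surjective because $\pi_0R\to\pi_0R/I$ is surjective and $\pi_0$ is right exact. So $N\otimes_R I^\infty\simeq\fib(N\to N\otimes_R R/I^\infty)$, computed in $\mcc$.

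This exhibits $-\otimes_R I^\infty$ as the fiber of a natural transformation between two left exact functors, $\id$ and $-\otimes_R R/I^\infty$, where the fiber is taken inside the prestable category $\mcc$. Fibers commute with finite limits, so the fiber of such a transformation is again left exact, provided the fiber is computed in $\mcc$ itself rather than only in the stabilization. That is exactly what the $\pi_0$-surjectivity above guarantees, naturally in $N$. Hence $-\otimes_R I^\infty$ is left exact, and $I^\infty$ is $\mcc$-flat over $R$.

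The main obstacle is making the prestable fiber argument rigorous. One has to check that $\fib$ in $\mcc$ of a pointwise $\pi_0$-surjective map between left exact functors still preserves finite limits. In particular, for a pullback square of inputs, the induced square of fibers must be a pullback in $\mcc$ and not merely in $\opsp(\mcc)$. I would settle this by working in $\opsp(\mcc)$, where the fiber sequence of exact functors is clear. Both outer functors send pullbacks in $\mcc$ to pullbacks landing in $\mcc$, and connectivity of the fibers follows from the $\pi_0$-surjectivity.

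If that route proves awkward, I would switch to a more concrete fallback. By \cite[Proposition C.3.2.1]{sag}, flatness reduces to checking that $N\otimes_R I^\infty$ is discrete for every discrete $N$. This follows from the long exact sequence, because $N$ and $N\otimes_R R/I^\infty\simeq\pi_0N\overline{\otimes}\pi_0R/I$ are discrete and $N\to N\otimes_R R/I^\infty$ is surjective on $\pi_0$.
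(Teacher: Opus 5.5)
Your proposal is correct, and your fallback is exactly the paper's argument. The paper tests against a discrete $R$-module $M$ and uses \cref{equidefpureidls} to see that $R/I^\infty\otimes_R M$ is discrete. From the long exact sequence of $I^\infty\otimes_R M\to M\to R/I^\infty\otimes_R M$ it reads off that $I^\infty\otimes_R M$ is discrete, and it concludes flatness by the discreteness criterion of \cite[Proposition C.3.2.1]{sag}, as in \cref{discflat}.

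Your main route is genuinely different. You realize $-\otimes_R I^\infty$ as the pointwise fiber, taken in $\mcc$, of the transformation from the forgetful functor $\modu_R(\mcc)\to\mcc$ to $-\otimes_R R/I^\infty$, and both of these functors are left exact. This is valid, but the ``main obstacle'' you flag is not one. A pointwise finite limit in $\mcc$ of left exact functors is automatically left exact, because limits commute with limits, so no pullback-square check is needed.

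Connectivity enters only to identify the $\mcc$-fiber with $N\otimes_R I^\infty$. That is immediate: $N\otimes_R I^\infty$ is already connective, so the fiber computed in $\opsp(\mcc)$ lies in $\mcc$ and agrees with the fiber in $\mcc$. Your proposed fix of arguing in $\opsp(\mcc)$ is the wrong place to work. Left exact functors send pullbacks in $\modu_R(\mcc)$ to pullbacks in $\mcc$, and these are in general not pullbacks in $\opsp(\mcc)$; just stay in $\mcc$.

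What your route buys is a formal two-out-of-three property: in a cofiber sequence of connective $R$-modules, if the middle and right terms are flat, so is the left one. This bypasses the discreteness criterion, and hence any Grothendieck or separatedness input beyond what \cref{equidefpureidls} already uses. The paper's route is more hands-on and reuses the discrete-test-object criterion already employed in \cref{discflat}.
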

\begin{proof}
	Consider the cofiber sequence of connective $R$-modules.
	$$I^\infty \longrightarrow R \longrightarrow R/I^\infty.$$
	Let $M$ be an arbitrary discrete $R$-module. Then $$I^\infty\otimes_R M \longrightarrow R\otimes_R M\simeq M \longrightarrow R/I^\infty\otimes_R M$$ is a cofiber sequence too. By \cref{equidefpureidls}, we see that $R/I^\infty$ is \mcc-flat over $R$. Therefore by long exact sequence of homotopy groups, we see that $I^\infty\otimes_R M$ is discrete, and thus $I^\infty$ is a flat $R$-module.
\end{proof}
Putting the preceding comparison together gives the desired identification of the Serre smashing frame with the pure frame of the heart.
\begin{cor}\label{pureserreequi}
	Let $\mcc\in\calg(\prlad)$ be a rigid $\spgeq$-algebra. Then the  Serre smashing frame $\cidem_s(\mcc)$  can be identified with the pure frame $\idl_u(\mcc^\heartsuit)$,  fitting into the following diagram.
	$$\begin{tikzcd}
		\idl_u(\mcc^\heartsuit) \arrow[r, hook] \arrow[d, "\sim"] & \cidem(\idl(\mcc^\heartsuit)) \arrow[d, "\sim"'] \\
		\cidem_s(\mcc) \arrow[r, hook]                     & \cidem(\mcc)                     
	\end{tikzcd}$$
\end{cor}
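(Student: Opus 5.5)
The plan is to build the square from the equivalences already available and then check that the two vertical maps match the two horizontal inclusions. The right vertical equivalence $\cidem(\idl(\mcc^\heartsuit))\simeq\cidem(\mcc)$ will be the composite of \cref{smashidlidemidliden} with \cref{almostalg}. To make this available, I first note that a rigid $\spgeq$-algebra is dualizable over $\spgeq$. By \cref{dualaddab6}, $\mcc$ is then left complete, separated and Grothendieck prestable. Explicitly, an idempotent ideal $I\subset\pi_0\mb{1}$ goes to the connective idempotent algebra $\mb{1}\to\mb{1}/I^\infty$. Under \cref{connlocandclosub}, this corresponds to the smashing ideal $\amodu_{(\mb{1},I)}(\mcc)=\ker(\mcc\to\modu_{\mb{1}/I^\infty}(\mcc))$.

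Next I identify the Serre condition with flatness. By \cref{defserre}, $\amodu_{(\mb{1},I)}(\mcc)$ is Serre exactly when the quotient functor $\mcc\to\mcc/\mci\simeq\modu_{\mb{1}/I^\infty}(\mcc)$ is left exact. By \cref{flatover}, this says that $\mb{1}\to\mb{1}/I^\infty$ is a flat morphism. Equivalently, the pair lies in $\calg(\mcc)^{\op{cn\text{-}idem}}_{\op{flat}}$, which is the left column of the remark following \cref{defserre}. Hence $\cidem_s(\mcc)$ is precisely the preimage of the flat connective idempotents under the right vertical equivalence.

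The main input is \cref{equidefpureidls}, applied with $R=\mb{1}$. It says that $\mb{1}/I^\infty$ is $\mcc$-flat over $\mb{1}$ if and only if $I$ is pure in $\mcc^\heartsuit$. Two checks are needed before invoking it. First, purity for $I\subset\pi_0\mb{1}$ in the heart must agree with the definition in $\mcc^\heartsuit\in\calg(\prladone)$. Second, $\mcc^\heartsuit$ should be a rigid $\op{Ab}$-algebra (rigidity passes to $\mcc\otimes\mcs_{\le0}$), so that $\idl_u(\mcc^\heartsuit)$ is a subframe of $\cidem(\idl(\mcc^\heartsuit))$ by \cref{thmpuresubfrm}. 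The first check is immediate from the definitions once we observe that $\pi_0(\mb{1}/I^\infty)\simeq\pi_0\mb{1}/I$.

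Consequently the right vertical equivalence restricts to a bijection $\idl_u(\mcc^\heartsuit)\simeq\cidem_s(\mcc)$, and the square commutes by construction. Both horizontal maps are subframe inclusions, by \cref{thmpuresubfrm} and \cref{pureidlfrm}. The restricted bijection is a restriction of a frame equivalence between subframes, so it is itself a frame equivalence. The only real obstacle is the flatness–purity comparison, and that is exactly the content of \cref{equidefpureidls}.
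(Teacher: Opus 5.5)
Your proposal is correct and follows the paper's proof. The paper likewise restricts the equivalence $\cidem(\idl(\mcc^\heartsuit))\simeq\cidem(\mcc)$ of \cref{almostalg} using \cref{equidefpureidls} with $R=\mb{1}$, which says that $\mb{1}\to\mb{1}/I^\infty$ is flat (equivalently, the quotient is left exact, i.e.\ the ideal is Serre) exactly when $I$ is pure; your extra checks, left completeness via \cref{dualaddab6} and rigidity of $\mcc^\heartsuit$ over $\op{Ab}$ for \cref{thmpuresubfrm}, only make explicit hypotheses the paper leaves implicit.
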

\begin{proof}
	It follows directly from \cref{almostalg} and \cref{equidefpureidls}.
\end{proof}

\begin{rem}
	By \cref{pureserreequi}, we see that if $\mcc$ is a rigid \spgeq-algebra, then its Serre smashing frame 
	only depends on its heart.
\end{rem}
\begin{cor}\label{serresmfrmspatial}
	Let $\mcc\in\calg(\prlad)$ be a projectively rigid $\spgeq$-algebra. Then the Serre smashing frame $\cidem_s(\mcc)$ is spatial and $\spec\big(\cidem_s(\mcc)\big)$ is quasi-compact.
\end{cor}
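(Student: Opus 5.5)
The plan is to reduce the statement to \cref{purefrmspatial} applied to the heart. First I will show that a projectively rigid $\spgeq$-algebra is in particular rigid over $\spgeq$; this holds by definition, since it is atomically generated and rigid. Hence \cref{pureserreequi} applies and gives an equivalence of frames $\cidem_s(\mcc)\simeq\idl_u(\mcc^\heartsuit)$, where $\idl_u(\mcc^\heartsuit)$ is the pure frame of the unit $\pi_0\mb{1}\in\calg(\mcc^\heartsuit)$. Spatiality and quasi-compactness of the spectrum are invariant under equivalence of frames, so it suffices to prove both properties for $\idl_u(\mcc^\heartsuit)$.

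Next I will check that $\mca:=\mcc^\heartsuit$ is a $1$-projectively rigid $\op{Ab}$-algebra, so that \cref{purefrmspatial} applies with $A=\mb{1}_\mca$. The heart is obtained as $\mcc\otimes\mcs_{\leq 0}$, that is, by base change along $\spgeq\to\op{Ab}$. Base change preserves rigidity and atomic generation, so $\mca$ is rigid and atomically generated over $\op{Ab}$. Concretely, $\pi_0$ sends compact projective generators to compact $1$-projective generators; this is \cite[Proposition 2.4.8]{stefanich2023classification}, as used in the proof of \cref{comparisontcinc}. It also sends dualizable objects to dualizable objects, because $\pi_0$ is symmetric monoidal. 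Since $\mca$ is rigid over $\op{Ab}$ and atomically generated, its atomic objects are exactly its dualizable objects, which gives $\mca^{1\text{-}\op{cproj}}=\mca^d$.

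With this in place, \cref{purefrmspatial} shows that $\idl_u(\mca)$ is spatial and that $\spec(\idl_u(\mca))$ is quasi-compact. That proof embeds the pure frame into the coherent frame $\idl(\mca)_{\op{rad}}$ via $\sqrt{-}$, and uses compactness of the unit ideal. Transporting these properties back along the equivalence finishes the argument.

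The main obstacle is the middle step: verifying carefully that the heart is $1$-projectively rigid. In particular I need rigidity to descend along $-\otimes\mcs_{\leq0}$. I would try to deduce this from the characterization of rigidity as local rigidity plus atomicity of the unit: both the multiplication being an internal left adjoint and the unit being atomic are preserved by the $2$-functor $-\otimes\mcs_{\leq 0}$ (\cref{monoidalprvil}). As a fallback, if this argument is delicate, it is enough to note that compactness of the unit ideal and the coherence of $\idl(\mca)_{\op{rad}}$ only require compact generation of $\mca$ together with the consequences of \cref{charpureidl}.
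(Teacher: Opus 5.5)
Your proposal is correct and follows the paper's proof, which simply combines \cref{pureserreequi} with \cref{purefrmspatial} applied to the heart. You additionally spell out why $\mcc^\heartsuit\simeq\mcc\otimes_{\spgeq}\op{Ab}$ is $1$-projectively rigid: base change preserves rigidity and atomic generation. The paper leaves this implicit, and your main argument for it is sound, so the fallback is unnecessary.
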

\begin{proof}
	It follows by combining \cref{purefrmspatial} and \cref{pureserreequi}
\end{proof}
\subsection{Over a connective \einfring}
In this subsection, we investigate the flat telescope conjecture for $\modu_R(\spgeq)$ over a connective \einfring $R$. We analyze the relationships among the atomic, Serre, and general smashing frames, relating them to the algebraic hierarchy of Pierce, pure, and general idempotent ideals. For a detailed review of pure ideals in commutative ring theory, we refer the reader to \cref{pureidl}. 

The following theorem shows that for connective \einfrings, Serre smashing ideals interpolate between atomic smashing ideals and general smashing ideals. 

\begin{thm}\label{atsmidlisserre}
	Let $R$ be a connective \ein-ring. Then any atomic smashing ideal  of $\modu_R(\spgeq)$ is a Serre smashing ideal.
\end{thm}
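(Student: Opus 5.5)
By \cref{main2}, an atomic smashing ideal of $\modu_R(\spgeq)$ is of the form $\amodu_{(R,I)}(\spgeq)$, where $I\subset\pi_0R$ is generated by a family of idempotents $\{e_\alpha\}$. By \cref{equidefpureidls}, applied to the rigid $\spgeq$-algebra $\mcc=\spgeq$, the ideal is Serre exactly when $I$ is pure. Indeed, Serre means that $\modu_R(\spgeq)\to\modu_{R/I^\infty}(\spgeq)$ is left exact, which is the flatness of $R\to R/I^\infty$. So the whole proof reduces to the algebraic claim that a Pierce ideal is pure, and there are two routes to it.

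The first route works at the categorical level. Each $(e_\alpha)$ corresponds to the split smashing ideal $\modu_{R[e_\alpha^{-1}]}(\spgeq)$, as in the proof of \cref{main2}, via the product decomposition $R\simeq R[e_\alpha^{-1}]\times R[(1-e_\alpha)^{-1}]$. The quotient algebra is $R/(e_\alpha)^\infty\simeq R[(1-e_\alpha)^{-1}]$. This is a factor of a finite product decomposition of $R$, so it is flat over $R$, since base change to it is projection onto one factor and hence left exact. Thus each principal Pierce ideal is Serre.

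The Serre smashing frame $\cidem_s(\modu_R(\spgeq))$ is a subframe of $\cidem(\modu_R(\spgeq))$ by \cref{pureidlfrm}; this uses that $\modu_R(\spgeq)$ is separated Grothendieck prestable. In particular it is closed under arbitrary joins. The join of the principal ideals $(e_\alpha)$ in $\cidem(\idl(\pi_0R))\simeq\cidem(\modu_R(\spgeq))$ is their sum $I$, so $I$ is Serre.

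The second route is purely algebraic. Every finitely generated subideal of $I$ lies in some $(e)$, because finitely many idempotents generate a principal idempotent ideal. Hence $\pi_0R/I=\colim\,\pi_0R/(e)=\colim\,\pi_0R(1-e)$ is a filtered colimit of flat modules, so it is flat. \Cref{equidefpureidls} then shows that $R/I^\infty$ is flat.

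The main point to check is that joins in the Serre frame agree with joins in the full smashing frame, which is exactly the subframe statement of \cref{pureidlfrm}, and that $\modu_R(\spgeq)$ meets its hypotheses. Both hold, so no real obstacle is expected.
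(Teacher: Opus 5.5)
Your proposal is correct. Your second route is precisely the paper's proof: the paper combines \cref{main2} with \cref{pureserreequi} (that is, \cref{equidefpureidls} plus \cref{almostalg}) to reduce to ``Pierce implies pure''. It proves that in \cref{pierceincpure} by the same filtered-colimit argument, together with $A\simeq A/(e)\times A/(1-e)$.

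Your first route is genuinely different, and it makes the reduction to purity unnecessary. It uses only two facts. First, a split smashing ideal of a prestable category is Serre, because the quotient functor is a projection onto a factor (cf.\ \cref{splitstablesmid}). Second, the join-closure half of \cref{pureidlfrm}, which rests only on flat morphisms being stable under base change and filtered colimits. It does not need the L-\'etale lifting and cotangent-complex argument behind \cref{equidefpureidls}.

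Read this way, your first route proves $\op{Z}(\cidem(\mcc))\subset\cidem_s(\mcc)$ for every presentably symmetric monoidal separated Grothendieck prestable $\mcc$. The only ring-specific input is \cref{main2}, which says that atomic smashing ideals of $\modu_R(\spgeq)$ are joins of split ones. This is consistent with \cref{counterex3}, where the non-Serre atomic ideal must then lie outside $\op{Z}$.

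The paper's route, by contrast, identifies Serre smashing ideals with classical pure ideals. That identification is what yields the Pierce $\subset$ pure $\subset$ idempotent hierarchy in the surrounding corollaries.
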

\begin{proof}
	By \cref{main2} and \cref{pureserreequi}, it suffices to show that every Pierce ideal of an ordinary ring 
	 is a pure ideal, but that follows from \cref{pierceincpure}.
\end{proof}

\begin{cor}
		Let $R \in \calg(\spgeq)$ be a connective $\mathbb{E}_\infty$-ring, and let $\mcc=\modu_R(\spgeq)$. Then we have an inclusion $$\cidem_f(\mcc)\subset \cidem_s(\mcc).$$
\end{cor}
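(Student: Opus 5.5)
The inclusion $\cidem_f(\mcc)\subset\cidem_s(\mcc)$ is a restatement of \cref{atsmidlisserre} in frame-theoretic language, so the plan is to unwind the definitions and invoke that theorem.

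First recall the relevant identifications. By the convention following \cref{atsmashingideals}, $\cidem_f(\mcc)$ is the subposet of $\cidem(\mcc)\simeq\cidem(\prcdbl)$ consisting of coidempotent objects $x\to\mb{1}$ whose associated smashing ideal $\langle x\rangle\hookrightarrow\mcc$ is atomic. By \cref{defserre}, $\cidem_s(\mcc)$ is the subposet consisting of those $x$ for which $\langle x\rangle$ is a Serre smashing ideal. Both are full subposets of the same poset $\cidem(\mcc)$, so the inclusion reduces to a statement about individual elements.

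Now let $x\in\cidem_f(\mcc)$. Then $\langle x\rangle$ is an atomic smashing ideal of $\mcc=\modu_R(\spgeq)$. By \cref{atsmidlisserre}, it is therefore a Serre smashing ideal, and so $x\in\cidem_s(\mcc)$ by definition.

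As a sanity check, the route through the heart is consistent with this. Under \cref{comparisontcinc} together with \cref{main2}, $\cidem_f(\mcc)$ corresponds to the Pierce ideals of $\pi_0R$. Under \cref{pureserreequi}, which applies because $\modu_R(\spgeq)$ is rigid over $\spgeq$, $\cidem_s(\mcc)$ corresponds to the pure ideals. The classical fact that Pierce ideals are pure (\cref{pierceincpure}) then gives the same inclusion.

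One could also remark that $\mcc$ is separated Grothendieck prestable, so $\cidem_s(\mcc)$ is a subframe by \cref{pureidlfrm}. This shows that the inclusion is one of subframes, but it is not needed for the inclusion itself. No step here presents a real obstacle: all the content sits in \cref{atsmidlisserre}.
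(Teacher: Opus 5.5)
Your proposal is correct and matches the paper: the corollary is \cref{atsmidlisserre} restated elementwise inside $\cidem(\mcc)$, with no separate argument needed. Your ``sanity check'' through \cref{main2}, \cref{pureserreequi} and \cref{pierceincpure} is exactly how the paper proves \cref{atsmidlisserre} itself.
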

\begin{cor}
	 	Let $R \in \calg(\spgeq)$ be a connective $\mathbb{E}_\infty$-ring. Then $\modu_R(\spgeq)$ satisfies the flat telescope conjecture if and only if every idempotent ideal of $\pi_0R$ is a pure ideal. In particular, if $\modu_R(\spgeq)$ satisfies the telescope conjecture, then it satisfies the flat telescope conjecture.
\end{cor}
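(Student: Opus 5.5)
The final corollary asserts that $\modu_R(\spgeq)$ satisfies the flat telescope conjecture if and only if every idempotent ideal of $\pi_0R$ is pure, and that the telescope conjecture implies the flat one. The plan is to translate both frames in the inclusion $\cidem_s(\mcc)\hookrightarrow\cidem(\mcc)$, with $\mcc=\modu_R(\spgeq)$, into ideal-theoretic frames on the heart $\mcc^\heartsuit\simeq\modu_{\pi_0R}(\op{Ab})$, and then compare them.

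First, $\mcc$ is a projectively rigid $\spgeq$-algebra, hence rigid, so \cref{pureserreequi} applies. It gives a commutative square whose vertical maps are equivalences: $\idl_u(\pi_0R)\simeq\cidem_s(\mcc)$ and $\cidem(\idl(\pi_0R))\simeq\cidem(\mcc)$. The horizontal maps are the inclusions of pure ideals into idempotent ideals and of Serre smashing ideals into all smashing ideals. Here $\cidem(\idl(\pi_0R))$ is the poset of idempotent ideals of $\pi_0R$, by \cref{almostalg}. Since the square commutes and its vertical arrows are equivalences, the top inclusion is an equivalence exactly when the bottom one is. The top inclusion is an equivalence exactly when every idempotent ideal of $\pi_0R$ is pure, which proves the first assertion.

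For the second assertion, suppose $\mcc$ satisfies the telescope conjecture, so $\cidem_f(\mcc)=\cidem(\mcc)$. By \cref{atsmidlisserre} (equivalently, the preceding corollary) we have $\cidem_f(\mcc)\subset\cidem_s(\mcc)\subset\cidem(\mcc)$. The outer terms agree, so all three coincide, and the flat telescope conjecture holds.

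The only step needing care is checking that \cref{pureserreequi} applies, namely that $\modu_R(\spgeq)$ is a rigid $\spgeq$-algebra. This holds because $\modu_R(\spgeq)$ is generated by the dualizable compact projective object $R$, and compact projectives coincide with retracts of finite free modules, which are dualizable.
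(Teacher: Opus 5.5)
Your proposal is correct and is essentially the argument the paper has in mind (it states the corollary without a separate proof): the first assertion comes from the commutative square of \cref{pureserreequi}, whose vertical equivalences identify $\cidem_s(\modu_R(\spgeq))\hookrightarrow\cidem(\modu_R(\spgeq))$ with the inclusion of pure ideals into idempotent ideals of $\pi_0R$ (purity in $\modu_{\pi_0R}(\op{Ab})$ agrees with classical purity), and the second comes from the chain $\cidem_f\subset\cidem_s\subset\cidem$ of \cref{atsmidlisserre}. Your rigidity check is also fine, and the paper records it as the remark that $\modu_R(\mcc)$ is projectively rigid whenever $\mcc$ is, applied to $\mcc=\spgeq$.
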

Thus the flat telescope conjecture can hold even when the ordinary telescope conjecture fails, as the following example shows.
\begin{ex}
	Let \(k\) be a field and set
	\[
	R
	:=
	k[x_1,x_2,\ldots]/
	\bigl(x_n(1-x_{n+1})\mid n\geq1\bigr).
	\]
	Then every idempotent ideal of \(R\) is pure, but the ideal
$
	I:=(x_1,x_2,\ldots)
	$
	is pure and is not a Pierce ideal (see \cref{flatnotatomicexample}). Consequently,
	\[
	\idl_p(R)
	\subsetneq
	\idl_u(R)
	=
	\cidem(\idl(R)).
	\]
	In particular,
	$
	\modu_R(\spgeq)
	$
	satisfies the flat telescope conjecture but does not satisfy the
	telescope conjecture.
\end{ex}
We also record examples showing that the flat telescope conjecture can fail when idempotent ideals are not pure.
\begin{ex}[Counterexamples]
Both examples exhibit idempotent ideals which fail the purity criterion, and therefore give non-Serre smashing ideals.
	\enu{

		\item 
		Let $(A,\mathfrak{m})$ be a non-discrete valuation ring.
		Let
		$
		I := \mathfrak{m}
		$
		be the maximal ideal of $A$.
		Then $I$ is non-pure idempotent by \cref{counterpiercepure}.
		\item Let $k$ be a field and $p$ be a prime. Consider the ring $A = k[x^{1/p^\infty}]$, which is formally defined as the union $\bigcup_{n \geq 0} k[x^{1/p^n}]$. 
		Let $I = (x^{1/p^\infty})$ be the ideal generated by the set $\{x^{1/p^n} \mid n \geq 0\}$. Then $I$ is not a pure ideal by \cref{counterextcadd}.
	}

\end{ex}

\begin{rem}\label{piercenotpure}
	The \cref{atsmidlisserre} naturally leads to the question of whether, in any projectively rigid $\spgeq$-algebra, Serre smashing ideals always lie between atomic smashing ideals and smashing ideals. 
	
	The answer, however, turns out to be negative. In \cref{counterex3}, we provide an explicit counterexample within the $\infty$-category of connective $G$-equivariant spectra for a finite group $G$.
	
\end{rem}

\begin{qu}
	Let $G$ be a finite group. Is there an explicit characterization of the Pierce ideals or pure ideals of an arbitrary Green functor $\underline{R}\in\calg(\opsp_G^\heartsuit)$? Do similar descriptions exist for the other examples in \cref{exalgttt}?
\end{qu}
\subsection{Example: additive sheaves}\label{sheafcattc}
In this subsection, we investigate the flat telescope conjecture for the $\infty$-categories $\shvhyp(X;\spgeq)$ and $\shv(X;\op{Ab})$. As a geometric application of our framework, we establish an identification between atomic smashing ideals and Pierce open subsets of the base space $X$.

We first fix notation for the hypercomplete version of the associated $\infty$-topos.
\begin{nota}
	Let \mcx be an $\infty$-topos.  We denote the hypercompletion of \mcx by $\mcx^{\wedge}$.
\end{nota}
The basic geometric input is that open and closed decompositions of an $\infty$-topos give rise to connective smashing localization sequences.
\begin{prop}
	Let $\mathcal{X}$ be an $\infty$-topos, $\mathcal{U}=\mcx_{/U}$ be an open subtopos, where $U\in \mcx_{\leq -1}$ is a subfinal object. Let $\mathcal{Z}=\mcx \setminus U$ be the complementary closed subtopos. Let $j_! \colon \mathcal{U} \hookrightarrow \mathcal{X}$ denote the left adjoint to the open restriction $j^* \colon \mathcal{X} \to \mathcal{U}$, and $i^* \colon \mathcal{X} \to \mathcal{Z}$ denote the closed localization functor. 
	\enu{ \item The sequence $$ \mathcal{U} \xhookrightarrow{j_!} \mathcal{X} \xrightarrow{i^*} \mathcal{Z} $$ is a cofiber sequence in $\prl$.
		\item We have $\mcu^{\wedge}\simeq (\mcx^{\wedge})_{/U}$ and $\mcz^{\wedge}\simeq \mcx^{\wedge}\setminus U$. Therefore the sequence $$ \mathcal{U}^{\wedge} \xhookrightarrow{j_!} \mathcal{X}^{\wedge} \xrightarrow{i^*} \mathcal{Z}^{\wedge} $$ is a cofiber sequence in $\prl$.
		\item Both sequences $$\shv(\mathcal{U};\spgeq)\xhookrightarrow{j_!\otimes\spgeq}\shv(\mathcal{X};\spgeq)\xrightarrow{i^*\otimes\spgeq} \shv(\mathcal{Z};\spgeq)$$ and $$\shv(\mathcal{U}^{\wedge};\spgeq)\xhookrightarrow{j_!\otimes\spgeq}\shv(\mathcal{X}^{\wedge};\spgeq)\xrightarrow{i^*\otimes\spgeq} \shv(\mathcal{Z}^{\wedge};\spgeq)$$ are connective smashing localization sequences.
	}
\end{prop}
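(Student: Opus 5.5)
For (1), I will use the standard recollement of $\infty$-topoi attached to the subterminal object $U$. The functor $j_!\colon\mcx_{/U}\to\mcx$ is fully faithful and colimit-preserving; it is given by $V\mapsto V$, forgetting the map to $U$. The closed subtopos $\mcz$ is the full subcategory of objects $X$ with $X\times U\simeq U$, that is, $j^*X\simeq *$. Its localization $i^*$ is the pushout $X\mapsto X\amalg_{X\times U}U$.

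To see that $\mcu\to\mcx\to\mcz$ is a cofiber sequence in $\prl$, I pass to right adjoints. Cofibers in $\prl$ are computed as fibers in $\pr^R$. The fiber of $j^*\colon\mcx\to\mcu$ over the terminal object of $\mcu$ is exactly the full subcategory $\{X\mid j^*X\simeq *\}$, which is $\mcz$ with inclusion $i_*$. This gives (1). It is the content of \cite[\textsection7.3.2]{htt}.

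For (2), I will use that hypercompletion is compatible with étale slicing. An object of $\mcx_{/U}$ is hypercomplete if and only if its image under $j_!$ is hypercomplete. This is because $\infty$-connective morphisms are detected and preserved by $j^*$ and $j_!$, and the slice over a $(-1)$-truncated object is a full subcategory. Hence $\mcu^\wedge\simeq(\mcx^\wedge)_{/U}$.

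For the closed part, I will show that the closed complement of $U$ in $\mcx^\wedge$ consists of hypercomplete objects of $\mcz$. This holds because $i_*$ preserves and reflects $\infty$-connectedness, being a left exact inclusion with a left exact left adjoint. Applying the argument of (1) inside $\mcx^\wedge$ then gives the cofiber sequence.

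For (3), I tensor the sequences from (1) and (2) with $\spgeq$ in $\prl$. The functor $-\otimes\spgeq$ preserves cofiber sequences, since it is a left adjoint on $\prl$, and it preserves fully faithful left adjoints whose right adjoints preserve colimits.

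\begin{itemize}
\item I identify $\shv(\mcx;\spgeq)\simeq\mcx\otimes\spgeq$, which lies in $\calg(\prlad)$ and is prestable.
\item The inclusion $j_!\otimes\spgeq$ is linear over $\shv(\mcx;\spgeq)$, by the projection formula $j_!(A\times j^*B)\simeq j_!A\times B$.
\item Its right adjoint $j^*\otimes\spgeq$ preserves colimits and is linear, so it is an internal left adjoint, and $j_!$ is therefore a smashing ideal.
\item Its coidempotent object is $j_!\underline{\mathbb S}_U$, and the cofiber sequence identifies $i^*\otimes\spgeq$ with the quotient $\mcc/\mci$.
\end{itemize}

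By \cref{connlocandclosub}, such a quotient is automatically a connective smashing localization. Concretely, $i^*$ is a left exact localization, so the unit $x\to i_*i^*x$ is a $\pi_0$-epimorphism.

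The hypercomplete case is identical once I check that $\shv(\mcx^\wedge;\spgeq)$ is prestable and that $j_!$ preserves hypercompleteness. The latter reduces to the previous paragraph.

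The main obstacle is (2) on the closed side: showing that hypercompletion commutes with the closed complement, equivalently that $i^*$ sends hypercomplete objects to hypercomplete objects. My first approach is to use that $i^*$ is left exact and that the right adjoint $i_*$ detects $\infty$-connective maps via the pullback formula $X\simeq j_*j^*X\times_{i_*i^*j_*j^*X}i_*i^*X$.
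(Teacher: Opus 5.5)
Parts (1) and (3), and the open half of (2), follow the paper's argument (the paper cites \cite[Lemma 6.5.2.12]{htt} for $\mcu^\wedge\simeq(\mcx^\wedge)_{/U}$ and then appeals to \cref{connlocandclosub}). The gap is in the closed half of (2).

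\textbf{The claim about $i_*$ needs a different justification.} It is true that $i_*$ preserves $\infty$-connective morphisms, but the reason you give does not prove it. Every geometric embedding is a left exact inclusion with a left exact left adjoint. For instance, take $\shv(C)\hookrightarrow\mcp(C)$ with $\shv(C)$ non-hypercomplete. It sends a non-invertible $\infty$-connective map to a non-equivalence in the hypercomplete $\infty$-topos $\mcp(C)$, so the image is not $\infty$-connective.

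The missing idea is closedness. The subcategory $\mcz=\{X\mid X\times U\simeq U\}$ is closed in $\mcx$ under limits and under weakly contractible colimits, since $(\colim_\alpha X_\alpha)\times U\simeq\colim_\alpha(X_\alpha\times U)\simeq\colim_\alpha U\simeq U$. Hence $i_*$ preserves \v{C}ech nerves and their geometric realizations, so it preserves effective epimorphisms. By induction on diagonals it then preserves all $n$-connective morphisms. This is exactly the input the paper uses to see that $\mcx^\wedge\setminus U$ is hypercomplete.

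With this fact, $i_*Z\in\mcx^\wedge$ forces $Z\in\mcz^\wedge$. The reverse inclusion only needs that $i^*$ preserves $\infty$-connective maps. So $\mcz^\wedge=\mcz\cap\mcx^\wedge=\mcx^\wedge\setminus U$ inside $\mcx$, and applying (1) to $\mcx^\wedge$ finishes. This direct comparison of full subcategories is a fine substitute for the paper's pair of factorizations between the two localizations (the second of which uses \cite[Proposition 7.3.2.3]{htt}).

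\textbf{Your final paragraph aims at a false target.} The statement that $i^*$ sends hypercomplete objects to hypercomplete objects is not equivalent to $\mcz^\wedge\simeq\mcx^\wedge\setminus U$, and it fails in general. Take the Artin gluing of $\mcz=\shv(C)$ (non-hypercomplete) and $\mcu=\mcp(C)$ along sheafification $L$, and let $F$ be a non-hypercomplete sheaf. Then $j_*F$ is hypercomplete, because $F$ is hypercomplete in $\mcp(C)$ and $j^*$ preserves $\infty$-connective maps. But $i^*j_*F\simeq LF\simeq F$ is not hypercomplete. So the functor $\mcx^\wedge\to\mcz^\wedge$ in the second sequence is the hypercompletion of $i^*$, not its restriction, and the fracture-square strategy would be trying to prove a false statement.

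A similar slip occurs in (3). Left exactness of a localization does not make its unit a $\pi_0$-epimorphism; sheafification $\mcp(C;\spgeq)\to\shv(C;\spgeq)$ is a counterexample. The actual reason for $i^*\otimes\spgeq$ is again closedness, or simply \cref{connlocandclosub}, which you already invoke.
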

\begin{proof}
	(1)	It suffices to show that $$ \mathcal{U} \xleftarrow{j^*} \mathcal{X} \xhookleftarrow{i_*} \mathcal{Z} $$ is a fiber sequence in $\pr^R$, but that follows precisely from \cite[Lemma 7.3.2.4(3)]{htt}.\\
	(2) The equivalence $\mcu^{\wedge}\simeq (\mcx^{\wedge})_{/U}$ follows from \cite[Lemma 6.5.2.12]{htt}. Now we wish to prove $\mcz^{\wedge}\simeq \mcx^{\wedge}\setminus U$. Since both left and right arrows in the following diagram are localizations, it suffices to show the existence of factorizations $\varphi$ and $\phi$:
	$$\begin{tikzcd}
		& \mcx^{\wedge} \arrow[ld, "p"'] \arrow[rd, "q"] &                                                                 \\
		\mcz^{\wedge} \arrow[rr, "\varphi", dashed, shift left] &                                     & \mcx^{\wedge}\setminus U \arrow[ll, "\phi", dashed, shift left]
	\end{tikzcd}$$
	
	For the existence of $\varphi$, it suffices to show that  $\mcx^{\wedge}\setminus U$ is hypercomplete. By \cite[Lemma 7.3.2.4(3)]{htt}, the reflective subcategory $\mcx^{\wedge}\setminus U\subset \mcx^{\wedge}$ consists of those objects $X$ satisfying $X\times U\xrightarrow{\sim} U$. In particular, the inclusion $\mcx^{\wedge}\setminus U\subset \mcx^{\wedge}$ is closed under weakly contractible colimits (and small limits). Hence, it preserves $\infty$-connective morphisms. Thus any $\infty$-connective morphism in $\mcx^{\wedge}\setminus U$ is an equivalence, as desired. 
	
	For the existence of $\phi$, by \cite[Proposition 7.3.2.3]{htt} it suffices to check that for any $V\in \mcx^\wedge$ such that $\mapp_{\mcx^\wedge}(V,U)\neq\emptyset$, we have $p(\emptyset\to V)$ is an equivalence. However, that follows from the following diagram:
	$$\begin{tikzcd}
		\mcx \arrow[d] \arrow[r] & \mcx^\wedge \arrow[d, "p"] \\
		\mcz \arrow[r]           & \mcz^\wedge          
	\end{tikzcd}$$  
	(3) By \cref{connlocandclosub}, it suffices to observe that they are cofiber sequences in \prlad, and that $$\shv(\mathcal{U};\spgeq)\xhookrightarrow{j_!\otimes\spgeq}\shv(\mathcal{X};\spgeq)$$ and $$\shv(\mathcal{U}^\wedge;\spgeq)\xhookrightarrow{j_!\otimes\spgeq}\shv(\mathcal{X}^\wedge;\spgeq)$$ are smashing ideals (see \cite[Proposition 6.3.5.11]{htt}).
\end{proof}

We begin with the abelian category of sheaves of abelian groups, where idempotent ideals of the constant sheaf are classified by open subsets.

\begin{thm}\label{pierceAbshv}
	Let $X$ be a space. Let $\underline{\mathbb{Z}}\in \shv(X;\op{Ab})$ denote its unit. Then 
	\enu{
		\item There is a natural equivalence from the poset $	\mco(X)$ of open subsets
		\[
		\mco(X)\xrightarrow{\sim}\cidem(\shv(X;\op{Ab}))
		\]
		given by $U\mapsto j^U_!(\underline{\mathbb{Z}}_U)$.
		\item  Every idempotent ideal  of $\underline{\mathbb{Z}}$ is a pure ideal.
		\item Furthermore, the idempotent ideal $j^U_!(\underline{\mathbb{Z}}_U)$ is a Pierce ideal if and only if $U$ is a Pierce open subset (see \cref{pierceopen}). In particular, the equivalence in (1) restricts to an equivalence $$\mco_p(X)\xrightarrow{\sim}\cidem_f(\shv(X;\op{Ab}))$$ from the poset $\mco_p(X)$ of Pierce open subsets.
	} 
\end{thm}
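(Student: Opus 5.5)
For (1), I would exploit the fact that $\mca:=\shv(X;\op{Ab})$ is Grothendieck abelian, so \cref{idemidlabel} identifies $\cidem(\mca)$ with the poset of idempotent subsheaves $\mathcal{J}\subset\underline{\mathbb{Z}}$. The plan is then to classify these idempotent ideals directly.

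For an open $U$, the sheaf $j^U_!\underline{\mathbb{Z}}_U$ is a subsheaf of $\underline{\mathbb{Z}}$. It is idempotent because $j_!\underline{\mathbb{Z}}_U\otimes j_!\underline{\mathbb{Z}}_U\simeq j_!\underline{\mathbb{Z}}_U$, which one checks stalkwise.

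Conversely, let $\mathcal{J}\subset\underline{\mathbb{Z}}$ be idempotent. At each point $x$ the stalk $\mathcal{J}_x\subset\mathbb{Z}$ is an idempotent ideal of $\mathbb{Z}$, hence equal to $0$ or $\mathbb{Z}$. Set
\[
U:=\{x\mid \mathcal{J}_x=\mathbb{Z}\}.
\]
This set is open: if $\mathcal{J}_x=\mathbb{Z}$, then the germ $1$ lies in $\mathcal{J}_x$, so some local section of $\mathcal{J}$ equals $1$ near $x$. On $U$ we get $\mathcal{J}|_U=\underline{\mathbb{Z}}_U$, and off $U$ the stalks vanish. Hence $\mathcal{J}=j_!\underline{\mathbb{Z}}_U$. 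Finally, the assignment $U\mapsto j_!\underline{\mathbb{Z}}_U$ preserves and reflects order, so it is an equivalence of posets.

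For (2), I would verify purity using the criterion \cref{flatness_criterion}. Its hypothesis is rigidity, and since $\mca$ need not be rigid, I would instead use the variant in the remark following \cref{thmpuresubfrm}: the criterion holds in any Grothendieck abelian category with enough flat objects. Sheaves of abelian groups have enough $\otimes$-flat objects, namely sums of $j_!\underline{\mathbb{Z}}_V$. It then suffices to show that $j_!\underline{\mathbb{Z}}_U\otimes M\to M$ is a monomorphism for every sheaf $M$. This is immediate stalkwise, since each stalk map is either $0\to M_x$ or the identity of $M_x$. Alternatively, one can note directly that $\underline{\mathbb{Z}}/j_!\underline{\mathbb{Z}}_U=i_*\underline{\mathbb{Z}}_Z$ is $\otimes$-flat, because the stalkwise tensor product is exact.

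For (3), recall that a Pierce ideal is one whose smashing ideal $\shv(U;\op{Ab})\hookrightarrow\mca$ is generated by dualizable objects. The dualizable objects of $\mca$ are the locally constant sheaves with finitely generated projective (locally free) stalks. I would then argue as in the proof of \cref{atsmfrmsheaf}.

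\emph{Dualizable objects give clopen supports.} Let $F$ be dualizable. Its support $\op{supp}F$ is locally either everything or empty, since locally $F$ is constant with value some $\mathbb{Z}^n$. Hence $\op{supp}F$ is clopen. Moreover $F\in\langle j_!\underline{\mathbb{Z}}_C\rangle$ with $C=\op{supp}F$, and $j_!\underline{\mathbb{Z}}_C$ is a retract of $F\otimes F^\vee$.

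\emph{Clopen sets give dualizable generators.} Conversely, for a clopen $C$ the sheaf $j_!\underline{\mathbb{Z}}_C$ is a direct summand of the unit, hence dualizable.

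Combining the two directions, an atomic smashing ideal is generated by $\{j_!\underline{\mathbb{Z}}_{C_\alpha}\}$ for clopens $C_\alpha\subset U$. Its associated open set is therefore $\bigcup C_\alpha$, and the smashing ideal equals all of $\shv(U)$ exactly when $U=\bigcup C_\alpha$, i.e.\ when $U$ is Pierce open. This uses that joins in $\cidem$ correspond to unions under the equivalence of (1).

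The main obstacle is step (2). Rigidity, which \cref{flatness_criterion} assumes, may fail for $\shv(X;\op{Ab})$, so the argument depends on the weaker-hypothesis version from the remark being valid. If that version is in doubt, I would fall back on proving flatness of $i_*\underline{\mathbb{Z}}_Z$ by hand via stalks.
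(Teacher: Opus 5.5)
Your proposal follows the paper's route in all three parts. Part (1) is the same stalkwise classification of idempotent subsheaves of $\underline{\mathbb{Z}}$ via \cref{idemidlabel}, and part (3) is the same support argument. There is one false step, which is easy to remove, and one worry that is unfounded.

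\textbf{The false step.} In (3) you claim that $j_!\underline{\mathbb{Z}}_C$ is a retract of $F\otimes F^\vee$. This fails in general.
\begin{itemize}
\item Take $X=S^1$ and let $F$ be the rank-two local system whose monodromy $S$ swaps the two basis vectors. Then $C=S^1$ and $j_!\underline{\mathbb{Z}}_C=\underline{\mathbb{Z}}$.
\item Maps $\underline{\mathbb{Z}}\to F\otimes F^\vee$ are the integer matrices commuting with $S$, namely $aI+bS$.
\item By the trace pairing, maps $F\otimes F^\vee\to\underline{\mathbb{Z}}$ are $\phi\mapsto\operatorname{tr}(\psi\phi)$ with $\psi=cI+dS$.
\item Every composite is therefore multiplication by $2(ac+bd)$, so no composite is the identity.
\end{itemize}

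You do not need this inclusion. Suppose $\shv(U;\op{Ab})=\langle F_\alpha\rangle_\alpha$ with each $F_\alpha$ dualizable.
\begin{itemize}
\item Each $F_\alpha$ has stalks vanishing off $U$, so $C_\alpha:=\op{supp}F_\alpha\subset U$.
\item Since $F_\alpha\simeq F_\alpha\otimes j_!\underline{\mathbb{Z}}_{C_\alpha}$, we get $\shv(U;\op{Ab})\subset\langle j_!\underline{\mathbb{Z}}_{C_\alpha}\rangle_\alpha$. Under (1), the right-hand side corresponds to $\bigcup_\alpha C_\alpha$.
\item Hence $U=\bigcup_\alpha C_\alpha$.
\end{itemize}
This is exactly the paper's argument. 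The paper phrases it through the localizing ideal $\{G\mid G_x=0\}$ for a hypothetical point $x\in U\setminus\bigcup_\alpha\op{supp}F_\alpha$.

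\textbf{The unfounded worry.} The obstacle you flag in (2) is not real. You only use the implication ($\Leftarrow$) of \cref{flatness_criterion}. Its snake-lemma proof uses only right exactness of $\otimes$; it needs neither rigidity nor a supply of flat objects. In any case, your fallback is precisely the paper's proof of (2): $-\otimes\underline{\mathbb{Z}}/j_!\underline{\mathbb{Z}}_U$ is stalkwise either $0$ or the identity, hence exact.
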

\begin{proof}
	(1) By \cref{idemidlabel},	it suffices to show that the functor $ \mco(X)\xrightarrow{}\cidem(\idl(\underline{\mathbb{Z}}))
	$ is an equivalence. It is obviously an embedding, and hence fully faithful. Now we prove that it is essentially surjective.
	For any subsheaf $I \subset \underline{\mathbb{Z}}$, its stalk $I_x$ at any point $x \in X$ must be an ideal of $\underline{\mathbb{Z}}_x \simeq \mathbb{Z}$. Therefore, there exists a non-negative integer $n_x \geq 0$ such that $I_x = n_x\mathbb{Z}$. 
	The idempotency condition $I= I^2$ implies the following equivalence on stalks: 
	\[ (n_x\mathbb{Z})^2 \xrightarrow{\simeq} n_x\mathbb{Z} \]
	This morphism is given by integer multiplication: $a \otimes b \mapsto ab$. Its image is $(n_x)^2 \mathbb{Z}$. For this map to be surjective (and thus an isomorphism), we must have $(n_x)^2 \mathbb{Z} = n_x \mathbb{Z}$. 
	Among non-negative integers, the only solutions to $n^2 = n$ are $n = 0$ or $n = 1$. Consequently, for any $x \in X$, the stalk $I_x$ of the ideal sheaf can only be $0$ or $\mathbb{Z}$.
	
	Now we examine the topological distribution of these stalks. Define the set:
	\[ U = \{ x \in X \mid I_x = \mathbb{Z} \} \]
	We need to show that $U$ is an open set in $X$. 
	Take any $x \in U$; by definition, $I_x = \mathbb{Z}$. This means the constant $1$ is in $I_x$ (i.e., $1_x \in I_x$). By the definition of the stalk of a sheaf, there exists an open neighborhood $V \subset X$ of $x$ and a section $s \in I(V)$ such that the germ of $s$ at $x$ is $s_x = 1_x$. 
	Since $I$ is a subsheaf of the constant sheaf $\underline{\mathbb{Z}}$, the section $s$ must be a locally constant function in $\underline{\mathbb{Z}}(V)$. Therefore, there exists a smaller open neighborhood $W \subset V$ containing $x$ such that $s$ is identically $1$ on $W$. 
	For any point $y \in W$, the section $s$ gives $s_y = 1_y \in I_y$. Since $1$ belongs to the ideal $I_y$, it must be that $I_y = \mathbb{Z}$. This implies $W \subset U$. Since for any $x \in U$ we can find such an open neighborhood $W$, we conclude that $U$ is an open set.
	
	Now that we know $U$ is an open set, and that the stalks of $I$ are identically $\mathbb{Z}$ on $U$ and $0$ on $X \setminus U$, the sheaf $I$ is uniquely determined by $U$ up to isomorphism. 
	Let $j: U \hookrightarrow X$ be the open immersion. By checking stalks, we conclude the following zig-zag equivalence, as desired:
	\[ I\xleftarrow{\sim} I \otimes j_!(\underline{\mathbb{Z}}_U) \xrightarrow{\sim}j_!(\underline{\mathbb{Z}}_U) \]
	(2) Now we check that every idempotent ideal $I\subset\underline{\mathbb{Z}}$ is a pure ideal. By (1), there exists an open subset $U$ such that $I\simeq j_!(\underline{\mathbb{Z}}_U)$. By definition, we need to check that the functor $$\shv(X;\op{Ab})\xrightarrow{\underline{\mathbb{Z}}/I\otimes-}\shv(X;\op{Ab})$$ is left exact, but that can be verified stalkwise (a stalk of $\underline{\mathbb{Z}}/I$ is either $\mathbb{Z}$ or $0$).\\
	(3) Assume that $U= \bigcup_\alpha U_\alpha$ is a union of clopen subsets. Without loss of generality, we can assume that it is a filtered union. Note that by \cref{splitsmidl}, $j_!(\underline{\mathbb{Z}}_{U_\alpha})$ is dualizable in $\shv(X;\op{Ab})$. On the other hand, we have $$j_!(\underline{\mathbb{Z}}_U)\simeq\colim_{\alpha} j_!(\underline{\mathbb{Z}}_{U_\alpha}).$$ Therefore the smashing ideal $$\left<j_!(\underline{\mathbb{Z}}_U)\right>=\shv(U;\op{Ab})\xhookrightarrow{j_!} \shv(X;\op{Ab})$$ is generated by dualizable/atomic objects, as desired.
	
	Now assume that $j_!(\underline{\mathbb{Z}}_U)$ is a Pierce ideal. By \cref{defpierce}, the smashing ideal $\left<j_!(\underline{\mathbb{Z}}_U)\right>=\shv(U;\op{Ab})\xhookrightarrow{j_!} \shv(X;\op{Ab})$ is generated by dualizable objects. We first observe that if $P\in \mathcal C=\shv(X;\op{Ab})$ is dualizable, then its support \[ \operatorname{Supp}(P):=\{x\in X\mid P_x\neq 0\} \] is clopen, because by \cref{dualshvlocconst} $P$ is finitely locally free. Since $\mathcal I_U=\shv(U;\op{Ab})$ is generated by dualizable objects, there is a family of dualizable objects $\{P_\alpha\}_{\alpha\in A}$ of $\mathcal C$ such that $ \mathcal I_U = \langle P_\alpha\rangle_{\alpha\in A}. $ Since each generator $P_\alpha$ belongs to $\mathcal I_U$, $ (P_\alpha)_x=0 \text{ for all }x\in X\setminus U. $ Equivalently, $ \operatorname{Supp}(P_\alpha)\subset U. $ By the observation above, each $\operatorname{Supp}(P_\alpha)$ is a clopen subset of $X$. Hence \[ \bigcup_{\alpha\in A}\operatorname{Supp}(P_\alpha)\subset U . \] It remains to prove the reverse inclusion. Suppose, for contradiction, that there exists a point $ x\in U $ such that $ x\notin \operatorname{Supp}(P_\alpha) \text{ for all }\alpha\in A. $ Then $ (P_\alpha)_x=0 \text{ for all }\alpha\in A. $ Consider the full subcategory \[ \mathcal K_x := \{F\in \mathcal C\mid F_x=0\}. \] The stalk functor $ x^*:\operatorname{Shv}(X;\operatorname{Ab})\longrightarrow \operatorname{Ab} $ is exact, colimit-preserving, and symmetric monoidal. Hence $\mathcal K_x$ is a localizing ideal of $\mathcal C$. Since all the generators $P_\alpha$ lie in $\mathcal K_x$, we have the inclusion \[ \mathcal I_U = \langle P_\alpha\rangle_{\alpha\in A} \subset \mathcal K_x . \]  On the other hand, the object $ i_!\underline{\mathbb{Z}}_U\in \mathcal I_U $ has stalk $ (i_!\underline{\mathbb{Z}}_U)_x\simeq \mathbb Z $ because $x\in U$. Thus $ i_!\underline{\mathbb{Z}}_U\notin \mathcal K_x, $ which contradicts $\mathcal I_U\subset \mathcal K_x$. Therefore we must have $ U = \bigcup_{\alpha\in A}\operatorname{Supp}(P_\alpha). $ 
\end{proof}
The \infcat of hypercomplete connective spectral sheaves follows an analogous pattern.

\begin{thm}\label{pierceSpgeq0shv}
	Let $X$ be a space. Let $\underline{\mathbb{S}}^{\wedge}\in \shvhyp(X;\spgeq)$ denote its unit. Then 
	\enu{
		\item There is a natural equivalence
		\[
		\mco(X)\xrightarrow{\sim}\cidem(\shvhyp(X;\spgeq))
		\]
		given by $U\mapsto j^U_!(\underline{\mathbb{S}}_U^{\wedge})$.
		\item  Every smashing ideal  of $\shvhyp(X;\spgeq)$ is a Serre smashing ideal.
		\item Furthermore, the smashing ideal $\shvhyp(U;\spgeq)\xhookrightarrow{j^U_!}\shvhyp(X;\spgeq)$ is an atomic smashing ideal if and only if $U\subset X$ is a Pierce open subset. In particular, the equivalence in (1) restricts to an equivalence $$\mco_p(X)\xrightarrow{\sim}\cidem_f(\shvhyp(X;\spgeq)).$$
	} 
\end{thm}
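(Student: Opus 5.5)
The plan is to reduce everything to the abelian statement \cref{pierceAbshv} by passing to the heart, following the same pattern as the proof of \cref{comparisontcinc}.

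First, $\mcc:=\shvhyp(X;\spgeq)$ is a left complete (hypercomplete, hence Postnikov complete) separated Grothendieck prestable \infcat, and its heart is $\mcc^\heartsuit\simeq\shv(X;\op{Ab})$. \cref{smashidlidemidliden} then gives
\[
\cidem(\mcc)\xrightarrow{\sim}\cidem(\mcc^\heartsuit),
\]
induced by $\pi_0$. Composing with \cref{pierceAbshv}(1) yields $\mco(X)\simeq\cidem(\mcc)$. To see that this is the assignment $U\mapsto j^U_!(\underline{\mathbb{S}}^\wedge_U)$, I will use the preceding proposition: $j_!\otimes\spgeq$ is a smashing ideal, and its coidempotent object is $j_!\underline{\mathbb{S}}^\wedge_U\to\underline{\mathbb{S}}^\wedge$. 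Since $j_!$ is exact, $\pi_0$ of this map is $j_!\underline{\mathbb{Z}}_U\to\underline{\mathbb{Z}}$. This proves (1).

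For (2), by the preceding proposition every smashing ideal is of the form $\shvhyp(U;\spgeq)$, with quotient $i^*\colon\mcc\to\shvhyp(Z;\spgeq)$ for $Z=X\setminus U$. It suffices to show that $i^*$ is left exact. Since $i^*$ is the inverse image of a geometric morphism of topoi, it is left exact on the underlying topoi. Left exactness of $i^*\otimes\spgeq$ then follows from \cite[Proposition C.4.4.1]{sag}, as in \cref{basechangeflat}. If hypercompletion causes trouble, I would instead argue on homotopy sheaves: $\pi_n i^*F\simeq i^*\pi_nF$, and $i^*$ is exact on abelian sheaves. Alternatively, one can check flatness of $\mb{1}/I^\infty$ via \cref{serrealmostmod}-type stalk arguments, using \cref{pierceAbshv}(2).

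For (3), the ``if'' direction is immediate. If $U=\bigcup U_\alpha$ with each $U_\alpha$ clopen, then each $\shvhyp(U_\alpha)$ is split, so $j_!\underline{\mathbb{S}}^\wedge_{U_\alpha}$ is dualizable by \cref{splitsmidl}. (Equivalently, via \cref{splitclopen}, complemented elements are atomic.) The join of these ideals is $\shvhyp(U)$, and $\cidem_f$ is a subframe.

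For ``only if'', suppose $\shvhyp(U;\spgeq)$ is generated by dualizable objects $P_\alpha$. I will show that the support of each dualizable $P$ is clopen and that these supports cover $U$, mirroring the proof of \cref{pierceAbshv}(3).
\begin{itemize}
\item If $P$ is dualizable, so is $\pi_0P$: stalk functors are symmetric monoidal, and dualizability is local, so $P$ is locally constant with perfect connective stalks and $\pi_0P$ is finitely locally free. Moreover $\operatorname{Supp}(P)=\operatorname{Supp}(\pi_0P)$, because a connective spectrum vanishes iff its $\pi_0$ vanishes only when it is perfect connective. Hence the support is clopen.
\item Conversely, for $x\in U$ not in any support, the kernel of the stalk functor $x^*$ is a localizing ideal containing all $P_\alpha$ but not $j_!\underline{\mathbb{S}}^\wedge_U$, a contradiction.
\end{itemize}

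The main obstacle is the identification of dualizable objects of $\shvhyp(X;\spgeq)$ as locally constant objects. One fallback is to push forward along $\pi_0$: this is symmetric monoidal on the connective level, so it preserves dualizables. One then applies \cref{dualshvlocconst} in $\shv(X;\op{Ab})$ and uses only the stalk vanishing criterion for connective objects, which needs just $\pi_0$. This avoids describing the dualizable objects of $\shvhyp(X;\spgeq)$ in full, since only their supports are needed.
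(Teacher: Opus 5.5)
Your proposal has a genuine gap at the first step. You assert that $\mcc=\shvhyp(X;\spgeq)$ is left complete ``because it is hypercomplete, hence Postnikov complete.'' That implication is not valid. Postnikov completeness is strictly stronger than hypercompleteness; it is guaranteed, for instance, when $\shv(X)$ is locally of finite homotopy dimension, but the statement allows an arbitrary space $X$. So \cref{almostalg}, and with it \cref{smashidlidemidliden}, are not available, and you have not established $\cidem(\mcc)\simeq\cidem(\mcc^\heartsuit)$.

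Concretely, the missing fact is that every coidempotent $C\to\underline{\mathbb{S}}^{\wedge}$ is of the form $j^U_!(\underline{\mathbb{S}}^{\wedge}_U)$. Your observation that $\pi_0$ sends $j^U_!(\underline{\mathbb{S}}^{\wedge}_U)$ to $j^U_!(\underline{\mathbb{Z}}_U)$ only shows that $U\mapsto j^U_!(\underline{\mathbb{S}}^{\wedge}_U)$ is an embedding, not that it is surjective. Your (2) and (3) both start from the classification of all smashing ideals as $\shvhyp(U;\spgeq)$, so they inherit this gap.

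The missing idea, which is what the paper uses, is that the stalk functors $x^*\colon\shvhyp(X;\spgeq)\to\spgeq$ are jointly conservative. The paper deduces this from $\shvhyp(X;\spgeq)\simeq\funct^{\times}((\spgeq^{\op{cproj}})^{\opp},\shvhyp(X))$ and the fact that hypercomplete sheaves on $X$ are detected by stalks. With this you can run the argument of \cref{pierceAbshv}(1) directly in $\mcc$:
\begin{itemize}
\item Each stalk $C_x\to\mathbb{S}$ of a coidempotent is coidempotent in $\spgeq$. Since $\spgeq$ is a smashing field (\cref{noethertelescope}), $C_x$ is $0$ or the map is an equivalence.
\item The locus $U$ where it is an equivalence is open: lift $1$ to a local section of $\pi_0C$, using that stalks commute with $\pi_0$.
\item The maps $C\leftarrow C\otimes j^U_!(\underline{\mathbb{S}}^{\wedge}_U)\to j^U_!(\underline{\mathbb{S}}^{\wedge}_U)$ are equivalences on every stalk, hence equivalences.
\end{itemize}

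Once (1) is proved this way, the rest of your plan goes through. For (2), left exactness of $i^*\otimes\spgeq$, checked on homotopy sheaves, suffices. For (3), your fallback is correct: $\pi_0P$ is dualizable in $\shv(X;\op{Ab})$, so \cref{dualshvlocconst} makes its support clopen. Moreover $\operatorname{Supp}(P)=\operatorname{Supp}(\pi_0P)$, because each stalk $P_x$ is compact projective in $\spgeq$. Hence $\pi_0\mapp(P_x,P_x)\cong\op{Hom}(\pi_0P_x,\pi_0P_x)$, which forces $P_x=0$ when $\pi_0P_x=0$.
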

\begin{proof}
	We first observe that by \cite[Lemma A.3.9]{ha} and the equivalence $$\shvhyp(X;\spgeq)\simeq \funct^{\times}\Big((\spgeq^{\op{cproj}})^\opp,\shvhyp(X)\Big),$$ the collection  of stalk functors
	$$\{x^*:\shvhyp(X;\spgeq)\to \spgeq\mid x\in X\}$$ is jointly conservative. Therefore all results follow from the same argument as \cref{pierceAbshv}.
\end{proof}
\begin{rem}
	Combining with \cref{cartesiancidem}, we see that the free functors induce the equivalences of smashing frames
	$$\cidem(\shv(X))\xrightarrow{\sim}\cidem(\shvhyp(X;\spgeq))\xrightarrow{\sim}\cidem(\shv(X;\op{Ab})).$$
\end{rem}
These classifications translate directly into telescope-type statements for sheaf categories.
\begin{cor}\label{shvabtc}
	Let $X$ be a space. Then the separated Grothendieck prestable \infcat $\shvhyp(X;\spgeq)$ always satisfies the flat telescope conjecture. Furthermore, the following are equivalent:
	\enu{
		\item $\shvhyp(X;\spgeq)$ satisfies the telescope conjecture;
	\item $\shv(X;\op{Ab})$ satisfies the telescope conjecture;
	\item every open subset of $X$ is a Pierce open subset.
	}
\end{cor}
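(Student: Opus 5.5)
The plan is to read all three assertions off the classification in \cref{pierceSpgeq0shv}, with \cref{pierceAbshv} covering the abelian case.

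For the first assertion, recall that \cref{pierceSpgeq0shv}(2) says every smashing ideal of \(\shvhyp(X;\spgeq)\) is Serre. This is exactly the statement that the inclusion \(\cidem_s(\mcc)\hookrightarrow\cidem(\mcc)\) is an equivalence. Before invoking it, I will check that the definition of the flat telescope conjecture applies, namely that \(\mcc=\shvhyp(X;\spgeq)\) is separated Grothendieck prestable. It is Grothendieck prestable as \(\spgeq\)-valued sheaves on a topos. It is separated because hypercompleteness makes the stalk functors jointly conservative, and the Postnikov tower then detects \(\infty\)-connective objects.

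For the equivalence of (1), (2) and (3), the key inputs are \cref{pierceSpgeq0shv}(1),(3) and \cref{pierceAbshv}(1),(3). Both give a commutative square: the bottom row is \(\mco_p(X)\hookrightarrow\mco(X)\), the top row is the telescope inclusion \(\cidem_f\hookrightarrow\cidem\), and the vertical maps are the equivalences \(U\mapsto j^U_!(\underline{\mathbb{S}}^\wedge_U)\), respectively \(U\mapsto j^U_!(\underline{\mathbb{Z}}_U)\). Each statement (3) there says that the vertical equivalence restricts to \(\mco_p(X)\simeq\cidem_f\). Commutativity is then automatic, because both compositions send \(U\) to the same subobject. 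Hence each telescope inclusion is isomorphic to \(\mco_p(X)\hookrightarrow\mco(X)\), and each telescope conjecture holds if and only if \(\mco_p(X)=\mco(X)\). That last condition is (3), so (1)\(\Leftrightarrow\)(3) and (2)\(\Leftrightarrow\)(3).

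For the profinite case, I will show every open subset of a profinite (Stone) space is Pierce. Such a space is zero-dimensional, meaning clopen sets form a basis, so each open set is a union of clopens. Therefore (3) holds.

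The only genuine obstacle is the separatedness check for \(\shvhyp(X;\spgeq)\). It is needed both to make the Serre smashing frame and the flat telescope conjecture well-defined, and to ensure that \cref{pierceSpgeq0shv}(2) is stated in the right generality. I will handle it through the jointly conservative stalk functors recorded in the proof of \cref{pierceSpgeq0shv}. Everything else is a formal transport of posets along equivalences.
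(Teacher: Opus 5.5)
Your proposal is correct and follows the paper's route: the paper states this corollary without a separate proof, as a direct readout of \cref{pierceSpgeq0shv}(2) for the flat telescope conjecture, and of the restricted equivalences $\mco_p(X)\simeq\cidem_f$ in \cref{pierceSpgeq0shv}(3) and \cref{pierceAbshv}(3) for (1)$\Leftrightarrow$(3)$\Leftrightarrow$(2). Your separatedness check via jointly conservative stalks is a harmless supplement that the paper takes for granted (it uses the same stalk input in proving \cref{pierceSpgeq0shv}), and your profinite paragraph belongs to the next corollary rather than to this statement.
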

\begin{rem}
	Note that the condition (3) is equivalent to the condition that the clopen subsets of $X$ form a basis.
\end{rem}

In particular, the condition is automatically satisfied for profinite spaces.
\begin{cor}
		Let $X$ be a profinite space. Then both  $\shvhyp(X;\spgeq)$ and $\shv(X;\op{Ab})$ satisfy the telescope conjecture.
\end{cor}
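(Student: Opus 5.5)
The plan is to deduce the corollary from \cref{pierceSpgeq0shv} and \cref{pierceAbshv}, with no new categorical input. The flat telescope conjecture for $\shvhyp(X;\spgeq)$ is exactly \cref{pierceSpgeq0shv}(2). That result says every smashing ideal is Serre, so the inclusion $\cidem_s(\shvhyp(X;\spgeq))\hookrightarrow\cidem(\shvhyp(X;\spgeq))$ is surjective. Being an inclusion of subposets, it is then an equivalence.

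Next I would prove $(1)\Leftrightarrow(3)$. By \cref{pierceSpgeq0shv}(1), every smashing ideal of $\shvhyp(X;\spgeq)$ has the form $\shvhyp(U;\spgeq)\xhookrightarrow{j^U_!}\shvhyp(X;\spgeq)$ for a unique $U\in\mco(X)$. By part (3), such an ideal is atomic exactly when $U\in\mco_p(X)$. So under the identification $\mco(X)\simeq\cidem$, the telescope inclusion $\cidem_f\hookrightarrow\cidem$ becomes $\mco_p(X)\hookrightarrow\mco(X)$. This is an equivalence if and only if every open subset is Pierce.

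For $(2)\Leftrightarrow(3)$ I would argue the same way with \cref{pierceAbshv}. Part (1) of that theorem identifies $\cidem(\shv(X;\op{Ab}))$ with $\mco(X)$. By \cref{defpierce}, part (3) identifies $\cidem_f(\shv(X;\op{Ab}))$ with $\mco_p(X)$. So the telescope inclusion for $\shv(X;\op{Ab})$ is again $\mco_p(X)\hookrightarrow\mco(X)$.

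For the profinite case, I would use that a profinite space is Stone, so its clopen subsets form a basis. Hence every open set is a union of clopens and is therefore Pierce, giving (3).

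The only point needing care is that both identifications with $\mco(X)$ are compatible with the telescope inclusions. This holds by construction, since each equivalence is given explicitly by $U\mapsto j^U_!$ of the unit and the atomic part is characterised on the nose. I do not expect any step to be a genuine obstacle.
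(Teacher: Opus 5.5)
Your proposal is correct and is essentially the paper's argument. The paper's proof notes that a profinite space has a basis of clopen subsets and then invokes \cref{shvabtc}, together with \cref{tcaddandheart} to pass between $\shvhyp(X;\spgeq)$ and its heart; you instead unpack \cref{shvabtc} directly from \cref{pierceSpgeq0shv} and \cref{pierceAbshv}, which is equally valid (your paragraph on the flat telescope conjecture is not needed for this statement but does no harm).
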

\begin{proof}
	Since $X$ is profinite, it has a basis of clopen  subsets. Therefore, by \cref{tcaddandheart} and \cref{shvabtc}, we win.
\end{proof}
\begin{rem}
	In fact, when $X$ is profinite, we have $\shv(X;\spgeq)\simeq\shvhyp(X;\spgeq)$ by \cref{shvprofinite}.
\end{rem}
\begin{rem}
	Since a compact Hausdorff space is profinite if and only if it has a basis of clopen subsets, we conclude that for a compact Hausdorff space $X$, the \infcat $\shvhyp(X;\spgeq)$ or $\shv(X;\op{Ab})$ satisfies the telescope conjecture if and only if $X$ is profinite.
\end{rem}
The preceding arguments use hypercompleteness in an essential way.  It remains natural to ask whether the same classification holds for unseparated sheaves of connective spectra.
\begin{qu}
	Does \cref{pierceSpgeq0shv} hold for the (unseparated) prestable \infcat $\shv(X;\spgeq)$ over an arbitrary topological space $X$? 
\end{qu}

\appendix
\part*{Appendices}
\addcontentsline{toc}{part}{Appendices}
\section{Preframes and Frames}
This appendix provides a brief review of the lattice-theoretic foundations used throughout the paper. Specifically, we recall the essential definitions and properties of preframes, coherent frames, and Artin recollements. These geometric and order-theoretic concepts serve as the background for the gluing of smashing frames in the main text.
\subsection{Preframes}
We recall the theory of 
preframes and frames. We refer the reader to \cite[Appendix A]{anel2023left}\footnote{Preframes are called $\otimes$-frames in \cite{anel2023left}.} or \cite[Section~2]{Zariski_Bal} for more details about preframes.
\begin{de}\label{de:preframe}
	A \textbf{preframe} is a presentably symmetric monoidal $0$-category $(Q,\star,1)$ whose unit $1$ is terminal. 
	A morphism of preframes is a symmetric monoidal functor that preserves small joins. 
	We denote by $\pfrm$ the category of preframes.
	
		Recall that a \textbf{frame} is a presentably symmetric monoidal $0$-category whose monoidal structure is Cartesian. 
A morphism of frames is simply a morphism of the underlying preframes.
	We denote by $\frm$ the category of frames.
\end{de}

\begin{rem}

	Every frame is canonically a preframe, yielding a fully faithful embedding
	\[
	\frm \hookrightarrow \pfrm.
	\]
	This embedding fits into the commutative diagram
	\[
	\begin{tikzcd}
		\frm \arrow[r, hook] \arrow[d, "\sim"]
		& \pfrm \arrow[d, "\sim"] \\
		\calg(\prl_0)_{\op{Car}} \arrow[r, hook]
		& \calg(\prl_0)_{\op{ter}}
	\end{tikzcd}
	\]
	where $\calg(\prl_0)_{\op{ter}}$ denotes the full subcategory of $\calg(\prl_0)$ spanned by presentably symmetric monoidal $0$-categories whose unit is terminal, and $\calg(\prl_0)_{\op{Car}}$ denotes the full subcategory spanned by those whose tensor product is given by the meet.
\end{rem}
\begin{de}\label{defkcoh}
	Let $\kappa\geq \omega$ be a regular cardinal. A preframe $Q$ is called $\kappa$-coherent\footnote{When $\kappa=\omega$ we will simply say it is coherent.}, if $Q$ lies in $\calg(\prl_\kappa)$, i.e. the following conditions hold:
	\enu{
		\item Every element is the join of $\kappa$-compact elements;
		\item The unit is $\kappa$-compact;
		\item The $\kappa$-compact elements are closed under tensor products.
	}
	We say a frame $F$ is $\kappa$-coherent if $F$ regarded as a preframe is $\kappa$-coherent, which coincides with the $\kappa$-coherence appeared in  \cite[\textsection3]{krause2023spectrum}.
\end{de}
\begin{de}\label{defradprime}
	Let $Q$ be a preframe. We call an element $r \in Q$ \textbf{radical} if $x^2 \leq r$ implies $x \leq r$ for every $x \in Q$. We write $Q_{\op{rad}}$ for the poset of radical elements of $Q$. An element $p \in Q$ is called a \textbf{prime} if $x_1 \cdot x_2 \leq p$ implies either $x_1 \leq p$ or $x_2 \leq p$, for all $x_1, x_2 \in Q$. 
\end{de}

\begin{rem}\label{operationsqrt}
	We define the nil-radical of an element $x \in Q$ to be
	\[
	\sqrt x = \bigwedge \big\{y\in Q|y\text{ is radical and }y\geq x\big\}.
	\]
	Since the radical objects are the local objects for the relations $y^2 \leq y$, we can also construct $\sqrt{x}$ by a small object argument. Let us put
	$$
	\sqrt[1]{x}=\sup \left\{y \mid \exists k\colon y^k \leq x\right\}
	$$
	and $\sqrt[n+1]{x}=\sqrt[1]{\sqrt[n]{x}}$, then the transfinite colimit of $x \leq \sqrt[1]{x} \leq \sqrt[2]{x} \leq \ldots$ is $\sqrt{x}$. When $Q$ is coherent in the sense of \cref{defkcoh}, we have in fact $\sqrt[1]{x}=\sqrt{x}$.
\end{rem}

\begin{de}\label{internalhomfrm}
	Let $Q\in\calg(\prl_0)$, i.e. a $\otimes$-suplattice. By definition, the map $a \cdot(-): Q \rightarrow Q$ preserves small joins for every $a \in Q$. Thus, we have
	$$
	a \cdot\left(\bigvee_{x \in S} x\right)=\bigvee_{x \in S} (a \cdot x)
	$$
	for every subset $S \subset Q$. It follows that the map $a \cdot(-): Q \rightarrow Q$ has a right adjoint, which is denoted by $a \backslash(-): Q \rightarrow Q$, and called the division by $a \in Q$. For every $a, b, c \in Q$ we have
	$$
	a \cdot b \leq c \Leftrightarrow b \leq a \backslash c .
	$$
\end{de}
\begin{rem}
	By definition, the tensor product of two elements $x$ and $y$ of $Q$ is $x \star y$, and the internal hom $\underline{\operatorname{Hom}}(x, y)$ is the element $x \backslash y$ for every $x, y \in Q$.
\end{rem}
\begin{prop}\label{homradical}
	If  $Q$ is a preframe and $y\in Q$ is radical, then so is $x\backslash y$ for any $x\in Q$.
\end{prop}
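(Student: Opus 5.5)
We must show: if $Q$ is a preframe, $y$ radical, then $x\backslash y$ is radical, i.e.\ for every $z\in Q$ with $z^2\leq x\backslash y$ we need $z\leq x\backslash y$. By the adjunction of \cref{internalhomfrm}, $z\leq x\backslash y$ is equivalent to $x\cdot z\leq y$, so the plan is to show $(x\cdot z)^2\leq y$ and then invoke radicality of $y$.

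The key steps, in order. First, translate the hypothesis: $z^2\leq x\backslash y$ means $x\cdot z^2\leq y$. Second, compute
\[
(x\cdot z)^2 = x\cdot x\cdot z\cdot z = x\cdot (x\cdot z^2)\leq x\cdot y,
\]
using commutativity and associativity of the symmetric monoidal structure and that $x\cdot(-)$ is monotone (it preserves joins, hence order). Third, use that the unit $1$ is terminal in a preframe: since $x\leq 1$, we get $x\cdot y\leq 1\cdot y=y$. Hence $(x\cdot z)^2\leq y$, and radicality of $y$ gives $x\cdot z\leq y$, i.e.\ $z\leq x\backslash y$.

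The only point needing care is the third step, where terminality of the unit is essential; without it (a general $\otimes$-suplattice) the inequality $x\cdot y\leq y$ would fail, and this is exactly where the preframe hypothesis enters. Everything else is formal manipulation of the adjunction $a\cdot(-)\dashv a\backslash(-)$.
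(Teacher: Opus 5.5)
Your proof is correct and follows essentially the same route as the paper: translate $z^2\leq x\backslash y$ into $x\cdot z^2\leq y$ via the adjunction, bound a power of $x\cdot z$ by $y$ using $x\leq 1$, and conclude by radicality of $y$. The paper phrases this with $n$-th powers, $(a\cdot x)^n=a^n\cdot x^n\leq y$, and leaves the inequality $x^n\leq x$ implicit; your version uses only squares, matching the definition of radical directly, and says explicitly where terminality of the unit is used.
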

\begin{proof}
	Let $a\in Q$ be an element such that $a^n \leq x\backslash y$ for some $n\geq1$. We wish to show that $a \in x\backslash y$. Unwinding definition, we have $a^n\star x\leq y$ and hence $(a\star x)^n=a^n\star x^n\leq y$. Since $y$ is radical, we get $a \in x\backslash y$.
\end{proof}

\begin{prop}
	Let $Q$ be a preframe. Then the full subcategory $Q_{\op{rad}}$ is a reflective subcategory, and the correspondent localization is given by $a\mapsto \sqrt{a}$. Furthermore, the localization is symmetric monoidal, i.e. we have $\sqrt{a\cdot b}=\sqrt{a\cdot \sqrt{b}}$ for any ideals $a,b\in Q$. Thereby it induces a symmetric monoidal adjunction 
	$$\begin{tikzcd}
		Q \arrow[r, "\sqrt{(-)}", shift left=1ex]   & \arrow[l,"\perp"', hook', shift left=1ex] Q_{\op{rad}}
	\end{tikzcd} .$$
\end{prop}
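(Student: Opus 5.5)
The statement has three parts: $Q_{\op{rad}}\subset Q$ is reflective, the reflector is $a\mapsto\sqrt a$, and this reflector is compatible with the tensor product. My plan is to handle them in that order.

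\textbf{Reflectivity.} For any $a\in Q$, the element $\sqrt a$ is by definition the meet of the radical elements above $a$. The first step is to check that radical elements are closed under arbitrary meets. If $r_i$ are radical and $x^2\le\bigwedge r_i$, then $x^2\le r_i$ for each $i$, so $x\le r_i$ for each $i$, hence $x\le\bigwedge r_i$. The empty meet is the top element $1$, which is trivially radical. So $\sqrt a$ is the least radical element above $a$, and $\sqrt a\le r\iff a\le r$ for every radical $r$. This is exactly the statement that $\sqrt{(-)}$ is left adjoint to the inclusion. Being a sub-poset closed under meets, $Q_{\op{rad}}$ is a complete lattice, and its joins are $\sqrt{\bigvee(-)}$.

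\textbf{Monoidality.} By the standard criterion \cite[Proposition 2.2.1.9]{ha}, the localization is compatible with $\star$ exactly when, for every $b$, the element $\sqrt b$ becomes equivalent to $b$ after tensoring with any $a$ and radicalizing. Concretely, I need $\sqrt{a\star b}=\sqrt{a\star\sqrt b}$. One inequality is immediate: $b\le\sqrt b$ gives $\le$. For the reverse inequality, I will test against an arbitrary radical $r$: the claim is that $a\star b\le r$ implies $a\star\sqrt b\le r$. By the adjunction of \cref{internalhomfrm}, $a\star b\le r$ means $b\le a\backslash r$. By \cref{homradical}, $a\backslash r$ is radical, so $\sqrt b\le a\backslash r$ by the reflectivity step. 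Moving $a$ back across the adjunction gives $a\star\sqrt b\le r$. Taking $r=\sqrt{a\star b}$ then yields the equality.

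\textbf{Conclusion.} It follows that $Q_{\op{rad}}$ carries the induced symmetric monoidal structure $r\star' s=\sqrt{r\star s}$, with unit $\sqrt1=1$, and $\sqrt{(-)}$ is symmetric monoidal with lax monoidal right adjoint the inclusion. This gives the displayed symmetric monoidal adjunction. The only nonformal input is \cref{homradical}, which is used in the monoidality step, and I do not expect any real obstacle anywhere in the argument.
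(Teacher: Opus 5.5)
Your proof is correct and follows the paper's argument. The paper also treats reflectivity as immediate from the definition of $\sqrt{a}$ (you additionally spell out that radical elements are closed under arbitrary meets), and it proves $\sqrt{a\cdot b}=\sqrt{a\cdot\sqrt{b}}$ exactly as you do: it rewrites $a\cdot b\le c$ as $b\le a\backslash c$, uses \cref{homradical} to get $\sqrt{b}\le a\backslash c$, and then adjoins back to obtain $a\cdot\sqrt{b}\le c$.
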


\begin{proof}
	The $a\mapsto \sqrt{b}$ provides a reflection by definition.
	
	Thus it suffices to show that for any elements $a,b$ in $\mcc$ the natural map
	$$\sqrt{a\cdot b}\longrightarrow\sqrt{a\cdot\sqrt{b}}$$
	is an equivalence in $Q_{\mathrm{rad}}$. Indeed, if $c$ is radical element such that $a\cdot b\leq c$, then  $b\leq a\backslash c$ and hence by \cref{homradical} $$\sqrt b\leq a\backslash c.$$ Equivalently, we get $a\cdot\sqrt{b}\leq c$. Consequently, $\sqrt{a\cdot b}=\sqrt{a\cdot\sqrt{b}}$, 
\end{proof}

\begin{de}
	Let $Q$ be a preframe. We define $\cidem(Q) \subset Q$ to be the subposet spanned by (co)idempotent elements, i.e. those $x$ such that $x^2=x$.
\end{de}
\begin{prop}
	$\cidem(Q)$ is a sub-preframe of $Q$. Moreover, $\cidem(Q)$ is a frame.
\end{prop}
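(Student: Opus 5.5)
We verify the two claims in turn, using only the preframe axioms of \cref{de:preframe}: $Q$ is a complete lattice, $\cdot$ distributes over arbitrary joins in each variable, and the unit $1$ is the top element. The key consequence of the last axiom is
\[
x\cdot y\le x\cdot 1=x \quad\text{for all } x,y\in Q,
\]
and likewise $x\cdot y\le y$. In particular $x^2\le x$ always, so $x\in\cidem(Q)$ exactly when $x\le x^2$.

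\emph{Step 1: $\cidem(Q)$ is a sub-preframe.} We check closure under the structure maps one at a time.
\begin{itemize}
\item The unit satisfies $1\cdot 1=1$, so $1\in\cidem(Q)$.
\item For $x,y\in\cidem(Q)$, commutativity and associativity give $(xy)^2=x^2y^2=xy$, so $\cidem(Q)$ is closed under products.
\item Let $S\subset\cidem(Q)$ and $s=\bigvee S$. Distributivity gives
\[
s^2=\bigvee_{a,b\in S}ab\;\ge\;\bigvee_{a\in S}a^2=\bigvee_{a\in S}a=s .
\]
Since $s^2\le s$ always holds, $s^2=s$, so $\cidem(Q)$ is closed under arbitrary joins.
\end{itemize}
Joins in $\cidem(Q)$ are therefore computed in $Q$, and $\cidem(Q)$ inherits a presentably symmetric monoidal $0$-category structure with terminal unit. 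Hence it is a sub-preframe, and the inclusion is a preframe map.

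\emph{Step 2: $\cidem(Q)$ is a frame.} By \cref{de:preframe}, it suffices to show that the tensor product on $\cidem(Q)$ is the binary meet. Let $x,y\in\cidem(Q)$. From $xy\le x$ and $xy\le y$, the element $xy$ is a lower bound of $\{x,y\}$. Now suppose $z\in\cidem(Q)$ with $z\le x$ and $z\le y$. Monotonicity of $\cdot$ gives
\[
z=z^2=z\cdot z\le x\cdot y .
\]
So $xy$ is the greatest lower bound of $x$ and $y$ inside $\cidem(Q)$, and the monoidal structure is Cartesian.

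Distributivity of this meet over joins is inherited from distributivity of $\cdot$ over joins, which we already used in Step 1. The top element $1$ is the unit for the meet. Therefore $\cidem(Q)$ is a frame.

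The only non-formal step is the join-closure computation in Step 1. Its key input is the inequality $s^2\ge\bigvee_{a\in S}a^2$, which says that the diagonal terms of the expanded join already dominate $s$. The argument nowhere requires finite joins to be treated separately: the empty join gives $0$, and $0^2=0$ holds by distributivity.
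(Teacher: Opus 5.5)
Your proof is correct and follows the paper's argument: the same identity $(ab)^2=a^2b^2$ gives closure under products, and the same chain $\bigvee a_i\geq(\bigvee a_i)^2\geq\bigvee a_i^2=\bigvee a_i$ gives closure under joins. Your Step 2 only spells out what the paper leaves implicit in ``every element is idempotent'': in a preframe where every element is idempotent, the tensor product is the binary meet, since $z=z^2\leq xy$.
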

\begin{proof}
	The poset $\cidem(Q)$ is closed under tensor product since, for every $a, b \in Q$, we have $(a \cdot b)^2=a^2 \cdot b^2=a \cdot b$. Let us see that it is closed under small joins: for any family of elements $a_i \in Q$, we have
	$$
	\bigvee a_i \geq\left(\bigvee a_i\right)^2=\bigvee a_i \cdot a_j \geq \bigvee a_i^2=\bigvee a_i .
	$$
	This shows that $\bigvee a_i=\left(\bigvee a_i\right)^2$ and that $\cidem(Q)$ is a sub-preframe of $Q$. Then it is a frame since every element is idempotent.
\end{proof}
\begin{prop}[{\cite[Corollary A.4.9]{anel2023left}}]\label{prefrmadjoints}
	The operations $(-)_{\op{rad}}$ and  $\cidem(-)$ induce the following adjunctions:
	$$\begin{tikzcd}
		\pfrm \arrow[r, "(-)_{\op{rad}}", shift left=2ex] \arrow[r, "\cidem(-)"', shift right=2ex]  & \arrow[l,"\perp","\perp"', hook'] \frm
	\end{tikzcd}  \text{\quad or equivalently\quad} \begin{tikzcd}
		\calg(\prl_0)_{\op{ter}} \arrow[r, "(-)_{\op{rad}}", shift left=2ex] \arrow[r, "\cidem(-)"', shift right=2ex]  & \arrow[l,"\perp","\perp"', hook'] \calg(\prl_0)_{\op{Car}}
	\end{tikzcd} .$$
\end{prop}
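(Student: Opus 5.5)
The plan is to prove the two adjunctions separately, using only the preceding results on radicals and on $\cidem(Q)$. The second formulation will follow at once from the first via the identifications $\frm\simeq\calg(\prl_0)_{\op{Car}}$ and $\pfrm\simeq\calg(\prl_0)_{\op{ter}}$ recorded in the remark after \cref{de:preframe}.

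\emph{Right adjoint.} First I check that $\cidem(Q)$ is a frame in the sense of \cref{de:preframe}. By the preceding proposition it is a sub-preframe of $Q$ in which every element is idempotent, and it contains the unit $1$, since $1\cdot1=1$. For idempotent $a,b$ we have $a\cdot b\le a$ and $a\cdot b\le b$, because $1$ is terminal. Conversely, if $c\le a$ and $c\le b$ with $c$ idempotent, then $c=c\cdot c\le a\cdot b$. Hence $a\cdot b=a\wedge b$ in $\cidem(Q)$, and the monoidal structure is cartesian.

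Next, let $f\colon F\to Q$ be a preframe map out of a frame $F$. For every $x\in F$,
\[
f(x)=f(x\wedge x)=f(x)\cdot f(x),
\]
so $f$ lands in $\cidem(Q)$. Because the inclusion $\cidem(Q)\hookrightarrow Q$ preserves joins and tensor products and is fully faithful, this gives a natural bijection $\Hom_{\pfrm}(F,Q)\simeq\Hom_{\frm}(F,\cidem(Q))$.

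\emph{Left adjoint.} By the preceding proposition, $\sqrt{(-)}\colon Q\to Q_{\op{rad}}$ is a symmetric monoidal reflection. Joins in $Q_{\op{rad}}$ are $\sqrt{\bigvee}$ and the tensor product is $\sqrt{a\cdot b}$, so $\sqrt{(-)}$ is a preframe map. To see that $Q_{\op{rad}}$ is a frame, take radicals $a,b$. On one side, $\sqrt{ab}\le a\wedge b$, since $ab\le a$, $ab\le b$ and $a\wedge b$ is radical. On the other side, $(a\wedge b)^2\le a\cdot b$ by monotonicity of the tensor, so $a\wedge b\le\sqrt{ab}$. Hence the tensor product on $Q_{\op{rad}}$ is the meet.

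For the universal property, let $f\colon Q\to F$ be a preframe map into a frame, and let $g$ be its right adjoint, which exists because $f$ preserves joins. I claim $g(f(x))$ is radical. Indeed, if $y^2\le gf(x)$, then
\[
f(y)=f(y)^2=f(y^2)\le f(x),
\]
so $y\le gf(x)$. Therefore $\sqrt{x}\le gf(x)$, that is, $f(\sqrt{x})\le f(x)$, and so $f(\sqrt x)=f(x)$. Thus $f$ inverts the localization, and it factors uniquely as $\bar f\circ\sqrt{(-)}$ with $\bar f=f|_{Q_{\op{rad}}}$. The map $\bar f$ preserves joins, since $\bar f(\sqrt{\bigvee a_i})=f(\bigvee a_i)$. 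It also preserves tensors, since $\bar f(\sqrt{ab})=f(a)f(b)$.

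Naturality in both variables is formal in each case. The only genuinely non-formal step is showing that $gf(x)$ is radical, which is where the frame hypothesis $f(y)^2=f(y)$ on the target is used. I expect no further obstacle.
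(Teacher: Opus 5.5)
Your proof is correct and complete. The paper gives no proof of its own and simply cites Anel et al.; you have written out a self-contained argument from the two propositions just before this one in the appendix (the symmetric monoidal radical reflection $Q\to Q_{\op{rad}}$, and $\cidem(Q)$ as a sub-preframe), and your key step, that $gf(x)$ is radical, is exactly the paper's remark that $Q_{\op{rad}}$ consists of the local objects for the relations $y^2\le y$, made explicit: preframe maps into frames invert these relations.
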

One can deduce the following statement from \cref{cidemkappaprl} directly, but here we provide a straightforward argument in the preframe setting.
\begin{prop}
	Let $Q$ be a $\kappa$-coherent preframe, where $\kappa>\omega$ is an uncountable regular cardinal. Then $\cidem(Q)$ is a $\kappa$-coherent frame. Furthermore we have $\cidem(Q)^\kappa\simeq \cidem(Q)\cap Q^\kappa$.
\end{prop}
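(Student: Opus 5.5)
We argue directly in the preframe $Q$: first show that $\cidem(Q)\cap Q^\kappa$ is join-dense in $\cidem(Q)$, then that it is closed under finite meets, and finally identify it with the $\kappa$-compact elements of $\cidem(Q)$. Recall that $\cidem(Q)\subset Q$ is a sub-preframe: it is closed under all joins and under $\cdot$, and the product $\cdot$ is the meet in $\cidem(Q)$. Hence a join in $\cidem(Q)$ is computed as in $Q$. Consequently any $e\in\cidem(Q)\cap Q^\kappa$ stays $\kappa$-compact in $\cidem(Q)$: a $\kappa$-filtered join of idempotents is the same join in $Q$. This gives the inclusion $\cidem(Q)\cap Q^\kappa\subset\cidem(Q)^\kappa$.

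The key step is join-density. Given $x=x^2\in\cidem(Q)$, write $x=\bigvee_{a\in S}a$ with $S=\{a\in Q^\kappa\mid a\le x\}$. For such $a$, iterate $x=x^2$: we have $a\le x=x^{n}$ for all $n$, so by compactness of $a$ and the expansion $x^{n}=\bigvee$ of $n$-fold products of elements of $S$, we find $a\le a_1\cdot a_2$ with $a_i\in S$. Iterating this countably often produces a sequence in $S$. Then set $e:=\bigvee_{n}b_n$, where $b_0=a$ and $b_{n}\le b_{n+1}^2$, using directedness of $S$ to arrange $a_1=a_2$. Since $\kappa>\omega$, this countable join of $\kappa$-compact elements is $\kappa$-compact. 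Moreover $e^2\ge\bigvee b_{n+1}^2\ge\bigvee b_n=e$, and $e^2\le e$ because the unit is terminal. So $e$ is an idempotent, $\kappa$-compact, with $a\le e\le x$. Therefore $x$ is a join of elements of $\cidem(Q)\cap Q^\kappa$. This is exactly where uncountability of $\kappa$ is used, and I expect making the inductive choice precise, namely getting $a_1=a_2$ via directedness of the compact elements below $x$, to be the main technical point.

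Closure under finite meets is immediate. The unit $1$ is idempotent and $\kappa$-compact by coherence of $Q$. For $e,e'\in\cidem(Q)\cap Q^\kappa$, the meet $e\cdot e'$ is idempotent, and it is $\kappa$-compact because $Q^\kappa$ is closed under products.

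It remains to identify the compact elements. If $x\in\cidem(Q)^\kappa$, write $x$ as a $\kappa$-filtered join of $\kappa$-small joins of elements of $\cidem(Q)\cap Q^\kappa$. By compactness, $x$ equals one such $\kappa$-small join, which lies in $Q^\kappa$. This gives $\cidem(Q)^\kappa\simeq\cidem(Q)\cap Q^\kappa$. Since $\cidem(Q)$ is a frame, this shows $\cidem(Q)\in\calg(\prl_\kappa)$, i.e. $\cidem(Q)$ is $\kappa$-coherent.
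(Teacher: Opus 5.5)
Your proof is correct and follows the paper's argument. Both write an idempotent $x$ as a $\kappa$-directed join of $\kappa$-compact elements, use $x=x^2$ and compactness to build a chain $b_n\le b_{n+1}^2$ below $x$, and take its countable join, which is $\kappa$-small since $\kappa>\omega$, to obtain a $\kappa$-compact idempotent above each compact piece. You also spell out the closure under finite meets and the reverse inclusion $\cidem(Q)^\kappa\subset\cidem(Q)\cap Q^\kappa$; the paper leaves both implicit in its reduction to $\op{Ind}_\kappa(\cidem(Q)\cap Q^\kappa)\simeq\cidem(Q)$.
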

\begin{proof}
	Note that we always have $\cidem(Q)\cap Q^\kappa\subset \cidem(Q)^\kappa$. So it suffices to show $$\op{Ind}_\kappa(\cidem(Q)\cap Q^\kappa)\simeq \cidem(Q).$$
	Now given an idempotent element $x\in \cidem(Q)$, by assumption $x\simeq \bigvee_{\alpha\in I} x_\alpha$ can be written as a $\kappa$-filtered join of $\kappa$-compact elements. For each $\alpha\in I$, we will construct an increasing sequence of $\kappa$-compact elements by induction $$x_\alpha=x_{\alpha,0}\leq x_{\alpha,1}\leq ... $$
	such that each $x_{\alpha,n}\leq x$ and $x_{\alpha,n}\leq x_{\alpha,n+1}^2$. We let $x_{\alpha,0}=x_{\alpha}$. Assume that $x_{\alpha,n}$ has been constructed, then we have $$x_{\alpha,n}\leq x=x^2\simeq \bigvee_{\beta\in I} x_\beta^2.$$  The $\kappa$-compactness of $x_{\alpha,n}$ implies there exists a $\beta\geq \alpha$ such that $x_{\alpha,n}\leq x_\beta^2$. We let $x_{\alpha,n+1}:=x_\beta$. Then we get the desired sequence of $\kappa$-compact elements. 
	
	We define $\bar{x}_\alpha:=\bigvee_n x_{\alpha,n}$. Then $\bar{x}_\alpha$ is idempotent because $\bar{x}_\alpha= \bigvee_n x_{\alpha,n}\leq \bigvee_n x_{\alpha,n+1}^2\leq \bar{x}_\alpha^2$. Furthermore, $\bar{x}_\alpha$ is $\kappa$-compact because the sequential colimit is a $\kappa$-small diagram. Consequently, $x\simeq \bigvee_\alpha x_\alpha\simeq\bigvee_\alpha \bar{x}_\alpha$ proves that any element in $\cidem(Q)$ is a join of elements in $\cidem(Q)\cap Q^\kappa.$
\end{proof}

\subsection{Artin gluing of frames}\label{glueframes}

Artin gluing is a fundamental construction in locale theory and topos theory, firstly introduced in \cite[Expos\'e IV, \S9]{artin1972theorie}.
\begin{thm}\label{artingluethm}
Let $\mathcal{O}$ and $\mathcal{K}$ be frames, and let $\varphi \colon \mathcal{O} \to \mathcal{K}$ be an order-preserving map that preserves finite meets.
Then the subposet
\[
\mco\overleftarrow{\times}_{\!\varphi}\mck
:=
\{(a,b)\mid \varphi(a) \geq b\} \subset  \mathcal{O}\times \mathcal{K}
\]
equipped with the pointwise partial order
forms a subframe of $\mco\times\mck$. Furthermore,
\[
(\mco\overleftarrow{\times}_{\!\varphi}\mck)_{\leq (1,0)}\simeq \mco
\quad\text{and}\quad
(\mco\overleftarrow{\times}_{\!\varphi}\mck)_{\geq (1,0)}\simeq \mck,
\]
so that $\mco$ identifies with an open sublocale of $\mco\overleftarrow{\times}_{\!\varphi}\mck$, and $\mck$ identifies with its closed complement.
\end{thm}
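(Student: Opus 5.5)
We check three things in turn, all by pointwise reasoning in $\mco\times\mck$: that $\mco\overleftarrow{\times}_{\!\varphi}\mck$ is closed under the frame operations of $\mco\times\mck$, and then the two identifications of the slices at $(1,0)$. Joins and finite meets in $\mco\times\mck$ are computed coordinatewise.

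\textbf{Subframe.} For finite meets, the top element $(1,1)$ lies in the subposet because $\varphi(1)=1$; here we use that $\varphi$ preserves the empty meet. If $\varphi(a)\geq b$ and $\varphi(a')\geq b'$, then
\[
\varphi(a\wedge a')=\varphi(a)\wedge\varphi(a')\geq b\wedge b',
\]
so $(a\wedge a',b\wedge b')$ lies in the subposet. For a family $(a_i,b_i)$ in the subposet, monotonicity gives $\varphi(\bigvee a_i)\geq\varphi(a_i)\geq b_i$ for each $i$, hence $\varphi(\bigvee a_i)\geq\bigvee b_i$. The empty join $(0,0)$ is included, since $\varphi(0)\geq 0$. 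So the subposet is closed under arbitrary joins and finite meets, which makes it a subframe by the definition in the paper. Infinite distributivity is inherited from $\mco\times\mck$.

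\textbf{Open part.} The element $(1,0)$ belongs to the subposet. Elements below it are exactly the pairs $(a,0)$; each such pair lies in the subposet, since $\varphi(a)\geq 0$ always holds. Hence $a\mapsto(a,0)$ is an order isomorphism $\mco\simeq(\mco\overleftarrow{\times}_{\!\varphi}\mck)_{\leq(1,0)}$. This slice is the open sublocale attached to $(1,0)$, with reflection $x\mapsto x\wedge(1,0)$.

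\textbf{Closed part.} Elements above $(1,0)$ are the pairs $(1,b)$, and each lies in the subposet because $\varphi(1)=1\geq b$. Hence $b\mapsto(1,b)$ gives $\mck\simeq(\mco\overleftarrow{\times}_{\!\varphi}\mck)_{\geq(1,0)}$. This is the closed sublocale complementary to the open one, with reflection $x\mapsto x\vee(1,0)$.

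The only point that needs care is the convention that $\varphi$ preserves the top element. This is part of preserving finite meets, and it is used in two places: to include $(1,1)$ in the subposet, and to obtain all of $\mck$ in the closed part. Everything else is formal.
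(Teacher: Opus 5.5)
Your proposal is correct and follows the same pointwise argument as the paper. You prove closure under finite meets and arbitrary joins using meet-preservation and monotonicity of $\varphi$, and then compute the two slices at $(1,0)$ directly. You also write out the closed-part computation that the paper leaves to the reader, and you rightly note that this step, like the inclusion of the top element $(1,1)$, uses $\varphi(1)=1$.
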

\begin{proof}
	Let $\mcf$ denote $	\mco\overleftarrow{\times}_{\!\varphi}\mck$. It suffices to show that the inclusion $\mcf\subset \mco\times \mck$ is closed under  arbitrary joins and finite meets.
It is obviously closed under finite meets by assumption.	
We now check that it is closed under  arbitrary joins.
	Let $\{(a_i,b_i)\}_{i\in I}\subset \mcf$. Their pointwise join is $(\bigvee_I a_i,\, \bigvee_I b_i)$. We check the gluing condition:
	Given $b_i \leq \varphi(a_i)$ for all $i$, and using $a_i \leq \bigvee_I a_k$ together with order preservation of $\varphi$, we get
	\[
	\varphi(a_i) \leq \varphi\Bigl(\bigvee_I a_k\Bigr) \quad \text{for all } i.
	\]
	Hence $b_i \leq \varphi(\bigvee_I a_k)$ for all $i$, and by the definition of joins,
	\[
	\bigvee_I b_i \;\leq\; \varphi\Bigl(\bigvee_I a_k\Bigr).
	\]
	Thus arbitrary pointwise joins remain in \mcf. The bottom element $(0,0)$ is also in \mcf since $0 \leq \varphi(0)$ holds.

	Now 
	the principal lower set is defined by
	\[
	\mcf_{\leq (1,0)} = \{(a,b)\in \mcf \mid (a,b) \leq (1, 0)\}.
	\]
	By the pointwise order, $(a,b) \leq (1, 0)$ means:
	\[
	a \leq 1 \quad\text{(always true),}\qquad
	b \leq 0 \quad\text{(hence } b = 0\text{)}.
	\]
	Plugging the forced condition $b = 0$ back into the definition of $\mcf$, the gluing condition becomes
	$
	0 \leq \varphi(a).
	$
	 This holds for all $a \in \mco$ without any constraint. Therefore, the elements of $\mcf_{\leq (1,0)}$ are precisely
	\[
	\mcf_{\leq (1,0)} = \{(a,\, 0) \mid a \in \mco\}.
	\]
 Hence
	$
	\mco \simeq \mcf_{\leq (1,0)},
	$
	realizing $\mco$ as the open sublocale given by the principal lower set of $(1,0)$.
	
	The argument for the closed part $\mcf_{\geq (1,0)}\simeq \mck$ is similar, we leave it to the reader.
\end{proof}
\begin{rem}
   Note that the inclusion $\mco\overleftarrow{\times}_{\!\varphi}\mck\hookrightarrow\mco\times\mck$ is an equivalence if and only if the gluing functor $\varphi(\mco)=\{1\}$ is trivial.
\end{rem}
\begin{prop}\label{frmopencloseddecomp}
Let \mcf be a frame and $u\in\mcf$ be an element. Then
	\[
	\mcf \;\simeq\; \Bigl\{(a,b)\in \mcf_{\leq u} \times \mcf_{\geq u} \;\Bigm|\;  u \vee (u \backslash a)\geq b \Bigr\},
	\]
	where
	$u \backslash a \;:=\; \bigvee \{\, y \in \mcf \mid y \wedge u \leq a \,\}.$
	That means \mcf is obtained by gluing along the map $$\varphi:\mcf_{\leq u}\xrightarrow{a\mapsto u\vee (u\backslash a)}\mcf_{\geq u}.$$
\end{prop}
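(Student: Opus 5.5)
The plan is to reduce the statement to \cref{artingluethm}. I will show that the map
\[
\Psi\colon \mcf\longrightarrow \mcf_{\leq u}\times\mcf_{\geq u},\qquad x\longmapsto (x\wedge u,\; x\vee u),
\]
is an order embedding whose image is exactly the set of pairs $(a,b)$ with $b\leq u\vee(u\backslash a)$. First I would check that $\varphi(a):=u\vee(u\backslash a)$ is order preserving and preserves finite meets as a map $\mcf_{\leq u}\to\mcf_{\geq u}$. Monotonicity is clear. The top element of $\mcf_{\leq u}$ is $u$, and $u\backslash u=1$, so $\varphi(u)=1$. For binary meets, the Heyting implication gives $u\backslash(a\wedge a')=(u\backslash a)\wedge(u\backslash a')$. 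Distributivity of $u\vee-$ over finite meets, which holds in any frame, then yields $\varphi(a\wedge a')=\varphi(a)\wedge\varphi(a')$.

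Next I would show that $\Psi$ is injective and reflects order. This is the standard fact that $x=(x\wedge u)\vee\bigl((x\vee u)\wedge(u\backslash x)\bigr)$, or more simply that $x\wedge u\leq y\wedge u$ and $x\vee u\leq y\vee u$ together imply $x\leq y$. To see this, write
\[
x=x\wedge(x\vee u)\leq x\wedge(y\vee u)=(x\wedge y)\vee(x\wedge u)\leq y.
\]
For the image to land in the glued subposet, note that $x\leq u\backslash(x\wedge u)$, since $x\wedge u\leq x\wedge u$. Hence $x\vee u\leq u\vee\bigl(u\backslash(x\wedge u)\bigr)=\varphi(x\wedge u)$.

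The main step is surjectivity onto the glued poset. Given $a\leq u$ and $b\geq u$ with $b\leq u\vee(u\backslash a)$, I would set $x:=a\vee\bigl(b\wedge(u\backslash a)\bigr)$. Then
\[
x\wedge u=a\vee\bigl(b\wedge u\wedge(u\backslash a)\bigr)=a\vee(b\wedge u\wedge a)=a,
\]
using $u\wedge(u\backslash a)=u\wedge a=a$. Also
\[
x\vee u=u\vee\bigl(b\wedge(u\backslash a)\bigr)=(u\vee b)\wedge\bigl(u\vee(u\backslash a)\bigr)=b\wedge\varphi(a)=b.
\]
The hypothesis $b\leq\varphi(a)$ is used precisely in the last equality; verifying these distributivity manipulations is the only real obstacle. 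Since $\Psi$ is a bijective order embedding onto $\mco\overleftarrow{\times}_{\!\varphi}\mck$, with $\mco=\mcf_{\leq u}$ and $\mck=\mcf_{\geq u}$, it is an isomorphism of posets and hence of frames. Under this isomorphism $u$ corresponds to $(u,u)$, which matches the element $(1,0)$ of \cref{artingluethm}. This yields the open/closed identification.
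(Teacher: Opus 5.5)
Your proof is correct and follows essentially the same route as the paper: the same map $x\mapsto(x\wedge u,\,x\vee u)$, the same distributivity argument for injectivity (you prove order-reflection directly, which is slightly cleaner), and the same preimage. Your $x=a\vee\bigl(b\wedge(u\backslash a)\bigr)$ equals the paper's $b\wedge(u\backslash a)$, because $a\leq b\wedge(u\backslash a)$. Your additional check that $\varphi$ preserves finite meets is exactly what is needed to invoke \cref{artingluethm}, which the paper leaves implicit.
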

\begin{proof}
		Let
	$\mcp \;=\; \Bigl\{(a,b)\in \mcf_{\leq u} \times \mcf_{\geq u} \;\Bigm|\; u \vee (u \backslash a)\geq b \Bigr\}.$
	We wish to show the canonical map
	$$f \colon \mcf \longrightarrow \mcf_{\leq u} \times \mcf_{\geq u}, 
	\qquad
	f(x) \;=\; (x \wedge u,\; x \vee u)$$
restricts an equivalence $f:\mcf\xrightarrow{\sim}\mcp$.
	
	We first check for any $x \in \mcf$, its image $(a,b)=(x \wedge u,\, x \vee u)$ lies in \mcp, i.e., satisfies the gluing condition. By the implication properties,
	$$u \backslash a \;=\; u \backslash (x \wedge u) \;=\; (u \backslash x) \wedge (u \backslash u) \;=\; u \backslash x,$$ because $u \backslash u=1$.
 One also has $x \leq u \backslash x$, hence
	$$b \;=\; x \vee u \;\leq\; (u \backslash x) \vee u \;=\; u \vee (u \backslash a),$$
	so $f(x) \in \mcp$.
	
	We now show $f$ is an embedding. Assume $x \wedge u = y \wedge u$ and $x \vee u = y \vee u$. Since frames are infinitely distributive (hence distributive), expand using distributivity:
	$$x \;=\; x \wedge (x \vee u) \;=\; x \wedge (y \vee u) \;=\; (x \wedge y) \vee (x \wedge u).$$
	Substitute $x \wedge u = y \wedge u$:
	$$x \;=\; (x \wedge y) \vee (y \wedge u) \;=\; y \wedge (x \vee u).$$
	Substitute $x \vee u = y \vee u$:
	$$x \;=\; y \wedge (y \vee u) \;=\; y.$$
	Hence $f$ is an embedding.
	
We now show $f$ is surjective.
	Given any $(a,b) \in \mcp$ satisfying the gluing condition, we reconstruct a unique preimage $x \in \mcf$ by
	$x \;=\; b \wedge (u \backslash a).$
	We first verify $x \wedge u = a$. We have
	$$x \wedge u \;=\; (b \wedge (u \backslash a)) \wedge u \;=\; b \wedge ((u \backslash a) \wedge u).$$
	By implication properties, $(u \backslash a) \wedge u = a \wedge u = a$, hence $x \wedge u = b \wedge a$. Since $(a,b) \in \mcf_{\leq u} \times \mcf_{\geq u}$ implies $a \leq u \leq b$, we get $b \wedge a = a$, so $x \wedge u = a$.
		We then verify $x \vee u = b$. We have
	$$x \vee u \;=\; (b \wedge (u \backslash a)) \vee u \;=\; (b \vee u) \wedge ((u \backslash a) \vee u),$$
	using distributivity in frames. Since $b \geq u$ (as $b \in \mcf_{\geq u}$), we have $b \vee u = b$, hence
	$$x \vee u \;=\; b \wedge \bigl(u \vee (u \backslash a)\bigr).$$
	By the gluing condition for $(a,b) \in \mcp$, $b \leq u \vee (u \backslash a)$, so the meet equals the smaller element:
	$$x \vee u \;=\; b.$$
	Thus $f$ is surjective.
\end{proof}

\begin{prop}\label{lem:interval-equivalence-implies-injective}
	Let $f:\mathcal F\to \mathcal L$ be a morphism of frames, and let
	$x\in \mathcal F$. Suppose that the induced maps
	\[
	\mathcal F_{\leq x}\longrightarrow \mathcal L_{\leq f(x)}
	\qquad\text{and}\qquad
	\mathcal F_{\geq x}\longrightarrow \mathcal L_{\geq f(x)}
	\]
	are equivalences of posets. Then $f$ is an embedding. 
	
	Moreover, if $f$ preserves internal homs, i.e., it is a Heyting map, 
	then $f$ is an equivalence.
\end{prop}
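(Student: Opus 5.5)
The plan is to reduce both assertions to the recollement decomposition of \cref{frmopencloseddecomp}, applied on both sides and compared through $f$. For any $y\in\mcf$, that result identifies $y$ with the pair $(y\wedge x,\ y\vee x)\in\mcf_{\leq x}\times\mcf_{\geq x}$, and this assignment is injective.

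\textbf{Injectivity.} Since $f$ is a frame map, it preserves finite meets and finite joins. Hence
\[
f(y\wedge x)=f(y)\wedge f(x),\qquad f(y\vee x)=f(y)\vee f(x).
\]
In other words, the square formed by $\mcf\to\mcf_{\leq x}\times\mcf_{\geq x}$, the corresponding map for $\mcl$ and $f(x)$, and the two restricted maps commutes. The bottom product map is an equivalence by hypothesis, and the top map is an embedding by \cref{frmopencloseddecomp}. Therefore the composite $\mcf\to\mcl_{\leq f(x)}\times\mcl_{\geq f(x)}$ is injective and reflects order. As a result $f$ reflects order, so $f$ is an embedding. Equivalently, one can invoke \cref{conserinjectfrms}: if $f(y)=f(z)$, then both components agree, so $y=z$.

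\textbf{Surjectivity when $f$ is Heyting.} Take $c\in\mcl$ and put $a'=c\wedge f(x)$ and $b'=c\vee f(x)$. By hypothesis there are unique elements $a\in\mcf_{\leq x}$ and $b\in\mcf_{\geq x}$ with $f(a)=a'$ and $f(b)=b'$. The candidate preimage comes from the reconstruction formula in the proof of \cref{frmopencloseddecomp}:
\[
y:=b\wedge(x\backslash a).
\]
Since $f$ preserves Heyting implication and meets, we get $f(y)=b'\wedge(f(x)\backslash a')$, and this equals $c$ by the same reconstruction formula in $\mcl$.

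One point still needs checking: that $(a,b)$ satisfies the gluing condition $b\leq x\vee(x\backslash a)$, so that $y\wedge x=a$ and $y\vee x=b$. Strictly speaking this is not needed, because $f(y)=c$ has already been computed directly. If one wants it anyway, it follows from injectivity of $\mcf_{\geq x}\to\mcl_{\geq f(x)}$. Indeed, $b\wedge(x\vee(x\backslash a))\in\mcf_{\geq x}$ maps to $b'\wedge(f(x)\vee(f(x)\backslash a'))=b'$, where the last equality uses the gluing condition in $\mcl$. Hence $b\wedge(x\vee(x\backslash a))=b$, which is the inequality required.

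\textbf{Main obstacle.} The only delicate step is that the Heyting hypothesis is used essentially: $f(x\backslash a)=f(x)\backslash f(a)$. Everything else consists of frame identities already verified in \cref{frmopencloseddecomp}.
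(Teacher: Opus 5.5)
Your proposal is correct and follows essentially the paper's route. For injectivity you compose with $y\mapsto(y\wedge x,\,y\vee x)$ and cite \cref{frmopencloseddecomp}, whereas the paper proves the identity $y=(y\vee x)\wedge\bigl(x\backslash(y\wedge x)\bigr)$ directly and then applies \cref{conserinjectfrms}. For surjectivity in the Heyting case, your elementwise use of the reconstruction formula $y=b\wedge(x\backslash a)$ is exactly what the paper's bare citation of \cref{frmopencloseddecomp} leaves implicit.
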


\begin{proof}
	First recall the following elementary identity in any frame. For all
	$y,x\in \mathcal F$, one has
	\[
	y=(y\vee x)\wedge \bigl(x\backslash (y\wedge x)\bigr).
	\]
	Indeed, since $y\wedge x\leq y\wedge x$, the defining property of
	$x\backslash (y\wedge x)$ gives
	$
	y\leq x\backslash (y\wedge x).
	$
	Thus
	\[
	y\leq (y\vee x)\wedge \bigl(x\backslash (y\wedge x)\bigr).
	\]
	Conversely, writing $r=x\backslash (y\wedge x)$, we have
	$
	x\wedge r\leq y\wedge x\leq y.
	$
	Hence, by distributivity,
	\[
	(y\vee x)\wedge r
	=
	(y\wedge r)\vee (x\wedge r)
	\leq y.
	\]
	This proves the identity.
	
	Now suppose that $a\leq b\in \mathcal F$ satisfy $f(a)=f(b)$. Since $f$ preserves
	finite meets, we get
	\[
	f(a\wedge x)=f(a)\wedge f(x)=f(b)\wedge f(x)=f(b\wedge x).
	\]
	Both $a\wedge x$ and $b\wedge x$ lie in $\mathcal F_{\leq x}$, and the restriction
	$
	\mathcal F_{\leq x}\to \mathcal L_{\leq f(x)}
	$
	is injective. Hence
	$
	a\wedge x=b\wedge x.
	$
	Similarly, since $f$ preserves joins,
	\[
	f(a\vee x)=f(a)\vee f(x)=f(b)\vee f(x)=f(b\vee x),
	\]
	and the injectivity of
	$
	\mathcal F_{\geq x}\to \mathcal L_{\geq f(x)}
	$
	implies
	$
	a\vee x=b\vee x.
	$
	Using the identity above, we obtain
	\[
	a
	=
	(a\vee x)\wedge \bigl(x\backslash (a\wedge x)\bigr)
	=
	(b\vee x)\wedge \bigl(x\backslash (b\wedge x)\bigr)
	=
	b.
	\]
	Thus by \cref{conserinjectfrms}, $f$ is an embedding.

	When $f$ is a Heyting map, the second statement follows from \cref{frmopencloseddecomp}.
\end{proof}
\begin{ex}
	Note that for a general map of frames $\mcf\xrightarrow{f}\mcl\in \frm$ inducing the equivalences $$\mcf_{\leq x}\xrightarrow{\sim}\mcl_{\leq f(x)}\,\,\text{and}\,\, \mcf_{\geq x}\xrightarrow{\sim}\mcl_{\geq f(x)},$$ it does not imply $f$ is an equivalence.
	
	In topology, a continuous bijection that is a homeomorphism on an open set and on its closed complement need not be a global homeomorphism, because the topology on the domain may be finer than that of the target space, so that the closed and open pieces do not “glue together” in the same way as in the target space.
	
	For example, consider the following finite frames (finite distributive lattices are frames). Let
	\[
	\mcf=\{0, x, 1\}
	\]
	be the three-element total order (the topology of the Sierpiński space). Let
	\[
	\mcl=\{0, A, B, 1\}
	\]
	be the four-element Boolean algebra with \(A \wedge B = 0\) and \(A \vee B = 1\). Define a frame map \(f \colon \mcf \to \mcl\) by
	\[
	f(0)=0,\qquad f(x)=A,\qquad f(1)=1.
	\]
	Clearly \(f\) preserves finite meets and arbitrary joins, hence is a legitimate frame map.
	\(\mcf_{\leq x}=\{0,x\}\) maps to \(\mcl_{\leq A}=\{0,A\}\), yielding the induced map \(\{0,x\}\xrightarrow{\ \sim\ }\{0,A\}\), which is an isomorphism.
	\(\mcf_{\geq x}=\{x,1\}\) maps to \(\mcl_{\geq A}=\{A,1\}\), yielding the induced map \(\{x,1\}\xrightarrow{\ \sim\ }\{A,1\}\), which is also an isomorphism.
	However, \(\mcf\) has only three elements while \(\mcl\) has four; the map \(f\) misses the element \(B\). Hence \(f\) is not an isomorphism.
\end{ex}
We end with a criterion when a continuous map of spaces induces a Heyting  map of frames.
\begin{nota}
	Let $X$ be a topological space and $A,B$ be open subsets. We denote $A\Rightarrow B$ (instead of $A\backslash B$) to be the internal hom $\underline{\Hom}_{\mco(X)}(A,B)$ to avoid confusion.
\end{nota}
\begin{thm}\label{prop:inverse-image-preserves-heyting}
	Let $f:X\to Y$ be a continuous map of topological spaces. Write
	\[
	f^{-1}:\mco(Y)\longrightarrow \mco(X)
	\]
	for the inverse-image morphism of frames. Then the following conditions are equivalent:
	\begin{enumerate}
		\item The map $f^{-1}$ preserves Heyting implication, i.e. for all open subsets
		$A,B\subset Y$ one has
		\[
		f^{-1}(B\Rightarrow A)
		=
		f^{-1}(B)\Rightarrow f^{-1}(A).
		\]
		\item For every locally closed subset $L\subset Y$, one has
		\[
		f^{-1}(\overline L)
		=
		\overline{f^{-1}(L)}.
		\]
	\end{enumerate}
	In particular, every open continuous map $f:X\to Y$ satisfies these equivalent
	conditions.
\end{thm}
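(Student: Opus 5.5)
We prove the equivalence by translating Heyting implication in $\mco(Y)$ into interiors of complements, and then reducing condition (2) to the two basic kinds of locally closed sets. Concretely, for open $A,B\subset Y$ one has $B\Rightarrow A=\op{int}\bigl((Y\setminus B)\cup A\bigr)$. Writing $C:=Y\setminus A$, this is the complement of $\overline{B\cap C}$, where $B\cap C$ is locally closed. Every locally closed set has the form $B\cap C$ with $B$ open and $C$ closed. Hence
\[
B\Rightarrow A \;=\; Y\setminus \overline{L}, \qquad L:=B\setminus A,
\]
and $L$ ranges over all locally closed subsets as $A,B$ vary. (If $L=B\cap C$ with $C$ an arbitrary closed set, take $A:=Y\setminus C$.)

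The same computation in $X$ gives
\[
f^{-1}(B)\Rightarrow f^{-1}(A)=X\setminus \overline{f^{-1}(B)\setminus f^{-1}(A)}=X\setminus\overline{f^{-1}(L)},
\]
because preimages commute with complements and intersections. On the other side, $f^{-1}(B\Rightarrow A)=X\setminus f^{-1}(\overline L)$. Taking complements, condition (1) for the pair $(A,B)$ is therefore literally the equation $f^{-1}(\overline L)=\overline{f^{-1}(L)}$ for $L=B\setminus A$. Since every locally closed $L$ arises from such a pair, quantifying over all $A,B$ gives $(1)\Leftrightarrow(2)$. The only point needing care is this surjectivity onto locally closed sets, which the rewriting $L=B\cap(Y\setminus A)$ handles.

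For the final claim we use that $\overline{f^{-1}(L)}\subseteq f^{-1}(\overline L)$ always holds, by continuity. So for open $f$ we only need the reverse inclusion. Let $x\in f^{-1}(\overline L)$ and let $W$ be an open neighbourhood of $x$. Then $f(W)$ is an open neighbourhood of $f(x)\in\overline L$, so $f(W)$ meets $L$. Pick $w\in W$ with $f(w)\in L$; then $w\in W\cap f^{-1}(L)$. Hence $x\in\overline{f^{-1}(L)}$. This argument works for arbitrary subsets $L$, not just locally closed ones.

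There is no real obstacle: the main step is the first identification of $B\Rightarrow A$ with $Y\setminus\overline{B\setminus A}$.
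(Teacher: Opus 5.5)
Your proposal is correct and follows the paper's proof essentially step for step. Like the paper, you rewrite $B\Rightarrow A$ as $Y\setminus\overline{B\setminus A}$, observe that the sets $B\setminus A$ are exactly the locally closed subsets, and give the same neighbourhood argument for open maps, valid for arbitrary subsets.
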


\begin{proof}
	Recall that the Heyting implication in the frame of open subsets of a topological
	space is given by
	\[
	B\Rightarrow A
	=
	\operatorname{int}\bigl((Y\setminus B)\cup A\bigr)
	=
	Y\setminus \overline{B\setminus A}.
	\]
	Hence, for open subsets $A,B\subset Y$, we have
	\[
	f^{-1}(B\Rightarrow A)
	=
	X\setminus f^{-1}\bigl(\overline{B\setminus A}\bigr).
	\]
	On the other hand,
	\[
	f^{-1}(B)\Rightarrow f^{-1}(A)
	=
	X\setminus
	\overline{f^{-1}(B)\setminus f^{-1}(A)}
	=
	X\setminus
	\overline{f^{-1}(B\setminus A)}.
	\]
	Therefore
	$
	f^{-1}(B\Rightarrow A)
	=
	f^{-1}(B)\Rightarrow f^{-1}(A)
	$
	if and only if
	$
	f^{-1}\bigl(\overline{B\setminus A}\bigr)
	=
	\overline{f^{-1}(B\setminus A)}.
$
	
	It remains only to observe that the subsets of the form $B\setminus A$, with
	$A,B\subset Y$ open, are precisely the locally closed subsets of $Y$. Indeed, such
	a subset is open in the closed subset $Y\setminus A$, hence locally closed.
	Conversely, if $L$ is locally closed, then $L=O\cap C$ for some open subset
	$O\subset Y$ and some closed subset $C\subset Y$, so
	\[
	L=O\setminus (Y\setminus C),
	\]
	with both $O$ and $Y\setminus C$ open. This proves the equivalence.
	
	Finally, suppose that $f$ is open. We claim that for every subset $S\subset Y$,
	one has
	\[
	f^{-1}(\overline S)
	=
	\overline{f^{-1}(S)}.
	\]
	The inclusion
	$
	\overline{f^{-1}(S)}
	\subset
	f^{-1}(\overline S)
	$
	follows from continuity. Conversely, let $x\in f^{-1}(\overline S)$, and let
	$U\subset X$ be an open neighbourhood of $x$. Since $f$ is open, $f(U)$ is an
	open neighbourhood of $f(x)$. Since $f(x)\in \overline S$, we have
	\[
	f(U)\cap S\neq \varnothing.
	\]
	Thus $U\cap f^{-1}(S)\neq \varnothing$. Hence
	$x\in \overline{f^{-1}(S)}$. Applying this to every locally closed
	$S=L\subset Y$ gives the desired condition.
\end{proof}

\section{Dualizable Sheaves}
This appendix focuses on the dualizability conditions for sheaves on topological spaces, with a particular emphasis on cases arising from profinite presentations. We provide the technical foundation for the geometric applications of the flat telescope conjecture discussed in the main text.
\begin{lem}\label{stalkformula}
	Let $X$ be a space and $\mcc\in\prl$.  We define the stalk functor $x^*: \shv(X;\mcc) \to \mcc$ as the left adjoint to the pushforward functor $(i_x)_*: \mcc \to \shv(X;\mcc)$ given by precomposing with the map $i_x^{-1}(-):\mco(X)\to \mco(*)=\{\emptyset,*\}$.
	Then for any sheaf $\mcf\in \shv(X;\mcc)$ and $x\in X$, we have the formula $$x^*\mcf\simeq \colim_{U\ni x} \mcf(U).$$ 
\end{lem}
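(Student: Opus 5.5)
The plan is to compute the right adjoint $(i_x)_*$ explicitly and then identify its left adjoint by the Yoneda lemma. By definition, for $c\in\mcc$ the sheaf $(i_x)_*c$ sends an open $U\subset X$ to $c$ if $x\in U$ and to the terminal object $*$ if $x\notin U$, since the constant sheaf on the point has value $*$ on $\emptyset$. The goal is to show
\[
\Map_{\shv(X;\mcc)}\bigl(\mcf,(i_x)_*c\bigr)\simeq \Map_{\mcc}\Bigl(\colim_{U\ni x}\mcf(U),c\Bigr),
\]
naturally in $\mcf$ and $c$. Uniqueness of left adjoints then gives the formula for $x^*$.

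First I compute mapping spaces in the presheaf category, using that $\shv(X;\mcc)\subset\funct(\mco(X)^\opp,\mcc)$ is full. A map $\mcf\to(i_x)_*c$ is a natural transformation. Its components at opens not containing $x$ land in $*$, so they carry no data. The opens containing $x$ form an upward-closed subposet $\mco_x\subset\mco(X)$, and any open mapping into an open of $\mco_x$ that does not contain $x$ contributes nothing. I will make this precise with a right Kan extension. The functor $(i_x)_*c$ is the right Kan extension along $\mco_x^\opp\hookrightarrow\mco(X)^\opp$ of the constant functor $c$. For $U\notin\mco_x$ the comma category $\{V\in\mco_x\mid V\subset U\}$ is empty, so the limit is $*$. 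For $U\in\mco_x$ the comma category has an initial object $U$, so the value is $c$. This gives
\[
\Map\bigl(\mcf,(i_x)_*c\bigr)\simeq \Map_{\funct(\mco_x^\opp,\mcc)}\bigl(\mcf|_{\mco_x},\underline{c}\bigr)\simeq \Map_{\mcc}\bigl(\colim_{\mco_x^\opp}\mcf|_{\mco_x},c\bigr),
\]
where the last step is the colimit–constant adjunction. Since $\mco_x^\opp$ is filtered (it is closed under finite intersections), this colimit is the colimit over $U\ni x$ appearing in the statement.

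The only step needing care is checking that $(i_x)_*c$ really is a sheaf and agrees with the precomposition description. For this I will verify the sheaf condition directly. Take a cover $\{U_i\}$ of $U$. If $x\notin U$, every term in the descent limit is $*$. If $x\in U$, pick $i_0$ with $x\in U_{i_0}$. The Čech diagram is then the right Kan extension of a constant diagram, and its limit is $c$; equivalently, precomposing with $i_x^{-1}$ turns the cover into a cover of the point, and the sheaf condition there holds trivially. I expect this sheaf verification to be the main, though routine, obstacle. Everything else is formal adjunction bookkeeping.
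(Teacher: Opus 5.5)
Your argument is correct and is essentially the paper's proof. The paper likewise computes the left adjoint of the presheaf-level skyscraper functor by the Kan-extension formula as $\colim_{U\ni x}\mathcal P(U)$, then restricts to sheaves using that $(i_x)_*$ lands in the full subcategory $\shv(X;\mcc)$; your fullness-of-mapping-spaces step is the same as its identification $x^*\simeq x^*_{pre}\circ\iota$, and your $\mathrm{Ran}$/restriction computation just makes the Kan-extension formula explicit. The sheaf condition you flag as the main obstacle is taken as part of the definition of $(i_x)_*$ in the statement, so checking it is optional rather than essential.
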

\begin{proof}
	We can compute the left adjoint explicitly. Let $$\mathcal{N}_x=\mco(X)^\opp\times_{\mco(*)^\opp}(\mco(*)^\opp)_{/*}$$ denote the poset of open neighborhoods of $x$ ordered by reverse inclusion, which is a filtered $\infty$-category. Let $x^*_{pre}$ denote left adjoint to the skyscraper presheaf functor $(i_x)_{*, pre}: \mcc \to \mathrm{PSh}(X;\mcc)$. For any presheaf $\mathcal{P} \in \mathrm{PSh}(X;\mcc)$, by the formula of left Kan extension, the presheaf stalk functor is given by the filtered colimit:
	$$ x^*_{pre}\mathcal{P} = \colim_{U \ni x} \mathcal{P}(U). $$

	Now, let $\iota: \shv(X;\mcc) \hookrightarrow \mathrm{PSh}(X;\mcc)$ be the fully faithful inclusion, and $L: \mathrm{PSh}(X;\mcc) \to \shv(X;\mcc)$ be the sheafification functor, which is left adjoint to $\iota$. Since $(i_x)_*$ lands in the subcategory of sheaves, we have $(i_x)_{*, pre} \simeq \iota \circ (i_x)_*$. Passing to left adjoints gives a canonical equivalence:
	$$ x^* \simeq x^*_{pre} \circ \iota $$
	Therefore, for any sheaf $\mcf \in \shv(X;\mcc)$, evaluating the stalk yields:
	$$ \mcf_x = x^*\mcf \simeq x^*_{pre}(\iota \mcf) = \colim_{U \ni x} \mcf(U) $$
	which completes the proof.
\end{proof}
The following criterion is a variant of
\cite[Corollary 2.5.4.12]{martini2025presentabilitytopoiinternalhigher}.
\begin{thm}\label{dualshvlocconst}
	Let \(X\) be a space and let \(\mcc\in\calg(\prl)\) be such that the unit
	\(\mathbf{1}\in\mcc\) is compact. Then an object
	\(\mcf\in\shv(X;\mcc)\) is dualizable if and only if it is locally constant
	and \(\mcf_x\) is dualizable in \(\mcc\) for each \(x\in X\).
\end{thm}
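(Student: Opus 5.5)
The plan is to prove the two implications separately, working with stalks via \cref{stalkformula} and with the exponential adjunction $\underline{\Hom}$ in $\shv(X;\mcc)$. Each stalk functor $x^*$ is symmetric monoidal. Sheafification commutes with the colimit in \cref{stalkformula}, so $x^*$ is the filtered colimit of the restrictions $\shv(X;\mcc)\to\shv(U;\mcc)$ followed by global sections. Symmetric monoidal functors preserve dualizable objects, so if $\mcf$ is dualizable then each $\mcf_x$ is dualizable in $\mcc$. This half of the ``only if'' direction is immediate.

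\textbf{Local constancy of a dualizable $\mcf$.} For local constancy, I would exploit compactness of the unit. Put $P:=\mcf_x$, a dualizable and hence compact object of $\mcc$, since $\mathbf 1$ is compact. The dual $\mcf^\vee$ has stalk $P^\vee$, because $x^*$ is symmetric monoidal. The coevaluation $\mathbf 1\to\mcf\otimes\mcf^\vee$ and evaluation $\mcf^\vee\otimes\mcf\to\mathbf 1$ are given by finitely many data. Their stalks at $x$ give maps $\mathbf 1\to P\otimes P^\vee$ and, by adjunction, $P\to\mcf_x$, $P^\vee\to\mcf^\vee_x$.

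Compactness of $\mathbf 1$, $P$ and $P^\vee$ lets each of these factor through a finite stage $\colim_{U\ni x}$. Thus there is a neighbourhood $U$ and maps $\underline P_U\to\mcf|_U$ and $\underline{P^\vee}_U\to\mcf^\vee|_U$ from the constant sheaves, and these induce equivalences on stalks at $x$.

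To promote ``equivalence at $x$'' to ``equivalence on a smaller neighbourhood'', I would build an inverse using duality rather than testing stalks at other points. The map $\underline P_U\to\mcf|_U$ and the evaluation of $\mcf$ give a map $\mcf|_U\to\underline{P^{\vee\vee}}=\underline P$, via $\mcf\simeq\underline{\Hom}(\mcf^\vee,\mathbf 1)$ and the map $\underline{P^\vee}\to\mcf^\vee$. The two composites are endomorphisms of the constant sheaf $\underline P_U$ and of $\mcf|_U$. Both are equivalent to the identity on the stalk at $x$.

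For $\underline P_U$ the endomorphism lives in $\Map(P,\Gamma(U,\underline P))$. The identity is detected after shrinking $U$, by compactness of $P$ and the filtered colimit formula. For $\mcf|_U$ the endomorphism corresponds, via dualizability, to a global section of $\mcf\otimes\mcf^\vee$, i.e.\ a map from the compact unit. It is therefore also determined on a small neighbourhood. This is the main obstacle: making the finite-stage factorization argument for homotopies, and not just for maps, carefully in the $\infty$-setting. I would handle it by phrasing everything as points of mapping spaces out of compact objects, which commute with the filtered colimit over neighbourhoods.

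\textbf{Converse.} Suppose $\mcf$ is locally constant with dualizable stalks. Choose an open cover $\{U_i\}$ with $\mcf|_{U_i}\simeq\underline{P_i}$, and set $\mcf^\vee:=\underline{\Hom}(\mcf,\mathbf 1)$. The canonical map $\mcf^\vee\otimes\mcg\to\underline{\Hom}(\mcf,\mcg)$ is compatible with restriction to opens. Restriction to an open commutes with both $\otimes$ and $\underline{\Hom}$, since $j^*$ has the $\shv(X;\mcc)$-linear left adjoint $j_!$; compare \cref{preservinternalhom}. An equivalence can be checked on the cover by descent. On $U_i$ the map is the constant-sheaf version of $P_i^\vee\otimes(-)\to\underline{\Hom}(P_i,-)$, which is an equivalence because $P_i$ is dualizable and $\underline{(-)}:\mcc\to\shv(U_i;\mcc)$ is symmetric monoidal and $\mcc$-linear. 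This yields dualizability of $\mcf$ by the criterion used in the proof of \cref{hopfcovproperties}(1).
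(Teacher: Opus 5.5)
Your proposal is correct and follows essentially the same route as the paper: you lift the identity of $\mcf_x$ by compactness to $\alpha\colon \underline{P}_U\to\mcf|_U$, produce a candidate inverse on a smaller neighbourhood, and then shrink once more using compactness of $\mathbf{1}$ so that both composites become homotopic to the identity. The only differences are cosmetic: you obtain the inverse by dualizing a lift $\underline{P^\vee}_U\to\mcf^\vee|_U$ where the paper lifts it directly as a section of $\underline{\Hom}(\mcf|_U,\underline{P}_U)\simeq\mcf^\vee|_U\otimes\underline{P}_U$ (the same datum under duality), and your converse is a spelled-out version of the paper's remark that dualizability is local because restriction to opens preserves $\otimes$ and $\underline{\Hom}$.
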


\begin{proof}
	($\impliedby$) Suppose $\mcf$ is locally constant and $\mcf_x$ is dualizable in $\mcc$ for each $x \in X$. Dualizability is a local property because the tensor product and internal homs in $\shv(X;\mcc)$ are computed locally (see \cref{preservinternalhom}). By assumption, there exists an open cover $\{U_i\}$ of $X$ such that $\mcf|_{U_i} \simeq \underline{K_i}$, the constant sheaf associated to some dualizable object $K_i  \in \mcc^d$. 
	For any open $U$, the constant sheaf functor $p^*: \mcc \to \shv(U;\mcc)$ (left adjoint to the global sections functor) is symmetric monoidal, hence $\underline{K_i}$ is dualizable in $\shv(U_i;\mcc)$. Thus, $\mcf$ is locally dualizable and hence dualizable in $\shv(X;\mcc)$.
	
	($\implies$)
	Suppose $\mcf \in \shv(X;\mcc)$ is dualizable. We must show it is locally constant. 
	For any point $x \in X$, the stalk functor $x^*: \shv(X;\mcc) \to \mcc$ is strongly symmetric monoidal, so it preserves dualizable objects. Thus, $K := \mcf_x$ is dualizable in $\mcc$. Because the tensor unit $\mb{1} \in \mcc$ is compact and $K$ is dualizable, $K$ is a compact object in $\mcc$. 
	
	By \cref{stalkformula}, the stalk is the filtered colimit of sections over open neighborhoods: $\mcf_x \simeq \colim_{U \ni x} \mcf(U)$. Since $K$ is a compact object in $\mcc$, the equivalence $\mathrm{id}_K: K \xrightarrow{\sim} \mcf_x$ must factor through a finite stage. That is, it arises from a morphism defined on some open neighborhood $U \ni x$:
	$$ \alpha: \underline{K}|_U \to \mcf|_U $$
	such that $\alpha_x$ is an equivalence.
	
	Since $\mcf$ is dualizable, its restriction $\mcf|_U$ is dualizable in $\shv(U; \mcc)$. We have a canonical equivalence for the internal hom:
	$$ \underline{\mathrm{Hom}}(\mcf|_U, \underline{K}|_U) \simeq (\mcf|_U)^\vee \otimes \underline{K}|_U .$$
	Taking the stalk at $x$ yields $\underline{\mathrm{Hom}}(\mcf|_U, \underline{K}|_U)_x\simeq\mcf_x^\vee \otimes K\simeq \underline{\mathrm{Hom}}(\mcf_x,K)$. 
	Combing \cref{preservinternalhom} and \cref{stalkformula}, 
	we get a natural equivalence in $\mcc$:
	$$ \underline{\mathrm{Hom}}(\mcf_x,  K) \simeq \colim_{V \subset U, x \in V}\underline{\mathrm{Hom}}( \mcf|_V, \underline{K}|_V)(V)$$
	Since $\mb{1}\in\mcc$ is compact. This implies that the inverse homotopy class at the stalk must lift to a section on some smaller open neighborhood $V \subset U$. This section corresponds to a morphism:
	$$ \beta: \mcf|_V \to \underline{K}|_V $$
	
	We now consider the composite morphisms on $V$:
	$$ \alpha \circ \beta: \mcf|_V \to \underline{K}|_V \to \mcf|_V \quad \text{and} \quad \beta \circ \alpha: \underline{K}|_V \to \mcf|_V \to \underline{K}|_V $$
	At the stalk $x$, both composites are the identity. 
	Using again the fact that $\mb{1}\in\mcc$ is compact, there must exist a sufficiently small open neighborhood $W \subset V$ containing $x$ such that  we have homotopy equivalences:
	$$ \alpha \circ \beta \simeq \mathrm{id}_{\mcf|_W} \text{\quad and \quad} \beta \circ \alpha \simeq \mathrm{id}_{\underline{K}|_W}.$$
	Consequently, $\alpha$ and $\beta$ restrict to mutually inverse equivalences on $W$, yielding $\mcf|_W \simeq \underline{K}|_W$. Iterating this procedure for all points $x \in X$ produces an open cover $\{W_x\}$ on which $\mcf$ is equivalent to a constant sheaf. This  proves that $\mcf$ is locally constant.
\end{proof}
We also record the following consequence, which may be of independent interest.
\begin{cor}
	Let $X$ be a space and $\mcc\in\calg(\prl)$ such that the unit $\mb{1}\in \mcc$ is compact. If $\pi_0\op{Pic}(\mcc)=0$, i.e. any invertible object in \mcc is isomorphic to the unit, then any invertible sheaf $\mcl\in\shv(X;\mcc)$ is locally trivial.
\end{cor}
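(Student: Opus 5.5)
The plan is to reduce everything to the theorem just proved, \cref{dualshvlocconst}. Let $\mcl\in\shv(X;\mcc)$ be invertible, with inverse $\mcl^{-1}$. An invertible object is dualizable, so \cref{dualshvlocconst} applies: $\mcl$ is locally constant and every stalk $\mcl_x$ is dualizable. Concretely, for each $x\in X$ there is an open neighbourhood $W\ni x$ and an object $K\in\mcc^d$ with $\mcl|_W\simeq\underline{K}|_W$.

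Next I will identify $K$. Since $\mcl|_W\simeq\underline{K}|_W$, passing to stalks gives $K\simeq\mcl_x$. The stalk functor $x^*$ is strongly symmetric monoidal, so it sends invertible objects to invertible objects. Hence $\mcl_x$ is invertible in $\mcc$, and $\mcl_x\otimes(\mcl^{-1})_x\simeq\mb{1}$. The hypothesis $\pi_0\op{Pic}(\mcc)=0$ then gives $K\simeq\mcl_x\simeq\mb{1}$.

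It follows that $\mcl|_W\simeq\underline{\mb{1}}|_W$, which is the unit of $\shv(W;\mcc)$. The sets $W$ obtained this way, one for each point $x$, cover $X$, and on each of them $\mcl$ is trivial. So $\mcl$ is locally trivial.

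I expect no genuine obstacle here. The only point to watch is the second step: the local model $K$ is a priori merely dualizable, and one must note that it coincides with the stalk $\mcl_x$, which is invertible. The triviality of $\pi_0\op{Pic}(\mcc)$ is used exactly there.
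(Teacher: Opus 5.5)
Your proposal is correct and is the argument the paper intends. The paper gives no separate proof and presents the corollary as an immediate consequence of \cref{dualshvlocconst}: an invertible sheaf is dualizable, hence locally isomorphic to the constant sheaf on its stalk, and that stalk is invertible and therefore trivial by $\pi_0\op{Pic}(\mcc)=0$.
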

\begin{ex}
	Any invertible object in $\shv(X;\spgeq)$ or $\shv(X;\op{Ab})$ is locally trivial.
\end{ex}
\subsection{Sheaves on a profinite space}
We prove that when $X$ is profinite,  $\shv(X;\opsp)$ can be actually identified with a module category over a connective \einfring.
\begin{lem}\label{frmcohcolimit}
	The composite
	\[
	\mathsf{DLat}\simeq \frmcoh\longrightarrow \frm
	\]
	identifies with the ind-completion functor
	\(\operatorname{Ind}\colon \mathsf{DLat}\to \frm\). It admits a right
	adjoint, which is the forgetful functor
	\[
	\frm\longrightarrow \mathsf{DLat}.
	\]
	
\end{lem}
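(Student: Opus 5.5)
The plan is to make the identification $\mathsf{DLat}\simeq\frmcoh$ explicit first, and then verify the adjunction through a hom-set bijection.

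\emph{Step 1: the equivalence.} The equivalence $\mathsf{DLat}\simeq\frmcoh$ is classical Stone duality for coherent frames. It sends a distributive lattice $D$ to $\operatorname{Ind}(D)$, the poset of ideals of $D$. Its inverse sends a coherent frame $F$ to its sublattice $F^\omega$ of compact elements. Coherence (\cref{defkcoh}) says exactly that $F^\omega$ is closed under finite meets, contains $1$, and generates $F$ under joins. Compact elements are always closed under finite joins, so $F^\omega$ is a distributive lattice with $F\simeq\operatorname{Ind}(F^\omega)$. Under this equivalence, the composite $\mathsf{DLat}\to\frmcoh\to\frm$ is therefore $D\mapsto\operatorname{Ind}(D)$.

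\emph{Step 2: morphisms in $\frmcoh$.} I must check how the equivalence treats morphisms. In $\frmcoh$ the morphisms are taken to be frame maps that preserve compact elements, and these correspond to lattice maps $D\to D'$. Ind-extending a lattice map gives a frame map, because $\operatorname{Ind}$ of a finite-meet-preserving map preserves finite meets (finite meets commute with filtered joins of ideals). This shows the composite really is the functor $\operatorname{Ind}$ on morphisms as well as objects.

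\emph{Step 3: the adjunction.} For a distributive lattice $D$ and a frame $L$, restriction along $D\to\operatorname{Ind}(D)$ gives a map
\[
\Hom_{\frm}(\operatorname{Ind}D,L)\longrightarrow\Hom_{\mathsf{DLat}}(D,L).
\]
I will show this is a bijection.
\begin{itemize}
\item Injectivity: every element of $\operatorname{Ind}D$ is a join of elements of $D$, and frame maps preserve joins.
\item Surjectivity: given a lattice map $g\colon D\to L$, extend it by $\tilde g(I)=\bigvee_{d\in I}g(d)$. This is the left Kan extension; it preserves all joins because $\operatorname{Ind}D$ is the free sup-lattice-with-finite-joins completion.
\item Finite meets: $\tilde g$ preserves $1$ since $g(1)=1$. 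For binary meets, compute
\[
\tilde g(I\wedge J)=\bigvee_{d\in I,\,e\in J}g(d\wedge e)=\bigvee_{d,e} g(d)\wedge g(e)=\tilde g(I)\wedge\tilde g(J),
\]
where the last equality uses infinite distributivity in $L$.
\end{itemize}
The bijection is natural in $D$ and $L$, so the forgetful functor is right adjoint to $\operatorname{Ind}$.

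The main subtlety, rather than a hard obstacle, is Step 2: whether $\frmcoh$ is meant to carry coherent maps, which is what makes $\mathsf{DLat}\simeq\frmcoh$ an equivalence. Once that convention is fixed, the rest is formal.
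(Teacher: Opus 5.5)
Your argument is correct, but it takes a different route from the paper. The paper gives no argument and simply cites \cite[Corollary II.2.11]{Johnstone82}. You take only the classical duality $\mathsf{DLat}\simeq\frmcoh$ (with compact-preserving frame maps) as input, and prove the adjunction directly: you extend a lattice map $g\colon D\to L$ to $\tilde g(I)=\bigvee_{d\in I}g(d)$. Joins are preserved because $\operatorname{Ind}$ freely adds directed joins. Binary meets are handled by $\{d\wedge e\mid d\in I,\,e\in J\}=I\cap J$ together with infinite distributivity in $L$.

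What your route buys is transparency about the point the paper actually uses in \cref{shvprofinite}. There, only the left-adjointness of $\operatorname{Ind}\colon\mathsf{DLat}\to\frm$, and hence preservation of colimits, is needed. That adjunction holds regardless of conventions, and the choice of morphisms in $\frmcoh$ only matters for the equivalence. In fact your Step~3 subsumes Step~2. Frame maps $\operatorname{Ind}D\to\operatorname{Ind}D'$ correspond to lattice maps $D\to\operatorname{Ind}D'$. The compact elements of $\operatorname{Ind}D'$ are exactly the principal ideals, so the compact-preserving frame maps correspond exactly to lattice maps $D\to D'$.

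Two small points to tidy up. First, say explicitly that $d\mapsto{\downarrow}d$ is a lattice map, so that restriction lands in $\Hom_{\mathsf{DLat}}(D,L)$. Second, replace the phrase ``free sup-lattice-with-finite-joins completion'' by the precise statement: a finite-join-preserving map out of $D$ extends uniquely to one preserving directed joins, and hence all joins.
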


\begin{proof}
	See
	\cite[Corollary II.2.11]{Johnstone82}.
\end{proof}
\begin{prop}\label{shvprofinite}
	Let $X$ be a profinite space. Then there exists a symmetric monoidal equivalence
	\[
	\shv(X;\spgeq)\simeq \modu_{C(X,\mathbb{S})}(\spgeq),
	\]
	where $$C(X,\mathbb{S})\simeq\colimit_i \mathbb{S}^{X_i}$$ if $X\simeq \lim_{i\in \mci} X_i$ is a profinite presentation. In particular, $	\shv(X;\spgeq)$ is projectively rigid over \spgeq.
\end{prop}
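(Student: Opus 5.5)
The plan is to exhibit $\shv(X;\spgeq)$ as a filtered colimit in $\calg(\prl)$ of finite products of copies of $\spgeq$, and to identify that colimit with connective modules over the colimit ring. Write $X\simeq\lim_i X_i$ with each $X_i$ finite discrete and surjective transition maps. By Stone duality, $\mco(X)\simeq\operatorname{Ind}(\colim_i \mathcal P(X_i))$ is the frame generated by the Boolean algebra of clopens. By \cref{frmcohcolimit}, $\operatorname{Ind}$ is a left adjoint, so $\mco(X)\simeq\colim_i\mco(X_i)$ in $\frm$.

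The first step is to apply $\shv(-)\colon\frm\to\calg(\prl)$, which is a left adjoint by \cref{shvsmadj}. This gives $\shv(X)\simeq\colim_i\shv(X_i)\simeq\colim_i\mcs^{X_i}$ in $\calg(\prl)$. Tensoring with $\spgeq$ is a symmetric monoidal left adjoint $\calg(\prl)\to\calg(\prl_{\spgeq})$, hence preserves colimits, so
\[
\shv(X;\spgeq)\simeq\colim_i(\spgeq)^{X_i}\simeq\colim_i\modu_{\mathbb S^{X_i}}(\spgeq),
\]
using that a finite product of copies of $\spgeq$ equals modules over the finite product ring.

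The second step identifies this colimit with $\modu_{C(X,\mathbb S)}(\spgeq)$ for $C(X,\mathbb S)=\colim_i\mathbb S^{X_i}$. The functor $\calg(\spgeq)\to\calg(\prl_{\spgeq})$, $R\mapsto\modu_R(\spgeq)$, preserves filtered colimits. I would check this through compact generators: the modules $\modu_R(\spgeq)$ are compact projectively generated, and the colimit in $\prl$ is the ind-completion of the filtered colimit of the small categories of compact projectives, where the unit maps to the colimit ring. The transition functors are base changes, so they preserve compact projectives, and Hom out of the unit commutes with filtered colimits. This step is the main obstacle. It needs the precise statement that filtered colimits in $\prl_\omega$ are computed on compact objects, and that $\colim_i\modu_{R_i}^{\op{cproj}}$ generates $\modu_{\colim R_i}^{\op{cproj}}$ under retracts. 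Alternatively, I would compare Hom spaces directly, since $\Map_{R}(R\otimes_{R_i}P,R\otimes_{R_i}Q)\simeq\colim_{j\ge i}\Map_{R_j}$.

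Projective rigidity then follows. The category is compact projectively generated by the unit, which is compact projective and dualizable. Compact projectives are retracts of finite free modules, hence dualizable. Conversely, a dualizable object is compact and $\Map(x,-)\simeq\Map(\mathbf 1,x^\vee\otimes-)$ preserves sifted colimits, so it is compact projective. Thus $\mcc^{\op{cproj}}=\mcc^d$. An alternative route to projective rigidity is to note that each $(\spgeq)^{X_i}$ is rigid and that rigid algebras are closed under filtered colimits.
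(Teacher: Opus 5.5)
Your proposal is correct, and its skeleton is the paper's: realize $\mco(X)\simeq\colim_i\mco(X_i)$ in $\frm$ via Stone duality and \cref{frmcohcolimit}, push this through the left adjoint $\shv(-)$ of \cref{shvsmadj} and through $-\otimes\spgeq$, and identify $\shv(X_i;\spgeq)\simeq\modu_{\mathbb{S}^{X_i}}(\spgeq)$.

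The real difference is the step you single out as the main obstacle. The paper handles it in one line. The functor $R\mapsto\modu_R(\spgeq)$ is a left adjoint $\calg(\spgeq)\to\calg(\prlad)$, whose right adjoint sends $\mcc$ to the endomorphism ring $\op{End}_{\spgeq}(\mb{1}_{\mcc})$ of its unit. It therefore preserves all small colimits, so $\colim_i\modu_{\mathbb{S}^{X_i}}(\spgeq)\simeq\modu_{\colim_i\mathbb{S}^{X_i}}(\spgeq)$ is automatic.

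Your hands-on verification also works. Filtered colimits in $\calg(\prl)$ are computed in $\prl$, and for compact $P$ the formula $\mapp_R(R\otimes_{R_i}P,R\otimes_{R_i}Q)\simeq\colim_{j\geq i}\mapp_{R_j}(R_j\otimes_{R_i}P,R_j\otimes_{R_i}Q)$ gives full faithfulness on generators. This is more explicit, though it covers only filtered colimits, which is all you need.

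Fix one slip. In $\prl_\omega$ the colimit is $\op{Ind}$ of the colimit of the categories of \emph{compact} objects. If you instead work with compact projectives, you must take $\mathcal{P}_\Sigma$ of their colimit, not $\op{Ind}$: by Lazard's theorem, $\op{Ind}(\modu_R^{\op{cproj}})$ is only the flat modules, not all of $\modu_R(\spgeq)$.

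Your deduction of projective rigidity, which the paper leaves implicit, is fine.
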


\begin{proof}
	We first note that $$\shv(X_i;\spgeq)\simeq \prod_{X_i}\spgeq\simeq \modu_{\mathbb{S}^{X_i}}(\spgeq),$$
	where the second equivalence follows from \cite[Corollary 4.8.5.21]{ha}.
	 Applying the adjoint $$
	 \begin{tikzcd}[sep=large]
	 	\calg(\spgeq)
	 	\arrow[r, "\modu_{(-)}(\spgeq)",hook, shift left=1ex]
	 	&
	 	\calg(\prlad)
	 	\arrow[l, "\op{End}_{\spgeq}(-)", shift left=1ex]
	 \end{tikzcd},
	 $$ it suffices to show that
	\[
	\shv(X;\spgeq)\simeq \colimit_i\shv(X_i;\spgeq)
	\]
	is a colimit in $\calg(\prlad)$. By \cref{shvsmadj}, it further suffices to show that
	\[
	\mco(X)\simeq \colimit_i \mco(X_i)
	\]
	is a colimit in $\frm$.
	
	We observe that the above diagram lies in $\frmcoh$. Since, by \cref{frmcohcolimit}, the functor $\frmcoh\to \frm$ preserves colimits, it suffices to show that
	\[
	\mco(X)^\omega\simeq \colimit_i \mco(X_i)^\omega\simeq \colimit_i\mco(X_i)
	\]
	is a colimit in the category of distributive lattices. However, filtered colimits in $\mathsf{DLat}$ are created in $\op{Posets}$, so the claim follows from the definition of the topology on $X$.
\end{proof}
\begin{rem}
In fact, when $X$ is profinite, one can show that $$\shv(X;\cmon(\mcs))\simeq \modu_{C(X,\op{Fin}^{\simeq})}(\cmon(\mcs))$$ by the same argument as \cref{shvprofinite}. However, this does not work for $\shv(X;\mcs_*)$, because $\prod_{X_i}\mcs_*\neq \modu_{(S^0)^{X_i}}(\mcs_*)$ in the pointed setting.
\end{rem}
\subsection{A counterexample to \cref{qu510}}
To delineate the limitations of our general framework, we provide an explicit topological counterexample to \cref{qu510}. This construction shows that in the recollement of atomic smashing frames, the compact-rigid generation condition is essential; without it, the existence of non-liftable atomic quotients causes the structural lifting to fail.
\begin{lem}\label{lemc4}
	Let $X\xrightarrow{f} Y$ be a map of topological spaces and $V\subset Y$ be an open subspace. Then $$\shv(f^{-1}(V))\simeq \shv(V)\otimes_{\shv(Y)}\shv(X)$$  in $\calg(\prl)$. 
\end{lem}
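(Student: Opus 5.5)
The plan is to recognize both sides as open sub-$\infty$-topoi and compare their universal properties. Write $\mathcal U:=\shv(Y)_{/V}\simeq\shv(V)$, where $V$ is viewed as a subterminal object of $\shv(Y)$. Then $\shv(V)\hookrightarrow\shv(Y)$, via $j_!$, is a smashing ideal of the Cartesian $\shv(Y)$ whose coidempotent object is $V\to *$; under \cref{cartesiancidem} this is the element $V\in\shv(Y)_{\leq-1}$. By \cref{smidlvslocrig}, the restriction $j^*\colon\shv(Y)\to\shv(V)$ is the corresponding locally rigid localization, and it is symmetric monoidal.

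The first key step uses \cref{smidlpushoutfrms} with $\mcv=\shv(Y)$, $\mcw=\shv(X)$ and $\mci=\shv(V)$, where the map $f^*\colon\shv(Y)\to\shv(X)$ lies in $\calg(\prl)$. That corollary gives that $\shv(V)\otimes_{\shv(Y)}\shv(X)\hookrightarrow\shv(X)$ is the smashing ideal generated by $f^*(V)$, namely $\langle f^*(V)\rangle$ (using the functoriality remark after \cref{smideal}). Its right adjoint exhibits $\shv(V)\otimes_{\shv(Y)}\shv(X)$ as the locally rigid localization of $\shv(X)$ attached to the coidempotent $f^*(V)\to *$.

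Next I identify $f^*(V)$. Since $f^*$ preserves finite limits, it sends the subterminal sheaf $V$ to the subterminal object of $\shv(X)$ representing the open subset $f^{-1}(V)$. The smashing ideal $\langle f^{-1}(V)\rangle\subset\shv(X)$ is the full subcategory of objects $F$ with $F\times f^{-1}(V)\simeq F$. These are exactly the objects lying over $f^{-1}(V)$, which gives $\shv(X)_{/f^{-1}(V)}\simeq\shv(f^{-1}(V))$ embedded via $j_!$ (\cite[Proposition 6.3.5.11]{htt}). Because coidempotents and smashing ideals correspond bijectively by \cref{smideal}, this identifies the two ideals.

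Finally, I upgrade this to an equivalence in $\calg(\prl)$. Both sides carry the symmetric monoidal structure induced by the locally rigid localization (\cref{symlocal}), and both are localizations of $\shv(X)$ at the same coidempotent, so \cref{smidlvslocrig} identifies them as objects of $\calg(\prl)_{\shv(X)/}$. On the relative tensor product, this structure is the one making $\shv(X)\to\shv(V)\otimes_{\shv(Y)}\shv(X)$ the base change of $j^*$ in $\calg(\prl)$. The main obstacle I expect is this compatibility check: that the base-changed ideal inclusion is still fully faithful, and that its right adjoint agrees with the pushout $\shv(X)\to\shv(V)\otimes_{\shv(Y)}\shv(X)$ in $\calg(\prl)$, since pushouts in $\calg(\prl)$ are relative tensor products. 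The first point is contained in \cref{smidlpushoutfrms}. For the second, I would use that base change preserves both the adjunction $j_!\dashv j^*$, being a 2-functor, and its counit equivalence.
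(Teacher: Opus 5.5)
Your argument is correct, but it follows a different route from the paper.

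The paper's proof is purely formal. By \cref{slicelocalepushout}, the square with edges $\mco(Y)\to\mco(Y)_{\leq V}\simeq\mco(V)$ and $\mco(X)\to\mco(X)_{\leq f^{-1}(V)}\simeq\mco(f^{-1}(V))$ is a pushout in $\frm$. Since $\shv(-)\colon\frm\to\calg(\prl)$ is a left adjoint by Aoki's adjunction (\cref{shvsmadj}), it sends this square to a pushout in $\calg(\prl)$, that is, to a relative tensor product.

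You stay inside $\calg(\prl)$ throughout. You base-change the smashing ideal $j_!$ and identify the result with $\langle f^*(V)\rangle$. You recognize $f^*(V)$ as the subterminal $f^{-1}(V)$, and $\langle U\rangle$ as $\shv(X)_{/U}$ in the Cartesian case. You then match the symmetric monoidal structures using the classification of locally rigid localizations by coidempotents (\cref{smidlvslocrig}) together with 2-functoriality of base change.

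What each route buys:
\begin{itemize}
\item The paper's route is shorter. It needs no compatibility check on monoidal structures, because everything is computed in $\calg(\prl)$ from the start. It does, however, rest on Aoki's adjunction and on the topos-theoretic pushout behind \cref{slicelocalepushout}.
\item Your route avoids that input. It also proves something more general: for any $F\colon\mcv\to\mcw$ in $\calg(\prl)$ and any smashing ideal $\langle x\rangle$ of $\mcv$, the pushout $\langle x\rangle\otimes_{\mcv}\mcw$ is the locally rigid localization of $\mcw$ classified by $F(x)$. The sheaf-theoretic content is isolated in $f^*(h_V)\simeq h_{f^{-1}(V)}$.
\item Your route produces the equivalence together with the embeddings $j_!$ into $\shv(X)$, so it is an identification of smashing ideals. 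This is the form in which the lemma is used in \cref{exnonliftableatomicquotient}.
\end{itemize}
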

\begin{proof}
	It follows by combining \cref{shvsmadj} and \cref{slicelocalepushout} (cf. \cite[Corollary 1.10]{aoki2023sheaves}).
\end{proof}
\begin{ex}\label{exnonliftableatomicquotient}
	We construct a  counterexample to \cref{qu510}.
	Let
	\[
	X=\mathbb N\sqcup\{p,q\}.
	\]
	We equip \(X\) with the following topology. A subset \(U\subset X\) is open
	if and only if the following conditions hold:
	\begin{enumerate}
		\item If \(p\notin U\) and \(q\notin U\), then \(U\cap \mathbb N\) is arbitrary;
		\item if \(p\in U\) or \(q\in U\), then \(U\cap \mathbb N\) is cofinite in \(\mathbb N\).
		
	\end{enumerate}
	Equivalently, the points of \(\mathbb N\) are isolated, while every
	neighbourhood of \(p\) and every neighbourhood of \(q\) contains all but
	finitely many natural numbers. In particular, \(p\) and \(q\) cannot be
	separated by disjoint open neighbourhoods, hence $X$ is not Hausdorff.
	
	Let \(k\) be a field, and	let
	\[
	\mathcal C:=\shv(X;\mcd(k)).
	\]
	Then
	$
	\mathcal C\in \mathrm{CAlg}(\mathrm{Pr}^L_{\mathrm{st}}).
	$
	Let
	\[
	V:=\mathbb N,\qquad W:=\mathbb N\cup\{p\}.
	\]
	Both \(V\) and \(W\) are open subsets of \(X\). We define the smashing ideals
	\[
	\mathcal I:=\shv(V;\mcd(k))\xhookrightarrow{j_{V,!}} \mathcal C,
	\qquad
	\mathcal K:=\shv(W;\mcd(k))\xhookrightarrow{j_{W,!}} \mathcal C.
	\]
	Thus
	$
	\mathcal I\subset \mathcal K\subset \mathcal C.
	$
	
	We first observe that \(\mathcal I\) is \(\mathcal C\)-atomically generated.
	For each \(n\in\mathbb N\), the singleton \(\{n\}\subset X\) is clopen.
	Therefore
	$
	k_{\{n\}}:=j_{\{n\},!}k
	$
	is a direct summand of the unit \(k_X\in\mcc\) by \cref{splitstablesmid}. In particular, \(k_{\{n\}}\) is
	dualizable, hence \(\mathcal C\)-atomic.	
	Since \(V\) is discrete, we have
	\[
	\mci\simeq \prod_{n\in\mathbb N}\mcd(k).
	\]
	Under this equivalence, the objects \(k_{\{n\}}\), \(n\in\mathbb N\), are the
	coordinate units. Hence they generate \(\mci\) as a localizing ideal of \(\mathcal C\), i.e. $
	\mathcal I
	=
	\left<\,k_{\{n\}}\mid n\in\mathbb N\,\right>.
	$
	Consequently, \(\mathcal I\) is \(\mathcal C\)-atomically generated.
	
	Next we compute the quotient. The complement
	\[
	Z:=X\setminus V=\{p,q\}
	\]
	is closed, and \(Z\) is discrete with its subspace topology.
	By \cref{splitstablesmid},
	\[
	\mathcal C/\mathcal I
	\simeq
	\shv(Z;\mcd(k))
	\simeq
	\mathcal{D}(k)\times \mathcal{D}(k).
	\]
	By \cref{lemc4},
	\[
	\mathcal K/\mathcal I
	\simeq \mathcal K\otimes_{\mcc}\mcc/\mathcal I
	\simeq
	\mathcal{D}(k_{\{p\}})
	\subset
	\mathcal{D}(k_{\{p,q\}})\simeq \mcc/\mci.
	\]
	This is a clopen summand, hence it is generated by the object
	\(k_p\), which is \(\mathcal C/\mathcal I\)-atomic. Therefore
	$
	\mathcal K/\mathcal I
	$
	is \(\mathcal C/\mathcal I\)-atomically generated.
	
	We now show that \(\mathcal K\) itself is not \(\mathcal C\)-atomically
	generated. The key point is to show the following inclusion
	\[
	\mathcal K\cap \mathcal C^{\mathrm{at}}
	\subset
	\mathcal I,
	\]
	which implies $\left<\mck\cap \mathcal C^{\mathrm{at}}\right>\subset \mathcal I\subsetneq \mck$ and hence \mck is not \mcc-atomically generated.
	
	Let
	$
	A\in \mathcal K\cap \mathcal C^{\mathrm{at}}.
	$
	Since \(A\in \mathcal K=\shv(W;\mcd(k))\), its stalk at \(q\) vanishes:
	$
	A_q\simeq 0.
	$
	On the other hand, an object in
	\(\mathcal C\) is \(\mathcal C\)-atomic precisely when it is dualizable.
	Thus \(A\) is a dualizable in \mcc. By \cref{dualshvlocconst}, \(A\) is locally constant.
	Since \(A_q\simeq 0\), after shrinking around \(q\) we may choose an open
	neighbourhood \(U_q\) of \(q\) such that
	$
	A|_{U_q}\simeq 0.
	$
	Every neighbourhood of \(q\) contains all but finitely many natural numbers.
	
	Now we wish to show that $$
	A_p\simeq 0.$$ Suppose, for contradiction, that
	$
	A_p\not\simeq 0.
	$
	By local constancy, after shrinking around \(p\) we may choose an open
	neighbourhood \(U_p\) of \(p\) such that \(A|_{U_p}\) is constantly nonzero on a
	neighbourhood of \(p\). In particular, \(A\) is nonzero at all points of
	\(U_p\cap\mathbb N\) outside a finite subset. But every neighbourhood of
	\(p\) also contains all but finitely many natural numbers. Therefore
	\[
	U_p\cap U_q\cap \mathbb N
	\]
	is nonempty, indeed cofinite in \(\mathbb N\). On this intersection, \(A\)
	is zero because \(A|_{U_q}\simeq 0\), while it is nonzero by the choice of
	\(U_p\). This is a contradiction. 
	Thus both \(A_p\) and \(A_q\) vanish.
	
	Since
	\[
	\mathcal I=\ker\bigl(\shv(X;\mcd(k))\xrightarrow{} \shv(Z;\mcd(k))\bigr),
	\qquad Z=\{p,q\},
	\]
	we conclude that
	$
	A\in \mathcal I
	$
	and
	$
	\mathcal K\cap \mathcal C^{\mathrm{at}}
	\subset
	\mathcal I.
	$
	
	Consequently, we obtain a chain of smashing ideals
	\[
	\mathcal I\subset \mathcal K\subset \mathcal C
	\]
	such that \(\mathcal I\) is \(\mathcal C\)-atomically generated and
	\(\mathcal K/\mathcal I\) is \(\mathcal C/\mathcal I\)-atomically generated,
	but \(\mathcal K\) is not \(\mathcal C\)-atomically generated.
\end{ex}
\section{Pure Ideals and Pierce Ideals of Rings}\label{pureidl}
In this section, we review some basic facts about Pierce ideals and pure ideals in classical commutative algebra.
\begin{de}[Pure ideals]\label{defpureidlring}
	We say an ideal $I$ of a commutative ring $A$ is \textbf{pure} if $A/I$ is a flat $A$-module.
\end{de}
\begin{lem}[{\cite[\href{https://stacks.math.columbia.edu/tag/04PS}{04PS}]{2}}]\label{pureidlproperties}
	Let $A$ be a commutative ring. Let $I\subset A$ be an ideal. The following are equivalent:
	\begin{enumerate}[label=(\arabic*)]
		\item $I$ is pure,
		\item for every ideal $J\subset A$ we have $J\cap I=IJ$,
		\item for every finitely generated ideal $J\subset A$ we have $J\cap I=IJ$,
		\item for every $x\in A$ we have $(x)\cap I=xI$,
		\item for every ${x}\in I$ we have ${x}=y{x}$ for some $y\in I$,
		\item for every $x_1,\dots,x_n\in I$ there exists $y\in I$ such that $x_i=yx_i$ for all $i=1,\dots,n$,
		\item for every prime $\mathfrak{p}$ of $A$ we have $I A_{\mathfrak{p}}=0$ or $I A_{\mathfrak{p}}=A_{\mathfrak{p}}$,
		\item $\operatorname{Supp}(I)=\operatorname{Spec}(A)\setminus V(I)$,
		\item $I$ is the kernel of the map $A\to(1+I)^{-1}A$,
		\item $A/I\simeq S^{-1}A$ as $A$-algebras for some multiplicative subset $S$ of $A$,
		\item $A/I\simeq(1+I)^{-1}A$ as $A$-algebras.
	\end{enumerate}
\end{lem}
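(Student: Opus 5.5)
The plan is to run the standard cycle of implications from commutative algebra, using the Tor/flatness criterion as the bridge. Set $B:=A/I$.

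First I prove $(1)\Leftrightarrow(2)\Leftrightarrow(3)\Leftrightarrow(4)$. For any ideal $J$ there is an exact sequence $0\to J\to A\to A/J\to 0$. Tensoring with $B$ gives
\[
\operatorname{Tor}_1^A(A/J,A/I)\simeq (J\cap I)/IJ .
\]
A module is flat exactly when $\operatorname{Tor}_1(A/J,-)$ vanishes for all finitely generated ideals $J$. This gives $(1)\Leftrightarrow(2)\Leftrightarrow(3)$. For $(3)\Leftrightarrow(4)$, one direction is the special case of principal ideals. For the other, I would pass through (5) and (6) rather than argue directly.

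Next I handle the elementwise conditions. For $(4)\Rightarrow(5)$, take $x\in I$. Then $x\in(x)\cap I=xI$, so $x=xy$ with $y\in I$. For $(5)\Rightarrow(6)$, I use the usual trick of combining the witnesses.
\begin{itemize}
\item If $x_1=y_1x_1$ and $x_2=y_2x_2$, then $y:=y_1+y_2-y_1y_2\in I$ works for both, since $1-y=(1-y_1)(1-y_2)$. Induct on $n$.
\item For $(6)\Rightarrow(3)$, let $J=(a_1,\dots,a_m)$ and $z\in J\cap I$. Choose $y\in I$ with $z=yz$, or better with $z$ and finitely many auxiliary elements fixed. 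Writing $z=\sum r_ia_i$ gives $z=\sum r_i(ya_i)$. Since $ya_i\in IJ$, we get $z\in IJ$.
\end{itemize}
So (1)–(6) are equivalent.

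Then I turn to the localization conditions. Condition (6) says exactly that $1-y$ kills each given finite subset of $I$ for some $y\in I$. So every element of $I$ dies in $(1+I)^{-1}A$, and conversely anything killed there lies in $I$: if $(1+i)a=0$ then $a=-ia\in I$. This gives (9), and then (11) and (10), since the natural map $A/I\to(1+I)^{-1}A$ is surjective because $1+i$ is a unit mod $I$. For $(10)\Rightarrow(1)$, localizations are flat. For (7) and (8), I would check locally at primes.
\begin{itemize}
\item If $\mathfrak p\not\supset I$, then $IA_{\mathfrak p}=A_{\mathfrak p}$.
\item If $\mathfrak p\supset I$, flatness of $B_{\mathfrak p}$ over the local ring together with (5) gives $x=yx$ with $1-y$ a unit, so $IA_{\mathfrak p}=0$.
\end{itemize}
Thus $(1)\Rightarrow(7)\Leftrightarrow(8)$. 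For $(7)\Rightarrow(5)$, the ideal $(x)(1-y)$ over varying $y\in I$ must locally vanish at every prime. Equivalently, $\operatorname{Ann}(x)+I=A$, which yields $y$.

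I expect the main obstacle to be this last step, deducing (5) from the purely local condition (7). It needs the argument that $\operatorname{Ann}(x)+I$ is not contained in any prime. For a prime containing $I$, we have $x/1=0$ in $A_{\mathfrak p}$, so some $s\notin\mathfrak p$ lies in $\operatorname{Ann}(x)$. For a prime not containing $I$ the claim is trivial.

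Since this is recorded verbatim from the Stacks Project (Tag 04PS), I would ultimately cite it and only sketch the cycle above.
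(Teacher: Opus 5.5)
Your proposal is correct and follows essentially the same route as Stacks Tag 04PS, which the paper cites instead of giving its own proof. The key ingredients are the same: the computation $\operatorname{Tor}_1^A(A/J,A/I)\simeq (J\cap I)/IJ$, the witness-combining trick $1-y=\prod_i(1-y_i)$, and the local-to-global step $\operatorname{Ann}(x)+I=A$; only the order of the implications differs, harmlessly. Two small points of precision: $A/I\to(1+I)^{-1}A$ is surjective because $I$ already vanishes in the target, so $1+i\mapsto 1$; and in (6)$\Rightarrow$(3) and (1)$\Rightarrow$(7) condition (5) alone suffices, for instance $z=yz\in IJ$ directly.
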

\begin{prop}
	Let $A$ be a commutative ring. Then any pure ideal $I$ of $A$ is a flat $A$-module.
\end{prop}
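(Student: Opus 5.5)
The plan is to use the characterization of purity in \cref{pureidlproperties}. We show that $I$ is a filtered colimit of flat modules, or alternatively that $\operatorname{Tor}_1^A(I,M)=0$ for every $M$. The cleanest route is the short exact sequence $0\to I\to A\to A/I\to 0$. Since $A$ is flat and $A/I$ is flat by the definition of purity (\cref{defpureidlring}), the long exact Tor sequence
\[
\operatorname{Tor}_2^A(A/I,M)\to \operatorname{Tor}_1^A(I,M)\to \operatorname{Tor}_1^A(A,M)=0
\]
reduces the claim to showing that $\operatorname{Tor}_2^A(A/I,M)=0$. This vanishing holds because $A/I$ is flat, so all of its higher Tor groups vanish. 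Hence $\operatorname{Tor}_1^A(I,M)=0$ for all $M$, which means $I$ is flat. This argument is really the general fact that in a short exact sequence whose middle and right terms are flat, the left term is flat.

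As a sanity check, we can also give a direct argument via condition (6) of \cref{pureidlproperties}, phrased through the equational criterion of flatness. Suppose $\sum_i a_i x_i=0$ in $I$, with $x_i\in I$ and $a_i\in A$. Choose $y\in I$ with $yx_i=x_i$ for all $i$. Then every relation among the $x_i$ factors through multiplication by $y$, which is compatible with $I\simeq\operatorname{colim}$ of the principal pieces. This route is messier, so we would present only the Tor argument.

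We expect no real obstacle. The only point to state carefully is that the flatness of $A/I$ kills $\operatorname{Tor}_2^A(A/I,-)$ as well as $\operatorname{Tor}_1^A(A/I,-)$, which follows from the standard fact that a flat module is Tor-acyclic in all positive degrees.
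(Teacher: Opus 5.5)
Your Tor argument is correct and is exactly the paper's proof: the long exact sequence for $0\to I\to A\to A/I\to 0$, together with $\operatorname{Tor}_1^A(A,-)=0$ and $\operatorname{Tor}_2^A(A/I,-)=0$ (flatness of $A/I$), forces $\operatorname{Tor}_1^A(I,-)=0$. Your secondary sketch is vague as written, but it can be finished in one line and is not messy: writing $x_i=x_i\cdot y$ with $y\in I$ exhibits the relation $\sum_i a_ix_i=0$ as trivial in the sense of the equational criterion, because the coefficients $x_i\in A$ themselves satisfy $\sum_i a_ix_i=0$.
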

\begin{proof}
	Consider the standard short exact sequence
	$$0 \longrightarrow I \longrightarrow A \longrightarrow A/I \longrightarrow 0.$$
	We want to show that $I$ is flat. For any $A$-module $M$, look at the long exact sequence of Tor:
	$$\dots \longrightarrow \operatorname{Tor}_2^A(M,A/I) \longrightarrow \operatorname{Tor}_1^A(M,I) \longrightarrow \operatorname{Tor}_1^A(M,A) \longrightarrow \dots$$
	Since $A$, as an $A$-module, is free (hence flat), we have $\operatorname{Tor}_1^A(M,A)=0$. By the definition of a pure ideal, $A/I$ is flat, so all higher Tors vanish, in particular $\operatorname{Tor}_2^A(M,A/I)=0$. Plugging these into the displayed segment of the long exact sequence gives
	$$0 \longrightarrow \operatorname{Tor}_1^A(M,I) \longrightarrow 0,$$
	so $\operatorname{Tor}_1^A(M,I)=0$ for every $A$-module $M$. By the homological criterion for flatness, $I$ is flat.
\end{proof}
\begin{rem}
	Note that, although pure implies flat, flat does not imply pure.
	For example, take $A=\mathbb{Z}$ and $I=(2)$. Then $I$ is flat. But $I$ is not pure, since $A/I=\mathbb{Z}/2\mathbb{Z}$ has torsion and is not flat over $A$.
\end{rem}
\begin{lem}[{\cite[\href{https://stacks.math.columbia.edu/tag/00EH}{00EH}]{2}}]\label{fgidemidl}
	Let $A$ be a commutative ring. Then any finitely generated idempotent ideal $I$ is generated by a (unique) idempotent element $e\in I$.
\end{lem}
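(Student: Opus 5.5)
This is the classical Stacks Project argument. Let $I=(a_1,\dots,a_n)$ with $I=I^2$. I will first produce an element $e\in I$ with $ex=x$ for all $x\in I$, and then deduce that $e$ is idempotent and generates $I$. Uniqueness is a separate formal check at the end.

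\emph{Step 1 (determinant trick / Nakayama).} Since $I=I^2$, every generator lies in $I\cdot I$, so we can write $a_i=\sum_j r_{ij}a_j$ with $r_{ij}\in I$. Equivalently, $(\mathbf{1}-R)\,a=0$, where $a$ is the column vector of the generators and $R=(r_{ij})$. Multiply on the left by the adjugate of $\mathbf{1}-R$; this gives $\det(\mathbf{1}-R)\,a_i=0$ for all $i$. Expanding the determinant yields $\det(\mathbf{1}-R)=1-e$ for some $e\in I$, because every term other than $1$ involves an entry of $R$. Hence $(1-e)a_i=0$ for every $i$, so $ex=x$ for all $x\in I$.

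\emph{Step 2 (conclusion).} Apply Step 1 to $x=e\in I$; this gives $e^2=e$. Every $x\in I$ satisfies $x=ex\in(e)$, and $e\in I$, so $I=(e)$.

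\emph{Step 3 (uniqueness).} Suppose $e,f$ are idempotents with $(e)=(f)=I$. Then $f\in(e)$ gives $f=ef$, and symmetrically $e=fe$. Hence $e=f$.

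The only step needing care is the determinant trick in Step 1. It is routine once $I=I^2$ is written out using the finite generating set, which is precisely where the finite generation hypothesis enters.
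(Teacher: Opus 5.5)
Your proof is correct and takes essentially the same route as the Stacks Project argument (Tag 00EH) that the paper cites instead of giving a proof. There, Nakayama's lemma, itself proved by the determinant trick, produces an element $1+i$ with $i\in I$ annihilating $I$, and $-i$ is then the idempotent generator; your $1-e$ is the same element up to sign, and your uniqueness check is fine.
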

\begin{rem}
In fact, for any two idempotent elements $e_1,e_2\in A$, we have an equivalence of ideals $(e_1, e_2)=(e_1+e_2-e_1\cdot e_2)$.
\end{rem}
\begin{prop}\label{pierceincpure}
	Let $A$ be a commutative ring. There are inclusions among the following classes of ideals:
	$$\left\{ \text{Pierce ideals}\right\} \subset \left\{ \text{Pure ideals}\right\}\subset \left\{ \text{Idempotent ideals}\right\}.$$ 
\end{prop}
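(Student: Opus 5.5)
The plan is to prove the two inclusions separately, using the characterizations of purity collected in \cref{pureidlproperties}. Here a Pierce ideal means an ideal generated by a family of idempotent elements $\{e_\alpha\}$, as in \cref{main2} and the subsequent corollary.

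\emph{Pierce $\Rightarrow$ pure.} Let $I=(e_\alpha)_{\alpha}$ with each $e_\alpha^2=e_\alpha$, and verify criterion (6) of \cref{pureidlproperties}. Take finitely many elements $x_1,\dots,x_n\in I$. Only finitely many generators $e_{\alpha_1},\dots,e_{\alpha_m}$ are needed to express all the $x_i$. By the remark following \cref{fgidemidl}, applied repeatedly, there is a single idempotent $e\in I$ with $(e_{\alpha_1},\dots,e_{\alpha_m})=(e)$. Each $x_i$ then lies in $(e)$, say $x_i=a_ie$, so $e x_i=a_ie^2=x_i$. Taking $y=e\in I$ satisfies (6), hence $I$ is pure.

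\emph{Pure $\Rightarrow$ idempotent.} Let $I$ be pure and verify $I^2=I$. By criterion (5), every $x\in I$ can be written $x=yx$ for some $y\in I$, so $x\in I^2$. The inclusion $I^2\subset I$ is automatic, so $I=I^2$. Alternatively, apply criterion (2) with $J=I$, which gives $I=I\cap I=I\cdot I$.

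Neither step is a genuine obstacle once \cref{pureidlproperties} is available. The only point needing care is the reduction of finitely many idempotent generators to a single idempotent, and that is exactly the identity $(e_1,e_2)=(e_1+e_2-e_1e_2)$ iterated finitely many times.
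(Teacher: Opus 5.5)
Your proof is correct. For the second inclusion it is the same as the paper's: criterion (2) of \cref{pureidlproperties} with $J=I$ gives $I=I\cap I=I^2$, or you can use (5).

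For the first inclusion your route differs from the paper's. You verify the elementwise criterion (6) of \cref{pureidlproperties} directly: for finitely many $x_1,\dots,x_n\in I$ you produce one idempotent $e\in I$ with $ex_i=x_i$.

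The paper argues structurally instead. A Pierce ideal is the filtered union of the ideals generated by finite subfamilies of its idempotent generators. So $A/I$ is the filtered colimit of the corresponding quotients. Since flat modules are closed under filtered colimits, pure ideals are closed under filtered unions. This reduces the claim to a principal idempotent ideal $(e)$, which is pure because $A\simeq A/(e)\times A/(1-e)$ makes $A/(e)$ a direct summand of $A$.

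Both arguments rely on the same collapse of finitely many idempotent generators into a single one, via $(e_1,e_2)=(e_1+e_2-e_1e_2)$. They differ in what they buy:

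\begin{itemize}
\item Yours is entirely at the level of elements. All the homological content is packaged inside the Stacks characterization (6), which has no direct analogue in a general abelian tensor category.
\item The paper's uses only closure of flatness under filtered colimits and under direct summands. That pattern is what gets transported to the categorical setting elsewhere in the paper. Examples are the closure of pure and Serre ideals under filtered joins in \cref{thmpuresubfrm} and \cref{pureidlfrm}, and the splitting off of an idempotent in \cref{main2}.
\end{itemize}
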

\begin{proof}
	For the first inclusion, since flat modules are closed under filtered colimits, the pure ideals are closed under filtered union. Therefore it suffices to show that any principle idempotent ideal $(e)\subset A$ is pure. But that is obvious because $A\simeq A/(e)\times A/(1-e)$.
	
	The second inclusion follows from \cref{pureidlproperties}(2) that $I=I\cap I=I^2$.
\end{proof}
\begin{ex}\label{counterpiercepure}
	Note that each of the inclusions above is a strict inclusion in general.
	\enu{\item 
		A pure ideal need not be generated by  idempotents.
		
		Let $A = C(\mathbb{R})$ be the ring of all real-valued continuous functions on~$\mathbb{R}$ and  $I \subset A$ be the ideal consisting of all functions with compact support.
		The ideal $I$ is pure.
		Indeed, let $f \in I$ be any function with compact support.
		Then $\operatorname{supp}(f)$ is a compact subset of~$\mathbb{R}$.
		We can choose a bump function $g \in I$ whose support is a slightly larger compact set and such that
		\[
		g(x) = 1 \quad \text{for all } x \in \operatorname{supp}(f).
		\]
		It follows that
		$
		f = f \cdot g,
		$
		which shows that $I$ is a pure ideal of~$A$.
		But $I$ is not generated by idempotent elements.
		Since $\mathbb{R}$ is connected, the only idempotent elements in the ring $C(\mathbb{R})$ are the constant functions $0$ and~$1$. 
		However, the ideal $I$ is neither $\{0\}$ nor $R$.
		Therefore, the ideal $I$ cannot be generated by  idempotent elements.
		\item An idempotent ideal is not necessarily pure.
		
		Let $(R,\mathfrak{m})$ be a non-discrete valuation ring.
		For instance, one may take $R$ to be a valuation ring whose value group contains $\mathbb{Q}$, such as the ring of Puiseux series over a field~$k$, or the valuation ring of a non-archimedean field with dense valuation.
		Let
		$
		I := \mathfrak{m}
		$
		be the maximal ideal of~$R$.
		
		We claim $I$ is idempotent, i.e.\ $I = I^2$, but it is not pure.
		Indeed, let $0 \neq x \in \mathfrak{m}$.
		Since $R$ is a non-discrete valuation ring, the value group is dense.
		Thus there exists $y \in R$ such that	
		$	v(y) = \tfrac12 v(x) > 0,$	
		so $y \in \mathfrak{m}$.
		Let $z := x/y$.
		Then
		$
		v(z) = v(x) - v(y) = \tfrac12 v(x) > 0,
		$
		hence $z \in \mathfrak{m}$.
		Consequently,
		\[
		x = yz \in \mathfrak{m}^2,
		\]
		which shows $\mathfrak{m} \subset \mathfrak{m}^2$.
		Since $\mathfrak{m}^2 \subset \mathfrak{m}$ always holds, we conclude that $I = I^2$.
		
		Recall that an ideal $I$ is pure if and only if the quotient $R/I$ is a flat $R$-module.
		Here,
		\[
		R/I = R/\mathfrak{m} = k,
		\]
		the residue field of~$R$.
		For a local ring $(R,\mathfrak{m})$, the residue field $k$ is flat over $R$ if and only if $R$ is a field.
		Since $R$ is a valuation ring of positive dimension (hence not a field), $R/\mathfrak{m}$ is not flat, and therefore $I$ is not pure.
		\item We also give a second example that an idempotent ideal is not necessarily pure.
		
		 Let $k$ be a field and $p$ be a prime. Consider the ring $A = k[x^{1/p^\infty}]$, which is formally defined as the union $\bigcup_{n \geq 0} k[x^{1/p^n}]$. 
		Let $I = (x^{1/p^\infty})$ be the ideal generated by the set $\{x^{1/p^n} \mid n \geq 0\}$. 
		
		First, observe that $A$ is an integral domain, as it is the union of integral domains $k[x^{1/p^n}]$. 
		The ideal $I$ consists precisely of all elements in $A$ that have a zero constant term. 
		Choose the element $x \in I$. Suppose for the sake of contradiction that $I$ is a pure ideal. Then there must exist some $g \in I$ such that 
		$$x = xg$$
		This implies that $x(1 - g) = 0$. Since $A$ is an integral domain and $x \neq 0$, we must have $1 - g = 0$, which yields $g = 1$.
		However, the constant polynomial $1$ has a non-zero constant term, meaning $1 \notin I$. This contradicts the assumption that $g \in I$. 
		
	}
\end{ex}
\begin{ex}\label{flatnotatomicexample}
	Let \(k\) be a field and set
	\[
	R
	:=
	k[x_1,x_2,\ldots]/
	\bigl(x_n(1-x_{n+1})\mid n\geq1\bigr).
	\]
	Then every idempotent ideal of \(R\) is pure, but the ideal
	$
	I:=(x_1,x_2,\ldots)
	$
	is pure and is not a Pierce ideal. 
\end{ex}

\begin{proof}
	The relations
	$
	x_n=x_nx_{n+1}
	$
	imply
	$x_nx_m=x_n$ when $(m>n)$.
	Hence for every \(f\in I\), there exists \(N\) such that
	\(f\in(x_1,\ldots,x_N)\), and therefore
	$
	fx_{N+1}=f.
	$
	Thus \(I\) is pure.
	
	We next show that \(R\) is stalkwise Noetherian. Put
	\[
	R_n
	=
	k[x_1,\ldots,x_n]/
	\bigl(x_i(1-x_{i+1})\mid 1\leq i<n\bigr).
	\]
	The transition map \(R_n\to R_{n+1}\) is injective, since it admits
	the retraction \(x_{n+1}\mapsto1\), and
	$
	R=\colim_nR_n.
	$
	Let \(\mathfrak p\in\spec(R)\). If \(x_n\in\mathfrak p\) for every
	\(n\), then
	\[
	\mathfrak p=I
	\qquad\text{and}\qquad
	R_{\mathfrak p}\simeq k.
	\]
	Otherwise choose \(N\) with \(x_N\notin\mathfrak p\). Inverting
	\(x_N\) forces
	\[
	x_{N+1}=x_{N+2}=\cdots=1,
	\]
	and hence
	$
	R_{\mathfrak p}
	\simeq
	(R_N)_{\mathfrak p\cap R_N}.
	$
	Thus every localization \(R_{\mathfrak p}\) is Noetherian.
	
	Now let \(J\subset R\) be idempotent. For every prime
	\(\mathfrak p\), the ideal \(J_{\mathfrak p}\) is a finitely generated
	idempotent ideal in the local Noetherian ring \(R_{\mathfrak p}\).
	If it is proper, then
	$
	J_{\mathfrak p}
	=
	J_{\mathfrak p}^2
	\subset
	\mathfrak pR_{\mathfrak p}J_{\mathfrak p},
	$
	so Nakayama's lemma gives \(J_{\mathfrak p}=0\). Hence
	\[
	J_{\mathfrak p}=0
	\quad\text{or}\quad
	J_{\mathfrak p}=R_{\mathfrak p}
	\]
	for every \(\mathfrak p\), and therefore \(J\) is pure.
	
	It remains to show that \(I\) is not Pierce. We claim that \(R\) has no
	nontrivial idempotent elements. Indeed, each \(R_n\) is connected.
	For \(n=1\) this is clear. Inductively,
	\[
	\spec(R_{n+1})
	=
	V(x_n)\cup V(1-x_{n+1}),
	\]
	where
	$
	V(1-x_{n+1})\simeq\spec(R_n),
	V(x_n)\simeq\spec(k[x_{n+1}]),
	$
	and their intersection is
	\[
	V(x_n,1-x_{n+1})\simeq\spec(k).
	\]
	Thus \(\spec(R_n)\) is connected for every \(n\).
	Since the maps \(R_n\to R\) are injective, every idempotent of \(R\)
	already belongs to some \(R_n\), and hence is \(0\) or \(1\).
	Finally,
	\[
	0\neq I\neq R
	\qquad\text{since}\qquad
	R/I\simeq k.
	\]
	As \(R\) has no nontrivial idempotents, the nonzero proper ideal \(I\)
	cannot be generated by idempotents. Thus \(I\) is pure but not Pierce.
\end{proof}
\begin{de}
	Let $A$ be a commutative ring. We denote the poset of pure ideals of $A$ by $\idl_u(A)$.
\end{de}
\begin{prop}
	Let $A$ be a commutative ring. Then the inclusion $\idl_u(A)\subset \cidem(\idl(A))$ is closed under small sums and finite meets, hence it is a subframe, which we call the pure frame of $A$. Consequently, we obtain inclusions among the following frames:
	$$\idl_p(A)\subset\idl_u(A)\subset \cidem(\idl(A)).$$
\end{prop}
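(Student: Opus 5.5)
Four things have to be checked: that pure ideals are idempotent, that $\idl_u(A)$ contains the bottom and top elements, that it is closed under arbitrary sums, and that it is closed under binary meets in $\cidem(\idl(A))$. The final chain of inclusions then follows from \cref{pierceincpure}. Idempotency comes from \cref{pureidlproperties}(2) with $J=I$, which gives $I=I\cap I=I^2$. The bottom element $0$ and the top element $A$ are pure because $A/0=A$ and $A/A=0$ are flat. The main tool throughout is the elementwise criterion \cref{pureidlproperties}(6): $I$ is pure if and only if every finite family $x_1,\dots,x_n\in I$ admits a single $y\in I$ with $x_i=yx_i$ for all $i$.

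For sums, let $\{I_\alpha\}$ be pure ideals and take finitely many $x_1,\dots,x_n\in\sum_\alpha I_\alpha$. Each $x_i$ is a finite sum of elements of finitely many $I_{\alpha_1},\dots,I_{\alpha_m}$. Applying criterion (6) in each $I_{\alpha_j}$ to the finitely many components lying there gives elements $y_j\in I_{\alpha_j}$ that act as the identity on those components. Set
\[
y:=1-\prod_{j=1}^m(1-y_j).
\]
Then $y$ lies in $\sum_j I_{\alpha_j}$. Moreover, each factor $1-y_j$ kills the components in $I_{\alpha_j}$, so $(1-y)$ annihilates every component of every $x_i$. Hence $x_i=yx_i$, and the sum is pure. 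Alternatively, one can argue as in \cref{thmpuresubfrm}: finite sums correspond to tensor products of the flat algebras $A/I_\alpha$, and filtered unions to filtered colimits, which preserve flatness.

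For meets, recall that the meet of two idempotent ideals in $\cidem(\idl(A))$ is their product $IJ$. For pure $I,J$, criterion \cref{pureidlproperties}(2) gives $IJ=I\cap J$. Purity of $I\cap J$ is then immediate: given finitely many $x_i\in I\cap J$, choose $y\in I$ and $z\in J$ fixing all the $x_i$. Then $yz\in IJ=I\cap J$ and $yzx_i=x_i$ for every $i$. As a more conceptual check, $A/(IJ)\simeq A/(I\cap J)$ is the pullback $A/I\times_{A/(I+J)}A/J$, which is flat by \cref{pullbackmoducats}.

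None of these steps is a serious obstacle. The only point needing care is the sum step, namely verifying that the product trick $y=1-\prod_j(1-y_j)$ really handles finitely many elements of an infinite sum simultaneously.
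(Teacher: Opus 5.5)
Your proof is correct, but it takes a different route from the paper's for sums, and a slightly different one for meets.

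For sums, the paper argues through flatness of the quotient. For finitely many pure ideals, $A/\sum_i I_i\simeq\bigotimes^A_i A/I_i$ is a tensor product of flat modules. An arbitrary sum is the filtered union of its finite subsums, so flatness of $A/\sum_\alpha I_\alpha$ follows because flat modules are closed under filtered colimits. You instead use the elementwise criterion \cref{pureidlproperties}(6) and build the witness directly as $y=1-\prod_j(1-y_j)$. This handles an arbitrary sum in one step, with no split into finite sums and filtered unions.

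For meets, both arguments start from $IJ=I\cap J$, obtained from \cref{pureidlproperties}(2). The paper then verifies criterion (2) for $IJ$ via the chain $K\cap IJ=K\cap I\cap J=KI\cap J=KIJ$. You verify criterion (6) with the witness $yz$.

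Your version is more elementary and self-contained. You also check explicitly that $0$ and $A$ are pure, which the paper leaves implicit. The paper's flatness argument for sums has a different advantage: it transfers essentially verbatim to the categorical settings of \cref{thmpuresubfrm} and \cref{pureidlfrm}, where no elementwise criterion exists. That is the alternative you mention in passing.

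Your aside via \cref{pullbackmoducats} is not needed for the ring case. The final chain of inclusions comes from \cref{pierceincpure} in both treatments.
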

\begin{proof}
	For a finite collection of pure ideals $\{I_i\}$, we have $A/(\Sigma_i I_i)\simeq \bigotimes^A_i A/I_i$ is flat. Therefore $i:\idl_u(A)\subset \cidem(\idl(A))$ is closed under finite sums. However, flat modules are closed under filtered colimits, we conclude that $i$ is closed under filtered union and hence small sums.
	
	Note that the meet of two idempotent ideals $I_1,I_2$ in $\cidem(\idl(A))$ is given by $I_1\cdot I_2$. Now given two pure ideals $I,J$, we wish to show $I\cdot J$ is pure. By \cref{pureidlproperties}(2), it suffices to show that for any ideal $K$, we have $K\cap IJ=KIJ$. But it is not hard to see it by using \cref{pureidlproperties}(2) again that $K\cap IJ=K\cap I\cap J=KI \cap J=KIJ$.
\end{proof}

\begin{prop}\label{purenilrad}
	Let $A$ be a commutative ring. If $I, J \subset A$ are pure ideals, then $\sqrt{I}=\sqrt{J}$ implies $I=J$.
\end{prop}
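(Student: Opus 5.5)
The plan is to adapt the argument of \cref{radpureidls} to ordinary commutative rings, using the elementwise criterion \cref{pureidlproperties}(5)--(6) in place of the categorical intersection criterion. By symmetry it suffices to show $I\subset J$. So fix $x\in I$; the goal is to prove $x\in J$ using only that $I$ is pure and $I\subset\sqrt{I}=\sqrt{J}$.

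The steps, in order:
\enu{
\item By purity of $I$ and \cref{pureidlproperties}(5), choose $y\in I$ with $x=yx$.
\item Iterating this identity gives $x=y^nx$ for every $n\geq 1$.
\item Since $y\in I\subset\sqrt{J}$, there is some $n\geq 1$ with $y^n\in J$.
\item Then $x=y^nx\in J$, because $J$ is an ideal.
}
Hence $I\subset J$. Exchanging the roles of $I$ and $J$ (purity of $J$ now supplies the factorization) gives $J\subset I$, so $I=J$.

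Note that only the purity of the ideal being included is used, together with $\sqrt{J}\supset I$; no finiteness hypotheses are needed. There is no substantial obstacle here. The only point to check is that the radical $\sqrt{J}$ in the statement means the usual nilradical of ideals, namely $\{a\mid a^n\in J\text{ for some } n\}$, which matches \cref{operationsqrt} for $\idl(A)$ via \cref{idlprefrm}.
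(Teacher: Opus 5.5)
Your proof is correct and is the same as the paper's. Both use the elementwise purity criterion to write $x=yx$ with $y\in I$, take $y^n\in J$ from $I\subset\sqrt{J}$, conclude $x=y^nx\in J$, and then finish by symmetry.
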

\begin{proof}
	If $f \in I$, then $f = fg$ for some $g \in I$. By assumption, $g^n \in J$ for
	some $n > 1$. Thus $f = fg^n \in J$. Hence, $I \subset J$. Similarly we get that
	$J \subset I$.
\end{proof}
\begin{exam}
	Note that two  distinct  idempotent ideals $I\neq J$  could have  $\sqrt{I} = \sqrt{J}$.
	
	Let $k$ be a field. Let $A$ be the  ring generated by symbols $\{x^q \mid q \in \mathbb{Q}_{\geq 0}\}$ subject to the relations $x^a \cdot x^b = x^{a+b}$ and the truncation relation $x^q = 0$ for all $q \geq 1$. Explicitly:
	\[
	A = k[x^q \mid q \in \mathbb{Q}_{\geq 0}] / (x^q \mid q \geq 1).
	\]
	Consider the two idempotent ideals $I = 0$ and
	$J = (x^q \mid 0 < q < 1)$ in $A$.
	For any generator $x^q \in J$, there exists an integer $n$ such that $nq \geq 1$. Thus $(x^q)^n = x^{nq} = 0$ in $A$. This implies $J \subset \operatorname{Nil}(A)$, and consequently $\sqrt{J} = \operatorname{Nil}(A)$.
	Therefore $I\neq J$ are distinct idempotent ideals, yet $\sqrt{I} = \sqrt{J}$. 
\end{exam}
\begin{prop}\label{idlusptial}
	Let $A$ be a commutative ring. Then the frame $\idl_u(A)$ is spatial and $\spec(\idl_u(A))$ is quasi-compact.
\end{prop}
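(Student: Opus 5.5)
The plan is to copy the proof of \cref{purefrmspatial}, replacing the categorical inputs by their classical counterparts from this appendix.

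\emph{Step 1: an embedding into the radical frame.} Consider the composite
\[
\idl_u(A)\hookrightarrow \cidem(\idl(A))\hookrightarrow \idl(A)\xrightarrow{\sqrt{-}}\idl(A)_{\op{rad}}.
\]
The first arrow is a subframe inclusion by the preceding proposition. Joins of idempotent ideals in $\idl(A)$ are sums, and $\sqrt{-}$ preserves sums, since $\sqrt{\sum I_\alpha}=\sqrt{\sum\sqrt{I_\alpha}}$ is the join in $\idl(A)_{\op{rad}}$. Meets of pure ideals are the products $IJ$, and $\sqrt{IJ}=\sqrt I\cap\sqrt J$ is the meet of radical ideals. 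The unit $A$ goes to $A$. Hence the composite is a frame map.

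\emph{Step 2: injectivity.} By \cref{purenilrad}, $\sqrt I=\sqrt J$ forces $I=J$ for pure ideals. So the composite is injective. Since it is a frame map, \cref{conserinjectfrms} shows it is an embedding. Its image is therefore a subframe of $\idl(A)_{\op{rad}}$ isomorphic to $\idl_u(A)$.

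\emph{Step 3: spatiality.} The frame $\idl(A)_{\op{rad}}$ is the frame of radical ideals, i.e.\ $\mco(\spec A)$. Equivalently, by \cref{idlprefrm}(3) applied to $\mcd(A)^\heartsuit=\modu_A(\op{Ab})$, it is coherent and hence spatial. \Cref{subfrmspatial} then shows that $\idl_u(A)$ is spatial.

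\emph{Step 4: quasi-compactness.} We need the top element $A$ to be compact in $\idl_u(A)$. Suppose $A=\sum_\alpha I_\alpha$ with each $I_\alpha$ pure. Then $1$ lies in a finite subsum, since joins of ideals are sums and $A$ is finitely generated. This finite subsum is itself pure because $\idl_u(A)$ is closed under finite sums. Hence the top is compact, and $\spec(\idl_u(A))$ is quasi-compact.

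The step most likely to need care is Step 1, checking that $\sqrt{-}$ respects infinite joins and meets on pure ideals. This is standard, because radicalization is a symmetric monoidal left adjoint by the preframe reflection proposition. The remaining steps are direct citations.
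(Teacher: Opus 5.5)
Your proposal is correct and follows the paper's proof essentially verbatim: the same composite $\idl_u(A)\hookrightarrow \cidem(\idl(A))\hookrightarrow \idl(A)\xrightarrow{\sqrt{-}}\idl(A)_{\op{rad}}$, injectivity from \cref{purenilrad}, spatiality via \cref{subfrmspatial}, and quasi-compactness from compactness of the unit ideal. Your Steps 1 and 4 only make explicit what the paper leaves implicit, namely that $\sqrt{-}$ sends sums to joins and products $IJ$ to intersections, and that $1$ already lies in a finite subsum.
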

\begin{proof}
	By \cref{purenilrad}, the composition
	\[
	\idl_u(A)\hookrightarrow \cidem(\idl(A))\hookrightarrow \idl(A)\xrightarrow{\sqrt{-}}\idl_{\op{rad}}(A)
	\]
	is a frame embedding. Since $\idl_{\op{rad}}(A)$ is spatial, it follows from \cref{subfrmspatial} that the frame $\idl_u(A)$ is also spatial.

	The quasi-compactness follows from that the unit ideal $A\in \idl_{\op{rad}}(A)$ is a compact object, because the inclusion $\idl_u(A)\hookrightarrow \idl(A)$ reflects compact objects.
\end{proof}
We refer the reader to \cite{borceux2006algebra,tarizadeh2021purely} for more details about the pure spectrum of a ring.
\begin{rem}
	Beware, however, that the almost frame $\cidem(\idl(A))$ is in general not spatial and the above argument fails, because the composition $\cidem(\idl(A))\hookrightarrow \idl(A)\xrightarrow{\sqrt{-}}\idl_{\op{rad}}(A)$ is generally not an embedding. 
\end{rem}

\begin{prop}[{\cite[\href{https://stacks.math.columbia.edu/tag/04PU}{04PU}]{2}}]
	Let $A$ be a commutative ring. The rule $I \mapsto V(I)^c$ determines a bijection $$\{I \subset A \text{ pure}\} \leftrightarrow\{U \subset \operatorname{Spec}(A) \text{ open subsets closed under specializations}\}.$$
\end{prop}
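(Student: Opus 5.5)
The statement is the Stacks Project correspondence between pure ideals of $A$ and open subsets of $\spec(A)$ closed under specialization, sending $I\mapsto V(I)^c$. I will prove it directly from the characterizations in \cref{pureidlproperties}.

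\textbf{Well-definedness.} Let $I$ be pure. By \cref{pureidlproperties}(7), for every prime $\mathfrak p$ either $IA_{\mathfrak p}=A_{\mathfrak p}$ (equivalently $\mathfrak p\notin V(I)$) or $IA_{\mathfrak p}=0$. By \cref{pureidlproperties}(8), $V(I)^c=\operatorname{Supp}(I)$.

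To see that this open set is closed under specialization, take $\mathfrak p\subset\mathfrak q$ with $\mathfrak q\in V(I)$. Then $I_{\mathfrak q}=0$. Since $A_{\mathfrak p}$ is a localization of $A_{\mathfrak q}$, we get $I_{\mathfrak p}=0$, so $\mathfrak p\notin\operatorname{Supp}(I)$. Hence $V(I)$ is closed under generization, which is the same as saying its complement $V(I)^c$ is closed under specialization.

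\textbf{Injectivity.} If two pure ideals satisfy $V(I)=V(J)$, then $\sqrt I=\sqrt J$, and \cref{purenilrad} gives $I=J$.

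\textbf{Surjectivity.} This is the main step. Let $U$ be open and closed under specialization, and set $Z=U^c$, which is closed and closed under generization. The candidate is
\[
I=\ker\Bigl(A\to \prod_{\mathfrak p\in Z}A_{\mathfrak p}\Bigr),
\]
the set of $f$ whose germ vanishes at every point of $Z$. By construction $I_{\mathfrak p}=0$ for $\mathfrak p\in Z$, since localization is exact and the map $I_{\mathfrak p}\to A_{\mathfrak p}$ factors through zero.

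Next take $\mathfrak q\in U$. Because $U$ is closed under specialization, the closure $V(\mathfrak q)$ lies in $U$, so $V(\mathfrak q)\cap Z=\emptyset$. Since $Z=V(\mathfrak a)$ is closed, we get $\mathfrak q+\mathfrak a=A$. Write $1=s+a$ with $s\in\mathfrak q$ and $a\in\mathfrak a$; then $a\notin\mathfrak q$.

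The obstacle is that $a$ need not vanish in the stalks along $Z$; it only lies in the primes of $Z$. To fix this I would use that $Z$ is closed under generization. For $\mathfrak p\in Z$, every prime of $A_{\mathfrak p}$ lies in $Z$, so none lies in $V(\mathfrak q)$. This suggests arguing via the compactness of $V(\mathfrak q)$ together with the fact that $Z$ is an intersection of the opens $D(f)$ containing it (as $Z$ is closed under generization). From this I would try to find $g\notin\mathfrak q$ with $D(g)\cap Z=\emptyset$ after suitable modification, so that $g$ is nilpotent in each $A_{\mathfrak p}$ with $\mathfrak p\in Z$.

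Even that only gives nilpotence, not vanishing. So instead I would prove surjectivity using the Stacks Project approach: \cref{pureidlproperties}(10),(11) say pure ideals correspond to localizations $A/I\simeq S^{-1}A$. I would take $S=\{f: V(f)\cap Z=\emptyset\}$, which is multiplicative, set $I=\ker(A\to S^{-1}A)$, and check that $A/I\to S^{-1}A$ is surjective and that $\spec S^{-1}A=Z$. Surjectivity holds because every $s\in S$ becomes a unit modulo $I$; this is the hard verification and uses that $Z$ is closed under generization. Granting it, $I$ is pure by \cref{pureidlproperties}(10), and $V(I)=Z$.
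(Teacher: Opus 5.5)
Your well-definedness and injectivity steps are correct. The gap is in surjectivity. The paper gives no argument beyond citing the Stacks Project. Your final sketch sets up the standard proof correctly: with $Z=V(J)$ and $S=A\setminus\bigcup_{\mathfrak p\in Z}\mathfrak p$, put $I=\ker(A\to S^{-1}A)$. However, you explicitly grant the two facts that constitute the proof. These are, first, that $\operatorname{Spec}(S^{-1}A)\to\operatorname{Spec}(A)$ has image exactly $Z$, and second, that every element of $S$ becomes a unit in $A/I$. As written, surjectivity is asserted rather than proved.

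Neither fact is hard.

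\emph{The image is $Z$.} If a prime $\mathfrak q$ misses $S$, then $\mathfrak q+J\neq A$. Otherwise $1=x+j$ with $x\in\mathfrak q$ and $j\in J$, and $x=1-j$ lies in no prime containing $J$, so $x\in S\cap\mathfrak q$. Hence $\mathfrak q$ lies in a maximal ideal belonging to $Z$, and closure of $Z$ under generization gives $\mathfrak q\in Z$. This is the only use of that hypothesis.

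\emph{The units.} The idea missing from both of your attempts is the standard fact that for any ring map $A\to B$, $V(\ker(A\to B))$ is the closure of the image of $\operatorname{Spec}(B)$. Indeed, an element of $A$ lying in every prime of the image is nilpotent in $B$, so some power of it lies in the kernel. Since $Z$ is closed, $V(I)=Z$. Then each $t\in S$ avoids every prime of $\operatorname{Spec}(A/I)=V(I)$, so $t$ is a unit in $A/I$. Hence the injection $A/I\to S^{-1}A$ is surjective, and $I$ is pure by \cref{pureidlproperties}(10).

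This also resolves the obstacle in your first attempt. Take $\mathfrak q\in U$ and $1=x+a$ with $x\in\mathfrak q$ and $a\in\mathfrak a=J$. The image of $a$ lies in every prime of $S^{-1}A$, so $a^n\in I$ for some $n$, while $a^n\notin\mathfrak q$. So nilpotence was enough after all. What you need is nilpotence in the single ring $S^{-1}A$, not separately in each stalk $A_{\mathfrak p}$, which gives no uniform exponent.
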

\begin{de}\label{pierceopen}
	Let $X$ be a topological space. We say an subset $U\subset X$ is a \textbf{Pierce open} subset if $U$ is a union of clopen subsets.
\end{de}
\begin{prop}\label{pierceidlopen}
	Let $A$ be a commutative ring. The rule $I \mapsto V(I)^c$ determines a bijection $$\{I \subset A \text{ Pierce}\} \leftrightarrow\{U \subset \operatorname{Spec}(A) \text{ Pierce open subsets}\}.$$
\end{prop}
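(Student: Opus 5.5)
The plan is to deduce the statement from the pure-ideal bijection of \cite[\href{https://stacks.math.columbia.edu/tag/04PU}{04PU}]{2} recalled just above, together with \cref{pierceincpure}, and then identify the Pierce open subsets inside that correspondence. Since every Pierce ideal is pure by \cref{pierceincpure}, the assignment $I\mapsto V(I)^c$ restricted to Pierce ideals is already injective. It therefore suffices to show two things: this assignment sends Pierce ideals to Pierce open subsets, and every Pierce open subset arises this way.

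For the first point, let $I=(e_\alpha)_{\alpha}$ be generated by idempotents. Then $V(I)=\bigcap_\alpha V(e_\alpha)$, so $V(I)^c=\bigcup_\alpha D(e_\alpha)$. Each $D(e_\alpha)$ is clopen: from $e_\alpha(1-e_\alpha)=0$ and $e_\alpha+(1-e_\alpha)=1$ we get $D(e_\alpha)=V(1-e_\alpha)$, and its complement is $D(1-e_\alpha)$. Hence $V(I)^c$ is a union of clopen subsets, i.e.\ Pierce open.

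For surjectivity, the key input is the standard correspondence between clopen subsets of $\spec(A)$ and idempotents of $A$. A clopen $U$ gives a decomposition $\spec(A)=U\sqcup U^c$ into disjoint closed subsets, hence $A\simeq A_1\times A_2$ with $\spec(A_1)=U$. The resulting idempotent $e$ then satisfies $D(e)=U$. Now let $U=\bigcup_\alpha U_\alpha$ be Pierce open, choose idempotents $e_\alpha$ with $D(e_\alpha)=U_\alpha$, and set $I:=(e_\alpha)_\alpha$. This $I$ is Pierce by definition, and the computation in the previous paragraph gives $V(I)^c=\bigcup_\alpha D(e_\alpha)=U$.

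The only step with real content is the clopen--idempotent correspondence. I expect it to be the main (mild) obstacle: one must check that a clopen $U$ really gives an idempotent, via the sheaf property of $\mathcal{O}_{\spec A}$ on the disjoint cover $U\sqcup U^c$, and that $D(e)$ is exactly $U$. If one wants to avoid citing the pure bijection, injectivity could alternatively be argued directly. Given two Pierce ideals with the same $V(I)^c$, every idempotent $e\in I$ has $D(e)\subset V(J)^c$. Quasi-compactness of the clopen set $D(e)$ then places $e$ in the radical of a finitely generated subideal of $J$, hence in $J$ because idempotents are radical-detected. However, invoking the pure bijection is cleaner, so that is the route I would take.
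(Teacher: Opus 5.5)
Your proposal is correct and follows the paper's proof: surjectivity comes from writing a Pierce open subset as $\bigcup_\lambda D(e_\lambda)$ via the clopen--idempotent correspondence, and injectivity comes from the fact that Pierce ideals are pure (\cref{pierceincpure}) together with injectivity on pure ideals. The paper gets the latter from \cref{purenilrad} rather than from the 04PU bijection, but this is the same radical argument.

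You also explicitly check that $V(I)^c$ is Pierce open when $I$ is Pierce, which the paper leaves implicit. In your alternative injectivity argument, quasi-compactness is not needed: $D(e)\subset V(J)^c$ already gives $e\in\sqrt{J}$, and hence $e=e^n\in J$.
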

\begin{proof}
	We denote $D(I):=V(I)^c$. It is a classical fact that the clopen subsets of $\operatorname{Spec}(A)$ are exactly the principal open sets $D(e)$ for idempotents $e \in A$.
	 
	 For surjectivity, let $U \subset \operatorname{Spec}(A)$ be a Pierce open subset. By  \cref{pierceopen}, $U = \bigcup_{\lambda \in \Lambda} U_\lambda$ where each $U_\lambda$ is clopen. Thus, each $U_\lambda = D(e_\lambda)$ for some idempotent $e_\lambda \in A$. The union is then $U = \bigcup_{\lambda \in \Lambda} D(e_\lambda) = D(I)$, where $I = \langle e_\lambda \mid \lambda \in \Lambda \rangle$ is the ideal generated by these idempotents. Since $I$ is generated by idempotents, it is a Pierce ideal. Noting that $D(I) = V(I)^c$, we have shown the map is surjective.
	
     The injectivity follows from \cref{purenilrad}, because every Pierce ideal is pure by \cref{pierceincpure}.
\end{proof}
\begin{de}
	Let $A$ be a commutative ring. We say $A$ is \textbf{absolutely flat} (or von Neumann regular) if every $A$-module is flat.
\end{de}
\begin{prop}[{\cite{nlabvonNeumannRegularRing}}]\label{abflatring}
	Let $A$ be a commutative ring. Then  following conditions are equivalent:
	\begin{enumerate}
		\item $A$ is absolutely flat.
		\item For every element $a \in A$, there exists $x \in A$ such that
		\[
		a = axa .
		\]
		
		\item For every $a \in A$, there exists an idempotent $e \in A$ such that
		\[
		aA = eA .
		\]
		In other words, every principal ideal of $A$ is generated by an idempotent.

		\item Every principal ideal of $A$ is a direct summand of the left $A$-module~$A$.
		
		\item Every finitely generated $A$-submodule of $A^n$ is a direct summand of $A^n$.
		\item Every finitely presented  $A$-module is projective.
	\end{enumerate}
\end{prop}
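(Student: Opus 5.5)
We prove the implications in the cycle $(1)\Rightarrow(2)\Rightarrow(3)\Rightarrow(5)\Rightarrow(6)\Rightarrow(1)$. We also show $(3)\Leftrightarrow(4)$ separately. All of this is classical commutative algebra; we only need \cref{pureidlproperties} and the closure of flatness under filtered colimits.

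For $(1)\Rightarrow(2)$, fix $a\in A$. Since $A/aA$ is flat, the ideal $aA$ is pure by \cref{defpureidlring}. By \cref{pureidlproperties}(5) there is $y\in aA$ with $a=ya$. Writing $y=ax$ gives $a=axa$.

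For $(2)\Rightarrow(3)$, set $e:=ax$. Then $e^2=(axa)x=ax=e$, so $e$ is idempotent. Moreover $e\in aA$ and $a=ea\in eA$, hence $aA=eA$.

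For $(3)\Leftrightarrow(4)$, note that if $aA=eA$ then $A=eA\oplus(1-e)A$. Conversely, if $A=aA\oplus N$, decompose $1=e+f$ with $e\in aA$ and $f\in N$. Then $e$ is idempotent and $aA=eA$.

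For $(3)\Rightarrow(5)$, first upgrade (3) to finitely generated ideals. Given idempotents $e_1,e_2$, one has $(e_1,e_2)=(e_1+e_2-e_1e_2)$. So by induction every finitely generated ideal is of the form $eA$ with $e$ idempotent. We then induct on $n$ for a finitely generated submodule $K\subset A^n$.
\begin{itemize}
\item Let $\pi\colon A^n\to A$ be the last coordinate. Then $\pi(K)$ is a finitely generated ideal, hence $\pi(K)=eA$ for an idempotent $e$.
\item Choose $k\in K$ with $\pi(k)=e$. The map $x\mapsto x-k\,\pi(x)$ retracts $K$ onto $K\cap A^{n-1}$; this uses $\pi(x)=\pi(x)e$. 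Hence $K=(K\cap A^{n-1})\oplus kA$, and $K\cap A^{n-1}$ is finitely generated.
\item By induction $K\cap A^{n-1}$ is a direct summand of $A^{n-1}$. Using $A^n=A^{n-1}\oplus kA\oplus(1-e)A\varepsilon_n$, we conclude that $K$ is a direct summand of $A^n$.
\end{itemize}

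For $(5)\Rightarrow(6)$, write a finitely presented module as $M=A^n/K$ with $K$ finitely generated. By (5), $K$ is a direct summand of $A^n$, so $M$ is projective.

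For $(6)\Rightarrow(1)$, every module is a filtered colimit of finitely presented modules. These are projective, hence flat, and flatness is preserved by filtered colimits.

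The only non-formal step is $(3)\Rightarrow(5)$, specifically the splitting $K=(K\cap A^{n-1})\oplus kA$. Everything else is immediate from the definitions and from \cref{pureidlproperties}.
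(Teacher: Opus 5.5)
The paper gives no proof of this proposition; it only cites \cite{nlabvonNeumannRegularRing}. Most of your argument is correct: $(1)\Rightarrow(2)$ via \cref{pureidlproperties}(5), $(2)\Rightarrow(3)$ with $e=ax$, $(3)\Leftrightarrow(4)$, $(5)\Rightarrow(6)$ and $(6)\Rightarrow(1)$ all work. In $(4)\Rightarrow(3)$ you should say that both $e^2=e$ and $aA\subset eA$ come from $aA\cdot N\subset aA\cap N=0$.

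There is a genuine slip, and it sits exactly in the step you identified as the only non-formal one. The retraction $r(x)=x-k\,\pi(x)$ on $K$ is fine and has image $K\cap A^{n-1}$. Its kernel, however, is $\{x\in K\mid x=k\,\pi(x)\}=k\cdot eA$, not $kA$. So for an arbitrary $k\in K$ with $\pi(k)=e$, the sum $(K\cap A^{n-1})+kA$ need not be direct.

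Here is a counterexample. Take $A=\mathbb{F}_2\times\mathbb{F}_2$, $e=(1,0)$, $n=2$ and $K=A\cdot(1,e)=\{(a,ea)\mid a\in A\}$. With $k=(1,e)$ you get $kA=K$, while $K\cap A^{1}=(1-e)A\times 0\neq 0$. For the same reason your decomposition $A^n=A^{n-1}\oplus kA\oplus(1-e)A\varepsilon_n$ fails: $k(1-e)=(1-e,0)$ is a nonzero element of $kA\cap A^{n-1}$.

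The repair is to normalize $k$ by replacing it with $ek$. This element still lies in $K$, still satisfies $\pi(ek)=e$, and now $k(1-e)=0$. Write $k=(k',e)$; the following then hold.
\begin{itemize}
\item The kernel of $r$ is $keA=kA$, so $K=(K\cap A^{n-1})\oplus kA$. Directness holds because $ka\in A^{n-1}$ forces $ea=0$, hence $ka=k(ea)=0$.
\item $A^n=A^{n-1}\oplus kA\oplus(1-e)A\varepsilon_n$, via $(y',y_n)=(y'-k'y_n,0)+k\,y_n+(1-e)y_n\varepsilon_n$. Directness follows by multiplying the last coordinate of a relation by $e$.
\item Combining this with $A^{n-1}=(K\cap A^{n-1})\oplus L$ from the induction hypothesis gives $A^n=K\oplus L\oplus(1-e)A\varepsilon_n$.
\end{itemize}
With this normalization the induction, and hence your whole proof, goes through.
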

\begin{cor}\label{abflatchar}
	Let $A$ be a commutative ring. Then  following conditions are equivalent:
	\begin{enumerate}
		\item $A$ is absolutely flat.
		\item Every ideal of $A$ is a Pierce ideal.
		\item Every ideal of $A$ is pure.
	\end{enumerate}
\end{cor}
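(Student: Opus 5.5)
The plan is to prove the cycle of implications $(1)\Rightarrow(2)\Rightarrow(3)\Rightarrow(1)$, using \cref{abflatring} to translate absolute flatness into elementwise conditions, and \cref{pureidlproperties} together with \cref{pierceincpure} to handle purity.

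For $(1)\Rightarrow(2)$, let $I\subset A$ be an arbitrary ideal. Write it as the sum of the principal ideals it contains:
\[
I=\sum_{a\in I}aA .
\]
By \cref{abflatring}(3), each $aA$ equals $e_aA$ for some idempotent $e_a\in A$. Hence $I$ is generated by the family of idempotents $\{e_a\}_{a\in I}$, so $I$ is Pierce.

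The implication $(2)\Rightarrow(3)$ is immediate from the first inclusion in \cref{pierceincpure}: every Pierce ideal is pure.

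For $(3)\Rightarrow(1)$, fix $a\in A$ and apply the hypothesis to the principal ideal $aA$, which is then pure. Criterion (5) of \cref{pureidlproperties} gives some $y\in aA$ with $a=ya$. Writing $y=xa$ for some $x\in A$, we get
\[
a=xa\cdot a=axa ,
\]
using commutativity. This is condition (2) of \cref{abflatring}, so $A$ is absolutely flat.

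No step is expected to be a real obstacle; everything reduces to the cited characterizations. The only points needing care are in $(3)\Rightarrow(1)$: one must apply purity to the principal ideal $aA$ itself (not to an arbitrary ideal), and note that the element $y$ supplied by \cref{pureidlproperties}(5) lies in $aA$, so that it can be written as a multiple $xa$.
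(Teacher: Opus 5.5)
Your proposal is correct and follows the paper's proof essentially step for step: the same cycle $(1)\Rightarrow(2)\Rightarrow(3)\Rightarrow(1)$, with $(3)\Rightarrow(1)$ obtained by applying the elementwise purity criterion to the principal ideal $aA$ to get $a=a^2x$. Your $(1)\Rightarrow(2)$ spells out what the paper leaves implicit in its citation of \cref{abflatring}, namely writing $I=\sum_{a\in I}e_aA$, and your $(2)\Rightarrow(3)$ gives \cref{pierceincpure} as the reason the paper calls this step ``obvious''.
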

\begin{proof}
	The $(1)\implies (2)$ follows from \cref{abflatring} and the $(2)\implies (3)$ is obvious. Now we need to show $(3)\implies (1)$.
	Let $a \in R$ be an arbitrary element of the ring.
	Consider the principal ideal
	$
	I := (a).
	$
	By assumption, the ideal $I$ is a pure ideal of $R$ and $a\in I$.
	By the element-wise characterization of pure ideals,  there exists some $y \in I$ satisfying
	$
	a = ay .
	$
	Because $y \in I = (a)$, there exists an element $r \in R$ such that
	$
	y = ar .
	$
	Substituting this expression into the equation above, we obtain
	$
	a = a(ar) = a^2 r .
	$
	Thus, for every $a \in R$, there exists $r \in R$ such that
	\[
	a = a^2 r .
	\]
	By \cref{abflatring}, this shows that $R$ is  an absolutely flat ring.
\end{proof}

\section{Some (Counter)examples in the Equivariant Setting}
\begin{ex}[{A counterexample to \cref{ex:mackey-non-pierce}}]\label{counterex2}
	The inclusion $$\cidem_f(\mcc)^\omega\hookrightarrow\idl(\mcc^\heartsuit)^\omega\cap\cidem(\idl(\mcc^\heartsuit))$$ in \cref{compatsmidl} is  generally not an equivalence.
	
	For example, let $q$ be an odd prime, and let $G=C_q$. Put
	$
	S=\mathbb Z[\alpha]/(\alpha^2-\alpha-q).
	$
	Since $1+4q$ is not a square for $q$ odd, the ring $S$ is an integral domain. On the other hand,
	$
	S/qS
	\simeq
	\mathbb F_q[\alpha]/(\alpha^2-\alpha)
	\simeq
	\mathbb F_q\times \mathbb F_q,
	$
	and the image $\bar\alpha\in S/qS$ is a non-trivial idempotent.
	Let $\underline R$ be the constant Green functor associated
	to $S$. Thus
	$$
	\underline R(G/e)=S,
	\qquad
	\underline R(G/G)=S,
	$$
	the restriction map is the identity
	$
	\operatorname{Res}_e^G=\operatorname{id}_S,
	$
	the transfer map is multiplication by $q$,
	$
	\operatorname{Tr}_e^G(s)=qs,
	$
	and the Weyl action is trivial. This is a commutative Green functor. 
	
	Let $\mcc:=\operatorname{Mod}_{H\underline R}(\opsp_{G,\geq0})$. Then we have
	$
\mcc^\heartsuit\simeq	\operatorname{Mod}_{\underline R}(\operatorname{Mack}_G(\operatorname{Ab}))
	$
	with the relative box product over $\underline R$.
	Define a Green ideal $\underline I\subset \underline R$ by
	$$
	\underline I(G/e)=S,
	\qquad
	\underline I(G/G)=(q,\alpha)\subset S.
	$$
	
	This is a sub-Mackey functor: indeed,
	$
	\operatorname{Res}_e^G(q,\alpha)\subset S=\underline I(G/e),
	$
	and
	$
	\operatorname{Tr}_e^G(S)=qS\subset (q,\alpha).
	$
	It is finitely generated by $1\in \underline I(G/e)$ and
	$\alpha\in \underline I(G/G)$.
	We first check that $\underline I$ is idempotent. For a $C_q$-Mackey
	functor $M$, write
	$$
	\overline M(G)
	:=
	M(G/G)\Big/
	\operatorname{Tr}_e^G M(G/e)
	$$
	for its top bar quotient, producing a colimit-preserving functor $$\operatorname{Mack}_G(\operatorname{Ab})\to\operatorname{Ab}.$$ Then
	$
	\overline{\underline R}(G)=S/qS,
	$
	and the image of $\underline I$ on the top stratum is
	$
	\overline{\underline I}(G)
	=
	(q,\alpha)/qS
	=
	(\bar\alpha)
	\subset S/qS.
	$
	Since $\bar\alpha^2=\bar\alpha$ in $S/qS$, the ideal
	$(\bar\alpha)\subset S/qS$ is idempotent. On the lower stratum we have
	$
	\underline I(G/e)=S=\underline R(G/e),
	$
	which is also idempotent. Using the compatibility of the bar construction
	with the box product, the multiplication map
	$
	\underline I\otimes_{\underline R}\underline I
	\longrightarrow
	\underline I
	$
	is an isomorphism on the $e$-stratum and on the top bar stratum. Hence it
	is an isomorphism of $C_q$-Mackey functors
	$$
	\underline I\otimes_{\underline R}\underline I
	\simeq
	\underline I.
	$$
	Equivalently, at the fixed-point level one can see the idempotence from
	$$
	\underline I(G/G)^2
	+
	\operatorname{Tr}_e^G\bigl(\underline I(G/e)^2\bigr)
	=
	(q,\alpha)^2+qS.
	$$
	Since $\alpha^2=\alpha+q$, we have
	$
	\alpha=\alpha^2-q\in (q,\alpha)^2+qS,
	$
	and hence
	$
	(q,\alpha)^2+qS=(q,\alpha).
	$
	Thus the idempotence of $\underline I$ is produced by the transfer term;
	pointwise multiplication alone need not satisfy $(q,\alpha)^2=(q,\alpha)$.
	
	The almost category
	$
	\operatorname{aMod}_{\underline R,\underline I}
	\bigl(\operatorname{Mack}_G(\operatorname{Ab})\bigr)
	$
	is non-zero. 
	We now show that this almost category is not compact projectively generated.
	Let $P$ be a compact projective object of
	$
	\operatorname{Mod}_{\underline R}(\operatorname{Mack}_G(\operatorname{Ab}))
	$
	such that
	$
	\underline I\otimes_{\underline R}\underline I
	\otimes_{\underline R}P
	\simeq
	P.
	$
	We claim that
	$
	\overline P(G)=0.
	$
	
	Since $P$ is compact projective, it is a retract of a finite direct sum of
	free $\underline R$-modules of the form
	$$
	\underline R\{G/G\},
	\qquad
	\underline R\{G/e\},
	$$
	where
	$
	\underline R\{T\}(U)=\underline R(T\times U).
	$
	In particular, $P(G/e)$ and $P(G/G)$ are finitely generated projective
	$S$-modules. Since $S$ is connected, their ranks are constant on
	$\operatorname{Spec}S$. Write
	$
	r_e=\operatorname{rank}_S P(G/e),
	r_G=\operatorname{rank}_S P(G/G).
	$
	The two points of $\operatorname{Spec}(S/qS)$ are
	$
	\mathfrak p_0=(q,\alpha),
	\mathfrak p_1=(q,\alpha-1).
	$
	At $\mathfrak p_0$, the idempotent $\bar\alpha$ becomes $0$. Since
	$P$ is $\underline I$-almost, the top bar satisfies
	$
	\bar\alpha\cdot \overline P(G)=\overline P(G).
	$
	Therefore
	$
	\overline{P_{\mathfrak p_0}}(G)=0.
	$
	After base change to the residue field $\mathbb F_q$, the finite
	projective objects in the category of $\mathbb F_q$-linear
	$C_q$-Mackey functors are finite direct sums of the two indecomposable
	projectives
	$$
	Q_G:=\mathbb F_q\{G/G\},
	\qquad
	Q_e:=\mathbb F_q\{G/e\}.
	$$
	Their evaluation ranks are
	$
	Q_G(G/e)\cong \mathbb F_q,
	Q_G(G/G)\cong \mathbb F_q,
	$
	so $Q_G$ has rank pair $(1,1)$, while
	$
	Q_e(G/e)\cong \mathbb F_q^{\oplus q},
	Q_e(G/G)\cong \mathbb F_q,
	$
	so $Q_e$ has rank pair $(q,1)$. Moreover,
	$$
	\overline{Q_G}(G)\cong \mathbb F_q,
	\qquad
	\overline{Q_e}(G)=0.
	$$
	Thus the condition $\overline{P_{\mathfrak p_0}}(G)=0$ forces
	$
	P_{\mathfrak p_0}\simeq Q_e^{\oplus m_0}
	$
	for some $m_0\geq 0$. Hence
	$
	(r_e,r_G)=(qm_0,m_0).
	$
	At the other point $\mathfrak p_1=(q,\alpha-1)$, the idempotent
	$\bar\alpha$ becomes $1$. Thus the almost condition imposes no
	restriction on the top bar, and we may write
	$
	P_{\mathfrak p_1}
	\simeq
	Q_G^{\oplus n}\oplus Q_e^{\oplus m_1}
	$
	for some $n,m_1\geq 0$. Therefore
	$
	(r_e,r_G)=(n+qm_1,n+m_1).
	$
	Since $r_e$ and $r_G$ are global $S$-ranks, the two rank pairs must be
	equal:
	$$
	qm_0=n+qm_1,
	\qquad
	m_0=n+m_1.
	$$
	Subtracting $q$ times the second equation from the first gives
	$
	0=(1-q)n.
	$
	Since $q>1$, we get $n=0$. Hence
	$
	\overline{P_{\mathfrak p_1}}(G)=0.
	$
	But $\overline P(G)$ is an $(\bar\alpha)$-local $S/qS$-module, hence
	it is supported only at the component $\mathfrak p_1$. Since its fiber at
	$\mathfrak p_1$ is zero, we conclude that
	$$
	\overline P(G)=0.
	$$
	
	Consequently every $\underline I$-almost compact projective object has
	zero top bar. If the almost category were compact projectively generated,
	then the non-zero object $\underline I$ would be a quotient of a coproduct
	of such compact projective objects. However, the functor
	$$
	M\longmapsto \overline M(G)
	$$
	preserves coproducts and quotients, and it vanishes on all
	$\underline I$-almost compact projectives. Hence it would have to vanish
	on $\underline I$, contradicting
	$
	\overline{\underline I}(G)=(\bar\alpha)\neq 0.
	$
	Therefore
	$
	\operatorname{aMod}_{\underline R,\underline I}
	\bigl(\operatorname{Mack}_{C_q}(\operatorname{Ab})\bigr)
	$
	is not compact projectively generated.
\end{ex}
\begin{ex}[A counterexample to {\cref{piercenotpure}}]\label{counterex3}
In a projectively rigid $\spgeq$-algebra \mcc, Serre smashing ideals need not lie between atomic smashing ideals and smashing ideals.
		
For example, let $G = C_p$ be the cyclic group of prime order $p$ and $$\mcc:=\opsp_{C_p,\geq0}.$$ Consider the Burnside Green functor $\underline{A}$, which serves as the unit object in the symmetric monoidal category $\opsp_{C_p}^{\heartsuit}$. This Green functor is characterized by the following data:
	\begin{itemize}
		\item At the underlying level: $\underline{A}(G/e) \simeq \mathbb{Z} \cdot 1_e$.
		\item At the fixed point level: $\underline{A}(G/G) \simeq \mathbb{Z}[x]/(x^2 - px)$, where $x = \mathrm{Tr}_e^G(1_e)$ corresponds to the free orbit $[G/e]$.
		\item The restriction map $\mathrm{Res}_e^G: \underline{A}(G/G) \to \underline{A}(G/e)$ satisfies $\mathrm{Res}_e^G(1) = 1_e$ and $\mathrm{Res}_e^G(x) = p \cdot 1_e$.
	\end{itemize}
	Define an ideal $\underline{I} \subset \underline{A}$ as the sub-Mackey functor generated by the element $1_e \in \underline{A}(G/e)$. Explicitly, $\underline{I}$ is given by $\underline{I}(G/e) = \mathbb{Z}$ and $\underline{I}(G/G) = \mathbb{Z} \cdot x \subset \underline{A}(G/G)$. 
	
	We claim that $\underline{I}$ is a finitely generated Pierce ideal that fails to be pure.
	In the category of Mackey functors, the idempotency condition $\underline{I}^2 \simeq \underline{I}$ is satisfied. Although at the fixed point level the pointwise multiplication yields $\underline{I}(G/G)^2 = (x^2) = (px) \subsetneq \underline{I}(G/G)$, the tensor product multiplication incorporates the transfer from the underlying level. Specifically, the multiplication map $\underline{I} \otimes_{\underline{A}} \underline{I} \to \underline{I}$ is an epimorphism because:
	\[ \mathrm{Tr}_e^G(1_e \cdot 1_e) = \mathrm{Tr}_e^G(1_e) = x \in \underline{I}(G/G). \]
	Thus, $x$ lies in the image of the multiplication map, confirming $\underline{I}^2 = \underline{I}$ in the categorical sense.
	Consider the quotient Green functor $\underline{Q} = \underline{A}/\underline{I}$. By construction:
	\begin{itemize}
		\item $\underline{Q}(G/e) = \mathbb{Z}/\mathbb{Z} = 0$.
		\item $\underline{Q}(G/G) \simeq \mathbb{Z}\{1, x\} / (x) \simeq \mathbb{Z} \cdot \bar{1}$.
	\end{itemize}
	By the recollement $$\funct(BC_p,\opsp)\to \opsp_{C_p}\to \opsp,$$
	we see that $\amodu_{(\underline{A},\underline{I})}(\opsp_G^\heartsuit)\simeq \funct(BC_p, \op{Ab})$ is compact projectively generated, hence $\underline{I}$ is a Pierce ideal.
	
	However, $\underline{Q}$ is not flat over $\underline{A}$ (cf. \cite[Remark 6.19]{hattt}). Since \(\underline I\) is finitely generated,
	\(\underline Q=\underline A/\underline I\) is finitely presented as an
	\(\underline A\)-module. Hence, if \(\underline Q\) were flat over
	\(\underline A\), it would be projective by
	\cite[Corollary 1.28]{hattt} and the projection $p: \underline{A} \twoheadrightarrow \underline{Q}$ would split via some section $s: \underline{Q} \to \underline{A}$. Such a section must satisfy:
	\begin{itemize}
		\item At $G/G$: $s(\bar{1}) = 1 + kx$ for some $k \in \mathbb{Z}$.
		\item At $G/e$: $s(0) = 0$.
	\end{itemize}
	Compatibility with the restriction map requires $\mathrm{Res}_e^G(s(\bar{1})) = s(\mathrm{Res}_e^G(\bar{1})) = s(0) = 0$. But:
	\[ \mathrm{Res}_e^G(1 + kx) = 1 + kp \in \mathbb{Z}. \]
	Since $1 + kp = 0$ has no integer solution for $p \geq 2$, no such section exists. Therefore, $\underline{A}/\underline{I}$ is not a direct summand of $\underline{A}$, and consequently, it is not flat.
\end{ex}
\printbibliography[heading=bibintoc]

\bigskip

\textsc{Jiacheng Liang, Department of Mathematics, Johns Hopkins University, Baltimore, MD 21218, USA}

\emph{Email address:} \href{mailto:jliang66@jhu.edu}{\texttt{jliang66@jhu.edu}}

\end{document}